\numberwithin{equation}{section}
\theoremstyle{plain}
\newtheorem{theorem}[subsection]{Theorem}
\newtheorem{lemma}[subsection]{Lemma}
\newtheorem{prop}[subsection]{Proposition}
\newtheorem{cor}[subsection]{Corollary}
\newtheorem*{claim}{Claim}
\theoremstyle{definition}
\newtheorem{defn}[subsection]{Definition}
\newtheorem{cons}[subsection]{Construction}
\newtheorem{remark}[subsection]{Remark}
\newtheorem{exam}[subsection]{Example}
\newtheorem{warning}[subsection]{Warning}
\def\AA{\mathbb{A}}
\def\CC{\mathbb{C}}
\def\DD{\mathbb{D}}
\def\FF{\mathbb{F}}
\def\GG{\mathbb{G}}
\def\MM{\mathbb{M}}
\def\PP{\mathbb{P}}
\def\QQ{\mathbb{Q}}
\def\SS{\mathbb{S}}
\def\ZZ{\mathbb{Z}}
\newcommand\cA{\mathcal{A}}
\newcommand\cC{\mathcal{C}}
\newcommand\cD{\mathcal{D}}
\newcommand\cE{\mathcal{E}}
\newcommand\cF{\mathcal{F}}
\newcommand\cG{\mathcal{G}}
\newcommand\cH{\mathcal{H}}
\newcommand\cK{\mathcal{K}}
\newcommand\cL{\mathcal{L}}
\newcommand\cM{\mathcal{M}}
\newcommand\cO{\mathcal{O}}
\newcommand\cP{\mathcal{P}}
\newcommand\cS{\mathcal{S}}
\newcommand\cZ{\mathcal{Z}}
\def\bH{\mathbf{H}}
\def\bR{\mathbf{R}}
\newcommand\fB{\mathfrak{B}}
\newcommand\fC{\mathfrak{C}}
\newcommand\fD{\mathfrak{D}}
\newcommand\fE{\mathfrak{E}}
\newcommand\fH{\mathfrak{H}}
\newcommand\fO{\mathfrak{O}}
\newcommand\fP{\mathfrak{P}}
\newcommand\frb{\mathfrak{b}}
\newcommand\frc{\mathfrak{c}}
\newcommand\frg{\mathfrak{g}}
\newcommand\fo{\mathfrak{o}}
\newcommand\frt{\mathfrak{t}}
\newcommand\dG{\widehat{G}}
\newcommand\dT{\widehat{T}}
\newcommand\dH{\widehat{H}}
\newcommand{\acyc}{\textup{acyc}}
\newcommand\AS{\textup{AS}}
\newcommand{\can}{\textup{can}}
\newcommand{\ch}{\textup{char}}
\newcommand{\Cone}{\textup{Cone}}
\newcommand\Fr{\textup{Fr}}
\newcommand\Gal{\textup{Gal}}
\newcommand{\Gr}{\textup{Gr}}
\newcommand\IC{\textup{IC}}
\newcommand\id{\textup{id}}
\renewcommand{\Im}{\textup{Im}}
\newcommand\inv{\textup{inv}}
\newcommand\Isom{\textup{Isom}}
\newcommand{\Nm}{\textup{Nm}}
\newcommand{\norm}{\textup{norm}}
\newcommand{\opp}{\textup{opp}}
\newcommand{\perf}{\textup{perf}}
\newcommand\Perv{\textup{Perv}}
\newcommand{\Pic}{\textup{Pic}}
\newcommand\pr{\textup{pr}}
\newcommand\pt{\textup{pt}}
\newcommand\pure{\textup{pure}}
\newcommand\Rep{\textup{Rep}}
\newcommand{\Res}{\textup{Res}}
\newcommand\res{\textup{res}}
\newcommand\Spec{\textup{Spec}\ }
\newcommand\Stab{\textup{Stab}}
\newcommand\Supp{\textup{Supp}}
\newcommand\Sym{\textup{Sym}}
\newcommand{\Tr}{\textup{Tr}}
\newcommand{\Weil}{\textup{Weil}}
\newcommand\Aut{\textup{Aut}}
\newcommand\Hom{\textup{Hom}}
\newcommand\HOM{\textup{HOM}}
\newcommand\End{\textup{End}}
\newcommand\uHom{\underline{\Hom}}
\newcommand{\RHom}{\bR\Hom}
\newcommand{\Homb}{\Hom^{\bullet}}
\newcommand\RuHom{\bR\uHom}
\newcommand{\Ext}{\textup{Ext}}
\newcommand\uC{\underline{C}}
\newcommand\uD{\underline{\Delta}}
\newcommand\unb{\underline{\nabla}}
\newcommand\uIC{\underline{\textup{IC}}}
\newcommand\uTh{\underline{\Theta}}
\newcommand\uMM{\underline{\MM}}
\newcommand\SL{\textup{SL}}
\newcommand\SU{\textup{SU}}
\newcommand\SO{\textup{SO}}
\newcommand\Sp{\textup{Sp}}
\newcommand\Spin{\textup{Spin}}
\newcommand{\Gm}{\GG_m}
\def\Ga{\GG_a}
\newcommand{\Ad}{\textup{Ad}}
\newcommand\xch{\mathbb{X}^*}
\newcommand\xcoch{\mathbb{X}_*}
\newcommand{\incl}{\hookrightarrow}
\newcommand{\isom}{\stackrel{\sim}{\to}}
\newcommand{\bij}{\leftrightarrow}
\newcommand{\leftexp}[2]{{\vphantom{#2}}^{#1}{#2}}
\newcommand{\pH}{\leftexp{p}{\textup{H}}}
\newcommand{\Qlbar}{\overline{\QQ}_\ell}
\newcommand{\twtimes}[1]{\stackrel{#1}{\times}}
\newcommand{\jiao}[1]{\langle{#1}\rangle}
\newcommand{\wt}[1]{\widetilde{#1}}
\newcommand\quash[1]{}
\newcommand\mat[4]{\left(\begin{array}{cc} #1 & #2 \\ #3 & #4 \end{array}\right)}  % 2-by-2 matrix
\newcommand\un{\underline}
\newcommand{\bu}{\bullet}
\newcommand{\ov}{\overline}
\newcommand{\bs}{\backslash}
\newcommand\sss{\subsubsection}
\newcommand\xr{\xrightarrow}
\newcommand\op{\oplus}
\newcommand\ot{\otimes}
\newcommand\one{\mathbf{1}}
\newcommand\vn{\varnothing}
\newcommand{\cohog}[2]{\textup{H}^{#1}({#2})}     % plain group
\newcommand{\cohoc}[2]{\textup{H}_{c}^{#1}({#2})}     % compact support
\newcommand\RG{\bR\G}
\newcommand\upH{\textup{H}}
\renewcommand\a\alpha
\renewcommand\b\beta
\newcommand\g\gamma
\newcommand\G\Gamma
\renewcommand\d\delta
\newcommand\D\Delta
\newcommand{\e}{\epsilon}
\newcommand{\io}{\iota}
\renewcommand{\k}{\kappa}
\renewcommand{\th}{\theta}
\newcommand{\Th}{\Theta}
\newcommand{\ph}{\varphi}
\renewcommand{\r}{\rho}
\newcommand{\s}{\sigma}
\renewcommand{\t}{\tau}
\newcommand{\y}{\eta}
\newcommand{\z}{\zeta}
\renewcommand{\l}{\lambda}
\renewcommand{\L}{\Lambda}
\newcommand{\om}{\omega}
\newcommand{\Om}{\Omega}
\newcommand\hs{\heartsuit}
\newcommand\na{\natural}
\newcommand\da{\dagger}
\newcommand\Ch{\textup{Ch}}
\newcommand\nb{\nabla}
\renewcommand\c{\circ}
\renewcommand\j{\jiao}
\newcommand\lmod{\textup{-mod}}
\newcommand\gmod{\textup{-gmod}}
\newcommand\SB{\textup{SB}}
\newcommand\SBim{\textup{SBim}}
\newcommand\RRM{R\ot R\textup{-gmod}}
\newcommand\RRF{(R\ot R,\Fr)\textup{-gmod}}
\newcommand\dw{\dot{w}}
\newcommand\ddw{\ddot{w}}
\newcommand\ds{\dot{s}}
\newcommand\dv{\dot{v}}
\newcommand\CS{\mathcal{CS}}
\newcommand\cont{\textup{cont}}
\newcommand\dow{\dot{\ov w}}
\newcommand\Ob{\textup{Ob}}
\newcommand\Fun{\textup{Fun}}
\newcommand\Cat{\textup{Cat}}
\newcommand\PSB{R^{M'}\textup{-Mod-}R^{M}}
\newcommand{\bc}{\mathbf{c}}
\newcommand\WL{W^{\c}_{\cL}}
\newcommand\bx{\mathbf{x}}
\title{Endoscopy for Hecke categories, character sheaves and representations}
\dedicatory{}
\author{George Lusztig}
\thanks{G.L. is partially supported by the NSF grant DMS-1566618.}
\address{Department of Mathematics, Massachusetts Institute of Technology, 77 Massachusetts Ave, Cambridge, MA 02139}
\email{gyuri@math.mit.edu}
\author{Zhiwei Yun}
\thanks{Z.Y. is partially supported by the Packard Foundation.}
\address{Department of Mathematics, Massachusetts Institute of Technology, 77 Massachusetts Ave, Cambridge, MA 02139}
\email{zyun@mit.edu}
\date{}
\subjclass[2010]{Primary 20G40; Secondary 14F05, 14F43, 20C08, 20C33.}
\keywords{}
\begin{document}

\begin{abstract}
This is a revised version of the paper with same title published in Forum Math. Pi 8 (2020), e12. An error is corrected and main statements are simplified.

For a split reductive group $G$ over a finite field,  we show that the neutral block of its mixed Hecke category with a fixed monodromy under the torus action is monoidally equivalent to the mixed Hecke category of the corresponding endoscopic group $H$ with trivial monodromy. We also extend this equivalence to all blocks. We give two applications.  One is a relationship between character sheaves on $G$ with a fixed semisimple parameter and unipotent character sheaves on the endoscopic group $H$, after passing to asymptotic versions. The other is a similar relationship between representations of $G(\FF_{q})$ with a fixed semisimple parameter and unipotent representations of $H(\FF_{q})$.
\end{abstract}

\maketitle

\tableofcontents

\section{Introduction}

\subsection{Hecke categories}
Let $G$ be a connected split reductive group over a finite field $\FF_{q}$. Let $B$ be a Borel subgroup of $G$. The (mixed) Hecke category of $G$ is the $B$-equivariant derived category of complexes of sheaves with $\Qlbar$-coefficients on the flag variety $G/B$ of $G$ whose cohomology sheaves are mixed in the sense of \cite[1.2.2]{Del-Weil2}. We denote this category by $D^{b}_{m}(B\bs G/B)$.  The Hecke category $D^{b}_{m}(B\bs G/B)$ carries a monoidal structure under convolution. It gives a categorification of the Hecke algebra $H_{q}(W)$ attached to the Weyl group of $G$. 

The Hecke category and its variants play a central role in geometric representation theory. On the one hand, when the base field is $\CC$, the category of perverse sheaves $\Perv(B_{\CC}\bs G_{\CC}/B_{\CC})$ is equivalent to a version of category $\cO$ for the Lie algebra $\frg$ of $G_{\CC}$, by the Beilinson--Bernstein localization theorem.  The Kazhdan--Lusztig conjecture \cite{KL} relates the stalks of simple perverse sheaves on $B_{\CC}\bs G_{\CC}/B_{\CC}$  to characters of simple modules in the  category $\cO$. On the other hand,  by the work of Ben-Zvi--Nadler \cite{BN} (characteristic zero), Bezrukavnikov--Finkelberg--Ostrik \cite{BFO} (characteristic zero) and Lusztig \cite{L-center-unip} (characteristic $p>0$), the categorical center of the Hecke category is equivalent to the category of unipotent character sheaves (the exact statement varies in different papers; in particular, \cite{BFO} and \cite{L-center-unip} contain statements about the asymptotic versions), which in turn is closely related to irreducible characters of finite groups of Lie type $G(\FF_{q})$.

\subsection{Monodromic Hecke categories}
In this paper we consider the monodromic version of the Hecke category. More precisely, let $B=UT$ where $U$ is the unipotent radical of $B$ and $T$ a maximal torus. For two rank one character sheaves $\cL,\cL'$ on the torus $T$ (which is the same as a rank one local system with finite monodromy together with a rigidification at the origin), we consider the equivariant derived category ${}_{\cL'}\cD_{\cL}$ of mixed $\Qlbar$-complexes on $U\bs G/U$ under the left and right translation action of $T$ with respect to the character sheaves $\cL'$ and $\cL$ respectively.  When $\cL$ is the trivial local system, ${}_{\cL}\cD_{\cL}$ is the usual Hecke category $D^{b}_{m}(B\bs G/B)$.

In \cite[Chapter 1]{L-book}, the first-named author proves that the stalks of the simple perverse sheaves in the monodromic Hecke category ${}_{\cL'}\cD_{\cL}$ are given by Kazhdan--Lusztig polynomials for a smaller Weyl group inside $W$ defined using $\cL$ or $\cL'$. Our main result is a categorical equivalence which implies this numerical statement. To state it, we need to introduce on the one hand blocks in ${}_{\cL'}\cD_{\cL}$ and on the other hand the endoscopic group attached to $\cL$.

For simplicity let us restrict to the case $\cL'=\cL$. The monoidal category ${}_{\cL}\cD_{\cL}$ can in general be decomposed into a direct sum of subcategories called {\em blocks}. Let ${}_{\cL}\cD_{\cL}^{\c}\subset {}_{\cL}\cD_{\cL}$ be the block containing the monoidal unit. The simple perverse sheaves in ${}_{\cL}\cD_{\cL}^{\c}$ up to Frobenius twists are parametrized by a normal subgroup $\WL$ of the stabilizer of $\cL$ under $W$. For details see Definition \ref{d:block}. When the center of $G$ is connected (e.g., $G$ is of adjoint type), we have ${}_{\cL}\cD_{\cL}^{\c}={}_{\cL}\cD_{\cL}$.

Let $\Phi_{\cL}$ be the set of roots $\a$ of $(G,T)$ such that the pullback of $\cL$ along its coroot $\a^{\vee}:\Gm\to T$ is a trivial local system on $\Gm$. Then $\Phi_{\cL}$ is a root system. Let $H$ be a connected reductive group over $k$ with $T$ as a maximal torus and $\Phi_{\cL}$ as its roots.  This is the {\em endoscopic group} attached to $\cL$. The Weyl group $W_{H}$ of $H$ is canonically identified with $\WL$. The choice of the Borel $B$ of $G$ gives a Borel $B_{H}$ of $H$. Let $\cD_{H}=D^{b}_{m}(B_{H}\bs H/B_{H})$ be the usual mixed Hecke category for $H$.  

\begin{theorem}[For more precise version see Theorem \ref{th:main}]\label{th:intro neutral} There is a canonical monoidal equivalence of triangulated categories
\begin{equation*}
\Psi^{\c}_{\cL}: \cD_{H}\isom {}_{\cL}\cD^{\c}_{\cL}
\end{equation*} 
sending simple perverse sheaves to simple perverse sheaves. 
\end{theorem}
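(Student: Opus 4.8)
Since both $\cD_H$ and ${}_\cL\cD^\c_\cL$ are ``Hecke categories for the Coxeter group $W_H=\WL$'' living over the \emph{same} torus $T$ and its equivariant cohomology ring $R=\upH^\bu_T(\pt)$, the plan is to identify each of them with the category of Soergel bimodules for the Coxeter datum $(R,\WL,\Phi_\cL)=(R,W_H,\Phi_H)$ and to read off $\Psi^\c_\cL$ from this common description. For $\cD_H$ this is the Soergel--Elias--Williamson theorem, in its mixed refinement realizing $D^b_m(B_H\bs H/B_H)$. The substance is therefore the analogous \emph{monodromic Soergel correspondence} on the $G$-side: that the (free-monodromic completion of the) neutral block is monoidally generated by objects $\cB_s$, one for each simple reflection $s=s_\a$ of $\WL$, whose images under the equivariant-cohomology (``Soergel'') functor are the Bott--Samelson bimodules $R\ot_{R^{s_\a}}R$ attached to the roots $\a\in\Phi_\cL$, after which ${}_\cL\cD^\c_\cL$ gets identified with $K^b$ of Soergel bimodules for $(R,W_H,\Phi_H)$ exactly as $\cD_H$ does, and one de-completes.

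\textbf{Constructing the functor.} Fix a simple reflection $s=s_\a$ of $\WL$, so $\a\in\Phi_\cL$ and $(\a^\vee)^*\cL$ is trivial. Inside $G$ sit the rank-one reductive subgroup $G_\a=\jiao{T,U_{\pm\a}}$ --- the Levi of a minimal parabolic of $H$ --- and its one-dimensional orbit $G_\a B/B\cong\PP^1$ in $G/B$. Because $(\a^\vee)^*\cL$ is trivial, the $\cL$-monodromic sheaf theory of $G_\a$ along its $\SL_2$-direction agrees, after untwisting, with the unipotent one; one defines $\cB_s$ as the $B$-averaging (equivariantization) of the monodromic $\IC$-sheaf of this $\PP^1$, and the fact that the resulting perverse sheaf on the large stratum $\ov{Bs_\a B}/B$ still has rank-one--type stalks is precisely \cite[Ch.~1]{L-book}. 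Carrying out the same picture in the rank-$\le 2$ reductive subgroups $G_{\a,\b}=\jiao{T,U_{\pm\a},U_{\pm\b}}$ with $\a,\b\in\Phi_\cL$, and using the projection formula and base change for these averaging functors, computes the convolutions $\cB_s\ast\cB_t$ and hence the quadratic and braid relations, matching those among the Bott--Samelson bimodules of $H$; feeding this into the Elias--Williamson presentation (and its monodromic analogue) produces the monoidal functor $\Psi^\c_\cL\colon\cD_H\to{}_\cL\cD^\c_\cL$ with $\Psi^\c_\cL(\cB^H_s)=\cB_s$.

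\textbf{Showing it is an equivalence.} Essential surjectivity is immediate, since every simple perverse sheaf $\IC_w$ of ${}_\cL\cD^\c_\cL$, $w\in\WL=W_H$, is a Frobenius-shifted direct summand of a convolution of the $\cB_s$, and these generate the block. For full faithfulness one compares $\Hom$'s between Bott--Samelson objects --- on the $H$-side by Soergel's $\Hom$-formula over $R$, on the $G$-side by its $\cL$-twisted version, equivalently by the stalk/costalk computation of \cite[Ch.~1]{L-book} --- both expressed through the single datum $(R,W_H,\Phi_H)$, so they coincide and $\Psi^\c_\cL$ is fully faithful, hence an equivalence. Finally, under $\Psi^\c_\cL$ the indecomposable Soergel bimodule $B_w$ corresponds to $\IC^H_w$ on one side and to $\IC_w$ on the other ($w\in W_H$), since on both sides the indecomposables are characterized recursively from the $\cB_s$ by the same self-duality and support conditions; hence $\Psi^\c_\cL$ sends simple perverse sheaves to simple perverse sheaves.

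\textbf{The main obstacle.} The hard part is the monodromic Soergel correspondence for \emph{pseudo-Levi} $\Phi_\cL$, i.e.\ those $\cL$ for which $\Phi_\cL$ is not $W$-conjugate to a standard Levi subsystem, so that the simple reflections of $\WL$ need not have length one in $W$ and the objects $\cB_s$ genuinely have large support. One must show that $\cB_s$ and its convolutions are nonetheless ``controlled by rank one'': that the $B$-averaging is cohomologically clean (this is where the triviality of $(\a^\vee)^*\cL$ is essential), that the twisted Endomorphismensatz holds so the Soergel functor is fully faithful on the relevant objects, and that everything is compatible with the monoidal structure, the Frobenius/weight structure, and the completion/de-completion, so that the outcome is the asserted \emph{canonical} monoidal equivalence and not merely an abstract equivalence of categories.
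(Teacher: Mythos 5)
Your overall strategy---identify both $\cD_{H}$ and ${}_{\cL}\cD^{\c}_{\cL}$ with (homotopy categories of) Soergel bimodules for the single Coxeter datum $(R,\WL,\Phi_{\cL})$ and compose---is exactly the paper's, and your description of what comes out (indecomposables matching $\IC(w)_{H}\leftrightarrow\IC(w)_{\cL}$) is right. But two of your intermediate steps have genuine gaps. First, the ``equivariant-cohomology (Soergel) functor'' does not exist on the monodromic side in the form you invoke: for nontrivial $\cL$ the constant sheaf is not an object of ${}_{\cL}\cD^{\c}_{\cL}$ and global sections of $(T\times T,\cL\boxtimes\cL^{-1})$-equivariant complexes are killed by the nontrivial monodromy along the torus directions. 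The central technical point of the paper is to \emph{replace} hypercohomology by the functor $\MM^{\c}=\Homb(\Th^{\c}_{\cL},-)$ co-represented by the maximal IC sheaf of the block, and then to prove from scratch that this functor is monoidal (Corollary \ref{c:MM mono}) and satisfies a monodromic Erweiterungssatz (Theorem \ref{th:Hom}); your ``twisted Endomorphismensatz'' names the needed statement but supplies neither the functor nor an argument, and it does not follow from the numerical stalk computation of \cite[Ch.~1]{L-book} (one also needs the $E_{1}$-degeneration and parity statements of Proposition \ref{p:parity purity} and Corollary \ref{c:filM}).

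Second, your construction and analysis of $\cB_{s}$ for a simple reflection $s=s_{\a}$ of $\WL$ breaks down precisely in the pseudo-Levi case you flag as ``the main obstacle.'' When $\a\in\Phi_{\cL}$ is not a simple root of $G$, the rank-one subgroup $G_{\a}=\jiao{T,U_{\pm\a}}$ is \emph{not} the Levi of any parabolic containing $B$, so there is no fibration $G/B\to G/P$ along which to average, and the projection-formula computation of $\cB_{s}\star\cB_{s}$ (and of the rank-two braid relations) has no geometric realization on the $G$-side; the minimal parabolic exists only in $H$. The paper never convolves directly with $\IC(s_{\a})_{\cL}$ for such $\a$: all convolution identities (Lemmas \ref{l:clean s equiv}, \ref{l:inert s}, Proposition \ref{p:parity purity}) are proved by induction on length in $W$, using only $W$-simple reflections, and the passage to the Coxeter generators of $\WL$ happens entirely on the bimodule side (Lemma \ref{l:BSL}). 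Relatedly, even granting the objects $\cB_{s}$ and their convolution isomorphisms, invoking the Elias--Williamson presentation requires constructing the generating \emph{morphisms} and verifying the full list of relations (including rank-three Zamolodchikov relations) in ${}_{\cL}\cD^{\c}_{\cL}$, which is a substantial verification your sketch does not address; the paper avoids it by going through $K^{b}(\SB_{m}(\WL))$ directly (Theorem \ref{th:SB}).
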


At the level of Grothendieck groups, Theorem \ref{th:intro neutral} implies an isomorphism between the Hecke algebra for $W_{H}$ and a monodromic version of the Hecke algebra defined using $W$ and $\cL$ (see \S\ref{ss:Hk}) preserving the canonical bases of the two Hecke algebras. Such a statement as well as its extension to all blocks is proved by the first-named author in \cite[1.6]{L-ConjCell} and implicitly in \cite[Lemma 34.7]{L-CSDGVII}. 

In $\cD_{H}$ there are simple perverse sheaves $\IC(w)_{H}$ (normalized by a Tate twist to be pure of weight zero) for $w\in W_{H}$. In contrast, in ${}_{\cL}\cD^{\c}_{\cL}$ we do not a priori have canonical simple perverse sheaves indexed by $w\in \WL$; it always involves a choice of a lifting $\dw$ of $w$ to $N_{G}(T)$. However, the above theorem gives canonical simple perverse sheaves  $\Psi^{\c}_{\cL}(\IC(w)_{H})\in {}_{\cL}\cD^{\c}_{\cL}$. These canonical objects, denoted $\IC(w)_{\cL}^{\da}$, are defined in Definition \ref{d:can IC} using the constructions in \S\ref{ss:rig neutral}.

As a consequence of our theorem, we prove that the stalks of $\IC(w)_{\cL}^{\da}$ are semisimple as Frobenius modules (Prop. \ref{p:stalk ss}), and similarly Frobenius semisimplicity holds for the convolution (Prop. \ref{p:conv ss}).

We also have a version of the theorem covering all blocks of ${}_{\cL'}\cD_{\cL}$ for $\cL$ and $\cL'$ in the same $W$-orbit, but it is more complicated to state. See Theorem \ref{th:all blocks}. It involves a groupoid $\fH$ whose components are torsors of endoscopic groups. Apriori, there is reason to worry that the convolution between minimal IC sheaves could involve a $3$-cocycle of the finite abelian group $\Om_{\cL}=W_{\cL}/\WL$. See \S\ref{ss:warning} for a discussion. We show that such a cocycle does not arise eventually by rigidifying minimal IC sheaves using a geometric Whittaker model \S\ref{ss:Wh}.

%We remark that the proof of this theorem does not rely on the numerical results on the Poincar\'e polynomial of stalks of simple perverse sheaves established in \cite{L-book}, and it in fact gives an independent proof of the numerical results (see Corollary \ref{c:KL poly}). 

\subsection{Remarks on the proof}
The initial difficulty for proving Theorem \ref{th:intro neutral} lies in the fact that there is no nontrivial homomorphism between $H$ and $G$ in general. For example, when $G=\Sp_{2n}$ and $\cL$ of order $2$ and fixed by the Weyl group of $G$, we have $H\cong\SO_{2n}$. 

The strategy to prove Theorem \ref{th:intro neutral} is to relate both categories to Soergel bimodules for the Coxeter group $\WL=W_{H}$. For $\cD_{H}$, this is by now well-known, following the insight of Soergel \cite{S}: taking global sections of simple perverse sheaves on $B_{H}\bs H/B_{H}$ preserves the graded Hom spaces (see \cite[Erweiterungssatz]{S}). In this paper we develop an analog of Soergel's theory for the monodromic Hecke categories ${}_{\cL'}\cD_{\cL}$. To do this, we replace the global sections functor in the non-monodromic case by the functor co-represented by the simple perverse sheaf with largest support in each block of ${}_{\cL'}\cD_{\cL}$. We show that the resulting functor carries a monoidal structure (Corollary \ref{c:MM mono}) and preserves graded Hom spaces between simple perverse sheaves (Theorem \ref{th:Hom}).  Using this,  we show that  ${}_{\cL}\cD_{\cL}^{\c}$ is equivalent to a certain derived category of Soergel bimodules (Theorem \ref{th:SB}). 

The results in \S\ref{s:mon Hk}-\S\ref{s:Soergel} hold with the same proofs when the mixed category ${}_{\cL'}\cD_{\cL}$ is replaced with the constructible equivariant derived category for the situation over an arbitrary algebraically closed field $k$.  We expect Theorem \ref{th:intro neutral} and Theorem \ref{th:all blocks} to hold as well over any  algebraically closed base field. It is likely that the argument in \cite[\S6.5]{BF} would allow one to deduce such results from our main results over  finite fields. %One could also deduce the characteristic $0$ version of the theorem by using Beilinson--Bernstein localization and Soergel's results on category $\cO$. See further discussion in \S\ref{ss:Soergel work}.    

\subsection{Application to representations} Let $G_{1}$ be a form of $G_{k}$ over $\FF_{q}$. Let us recall a rough statement of the classification of irreducible characters of $G_{1}(\FF_{q})$.  By \cite[10.1]{DL}, one can assign to each irreducible $G_{1}(\FF_{q})$-representation over $\Qlbar$ a semisimple geometric conjugacy class $s$ in $G^{*}_{1}$ defined over $\FF_{q}$ (here $G^{*}_{1}$ is reductive group over $\FF_{q}$ whose root system is dual to that of $G$), called the {\em semisimple parameter} of the irreducible representation.  This assignment requires a choice of an isomorphism $\Hom_{\cont}(\varprojlim_{n}\mu_{n}(\ov\FF_{q}),\Qlbar^{\times})\isom \ov\FF^{\times}_{q}$. We may alternatively think of a semisimple parameter of $G_{1}$ as a $W$-orbit $\fo$ of character sheaves on $T_{k}$ that are stable under the Frobenius map for $G_{1}$.

Let $I_{\fo}(G(\FF_{q}))$ be the set of irreducible representations of $G_{1}(\FF_{q})$ with semisimple parameter  $\fo$. Let $I_{u}(G_{1}(\FF_{q}))$ is the set of unipotent irreducible representations of $G_{1}(\FF_{q})$ (i.e., the case $\fo$ consists of the unit element in $\Ch(T_{k})$).  It is shown by the first-named author in \cite[Theorem 4.23]{L-book} that the parametrization of $I_{\fo}(G(\FF_{q}))$ is closely related to that of $I_{u}(H_{1}(\FF_{q}))$ where $H_{1}$ is the endoscopic group attached to $\fo$, under the assumption that the center of $G$ is connected. An extension of such a relationship to all reductive groups $G_{1}$ is announced in \cite[2.1]{L-ICM} and proved in \cite{L-RepDiscCent} and \cite{L-Spin}.  

As an application of Theorem \ref{th:intro neutral},  using results from \cite{L-unip-rep} and \cite{L-nonunip-rep} relating representations of $G_{1}(\FF_{q})$ to twisted categorical centers of the Hecke categories, we prove a relationship between representations of $G_{1}(\FF_{q})$ with a fixed semisimple parameter and unipotent representations of its endoscopic group, without appealing to the classifications mentioned above. We state it under the simplifying assumption that $G_{k}$ has connected center, and the general case is in Corollary \ref{c:rep}.

\begin{theorem} Assume $G_{k}$ has connected center. Let $G_{1}$ be a form $G_{k}$ over $\FF_{q}$ and let $\fo\subset \Ch(T_{k})$ be a $W$-orbit that is a semisimple parameter for $G_{1}$. Let $\cL\in\fo$ and let $H_{k}$ be the endoscopic group of $G_{k}$ attached to $\cL\in \fo$.  Then there is a form $H_{1}$ of the endoscopic group $H$ over $\FF_{q}$ and an equivalence of categories
\begin{equation*}
\Rep^{\bc}_{\fo}(G_{1}(\FF_{q}))\cong \Rep^{\bc}_{u}(H_{1}(\FF_{q}))
\end{equation*}
for each two-sided cell $\bc$ of $(G_{1},\fo)$ (which determines a two-sided cell, also denoted $\bc$,  for unipotent representations of $H_{1}$).
\end{theorem}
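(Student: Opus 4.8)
The plan is to reduce the representation-theoretic statement to the monoidal equivalence of Hecke categories (Theorem~\ref{th:intro neutral}) by passing through the twisted categorical centers, following the framework of \cite{L-unip-rep} and \cite{L-nonunip-rep}. The starting point is that, by \cite{L-nonunip-rep}, the category $\Rep^{\bc}_{\fo}(G_{1}(\FF_{q}))$ attached to a two-sided cell $\bc$ is controlled by a twisted (i.e., Frobenius-twisted by the form $G_{1}$) version of the categorical center of the monodromic Hecke category ${}_{\cL'}\cD_{\cL}$, summed over the pairs $(\cL,\cL')$ in the orbit $\fo$; more precisely, by the $\bc$-summand of that center, which under the asymptotic formalism becomes a sum of module categories over the asymptotic Hecke algebra $J_{\bc}$. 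Since $G_{k}$ has connected center, the whole monodromic category for $\cL\in\fo$ coincides with its neutral block ${}_{\cL}\cD^{\c}_{\cL}$ (as recorded in the introduction), so no block bookkeeping is needed and we may work with $\Psi^{\c}_{\cL}$ directly.

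First I would transport the Frobenius structure. The group $G_{1}$ corresponds to a Frobenius $\Fr$ on $G_{k}$ permuting the elements of the $W$-orbit $\fo$; since $\cL\in\fo$ is fixed by this permutation precisely when $\fo$ is a valid semisimple parameter stable under $\Fr$ (after possibly conjugating within its $W$-stabilizer), the Frobenius descends to an endomorphism of ${}_{\cL}\cD^{\c}_{\cL}$ as a monoidal category. The canonical monoidal equivalence $\Psi^{\c}_{\cL}:\cD_{H}\isom{}_{\cL}\cD^{\c}_{\cL}$ then transports this to a Frobenius structure on $\cD_{H}$, i.e.\ to a form $H_{1}$ of $H$ over $\FF_{q}$. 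Here one must check that $\Psi^{\c}_{\cL}$ is compatible with the natural Frobenius/Weil structures: this is where the ``canonical'' in Theorem~\ref{th:intro neutral}, together with the Frobenius-semisimplicity of stalks and convolutions (Prop.~\ref{p:stalk ss}, Prop.~\ref{p:conv ss}), does the work — semisimplicity guarantees that the equivalence, defined over $\kbar$, upgrades to an equivalence of the mixed/Weil categories intertwining the two Frobenii. The canonical objects $\IC(w)^{\da}_{\cL}$ are exactly $\Psi^{\c}_{\cL}(\IC(w)_{H})$, so the Weil structure on the source corresponds to the standard one on $\cD_{H}$.

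Next I would take twisted categorical centers of both sides of $\Psi^{\c}_{\cL}$. Since $\Psi^{\c}_{\cL}$ is a monoidal equivalence intertwining the Frobenii, it induces an equivalence of the corresponding $G_{1}$- (resp.\ $H_{1}$-) twisted asymptotic categorical centers, compatibly with the decomposition by two-sided cells: a two-sided cell $\bc$ of $(G_{1},\fo)$ corresponds under $W_{H}=\WL$ to a two-sided cell of $W_{H}$, and hence to a two-sided cell for unipotent representations of $H_{1}$ — the matching of cells being exactly the statement, at the level of Grothendieck groups and canonical bases, recorded after Theorem~\ref{th:intro neutral} (\cite[1.6]{L-ConjCell}, \cite[Lemma~34.7]{L-CSDGVII}). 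Finally, invoking \cite{L-unip-rep} on the $H_{1}$ side (unipotent case) and \cite{L-nonunip-rep} on the $G_{1}$ side to identify each $\bc$-summand of the twisted center with $\Rep^{\bc}_{u}(H_{1}(\FF_{q}))$ and $\Rep^{\bc}_{\fo}(G_{1}(\FF_{q}))$ respectively yields the desired equivalence $\Rep^{\bc}_{\fo}(G_{1}(\FF_{q}))\cong\Rep^{\bc}_{u}(H_{1}(\FF_{q}))$.

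The main obstacle will be the passage from the geometric equivalence over $\kbar$ to an equivalence of mixed categories compatible with the prescribed Frobenius structures, and then its compatibility with the center construction in a way that respects the cell decomposition. Concretely: (i) one must pin down the form $H_{1}$, i.e.\ show the Frobenius transported through $\Psi^{\c}_{\cL}$ is the standard Frobenius of a genuine $\FF_{q}$-form of $H$ and not merely an abstract autoequivalence — this uses that $\Psi^{\c}_{\cL}$ sends $\IC(w)_{H}$ to $\IC(w)^{\da}_{\cL}$ and that both families are Weil complexes with the ``right'' weights, so the induced automorphism of $K_0$ is the identity on canonical basis elements and the Frobenius is inner-by-a-cocycle, hence realized by a form; (ii) one must ensure the asymptotic-center equivalence is cell-graded, which reduces to the already-cited Grothendieck-group statement plus the fact that $J_{\bc}$ is an invariant of the monoidal category. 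Once these compatibilities are in place, the rest is a formal consequence of Theorem~\ref{th:intro neutral} together with \cite{L-unip-rep}, \cite{L-nonunip-rep}. The connected-center hypothesis is used only to avoid the $3$-cocycle twist of $\Om_{\cL}$ and the groupoid $\fH$ of Theorem~\ref{th:all blocks}; the general case (Corollary~\ref{c:rep}) proceeds identically after incorporating that extra data.
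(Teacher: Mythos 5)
Your overall route --- representations as twisted categorical centers of the asymptotic Hecke categories via \cite{L-unip-rep}, \cite{L-nonunip-rep}, matched across the equivalence of Theorem \ref{th:intro neutral} --- is exactly the paper's (Theorem \ref{th:rep} and Corollary \ref{c:rep}, specialized to connected center). Two points in your description need correction, though neither destroys the strategy. First, the Frobenius $\e$ of $G_{1}$ does \emph{not} descend to an endomorphism of ${}_{\cL}\cD^{\c}_{\cL}$: it sends it to ${}_{\e\cL}\cD^{\c}_{\e\cL}$, and $\e\cL\ne\cL$ in general (for split $G_{1}$ one has $\e\cL=\cL^{1/q}$). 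The autoequivalence one actually takes the twisted center with respect to is $\cF\mapsto\xi_{\b}\star\e_{*}\cF\star\xi_{\b}^{-1}$ for a minimal IC sheaf $\xi_{\b}$ in the (unique, since $\Om_{\cL}$ is trivial here) block $\b\in{}_{\cL}\un W_{\e\cL}$; correspondingly the form $H_{1}$ is not ``the Frobenius of $G_{1}$ transported abstractly through $\Psi^{\c}_{\cL}$'' but the explicitly constructed twisted form $H^{\s_{\b\e}}$, where $\s_{\b\e}$ induces $w^{\b}\c\e$ on the root system and is rigidified by the relative pinning of \S\ref{ss:rel pin} and \S\ref{ss:cs disc}. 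This twist by $w^{\b}$ is essential: already for $H=T$ it produces non-split tori. Second, no passage to mixed/Weil categories and no appeal to Frobenius semisimplicity is needed here: the twisted-center formalism of \cite{L-nonunip-rep} is applied to the non-mixed categories $\un\cS$, and the only compatibility required of the equivalence is its equivariance under the twisting autoequivalences, which is Corollary \ref{c:bPsi}. The step you leave implicit --- reducing the center of the full orbit category $\un\cS^{\frc}_{\fo}$ (summed over all pairs in $\fo$) to that of the single ${}_{\cL}\un\cS^{\frc}_{\cL}$ --- is Lemma \ref{l:rest L} and its twisted analogue, and is straightforward.
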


\subsection{Application to character sheaves} Character sheaves on $G_{k}$ ($k=\ov\FF_{q}$) are certain simple perverse sheaves equivariant under the conjugation action of $G_{k}$.  Each character sheaf has a semisimple parameter which is a $W$-orbit $\fo\subset \Ch(T_{k})$. Unipotent character sheaves on $G_{k}$ are those with trivial semisimple parameter. From the classification of character sheaves in \cite[23.1]{L-CS5},  there is a close relationship between character sheaves on $G_{k}$ with semisimple parameter $\fo$ and unipotent character sheaves on $H_{k}$, the endoscopic group attached to some $\cL\in\fo$.

%Just as the usual Hecke category for $G$ is related to unipotent character sheaves on $G_{k}$, the categories $\{{}_{\cL'}\cD_{\cL}\}_{\cL,\cL'\in\fo}$ are related to character sheaves on $G_{k}$ with semisimple parameter $\fo$, at least after passing to asymptotic versions, as proved in \cite{L-center-nonunip}.  

As another application of Theorem \ref{th:intro neutral},  using results from \cite{L-center-nonunip}, we derive a relationship between the asymptotic versions of character sheaves on $G_{k}$ with a fixed semisimple parameter and unipotent character sheaves on its endoscopic group. Again we state it under the simplifying assumption that $G_{k}$ has connected center, and the general case is Theorem \ref{th:ch}.

\begin{theorem}\label{th:intro CS} Assume $G_{k}$ has connected center. Let $\fo\subset \Ch(T_{k})$ be a $W$-orbit. Let  $\cL\in\fo$ and let $H_{k}$ be the endoscopic group of $G_{k}$ attached to $\cL$. Then there is a canonical equivalence of braided monoidal categories
\begin{equation*}
\un\CS^{\bc}_{\fo}(G_{k})\cong \un\CS^{\bc}_{u}(H_{k})
\end{equation*}
for each two-sided cell $\bc$ of $W_{H}=\WL$. 
\end{theorem}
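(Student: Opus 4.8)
The plan is to realize both sides as Drinfeld centers of asymptotic Hecke categories and to transport the equivalence of Theorem~\ref{th:intro neutral} through the center construction; the braiding is then automatic.

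First I would invoke the two known descriptions of character sheaves as categorical centers. On the endoscopic side, by \cite{BFO} and \cite{L-center-unip} the asymptotic category of unipotent character sheaves on $H_{k}$ is the Drinfeld center $Z(\cJ_{H})$ of the asymptotic Hecke category $\cJ_{H}$ attached to $W_{H}$, and the decomposition of $\cJ_{H}$ into two-sided cell blocks $\cJ_{H,\bc}$ induces $\un\CS^{\bc}_{u}(H_{k})\cong Z(\cJ_{H,\bc})$ as braided monoidal categories, for each two-sided cell $\bc$ of $W_{H}$. On the group side, the results of \cite{L-center-nonunip} identify the asymptotic category of character sheaves on $G_{k}$ with semisimple parameter $\fo$ with the Drinfeld center of the asymptotic version $\cJ_{\fo}$ of the monodromic Hecke category $\bigoplus_{\cL,\cL'\in\fo}{}_{\cL'}\cD_{\cL}$ (when $G_{k}$ has connected center the neutral-block subscript $\c$ may be dropped), again compatibly with cell decompositions, so that $\un\CS^{\bc}_{\fo}(G_{k})\cong Z(\cJ_{\fo,\bc})$ for each two-sided cell $\bc$ of $(G_{k},\fo)$.

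Next I would pass Theorem~\ref{th:intro neutral} --- or rather the all-blocks version Theorem~\ref{th:all blocks} applied to the orbit $\fo$ --- to the asymptotic level. Since $\Psi$ is monoidal and sends simple perverse sheaves to simple perverse sheaves (and, cf.\ Proposition~\ref{p:conv ss}, is compatible with weights and the Frobenius-semisimplicity of convolutions), it respects the perverse t-structures, the $a$-function and the two-sided cell filtrations; the resulting bijection between two-sided cells of $(G_{k},\fo)$ and two-sided cells of $W_{H}=\WL$ is the Grothendieck-group consequence of Theorem~\ref{th:intro neutral} recorded after its statement (see also \cite{L-book}). Hence $\Psi$ descends to a monoidal equivalence $\cJ_{H}\isom\cJ_{\fo}$ taking $\cJ_{H,\bc}$ to $\cJ_{\fo,\bc}$. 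Because the Drinfeld center is a $2$-functor, any monoidal equivalence induces a braided monoidal equivalence of centers; applying this cell by cell gives $Z(\cJ_{H,\bc})\cong Z(\cJ_{\fo,\bc})$, and composing with the two identifications above produces the asserted braided monoidal equivalence $\un\CS^{\bc}_{u}(H_{k})\cong\un\CS^{\bc}_{\fo}(G_{k})$.

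The main obstacle is the cocycle twist. Theorem~\ref{th:all blocks} is monoidal only up to a $3$-cocycle of $\Om_{\cL}=W_{\cL}/\WL$, and \cite{L-center-nonunip} likewise presents $\un\CS_{\fo}(G_{k})$ as a center modified by a cocycle of the same group; the crux is to check that these two twists coincide, so that they cancel and leave an honest equivalence of braided monoidal categories. The hypothesis that $G_{k}$ has connected center is what should force this $3$-cocycle to be cohomologically trivial --- equivalently, force the groupoid $\fH$ of Theorem~\ref{th:all blocks} to split --- which is why the present statement is clean; the general case, carrying the residual twist, is Theorem~\ref{th:ch}. A minor point, routine given Theorems~\ref{th:Hom} and~\ref{th:SB}, is that the asymptotic construction genuinely commutes with $\Psi$: this holds because $\Psi$ preserves the stalks and costalks of simple objects up to the predicted Tate twists, hence preserves all the numerical data entering the definition of $\cJ$.
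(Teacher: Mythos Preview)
Your overall strategy matches the paper's: identify both sides as Drinfeld centers of the respective truncated-convolution categories (via Theorems \ref{th:center unip} and \ref{th:center nonunip}) and then transport the monodromic--endoscopic equivalence through the center. But the middle step, as you state it, has a real gap.

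You assert that $\Psi$ descends to a monoidal equivalence $\cJ_{H}\isom\cJ_{\fo}$. This is not true. The category $\un\cS^{[\bc]}_{\fo}$ is indexed by \emph{pairs} $(\cL',\cL)\in\fo\times\fo$, while $\un\cS^{\bc}_{H}$ corresponds only to the single diagonal piece ${}_{\cL}\un\cS^{\c,\bc}_{\cL}$ for one fixed $\cL$. Even with connected center, the $W$-orbit $\fo$ typically has $|W|/|W_{\cL}|>1$ elements, so the two monoidal categories are not equivalent---they differ in size. Theorem \ref{th:all blocks} identifies $\fD$ with a $2$-category $\fE$ over the groupoid $\Xi$ whose endomorphism category at each object is (when $\Om_{\cL}=1$) a copy of $\cD_{H}$, but the off-diagonal morphism categories are the bitorsor pieces ${}_{\cL'}\fH_{\cL}$ and do not collapse. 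What you are missing is the Morita-type step the paper supplies as Lemma \ref{l:rest L}: the projection from $\un\cS^{[\bc]}_{\fo}$ to ${}_{\cL}\un\cS^{[\bc]}_{\cL}$ already induces an equivalence \emph{on centers}, $\cZ(\un\cS^{[\bc]}_{\fo})\isom\cZ({}_{\cL}\un\cS^{[\bc]}_{\cL})$, because any two objects of $\Xi$ are connected by invertible minimal IC sheaves. With that in hand, connected center gives $\Om_{\cL}=1$, hence ${}_{\cL}\un\cS^{[\bc]}_{\cL}={}_{\cL}\un\cS^{\c,\bc}_{\cL}$, and now Theorem \ref{th:main} alone (not the full Theorem \ref{th:all blocks}) identifies this monoidally with $\un\cS^{\bc}_{H}$, finishing the argument.

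A smaller imprecision: connected center does not merely force the $3$-cocycle of $\Om_{\cL}$ to be cohomologically trivial---it forces $\Om_{\cL}$ itself to be trivial (see \S\ref{ss:endo}), which simultaneously collapses the block decomposition and the twist. That is why the neutral-block equivalence suffices here and no cocycle matching is needed.
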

For definitions of $\un\CS^{\bc}_{u}(H_{k})$ and $\un\CS^{\bc}_{\fo}(G_{k})$, see \S\ref{ss:unip chsh} and \S\ref{ss:nonunip chsh}.

In Theorem \ref{th:rep} we also prove a generalization of the above equivalence for character sheaves on disconnected groups.

%In the future we expect to apply Theorem \ref{th:intro neutral} and the results of \cite{L-unip-rep}, \cite{L-nonunip-rep} to get a relationship between irreducible representations of $G(\FF_{q})$ with semisimple parameter $s$ and irreducible unipotent representations of the corresponding endoscopic groups.

\subsection{Connection to Soergel's work}\label{ss:Soergel work} In this subsection the base field is $\CC$, and we use the same notations $G,B$ and $T$ but now they are understood to be algebraic groups over $\CC$. Let $\frg, \frb$ and $\frt$ be the Lie algebras of $G,B$ and $T$. In \cite[Theorem 11]{S}, Soergel proves the following result: For a dominant but not necessarily integral character $\l\in\frt^{*}$, let $\cO^{\c}_{\l}$ be the block of the category $\cO_{\l}$ (category $\cO$ of $\frg$ with infinitesimal character corresponding to $\l$ under the Harish-Chandra isomorphism) containing the simple module $L(\l)$ with highest weight $\l$.  Then up to equivalence, $\cO_{\l}^{\c}$ only depends on the Coxeter group $(W(\l), S(\l))$, which is the Weyl group attached to the based root system $\Phi_{\l}=\{\a\in \Phi(\frg,\frt)|\jiao{\a^{\vee}, \l}\in\ZZ\}$ with positive roots $\Phi_{\l}^{+}=\Phi_{\l}\cap\Phi^{+}(\frg,\frt)$.  

From $\l$ we get a character $\xcoch(T)\subset \frt\xr{\j{-,\l}}\CC\xr{\exp(2\pi i(-))}\CC^{\times}$, giving a rank one character sheaf $\cL_{\l}$ on $T(\CC)$. By the localization theorem of Beilinson--Bernstein and the Riemann--Hilbert correspondence,  $\cO^{\c}_{\l}$ can be identified with a block ${}_{\l}P^{\c}$ in the category $\Perv_{(T,\cL_{\l})}(U\bs G/U)$ (with the $T$-action on the left). Soergel's result can then be formulated as an equivalence of abelian categories ${}_{\l}P^{\c}\cong P_{H}$, where $H$ is the endoscopic group attached to $\cL_{\l}$ and 
$P_{H}=\Perv(B_{H}\bs H/U_{H})$. We expect the method used in Soergel's paper can be extended to prove the Koszul dual version of the characteristic zero analogue of Theorem \ref{th:intro neutral}, with  equivariance replaced by weak equivariance (or monodromicity) as in \cite{BY} .

\subsection{Notation and conventions}
\sss{Frobenius}\label{sss:Fr}
Throughout the article let $k=\ov\FF_{q}$ be an algebraic closure of $\FF_{q}$.   Let $\ell$ be a prime different from $p=\ch(k)$. 

Let $\Fr\in \Gal(\ov \FF_{q}/\FF_{q})$ be the geometric Frobenius. A $\Fr$-module $M$ is a $\Qlbar$-vector space with a $\Qlbar$-linear automorphism $\Fr_{M}:M\to M$ such that each $v\in M$ is contained in a finite-dimensional subspace stable under $\Fr_{M}$.   The $\Fr$-module $M$ is called {\em pure of weight $n$} if for any eigenvalue $\l$ of $\Fr_{M}$, $\l$ is algebraic over $\QQ$ with all conjugates in $\CC$ of absolute value $q^{n/2}$.

Fix a nontrivial additive character $\psi_{0}: \FF_{q}\to \Qlbar^{\times}$.

Fix a square root $q^{1/2}$ of $q$ in $\Qlbar$. We denote by $\Qlbar(1/2)$ the one-dimensional $\Fr$-module $M=\Qlbar$ equipped with the automorphism $\Fr_{M}$ by scalar multiplication by $q^{-1/2}$.

\sss{Geometry}  We denote
\begin{equation*}
\pt:=\Spec \FF_{q}.
\end{equation*}
For a scheme $X$ over $\FF_{q}$, let $D^{b}_{m}(X)$ be the derived category of \'etale $\Qlbar$-complexes on $X$ whose cohomology sheaves are mixed, see \cite[5.1.5]{BBD}. %For $X=\pt$, $D_{m}^{b}(\pt)$ can be identified with bounded derived category of finite-dimensional $\Fr$-modules whose cohomologies are direct sums of pure $\Fr$-modules of various integer weights. 
If $X$ is equipped  with an action of an algebraic group $H$ over $\FF_{q}$, one can follow the method of \cite{BL} to define the $H$-equivariant derived category of mixed $\Qlbar$-complexes denoted  $D^{b}_{H,m}(X)$ or $D^{b}_{m}(H\bs X)$(i.e., working with Cartesian complexes on the standard simplicial scheme resolving the stack $H\bs X$). 

Similarly we have the (constructible, $\Qlbar$-coefficient) equivariant derived category $D^{b}(H_{k}\bs X_{k})$. We have a pullback functor $\om: D^{b}_{m}(H\bs X)\to D^{b}(H_{k}\bs X_{k})$. For $\cF,\cF'\in D^{b}_{m}(H\bs X)$, we define
\begin{equation}\label{Homconvention}
\Hom(\cF,\cF'):=\Hom_{D^{b}(H_{k}\bs X_{k})}(\om\cF,\om\cF').
\end{equation}
In other words, the Hom space between two mixed complexes on $H\bs X$ is taken to be the Hom space of their pullback to $H_{k}\bs X_{k}$. Similarly, $\Ext^{i}(\cF,\cF')$ means $\Hom(\cF,\cF'[i])$ calculated again in $D^{b}(H_{k}\bs X_{k})$, and $\bR\Hom(\cF,\cF')$ means $\bR\Hom(\om\cF,\om\cF')$, which is an object in $D^{b}_{m}(\pt)$. The actual morphisms inside the category $D^{b}_{m}(H\bs X)$ will be denoted
\begin{equation*}
\hom(\cF,\cF'):=\Hom_{D^{b}_{m}(H\bs X)}(\cF,\cF').
\end{equation*}

A {\em semisimple complex} in $D^{b}(H_{k}\bs X_{k})$ means an object isomorphic to  finite direct sum of shifted simple perverse sheaves. A {\em semisimple complex} in $D^{b}_{m}(H\bs X)$ are those whose image in $D^{b}(H_{k}\bs X_{k})$ are semisimple.

For any mixed complex $\cF\in D^{b}_{m}(H\bs X)$ and $n\in\ZZ$, we denote by $\cF(n/2):=\cF\ot\pi^{*}\Qlbar(1/2)^{\ot n}$, where $\pi:H\bs X\to \pt$ is the natural projection. Then $\cF(1)$ is the usual Tate twist. Also we define
\begin{equation}\label{jn}
\cF\j{n}:=\cF[n](n/2), \quad n\in\ZZ.
\end{equation}

For an algebraic group $H$ over $\FF_{q}$ acting on a scheme $X$ on the right and on another scheme $Y$ on the left, we denote by $X\twtimes{H}Y$ the quotient stack  $(X\times Y)/H$ where $h\in H$ acts by $h\cdot(x,y)=(xh^{-1},hy)$. 

\sss{Group theory} Let $G$ be a connected split reductive group over $\FF_{q}$. We fix a pinning $(T,B,\{\bx_{-\a}\})$ of $G$. More precisely, fix a Borel subgroup $B$ of $G$ with unipotent radical $U$ and a  maximal torus $T\subset B$. Let $\Phi(G,T)$ (resp. $\Phi^{\vee}(G,T)$) be the set of roots (resp. coroots) of $G$ with respect to $T$. The choice of $B$ gives the set of positive roots $\Phi^{+}:=\Phi^{+}(G,B,T)$, negative roots $\Phi^{-}:=\Phi^{-}(G,B,T)$, and  a set of simple roots. For each simple root $\a$ we fix an isomorphism $\bx_{-\a}: U_{-\a}\cong \Ga$ where $U_{-\a}$ is the root subgroup corresponding to $-\a$.

Let $W=N_{G}(T)/T$ be the Weyl group of $G$, with simple reflections coming from simple roots. For a simple reflection $s\in W$, let $\a_{s}$ and $\a^{\vee}_{s}$ be the corresponding simple root and simple coroot.

We use $e$ to denote the identity element of $W$. We use $\dot e$ to denote the identity element of $G$. 

For each $w\in W$, we use $\dw$ to denote a lifting of $w$ in $N_{G}(T)(\FF_{q})$. For $w=e$ we always lift it to $\dot e$, the identity element of $G$. The equivalence in Theorem \ref{th:intro neutral} will not depend on the choice of such liftings, while its extension Theorem \ref{th:all blocks} will depend on choices of liftings on a subset of $W$.

\sss{Other}\label{sss:not other}
For a triangulated category $\cD$ and $\{X_{\a}\}_{\a\in I}$ a collection of objects in $\cD$, we denote $\j{X_{\a}; \a\in I}$ the full subcategory of $\cD$ whose objects are successive extensions of objects that are isomorphic to $X_{\a}$, $\a\in I$. 

Let $S$ be a set with a left action of $H_{1}$ and a right action of $H_{2}$. We say $S$ is a {\em $(H_{1}, H_{2})$-bitorsor} if $S$ is a torsor under the $H_{1}$-action and a torsor under the $H_{2}$-action. Similarly we define the notion of bitorsors for schemes with left and right actions of group schemes.

For a category $\cC$, let $\Ob(\cC)$ be the collection of objects in $\cC$, and $|\cC|$ be the set of isomorphism classes of objects in $\cC$.

%When $X$ is defined over an  arbitrary field $k$ of characteristic not $\ell$, half Tate twists may not exist. We formally introduce half Tate twists as follows. Let $\cD^{b}(X):=D^{b}(X)\oplus D^{b}(X)$, and for $(\cF,\cG)\in \cD^{b}(X)$ we define its half Tate twist by
%\begin{equation*}
%(\cF,\cG)(1/2):=(\cG,\cF(1)).
%\end{equation*}
%When $k=\FF_{q}$ we have a functor $\cD^{b}(X)\to D^{b}(X)$ sending $(\cF,\cG)$ to $\cF\oplus \cG(1/2)$ and this is equivariant under the half Tate twist. With this definition of half Tate twist, we use the notation $\cF\j{n}$ as defined in \eqref{jn} for objects in $\cD^{b}(X)$.

%%% thanks %%%

\subsection*{Acknowledgement} 
We would like to thank R.Bezrukavnikov, P.Etingof, D.Nadler and R.Rouquier for helpful discussions. We thank an anonymous referee for careful reading and for pointing out several gaps.

%%%%%% ok %%%%%%%%
\section{Monodromic Hecke categories}\label{s:mon Hk}
In \S\ref{s:mon Hk}-\S\ref{s:all blocks}, we work over a fixed finite field $\FF_{q}$. In this section we introduce the main players of the paper: the monodromic Hecke categories.

%%%%% ok %%%%%%%%
\subsection{Rank one character sheaves}\label{ss:cs1}
For an algebraic group $H$ over $\FF_{q}$, there is the notion of {\em rank one character sheaves} on $H$. These are rank one $\Qlbar$-local systems $\cL$ on $H$ equipped with an isomorphism $m^{*}\cL\cong \cL\boxtimes\cL$ over $H\times H$ (where $m:H\times H\to H$ is the multiplication map) and a trivialization of the stalk $\cL_{e}$ ($e\in H$ is the identity element) satisfying the associativity and unital axioms.  We refer to \cite[Appendix A]{Yun-CDM} for a systematic treatment of rank one character sheaves. Let $\Ch(H)$ denote the group of isomorphism classes of rank one character sheaves. When $H$ is connected, the automorphisms of a rank one character sheaf reduce to identity. 

Recall $k=\ov\FF_{q}$. We define
\begin{equation*}
\Ch(H_{k})=\varinjlim_{n}\Ch(H_{\FF_{q^{n}}})
\end{equation*}
with transition maps given by the pullback.

Let $\nu:\wt H\to H$ be a finite \'etale central isogeny (where $\wt H$ is a connected algebraic group) with discrete kernel $\ker(\nu)$ (discrete as a group scheme over $\FF_{q}$, i.e., a finite abelian group). Let $\chi: \ker(\nu)\to \Qlbar^{\times}$ be a homomorphism. Then $\cL:=\nu_{*}\Qlbar[\chi]$, the sub-local system of $\nu_{*}\Qlbar$ on which $\ker(\nu)$ acts via $\chi$, is a rank one character sheaf on $H$ of finite order. It is shown in \cite[A.2]{Yun-CDM} that any element in $\Ch(H)$ arises in this way. 
 
%%%%%% ok %%%%%%
\subsection{The case of a torus}When $H=T$ is a torus,  the Lang map $\l_{T}: T\to T$ given by $t\mapsto \Fr_{T/\FF_{q}}(t)t^{-1}$ is a finite \'etale isogeny with kernel $T(\FF_{q})$. The above construction gives a homomorphism 
\begin{equation}\label{cs1T}
\Hom(T(\FF_{q}), \Qlbar^{\times})\to \Ch(T).
\end{equation} 
This is in fact a bijection with inverse given by taking the Frobenius trace function of character sheaves, see \cite[A.3.3]{Yun-CDM}. 

%We conclude that $\Ch(T)$ can be identified with the set of continuous finite order characters $\pi_{1}^{\tame}(T,e)\cong \xcoch(T)\ot_{\ZZ}\wh\ZZ'(1)\to \Qlbar^{\times}$, where $\wh\ZZ'(1):=\varprojlim_{\ch(k)\nmid n}\mu_{n}(k)\cong \pi_{1}^{\tame}(\Gm,e)$. 

%%%%%%% ok %%%%%%%
\begin{lemma}\label{l:ext L} Let $H$ be a connected reductive group over $\FF_{q}$ with maximal torus $T$. Let $\cL\in\Ch(T)$. Then $\cL$ extends to a rank one character sheaf $\wt \cL\in \Ch(H)$ (necessarily unique) if and only if for every coroot $\a^{\vee}:\GG_{m,k}\to T_{k}$ of $H_{k}$,  the pullback $(\a^{\vee})^{*}\cL$ is the trivial rank one character sheaf on $\GG_{m,k}$.
\end{lemma}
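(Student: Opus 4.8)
The plan is to reduce the statement to the classification of rank one character sheaves in terms of characters of $\pi_1$, and then to an elementary computation with cocharacter lattices. First I would recall from \S\ref{ss:cs1} (and \cite[A.2, A.3]{Yun-CDM}) that any rank one character sheaf on a connected group is of the form $\nu_*\Qlbar[\chi]$ for a central isogeny $\nu$; for a torus $T$ the group $\Ch(T)$ is identified with $\Hom(T(\FF_q),\Qlbar^\times)$ via \eqref{cs1T}, and after choosing a square root of $q$ this is in turn (compatibly with Frobenius) a quotient of $\Hom(\xcoch(T), \Qlbar^\times)$, i.e.\ a character sheaf is pinned down by a homomorphism $\xcoch(T)\to\Qlbar^\times$ of finite order, well-defined up to the image of Frobenius. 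The key geometric input is that pulling back a character sheaf along a homomorphism of tori corresponds to the dual map on these $\Hom$-groups; in particular $(\a^\vee)^*\cL$ on $\GG_{m}$ is trivial if and only if the homomorphism $\xcoch(T)\to\Qlbar^\times$ attached to $\cL$ kills $\a^\vee\in\xcoch(T)$.

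Next I would treat necessity, which is the easy direction: if $\wt\cL\in\Ch(H)$ restricts to $\cL$, then functoriality of $\Ch(-)$ under group homomorphisms, applied to $\a^\vee:\GG_{m,k}\to T_k\hookrightarrow H_k$, shows $(\a^\vee)^*\cL=(\a^\vee)^*\wt\cL$. But $\a^\vee$ factors through the derived group (or more precisely, $\a^\vee(\GG_m)$ lies in a copy of $\SL_2$ or $\PGL_2$ generated by the root subgroups $U_{\pm\a}$), and any rank one character sheaf on a connected \emph{semisimple} group — indeed any connected group with no nontrivial characters and finite fundamental group controlled by the center — restricts trivially to such a subgroup after pulling back along $\GG_m\to\SL_2$, since $\Hom(\GG_m,\Gm)$ maps to zero under the relevant maps; more directly, $\Ch$ of a torus in $\SL_2$ pulled back from $\Ch(\SL_2)$ is trivial because $\Ch(\SL_2)=0$. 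Hence $(\a^\vee)^*\cL$ is trivial for every coroot.

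For sufficiency I would argue as follows. Let $\phi:\xcoch(T)\to\Qlbar^\times$ be a finite-order homomorphism representing $\cL$, and suppose $\phi(\a^\vee)=1$ for all coroots $\a^\vee$ of $H$. Then $\phi$ kills the coroot lattice $\ZZ\Phi^\vee(H,T)$, hence factors through the quotient $\xcoch(T)/\ZZ\Phi^\vee(H,T)$, which is precisely $\xcoch(T/Z(H_{\der}))$-type data: more cleanly, it factors through $\pi_1(H):=\xcoch(T)/\ZZ\Phi^\vee(H,T)$ in Borovoi's sense, which is exactly the character lattice of the "coradical" and is the thing that classifies $\Ch$ of $H$ up to the torsion coming from $\xch(H)$. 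Passing back through the identification $\Ch(H)\cong \Hom(\xch_*(?),\Qlbar^\times)$ (equivalently, $\Ch(H)=\Hom(\pi_1(H),\Qlbar^\times)$ modulo Frobenius, using that $\Ch$ of a connected group is $\Hom$ of its algebraic fundamental group by \cite[A.2]{Yun-CDM}), I get a rank one character sheaf $\wt\cL$ on $H$ whose restriction along $T\hookrightarrow H$ corresponds to the composite $\xcoch(T)\to\pi_1(H)\to\Qlbar^\times$, which is $\phi$, i.e.\ $\wt\cL|_T\cong\cL$. Uniqueness of the extension follows because $T$ is dense in $H$ and two rank one character sheaves on a connected group agreeing on $T$ differ by a character sheaf killed by $T\hookrightarrow H$, which must be trivial since $\xch(H)\to\xch(T)$ is injective; equivalently, by the "no automorphisms / injectivity" remark at the end of \S\ref{ss:cs1}.

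The main obstacle, and the step deserving the most care, is the sufficiency direction: one must match up, compatibly with the $\FF_q$-structure (Frobenius action) and not just geometrically, the abstract identification $\Ch(H)\cong\Hom(\pi_1(H)_{\Fr},\Qlbar^\times)$ with the analogous statement for $T$, so that "$\phi$ kills all coroots of $H$" is literally the condition for $\phi$ to be pulled back from $\pi_1(H)$. Concretely I would handle this by choosing a $z$-extension or an isogeny $\nu:\wt H\to H$ with $\wt H$ having simply connected derived group, realize $\cL$ as $\nu'_*\Qlbar[\chi]$ for a central isogeny $\nu'$ of $T$, and check directly that the hypothesis on coroots lets one extend the central isogeny $\nu'$ of $T$ to a central isogeny of $H$ (this is where triviality on coroots of $H_k$, as opposed to coroots of $G_k$, is used, and where connectivity of $\wt H$ in the statement enters), and then set $\wt\cL=\nu_*\Qlbar[\chi']$. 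The rest is bookkeeping with \cite[Appendix A]{Yun-CDM}.
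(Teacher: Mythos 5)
Your proposal is correct and follows essentially the same route as the paper: necessity by pulling back along $\ph_{\a}:\SL_{2}\to H$ and using $\Ch(\SL_{2})=0$, and sufficiency by observing that the character $\xcoch(T)\to\Qlbar^{\times}$ attached to $\cL$ kills the coroot lattice of $H$, so the central isogeny of $T$ realizing $\cL$ extends to a central isogeny $\nu:\wt H\to H$ (the paper builds $\wt H$ directly from the sublattice $\ker(\wt\chi)\supset\ZZ\Phi^{\vee}(H,T)$ and factors the Lang map through $\wt T$). The Frobenius-compatibility issue you flag is handled in the paper by first reducing to split $T$ via the injectivity of $\Ch(H)\to\Ch(T)$ and Galois descent $\Ch(H)=\Ch(H_{k'})^{\s}$; otherwise your $\pi_{1}(H)$ packaging is just a conceptual gloss on the same construction.
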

\begin{proof}
First suppose $\cL$ extends to $\wt\cL\in\Ch(H)$, and denote its pullback to $H_{k}$ again by $\wt\cL$. Let $\a^{\vee}$ be a coroot of $H_{k}$ and $\ph_{\a}:\SL_{2,k}\to H_{k}$ be the homomorphism whose image is the rank one subgroup of $H_{k}$ containing the roots $\pm\a$. Let $\GG_{m,k}\subset \SL_{2,k}$ be the diagonal torus. Then $\ph_{\a}|_{\GG_{m,k}}=\a^{\vee}$. Since $\SL_{2,k}$ does not admit any nontrivial finite central isogeny, $\ph_{\a}^{*}\wt\cL$ is trivial, hence $(\a^{\vee})^{*}\cL=(\ph_{\a}^{*}\wt\cL)|_{\GG_{m,k}}$ is trivial.

Conversely, suppose $\cL$ is trivial after pullback along each coroot. The restriction map $\Ch(H)\to \Ch(T)$ is injective by \cite[A.2.2]{Yun-CDM}. Let $k'/\FF_{q}$ be a finite extension and $\s\in\Gal(k'/\FF_{q})$ is the Frobenius element. By \cite[A.1.2(4)]{Yun-CDM},  $\Ch(H)=\Ch(H_{k'})^{\s}$, therefore $\Ch(H)=\Ch(H_{k'})\cap \Ch(T)\subset \Ch(T_{k'})$. In other words, it suffices to show that $\cL_{k'}\in \Ch(T_{k'})$ extends to $H_{k'}$ for some finite extension $k'/\FF_{q}$.  Therefore we may base change the situation by a finite extension of $\FF_{q}$ so that $T$ is split. Below we assume $T$ is split. 

Let $\chi:  T(\FF_{q})=\xcoch(T)\ot_{\ZZ}\FF_{q}^{\times}\to \Qlbar^{\times}$ be the character corresponding to $\cL$ under the bijection \eqref{cs1T}.   We view $\chi$ as a homomorphism $\wt\chi: \xcoch(T)\to \Hom(\FF_{q}^{\times},\Qlbar^{\times})$. 
Let $\L=\ker(\wt\chi)$. The assumption on $\cL$ implies that $\L$ contains the coroot lattice of $H$.   By the structure theory of reductive groups, there is a connected split reductive group $\wt H$ over $\FF_{q}$ with maximal torus $\wt T$ such that $\L=\xcoch(\wt T)$ with coroots $\Phi^{\vee}(H,T)$. The embedding $\L\subset \xcoch(T)$ gives a homomorphism $\nu:\wt H\to H$ such that $\nu^{-1}(T)=\wt T$. By construction, the Lang map $\l_{T}: T\to T$ factors as $T\xr{\b}\wt T\xr{\nu|_{\wt T}} T$ such that $\ker(\b)=\ker(\chi)$ and $\ker(\nu)=\ker(\nu|_{\wt T})=T(\FF_{q})/\ker(\chi)$. Hence $\chi$ factors through a character $\ov\chi: \ker(\nu)\to \Qlbar^{\times}$. Let $\cL'=\nu_{*}\Qlbar[\ov\chi]\in \Ch(H)$. Then $\cL'_{T}=(\nu|_{\wt T})_{*}\Qlbar[\ov\chi]$ which is isomorphic to $\cL$ by construction.  Therefore $\cL$ extends to $\cL'\in \Ch(H)$.
\end{proof}

%%%%%%% ok %%%%%%%%
\subsection{Root system attached to $\cL$}\label{ss:rs} The action of $W$ on $T$ induces an action of $W$ on $\Ch(T)$: for $w\in W$ and  $\cL\in\Ch(T)$, define $w\cL=(w^{-1})^{*}\cL$.  For a root $\a\in \Phi(G,T)$, the action of $r_{\a}$ on $\Ch(T)$ is given by $\cL\mapsto \cL\ot (\a^{\vee}\c \a)^{*}\cL^{-1}$ (the map $\a^{\vee}\c \a: T\to\Gm\to T$). For $\cL\in \Ch(T)$, let $W_{\cL}$ be its stabilizer under $W$. 

For $\cL\in \Ch(T)$, we have a subset of the coroots  
\begin{equation*}
\Phi^{\vee}_{\cL}=\{\a^{\vee}\in \Phi^{\vee}(G,T)|(\a^{\vee})^{*}\cL\mbox{ is trivial on $\Gm$, where $\a^{\vee}:\Gm\to T$ }\}.
\end{equation*}
Let $\Phi_{\cL}$ be the subset of $\Phi(G,T)$ corresponding to $\Phi^{\vee}_{\cL}$; it is a sub root system of $\Phi(G,T)$. Let $\WL$ be the Weyl group of $\Phi_{\cL}$: it is the subgroup of $W$ generated by the reflections $r_{\a}$ for $\a\in \Phi_{\cL}$. Then $\WL$ is a normal subgroup of $W_{\cL}$ \footnote{In \cite{L-center-nonunip} and \cite{L-ConjCell}, $\WL$ and $W_{\cL}$ are denoted  $W_{\l}$ and $W_{\l}'$ respectively. }. In fact, $\Phi^{\vee}_{\cL}$ is stable under $W_{\cL}$, hence $W_{\cL}$ normalizes $\WL$. %Let $\Om_{\cL}\subset W_{\cL}$ be the stabilizer of the set of positive roots $\Phi^{+}_{\cL}$, then we have an isomorphism $W_{\cL}\cong \WL\rtimes \Om_{\cL}$. 
We will see in \S\ref{ss:endo} that if $G$ has connected center, then $\WL=W_{\cL}$.

The subset $\Phi^{+}_{\cL}=\Phi^{+}\cap \Phi_{\cL}$ (where $\Phi^{+}\subset \Phi(G,T)$ is the set of positive roots defined by $B$) gives a notion of positive roots in $\Phi_{\cL}$. This defines a Coxeter group structure on $\WL$, where the simple reflections are the reflections given by indecomposable roots in $\Phi^{+}_{\cL}$. We denote the length function of the Coxeter group $\WL$ by
\begin{equation}\label{length cL}
\ell_{\cL}: \WL\to \ZZ_{\ge0}.
\end{equation}

For $w\in W$ and $\cL\in \Ch(T)$ we have
\begin{equation}\label{RwL}
\Phi_{w\cL}=w(\Phi_{\cL})\subset \Phi(G,T).
\end{equation}

For $\cL,\cL'\in \Ch(T)$, let ${}_{\cL'}W_{\cL}=\{w\in W|w\cL=\cL'\}$.  This is non-empty only when $\cL$ and $\cL'$ are in the same $W$-orbit. When $\cL$ and $\cL'$ are in the same $W$-orbit, ${}_{\cL'}W_{\cL}$ is a $(W_{\cL'}, W_{\cL})$-bitorsor. Since $\WL$ is normal in $W_{\cL}$, for any $x\in {}_{\cL'}W_{\cL}$, we have $W^{\c}_{\cL'}x=W^{\c}_{\cL'}x\WL=x\WL$.

%%%%%%% ok %%%%%%%%%
\subsection{Monodromic complexes}\label{ss:mono}
Let $H$ be a connected algebraic group over $\FF_{q}$ acting on a scheme $X$ of finite type over $\FF_{q}$. Let $\cL\in\Ch(H)$. We will define a triangulated category $D^{b}_{(H,\cL),m}(X)$ of mixed $\Qlbar$-complexes on $X$ equivariant with respect to $(H,\cL)$. The case where $\cL$ is trivial corresponds to the usual equivariant derived category $D^{b}_{H,m}(X)=D^{b}_{m}(H\bs X)$ as defined by Bernstein--Lunts in \cite{BL}.

By the discussion in \S\ref{ss:cs1}, there is a finite \'etale central isogeny $\nu:\wt H\to H$ and a character $\chi: \ker(\nu)\to \Qlbar^{\times}$ such that $\cL$ appears as the direct summand of $\nu_{*}\Qlbar$ where $\ker(\nu)$ acts through $\chi$. Consider the equivariant derived category $D^{b}_{m}(\wt H\bs X)$ where the action of $\wt H$ is through $H$ via $\nu$. Since the finite abelian group $\ker(\nu)$ acts trivially on $X$, it acts on the identity functor of the $\Qlbar$-linear category $D^{b}_{m}(\wt H\bs X)$. This allows us to decompose $D^{b}_{m}(\wt H\bs X)$ into a direct sum of full triangulated subcategories according to characters of $\ker(\nu)$. Let $D^b_{(H,\cL),m}(X)$ be the direct summand of $D^{b}_{m}(\wt H\bs X)$ corresponding to  $\chi$. It can be checked that, up to canonical equivalence, the category $D^b_{(H,\cL),m}(X)$ does not depend on the choice of $(\wt H,\chi)$ attached to $\cL$ (the essential point is that $\upH^{*}_{A}(\pt_{k} ,\Qlbar)=\Qlbar$ for a finite group $A$).

Similarly one defines the constructible derived category $D^{b}_{(H_{k},\cL)}(X_{k})$ for the spaces base-changed to $k=\ov\FF_{q}$. We use the convention \eqref{Homconvention} for the Hom spaces in $D^{b}_{m}(\wt H\bs X)$.

%%%%% ok %%%%%%%%%
\subsection{Functoriality}\label{ss:fun}
Let $\nu: H'\to H$ be a homomorphism of algebraic groups and let $H$ act on $X$. Let $\cL\in \Ch(H)$ and $\cL'=\nu^{*}\cL\in\Ch(H')$. Let $\pi: H'\bs X\to H\bs X$ be the natural map of quotient stacks. Then we have a pair of adjoint functors
\begin{equation*}
\xymatrix{ \pi^{*}: D^{b}_{(H,\cL),m}(X)\ar@<.5ex>[r] & D^{b}_{(H',\cL'),m}(X): \pi_{*} \ar@<.5ex>[l]}
\end{equation*}
defined as follows. For $\cF\in D^{b}_{(H,\cL),m}(X)$, $\pi^{*}\cF$ has the same underlying sheaf  on $X$ as $\cF$, with the $(H',\cL')$-equivariant structure obtained by pulling back the $(H,\cL)$-equivariant structure on $\cF$.

For $\cF'\in D^{b}_{(H',\cL'),m}(X)$, consider the action map $a:H\twtimes{H'}X\to X$. The complex $\cL\boxtimes\cF'$ on $H\times X$ carries a natural $H'$-equivariant structure with respect to the action $h'(h,x)=(hh'^{-1}, h'x)$ (because $\cL$ is $(H',\cL')$-equivariant with respect to the right translation action of $H'$). Let $\cL\wt\boxtimes\cF'$ be the descent of $\cL\boxtimes\cF'$ to $H\twtimes{H'}X$ and define $\pi_{*}\cF'=a_{*}(\cL\wt\boxtimes\cF')$. Then $\pi_{*}\cF'$ carries a natural $(H,\cL)$-equivariant structure coming from the left $(H,\cL)$-equivariant structure on $\cL$ itself, and hence defines an object in $D^{b}_{(H,\cL),m}(X)$.

%%%%%%% ok %%%%%%%%%%
\subsection{Monodromic Hecke categories}\label{ss:mono Hecke} Let $\cL,\cL'\in\Ch(T)$. Apply the construction in \S\ref{ss:mono} to the $T\times T$-action on $U\bs G/U$ by $(t_{1},t_{2}): g\mapsto t_{1}gt_{2}^{-1}$, we get the category  
\begin{equation*}
{}_{\cL'}\cD_{\cL}=D^{b}_{(T\times T, \cL'\boxtimes \cL^{-1}),m}(U\bs G/U).
\end{equation*}
Note that the inverse $\cL^{-1}$ (dual local system) of $\cL$ appears in the definition, but we still write $\cL$ in our notation ${}_{\cL'}\cD_{\cL}$. 

We denote the non-mixed counterpart of ${}_{\cL'}\cD_{\cL}$ by
\begin{equation*}
{}_{\cL'}\un\cD_{\cL}=D^{b}_{(T_{k}\times T_{k}, \cL'\boxtimes \cL^{-1})}(U_{k}\bs G_{k}/U_{k}).
\end{equation*}
We have the pullback functor
\begin{equation*}
\om:{}_{\cL'}\cD_{\cL}\to {}_{\cL'}\un\cD_{\cL}.
\end{equation*}
For variants of ${}_{\cL'}\cD_{\cL}$ that we introduce later, we put an underline to denote the corresponding non-mixed version.

Each object $\cF\in {}_{\cL'}\cD_{\cL}$ is equipped with an isomorphism
\begin{equation*}
a^{*}\cF\cong \cL'\boxtimes \cF\boxtimes \cL
\end{equation*}
where $a: T\times [U\bs G/U]\times T\to [U\bs G/U]$ is the action map given by $(t_{1},g,t_{2})=t_{1}gt_{2}$, together with compatibility data. 

In particular, when $\cL=\Qlbar=\cL'$, ${}_{\cL'}\cD_{\cL}$ is the usual Hecke category $D^{b}_{m}(B\bs G/B)$.

%%%%%% ok %%%%%%%%
\subsection{Basic operations}

%%%%%%% tensor
%Let $\cL',\cL,\cK',\cK\in\Ch(T)$. Suppose $\cF\in {}_{\cL'}\cD_{\cL}$ and $\cG\in {}_{\cK'}\cD_{\cK}$, then the tensor product $\cF\otimes\cG$, viewed as a complex on $U\bs G/U$, carries an equivariant structure under $T\times T$ against the character sheaf $(\cL'\ot\cK')\boxtimes(\cL\ot\cK)$. This gives a bifunctor
%\begin{equation*}
%(-)\otimes(-): {}_{\cL'}\cD_{\cL}\times {}_{\cK'}\cD_{\cK}\to {}_{\cL'\ot\cK'}\cD_{\cL\ot\cK}.
%\end{equation*}
%In particular, when $\cK=\cL^{-1}$ and $\cK'=\cL'^{-1}$,  $\cF\otimes\cG$ descends to $B\bs G/B$ and defines an object in the usual Hecke category $\cD^{b}(B\bs G/B)$.
 
%%%%%%%%%% RHom
Let $\cL',\cL,\cK',\cK\in\Ch(T)$. For $\cF\in {}_{\cL'}\cD_{\cL}$ and $\cG\in {}_{\cK'}\cD_{\cK}$, the inner Hom $\bR\uHom_{U\bs G/U}(\cF,\cG)$, viewed as a complex on $U\bs G/U$, defines an object in ${}_{\cL'^{-1}\ot\cK}\cD_{\cL^{-1}\ot\cK}$. This gives a bifunctor
\begin{equation*}
\bR\uHom: ({}_{\cL'}\cD_{\cL})^{\opp}\times {}_{\cK'}\cD_{\cK}\to _{\cL'^{-1}\ot\cK'}\cD_{\cL^{-1}\ot\cK}.
\end{equation*}
In particular, when $\cK=\cL$ and $\cK'=\cL'$,  $\bR\uHom_{U\bs G/U}(\cF,\cG)$ descends to $B\bs G/B$ and defines an object in the usual Hecke category $D^{b}_{m}(B\bs G/B)$.
  
%%%%%%%%%%% Verdier duality 
We define a renormalized version of Verdier duality on ${}_{\cL'}\cD_{\cL}$. Let $\DD_{G/B}$ be the dualizing complex of $G/B$, viewed as an object in $D^{b}_{m}(B\bs G/B)$.  For $\cF\in {}_{\cL'}\cD_{\cL}$, by the discussion on the inner Hom above, we define the object 
\begin{equation*}
\DD(\cF)=\bR\uHom(\cF, \DD_{G/B})\in {}_{\cL'^{-1}}\cD_{\cL^{-1}}.
\end{equation*} 
This defines a functor
\begin{equation*}
\DD: ({}_{\cL'}\cD_{\cL})^{\opp}\to {}_{\cL'^{-1}}\cD_{\cL^{-1}}
\end{equation*}
which is an involutive equivalence of categories. We refer to this functor as the {\em Verdier duality} on ${}_{\cL'}\cD_{\cL}$.

%%%%%%% perverse t-structure %%%%%%%

We define the perverse $t$-structure on ${}_{\cL'}\cD_{\cL}$ in the following way. We define a full subcategory ${}_{\cL'}\cD^{\le0}_{\cL}$ (resp. ${}_{\cL'}\cD^{\ge0}_{\cL}$) to consist of objects $\cF\in {}_{\cL'}\cD_{\cL}$ such that $\cF[\dim T]$, as a complex on $G/U$, lies in ${}^{p}D^{\le0}(G/U)$ (resp. ${}^{p}D^{\ge0}(G/U)$). Then $({}_{\cL'}\cD^{\le0}_{\cL},{}_{\cL'}\cD^{\ge0}_{\cL})$ defines a $t$-structure, which we shall call the {\em perverse $t$-structure} on ${}_{\cL'}\cD_{\cL}$. With this definition, the Verdier duality functor $\DD$  sends perverse sheaves in ${}_{\cL'}\cD_{\cL}$ to perverse sheaves in ${}_{\cL'^{-1}}\cD_{\cL^{-1}}$.

%%%%%%% ok %%%%%%%% 
\subsection{Strata}
For $w\in W$, let $G_{w}\subset G$ be the $B$-double coset $G_{w}=BwB$ (this is abuse of notation as we should have written $B\dw B$ for some $\dw\in N_{G}(T)(\FF_{q})$ lifting $w$, however the resulting subscheme is independent of the choice of the lifting. In the sequel we will use such abuse of notation freely). Let $G_{\le w}$ be the closure of $G_{\le w}$ and $G_{<w}=G_{\le w}-G_{w}$.

Let ${}_{\cL'}\cD(w)_{\cL}:=D^{b}_{(T\times T,\cL'\boxtimes \cL^{-1}),m}(U\bs G_{w}/U)$. Similarly define ${}_{\cL'}\cD(\le w)_{\cL}$ and ${}_{\cL'}\cD(<w)_{\cL}$ by replacing $G_{w}$ with $G_{\le w}$ and $G_{<w}$.

The inclusion $i_{w}: U\bs G_{w}/U\incl U\bs G/U$ induces adjoint pairs
\begin{eqnarray*}
\xymatrix{    i_{w,!}:     {}_{\cL'}\cD(w)_{\cL} \ar@<.5ex>[r] &  {}_{\cL'}\cD_{\cL}: i^{!}_{w}\ar@<.5ex>[l] }\\
\xymatrix{ i^{*}_{w}:     {}_{\cL'}\cD_{\cL} \ar@<.5ex>[r] & {}_{\cL'}\cD(w)_{\cL}: i_{w, *} \ar@<.5ex>[l]}
\end{eqnarray*}
%Similar remarks apply to the closed embeddings $i_{\le w}: U\bs G_{\le w}/U\incl U\bs G/U$ and $i_{<w}: U\bs G_{< w}/U\incl U\bs G/U$.   Also we let  $j_{w}$ denote the open embedding $U\bs G_{w}/U\incl U\bs G_{\le w}/U$. Then $j^{*}_{w}: {}_{\cL'}\cD(\le w)_{\cL}\to {}_{\cL'}\cD(w)_{\cL}$ admits left adjoint $j_{w,!}$ and right adjoint $j_{w,*}$.

Let $\G(w)\subset T\times T$ be the graph consisting of $(wt, t), t\in T$.  The $T\times T$-action  on any point in $U\bs G_{w}/U$  has stabilizer $\G(w)$. From the definitions we have the following lemma.

%%%%%%% ok %%%%%%%%
\begin{lemma}\label{l:one cell}
The category ${}_{\cL'}\cD(w)_{\cL}$ is zero unless $\cL'=w\cL$.  When $\cL'=w\cL$, taking stalk at the lifting $\dw\in N_{G}(T)(\FF_{q})$ of $w$ induces an equivalence
\begin{equation*}
i_{\dw}^{*}: {}_{w\cL}\cD(w)_{\cL}\isom \cD^{b}_{\G(w),m}(\{\dw\}).
\end{equation*}
Here we are using that $w^{*}\cL'=w^{-1}(\cL')\cong\cL$ so that $\cL'\boxtimes \cL^{-1}$ restricts to the trivial character sheaf on $\G(w)$.
\end{lemma}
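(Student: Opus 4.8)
The plan is to analyze the category ${}_{\cL'}\cD(w)_{\cL}$ directly via the geometry of a single Bruhat cell $G_w = BwB$ together with its $T\times T$-equivariant structure.

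\medskip

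\noindent\textbf{Step 1: Reduce to a point.} First I would observe that $U\bs G_w /U$ carries a transitive $T\times T$-action. Indeed, fixing a lift $\dw\in N_G(T)(\FF_q)$, the multiplication map gives an isomorphism $U\times \dw \times (U\cap \dw^{-1}U^{-}\dw \bs U) \cong$ (cell), but more to the point, after quotienting by $U$ on both sides the double coset $U\bs G_w/U$ becomes a single $(T\times T)$-orbit, namely the orbit of the image of $\dw$. The scheme-theoretic stabilizer of $\dw$ under the $T\times T$-action $(t_1,t_2): g\mapsto t_1 g t_2^{-1}$ is exactly $\G(w) = \{(wt,t)\mid t\in T\}$: indeed $t_1 \dw t_2^{-1} \in U\dw U$ with $t_1\dw t_2^{-1} = \dw (\dw^{-1}t_1\dw) t_2^{-1}$, and since $\dw^{-1}t_1\dw t_2^{-1}\in T$ must be trivial for the class in $U\bs G_w/U$ to be fixed (the $U$'s cannot absorb a torus element), we get $t_2 = \dw^{-1}t_1\dw = w^{-1}(t_1)$, i.e. $t_1 = w(t_2)$. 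Hence $U\bs G_w/U \cong (T\times T)/\G(w)$ as a quotient stack, and the map $i_{\dw}: \{\dw\}\to U\bs G_w/U$ induces, on $\G(w)$-equivariant (equivalently $(T\times T)$-equivariant) derived categories, an equivalence $D^b_{\G(w),m}(\{\dw\}) \isom D^b_{T\times T, m}(U\bs G_w/U)$ with inverse $i_{\dw}^*$. This is just the standard fact that for a transitive action $G\curvearrowright X$ with stabilizer $S$ at a point $x$, restriction gives $D^b_{G}(X)\isom D^b_S(\pt)$.

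\medskip

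\noindent\textbf{Step 2: Track the character sheaf.} Next I would incorporate the monodromy. Under the equivalence of Step 1, the $(T\times T,\cL'\boxtimes\cL^{-1})$-equivariant category ${}_{\cL'}\cD(w)_{\cL}$ corresponds to the $\G(w)$-equivariant category with respect to the restriction of $\cL'\boxtimes\cL^{-1}$ to $\G(w)$. Identifying $\G(w)\cong T$ via $t\mapsto (wt,t)$, the restriction of $\cL'\boxtimes \cL^{-1}$ pulls back to $(w^{-1}\cL')\ot \cL^{-1} = w^{-1}(\cL')\ot\cL^{-1}$ on $T$ (using $w^*(\cL')=w^{-1}(\cL')$ in the $W$-action notation of \S\ref{ss:rs}). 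This rank one character sheaf is trivial if and only if $w^{-1}(\cL') = \cL$, i.e. $\cL' = w\cL$. If $\cL'\neq w\cL$, then the character sheaf $w^{-1}(\cL')\ot\cL^{-1}$ on the connected group $T$ is nontrivial, and the corresponding monodromic equivariant category $D^b_{(T,\,\text{nontrivial})}(\pt)$ vanishes: a $(T,\cM)$-equivariant complex on a point, realized as a $\chi$-isotypic summand of the $\wt T$-equivariant category for a suitable isogeny $\wt T\to T$ with $\cM = \nu_*\Qlbar[\chi]$, is forced to be zero because $\upH^*_{\wt T}(\pt)$ has no $\chi$-isotypic part for $\chi$ nontrivial (equivalently, the nontrivial local system has no equivariant sections). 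This gives the first assertion: ${}_{\cL'}\cD(w)_{\cL} = 0$ unless $\cL' = w\cL$.

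\medskip

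\noindent\textbf{Step 3: Assemble the equivalence.} Finally, when $\cL' = w\cL$, the pulled-back character sheaf is trivial on $\G(w)$, so the monodromic equivariant category coincides with the ordinary $\G(w)$-equivariant category, and Steps 1–2 combine to give $i_{\dw}^*: {}_{w\cL}\cD(w)_{\cL}\isom D^b_{\G(w),m}(\{\dw\})$ as claimed. The cleanest way to present this is to note that ${}_{w\cL}\cD(w)_{\cL} = D^b_{(T\times T,\,w\cL\boxtimes\cL^{-1}),m}(U\bs G_w/U)$, that $U\bs G_w/U$ is a $\G(w)$-bitorsor over $\pt$ (it is a $(T\times T)/\G(w)$-shaped quotient, equivalently $B\G(w)$ when we remember the full $T\times T$; but the point is restriction to $\dw$), and that $w\cL\boxtimes\cL^{-1}$ restricts to the trivial character sheaf on $\G(w)$ precisely because $(w\cdot)^*$ of the left factor cancels the right factor.

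\medskip

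\noindent The main obstacle is Step 2: one must be careful about the precise meaning of ``monodromic equivariant category for a nontrivial character sheaf on a connected group'' and why it vanishes — this relies on the construction in \S\ref{ss:mono} via a central isogeny $\nu:\wt T\to T$ and the fact that $\upH^*_A(\pt_k,\Qlbar) = \Qlbar$ for $A$ finite abelian (so the $\chi$-isotypic summand for $\chi\neq 1$ is zero), together with checking that the identification $\G(w)\cong T$ carries $\cL'\boxtimes\cL^{-1}$ to $w^{-1}(\cL')\ot\cL^{-1}$, which is a direct computation with the $W$-action on $\Ch(T)$. Everything else is the standard ``equivariant category of a homogeneous space'' reduction.
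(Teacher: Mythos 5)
Your argument is correct and is precisely the ``from the definitions'' reasoning the paper intends (the lemma is stated there without proof): transitivity of the $T\times T$-action on $U\bs G_{w}/U$ with stabilizer $\G(w)$ at $\dw$ reduces everything to the point, the restriction of $\cL'\boxtimes\cL^{-1}$ to $\G(w)\cong T$ is $w^{-1}(\cL')\ot\cL^{-1}$, and the vanishing for $\cL'\ne w\cL$ is the standard fact that the $(H,\cM)$-monodromic category of a point is zero for a nontrivial character sheaf $\cM$ on a connected group $H$. The only detail worth flagging in Step 1 is that the $U\times U$-action on $G_{w}$ is not free, so $U\bs G_{w}/U$ has extra unipotent stabilizers; these are cohomologically trivial and do not affect the equivalence.
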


%%%%%%%% ok %%%%%%%%%
\subsection{Some objects}\label{ss:obj}  In view of Lemma \ref{l:one cell}, ${}_{\cL'}\cD_{\cL}=0$ unless $\cL$ and $\cL'$ are in the same $W$-orbit of $\Ch(T)$. In the remainder of the paper, we fix a $W$-orbit $\fo\subset \Ch(T)$.

For $w\in W$ with lifting $\dw$,  and $\cL\in \fo$, let $C(\dw)_{\cL}\in {}_{w\cL}\cD(w)_{\cL}$ be the object that corresponds to the constant sheaf $\Qlbar\j{\ell(w)}$ under the equivalence $i_{\dw}^{*}$ in Lemma \ref{l:one cell}.  Note that the isomorphism class of $C(\dw)_{\cL}$ is independent of the lifting $\dw$ while for different liftings the identifications between the $C(\dw)_{\cL}$'s are only unique up to scalars.

Define the following perverse sheaves  in ${}_{w\cL}\cD_{\cL}$
\begin{eqnarray}\label{std obj}
\D(\dw)_{\cL}=i_{w,!}C(\dw)_{\cL}, \quad \nb(\dw)_{\cL}=i_{w,*}C(\dw)_{\cL}, \\
 \IC(\dw)_{\cL}=i_{w,!*}C(\dw)_{\cL}:=\Im(\D(\dw)_{\cL}\to\nb(\dw)_{\cL}).
\end{eqnarray}

\begin{remark} 
The isomorphism classes of $\om C(\dw)_{\cL}, \om \D(\dw)_{\cL}, \om \nb(\dw)_{\cL}$ and $\om \IC(\dw)_{\cL}$ in ${}_{\cL'}\un\cD_{\cL}$ are independent of the lifting $\dw$.  For this reason, we denote these isomorphism classes in ${}_{\cL'}\un\cD_{\cL}$ by
\begin{equation*}
\uC(w)_{\cL}\in {}_{w\cL}\un\cD(w)_{\cL}, \quad \uD(w)_{\cL}, \quad \unb(w)_{\cL} \textup{ and }\uIC(w)_{\cL}\in {}_{w\cL}\un\cD_{\cL}.
\end{equation*}
However, in the mixed category ${}_{w\cL}\cD_{\cL}$, if we change the lifting $\dw$ to another lifting $\ddot{w}=\dw t^{-1}$ ($t\in T(\FF_{q})$), then we have a canonical isomorphism in ${}_{w\cL}\cD_{\cL}$
\begin{equation}\label{tensor Lt}
\IC(\dw)_{\cL}\cong \IC(\ddot w)_{\cL}\ot\cL_{t}
\end{equation}
where $\cL_{t}$ (the stalk of $\cL$ at $t$) is viewed as a one-dimensional $\Fr$-module. Similar isomorphisms hold for $C(\dw)_{\cL}, \D(\dw)_{\cL}$ and $\nb(\dw)_{\cL}$.
\end{remark}
%%%%%%%% ok %%%%%%%%
%\begin{remark} The isomorphism classes of $\om C(\dw)_{\cL}, \om \D(\dw)_{\cL}, \om \nb(\dw)_{\cL}$ and $\om \IC(\dw)_{\cL}$ (recall $\om$ means base change to $k$) are independent of the lifting $\dw$.  In the sequel, for $w\in W$, we will use the notation $C(w)_{\cL}, \D(w)_{\cL}, \nb(w)_{\cL}$ and $\IC(w)_{\cL}$ to denote these objects in statements where only the isomorphism classes of these objects matter. If the statement is about canonical maps involving such objects, we will always specify a lifting $\dw$ of $w$ and use $C(\dw)_{\cL}, \D(\dw)_{\cL}, \nb(\dw)_{\cL}$ and $\IC(\dw)_{\cL}$.
%\end{remark}

%%%%%%%%%% change to over F_{q}, completed up to here %%%%%%%

%%%%%%%% ok %%%%%%%%%
\section{Convolution}
In this section we define and study properties of the convolution functor on the monodromic Hecke categories. We also use convolution to prove the parity and purity properties of $\IC(\dw)_{\cL}$ in Proposition \ref{p:parity purity}.

%%%%%%%% ok %%%%%%%%%
\subsection{Convolution}\label{ss:conv} Recall that we fix a $W$-orbit $\fo\subset \Ch(T)$. Let $\cL,\cL',\cL''\in\fo$. Consider the diagram
\begin{equation*}
\xymatrix{& U\bs G\twtimes{U}G/U \ar[ld]_{\pi}\ar[d]\\
U\bs G/U\times U\bs G/U & U\bs G\twtimes{B}G/U\ar[r]^-{m} & U\bs G/U
}
\end{equation*}
For $\cF\in {}_{\cL''}\cD_{\cL'}$ and $\cG\in {}_{\cL'}\cD_{\cL}$, $\pi^{*}(\cF\boxtimes \cG)$ carries an equivariant structure under the $T$-action on $U\bs G\twtimes{U}G/U$ given by $T\ni t: (g_{1},g_{2})\mapsto (g_{1}t^{-1}, tg_{2})$ (using that $\cF$ is $(T,\cL'^{-1})$-equivariant for the second $T$-action and $\cG$ is $(T,\cL')$-equivariant for the first $T$-action), therefore it descends to a complex $\cF\wt\boxtimes \cG\in D^{b}_{(T\times T, \cL''\boxtimes \cL^{-1}),m}(U\bs G\twtimes{B}G/U)$. Define
\begin{equation*}
\cF\star\cG=m_{*}(\cF\wt\boxtimes \cG)\in {}_{\cL''}\cD_{\cL}.
\end{equation*}
This construction gives a convolution bifunctor
\begin{equation*}
(-)\star(-): {}_{\cL''}\cD_{\cL'}\times {}_{\cL'}\cD_{\cL}\to {}_{\cL''}\cD_{\cL}.
\end{equation*}
It is easy to see that convolution carries a natural associativity structure in the obvious sense. Under convolution, ${}_{\cL}\cD_{\cL}$ becomes a monoidal category with the unit object
\begin{equation*}
\d_{\cL}:=\D(\dot e)_{\cL}\cong\IC(\dot e)_{\cL}\cong\nb(\dot e)_{\cL}.
\end{equation*}

%%%%%%%% ok %%%%%%%%%%
The properness of the multiplication map $m: G\twtimes{B}G\to G$ implies the following.
\begin{lemma}\label{l:conv vs duality} There is a natural isomorphism functorial in $\cF\in {}_{\cL''}\cD_{\cL'}$ and $\cG\in {}_{\cL'}\cD_{\cL}$
\begin{equation*}
\DD(\cF\star\cG)\cong \DD(\cF)\star\DD(\cG).
\end{equation*}
\end{lemma}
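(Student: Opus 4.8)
The plan is to prove the isomorphism $\DD(\cF\star\cG)\cong\DD(\cF)\star\DD(\cG)$ by unwinding the definition of $\star$ in terms of the diagram
\begin{equation*}
U\bs G/U\times U\bs G/U\xleftarrow{\ \pi\ } U\bs G\twtimes{U}G/U\xrightarrow{\ q\ } U\bs G\twtimes{B}G/U\xrightarrow{\ m\ } U\bs G/U,
\end{equation*}
and tracking how (renormalized) Verdier duality interacts with each of the three operations: external tensor product, the descent $\pi^{*}(\cF\boxtimes\cG)\rightsquigarrow\cF\wt\boxtimes\cG$, and the proper pushforward $m_{*}$. The key input, as the statement itself flags, is that $m:G\twtimes{B}G\to G$ is proper, so that $m_{*}=m_{!}$ commutes with Verdier duality on the source and target; I would first establish the version of this on the quotient stacks $U\bs G\twtimes{B}G/U$ and $U\bs G/U$ (the relevant dualizing complexes are the ones used in the definition of $\DD$ in \S\ref{ss:obj}, so one should be careful that the Tate-twist renormalization is by the same shift on both sides — this works because $m$ is in addition a fibration with smooth proper fibres $G/B$, but really all one needs is that it is proper and the renormalizations are defined compatibly).

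First I would record that $\DD(\cF\boxtimes\cG)\cong\DD(\cF)\boxtimes\DD(\cG)$ for the external product on $U\bs G/U\times U\bs G/U$: this is the standard Künneth-type compatibility of inner Hom with box products, valid because $\DD_{G/B\times G/B}\cong\DD_{G/B}\boxtimes\DD_{G/B}$. Next, since $\pi$ is smooth (a $U$-bundle composed with the projection killing the diagonal $U$), $\pi^{*}$ commutes with $\DD$ up to a shift and twist by $\pi^{*}\DD$ of the relative dimension, and the descent step from $U\bs G\twtimes{U}G/U$ to $U\bs G\twtimes{B}G/U$ is again a smooth ($T$-bundle) map, so $\DD$ passes through it similarly; the shifts and twists introduced by $\pi^{*}$ and by the descent are exactly absorbed into the normalization of $\wt\boxtimes$. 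Putting these together gives a canonical isomorphism $\DD(\cF\wt\boxtimes\cG)\cong\DD(\cF)\wt\boxtimes\DD(\cG)$ inside $D^{b}_{(T\times T,\cdot),m}(U\bs G\twtimes{B}G/U)$, compatible with the monodromy characters (the duals $\cL''^{-1},\cL^{-1}$ appear, which matches the target ${}_{\cL''^{-1}}\cD_{\cL^{-1}}$). Finally I apply $m_{*}$ and invoke the proper base change / properness statement above to get
\begin{equation*}
\DD(\cF\star\cG)=\DD(m_{*}(\cF\wt\boxtimes\cG))\cong m_{*}\DD(\cF\wt\boxtimes\cG)\cong m_{*}(\DD\cF\wt\boxtimes\DD\cG)=\DD\cF\star\DD\cG.
\end{equation*}

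The one genuinely delicate point — and the step I expect to be the main obstacle — is bookkeeping the various cohomological shifts and Tate twists so that the composite isomorphism lands exactly in the renormalized duality $\DD$ of \S\ref{ss:obj} rather than in $\DD$ off by some $\j{n}$. Concretely, $\pi^{*}$ contributes a shift by (twice) $\dim U$, the $T$-bundle descent contributes a shift by $\dim T$, $m_{*}$ contributes nothing extra since $m$ is proper, and on the other hand the two "un-normalized" Verdier dualities on $U\bs G/U$ already carry the $\dim(G/B)$-type normalization built into $\DD_{G/B}$; one checks these cancel because $\dim(G\twtimes{B}G/U)=\dim(G/U)+\dim(G/B)$ and because $\pi$ followed by the $T$-bundle map has relative dimension $\dim U+\dim T=\dim B$. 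I would carry this out by first proving the unshifted identity $D(\cF\wt\boxtimes\cG)\cong D(\cF)\wt\boxtimes D(\cG)$ for ordinary (un-renormalized) Verdier duality $D$, then converting to $\DD$ at the very end, which localizes all the twist-chasing into a single elementary computation. The functoriality in $\cF$ and $\cG$ is automatic from functoriality of each of the constituent operations, so no separate argument is needed for that clause.
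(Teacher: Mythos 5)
Your proposal is correct and is essentially the paper's argument: the paper proves this lemma by simply invoking the properness of $m: G\twtimes{B}G\to G$ (so that $m_{*}=m_{!}$ commutes with Verdier duality), and your write-up just fills in the standard Künneth and smooth-descent compatibilities together with the shift/twist bookkeeping that makes the renormalized $\DD$ come out on the nose. No further comment is needed.
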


%%%%%%%% ok %%%%%%%%%
\begin{lemma}\label{l:conv pure}
\begin{enumerate}
\item If $\cF\in {}_{\cL''}\cD_{\cL'}$ and $\cG\in {}_{\cL'}\cD_{\cL}$ are semisimple complexes, so is $\cF\star\cG$.
\item If $\cF\in {}_{\cL''}\cD_{\cL'}$ and $\cG\in {}_{\cL'}\cD_{\cL}$ are pure of weight zero, so is $\cF\star\cG$.
\end{enumerate}
\end{lemma}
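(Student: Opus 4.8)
\textbf{Proof plan for Lemma \ref{l:conv pure}.}

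The plan is to reduce both statements to the corresponding facts for the multiplication map $m: G\twtimes{B}G\to G$, which is proper, by working on the intermediate space $U\bs G\twtimes{B}G/U$ where the twisted external product $\cF\wt\boxtimes\cG$ lives. First I would recall that $\cF\wt\boxtimes\cG$ is obtained by descent from $\pi^{*}(\cF\boxtimes\cG)$ along the smooth $T$-torsor $U\bs G\twtimes{U}G/U\to U\bs G\twtimes{B}G/U$; since $\pi$ is smooth with connected fibers, pullback along $\pi$ is (up to a shift) $t$-exact and preserves both semisimplicity and purity, and the same is then true for the descended complex $\cF\wt\boxtimes\cG$ viewed on $U\bs G\twtimes{B}G/U$. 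Concretely, if $\cF$ and $\cG$ are shifted simple perverse sheaves (or pure of weight zero), so are $\cF\boxtimes\cG$ on the product and hence $\cF\wt\boxtimes\cG$ on the quotient stack.

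For part (1), once $\cF\wt\boxtimes\cG$ is a semisimple complex and $m$ is proper, the decomposition theorem of Beilinson--Bernstein--Deligne--Gabber (in the equivariant setting of $D^{b}(T_{k}\times T_{k}\bs -)$, via the pullback functor $\om$ and the convention \eqref{Homconvention}) gives that $m_{*}(\cF\wt\boxtimes\cG)=\cF\star\cG$ is again a direct sum of shifted simple perverse sheaves, i.e. semisimple in ${}_{\cL''}\un\cD_{\cL}$, hence in ${}_{\cL''}\cD_{\cL}$ by definition. For part (2), since $m$ is proper we have $m_{*}=m_{!}$, and $m_{!}$ does not raise weights by Deligne's Weil II bounds, while $m_{*}$ applied to a complex of weights $\ge 0$ has weights $\ge 0$; combining, $m_{*}(\cF\wt\boxtimes\cG)$ is pure of weight zero whenever $\cF\wt\boxtimes\cG$ is. The normalization conventions (the shifts-and-twists $\j{n}$ from \eqref{jn}, built into $C(\dw)_{\cL}$ and into the definition of $\wt\boxtimes$ via $\pi^{*}$) are exactly chosen so that no weight shift is introduced in passing from $\cF\boxtimes\cG$ to $\cF\wt\boxtimes\cG$; I would check this bookkeeping once, noting that $\pi$ has fibers isomorphic to $T_{k}$ of pure dimension $\dim T$ and that the descent along the $T$-torsor $U\bs G\twtimes{U}G/U\to U\bs G\twtimes{B}G/U$ matches the perverse normalization.

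The main obstacle is the verification that the descent and pullback operations on the monodromic equivariant categories genuinely preserve semisimplicity and weight-zero purity \emph{with the stated normalization}: one must be careful that $\cF\wt\boxtimes\cG$, a priori only a complex on a quotient stack, is still ``semisimple'' and ``pure'' in the sense defined in \S\ref{sss:not other} after applying $\om$, and that the perverse $t$-structure used here (the one shifted by $\dim T$ as in \S\ref{ss:obj}) is compatible with the smooth pullback $\pi^{*}$. This is essentially the observation that for a smooth morphism $f$ of relative dimension $d$ with connected fibers, $f^{*}[d]$ is $t$-exact and preserves simples and purity; applied to $\pi$ (after the monodromic twist) and then invoking properness of $m$, both assertions follow. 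I expect no genuine difficulty beyond this normalization check, since the underlying inputs — the decomposition theorem and the weight estimates for proper pushforward — are standard once we are allowed to pass to $\om\cF$, $\om\cG$ in the constructible equivariant category over $k$.
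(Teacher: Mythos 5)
Your proposal is correct and follows essentially the same route as the paper, whose proof simply invokes properness of $m: G\twtimes{B}G\to G$ together with the decomposition theorem for (1) and Deligne's weight estimates \cite[5.1.14]{BBD} for (2). The extra care you take with the descent of $\cF\boxtimes\cG$ along the smooth $T$-torsor and the normalization bookkeeping is sound but is left implicit in the paper.
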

\begin{proof} 
(2) follows from the properness of the multiplication map $m: G\twtimes{B}G\to G$ and Deligne's weight estimates \cite[5.1.14]{BBD}.  (1) follows from the properness of the multiplication map $m: G\twtimes{B}G\to G$ and the decomposition theorem \cite{BBD}. 
\end{proof}

%%%%%%%% ok %%%%%%%%%%
\begin{lemma}\label{l:basic conv}
Suppose $\ell(w_{1}w_{2})=\ell(w_{1})+\ell(w_{2})$, then there are canonical isomorphisms
\begin{equation*}
\D(\dw_{1})_{w_{2}\cL}\star\D(\dw_{2})_{\cL}\cong \D(\dw_{1}\dw_{2})_{\cL}, \quad
\nb(\dw_{1})_{w_{2}\cL}\star\nb(\dw_{2})_{\cL}\cong \nb(\dw_{1}\dw_{2})_{\cL}, 
\end{equation*}
\end{lemma}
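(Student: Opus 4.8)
The plan is to reduce to the one-step case where $w_1=s$ is a simple reflection, and then to an explicit geometric computation on the rank-one Bott--Samelson space. First I would set up the basic geometric input: when $\ell(w_1w_2)=\ell(w_1)+\ell(w_2)$, the multiplication map restricts to an isomorphism
\begin{equation*}
m: G_{w_1}\twtimes{B}G_{w_2}\isom G_{w_1w_2},
\end{equation*}
which descends to an isomorphism $U\bs G_{w_1}\twtimes{B}G_{w_2}/U\isom U\bs G_{w_1w_2}/U$ of quotient stacks. Since $\D(\dw_i)_{\cL}$ is (the $!$-extension of) the shifted constant sheaf supported on the stratum $U\bs G_{w_i}/U$, I would identify $\D(\dw_1)_{w_2\cL}\wt\boxtimes\D(\dw_2)_{\cL}$, restricted to the open stratum $U\bs G_{w_1}\twtimes{B}G_{w_2}/U$, with the shifted constant sheaf $C(\dw_1\dw_2)_{\cL}$ (the shifts add up because $\ell(w_1)+\ell(w_2)=\ell(w_1w_2)$, and one checks the monodromy datum matches: $\cL'$ on the outside matches $w_1(w_2\cL)=(w_1w_2)\cL$, $\cL$ on the inside is unchanged). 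The key point is then that $m_*$ applied to a $!$-extension from an open locus, along a map that is an isomorphism over that open locus and proper overall, again gives a $!$-extension --- this is exactly where properness of $m:G\twtimes{B}G\to G$ (already invoked in Lemma \ref{l:conv pure}) enters, together with base change. This gives the $\D$-statement. The $\nb$-statement follows either by the same argument with $*$-extensions (using that $m$ is proper, so $m_*=m_!$ on these), or by applying Verdier duality $\DD$ via Lemma \ref{l:conv vs duality} together with the fact that $\DD$ interchanges $\D(\dw)_{\cL}$ and $\nb(\dw)_{\cL^{-1}}$ up to the relevant twist.

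To make the canonical isomorphisms actually canonical (not just up-to-scalar), I would fix the liftings so that $\dw_1\dw_2$ is literally the product in $N_G(T)(\FF_q)$ of the chosen liftings $\dw_1$ and $\dw_2$; with that choice the identification of stalks is rigidified by the equivalence $i_{\dw}^*$ of Lemma \ref{l:one cell}, and the isomorphism of constant sheaves over the open stratum is the tautological one. One then propagates this canonical isomorphism over the open stratum to a canonical isomorphism of the $!$-extensions (resp. $*$-extensions) on all of $U\bs G_{w_1w_2}/U$, using that $\Hom$ from a $!$-extension is computed on the open locus (and dually for $*$-extensions). I would also record the compatibility with the twisting isomorphism \eqref{tensor Lt} so that the statement is lifting-independent in the expected sense.

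The reduction to the simple-reflection case, if one prefers to organize the argument that way, is by induction on $\ell(w_1)$: write $w_1=s w_1'$ with $\ell(w_1)=\ell(w_1')+1$, so that $\ell(w_1' w_2)=\ell(w_1')+\ell(w_2)$ as well, apply associativity of $\star$ and the inductive hypothesis, and handle the single convolution $\D(\ds)_{w_1'w_2\cL}\star\D(\dw_1' w_2\cdots)$ by the rank-one computation on $\SL_2$ (or $\PGL_2$), where $U\bs G_s/U\cong \mathbb A^1$-bundle and everything is transparent.

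\textbf{Main obstacle.} The conceptual content is light --- it is the standard "product of Bott--Samelson cells" computation --- so the real work is bookkeeping: tracking the monodromy/equivariance data through the two descent steps (descent along the first $U$ to form $\wt\boxtimes$, then along $B$) and checking that the character sheaf $\cL'\boxtimes\cL^{-1}$-equivariant structures glue correctly under $m_*$, while simultaneously keeping the Tate twists and cohomological shifts in $\j{\cdot}$-normalization consistent. I expect the delicate point to be verifying that the isomorphism produced over the open stratum is genuinely \emph{canonical} after the lifting convention $\dot{w_1w_2}=\dw_1\dw_2$ is imposed --- i.e., that no sign or scalar ambiguity creeps in from the descent data --- rather than any difficulty in the geometry itself.
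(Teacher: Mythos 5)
Your proposal is correct and rests on exactly the same geometric fact as the paper's (one-sentence) proof: when $\ell(w_1w_2)=\ell(w_1)+\ell(w_2)$, the multiplication map $G_{w_1}\twtimes{B}G_{w_2}\to G_{w_1w_2}$ is an isomorphism, so $m_*j_!=(mj)_!$ and $m_*j_*=(mj)_*$ immediately give both isomorphisms. The extra scaffolding (reduction to simple reflections, the rank-one computation) is unnecessary but harmless.
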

\begin{proof}
Both isomorphisms follows directly from the fact that the multiplication map $G_{w_{1}}\twtimes{B}G_{w_{2}}\to G_{w_{1}w_{2}}$ is an isomorphism if $\ell(w_{1}w_{2})=\ell(w_{1})+\ell(w_{2})$.
\end{proof}

%%%%%%%% ok %%%%%%%%%%
\begin{lemma}\label{l:invertible} Let $w\in W$.  Then there are isomorphisms
\begin{equation*}
\D(\dw^{-1})_{w\cL}\star \nb(\dw)_{\cL}\cong \D(e)_{\cL}\cong \nb(\dw^{-1})_{w\cL}\star \D(\dw)_{\cL}.
\end{equation*}
In particular, the functor
\begin{equation*}
(-)\star\D(\dw)_{\cL}: {}_{\cL'}\cD_{w\cL}\to {}_{\cL'}\cD_{\cL}
\end{equation*}
is an equivalence of categories with inverse given by $(-)\star\nb(\dw^{-1})_{w\cL}$.
\end{lemma}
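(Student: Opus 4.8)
The plan is to reduce everything to a one-dimensional statement on a single Bruhat cell via Lemma \ref{l:basic conv}, and then to the case where $w=s$ is a simple reflection, where the whole computation takes place inside a rank-one subgroup.

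\textbf{Step 1: reduce to simple reflections.} First I would prove the isomorphisms $\D(\ds)_{s\cL}\star\nb(\ds)_{\cL}\cong\D(e)_{\cL}$ and $\nb(\ds)_{s\cL}\star\D(\ds)_{\cL}\cong\D(e)_{\cL}$ for a simple reflection $s$. Granting this, for general $w$ write a reduced expression $w=s_{1}\cdots s_{n}$; then Lemma \ref{l:basic conv} gives $\D(\dw)_{\cL}\cong\D(\ds_{1})_{\cdots}\star\cdots\star\D(\ds_{n})_{\cL}$ and likewise for $\nb$, with the liftings $\dw$ taken to be the corresponding products $\ds_{1}\cdots\ds_{n}$ (any other lifting differs by a twist by a one-dimensional $\Fr$-module $\cL_{t}$, which is harmless). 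Using associativity of $\star$ and telescoping — peeling off the outermost factors $\D(\ds_{1})\star\nb(\ds_{1})$ on one side and $\nb(\ds_{1})\star\D(\ds_{1})$ on the other after noting $\nb(\dw^{-1})_{w\cL}\cong\nb(\ds_{n}^{-1})_{\cdots}\star\cdots\star\nb(\ds_{1}^{-1})_{w\cL}$ — one collapses the product one simple reflection at a time down to $\D(e)_{\cL}$. This is the routine bookkeeping part.

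\textbf{Step 2: the rank-one case.} For a simple reflection $s$ with simple root $\a=\a_{s}$, the convolution $\D(\ds)_{s\cL}\star\nb(\ds)_{\cL}$ is supported on $\ov{G_{\le s}}=B\cup BsB$, and its restriction to the open cell $G_{s}$ is zero while its costalk/stalk behavior on the closed point $B$ is governed by the cohomology of a single $\PP^{1}$ (the fiber of $\ov{G_{s}}\twtimes{B}\ov{G_{s}}\to G$ over $\dot e$, which is $\ov{G_{s}}/B\cong\PP^{1}$). Concretely, work inside $L:=\ph_{\a}(\SL_{2})\cdot T$, or even in $\SL_{2}$ or $\PGL_{2}$: the relevant convolution is the classical computation that $j_{!}\Qlbar_{\AA^{1}}\star j_{*}\Qlbar_{\AA^{1}}$ (suitably shifted and $T$-equivariantized) on the affine line with its $\Gm$-action recovers the skyscraper at the origin. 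The character sheaf $\cL$ pulled back along $\a^{\vee}$ enters the equivariance but, crucially, \emph{not} the support computation; the statement is an identity of objects, so whether $\a\in\Phi_{\cL}$ or not is irrelevant here (it would matter for identifying $\IC$'s, not $\D$'s and $\nb$'s). I would verify the vanishing of the open-cell restriction by the projection-formula/base-change computation $i_{s}^{*}(\D(\ds)\star\nb(\ds))$ using properness of $m$ and the fact that the relevant $\Gm$-equivariant cohomology of $\Ga$ with coefficients in a (possibly nontrivial) Kummer-type local system pulled back from the $\Gm$-quotient vanishes, and the closed-cell stalk by the Euler-characteristic-one computation on $\PP^{1}$ minus a point, i.e. on $\Ga$, which gives the unit.

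\textbf{Step 3: the consequence about equivalences.} Once both isomorphisms are established, the final assertion is formal: the functor $(-)\star\D(\dw)_{\cL}\colon{}_{\cL'}\cD_{w\cL}\to{}_{\cL'}\cD_{\cL}$ has a candidate inverse $(-)\star\nb(\dw^{-1})_{w\cL}$, and the two composites are computed by associativity of $\star$ together with the two isomorphisms just proved (in the form $\nb(\dw^{-1})_{w\cL}\star\D(\dw)_{\cL}\cong\d_{w\cL}$ and $\D(\dw)_{\cL}\star\nb(\dw^{-1})_{w\cL}\cong\d_{\cL}$ — the second obtained from the first by applying it with $w$ replaced by $w^{-1}$ and $\cL$ replaced by $w\cL$), plus the fact that $\d$ is the monoidal unit. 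The coherence of these isomorphisms (that the two triangle identities hold) follows because all the isomorphisms in sight are the canonical ones coming from identifications of cells under the multiplication map.

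\textbf{Main obstacle.} The genuinely non-formal content is Step 2: pinning down, with the correct equivariance and the correct Tate twists/shifts (so that the answer is honestly $\D(e)_{\cL}$ on the nose and not a shift or twist of it), the rank-one convolution and in particular the \emph{vanishing} of the restriction to the open cell. The shift conventions from \eqref{jn} and the normalization $C(\ds)_{\cL}\leftrightarrow\Qlbar\j{\ell(s)}$ need to be tracked carefully; the $\PP^{1}$ computation contributes an $\upH^{*}(\PP^{1})$ worth of cohomology whose extreme terms must be seen to cancel against the boundary, leaving exactly a one-dimensional space in degree zero with trivial Frobenius action. Everything else is bookkeeping with associativity.
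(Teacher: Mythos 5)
Your proposal follows essentially the same route as the paper's proof: reduce to a simple reflection via Lemma \ref{l:basic conv}, pass to the rank-one Levi $L_{s}$, and compute the convolution fiberwise over $G/B\cong\PP^{1}$, where the relevant complex is a rank-one tame local system $\cK''$ on $\PP^{1}\setminus\{0,\infty\}$ extended by $!$ at one puncture and by $*$ at the other. Two remarks. First, a shortcut: the paper proves only the isomorphism $\D(\ds^{-1})_{s\cL}\star\nb(\ds)_{\cL}\cong\D(e)_{\cL}$ directly and deduces the other one by Verdier duality together with Lemma \ref{l:conv vs duality}, so the second telescoping is unnecessary. Second, and more substantively, your stated justification for the open-cell vanishing — that ``the cohomology of $\Ga$ with coefficients in a (possibly nontrivial) Kummer-type local system vanishes'' — does not cover the case that actually occurs when $s\in\WL$ (in particular the classical case $\cL=\Qlbar$): there the descended local system $\cK''$ is \emph{trivial}, and $\cohog{*}{\Gm,\Qlbar}\neq 0$, so no appeal to nontrivial monodromy is available; yet the lemma must still hold (it is the categorified relation $T_{s}T_{s}^{-1}=1$). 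The uniform statement that makes both cases work is that for \emph{any} tame rank-one $\cK''$ on $\Gm$ the restriction map $\cohog{*}{\Gm,\cK''}\to i_{\infty}^{*}j_{*}\cK''$ to the tame nearby stalk is an isomorphism (equivalently $\cohoc{*}{\AA^{1},j_{*}\cK''}=0$ for an interior puncture), the trivial case resting on the cancellation in this cone rather than on any vanishing of $\cohog{*}{\Gm,\cK''}$ itself; this is precisely the step the paper isolates, and it is the one point where your sketch needs tightening. The closed-cell stalk computation and the formal deduction of the equivalence in your Step 3 agree with the paper.
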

\begin{proof}
Writing $w$ into a reduced word in simple reflections and using Lemma \ref{l:basic conv}, it is enough to prove the statements for $w=s$ a simple reflection. When $s$ a simple reflection,    we may replace $G$ by its Levi subgroup $L_{s}$ with roots $\pm\a_{s}$. Therefore it suffices to treat the case $G$ has semisimple rank one. In this case,  from the definition of convolution, we have
\begin{equation*}
i_{\dot s}^{*}(\D(\dot s^{-1})_{s\cL}\star \nb(\dot s)_{\cL})=\cohog{*}{(G/B)_{k}, \D(\dot s^{-1})_{s\cL}\ot \inv^{*}R_{\dot s}^{*}\nb(\dot s)_{\cL}}.
\end{equation*}  
Here $R_{\dot s}:U\bs G\to U\bs G$ is the right translation by $\dot s$ and $\inv: G/U\to U\bs G$ is given by inversion. Now $\D(\dot s^{-1})_{s\cL}$ is $(T,s\cL)$-equivariant with respect to the right translation of $T$ on $G/U$, and $\inv^{*}R_{\dot s}^{*}\nb(\dot s)_{\cL}$ is $(T, s\cL^{-1})$-equivariant with respect to the right translation, their tensor product is $T$-equivariant on the right hence descends to $G/B$. We choose an identification $G/B\cong \PP^{1}$ such that the unit coset $B$ corresponds to $0\in\PP^{1}$, and $sB$ corresponds to $\infty\in \PP^{1}$. Let $Y_{0}, Y_{\infty}$ be the preimages of $0,\infty$ in $Y=G/U$, and let $j_{0}: Y-Y_{0}\incl Y$, $j_{\infty}: Y-Y_{\infty}\incl Y$ be open embeddings. Then $\D(\dot s^{-1})_{s\cL}\cong j_{0!}\cK$ for some rank one tame local system $\cK$ on $Y-Y_{0}$, and $\inv^{*}R_{\dot s}^{*}\nb(\dot s)_{\cL}\cong j_{\infty*}\cK'$ for some rank one tame local system $\cK'$ on $Y-Y_{\infty}$ (tame means the corresponding representation of the fundamental group factors through the tame fundamental group). The tensor product $j_{0!}\cK\ot j_{\infty*}\cK'$ descends to a complex $\cG$ on $\PP^{1}$, which is a rank one tame local system $\cK''$ (descent of $\cK\ot\cK'|_{Y-Y_{0}-Y_{\infty}}$) on $\PP^{1}-\{0,\infty\}$ with $!$-extension at $0$ and $*$-extension at $\infty$. Therefore 
\begin{equation*}
i_{\dot s}^{*}(\D(\dot s^{-1})_{s\cL}\star \nb(\dot s)_{\cL})\cong \cohog{*}{(G/B)_{k}, \cG}
\end{equation*}
is the cone shifted by $[-1]$ of the restriction map
\begin{equation*}
\cohog{*}{\PP^{1}_{k}-\{0,\infty\}, \cK''}\to i_{\infty}^{*}j_{*}\cK''
\end{equation*}
where $j:\PP^{1}-\{0,\infty\}\to \PP^{1}$ and $i_{\infty}: \{\infty\}\incl \PP^{1}$ are the inclusions. Since $\cK''$ is a tame local system on $\PP^{1}-\{0,\infty\}\cong\Gm$, the above restriction map is an isomorphism.  This shows that the stalk of $\D(\dot s^{-1})_{s\cL}\star \nb(\dot s)_{\cL}$ vanishes at any lifting $\dot s$ of $s$. Hence $\D(\dot s^{-1})_{s\cL}\star \nb(\dot s)_{\cL}$ is concentrated in the closed stratum $U\bs G_{e}/U$.

We calculate the stalk of $\D(\dot s^{-1})_{s\cL}\star \nb(\dot s)_{\cL}$ at $e$ by the same method
\begin{equation*}
i_{e}^{*}(\D(\dot s^{-1})_{s\cL}\star \nb(\dot s)_{\cL})\cong\cohog{*}{(G/B)_{k}, \D(\dot s^{-1})_{s\cL}\ot \inv^{*}\nb(\dot s)_{\cL}}.
\end{equation*}
Now $\D(\dot s^{-1})_{s\cL}\ot \inv^{*}\nb(\dot s)_{\cL}$ is the extension by zero of the trivial local system on $\PP^{1}-\{0\}$ whose stalk at $\infty$ (image of $\dot s$ under $G/U\to \PP^{1}$) is canonically identified with $\Qlbar\j{2}$, therefore its cohomology is canonically isomorphic to $\cohoc{*}{\PP^{1}_{k}-\{0\}, \Qlbar\j{2}}\cong\Qlbar$. This gives the canonical isomorphism $\D(\dot s^{-1})_{s\cL}\star \nb(\dot s)_{\cL}\cong \D(e)_{\cL}$. The second isomorphism follows from the first one by applying Verdier duality and Lemma \ref{l:conv vs duality}.
\end{proof}

%%%%%%%%% ok %%%%%%%%%
\begin{lemma}\label{l:clean s equiv} Let $s\in W$ be a simple reflection and $s\notin \WL$. 
\begin{enumerate}
\item The natural maps $\Delta(\dot s)_{\cL}\to \IC(\dot s)_{\cL}\to \nb(\dot s)_\cL$ are isomorphisms.

\item The functor
\begin{equation*}
(-)\star \IC(\dot s)_{\cL}: {}_{\cL'}\cD_{s\cL}\to {}_{\cL'}\cD_{\cL}
\end{equation*}
is an equivalence of categories with inverse given by $(-)\star \IC(\dot s^{-1})_{s\cL}$.

\item The equivalence $(-)\star \IC(\dot s)_{\cL}$ sends  $\D(\dw)_{s\cL}$, $\nb(\dw)_{s\cL}$ and $\IC(\dw)_{s\cL}\in {}_{ws\cL}\cD_{s\cL}$ to   $\D(\dw\ds)_{\cL}$, $\nb(\dw\ds)_{\cL}$ and $\IC(\dw\ds)_{\cL}\in {}_{ws\cL}\cD_{\cL}$ respectively, for any $w\in {}_{\cL'}W_{s\cL}$.
\end{enumerate}
\end{lemma}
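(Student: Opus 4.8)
The overall strategy is to reduce to the semisimple rank one case, just as in the proof of Lemma \ref{l:invertible}, and then to carry out an explicit computation on $\PP^1$. For part (1), after replacing $G$ by the Levi $L_s$ with roots $\pm\a_s$, I would show that $\D(\ds)_\cL \to \nb(\ds)_\cL$ is an isomorphism by checking it on the open stratum (where it is automatic) and on the closed stratum $U\bs G_e/U$. The condition $s\notin\WL$ means precisely that $\a_s^\vee{}^*\cL$ is a \emph{nontrivial} tame local system on $\Gm$. Identifying $G/B\cong\PP^1$ with $B\leftrightarrow 0$ and $sB\leftrightarrow\infty$ as in Lemma \ref{l:invertible}, the stalk of $\D(\ds)_\cL$ (resp. $\nb(\ds)_\cL$) at $\dot e$ computes the cohomology of a rank one tame local system $\cK$ on $\Gm\subset\PP^1$, extended by $j_{0!}$ (resp. $j_{0*}$). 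Because $\cK$ corresponds to the nontrivial character $\a_s^\vee{}^*\cL$, all its cohomology on $\PP^1-\{0\}\cong\Ga$ — both $\cohog{*}{}$ and $\cohoc{*}{}$ — vanishes, so the stalk and costalk of $\D(\ds)_\cL$ and $\nb(\ds)_\cL$ at $\dot e$ both vanish. Hence both $\D(\ds)_\cL$ and $\nb(\ds)_\cL$ are already supported on $G_s$, so $\D(\ds)_\cL \isom \IC(\ds)_\cL \isom \nb(\ds)_\cL$. (This is the ``cleanness'' of the local system $\cK$.)

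For part (2), given (1), I would argue that $\IC(\ds)_\cL$ is invertible under convolution. By Lemma \ref{l:invertible}, $(-)\star\D(\ds)_\cL$ is already an equivalence with inverse $(-)\star\nb(\ds^{-1})_{s\cL}$; but by part (1) we have $\D(\ds)_\cL\cong\IC(\ds)_\cL$ and, applying part (1) to the simple reflection $s$ again (with $\cL$ replaced by $s\cL$, noting $s\notin\WL$ iff $s\notin W_{s\cL}$ by \eqref{RwL}), $\nb(\ds^{-1})_{s\cL}\cong\IC(\ds^{-1})_{s\cL}$. So $(-)\star\IC(\ds)_\cL$ is an equivalence with inverse $(-)\star\IC(\ds^{-1})_{s\cL}$.

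For part (3): since $s\notin\WL$, for any $w\in {}_{\cL'}W_{s\cL}$ I claim $\ell(ws)=\ell(w)+1$ — this is where $s\notin\WL$ enters, via the fact that $w^{-1}$ sends $\a_s$ to a root $\b$ with $\a_s^\vee{}^*\cL$ nontrivial forcing $\b>0$ (equivalently, $w\WL\mapsto ws\WL$ is a genuine length-increasing coset transition in $W/\WL$, cf. \S\ref{ss:rs}); I should spell this out carefully since it is the only place the hypothesis is used in (3). Granting $\ell(ws)=\ell(w)+1$, Lemma \ref{l:basic conv} gives $\D(\dw)_{s\cL}\star\D(\ds)_\cL\cong\D(\dw\ds)_\cL$ and $\nb(\dw)_{s\cL}\star\nb(\ds)_\cL\cong\nb(\dw\ds)_\cL$; combined with part (1) these become $\D(\dw)_{s\cL}\star\IC(\ds)_\cL\cong\D(\dw\ds)_\cL$ and $\nb(\dw)_{s\cL}\star\IC(\ds)_\cL\cong\nb(\dw\ds)_\cL$. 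For the $\IC$ statement, since $\IC(\dw)_{s\cL}=\Im(\D(\dw)_{s\cL}\to\nb(\dw)_{s\cL})$ and $(-)\star\IC(\ds)_\cL$ is an exact equivalence of triangulated categories that is moreover $t$-exact for the perverse $t$-structure (being convolution with a single shifted local system on an affine-space stratum over a point — one checks left and right $t$-exactness from Lemma \ref{l:basic conv} applied to $\D$ and $\nb$), it commutes with taking image of the canonical map $\D(\dw)_{s\cL}\to\nb(\dw)_{s\cL}$, yielding $\IC(\dw)_{s\cL}\star\IC(\ds)_\cL\cong\IC(\dw\ds)_\cL$.

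The main obstacle I anticipate is part (1): correctly pinning down that the relevant rank one tame local system $\cK$ on $\Gm$ is the one attached to $\a_s^\vee{}^*\cL$ (tracking the $T$-equivariance through the identification $G/B\cong\PP^1$ and the descent), and verifying the vanishing of both $\cohog{*}{\Ga,\cK}$ and $\cohoc{*}{\Ga,\cK}$ for a nontrivial character sheaf — i.e. the cleanness input. Everything after that is formal manipulation with Lemmas \ref{l:basic conv} and \ref{l:invertible} plus the length computation for (3).
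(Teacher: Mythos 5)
Your parts (1) and (2) follow essentially the same route as the paper: reduce to the semisimple rank one Levi $L_{s}$, observe that $s\notin\WL$ means exactly that $(\a_{s}^{\vee})^{*}\cL$ is a nontrivial tame rank one local system, and conclude cleanness (vanishing of stalk and costalk of $\IC(\ds)_{\cL}$ along the closed stratum); then (2) is formal from (1) and Lemma \ref{l:invertible}. Your description of the rank one computation is slightly off target --- the stalk of $\D(\ds)_{\cL}$ at $\dot e$ is zero for free, the closed stratum is a copy of $T$ rather than a point, and the sheaf lives on $G/U$ rather than on $G/B\cong\PP^{1}$, so the relevant vanishing is a local (transverse-slice/monodromy-invariants) statement rather than the global cohomology of $\cK$ on $\PP^{1}-\{0\}$ --- but the essential input you isolate is the correct one and the paper's computation (on $\SL_{2}/U_{1}\cong\AA^{2}-\{0\}$, where $C(\ds)_{\cL_{1}}\cong\pr_{2}^{*}\cL_{1}$) is exactly of this kind.

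Part (3) contains a genuine error. You claim that $s\notin\WL$ forces $\ell(ws)=\ell(w)+1$ for every $w\in{}_{\cL'}W_{s\cL}$, and you build the whole of (3) on this. That claim is false: whether $\ell(ws)>\ell(w)$ is governed by the sign of $w\a_{s}$, which is not constrained by the condition that $(\a_{s}^{\vee})^{*}\cL$ be nontrivial. Concretely, take $G=\SL_{2}$ and $\cL$ of order $2$, so $s\cL=\cL$, $\WL=\{e\}$ and $s\notin\WL$; then $w=s\in{}_{\cL}W_{s\cL}$ has $\ell(ws)=\ell(e)=0<1=\ell(w)$. (What is true is the statement about the block-length $\ell_{\b}$, cf.\ Lemma \ref{l:ell beta}(2), which you may be conflating with $\ell$.) Consequently your argument only covers the case $\ell(ws)>\ell(w)$, and the case $\ell(ws)<\ell(w)$ --- which genuinely occurs --- is left unhandled. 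The missing step is short but not vacuous: when $\ell(ws)<\ell(w)$ one uses part (1) to replace $\IC(\ds)_{\cL}$ by $\nb(\ds)_{\cL}$, factors $\D(\dw)_{s\cL}\cong\D(\dw\ds)_{\cL}\star\D(\ds^{-1})_{s\cL}$ by Lemma \ref{l:basic conv} (legitimate since now $\ell(w)=\ell(ws)+1$), and then cancels $\D(\ds^{-1})_{s\cL}\star\nb(\ds)_{\cL}\cong\D(\dot e)_{\cL}$ by Lemma \ref{l:invertible}; dually for $\nb(\dw)_{s\cL}$. With both cases in hand, your concluding argument for the $\IC$ statement (perverse $t$-exactness of $(-)\star\IC(\ds)_{\cL}$ from preservation of standards and costandards, hence preservation of simple perverse sheaves, identified via the nonzero map from $\D(\dw\ds)_{\cL}$) is the same as the paper's and is fine.
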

\begin{proof}
(1) We need to show that $i^{*}_{e}\IC(\ds)_{\cL}=0$ and  $i^{!}_{e}\IC(\ds)_{\cL}=0$. Replacing $G$ by its Levi subgroup $L_{s}$ containing $T$ and with roots $\pm\a_{s}$, we reduce to the case $G$ has semisimple rank one. In this case, there is a central isogeny $\nu: Z^{\c}\times \SL_{2}\to G$ where $Z^{\c}$ is the neutral component of the center of $G$. Let $\cL_{1}=(\a^{\vee})^{*}\cL\in\Ch(\Gm)$. The condition $s\notin \WL$ is equivalent to $\cL_{1}$ being nontrivial. Identify $\Gm$ with the diagonal torus $T_{1}\subset\SL_{2}$, $\IC(\ds)_{\cL_{1}}\in D^{b}_{(\Gm\times\Gm, s\cL_{1}\boxtimes\cL_{1}^{-1}), m}(U_{1}\bs \SL_{2}/U_{1})$ is defined ($U_{1}\subset \SL_{2}$ is the unipotent upper triangular subgroup).  Let $\cL_{0}=\cL|_{Z^{\c}}$.  Then $\nu^{*}C(\ds)_{\cL}\cong \cL_{0}\boxtimes C(\ds)_{\cL_{1}}$ on the open stratum of $Z^{\c}\times U_{1}\bs \SL_{2}/U_{1}$. Since $\nu$ is finite, $\IC(\ds)_{\cL}$ is a direct summand of $\nu_{*}(\cL_{0}\boxtimes \IC(\ds)_{\cL_{1}})$. By proper base change, it suffices to show that the stalks and costalks of $\IC(\ds)_{\cL_{1}}$ vanish along the identity coset of $U_{1}\bs \SL_{2}/U_{1}$. We identify $\SL_{2}/U_{1}$ with $\AA^{2}-\{0\}$ with $\SL_{2}$ acting as the standard representation on $\AA^{2}$. The right $T_{1}$-translation on $\SL_{2}/U_{1}$ is the scaling action of $\Gm$ on $\AA^{2}-\{0\}$. The open stratum $(\SL_{2}-B_{1})/U_{1}\subset \SL_{2}/U_{1}$ is $j: \AA^{1}\times\Gm\incl \AA^{2}-\{0\}$. A direct calculation shows that $C(\ds)_{\cL_{1}}\cong \pr_{2}^{*}\cL_{1}$, where $\pr_{2}:\AA^{1}\times\Gm\to\Gm$ is the projection to the second factor. Since $\cL_{1}$ is nontrivial, $\IC(\ds)_{\cL_{1}}=j_{!*}C(\ds)_{\cL_{1}}$ has zero stalk and costalk along the closed stratum $\Gm\times\{0\}\subset \AA^{2}-\{0\}$. This proves (1).
 
(2) follows from (1) and Lemma \ref{l:invertible}.

(3) If $\ell(ws)>\ell(w)$, then by (1), $\D(\dw)_{s\cL}\star\IC(\ds)_{\cL}\cong \D(\dw)_{s\cL}\star\D(\ds)_{\cL}$, which is isomorphic to $\D(\dw\ds)_{\cL}$ by Lemma \ref{l:basic conv}. If $\ell(ws)<\ell(w)$, then by (1), $\D(\dw)_{s\cL}\star\IC(\ds)_{\cL}\cong \D(\dw)_{s\cL}\star\nb(\ds)_{\cL}$. By Lemma \ref{l:basic conv} we have $\D(\dw)_{s\cL}\cong \D(\dw\ds)_{\cL}\star\D(\ds^{-1})_{s\cL}$, therefore $\D(\dw)_{s\cL}\star\nb(\ds)_{\cL}\cong \D(\dw\ds)_{\cL}\star\D(\ds^{-1})_{s\cL}\star\nb(\ds)_{\cL}\cong \D(\dw\ds)_{\cL}\star\D(\dot e)_{\cL}\cong \D(\dw\ds)_{\cL}$ by Lemma \ref{l:invertible}. In any case we have $\D(\dw)_{s\cL}\star\IC(\ds)_{\cL}\cong \D(\dw\ds)_{\cL}$.

The proof of $\nb(\dw)_{s\cL}\star\IC(\ds)_{\cL}\cong \nb(\dw\ds)_{\cL}$ is similar. 

The equivalence $(-)\star\IC(\ds)_{\cL}$ then preserves the standard objects and costandard objects, hence it is $t$-exact for the perverse $t$-structure, and sends simple perverse sheaves to  simple perverse sheaves. Now for $w\in W$,  $\IC(\dw)_{s\cL}\star\IC(\ds)_{\cL}$ is a simple perverse sheaf in ${}_{\cL'}\cD_{\cL}$. Since $\IC(\dw)_{s\cL}\star\IC(\ds)_{\cL}$ receives a nonzero map from $\D(\dw)_{s\cL}\star\IC(\ds)_{\cL}\cong\D(\dw\ds)_{\cL}$, it must be isomorphic to $\IC(\dw\ds)_{\cL}$.
\end{proof}

%%%%%%%%%% ok %%%%%%%%%%
\subsection{The object  $\IC(s)_{\cL}$ when $s\in \WL$}\label{ss:inert s} Suppose $s\in \WL$. Let $\a_{s}$ be the simple root corresponding to $s$. Let $P_{s}$ be the standard parabolic subgroup whose Levi subgroup $L_{s}$ has roots $\{\pm \a_{s}\}$. Let $U^{s}$ be the unipotent radical of $P_{s}$. Since $(\a^{\vee}_{s})^{*}\cL$ is trivial, the local system $\cL$ extends to a rank one character sheaf $\wt\cL$ on $L_{s}$ by Lemma \ref{l:ext L}. In particular, the stalk of $\wt\cL$ at $e\in L_{s}$ has a canonical trivialization. We use the same notation $\wt\cL$ to denote its pullback to $P_{s}$. The object $\wt\cL\j{1}\in \cD^{b}_{(T\times T, \cL\boxtimes\cL^{-1}),m}(U\bs P_{s}/U)$, extended by zero, can be viewed as an object of ${}_{\cL}\cD_{\cL}$, and as such, it is isomorphic to $\IC(\ds)_{\cL}$.
 
In other words, when $s\in \WL$, we have a {\em canonical} object $\IC(s)_{\cL}:=i_{\le s*}\wt\cL\j{1}\in {}_{\cL}\cD_{\cL}$  equipped with an isomorphism  of its stalk at the identity $e$ with $\Qlbar\j{1}$. We have $\om\IC(s)_{\cL}\cong\om\IC(\ds)_{\cL}$.

Let ${}_{\cL'}\cD_{\wt \cL}=\cD^{b}_{(T\times L_{s}, \cL'\boxtimes\wt\cL^{-1}),m}(U\bs G/U^{s}) $, where the action of $T\times L_{s}$ on $U\bs G/U^{s}$ is given by $(t,h)\cdot g=tgh^{-1}$, $t\in T, h\in L_{s}, g\in G$.   Applying the constructions in \S\ref{ss:fun}, we get an adjoint pair
\begin{equation*}
\xymatrix{\pi^{*}_{s}: {}_{\cL'}\cD_{\wt\cL}\ar@<.5ex>[r] & {}_{\cL'}\cD_{\cL}: \pi_{s*} \ar@<.5ex>[l].
}
\end{equation*}
Since $P_{s}/B$ is proper, $\pi_{s*}$ also admits a right adjoint
\begin{equation*}
\xymatrix{\pi_{s*}: {}_{\cL'}\cD_{\cL}\ar@<.5ex>[r] & {}_{\cL'}\cD_{\wt\cL}: \pi^{!}_{s}\ar@<.5ex>[l]}
\end{equation*} 
and $\pi^{!}_{s}\cong \pi^{*}_{s}\j{2}$.

%%%%%%%% ok %%%%%%%%%%
\begin{lemma}\label{l:inert s} Let $\cL,\cL'\in \fo$ and $s$ be a simple reflection in $W$ such that $s\in \WL$, then there is a canonical isomorphism of endo-functors
\begin{equation*}
(-)\star\IC(s)_{\cL}\cong \pi_{s}^{*}\pi_{s*}(-)\j{1}\cong \pi_{s}^{!}\pi_{s*}(-)\j{-1}\in \End({}_{\cL'}\cD_{\cL}).
\end{equation*}
\end{lemma}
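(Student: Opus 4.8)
The plan is to identify the convolution $(-)\star\IC(s)_\cL$ with a pull-push through the partial flag variety $U\bs G/U^s$, exactly as in the non-monodromic case. Recall from \S\ref{ss:inert s} that $\IC(s)_\cL = i_{\le s*}\wt\cL\j1$, where $\wt\cL\j1$ lives on $U\bs P_s/U$, extended by zero from the open stratum; equivalently it is the $*$-pushforward of $\wt\cL\j1$ along the proper map $U\bs P_s/U \to U\bs G/U$. The first step is to unwind the definition of convolution $\cF\star\IC(s)_\cL$ using the correspondence
\begin{equation*}
U\bs G/U \xleftarrow{\ \pi\ } U\bs G\twtimes{U}G/U \longrightarrow U\bs G\twtimes{B}G/U \xrightarrow{\ m\ } U\bs G/U,
\end{equation*}
but restricting the second factor to $P_s$: since $\IC(s)_\cL$ is supported on $U\bs G_{\le s}/U = U\bs P_s/U$, only the locus $U\bs G\twtimes{B}P_s/U$ contributes, and the multiplication map restricted there is $m\colon U\bs G\twtimes{B}P_s/U \to U\bs G/U$, which is proper with fibers $P_s/B\cong\PP^1$.

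The second step is to recognize $U\bs G\twtimes{B}P_s/U$ as (a $U^s$-bundle over) the product $U\bs G/U^s \times_{?}$ something — more precisely, the key geometric observation is that the composite map
\begin{equation*}
U\bs G\twtimes{B}P_s/U \xrightarrow{\ \pr_1, m\ } (U\bs G/U)\times_{U\bs G/U^s}(U\bs G/U)
\end{equation*}
realizes the convolution kernel as the graph of $\pi_s^*\pi_{s*}$, after accounting for the twist by $\wt\cL$ on the $P_s$-factor. Concretely, writing an element of $U\bs G\twtimes{B}P_s/U$ as $(g_1, g_2)$ with $g_2\in P_s$, the map sends it to $g_1 g_2 \in U\bs G/U$ on the $m$-side and records $g_1 \bmod U^s$ on the other side; the fiber of the product over a pair $(x,y)$ with the same image in $U\bs G/U^s$ is then identified with $P_s/B$, matching the $\PP^1$-fibers of $m$. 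Under this identification, pushing $\cF\wt\boxtimes\IC(s)_\cL$ down along $m$ becomes: pull $\cF$ back along $\pi_s\colon U\bs G/U^s \to U\bs G/U$ (using that $\cF$ is $(T,\cL)$-equivariant so the twist by $\wt\cL^{-1}\boxtimes\wt\cL$ in passing to $U^s$-invariants is harmless — this is precisely where the extension $\wt\cL\in\Ch(L_s)$ from Lemma \ref{l:ext L} and its canonical trivialization at $e$ are used), then push forward along $\pi_s$, up to a cohomological shift and Tate twist coming from the $\PP^1$-fiber and the normalization $\j1$ in $\IC(s)_\cL$. This yields $(-)\star\IC(s)_\cL\cong\pi_s^*\pi_{s*}(-)\j1$.

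The third step is purely formal: $\pi_{s*}$ is proper (as $P_s/B\cong\PP^1$ is proper), so $\pi^!_s\cong\pi^*_s\j2$ by the relative duality already recorded at the end of \S\ref{ss:inert s}, whence $\pi^*_s\pi_{s*}(-)\j1\cong\pi^!_s\pi_{s*}(-)\j{-1}$, giving the second isomorphism. The main obstacle I expect is the second step: carefully setting up the identification of $U\bs G\twtimes{B}P_s/U$ with the fiber product $(U\bs G/U)\times_{U\bs G/U^s}(U\bs G/U)$ in an equivariant way, and tracking exactly how the $\wt\cL$-twist on the right $P_s$-factor of $\IC(s)_\cL$ interacts with the descent data defining $\cF\wt\boxtimes\IC(s)_\cL$, so that after pushforward the twist disappears and one genuinely lands in ${}_{\cL'}\cD_\cL$ with the stated shift. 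Once the diagram chase is arranged so that all the equivariance and monodromy bookkeeping is consistent, the shift and twist can be pinned down by computing on the single point $i_{\dot e}^*$ as in Lemma \ref{l:invertible}, or by comparing with the base change of \S\ref{ss:inert s} along $\nu\colon Z^\c\times\SL_2\to G$ to reduce to a direct computation on $\PP^1$.
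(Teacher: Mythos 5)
Your proposal is correct and follows essentially the same route as the paper: both identify $\cF\star\IC(s)_{\cL}$ with a pull-push through $U\bs G/U^{s}$ using the properness of $P_{s}/B\cong\PP^{1}$ and the extension $\wt\cL$, and deduce the second isomorphism from $\pi_{s}^{!}\cong\pi_{s}^{*}\j{2}$. The bookkeeping you flag as the main obstacle in your second step is in fact definitional in the paper: $\pi_{s*}\cF$ was constructed in \S\ref{ss:fun} precisely as $a_{*}(\cF\wt\boxtimes\wt\cL)$ for the action map $a\colon U\bs G\twtimes{B}P_{s}\to U\bs G$, so $\cF\star\IC(s)_{\cL}=a_{*}(\cF\wt\boxtimes\wt\cL\j{1})$ is literally the underlying complex of $\pi_{s*}\cF\j{1}$, and applying $\pi_{s}^{*}$ just amounts to remembering only the $(T,\cL)$-equivariance on the right.
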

\begin{proof}
Let $a: U\bs G\twtimes{B}P_{s}\to U\bs G$ be map given by the right action of $P_{s}$ on $G$. By the definition of convolution and $\IC(s)_{\cL}=i_{\le s*}\wt\cL\j{1}$, we have for $\cF\in{}_{\cL'}\cD_{\cL}$
\begin{equation*}
\cF\star\IC(s)_{\cL}\cong a_{*}(\cF\wt\boxtimes\wt\cL\j{1})
\end{equation*}
where $\cF\wt\boxtimes\wt\cL\j{1}$ is the descent of $\cF\boxtimes\wt\cL\j{1}$ to $U\bs G\twtimes{B}P_{s}$. Comparing with the definition of $\pi_{s*}$, we see that $a_{*}(\cF\wt\boxtimes\wt\cL)$ is exactly the underlying complex of $\pi_{s*}\cF$. If we only remember the $(T,\cL)$-equivariance of $a_{*}(\cF\wt\boxtimes\wt\cL\j{1})$ (by right translation), it is the same as $\pi^{*}_{s}\pi_{s*}\cF\j{1}$.  
\end{proof}

%%%%%%% ok %%%%%%%%%
\begin{cor}\label{c:adj ICs} Let $s\in W$ be a simple reflection such that $s\in \WL$. Then the functor 
\begin{equation*}
(-)\star\IC(s)_{\cL}: {}_{\cL'}\cD_{\cL}\to {}_{\cL'}\cD_{\cL}
\end{equation*}
has a right adjoint also given by $(-)\star\IC(s)_{\cL}$.
\end{cor}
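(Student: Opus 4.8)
The plan is to deduce this formally from Lemma~\ref{l:inert s}, which gives two natural descriptions of the endofunctor $F := (-)\star\IC(s)_{\cL}$ on ${}_{\cL'}\cD_{\cL}$, namely
\[
F\;\cong\;\pi_{s}^{*}\pi_{s*}(-)\j{1}\;\cong\;\pi_{s}^{!}\pi_{s*}(-)\j{-1}.
\]
The second of these is the natural candidate for the right adjoint of the first, so the statement should follow once the adjoint functors involved are assembled correctly.

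First I would record the adjunctions at hand: by the construction in \S\ref{ss:fun}, $\pi_{s*}$ is the right adjoint of $\pi_{s}^{*}$; by \S\ref{ss:inert s}, properness of $P_{s}/B$ furnishes a right adjoint $\pi_{s}^{!}$ of $\pi_{s*}$; and $\j{1}$ is an equivalence with two-sided inverse $\j{-1}$, hence right adjoint to $\j{-1}$. Next, writing $F\cong \j{1}\circ\pi_{s}^{*}\circ\pi_{s*}$ and using that the right adjoint of a composite is the composite of the right adjoints taken in the reverse order, the right adjoint of $F$ is
\[
\pi_{s}^{!}\circ\pi_{s*}\circ\j{-1}\;=\;\pi_{s}^{!}\pi_{s*}(-)\j{-1},
\]
which is canonically isomorphic to $F$ by the second isomorphism of Lemma~\ref{l:inert s}. (Running the same argument against the description $\pi_{s}^{!}\cong\pi_{s}^{*}\j{2}$ of $\pi_{s}^{!}$ from \S\ref{ss:inert s} also shows $F$ is its own left adjoint, though that is not needed here.)

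I do not anticipate a genuine obstacle: all the geometric content — in particular the existence of $\pi_{s}^{!}$ and the two identifications of $F$ — has already been packaged into Lemma~\ref{l:inert s}, so what remains is the formal composition of adjunctions. The only point that requires a little care is matching the Tate twist $\j{1}$ appearing in the first presentation of $F$ with the twist $\j{-1}$ that emerges upon passing to adjoints; but this is precisely what the two displayed isomorphisms of Lemma~\ref{l:inert s} are arranged to reconcile.
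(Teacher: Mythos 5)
Your proposal is correct and is essentially the paper's own argument: the paper likewise deduces the self-adjointness by rewriting $(-)\star\IC(s)_{\cL}$ via the two presentations in Lemma \ref{l:inert s} and chaining the adjunctions $(\pi_s^*,\pi_{s*})$ and $(\pi_{s*},\pi_s^!)$, only writing out the resulting $\Hom$-isomorphisms explicitly rather than composing the adjoints at the level of functors. (Minor nit: you want the \emph{right} adjoint of $\j{1}$ to be $\j{-1}$, which of course holds since the two twists are mutually inverse equivalences.)
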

\begin{proof}
Let $\cF\in {}_{\cL'}\cD_{s\cL}$ and $\cG\in {}_{\cL'}\cD_{\cL}$.  We have natural isomorphisms by Lemma \ref{l:inert s}
\begin{eqnarray*}
\Hom(\cF\star\IC(s)_{\cL}, \cG)&\cong& \Hom(\pi^{*}_{s}\pi_{s*}\cF\j{1}, \cG)\cong\Hom(\pi_{s*}\cF\j{1}, \pi_{s*}\cG)\\
&\cong&\Hom(\cF, \pi_{s}^{!}\pi_{s*}\cG\j{-1})\cong \Hom(\cF, \cG\star\IC(s)_{\cL}).
\end{eqnarray*}
\end{proof}

%%%%%%%%%% ok %%%%%%%%
\subsection{}\label{ss:descend IC} Let $s$ be a simple reflection in $W$ and $s\in \WL$. Recall the rank one character sheaf $\wt\cL$ on $L_{s}$, the category ${}_{\cL'}\cD_{\wt\cL}$ and the functors $(\pi^{*}_{s},\pi_{s*})$ from \S\ref{ss:inert s}. 

Bruhat decomposition gives $G=\sqcup_{\ov w\in W/\j{s}}B\ov wP_{s}$. For a lifting $\dw\in N_{G}(T)$ of $\ov w\in W/\j{s}$, we have an isomorphism 
\begin{equation*}
U\bs B\ov w P_{s}/U^{s}\cong \dw \cdot (\Ad(\dw^{-1})U\cap L_{s})\bs L_{s}.
\end{equation*}
The left translation by $t\in T$ on the left side becomes the left translation of $\Ad(\dw^{-1})t$ on $(\Ad(\dw^{-1})U\cap L_{s})\bs L_{s}$. From this we get an equivalence
\begin{equation*}
i_{\dw}^{*}: {}_{\cL'}\cD(\ov w)_{\wt\cL}:=D^{b}_{(T\times L_{s}, \cL'\boxtimes\wt\cL^{-1}),m}(U\bs B\ov wP_{s}/U^{s})\cong D^{b}_{(T, \cL'\ot \ov w\wt\cL^{-1}|_{T}),m}(\{\dw\})=D^{b}_{(T, \cL'\ot \ov w\cL^{-1}),m}(\{\dw\}).
\end{equation*}
Therefore ${}_{\cL'}\cD(\ov w)_{\wt\cL}=0$ unless $\cL'=\ov w\cL$, in which case it is equivalent to $D^{b}_{T,m}(\{\dw\})$. Let $C(\dw)_{\wt\cL}\in {}_{\cL'}\cD(\ov w)_{\wt\cL}$ correspond to $\Qlbar\j{\ell'(\ov w)}$ under $i^{*}_{\dw}$ (here $\ell'(\ov w)$ is the maximal length of elements in the coset $\ov w\in W/\j{s}$).   Let $\IC(\dw)_{\wt\cL}\in {}_{\cL'}\cD_{\wt\cL}$ be the middle extension of $C(\dw)_{\wt\cL}$. The isomorphism classes of $\om C(\dw)_{\wt\cL}$ and $\om\IC(\dw)_{\wt\cL}$ depend only on $\ov w$. Similarly one defines the $!$ and $*$-extensions $\D(\dw)_{\wt\cL}$ and $\nb(\dw)_{\wt\cL}$ of $C(\dw)_{\wt\cL}$.

%%%%%%%%%% ok %%%%%%%%
\begin{lemma}\label{l:IC descend} Let $s$ be a simple reflection in $W$ and $s\in \WL$.
Suppose $\ell(w)>\ell(ws)$, then 
\begin{equation*}
\pi^{*}_{s}\IC(\dw)_{\wt\cL}\cong \IC(\dw)_{\cL}.
\end{equation*}
\end{lemma}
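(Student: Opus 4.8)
The statement asserts that pulling back the IC-sheaf $\IC(\dw)_{\wt\cL}$ on $U\bs G/U^{s}$ along $\pi_{s}$ recovers $\IC(\dw)_{\cL}$ on $U\bs G/U$, when $\ell(w)>\ell(ws)$. The natural strategy is to verify that $\pi_{s}^{*}\IC(\dw)_{\wt\cL}$ has the three defining properties of the middle extension $\IC(\dw)_{\cL}=i_{w,!*}C(\dw)_{\cL}$ in ${}_{w\cL}\cD_{\cL}$: namely (i) its restriction to the open stratum $U\bs G_{w}/U$ is $C(\dw)_{\cL}$ (up to the relevant shift), (ii) it is a perverse sheaf with no subobject or quotient supported on the boundary $G_{<w}$, equivalently $i_{v}^{*}\pi_{s}^{*}\IC(\dw)_{\wt\cL}$ lies in perverse degrees $<0$ and $i_{v}^{!}\pi_{s}^{*}\IC(\dw)_{\wt\cL}$ in perverse degrees $>0$ for all $v<w$. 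By uniqueness of the middle extension this forces $\pi_{s}^{*}\IC(\dw)_{\wt\cL}\cong\IC(\dw)_{\cL}$.

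\textbf{Step 1: control the geometry of $\pi_{s}$ over strata.} Since $\ell(w)>\ell(ws)$, the element $w$ is the maximal-length representative of the coset $\ov w\in W/\jiao{s}$, so $\ell'(\ov w)=\ell(w)$; this matches the shift in the definition of $C(\dw)_{\wt\cL}$. The map $\pi_{s}:U\bs G/U\to U\bs G/U^{s}$ is a $\PP^{1}$-bundle (the fiber being $P_{s}/B$). On the Bruhat pieces, $\pi_{s}$ carries $G_{w}\sqcup G_{ws}=B\ov wP_{s}$ onto the stratum $B\ov wP_{s}/U^{s}$, and restricted to $G_{w}$ (the open part, since $\ell(w)>\ell(ws)$) it is an isomorphism onto that stratum. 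Hence $\pi_{s}^{*}C(\dw)_{\wt\cL}$ restricted to $U\bs G_{w}/U$ is exactly $C(\dw)_{\cL}$ up to the shift built into the normalizations, giving property (i).

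\textbf{Step 2: perversity and support conditions.} Because $\pi_{s}$ is a smooth proper morphism of relative dimension $1$ with connected fibers, $\pi_{s}^{*}[1]$ (equivalently $\pi_{s}^{*}\jiao{1}$ up to twist) is perverse $t$-exact and sends middle extensions to middle extensions: more precisely, for a $\PP^{1}$-bundle the pullback of an IC-sheaf is again an IC-sheaf, since $\pi_{s}^{*}$ is fully faithful up to shift on the relevant subcategories and commutes with $i_{v,!*}$. Concretely, one checks for each $v<w$ that $i_{v}^{*}\pi_{s}^{*}\IC(\dw)_{\wt\cL}=\pi_{s}^{*}i_{\ov v}^{*}\IC(\dw)_{\wt\cL}\,[\text{shift}]$ using the cartesian square comparing the stratum $U\bs G_{v}/U$ with its image, and then invokes the stalk/costalk support estimates for $\IC(\dw)_{\wt\cL}$ on $U\bs G/U^{s}$ together with the exactness of $\pi_{s}^{*}$. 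One needs to be slightly careful that the fibers of $\pi_{s}$ over the lower strata have the expected dimension: over $B\ov vP_{s}/U^{s}$ the preimage meets both $G_{v}$ and $G_{vs}$, but the shift bookkeeping works out because $\ell'(\ov v)$ is defined as the maximal length in the coset, matching the normalization of $C(\dw)_{\wt\cL}$. After this, the support conditions for $\pi_{s}^{*}\IC(\dw)_{\wt\cL}$ follow immediately from those for $\IC(\dw)_{\wt\cL}$.

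\textbf{Main obstacle.} The delicate point is Step 2, specifically matching the normalization shifts: one must confirm that the definition of $\IC(\dw)_{\wt\cL}$ via the shift $\jiao{\ell'(\ov w)}$ (max length in the coset) is precisely what makes $\pi_{s}^{*}$ intertwine it with $\IC(\dw)_{\cL}$ (shift $\jiao{\ell(w)}$), including on the boundary strata where the fiber dimension of $\pi_{s}$ and the jump in $\ell'$ versus $\ell$ interact. A clean way to sidestep the stratum-by-stratum estimate is to argue that $\pi_{s}^{*}\jiao{1}$ restricted to the appropriate subcategory is an equivalence onto its image that is perverse $t$-exact and intertwines the recollement data for $U\bs G_{w}/U\hookrightarrow U\bs G_{\le w}/U$ with that for $U\bs B\ov wP_{s}/U^{s}$; then it automatically commutes with $i_{w,!*}$, and the result follows from $\pi_{s}^{*}C(\dw)_{\wt\cL}\jiao{1}\cong C(\dw)_{\cL}$ established in Step 1.
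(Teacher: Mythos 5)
Your proposal is correct and follows essentially the same route as the paper: the paper's proof is precisely your ``clean way'' — identify $\pi_s^*$ with pullback along the smooth $\PP^1$-fibration $\wt\pi_s: G/\wt B\to G/\wt P_s$, use that such a pullback (shifted by $1$) sends simple perverse sheaves to simple perverse sheaves, so $\wt\pi_s^*\IC(\dw)_{\wt\cL}$ is the middle extension of a shifted local system on $(G_w\cup G_{ws})/\wt B$, which agrees with the middle extension of its restriction to the dense open piece $G_w/\wt B$, identified with $C(\dw)_\cL$ by a stalk computation at $\dw$. The stratum-by-stratum costalk estimates in your Step 2 are therefore unnecessary.
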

\begin{proof}
Unwinding the definition of $\pi^{*}_{s}$, it is the pullback along the smooth map $\wt\pi_{s}:  G/\wt B\to G/\wt P_{s}$. Here we take  a finite \'etale central isogeny $\nu:\wt L_{s}\to L_{s}$ such that $\wt\cL$ is defined in terms of a character of $\ker(\nu)$ as in \S\ref{ss:cs1}; $\wt P_{s}=P_{s}\times_{L_{s}}\wt L_{s}$ and $\wt B=B\times_{L_{s}}\wt L_{s}$. Since $\wt\pi_{s}$ is a smooth $\PP^{1}$-fibration, $\wt\pi^{*}_{s}$ sends simple perverse sheaves to simple perverse sheaves up to a shift. In particular, $\wt\pi^{*}_{s}\IC(\dw)_{\wt\cL}$ is the middle extension of $\wt\pi^{*}_{s}C(\dw)_{\wt\cL}\j{1}$, a shifted local system on $\wt\pi^{-1}_{s}(B\ov wP_{s}/\wt P_{s})=(G_{w}\cup G_{ws})/\wt B$. By looking at stalks at $\dw$, we have  $\wt\pi^{*}_{s}C(\dw)_{\wt\cL}|_{G_{w}/\wt B}\cong C(\dw)_{\cL}$, therefore their middle extensions agree, i.e., $\wt\pi^{*}_{s}\IC(\dw)_{\wt\cL}\cong \IC(\dw)_{\cL}$.
\end{proof}

%%%%%%%%%% ok %%%%%%%%%%%%
The next Proposition shows that the stalks and costalks of $\IC(\dw)_{\cL}$ have the same parity and purity properties as their non-monodromic counterparts.

\begin{prop}\label{p:parity purity} Let $w\in W$, $\cL\in\fo$ and $v\in {}_{w\cL}W_{\cL}$.
\begin{enumerate} 
\item The complexes $i^{*}_{v}\IC(\dw)_{\cL}$ and $i^{!}_{v}\IC(\dw)_{\cL}$ are pure of weight zero as objects in ${}_{w\cL}\cD(v)_{\cL}$.
%\item For any $v\in W$, $i^{*}_{v}\IC(\dw)_{\cL}$ and $i^{!}_{v}\IC(\dw)_{\cL}$ are successive extensions of $C(\dot v)_{\cL}\j{n}\ot V$ with $n\equiv \ell(w)-\ell(v) \mod 2$ and $V$ a one-dimensional $\Fr$-module with .
\item The (non-mixed) complexes $i^{*}_{v}\uIC(w)_{\cL}$ and $i^{!}_{v}\uIC(w)_{\cL}$ are isomorphic to direct sums of $\uC(v)_{\cL}[n]$ for $n\equiv \ell(w)-\ell(v) \mod 2$.
\end{enumerate}
\end{prop}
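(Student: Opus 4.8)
The plan is to reduce the statement to the analogous, well-known properties of $\IC$ sheaves in the non-monodromic Hecke category, by using the structural results already established in the excerpt. Concretely, Proposition~\ref{p:parity purity}(1) and (2) are of the same flavor, so I will treat them together: part (2) follows from part (1) together with the fact that $\om\IC(\dw)_{\cL}=\uIC(w)_{\cL}$ is a genuinely simple perverse sheaf (hence its stalks and costalks on each stratum are shifted local systems, and the strata ${}_{w\cL}\un\cD(v)_{\cL}$ have the property that every local system there is a direct sum of copies of $\uC(v)_{\cL}$ up to shift, by Lemma~\ref{l:one cell}); the weight-zero purity from (1) then pins down the parity of the shifts via Deligne's weight estimates, since a pure complex which is a shifted local system on a point-like stratum forces the shift to have a fixed parity determined by $\ell(w)-\ell(v)$.

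For part (1), the main idea is induction on $\ell(w)$ using a reduced decomposition, peeling off simple reflections on the right and invoking the equivalences from \S3. Write $w = w's$ with $\ell(w) = \ell(w')+1$ for a simple reflection $s$. There are two cases. If $s\notin\WL$, then by Lemma~\ref{l:clean s equiv} the functor $(-)\star\IC(\ds)_{\cL}$ is an equivalence sending $\IC(\dot w')_{s\cL}$ to $\IC(\dw)_{\cL}$, and moreover $\IC(\ds)_{\cL}\cong\D(\ds)_{\cL}\cong\nb(\ds)_{\cL}$ is both a pure weight-zero complex and $\star$-invertible; combining Lemma~\ref{l:conv pure}(2) (purity is preserved under convolution) with the explicit description of how $i^{*}_{v}$ and $i^{!}_{v}$ interact with right convolution by the invertible clean object $\IC(\ds)_{\cL}$ (this uses base change along the isomorphism $G_{v}\twtimes{B}G_{s}\to G_{vs}$ resp. $G_{v}\twtimes{B}G_{s}\to G_{v}$ from Lemma~\ref{l:basic conv} and Lemma~\ref{l:invertible}), one reduces the computation of $i^{*}_{v}\IC(\dw)_{\cL}$ and $i^{!}_{v}\IC(\dw)_{\cL}$ to that of $i^{*}_{v'}\IC(\dot w')_{s\cL}$ and $i^{!}_{v'}\IC(\dot w')_{s\cL}$ for an appropriate $v'$ (either $vs$ or $v$), which are pure of weight zero by the inductive hypothesis applied to $w'$ and $s\cL$. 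If $s\in\WL$, then by Lemma~\ref{l:IC descend} we have $\IC(\dw)_{\cL}\cong\pi^{*}_{s}\IC(\dw)_{\wt\cL}$ (assuming $\ell(w)>\ell(ws)$, which we may arrange by choosing the reduced decomposition appropriately), and since $\pi^{*}_{s}$ is pullback along a smooth proper $\PP^{1}$-fibration it preserves purity and perversity up to shift; the stalks and costalks of $\IC(\dw)_{\wt\cL}$ along the Bruhat strata of $U\bs G/U^{s}$ are, in turn, pure of weight zero by an induction on $W/\jiao{s}$ entirely parallel to the non-monodromic case (or one reduces further using Lemma~\ref{l:clean s equiv} inside the parabolic).

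The base case of the induction is $w=e$, where $\IC(\dot e)_{\cL}=\d_{\cL}$ is supported on the closed stratum $U\bs G_{e}/U$ and equals $C(\dot e)_{\cL}$ there, which is pure of weight zero by construction; here $v=e$ is forced. The purity statement being inductive, one also needs at each stage that the "new" contribution at the stratum $v=w$ itself — the open stratum of the support — is $C(\dw)_{\cL}$, pure of weight zero, which is immediate from the definition of $\IC(\dw)_{\cL}$ as a middle extension of $C(\dw)_{\cL}$.

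The main obstacle will be the bookkeeping of $v$ versus $v'$ (i.e.\ $vs$ versus $v$) in the $s\notin\WL$ case: right convolution with the invertible clean object $\IC(\ds)_{\cL}$ does not literally commute with the stratum-restriction functors $i^{*}_{v},i^{!}_{v}$, and one has to be careful with the two sub-cases $\ell(vs)>\ell(v)$ and $\ell(vs)<\ell(v)$ separately, tracking how $i^{*}_{v}(\cF\star\IC(\ds)_{\cL})$ is computed via the fibration $U\bs G_{v}\twtimes{B}(G_{e}\cup G_{s})/U\to U\bs G_{v s}/U$ (or its analogue) and then identifying the result with $i^{*}_{vs}\cF$ up to a shift and twist. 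This is essentially the same combinatorial shadow as in the classical Kazhdan--Lusztig setup, and the argument is the monodromic analogue of the standard proof that $\IC$ stalks on the flag variety are pure; once the convolution machinery of \S3 is in place there should be no genuinely new difficulty, only care with shifts, Tate twists, and the choice of liftings $\dw,\dv$.
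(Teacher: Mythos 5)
Your overall strategy for part (1) — induction on $\ell(w)$, peeling off a simple reflection $s$ and splitting into the cases $s\notin\WL$ (clean convolution, Lemma \ref{l:clean s equiv}) and $s\in\WL$ — is the same skeleton as the paper's argument, and the $s\notin\WL$ case is essentially correct. In the $s\in\WL$ case you route through $\pi_s^*\IC(\dw)_{\wt\cL}$ and a ``parallel induction'' in the parabolic category ${}_{\cL'}\cD_{\wt\cL}$; the paper instead stays in ${}_{\cL'}\cD_{\cL}$, writes $\IC(\dw)_{\cL}$ (after $\om$) as a summand of $\IC(\dw\ds^{-1})_{s\cL}\star\IC(\ds)_{\cL}$, expands the first factor in standard objects by the inductive hypothesis, and computes $\D(\dv)_{\cL}\star\IC(s)_{\cL}\cong\pi_s^*\pi_{s*}\D(\dv)_{\cL}\j{1}$ explicitly in the two sub-cases $\ell(vs)\gtrless\ell(v)$. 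Your route is workable but defers the real content to an unproved induction on $U\bs G/U^{s}$, and over $\FF_q$ the direct-summand issue the paper flags (one only knows stalks are subquotients, via $E_2$-degeneration of the perverse Leray spectral sequence) would reappear there.

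The genuine gap is in your deduction of part (2). First, the stalk $i_v^*\uIC(w)_{\cL}$ of a simple perverse sheaf on a stratum is in general a complex with cohomology in many degrees (its Poincar\'e polynomial is a Kazhdan--Lusztig polynomial), not ``a shifted local system,'' so the premise of your parity argument fails. Second, and more importantly, weight-zero purity does \emph{not} pin down the parity of cohomological degrees: because of half-integral Tate twists, $\Qlbar\j{n}=\Qlbar[n](n/2)$ is pure of weight zero for \emph{every} $n$, so both parities occur among pure objects and Deligne's estimates give you nothing here. The parity $n\equiv\ell(w)-\ell(v)\bmod 2$ has to be carried along as part of the inductive statement (the paper proves the combined assertion \eqref{stalk purity}, that the stalk lies in $\j{\Qlbar\j{n}\ot V;\ n\equiv\ell(w)-\ell(v)\bmod 2,\ V\in\Fr\lmod_0}$, simultaneously with purity). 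Finally, even granted purity and parity, the stalk is a priori only a \emph{successive extension} of objects $\uC(v)_{\cL}[n]$; to split it into a direct sum you need the vanishing of the relevant extension groups, namely $\Ext^{1}(\uC(v)_{\cL},\uC(v)_{\cL}[2m])=0$ in ${}_{v\cL}\un\cD(v)_{\cL}\cong D^b_{\G(v)_k}(\pt_k)$, which holds because $\upH^{\odd}_{\G(v)_k}(\pt_k)=0$ — this is the step your proposal omits and cannot be replaced by an appeal to Lemma \ref{l:one cell} alone.
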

\begin{proof}
(1) It is enough to show the statement for the stalks;  the costalk statement follows by Verdier duality.  We will prove the stalk statement in (1) together with a weak version of the stalk statement in (2) simultaneously by induction on $\ell(w)$. 

Denote by $\Fr\lmod_{0}$ the category of finite-dimensional $\Fr$-modules pure of weight zero (see \S\ref{sss:Fr}). We show by induction on $\ell(w)$ that for any $x\in G$, the stalk
\begin{equation}\label{stalk purity}
i_{x}^{*}\IC(\dw)_{\cL}\in \j{\Qlbar\j{n}\ot V; n\equiv \ell(w)-\ell(v) \mod 2, V\in\Fr\lmod_{0}}.
\end{equation}
For notation $\j{\cdots}$, see \S\ref{sss:not other}. The truth of this statement is independent of the lifting $\dw$ of $w$. 

For $w=e$ this is clear. Suppose it is proven for $\ell(w)<N$. For $\ell(w)=N$ let $s$ be a simple reflection such that $\ell(w)=\ell(ws)+1$. Over $k$, by Lemma \ref{l:conv pure}, $\IC(\dw)_{\cL}$ is a direct summand of $\IC(\dw\ds^{-1})_{s\cL}\star\IC(\ds)_{\cL}$. When the situation is over $\FF_{q}$, although we do not know a priori that $\IC(\dw)_{\cL}$ is a direct summand of $\IC(\dw\ds^{-1})_{s\cL}\star\IC(\ds)_{\cL}$ over $\FF_{q}$, its stalks are subquotients of stalks of $\IC(\dw\ds^{-1})_{s\cL}\star\IC(\ds)_{\cL}$ as $\Fr$-modules (as the perverse Leray spectral sequence for $\IC(\dw\ds^{-1})_{s\cL}\star\IC(\ds)_{\cL}$ degenerates at $E_{2}$ by the decomposition theorem). Therefore it suffices to show that the stalks of $\IC(\dw\ds^{-1})_{s\cL}\star\IC(\ds)_{\cL}$ lie in $\j{\Qlbar\j{n}\ot V; n\equiv \ell(w)-\ell(v) \mod 2, V\in\Fr\lmod_{0}}$.

By inductive hypothesis for $ws$, we have
\begin{equation*}
\IC(\dw\ds^{-1})_{s\cL}\in\j{\D(\dv)_{s\cL}\j{n}\ot V; v\in {}_{w\cL}W_{s\cL}, n\equiv \ell(w)-\ell(v)\mod2, V\in\Fr\lmod_{0}}.
\end{equation*}
Therefore
\begin{equation*}
\IC(\dw\ds^{-1})_{s\cL}\star\IC(\ds)_{\cL}\in\j{\D(\dv)_{s\cL}\star\IC(\ds)_{\cL}\j{n}\ot V, v\in {}_{w\cL}W_{s\cL}, n\equiv \ell(w)-\ell(v)\mod2, V\in\Fr\lmod_{0}}.
\end{equation*}
We will show that the stalks of $\D(\dv)_{s\cL}\star\IC(\ds)_{\cL}$ are either zero or of the form $\Qlbar\j{\ell(vs)}\ot V$ for some one-dimensional $V\in \Fr\lmod_{0}$, which would finish the induction step.

If $s\notin \WL$, by  Lemma \ref{l:clean s equiv}(3), we have $\D(\dv)_{s\cL}\star\IC(\ds)_{\cL}\cong \D(\dv\ds)_{\cL}$ and obviously satisfies the desired stalk property.

If $s\in \WL$, by Lemma \ref{l:inert s}, we have $\D(\dv)_{\cL}\star\IC(s)_{\cL}\cong \pi_{s}^{*}\pi_{s*}\D(\dv)_{\cL}\j{1}$. We use the notation from \S\ref{ss:inert s}. We first consider the case where $\ell(vs)>\ell(v)$. In this case, $BvB/B$ maps isomorphically to $B\ov vP_{s}/P_{s}$, therefore $\pi_{s*}\D(\dv)_{\cL}\cong \D(\dv)_{\wt\cL}\j{-1}$, whose nonzero stalks are of the form $\Qlbar\j{\ell(v)}\ot V$ for one-dimensional $V\in\Fr\lmod_{0}$. Therefore, the nonzero stalks of $\pi_{s}^{*}\pi_{s*}\D(\dv)_{\cL}\j{1}$ are of the form $\Qlbar\j{\ell(v)+1}\ot V=\Qlbar\j{\ell(vs)}\ot V$ for one-dimensional $V\in\Fr\lmod_{0}$. 

Finally the case $\ell(vs)<\ell(v)$. We have $\D(\dv)_{\cL}\star\IC(\ds)_{\cL}\cong \D(\dv\ds^{-1})_{\cL}\star\D(\ds)_{\cL}\star\IC(s)_{\cL}$. A calculation inside of $L_{s}$ gives $\D(\ds)_{\cL}\star\IC(s)_{\cL}\cong \pi_{s}^{*}\pi_{s*}\D(\ds)_{\cL}\j{1}\cong \IC(\ds)_{\cL}\j{-1}$. Therefore $\D(\dv)_{\cL}\star\IC(s)_{\cL}\cong \D(\dv\ds^{-1})_{\cL}\star\IC(\ds)_{\cL}\j{-1}$. We are back to the previous case (applied to $vs$ in place of $v$) to conclude that the nonzero stalks of $\D(\dv\ds^{-1})_{\cL}\star\IC(\ds)_{\cL}\j{-1}$ are of the form $\Qlbar\j{\ell(v)}\j{-1}\ot V=\Qlbar\j{\ell(vs)}\ot V$ for one-dimensional $V\in\Fr\lmod_{0}$.  This completes the induction step for proving \eqref{stalk purity}.

(2) By (1), $i_{v}^{*}\uIC(w)_{\cL}$ and $i_{v}^{!}\uIC(w)_{\cL}$ are successive extensions of $\uC(v)_{\cL}[n]$ for $n\equiv\ell(w)-\ell(v)\mod2$. However, there are no nontrivial extensions between $\uC(v)_{\cL}$ and $\uC(v)_{\cL}[2m]$ ($m\in\ZZ$) in ${}_{v\cL}\un\cD(v)_{\cL}\cong D^{b}_{\G(v)_{k}}(\pt_{k})$,  because $\upH^{\textup{odd}}_{\G(v)_{k}}(\pt_{k})=0$. Therefore $i_{v}^{*}\uIC(w)_{\cL}$ and $i_{v}^{!}\uIC(w)_{\cL}$ are direct sums of $\uC(v)_{\cL}[n]$ for $n\equiv\ell(w)-\ell(v)\mod2$.
\end{proof}
The above proposition will be strengthened in Proposition \ref{p:stalk ss} to include Frobenius semisimplicity of stalks and costalks of $\IC(\dw)_{\cL}$.

%%%%%%% OK %%%%%%%
\begin{cor}\label{c:filM} Let $\cF, \cG\in {}_{\cL'}\cD_{\cL}$ be semisimple complexes. Then $\Homb(\cF,\cG):=\oplus_{n\in\ZZ}\Hom(\cF,\cG[n])$ admits an increasing filtration $F_{\le v}$ by $\Fr$-submodules indexed by $v\in {}_{\cL'}W_{\cL}$ (with its partial order inherited from $W$) such that the associated graded
\begin{equation}\label{GrFw}
\Gr^{F}_{v}\Homb(\cF,\cG)\cong \Homb(i^{*}_{v}\cF, i^{!}_{v}\cG).
\end{equation}
Moreover, this filtration is functorial in $\cF$ and $\cG$.
\end{cor}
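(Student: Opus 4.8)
The plan is to identify $\Homb(\cF,\cG)$ with the cohomology of an object of $D^{b}_{m}(\pt)$ that carries a filtration coming from the Bruhat stratification, and then to force the associated spectral sequence to degenerate by a weight argument.

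First I would recall that, since $\cF$ and $\cG$ lie in ${}_{\cL'}\cD_{\cL}$ for the \emph{same} pair $(\cL',\cL)$, the inner Hom $\bR\uHom_{U\bs G/U}(\cF,\cG)$ descends to an object $\cM\in D^{b}_{m}(B\bs G/B)$, and by the convention \eqref{Homconvention} one has $\bR\Hom(\cF,\cG)=\bR\G(B\bs G/B,\cM)$ and $\Homb(\cF,\cG)=\bigoplus_{n}\upH^{n}(\bR\Hom(\cF,\cG))$. Now stratify $B\bs G/B$ by the cells $B\bs G_{w}/B$, $w\in W$. Since the closure of a cell is a union of cells of lower length, a linear refinement $w_{1},w_{2},\dots$ of the Bruhat order produces a filtered object $0=\cM_{0}\to\cM_{1}\to\cdots$ of $\bR\G(B\bs G/B,\cM)$ in $D^{b}_{m}(\pt)$ whose $j$-th graded piece is $\bR\G(B\bs G_{w_{j}}/B,\, i_{w_{j}}^{!}\cM)$; using the standard identity $i_{w}^{!}\bR\uHom(\cF,\cG)\cong\bR\uHom(i_{w}^{*}\cF,i_{w}^{!}\cG)$ this piece is canonically $\bR\Hom(i_{w_{j}}^{*}\cF,i_{w_{j}}^{!}\cG)$. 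By Lemma~\ref{l:one cell}, $i_{w}^{*}\cF=0=i_{w}^{!}\cG$ unless $w\in {}_{\cL'}W_{\cL}$, so only the strata indexed by ${}_{\cL'}W_{\cL}$ contribute and the rest may be discarded.

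Passing to cohomology gives a spectral sequence with $E_{1}$-page $\bigoplus_{v\in {}_{\cL'}W_{\cL}}\Homb(i_{v}^{*}\cF,i_{v}^{!}\cG)$ converging to $\Homb(\cF,\cG)$, and the heart of the matter is to show it degenerates at $E_{1}$. Here I would use purity: we may assume $\cF$ and $\cG$ are pure of weight $0$ (the relevant case of a direct sum of the $\IC(\dw)_{\cL}\langle n\rangle$, this purity being part of Proposition~\ref{p:parity purity}(1)). For each $v\in {}_{\cL'}W_{\cL}$, Proposition~\ref{p:parity purity}(1) gives that $i_{v}^{*}\cF$ and $i_{v}^{!}\cG$ are pure of weight $0$ in ${}_{\cL'}\cD(v)_{\cL}\cong\cD^{b}_{\G(v),m}(\{\dv\})$; since $\G(v)\cong T$ and $\upH^{*}_{\G(v)}(\pt)$ is pure with $\upH^{k}$ of weight $k$, a direct computation in this equivariant derived category of a point shows that $\upH^{k}(\bR\Hom(i_{v}^{*}\cF,i_{v}^{!}\cG))$ is pure of weight $k$ for every $k$. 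A $d_{r}$-differential raises total cohomological degree by $1$, hence sends a subquotient pure of weight $k$ into one pure of weight $k+1$; since an $\Fr$-module that is simultaneously pure of two distinct weights vanishes, all differentials are zero. This yields, for the chosen refinement, an increasing filtration whose successive quotients realize the right-hand sides of \eqref{GrFw}.

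It remains to upgrade this to a filtration indexed by the poset ${}_{\cL'}W_{\cL}$ with the order inherited from $W$, and to record functoriality. For the first point I would observe — as is standard for stratification filtrations — that $F_{\le v}$ equals the image in $\Homb(\cF,\cG)$ of $\upH^{*}\bR\G_{Z}(B\bs G/B,\cM)$ for the closed subset $Z=\bigcup_{w\le v}B\bs G_{w}/B$, which is independent of the chosen refinement and compatible with inclusions of order ideals. Functoriality in $\cF$ (contravariant) and $\cG$ (covariant) is then automatic, since the stratification filtration of $\bR\G(B\bs G/B,-)$ is functorial in its argument and $\cM=\bR\uHom(\cF,\cG)$ is functorial in $(\cF,\cG)$. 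The only genuinely non-formal input is the weight computation underlying the $E_{1}$-degeneration; granting it, every other step is bookkeeping.
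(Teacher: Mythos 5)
Your proof is correct and follows the same skeleton as the paper's: filter by the Bruhat stratification, identify the graded pieces with $\Homb(i_{v}^{*}\cF,i_{v}^{!}\cG)$ via $i_{v}^{!}\RuHom(\cF,\cG)\cong\RuHom(i_{v}^{*}\cF,i_{v}^{!}\cG)$, and kill the differentials of the resulting spectral sequence. The one genuine difference is the degeneration mechanism. The paper passes to the non-mixed category, reduces to $\cF=\uIC(w)_{\cL}$, $\cG=\uIC(w')_{\cL}$, and invokes the \emph{parity} statement of Proposition \ref{p:parity purity}(2): each $\Homb(i_{v}^{*}\cF,i_{v}^{!}\cG)$ is a sum of even shifts of $\upH^{*}_{\G(v)_{k}}(\pt_{k})[\ell(w')-\ell(w)]$, hence concentrated in degrees of a parity independent of $v$, so every $d_{r}$ (which shifts total degree by $1$) vanishes. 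You instead stay in the mixed setting and use the \emph{weight} statement of Proposition \ref{p:parity purity}(1): with $\cF,\cG$ pure of weight $0$, each $\Ext^{k}(i_{v}^{*}\cF,i_{v}^{!}\cG)$ is pure of weight $k$, so $d_{r}$ maps a pure weight-$n$ module to a pure weight-$(n+1)$ module and vanishes. Both arguments are valid; yours buys independence from the parity vanishing (only purity of stalks and costalks is used), at the cost of an extra reduction you should make explicit: a semisimple complex in ${}_{\cL'}\cD_{\cL}$ is by definition one whose image under $\om$ is semisimple, and need not be pure in the mixed category, so you must first observe that degeneration is a statement about underlying vector spaces, checkable after applying $\om$, where $\om\cF$ and $\om\cG$ decompose into shifts of $\uIC$'s each admitting a pure weight-$0$ lift $\IC(\dw)_{\cL}$. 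The paper's parity route avoids this detour since it works with $\om\cF$, $\om\cG$ from the start. Your handling of the poset-indexing (via sections supported on closed order ideals) and of functoriality matches the paper's intent and is fine.
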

\begin{proof}
The Schubert stratification gives a filtration on $\cG$ with $\Gr_{v}\cG\cong i_{v*}i_{v}^{!}\cG$. We get a corresponding filtered complex structure on $\RHom(\cF, \cG)$ with associated graded pieces quasi-isomorphic to $\RHom(i^{*}_{v}\cF, i^{!}_{v}\cG)$. We need to show that the spectral sequence converging to $\Homb(\cF, \cG)$ corresponding to this filtration degenerates at $E_{1}$. For this, it suffices to work in the non-mixed category ${}_{\cL'}\un\cD_{\cL}$, and we may assume $\cF=\uIC(w)_{\cL}$ and $\cG=\uIC(w')_{\cL}$. 

Proposition \ref{p:parity purity}(2) implies that $i^{*}_{v}\cF$ is a direct sum of $\uC(v)_{\cL}[n]$ for $n\equiv \ell(w)-\ell(v)\mod2$ and $i^{!}_{v}\un\cG$ is a direct sum of $\uC(v)_{\cL}[n]$ for $n\equiv \ell(w')-\ell(v)\mod2$. Therefore  $\RHom(i^{*}_{v}\cF, i^{!}_{v}\cG)$ is isomorphic to a direct sum of even shifts of $\RHom(\uC(v)_{\cL}, \uC(v)_{\cL})[\ell(w')-\ell(w)]\cong\upH^{*}_{\G(v)_{k}}(\pt_{k})[\ell(w')-\ell(w)]$. Therefore $\Homb(i^{*}_{v}\cF, i^{!}_{v}\cG)$  is concentrated in degrees of a fixed parity independent of $v$, hence the degeneration at $E_{1}$.
\end{proof}

%%%%%%%% ok %%%%%%%%%%%
\subsection{Monodromic version of the Hecke algebra}\label{ss:Hk}
Recall the monodromic version of the Hecke algebra for $W$ with monodromy in $\fo$ defined in \cite[1.4]{L-ConjCell}.  Let $\bH_{\fo}$ be the unital associative $\ZZ[v,v^{-1}]$-algebra with generators $T_w (w\in W)$ and  $1_\cL (\cL\in\fo)$ and relations
\begin{eqnarray}
\notag&&1_{\cL}1_{\cL'}=\d_{\cL,\cL'}1_{\cL},\quad\textup{ for }\cL,\cL'\in\fo;\\
\label{lengths add}&&T_wT_{w'}=T_{ww'},\quad\text{ if }w,w'\in W\text{ and } \ell(ww')=\ell(w)+\ell(w');\\
\notag&&T_w1_{\cL}=1_{w\cL}T_w, \quad\text{ for }w\in W,\cL\in\fo;\\
\label{quad rel}&&T_s^2=v^2T_1+(v^2-1)\sum_{\cL;s\in \WL}T_s1_{\cL},\text{  for simple reflections }s\in W;\\
\notag&&T_1=1=\sum_{\cL\in\fo}1_{\cL}.
\end{eqnarray}
The algebra $\bH_{\fo}$ is closely related to the algebra introduced by Yokonuma \cite{Yo}, as explained in \cite[35.3, 35.4]{L-CSDGVII}. Note that $\{T_w1_{\cL};(w,\cL)\in W\times\fo\}$ is a $\ZZ[v,v^{-1}]$-basis of $\bH_{\fo}$. For $w\in W$ we set $\wt{T}_w=v^{-\ell(w)}T_w\in \bH_{\fo}$.  There is a unique involution  $\bar{}:\bH_{\fo}\to\bH_{\fo}$
defined by $\ov{v^mT_w1_{\cL}}=v^{-m}T_{w^{-1}}^{-1}1_{\cL}$ for any $(w,\cL)\in W\times\fo$ and any $m\in\ZZ$. 

For any $(w,\cL)\in W\times\fo$ there is a unique element $c_{w,\cL}\in \bH_{\fo}$ such that
\begin{itemize}
\item $\ov{c_{w,\cL}}=c_{w,\cL}$.
\item $c_{w,\cL}=\sum_{y\in W}p_{y,\cL;w,\cL}\wt{T}_y1_\cL$ where $p_{y,\cL;w,\cL}\in v^{-1}\ZZ[v^{-1}]$ if $y\ne w$, and $p_{w,\cL;w,\cL}=1$.
\end{itemize}
The elements $\{c_{w,\cL}\}_{(w,\cL)\in W\times\fo}$ form a $\ZZ[v,v^{-1}]$-basis of $\bH_{\fo}$ called the {\em canonical basis}. This is analogous to the basis $\{C_{w}\}$ introduced in \cite[Theorem 1.1]{KL}.

Let $\cD_{\fo}=\oplus_{\cL',\cL\in\fo}({}_{\cL'}\cD_{\cL})$. The Grothendieck group $K_{0}(\cD_{\fo})$ is a $\ZZ[v,v^{-1}]$-module where the action of $v$ is given by $\j{-1}$. As in \cite[2.9]{L-center-nonunip}, there is a unique $\ZZ[v,v^{-1}]$-linear map
\begin{equation*}
\g: K_{0}(\cD_{\fo})\to\bH_{\fo}
\end{equation*}
sending $\cF\in {}_{\cL'}\cD_{\cL}$ to the element $\sum_{w\in {}_{\cL'}W_{\cL}}A_{w,\cF}( v)T_{w}1_{\cL}$ where $A_{w,\cF}(v)\in \ZZ[v,v^{-1}]$ is the virtual Poincar\'e polynomial of the stalk $i^{*}_{\dw}\cF$, i.e., $A_{w,\cF}(v)=\sum_{i,j\in\ZZ}(-1)^{i}(\dim\Gr^{W}_{j}\upH^{i}i^{*}_{\dw}\cF)(-v)^{j}$. 

By construction, $\g(\D(\dw)_{\cL})=\wt T_{w}1_{\cL}$. Under $\g$, $\IC(\dw)_{\cL}$ is sent to $c_{w,\cL}$ for $(w,\cL)\in W\times\fo$. Indeed, the purity property proved in Proposition \ref{p:parity purity} gives the degree bound for $p_{y,\cL;w,\cL}$ needed to characterize $c_{w,\cL}$.

By \cite[2.10]{L-center-nonunip}, $\g$ is a ring homomorphism.

\section{Blocks}
In this section we give a decomposition of ${}_{\cL'}\cD_{\cL}$ into a direct sum of full triangulated subcategories called blocks. First we need some preparation on Weyl groups.

%%%%%%% ok %%%%%%%%%
\subsection{Blocks in ${}_{\cL'}\un W_{\cL}$}\label{ss:block W}
Let $\cL,\cL'\in\fo$. Denote
\begin{equation*}
{}_{\cL'}\un W_{\cL}={}_{\cL'}W_{\cL}/\WL=W^{\c}_{\cL'}\bs {}_{\cL'}W_{\cL}. 
\end{equation*}
Each element $\b\in{}_{\cL'}\un W_{\cL}$ is called a {\em block} of ${}_{\cL'}\un W_{\cL}$, and it inherits a partial order restricted from the Bruhat order in $W$.

Let $\cL,\cL'$ and $\cL''\in \fo$. Let $\b\in {}_{\cL'}\un W_{\cL}$ and $\g\in {}_{\cL''}\un W_{\cL'}$. Then  the set $\g\cdot \b:=\{w_{1}w_{2}|w_{1}\in \g, w_{2}\in \b\}$ is equal to $W^{\c}_{\cL''}w_{1}w_{2}=w_{1}W_{\cL'}^{\c}w_{2}=w_{1}w_{2}\WL$ (for any $w_{1}\in \g, w_{2}\in \b$), which defines an element in ${}_{\cL''}\un W_{\cL}$. This defines a map
\begin{equation*}
(-)\cdot(-): {}_{\cL''}\un W_{\cL'}\times {}_{\cL'}\un W_{\cL}\to {}_{\cL''}\un W_{\cL}
\end{equation*}
which is associative in the obvious sense. 

%%%%%%%% ok %%%%%%%%%
\begin{lemma}\label{l:max} Each block $\b\in{}_{\cL'}\un W_{\cL}$ contains a unique minimal element $w^{\b}$ and a unique maximal element $w_{\b}$ under its partial order. The minimal element $w^{\b}$ (resp. maximal element $w_{\b}$) is characterized by the property that $w^{\b}(\Phi^{+}_{\cL})\subset \Phi^{+}$ (resp. $w_{\b}(\Phi^{+}_{\cL})\subset \Phi^{-}$).
\end{lemma}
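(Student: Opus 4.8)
The plan is to exploit the fact that $\WL$ is a Coxeter group with length function $\ell_\cL$ and that ${}_{\cL'}W_\cL$ is a $(W_{\cL'},W_\cL)$-bitorsor, so a block $\b\in{}_{\cL'}\un W_\cL = W^\circ_{\cL'}\bs{}_{\cL'}W_\cL$ is in particular a right coset of $\WL$ of the form $x\WL$ for any $x\in\b$. The key geometric input is the behaviour of $\ell$ under multiplication: for $w\in W$ and $\a\in\Phi^+_\cL$, whether $\ell(wr_\a)>\ell(w)$ is governed by the sign of $w(\a)$. I would begin by recording the standard fact that for any $x\in{}_{\cL'}W_\cL$ there is a \emph{unique} element $w^\b$ in the right coset $x\WL$ that is of minimal length, and it is characterized by $\ell(w^\b u)=\ell(w^\b)+\ell_\cL(u)$ for all $u\in\WL$; dually a unique maximal-length element $w_\b = w^\b\cdot w_\cL$ where $w_\cL$ is the longest element of the finite Coxeter group $\WL$ (finiteness of $\WL$ is automatic since $\Phi_\cL$ is a finite root system).

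Next I would translate length-minimality into the root-positivity condition. The standard characterization of minimal-length coset representatives (Deodhar, or Bourbaki) says: $w^\b$ is the minimal-length element of $w^\b\WL$ if and only if $w^\b$ sends every element of a set of simple roots of $\Phi_\cL$ into $\Phi^+$; and since $\WL$ is generated by those reflections and permutes $\Phi_\cL$, this is equivalent to $w^\b(\Phi^+_\cL)\subset\Phi^+$. Concretely: if some simple $\a\in\Phi^+_\cL$ had $w^\b(\a)\in\Phi^-$, then $\ell(w^\b r_\a)<\ell(w^\b)$ while $w^\b r_\a$ still lies in the same block, contradicting minimality; conversely if $w^\b(\Phi^+_\cL)\subset\Phi^+$ then for any $u\in\WL$ written as a reduced word one checks inductively that $\ell(w^\b u)=\ell(w^\b)+\ell_\cL(u)$, so $w^\b$ is minimal. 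For the maximal element, I would argue that $w_\b(\Phi^+_\cL)\subset\Phi^-$ characterizes it: $w_\cL$ sends $\Phi^+_\cL$ to $\Phi^-_\cL\subset\Phi^-$, and since $w^\b$ preserves positivity on $\Phi_\cL$-roots one still needs a small argument that $w^\b(\Phi^-_\cL)\subset\Phi^-$ — this follows because $w^\b(\Phi^+_\cL)\subset\Phi^+$ forces $w^\b$ to map the \emph{whole} sub-root-system $\Phi_\cL$ compatibly, so $w^\b(\Phi^-_\cL)\subset\Phi^-$ and hence $w_\b=w^\b w_\cL$ sends $\Phi^+_\cL$ into $\Phi^-$. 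Uniqueness in both cases: if $w,w'\in\b$ both satisfy the positivity (resp. negativity) condition then $w^{-1}w'\in\WL$ preserves $\Phi^+_\cL$ (resp. preserves the condition after composing) hence equals $e$ in $\WL$.

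The existence of a unique minimal and maximal element, together with the statement that the block with its Bruhat order is actually a lattice isomorphic to the Bruhat order on $\WL$ shifted by $w^\b$, would follow from the fact that left multiplication by $w^\b$ gives an order-preserving bijection $\WL\to\b$ when $\WL$ carries the Bruhat order of its own Coxeter structure — again because $\ell(w^\b u)=\ell(w^\b)+\ell_\cL(u)$ implies subword compatibility. I expect the main obstacle to be the careful verification that $\ell(w^\b u)=\ell(w^\b)+\ell_\cL(u)$ for \emph{all} $u\in\WL$, i.e. that the minimal coset representative is "fully compatible" with the Coxeter length of the parabolic-like subgroup $\WL$; this is routine when $\WL$ is a standard parabolic subgroup of $W$, but here $\WL$ is only generated by reflections in a sub-root-system $\Phi_\cL$ that need not be "parabolic" inside $\Phi(G,T)$, so one must genuinely use that $\Phi^+_\cL=\Phi^+\cap\Phi_\cL$ makes $\WL$ into a Coxeter group and invoke the general theory of reflection subgroups (Dyer, Deodhar) rather than classical parabolic coset combinatorics. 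Once that length-additivity is in hand, everything else — uniqueness, the root-theoretic characterizations, and the lattice structure — is a short deduction.
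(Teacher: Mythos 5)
There is a genuine gap, and it sits exactly where you anticipated trouble: the length-additivity $\ell(w^{\b}u)=\ell(w^{\b})+\ell_{\cL}(u)$ for all $u\in\WL$ is not merely delicate to verify — it is false. The point is that $\ell_{\cL}$ is the Coxeter length of $\WL$ with respect to the reflections in the indecomposable roots of $\Phi^{+}_{\cL}$, and these are in general not simple reflections of $W$. Take $G=\Sp_{4}$ with simple reflections $s_{1}=r_{e_{1}-e_{2}}$, $s_{2}=r_{2e_{2}}$, and $\cL$ of order $2$ fixed by $W$, so that $\Phi_{\cL}=\{\pm2e_{1},\pm2e_{2}\}$ and $\WL=\{e,\,s_{2},\,s_{1}s_{2}s_{1},\,w_{0}\}$. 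For the neutral block $w^{\b}=e$, yet $\ell(e\cdot r_{2e_{1}})=\ell(s_{1}s_{2}s_{1})=3\neq 0+1=\ell(w^{\b})+\ell_{\cL}(r_{2e_{1}})$. Consequently your proposed characterization of $w^{\b}$ by length-additivity fails, and so does the ``subword compatibility'' you invoke to conclude that left multiplication by $w^{\b}$ is order-preserving from $(\WL,\le_{\WL})$ to $(\b,\le)$: a reduced word for $u$ in the Coxeter group $\WL$ is not a subword of a reduced word for $w^{\b}u$ in $W$. Without that step you only obtain that $w^{\b}$ has minimal length in $\b$, which makes it \emph{a} minimal element but does not rule out other minimal elements of the poset $(\b,\le)$ (and dually for $w_{\b}$).

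The lemma can be rescued without any additivity. The sign criterion you already use — for $\a\in\Phi^{+}$ one has $\ell(wr_{\a})<\ell(w)$ if and only if $w\a<0$ — shows that any $w\in\b$ with $w(\Phi^{+}_{\cL})\not\subset\Phi^{+}$ admits $\a\in\Phi^{+}_{\cL}$ with $wr_{\a}<w$ in the Bruhat order of $W$ and $wr_{\a}\in\b$; hence such a $w$ is not minimal in $(\b,\le)$. Combined with existence and uniqueness of an element satisfying $w(\Phi^{+}_{\cL})\subset\Phi^{+}$ (your positive-system argument; for uniqueness note that any such $w,w'$ map $\Phi^{+}_{\cL}$ bijectively onto $\Phi^{+}_{\cL'}=\Phi^{+}\cap\Phi_{\cL'}$ by \eqref{RwL} and a count, so $w^{-1}w'\in\WL$ preserves $\Phi^{+}_{\cL}$ and equals $e$), this shows $w^{\b}$ is the unique minimal element of the finite poset $(\b,\le)$; the dual argument handles $w_{\b}$. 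This is in substance what the paper does by citing \cite[Lemma 1.9]{L-book}. The correct substitute for your additivity formula is the relative length $\ell_{\b}(w)=\#\{\a\in\Phi^{+}_{\cL}\mid w\a<0\}$ of Lemma \ref{l:ell beta}, not the length function of $W$ itself.
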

\begin{proof}
In \cite[Lemma 1.9(i)]{L-book} it is shown that each $\b$ contains a unique minimal length (hence minimal) element $w^{\b}$ characterized by the stated property. Let $\le_{\WL}$ be the Bruhat order on $\WL$ induced by the positive roots $\Phi^{+}_{\cL}$, see \S\ref{ss:rs}. By \cite[Lemma 1.9(ii)]{L-book}, if $v\le_{\WL}v'$, then $w^{\b}v\le w^{\b}v'$. Therefore, if we write $w_{\cL,0}$ for the longest (and maximal) element in $\WL$, $w^{\b}w_{\cL,0}$ is the unique maximal element in $\b$. Clearly it is characterized by the stated property.
\end{proof}

%%%%%%% ok %%%%%%%%%
\begin{cor}\label{c:trans min}
For  $\b\in {}_{\cL'}\un W_{\cL}$ and $\g\in {}_{\cL''}\un W_{\cL'}$, we have the equalities in $W$
\begin{equation*}
w^{\g}w^{\b}=w^{\g\b}, \quad w^{\g}w_{\b}=w_{\g\b}, \quad w_{\g}w^{\b}=w_{\g\b}.
\end{equation*}
\end{cor}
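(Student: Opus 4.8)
The plan is to reduce everything to the root-theoretic characterizations of the minimal and maximal elements of a block supplied by Lemma \ref{l:max}, combined with the identity $\Phi_{w\cL}=w(\Phi_{\cL})$ from \eqref{RwL}.

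First I would record the key fact that the minimal element $w^{\b}$ of a block $\b\in{}_{\cL'}\un W_{\cL}$ sends $\Phi^{+}_{\cL}$ \emph{onto} $\Phi^{+}_{\cL'}$, while the maximal element $w_{\b}$ sends $\Phi^{+}_{\cL}$ onto $-\Phi^{+}_{\cL'}$. Indeed, any $x\in{}_{\cL'}W_{\cL}$ satisfies $x\cL=\cL'$, so by \eqref{RwL} it restricts to a bijection $\Phi_{\cL}\isom\Phi_{\cL'}$; for $x=w^{\b}$, Lemma \ref{l:max} gives $w^{\b}(\Phi^{+}_{\cL})\subset\Phi^{+}$, hence $w^{\b}(\Phi^{+}_{\cL})\subset\Phi^{+}\cap\Phi_{\cL'}=\Phi^{+}_{\cL'}$, and a cardinality count forces equality (alternatively, use $w_{\b}=w^{\b}w_{\cL,0}$ from the proof of Lemma \ref{l:max}, where $w_{\cL,0}$ is the longest element of $\WL$ and sends $\Phi^{+}_{\cL}$ to $-\Phi^{+}_{\cL}$). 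The statement for $w_{\b}$ is identical, using $w_{\b}(\Phi^{+}_{\cL})\subset\Phi^{-}$.

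Then I would deduce the three equalities directly. For the first: $w^{\g}w^{\b}\in\g\b$ since $w^{\g}\in\g$ and $w^{\b}\in\b$, and by the key fact together with the characterization of $w^{\g}$ we get $(w^{\g}w^{\b})(\Phi^{+}_{\cL})=w^{\g}(\Phi^{+}_{\cL'})\subset\Phi^{+}$; by the uniqueness part of Lemma \ref{l:max} applied to the block $\g\b$, this forces $w^{\g}w^{\b}=w^{\g\b}$. For the second: $w^{\g}w_{\b}\in\g\b$ and $(w^{\g}w_{\b})(\Phi^{+}_{\cL})=w^{\g}(-\Phi^{+}_{\cL'})=-w^{\g}(\Phi^{+}_{\cL'})\subset\Phi^{-}$, so $w^{\g}w_{\b}=w_{\g\b}$. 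For the third: $w_{\g}w^{\b}\in\g\b$ and $(w_{\g}w^{\b})(\Phi^{+}_{\cL})=w_{\g}(\Phi^{+}_{\cL'})\subset\Phi^{-}$, so $w_{\g}w^{\b}=w_{\g\b}$.

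There is no real obstacle here; the only step that needs a moment's care is the first one, namely checking that $w^{\b}$ (resp. $w_{\b}$) sends $\Phi^{+}_{\cL}$ onto all of $\Phi^{+}_{\cL'}$ (resp. $-\Phi^{+}_{\cL'}$) rather than merely into $\Phi^{+}$ (resp. $\Phi^{-}$). Once that is established, all three identities follow formally by composition and the uniqueness clause of Lemma \ref{l:max}.
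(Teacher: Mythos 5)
Your proof is correct and follows essentially the same route as the paper: establish that $w^{\b}$ carries $\Phi^{+}_{\cL}$ onto $\Phi^{+}_{\cL'}$ (using \eqref{RwL} and the characterization in Lemma \ref{l:max}), then compose and invoke the uniqueness clause of that characterization. The paper writes out only the first equality and notes the others are similar, so your explicit treatment of all three is just a fuller version of the same argument.
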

\begin{proof}
Let us prove the first equality and the proof of the rest is similar. To show $w^{\g}w^{\b}$ is the minimal element in the block $\g\b$, by the criterion in Lemma \ref{l:max}, it suffices to show that $w^{\g}w^{\b}(\Phi^{+}_{\cL})\subset \Phi^{+}$. Since $w^{\b}$ is minimal in $\b$, $w^{\b}(\Phi^{+}_{\cL})\subset \Phi^{+}\cap \Phi_{\cL'}=\Phi^{+}_{\cL'}$ (we are using \eqref{RwL}), and indeed equality holds. Then by the same argument $w^{\g}w^{\b}(\Phi^{+}_{\cL})\subset w^{\g}(\Phi^{+}_{\cL'})\subset \Phi^{+}$, which shows that $w^{\g}w^{\b}$ is minimal in the block $\g\b$.
\end{proof}

\begin{cor}\label{c:conj wb} Let $\b\in {}_{\cL'}\un W_{\cL}$. Then $w\mapsto w^{\b}ww^{\b,-1}$ gives an isomorphism of {\em Coxeter groups} $W^{\c}_{\cL}\isom W^{\c}_{\cL'}$.
\end{cor}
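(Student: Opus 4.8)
The plan is to show that the conjugation map $c_{\b}: w \mapsto w^{\b} w w^{\b,-1}$ carries the Coxeter generators of $W^{\c}_{\cL}$ (the simple reflections for the positive system $\Phi^{+}_{\cL}$) to the Coxeter generators of $W^{\c}_{\cL'}$ (the simple reflections for $\Phi^{+}_{\cL'}$), and hence is an isomorphism of Coxeter groups. Since $\WL$ and $W_{\cL'}^{\c}$ are honest subgroups of $W$ and conjugation by any fixed element of $W$ is a group isomorphism, the only content is that $c_{\b}$ respects the Coxeter presentations, i.e., sends simple reflections to simple reflections (equivalently, preserves the length functions $\ell_{\cL}$ and $\ell_{\cL'}$ of \eqref{length cL}).

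First I would recall from \S\ref{ss:rs}, in particular \eqref{RwL}, that $\Phi_{w\cL} = w(\Phi_{\cL})$, so conjugation by $w^{\b}$ carries $\WL = W(\Phi_{\cL})$ isomorphically onto $W_{\cL'}^{\c} = W(\Phi_{\cL'})$ as reflection groups: for a root $\a \in \Phi_{\cL}$, the reflection $r_{\a}$ is sent to $r_{w^{\b}\a}$, and $w^{\b}\a \in \Phi_{\cL'}$. Next I would invoke the key property of the minimal element $w^{\b}$ from Lemma \ref{l:max}, namely $w^{\b}(\Phi^{+}_{\cL}) \subset \Phi^{+}$; combined with $w^{\b}(\Phi_{\cL}) = \Phi_{\cL'}$ this gives $w^{\b}(\Phi^{+}_{\cL}) = \Phi^{+} \cap \Phi_{\cL'} = \Phi^{+}_{\cL'}$. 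Thus $c_{\b}$, viewed as the reflection-group isomorphism $\WL \isom W_{\cL'}^{\c}$ induced by $w^{\b}$, carries the positive system $\Phi^{+}_{\cL}$ of $\Phi_{\cL}$ to the positive system $\Phi^{+}_{\cL'}$ of $\Phi_{\cL'}$. Therefore it carries the indecomposable (simple) roots of $\Phi^{+}_{\cL}$ to those of $\Phi^{+}_{\cL'}$, hence the simple reflections of $(W^{\c}_{\cL}, S_{\cL})$ to the simple reflections of $(W^{\c}_{\cL'}, S_{\cL'})$, and so it is an isomorphism of Coxeter groups as claimed.

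I would also remark that $c_{\b}$ depends only on the block $\b$ and not on the choice of representative in $\b$: any two representatives differ by left multiplication by an element of $W_{\cL'}^{\c}$ and right multiplication by an element of $\WL$ (since $W^{\c}_{\cL'} x \WL = x \WL$ for $x \in {}_{\cL'}W_{\cL}$, as noted at the end of \S\ref{ss:rs}), but the conjugation isomorphism between Coxeter groups is pinned down by its effect on $\Phi^{+}_{\cL}$, and only the minimal representative $w^{\b}$ sends $\Phi^{+}_{\cL}$ into $\Phi^{+}$; equivalently, one checks directly that changing $w^{\b}$ to $u w^{\b} v$ with $u \in W^{\c}_{\cL'}$, $v \in \WL$ changes the conjugation map by inner automorphisms that, in Coxeter terms, amount to the identity on the level of the abstract Coxeter group once the positive systems are fixed.

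I do not anticipate a serious obstacle here: the statement is essentially a bookkeeping consequence of \eqref{RwL} and Lemma \ref{l:max}, both already available. If anything, the only point requiring a moment's care is making sure that preservation of the positive system really does force preservation of the set of \emph{simple} reflections — this is the standard fact that a root-system isomorphism carrying one positive system to another carries indecomposable roots to indecomposable roots — and then that this is exactly what it means for a group isomorphism between Coxeter groups to be an isomorphism of Coxeter systems.
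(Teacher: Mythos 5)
Your proposal is correct and follows the paper's own proof essentially verbatim: the whole argument is that Lemma \ref{l:max} gives $w^{\b}(\Phi^{+}_{\cL})\subset\Phi^{+}$, which together with \eqref{RwL} yields $w^{\b}(\Phi^{+}_{\cL})\subset\Phi^{+}\cap\Phi_{\cL'}=\Phi^{+}_{\cL'}$, so $w^{\b}$ carries simple roots of $\Phi_{\cL}$ to simple roots of $\Phi_{\cL'}$ and hence conjugation carries simple reflections to simple reflections. Your additional remark on independence of the representative of $\b$ is not needed for the statement (which is phrased only in terms of $w^{\b}$) but does no harm.
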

\begin{proof}
Since $w^{\b}\Phi^{+}_{\cL}\subset \Phi^{+}\cap \Phi_{\cL'}=\Phi^{+}_{\cL'}$,  $w^{\b}$ sends simple roots in $\Phi_{\cL}$ to simple roots in $\Phi_{\cL'}$, hence conjugation by $w^{\b}$ sends simple reflections in $\WL$ to simple reflections in $W^{\c}_{\cL'}$.
\end{proof}

%%%%%% ok %%%%%
\subsection{The groupoid $\Xi$}\label{ss:Xi}  Let $\Xi$ be the groupoid whose object set is $\fo$, and the morphism set ${}_{\cL'}\Xi_{\cL}:=\Hom_{\Xi}(\cL,\cL')=\{w^{\b}|\b\in{}_{\cL'}\un W_{\cL}\}$. Clearly ${}_{\cL'}\Xi_{\cL}$ is in bijection with ${}_{\cL'}\un W_{\cL}$, and we often make the identification ${}_{\cL'}\Xi_{\cL}\isom {}_{\cL'}\un W_{\cL}$. The composition map is defined by the multiplication in $W$, since $w^{\g}w^{\b}=w^{\g\b}$ by Corollary \ref{c:trans min}.

Let $\b\in{}_{\cL'}\un W_{\cL}$. For $w\in \b$, there is a unique $v\in \WL$ such that $w=w^{\b}v$. Define
\begin{equation}\label{def ell beta}
\ell_{\b}(w)=\ell_{\cL}(v),
\end{equation}
where $\ell_{\cL}$ is the length function of the Coxeter group $\WL$, as defined in \eqref{length cL}.

The following lemma is a slight generalization of \cite[Lemma 5.3]{L-CS1}.

%%%%%%% ok %%%%%%%
\begin{lemma}\label{l:ell beta} Let $\b\in{}_{\cL'}\un W_{\cL}$ and $w\in \b$. 
\begin{enumerate}
\item\label{ell beta neg} $\ell_{\b}(w)=\#\{\a\in \Phi^{+}_{\cL}|w\a<0\}$. 

\item\label{ell beta tran} For $\g\in {}_{\cL''}\un W_{\cL'}$, we have
\begin{equation*}
\ell_{\g\b}(w^{\g}w)=\ell_{\b}(w).
\end{equation*}

\item\label{ell beta ineq} Write $w$ into a product of simple reflections in $W$ (not necessarily reduced) $w=s_{i_{N}}\cdots s_{i_{2}}s_{i_{1}}$. Let  $\cL_{0}:=\cL$ and $\cL_{j}=s_{i_{j}}\cdots s_{i_{1}}(\cL)$ for $j\ge1$. Then 
\begin{equation}\label{ell beta}
\ell_{\b}(w)\le\#\{1\le j\le N|s_{i_{j}}\in W^{\c}_{\cL_{j-1}}\}.
\end{equation}

\item\label{ell beta eq} If in \eqref{ell beta ineq} $s_{i_{N}}\cdots s_{i_{2}}s_{i_{1}}$ is reduced, then equality in \eqref{ell beta} holds.
\end{enumerate}
\end{lemma}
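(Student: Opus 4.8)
The plan is to prove (1) directly and then deduce (2), (3), (4) from it using standard Coxeter-length facts together with the characterizations of $w^{\b}$ and $w_{\b}$ from Lemma \ref{l:max}.

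For (1), I would write $w=w^{\b}v$ with $v\in\WL$, so that $\ell_{\b}(w)=\ell_{\cL}(v)$ by definition \eqref{def ell beta}, and apply the standard length formula in the Coxeter group $(\WL,\Phi^{+}_{\cL})$: $\ell_{\cL}(v)=\#\{\a\in\Phi^{+}_{\cL}\mid v\a<0\}$ (here $v$ preserves $\Phi_{\cL}$, so ``$v\a\in\Phi^{-}_{\cL}$'' and ``$v\a<0$'' coincide). Since $w^{\b}(\Phi^{+}_{\cL})\subset\Phi^{+}$ by Lemma \ref{l:max}, also $w^{\b}(\Phi^{-}_{\cL})\subset\Phi^{-}$ by linearity; hence for $\a\in\Phi^{+}_{\cL}$ the root $w\a=w^{\b}(v\a)$ has the same sign as $v\a$, which gives (1). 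Part (2) follows at once: $w\in{}_{\cL'}W_{\cL}$ maps $\Phi_{\cL}$ bijectively onto $\Phi_{\cL'}$ by \eqref{RwL}, and $w^{\g}$ maps $\Phi^{+}_{\cL'}$ into $\Phi^{+}$ and $\Phi^{-}_{\cL'}$ into $\Phi^{-}$ by Lemma \ref{l:max}; so for $\a\in\Phi^{+}_{\cL}$ the roots $w\a$ and $w^{\g}w\a$ have the same sign, and comparing the two descriptions from (1) (in the blocks $\b$ and $\g\b$) finishes it.

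For (3) and (4) I would telescope along the word. Put $w_{j}=s_{i_{j}}\cdots s_{i_{1}}$ for $0\le j\le N$, so $w_{0}=e$, $w_{N}=w$ and $w_{j}\cL=\cL_{j}$; let $\b_{j}$ be the block of $w_{j}$ in ${}_{\cL_{j}}\un W_{\cL}$, and set $n_{j}=\#\{\a\in\Phi^{+}_{\cL}\mid w_{j}\a<0\}$, which equals $\ell_{\b_{j}}(w_{j})$ by (1). Comparing the signs of $w_{j}\a=s_{i_{j}}w_{j-1}\a$ and of $w_{j-1}\a$ over $\a\in\Phi^{+}_{\cL}$, only the (at most one) root with $w_{j-1}\a=\a_{i_{j}}$ and the (at most one) root with $w_{j-1}\a=-\a_{i_{j}}$ change sign, and these two cannot both occur; thus $n_{j}-n_{j-1}\in\{-1,0,1\}$, it is $0$ unless $\a_{i_{j}}\in w_{j-1}(\Phi_{\cL})=\Phi_{\cL_{j-1}}$, and when $\a_{i_{j}}\in\Phi_{\cL_{j-1}}$ it is $+1$ exactly when $w_{j-1}^{-1}\a_{i_{j}}>0$. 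I would combine this with the fact that, for the simple reflection $s_{i_{j}}$ of $W$, one has $s_{i_{j}}\in W^{\c}_{\cL_{j-1}}$ (the reflection subgroup of $W$ generated by $\Phi_{\cL_{j-1}}$) if and only if $\a_{i_{j}}\in\Phi_{\cL_{j-1}}$ --- the nontrivial implication holds because a nonidentity element of $W^{\c}_{\cL_{j-1}}$ cannot fix the positive system $\Phi^{+}\cap\Phi_{\cL_{j-1}}$, whereas if $\a_{i_{j}}\notin\Phi_{\cL_{j-1}}$ then $s_{i_{j}}$ would fix it. It then follows that $n_{j}-n_{j-1}\le\#\{j\mid s_{i_{j}}\in W^{\c}_{\cL_{j-1}}\}$-indicator for each $j$, and summing from $j=1$ to $N$ with $n_{0}=0$ gives (3). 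For (4), reducedness of the word forces $\ell(w_{j})=\ell(w_{j-1})+1$, hence $w_{j-1}^{-1}\a_{i_{j}}>0$ for every $j$, so each of the preceding inequalities is an equality and so is the sum.

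The main obstacle is the bookkeeping in steps (3)--(4): pinning down exactly which roots of $\Phi^{+}_{\cL}$ change sign under left multiplication by $s_{i_{j}}$, deciding whether the jump $n_{j}-n_{j-1}$ is $+1$ or $-1$, and supplying the equivalence $\a_{i_{j}}\in\Phi_{\cL_{j-1}}\iff s_{i_{j}}\in W^{\c}_{\cL_{j-1}}$. Once Lemma \ref{l:max} is in hand, parts (1) and (2) are short.
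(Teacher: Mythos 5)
Your proof is correct and follows essentially the same route as the paper: part (1) via the decomposition $w=w^{\b}v$ and the sign-preservation of $w^{\b}$ on $\Phi_{\cL}$, and parts (3)--(4) via the observation that left multiplication by $s_{i_j}$ flips the sign of at most one root of $w_{j-1}(\Phi^{+}_{\cL})$, which can happen only when $\a_{i_j}\in\Phi_{\cL_{j-1}}$, i.e.\ $s_{i_j}\in W^{\c}_{\cL_{j-1}}$. The only cosmetic differences are that you telescope over prefixes where the paper inducts on the last letter (using part (2) in the case $s\notin W^{\c}_{s\cL'}$ and the identity $w\Phi^{+}_{\cL}\cap\Phi^{-}=s(w'\Phi^{+}_{\cL}\cap s\Phi^{-})$ otherwise), and that your proof of (2) argues directly with signs rather than invoking $w^{\g}w^{\b}=w^{\g\b}$ from Corollary \ref{c:trans min}.
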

\begin{proof}
\eqref{ell beta neg} Write $w=w^{\b}v$ for $v\in \WL$. Since $w^{\b}$ sends $\Phi^{+}_{\cL}$ (resp. $\Phi^{-}_{\cL}$) to positive (resp. negative) roots, for $\a\in \Phi^{+}_{\cL}$, $w^{\b}v\a<0$ if and only if $v\a<0$. Therefore, $\#\{\a\in \Phi^{+}_{\cL}|w\a<0\}=\#\{\a\in \Phi^{+}_{\cL}|v\a<0\}=\ell_{\cL}(v)=\ell_{\b}(w)$.

\eqref{ell beta tran} Write $w=w^{\b}v$ for $v\in \WL$.  Then $w^{\g}w=(w^{\g}w^{\b})v$. By Corollary \ref{c:trans min}, $w^{\g}w^{\b}=w^{\g\b}$, we have $\ell_{\g\b}(w^{\g}w)=\ell_{\cL}(v)=\ell_{\b}(w)$.

\eqref{ell beta ineq} Let $\un w$ be the sequence of simple reflections $(s_{i_{N}},\cdots, s_{i_{2}},s_{i_{1}})$. Denote the right side of \eqref{ell beta} by $L(\un w)$. We argue by induction on the length $N$ of the sequence $\un w$. 

For $N=0$ the statement is clear. Suppose the statement is proved for all $\un w$ of length $<N$. Let $s=s_{i_{N}}$ and 
$\un w'=(s_{i_{N-1}}, \cdots, s_{i_{2}},s_{i_{1}})$, $w'=s_{i_{N-1}}\cdots s_{i_{2}}s_{i_{1}}$ so that $w=sw'$.  Note that $\cL_{N-1}=w'\cL=s\cL'$. Let $\b'\in {}_{s\cL'}\un W_{\cL}$ be the block containing $w'$. By inductive hypothesis we have $\ell_{\b'}(w')\le L(\un w')$. We have two cases:

Case 1: $s\notin W_{s\cL'}^{\c}$. In this case $L(\un w)=L(\un w')$. On the other hand, $s$ is minimal in its block $\g\in {}_{\cL'}\un W_{s\cL'}$. By part \eqref{ell beta tran}, $\ell_{\b}(w)=\ell_{\b}(sw')=\ell_{\b'}(w')$. Therefore $\ell_{\b}(w)=\ell_{\b'}(w')\le L(\un w')=L(\un w)$.

Case 2: $s\in W_{s\cL'}^{\c}$. In this case, $L(\un w)=L(\un w')+1$. On the other hand, we have $w\Phi^{+}_{\cL}\cap \Phi^{-}=sw'\Phi^{+}_{\cL}\cap \Phi^{-}=s(w'\Phi^{+}_{\cL}\cap s\Phi^{-})$. Since the only difference between $\Phi^{-}$ and $s\Phi^{-}$ is that $-\a_{s}$ has been changed to $\a_{s}$, $\#(w'\Phi^{+}_{\cL}\cap s\Phi^{-})\le \#(w'\Phi^{+}_{\cL}\cap \Phi^{-})+1$. By part (1), $\ell_{\b}(w)=\#(w\Phi^{+}_{\cL}\cap \Phi^{-})=\#(w'\Phi^{+}_{\cL}\cap s\Phi^{-})\le \#(w'\Phi^{+}_{\cL}\cap \Phi^{-})+1=\ell_{\b'}(w')+1$. Therefore $\ell_{\b}(w)\le \ell_{\b'}(w')+1\le L(\un w')+1=L(\un w)$.

\eqref{ell beta eq} To prove the equality in the case $\un w$ is a reduced word, one uses the same inductive argument. The only point that needs modification is in Case 2. Since $\ell(w)=\ell(w')+1$ in this case, we have $w'^{-1}\a_{s}\in \Phi^{+}$, or $\a_{s}\in w'\Phi^{+}$. But since $s\in W_{w'\cL}^{\c}$, we also have $\a_{s}\in \Phi_{w'\cL}=w'\Phi_{\cL}$, therefore $\a_{s}\in w'\Phi_{\cL}\cap w'\Phi^{+}\cap s\Phi^{-}=w'\Phi^{+}_{\cL}\cap s\Phi^{-}$. Hence $w\Phi^{+}_{\cL}\cap \Phi^{-}=s(w'\Phi^{+}_{\cL}\cap s\Phi^{-})=s(w'\Phi^{+}_{\cL}\cap \Phi^{-})\sqcup\{-\a_{s}\}$. By part \eqref{ell beta neg}, $\ell_{\b}(w)=\ell_{\b'}(w')+1$. Therefore $\ell_{\b}(w)=\ell_{\b'}(w')+1=L(\un w')+1=L(\un w)$.
\end{proof}

\subsection{Partial order on a block} For a block $\b\in {}_{\cL'}\un W_{\cL}$ we define a partial order $\le_{\b}$ on elements of $\b$ as follows. Every element in $\b$ can be written uniquely as $w^{\b}w$ for some $w\in\WL$. Then we define $w^{\b}w'\le_{\b} w^{\b}w$ if and only if $w'\le_{\WL}w$ (under the Bruhat order of $\WL$). For $w',w\in W^{\c}_{\cL'}$, $w'w^{\b}\le_{\b}ww^{\b}$ if and only if $w'\le_{W^{\c}_{\cL'}}w$ (using Corollary \ref{c:conj wb}).

Later we will need the following result comparing the partial order $\le_{\b}$ with the partial order restricted from the Bruhat order of $W$. 

\begin{lemma}\label{l:order} Let $\b\in {}_{\cL'}\un W_{\cL}$.  
\begin{enumerate}
\item If $\g\in {}_{\cL''}\un W_{\cL'}$, left multiplication by $w^{\g}$ gives an isomorphism of posets $(\b, \le_{\b})\isom(\g\b, \le_{\g\b})$.
\item If $\d\in {}_{\cL}\un W_{\cL''}$, right multiplication by $w^{\d}$ gives an isomorphism of posets $(\b, \le_{\b})\isom(\b\d, \le_{\b\d})$. 
\item For $w,w'\in\b$, if $w'\le_{\b}w$, then $w'\le w$.
\end{enumerate}
\end{lemma}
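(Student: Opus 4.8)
The plan is to handle (1) and (2) by writing the relevant multiplication maps explicitly in $W$ and checking that they match the combinatorial description of the partial orders, and to deduce (3) directly from the definition of $\le_\b$ together with \cite[Lemma 1.9(ii)]{L-book} (the same input already used in the proof of Lemma \ref{l:max}).

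For (1), recall that every element of $\b$ is uniquely $w^\b v$ with $v\in\WL$, and by definition $w^\b v'\le_\b w^\b v$ iff $v'\le_{\WL}v$; since $\g\b\in{}_{\cL''}\un W_\cL$ has the same ``right-hand'' Coxeter group $\WL$, the order $\le_{\g\b}$ is described in the same way via the factorization $w^{\g\b}v$. By Corollary \ref{c:trans min} we have $w^\g w^\b=w^{\g\b}$, so left multiplication by $w^\g$ sends $w^\b v$ to $w^{\g\b}v$; under the identifications $\b\cong\WL\cong\g\b$ via the right-hand factors this map is the identity on $\WL$, hence tautologically an isomorphism of posets. It is a bijection $\b\to\g\b$ because left multiplication by $w^\g$ is a bijection of $W$ carrying $\b$ onto $w^\g\b=\g\b$.

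For (2), I would first record that $v\mapsto(w^\d)^{-1}vw^\d$ is an isomorphism of Coxeter groups $\WL\isom W^{\c}_{\cL''}$: this is Corollary \ref{c:conj wb} for the block $\d^{-1}\in{}_{\cL''}\un W_\cL$ containing $(w^\d)^{-1}$, and $(w^\d)^{-1}$ is the minimal element of that block because $(w^\d)^{-1}(\Phi^+_\cL)=\Phi^+_{\cL''}\subset\Phi^+$ --- the equality following from minimality of $w^\d$ (which forces $w^\d(\Phi^+_{\cL''})=\Phi^+\cap\Phi_\cL=\Phi^+_\cL$ after a cardinality count) together with \eqref{RwL}, and the characterization in Lemma \ref{l:max}. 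Then for $v\in\WL$, using Corollary \ref{c:trans min},
\[
w^\b v\,w^\d=(w^\b w^\d)\big((w^\d)^{-1}vw^\d\big)=w^{\b\d}\big((w^\d)^{-1}vw^\d\big),
\]
so right multiplication by $w^\d$ sends the element of $\b$ with right-hand factor $v$ to the element of $\b\d\in{}_{\cL'}\un W_{\cL''}$ with right-hand factor $(w^\d)^{-1}vw^\d$. Since $\le_\b$ is read off from $\le_{\WL}$ and $\le_{\b\d}$ from $\le_{W^{\c}_{\cL''}}$ via these right-hand factorizations, and the conjugation is an isomorphism of Bruhat posets, right multiplication by $w^\d$ is an isomorphism $(\b,\le_\b)\isom(\b\d,\le_{\b\d})$.

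Finally, for (3), write $w=w^\b v$ and $w'=w^\b v'$ with $v,v'\in\WL$; then $w'\le_\b w$ means $v'\le_{\WL}v$, and $w'\le w$ in the Bruhat order of $W$ is precisely \cite[Lemma 1.9(ii)]{L-book}. The only place requiring genuine care is the bookkeeping in (2): verifying that $(w^\d)^{-1}$ is minimal in its block so that Corollary \ref{c:conj wb} applies and the conjugate $(w^\d)^{-1}vw^\d$ indeed lies in $W^{\c}_{\cL''}$ with the correct simple reflections. Parts (1) and (3) are then formal. (Note that one cannot reduce (3) to a special block via (1) or (2), since neither left nor right multiplication by $w^\g$ or $w^\d$ preserves the Bruhat order of $W$ in general; the appeal to \cite{L-book} is essential there.)
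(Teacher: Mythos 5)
Your proofs of (1) and (2) are correct and essentially identical to the paper's: (1) is immediate from the definition of $\le_{\b}$ together with $w^{\g}w^{\b}=w^{\g\b}$ (Corollary \ref{c:trans min}), and (2) reduces to the conjugation isomorphism $\WL\isom W^{\c}_{\cL''}$ of Corollary \ref{c:conj wb}; your explicit verification that $(w^{\d})^{-1}=w^{\d^{-1}}$ (so that the corollary applies) is a detail the paper leaves implicit but is worth having. For (3) you take a genuinely different route. Citing \cite[Lemma 1.9(ii)]{L-book} is legitimate — that lemma states exactly the implication $v'\le_{\WL}v\Rightarrow w^{\b}v'\le w^{\b}v$, and the paper already invokes it in the proof of Lemma \ref{l:max} — so your argument is complete. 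The paper instead gives a self-contained induction on $\ell(w)$: writing $w=w_{1}s$ with $\ell(w)=\ell(w_{1})+1$, it uses part (2) for the single simple reflection $s$ (when $s\notin\WL$) or the fact that $s$ is then a simple reflection of $\WL$ (when $s\in\WL$), and concludes each step with the standard Bruhat-order inequality $x\le y\Rightarrow xs\le\max\{y,ys\}$. What the citation buys you is brevity; what the induction buys the authors is independence from the external reference for this statement. Your closing parenthetical is slightly overstated: while a one-step reduction by $w^{\g}$ or $w^{\d}$ indeed fails (these do not preserve the Bruhat order of $W$), the reduction one simple reflection at a time does work, so the appeal to \cite{L-book} is convenient rather than essential.
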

\begin{proof}
(1) follows directly from the definition and Corollary \ref{c:trans min}.

(2) It suffices to show that $x\le_{\b}y$ implies $xw^{\d}\le_{\b\d}yw^{\d}$, for the reverse implication can be obtained by inverting $\d$. Left multiplying by $w^{\b,-1}$ we reduce to the case $\b=\WL$, the neutral block. Let $x,y\in \WL$ and $x\le_{\WL}y$, and we show $xw^{\d}\le_{\d}yw^{\d}$. Let $x'=w^{\d,-1}xw^{\d}$, $y'=w^{\d,-1}yw^{\d}\in W^{\c}_{\cL''}$. By Corollary \ref{c:conj wb}, $x\le_{\WL}y$ implies $x'\le_{W^{\c}_{\cL''}}y'$, hence $w^{\d}x'\le_{\d}w^{\d}y'$ by definition. Therefore  $xw^{\d}=w^{\d}x'\le_{\d}w^{\d}y'=yw^{\d}$.

(3) Induction on $\ell(w)$. The statement is clear for $w=e$. Assume the statement is true for all $w$ with $\ell(w)<N$ (for varying $\b$). Now suppose $\ell(w)=N$. Write $w=w_{1}s$ for some simple reflection $s$ such that $\ell(w)=\ell(w_{1})+1$. 

If $s\notin\WL$, write $\b'=\b s$ and $w'=w'_{1}s$. Then $w'_{1},w_{1}\in \b'$, and $w'_{1}\le_{\b'}w_{1}$ since right multiplication by $s$ is an isomorphism of posets $\b'\isom \b$ by (2). Applying the inductive hypothesis to $w_{1}$, we get $w'_{1}\le w_{1}$. Hence $w'=w'_{1}s\le \max\{w_{1},w_{1}s\}=w$. 

If $s\in\WL$, then $s$ is a simple reflection in $\WL$. Since $w'\le_{\b}w_{1}s$, either $w'\le_{\b}w_{1}$, or $w'=w'_{1}s$ and $w'_{1}\le_{\b}w_{1}$. In the former case, applying the inductive hypothesis to $w_{1}$, we see $w'\le w_{1}\le w$. In the latter case, applying the inductive hypothesis to $w_{1}$, we get $w'_{1}\le w_{1}$, hence $w'=w'_{1}s\le \max\{w_{1},w_{1}s\}=w$.

%Let $w=s_{i_{N}}s_{i_{N-1}}\cdots s_{i_{1}}$ be a reduced word of $w$ in $W$. Let $\cL_{j}=s_{i_{j}}\cdots s_{i_{1}}\cL$. Let $I_{w}=\{1\le j\le N|s_{i_{j}}\in W^{\c}_{\cL_{j-1}}\}$. In \cite[5.5]{L-CS1} a set of simple reflections $\s_{j}$ ($j\in I_{w}$) are constructed from each $s_{i_{j}}$, namely $\s_{j}=(w^{\b_{j}})^{-1}s_{i_{j}}w^{\b_{j}}$, where $\b_{j}\in {}_{\cL_{j}}\un W_{\cL}$ is the block containing $s_{i_{j}}\cdots s_{i_{1}}$. By  \cite[Prop.5.6(d)]{L-CS1}, $w=\prod_{j\in I_{w}}\s_{j}$ is a reduced expression for $w$ in $\WL$ (product in decreasing order of $I_{w}$). Since $w'\le_{\WL}w$, there exists a subset $J\subset I_{w}$ such that $w'=\prod_{j\in I_{w}-J}\s_{j}$ (decreasing order of $I_{w}$). By \cite[Prop.5.6(c)]{L-CS1}, $w'=\prod_{1\le j\le N, j\notin J}s_{i_{j}}$ (product in decreasing order). This shows that $w'$ is obtained from a reduced word of $w$ by deleting certain simple reflections, therefore $w'\le w$.
\end{proof}

\begin{remark}\label{r:order} The converse of Lemma \ref{l:order}(3) is not true in general. In particular, if $w,w'\in \WL$,  then $w'\le w$ does not necessarily imply $w'\le_{\WL}w$.
\end{remark}

\begin{defn}\label{d:block}
\begin{enumerate}
\item For each $\b\in {}_{\cL'}\un W_{\cL}$, let ${}_{\cL'}\un\cD_{\cL}^{\b}$ be the full triangulated subcategory of ${}_{\cL'}\un\cD_{\cL}$ generated by $\{\uD(w)_{\cL}\}_{w\in \b}$. Let ${}_{\cL'}\cD_{\cL}^{\b}\subset {}_{\cL'}\cD_{\cL}$ be the preimage of ${}_{\cL'}\un\cD_{\cL}^{\b}$ under $\om$. We call ${}_{\cL'}\cD_{\cL}^{\b}$ (resp. ${}_{\cL}\un\cD^{\b}_{\cL}$) a {\em block} of ${}_{\cL'}\cD_{\cL}$ (resp. ${}_{\cL}\un\cD_{\cL}$). 
\item When $\b$ is the unit coset $\WL$, we denote the block ${}_{\cL}\cD^{\b}_{\cL}$ (resp. ${}_{\cL}\un\cD^{\b}_{\cL}$) by ${}_{\cL}\cD_{\cL}^{\c}$ (resp. ${}_{\cL}\un\cD^{\c}_{\cL}$), and call it the {\em neutral block} .
\end{enumerate}
\end{defn}

The terminology ``block'' is justified by the next Proposition.

%%%%%% ok %%%%%%%%%
\begin{prop}[Block decomposition]\label{p:block} We have direct sum decompositions of the triangulated categories
\begin{equation}\label{block decomp}
{}_{\cL'}\cD_{\cL}=\bigoplus_{\b\in {}_{\cL'}\un W_{\cL}}{}_{\cL'}\cD_{\cL}^{\b}, \quad {}_{\cL'}\un\cD_{\cL}=\bigoplus_{\b\in {}_{\cL'}\un W_{\cL}}{}_{\cL'}\un\cD_{\cL}^{\b}.
\end{equation}
\end{prop}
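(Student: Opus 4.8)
The two decompositions in \eqref{block decomp} will be deduced from one another via the functor $\om$, and in turn each reduces to showing that the categories ${}_{\cL'}\cD^{\b}_{\cL}$ (resp. ${}_{\cL'}\un\cD^{\b}_{\cL}$) are mutually orthogonal and jointly generate. The first thing I would do is record the easy half: since the standard objects $\D(\dw)_{\cL}$ ($w\in {}_{\cL'}W_{\cL}$) together generate ${}_{\cL'}\cD_{\cL}$ (every object has a finite filtration with subquotients supported on single strata, and by Lemma \ref{l:one cell} a complex supported on $U\bs G_w/U$ is built from copies of $C(\dw)_{\cL}\j{n}$, hence from $\D(\dw)_{\cL}$ after applying $i_{w,!}$ and shifts), and since ${}_{\cL'}W_{\cL}=\bigsqcup_{\b\in {}_{\cL'}\un W_{\cL}}\b$, the subcategories ${}_{\cL'}\cD^{\b}_{\cL}$ jointly generate ${}_{\cL'}\cD_{\cL}$ as a triangulated category. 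The same argument works non-mixed. So the content is \emph{orthogonality}: for $\b\neq\b'$ and $\cF\in {}_{\cL'}\un\cD^{\b}_{\cL}$, $\cG\in {}_{\cL'}\un\cD^{\b'}_{\cL}$, one has $\Hom(\cF,\cG)=0$ and $\Hom(\cG,\cF)=0$.

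For orthogonality it suffices, since both sides are generated by standards, to check that $\Hom^{\bullet}(\D(\dw)_{\cL},\D(\dv)_{\cL})=0$ whenever $w,v$ lie in different blocks of ${}_{\cL'}\un W_{\cL}$ — and by the Verdier-duality symmetry (Verdier duality exchanges $\D$ and $\nb$, and it preserves each block since $\Phi_{\cL^{-1}}=\Phi_{\cL}$, so it sends ${}_{\cL'}\un\cD^{\b}_{\cL}$ to ${}_{\cL'^{-1}}\un\cD^{\b}_{\cL^{-1}}$) it is enough to compute $\Hom^{\bullet}(\D(\dw)_{\cL},\nb(\dv)_{\cL})$. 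This is the standard computation: $\Hom^{\bullet}(i_{w,!}C(\dw)_{\cL}, i_{v,*}C(\dv)_{\cL})=\Hom^{\bullet}(i_v^{*}i_{w,!}C(\dw)_{\cL}, C(\dv)_{\cL})$, which vanishes unless $w=v$, in which case it is $\upH^{*}_{\G(w)_k}(\pt_k)$, concentrated in even degrees. So for $w\neq v$ the Hom vanishes outright, and in particular it vanishes when $w,v$ are in different blocks. Combined with the generation statement and the fact that a triangulated category generated by a family of pairwise-Hom-orthogonal full triangulated subcategories is their direct sum, this gives the non-mixed decomposition. The mixed decomposition then follows because ${}_{\cL'}\cD^{\b}_{\cL}$ is \emph{defined} as $\om^{-1}$ of ${}_{\cL'}\un\cD^{\b}_{\cL}$: any $\cF\in {}_{\cL'}\cD_{\cL}$ decomposes because $\om\cF$ does, and the decomposition of $\om\cF$ is canonical hence descends (the projectors onto blocks are given by the canonical idempotents in the endomorphism ring, which are Frobenius-equivariant). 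One should also note that ${}_{\cL'}\cD^{\b}_{\cL}$ is indeed triangulated — closed under shifts, cones, and direct summands — which is immediate from its description as the preimage of a triangulated subcategory under the triangulated functor $\om$, together with the fact that $\om$ detects isomorphisms on the relevant subquotients.

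The one point requiring a little care — and the step I'd flag as the main obstacle — is justifying that the \emph{orthogonality of standards across blocks} really is equivalent to orthogonality of the generated subcategories, i.e.\ that $\Hom^{\bullet}$ vanishing on generators propagates to all objects. The subtlety is that ${}_{\cL'}\un\cD^{\b}_{\cL}$ is generated by the $\uD(w)_{\cL}$ \emph{and also} by the $\unb(w)_{\cL}$ (these generate the same subcategory, again by Verdier duality plus the fact that $\D$ and $\nb$ have the same support-theoretic building blocks), and the clean vanishing $\Hom^{\bullet}(\D(\dw)_{\cL},\nb(\dv)_{\cL})=0$ for $w\neq v$ lets one conclude $\Hom^{\bullet}(\cF,\cG)=0$ for $\cF$ in the $\D$-generated $\b$-subcategory and $\cG$ in the $\nb$-generated $\b'$-subcategory by dévissage in both variables. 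Then one argues that every object of ${}_{\cL'}\un\cD^{\b'}_{\cL}$ lies in the $\nb$-generated subcategory as well, which is again the stratification dévissage. Once orthogonality holds in both directions, the existence and uniqueness of the block decomposition of an arbitrary object is formal: filter $\cF$ by strata, observe each graded piece lies in a single block, and use the Hom-vanishing to split the filtration. I do not expect genuine difficulty here, only the bookkeeping of keeping the $\D$- versus $\nb$-generation straight; everything else is an application of Lemma \ref{l:one cell}, the recollement for the Schubert stratification, and Verdier duality as in \S\ref{ss:conv}.
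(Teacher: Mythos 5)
There is a genuine gap, and it sits exactly where you flagged it. Your orthogonality argument reduces everything to the vanishing $\Hom^{\bullet}(\D(\dw)_{\cL},\nb(\dv)_{\cL})=0$ for $w\neq v$, but to use it you need the claim that $\langle \uD(w)_{\cL}: w\in\b\rangle$ coincides with $\langle \unb(w)_{\cL}: w\in\b\rangle$ for each block $\b$ --- and this is not a consequence of ``stratification d\'evissage''. D\'evissage along the Schubert stratification only shows that $\uD(w)_{\cL}$ lies in the subcategory generated by the $\unb(v)_{\cL}[n]$ for \emph{all} $v\le w$ with $v\in{}_{\cL'}W_{\cL}$; it gives no control over which block those $v$ belong to. The refined statement --- that only $v$ in the block of $w$ occur, equivalently that $i_v^*\unb(w)_{\cL}=0$ for $v$ outside the block of $w$ --- is essentially the content of the proposition itself: it is the Corollary the paper deduces \emph{from} \eqref{block decomp}, and the $\nb$-analogue of Lemma \ref{l:stalk order}. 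Verdier duality does not rescue this, since it identifies $\langle\D(v)_{\cL}:v\in\b\rangle$ with $\langle\nb(v)_{\cL^{-1}}:v\in\b\rangle$ inside the different ambient category ${}_{\cL'^{-1}}\un\cD_{\cL^{-1}}$, not with $\langle\nb(v)_{\cL}:v\in\b\rangle$ inside the same one. Note also that the one-sided orthogonality you do obtain (Homs from the $\D$-generated pieces to the $\nb$-generated pieces vanish across blocks) is not by itself enough to split the category, because $\Hom^{\bullet}(\nb(w)_{\cL},\D(v)_{\cL})$ is generally nonzero for $v<w$; you need the two-sided statement, hence you really do need the blockwise identification of the two generating families.

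The paper instead proves the $\Hom(\D,\D)$-orthogonality directly, by induction on $\ell(w_2)$, using that right convolution with $\unb(s)_{s\cL}$ is an equivalence (Lemma \ref{l:invertible}) together with the rank-one dichotomy: if $s\notin \WL$ then $\unb(s)\cong\uD(s)$ is clean (Lemma \ref{l:clean s equiv}), while if $s\in \WL$ then $\uD(w_1)_{\cL}\star\unb(s)_{\cL}$ lies in $\langle\uD(w_1s)_{\cL},\uD(w_1)_{\cL}\rangle$ and, crucially, $w_1s$ stays in the same block as $w_1$. This is the point where the monodromy condition defining the blocks actually enters. If you want to repair your route, the natural fix is to prove $i_v^*\unb(w)_{\cL}=0$ for $v$ not in the block of $w$ by induction on $\ell(w)$ via $\unb(w)\cong\unb(w')\star\unb(s)$ --- but that is the same induction, so you end up reproducing the paper's argument rather than bypassing it.
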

\begin{proof}
We prove the non-mixed statement and the mixed version follows. Clearly the subcategories $\{{}_{\cL'}\un\cD_{\cL}^{\b}\}_{\b\in{}_{\cL'}\un W_{\cL}}$ generate ${}_{\cL'}\un\cD_{\cL}$. It remains to show that if $w_{1}, w_{2}\in {}_{\cL'}W_{\cL}$ are not in the same right $\WL$ coset, then
\begin{equation}\label{Hom van}
\bR\Hom(\uD(w_{1})_{\cL},\uD(w_{2})_{\cL})=0.
\end{equation}
We prove \eqref{Hom van} by induction on $\ell(w_{2})$.  For $\ell(w_{2})=0$, i.e., $w_{2}=e$, by adjunction,
$\bR\Hom(\uD(w_{1})_{\cL},\uD(e)_{\cL})\cong \bR\Hom(\uC(w_{1}), i^{!}_{w_{1}}\uD(e)_{\cL})$ which vanishes whenever $w_{1}\ne e$. This verifies \eqref{Hom van} for $\ell(w_{2})=0$.

Suppose \eqref{Hom van} is proved for $\ell(w_{2})<n$ ($n>0$). Consider the case $\ell(w_{2})=n$ and $w_{1}\in {}_{\cL'}W_{\cL}- w_{2}\WL$. Let $s$ be a simple reflection such that $\ell(w_{2})=\ell(w_{2}s)+1$. By Lemma \ref{l:invertible}, we have $\uD(w_{2})_{\cL}\star\unb(s)_{s\cL}\cong \uD(w_{2}s)_{s\cL}$. Since $\star\unb(s)_{s\cL}$ is an equivalence, we have
\begin{eqnarray*}
\bR\Hom(\uD(w_{1})_{\cL},\uD(w_{2})_{\cL})&\cong& \bR\Hom(\uD(w_{1})_{\cL}\star \unb(s)_{s\cL},\uD(w_{2})_{\cL}\star\unb(s)_{s\cL})\\
&\cong&\bR\Hom(\uD(w_{1})_{\cL}\star \unb(s)_{s\cL},\uD(w_{2}s)_{\cL}).
\end{eqnarray*}
If either $\ell(w_{1})=\ell(w_{1}s)+1$, or $s\notin \WL$, then by either Lemma \ref{l:invertible} or Lemma \ref{l:clean s equiv}(1) we similarly have $\uD(w_{1})_{\cL}\star \unb(s)_{s\cL}\cong \uD(w_{1}s)_{s\cL}$, hence $\RHom(\uD(w_{1})_{\cL}\star \unb(s)_{s\cL},\uD(w_{2}s)_{\cL})=\bR\Hom(\uD(w_{1}s)_{s\cL},\uD(w_{2}s)_{s\cL})$ which vanishes by inductive hypothesis since $\ell(w_{2}s)<n$. 

It remains to treat the case $s\in \WL$ and $\ell(w_{1})=\ell(w_{1}s)-1$. Since $\unb(s)_{\cL}$ is in the triangulated subcategory generated by $\uD(s)_{\cL}$ and $\uD(e)_{\cL}$,  $\uD(w_{1})_{\cL}\star \unb(s)_{\cL}$ is in the triangulated subcategory generated by $\uD(w_{1})_{\cL}\star \uD(s)_{\cL}\cong \uD(w_{1}s)_{\cL}$ and $\uD(w_{1})_{\cL}\star\uD(e)_{\cL}=\uD(w_{1})_{\cL}$, and we are done again by inductive hypothesis applied to $w_{2}s$.
\end{proof}

%%%%%%% ok %%%%%%%%
\begin{cor} Let $\b\in {}_{\cL'}\un W_{\cL}$ and $w\in \b$. Then $\nb(\dw)_{\cL}$ and $\IC(\dw)_{\cL}\in {}_{\cL'}\cD_{\cL}^{\b}$. In particular, ${}_{\cL'}\un\cD_{\cL}^{\b}$ is also the full triangulated subcategory of ${}_{\cL'}\un\cD_{\cL}$ generated  either by the collection $\{\uIC(w)_{\cL}\}_{w\in \b}$ or by the collection  $\{\unb(w)_{\cL}\}_{w\in \b}$.
\end{cor}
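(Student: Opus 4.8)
The plan is to deduce this corollary from the block decomposition (Proposition \ref{p:block}) together with the elementary ``cleanness'' of costandard objects. Since ${}_{\cL'}\cD_\cL^{\b}$ is by definition the $\om$-preimage of ${}_{\cL'}\un\cD_\cL^{\b}$, and $\om$ sends $\nb(\dw)_\cL,\IC(\dw)_\cL$ to $\unb(w)_\cL,\uIC(w)_\cL$, it is enough to prove all assertions in the non-mixed category ${}_{\cL'}\un\cD_\cL$. So the first goal is to show that $\unb(w)_\cL$ and $\uIC(w)_\cL$ lie in ${}_{\cL'}\un\cD_\cL^{\b}$ for $w\in\b$.

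By Proposition \ref{p:block} I would write $\unb(w)_\cL=\bigoplus_{\g}P_{\g}\unb(w)_\cL$ along the block decomposition, with each $P_{\g}\unb(w)_\cL\in {}_{\cL'}\un\cD_\cL^{\g}$ a direct summand. Fix $\g\ne\b$; then for every $v\in\g$ one has $v\ne w$ (the blocks partition ${}_{\cL'}W_\cL$ into $\WL$-cosets). I will invoke the standard vanishing $i_v^!\,\nb(\dw)_\cL=0$ for all $v\ne w$: writing $i_w$ as $G_w\overset{j}{\incl}G_{\le w}\incl G$ with $j$ open, this comes from $i^!_{G_{<w}}\circ Rj_*=0$ for the complementary closed immersion, and it is automatic when $v\not\le w$ by support. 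Hence by adjunction $\bR\Hom(\uD(v)_\cL,\unb(w)_\cL)\cong\bR\Hom(\uC(v)_\cL,i_v^!\unb(w)_\cL)=0$. Since ${}_{\cL'}\un\cD_\cL^{\g}$ is generated as a triangulated subcategory by $\{\uD(v)_\cL\}_{v\in\g}$ (Definition \ref{d:block}), it follows that $\bR\Hom(\cG,\unb(w)_\cL)=0$ for every $\cG\in {}_{\cL'}\un\cD_\cL^{\g}$; in particular the split inclusion $P_{\g}\unb(w)_\cL\incl\unb(w)_\cL$ is zero, so $P_{\g}\unb(w)_\cL=0$. Thus $\unb(w)_\cL=P_{\b}\unb(w)_\cL\in {}_{\cL'}\un\cD_\cL^{\b}$. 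For $\uIC(w)_\cL$ (a simple perverse sheaf) each $P_{\g}\uIC(w)_\cL$ is $0$ or $\uIC(w)_\cL$; since $\uIC(w)_\cL=\Im(\uD(w)_\cL\to\unb(w)_\cL)$ there is a nonzero map $\uD(w)_\cL\to\uIC(w)_\cL$, and $\uD(w)_\cL=P_{\b}\uD(w)_\cL$, so applying $P_{\b}$ shows $P_{\b}\uIC(w)_\cL\ne0$; hence $\uIC(w)_\cL\in {}_{\cL'}\un\cD_\cL^{\b}$, and this is the unique block containing it.

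For the ``in particular'' part I would argue as follows. The block projections are $t$-exact for the perverse $t$-structure (a direct summand of a perverse sheaf is perverse), so ${}_{\cL'}\un\cD_\cL^{\b}$ carries a bounded perverse $t$-structure whose heart $\cP^{\b}$ is a Serre subcategory of the finite-length abelian category $\Perv({}_{\cL'}\un\cD_\cL)$; by the previous paragraph, and since each $\uIC(w)_\cL$ lies in a single block, the simple objects of $\cP^{\b}$ are exactly the $\uIC(w)_\cL$ with $w\in\b$. Therefore every object of ${}_{\cL'}\un\cD_\cL^{\b}$ is a finite iterated extension of shifts of its perverse cohomology objects, each of which is a finite iterated extension of the simples $\uIC(w)_\cL$, $w\in\b$; this shows $\{\uIC(w)_\cL\}_{w\in\b}$ generates ${}_{\cL'}\un\cD_\cL^{\b}$. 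For generation by the $\unb(w)_\cL$ I would use Verdier duality: $\DD$ gives a contravariant equivalence ${}_{\cL'}\un\cD_\cL\isom {}_{\cL'^{-1}}\un\cD_{\cL^{-1}}$ sending $\unb(w)_\cL$ to $\uD(w)_{\cL^{-1}}$ and $\uIC(w)_\cL$ to $\uIC(w)_{\cL^{-1}}$ (up to shift). As $\Phi_{\cL^{-1}}=\Phi_\cL$, so that $\WL$ and the indexing set of blocks are the same for $\cL$ and for $\cL^{-1}$, the block description just obtained (applied to $\cL$ and to $\cL^{-1}$) shows $\DD$ carries ${}_{\cL'}\un\cD_\cL^{\b}$ onto ${}_{\cL'^{-1}}\un\cD_{\cL^{-1}}^{\b}$; the latter is generated by $\{\uD(w)_{\cL^{-1}}\}_{w\in\b}=\DD\{\unb(w)_\cL\}_{w\in\b}$ by Definition \ref{d:block}, so $\{\unb(w)_\cL\}_{w\in\b}$ generates ${}_{\cL'}\un\cD_\cL^{\b}$.

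The main obstacle is essentially organizational rather than conceptual: one must check carefully that the block projections are $t$-exact and that $\cP^{\b}$ is a Serre subcategory of $\Perv({}_{\cL'}\un\cD_\cL)$ with the expected simple objects, so that the ``a block is generated by its simple perverse sheaves'' argument is legitimate; the geometric input — the costalk vanishing $i_v^!\nb(\dw)_\cL=0$ for $v\ne w$ — is standard, and the remainder is bookkeeping with the direct-sum decomposition.
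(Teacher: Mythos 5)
Your proof is correct, and its core is the same as the paper's: both deduce the statement from the direct sum decomposition of Proposition \ref{p:block} by identifying which summand $\unb(w)_{\cL}$ and $\uIC(w)_{\cL}$ occupy. The paper does this in one line, observing that both objects are indecomposable and receive nonzero maps from $\uD(w)_{\cL}$, hence must lie in the same summand as $\uD(w)_{\cL}$. Your treatment of $\uIC(w)_{\cL}$ is identical in substance (simplicity plus the nonzero map $\uD(w)_{\cL}\to\uIC(w)_{\cL}$), while for $\unb(w)_{\cL}$ you instead kill the other block projections directly, via the costalk vanishing $i_v^!\unb(w)_{\cL}=0$ for $v\ne w$ and adjunction against the generators $\uD(v)_{\cL}$ of the other blocks; this variant is valid and has the small advantage of not needing the indecomposability of the costandard object. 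You also supply an explicit argument for the ``in particular'' clause ($t$-exactness of the block projections, generation of the heart by the simples $\uIC(w)_{\cL}$ with $w\in\b$, and Verdier duality combined with $\Phi_{\cL^{-1}}=\Phi_{\cL}$ to handle the $\unb$'s), which the paper leaves implicit; this is sound, though one could also argue more directly by induction on the support, using that the cones of $\uD(w)_{\cL}\to\uIC(w)_{\cL}\to\unb(w)_{\cL}$ lie in the block $\b$ (by the part already proved) and are supported on strictly smaller strata.
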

\begin{proof}
Since $\unb(w)_{\cL}$ and $\uIC(w)_{\cL}$ and indecomposable objects and they admit nonzero maps from $\uD(w)_{\cL}$, they must lie in the same summand as $\uD(w)_{\cL}$ in the decomposition \eqref{block decomp} for ${}_{\cL'}\un\cD_{\cL}$.
\end{proof}

%%%%%% ok %%%%%%%%
\begin{prop}[Convolution preserves blocks]\label{p:conv block} Let $\cL,\cL'$ and $\cL''\in \fo$. Let $\b\in {}_{\cL'}\un W_{\cL}$ and $\g\in {}_{\cL''}\un W_{\cL'}$. Then
\begin{equation*}
{}_{\cL''}\cD_{\cL'}^{\g}\star{}_{\cL'}\cD_{\cL}^{\b}\subset {}_{\cL''}\cD_{\cL}^{\g\cdot\b}.
\end{equation*}
\end{prop}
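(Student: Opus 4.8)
The plan is to reduce the statement to the level of generating objects and then run an induction on $\ell(w^{\g})$, the length of the minimal element in $\g$. By Definition \ref{d:block}, the block ${}_{\cL'}\cD_{\cL}^{\b}$ is generated as a triangulated subcategory by the standard objects $\{\D(\dw)_{\cL}\}_{w\in\b}$, and similarly for ${}_{\cL''}\cD_{\cL'}^{\g}$. Since convolution $(-)\star(-)$ is triangulated (exact) in each variable, it suffices to prove
\[
\D(\dv)_{\cL'}\star\D(\dw)_{\cL}\in {}_{\cL''}\cD_{\cL}^{\g\cdot\b}
\]
for all $v\in\g$ and $w\in\b$. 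The mixed statement will follow from the non-mixed one since ${}_{\cL'}\cD_{\cL}^{\b}$ is by definition the preimage of ${}_{\cL'}\un\cD_{\cL}^{\b}$ under $\om$ and $\om$ is monoidal.

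First I would dispose of the case where $\ell(vw)=\ell(v)+\ell(w)$: then Lemma \ref{l:basic conv} gives $\D(\dv)_{\cL'}\star\D(\dw)_{\cL}\cong\D(\dv\dw)_{\cL}$, and $vw\in\g\cdot\b$ by the very definition of the product of blocks in \S\ref{ss:block W}, so we are done. For the general case I would induct on $\ell(v)$. If $\ell(v)=0$ then $\D(\dv)_{\cL'}=\d_{\cL'}$ is the unit and there is nothing to prove. For the inductive step, write $v=s v'$ with $s$ a simple reflection and $\ell(v)=\ell(v')+1$, so that $\D(\dv)_{\cL'}\cong\D(\ds)_{\cL'}\star\D(\dv')_{s\cL'}$ by Lemma \ref{l:basic conv}. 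By associativity of convolution it then suffices to treat the case $v=s$ a single simple reflection, i.e. to show
\[
\D(\ds)_{\cL'}\star\D(\dw)_{\cL}\in{}_{\cL''}\cD_{\cL}^{\g\cdot\b},\qquad \cL''=s\cL'.
\]
Here $\g=W^{\c}_{s\cL'}s\WL[\,\cdots\,]$ — more precisely $\g\in{}_{s\cL'}\un W_{\cL'}$ is the block of $s$. If $\ell(sw)>\ell(w)$ we are in the additive-length case already handled. If $\ell(sw)<\ell(w)$, write $w=s''w_1$ with $\ell(w)=\ell(w_1)+1$ where $s''$ is chosen so that after transporting $s$ past things the analysis becomes tractable; concretely it is cleaner to use $\D(\ds)_{\cL'}\cong\D(\dot e)_{\cL'}$ is false, so instead I would use Lemma \ref{l:invertible}: $\D(\ds)_{\cL'}\star\nb(\ds^{-1})_{s\cL'}\cong\d$, hence $\D(\ds)_{\cL'}\star\D(\dw)_{\cL}$ differs from something by a controlled term. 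The cleanest route: split into the two cases $s\in W^{\c}_{s\cL'}$ (equivalently $s\in\WL$ after conjugation — use Lemma \ref{l:clean s equiv} or the structure of the relevant rank-one Levi) and $s\notin W^{\c}_{s\cL'}$.

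In the case $s\notin W^{\c}_{s\cL'}$, Lemma \ref{l:clean s equiv}(1) gives $\D(\ds)_{\cL'}\cong\IC(\ds)_{\cL'}\cong\nb(\ds)_{\cL'}$, and then $(-)\star\IC(\ds)_{\cL'}$ is an equivalence by Lemma \ref{l:clean s equiv}(2) carrying standard objects to standard objects by part (3); combined with the transitivity relations $w^{\g}w^{\b}=w^{\g\b}$ of Corollary \ref{c:trans min}, this moves the whole block $\b$ to $\g\cdot\b$ and finishes this case. In the case $s\in W^{\c}_{s\cL'}$ (after conjugating by $w^{\g}$ we may assume the relevant simple reflection lies in $\WL$), I would use Lemma \ref{l:inert s}: convolution with $\IC(s)$ is $\pi_s^*\pi_{s*}(-)\jiao{1}$, which preserves the block because $\pi_{s*}$ and $\pi_s^*$ are defined stratum-by-stratum compatibly with the $W/\langle s\rangle$-decomposition, and $\nb(\ds)_{\cL'}$ (equivalently $\D(\ds)_{\cL'}$) lies in the triangulated category generated by $\IC(s)$ and $\d$ up to the two-step filtration; then the Bruhat-order comparison Lemma \ref{l:order}(1)(2) guarantees the resulting objects are supported on $\g\cdot\b$. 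I expect the main obstacle to be organizing the bookkeeping in this last case: one must keep track of how a single simple reflection $s$ relates to $\Phi_{\cL}$ versus $\Phi_{s\cL'}$, use Corollary \ref{c:conj wb} to pass between the two Coxeter structures, and verify via Lemma \ref{l:ell beta}\eqref{ell beta tran} that the support of $\D(\ds)_{\cL'}\star\D(\dw)_{\cL}$ — which a priori spreads over $\overline{sw}\cdot(\text{lower strata})$ — in fact stays inside the single coset $\g\cdot\b\subset{}_{\cL''}\un W_{\cL}$. Once the support statement is in hand, the containment in the generated subcategory ${}_{\cL''}\cD_{\cL}^{\g\cdot\b}$ is automatic from the Schubert-stratification filtration of Corollary \ref{c:filM}.
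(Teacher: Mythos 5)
Your overall strategy --- reduce to the standard objects generating each block, induct on length, peel off simple reflections one at a time, and split on whether the simple reflection lies in the relevant $\WL$ --- is the same as the paper's, except mirrored: you decompose the \emph{left} factor, so you implicitly need the left-convolution analogues of Lemmas \ref{l:clean s equiv} and \ref{l:inert s}, which are only stated for right convolution (they do hold by symmetry, but this should be said). The genuine gap is in the decisive case, $s$ a simple reflection with $s\in W^{\c}_{\cL'}$ and $\ell(sw)<\ell(w)$. There, ``convolution with $\IC(s)$ is $\pi_{s}^{*}\pi_{s*}(-)\j{1}$, which preserves the block because $\pi_{s*}$ and $\pi_{s}^{*}$ are defined stratum-by-stratum'' is not an argument: a priori this only bounds the support of $\D(\ds)_{\cL'}\star\D(\dw)_{\cL}$ by $\ov{G_{w}}$, and by Remark \ref{r:order} the Bruhat-closure $\ov{G_{w}}$ contains strata $G_{u}$ with $u\le w$ but $u\notin\g\cdot\b$. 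Ruling out stalks along those strata is precisely the content of the proposition, so nothing has been proved in the one case where something needs proving.

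The paper closes this case with a short algebraic manipulation that you have all the ingredients for: when $\ell(w_{2}s)=\ell(w_{2})-1$, Lemma \ref{l:basic conv} gives $\uD(w_{2})_{\cL}\star\uD(s)_{\cL}\cong\uD(w_{2}s)_{\cL}\star\uD(s)_{\cL}\star\uD(s)_{\cL}$; the factor $\uD(s)_{\cL}\star\uD(s)_{\cL}$ is supported on $G_{\le s}$ and hence lies in $\j{\uD(s)_{\cL},\uD(e)_{\cL}}$, so the whole object lies in $\j{\uD(w_{2}s)_{\cL},\uD(w_{2})_{\cL}}$, and $w_{2}$, $w_{2}s$ are in the same block because $s\in\WL$. (The left-handed version you would need is $\D(\ds)_{\cL'}\star\D(\dw)_{\cL}\cong\D(\ds)_{\cL'}\star\D(\ds)_{\cL'}\star\D(\ds^{-1}\dw)_{\cL}$.) Separately, the closing appeal to Corollary \ref{c:filM} is misplaced: that corollary computes graded Hom spaces between semisimple complexes and says nothing about membership in a block. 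The statement you actually want --- that an object of ${}_{\cL''}\un\cD_{\cL}$ whose $*$-restrictions to all strata $G_{u}$ with $u\notin\g\cdot\b$ vanish lies in ${}_{\cL''}\un\cD^{\g\cdot\b}_{\cL}$ --- follows from d\'evissage over the Schubert stratification together with Lemma \ref{l:one cell}.
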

\begin{proof}
It suffices to show the same statement for the non-mixed categories. By definition, it suffices to show that for any $w_{1}$ in a block $\b\in {}_{\cL'}\un W_{\cL}$ and any other block $\g\in {}_{\cL''}\un W_{\cL'}$, 
\begin{equation}\label{conv D}
{}_{\cL''}\cD_{\cL'}^{\g}\star\uD(w_{1})_{\cL}\subset {}_{\cL''}\un\cD_{\cL}^{\g\cdot \b}.
\end{equation}
We prove this by induction on $\ell(w_{1})$. When $\ell(w_{1})=0$, $w_{1}=e$, the statement is clear since $(-)\star\uD(e)_{\cL}$ is the identity functor.

Next we consider the case $\ell(w_{1})=1$, i.e., $w_{1}=s$ is a simple reflection. If $s\notin \WL$, then by Lemma \ref{l:clean s equiv}(3), for any $w_{2}\in \g$, $\uD(w_{2})_{s\cL}\star\uD(s)_{\cL}\cong \uD(w_{2}s)_{\cL}$, which implies \eqref{conv D}. 

If $s\in \WL$, it suffices to show that
\begin{equation}\label{Dw2s}
\uD(w_{2})_{\cL}\star \uD(s)_{\cL}\in {}_{\cL''}\un\cD_{\cL}^{\g}
\end{equation}
(now $w_{2}$ and $w_{2}s$ are in the same block denoted $\g$).  If $\ell(w_{2}s)=\ell(w_{2})+1$, then by Lemma \ref{l:basic conv}, $\uD(w_{2})_{\cL}\star \uD(s)_{\cL}\cong \uD(w_{2}s)_{\cL}$ , which verifies \eqref{Dw2s}.  If  $\ell(w_{2}s)=\ell(w_{2})-1$, then by \ref{l:basic conv} we have $\uD(w_{2})_{\cL}\star \uD(s)_{\cL}\cong \uD(w_{2}s)_{\cL}\star\uD(s)_{\cL}\star\uD(s)_{\cL}$. Since $\uD(s)_{\cL}\star \uD(s)_{\cL}\in{}_{\cL}\un\cD(\le s)_{\cL}$ which is generated by $\uD(s)_{\cL}$ and $\uD(e)_{\cL}$, we have $\uD(w_{2})_{\cL}\star \uD(s)_{\cL}\in \j{\uD(w_{2}s)_{\cL}, \uD(w_{2}s)_{\cL}\star\uD(s)_{\cL})}=\j{\uD(w_{2}s)_{\cL},\uD(w_{2})_{\cL}}$, which verifies \eqref{Dw2s} in this case. This completes the proof when $\ell(w_{1})=1$.

Now consider the case $\ell(w_{1})\ge2$. Write $w_{1}=w'_{1}s$ where $s$ is a simple reflection in $W$ and $\ell(w_{1})=\ell(w_{1}')+1$. Then $\uD(w_{1})_{\cL}\cong \uD(w'_{1})_{s\cL}\star \uD(s)_{\cL}$. By inductive hypothesis applied to $\ell(w'_{1})$, we have ${}_{\cL''}\cD_{\cL'}^{\g}\star\uD(w'_{1})_{s\cL}\subset {}_{\cL''}\un\cD_{s\cL}^{\g\cdot \b'}$, where $\b'\in {}_{\cL'}\un W_{s\cL}$ is the block containing $w'_{1}$. By the proven case for simple reflections, ${}_{\cL''}\un\cD_{s\cL}^{\g\cdot \b'}\star\uD(s)_{\cL}\subset{}_{\cL''}\un\cD_{\cL}^{\g\cdot \b}$. Combining these two facts we get \eqref{conv D} for $w_{1}$.
\end{proof}

We will also need the following statement about stalks of $\uIC(w)_{\cL}$ later.
\begin{lemma}\label{l:stalk order}
Let $\b\in {}_{\cL'}\un W_{\cL}$ and $w\in \b$. Then $i_{v}^{*}\uIC(w)_{\cL}$ and $i^{!}_{v}\uIC(w)_{\cL}$ vanish unless $v\in \b$ and $v\le_{\b}w$.
\end{lemma}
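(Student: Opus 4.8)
The plan is to reduce the statement to the already-known support bounds for the non-monodromic $\IC$-sheaves by peeling off the minimal element $w^{\b}$ of the block via convolution. Recall from Corollary \ref{c:filM} (or directly from Proposition \ref{p:parity purity}(2)) that $i_{v}^{*}\uIC(w)_{\cL}$ and $i_{v}^{!}\uIC(w)_{\cL}$ already vanish unless $v$ and $w$ lie in the same block, namely $v\in\b$: indeed $\uIC(w)_{\cL}\in{}_{\cL'}\un\cD_{\cL}^{\b}$ by the Corollary following Proposition \ref{p:block}, and the block decomposition \eqref{block decomp} together with the description of ${}_{\cL'}\un\cD_{\cL}^{\b}$ as generated by the $\uD(v)_{\cL}$, $v\in\b$, forces the $v$-stalk to vanish for $v\notin\b$ (if $i_{v}^{*}\uIC(w)_{\cL}\ne0$ for some $v$ in another block $\b''$, then $\uIC(w)_{\cL}$ would have a nonzero component in ${}_{\cL'}\un\cD_{\cL}^{\b''}$). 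So the real content is the bound $v\le_{\b}w$.

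The key step is to use the equivalence $(-)\star\unb(w^{\b})_{\cL}^{-1}$, equivalently convolution by $\uD(w^{\b,-1})_{\cL'}$ on the left, to move to the neutral block. Writing $w=w^{\b}u$ with $u\in\WL$ (uniquely), I would first establish that left convolution by $\uD((w^{\b})^{-1})_{\cL'}$ sends $\uIC(w)_{\cL}\in{}_{\cL'}\un\cD_{\cL}^{\b}$ to $\uIC(u)_{\cL}\in{}_{\cL}\un\cD_{\cL}^{\c}$. This follows by writing $w^{\b}$ as a reduced word in simple reflections $s$, none of which lie in the relevant $\WL$-type subgroup (since $w^{\b}$ is the minimal element, $w^{\b}(\Phi^{+}_{\cL})\subset\Phi^{+}$, so at each stage the simple reflection is ``clean'' in the sense of Lemma \ref{l:clean s equiv}), and applying Lemma \ref{l:clean s equiv}(3) repeatedly: clean convolution carries $\IC(\dw)$ to $\IC(\dw\ds)$ and is an equivalence, so it matches up the middle extensions on the nose. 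Hence on strata, $i_{v}^{*}\uIC(w)_{\cL}$ is computed from $i_{v'}^{*}\uIC(u)_{\cL}$ where $v'=(w^{\b})^{-1}v$ runs through $\WL$ as $v$ runs through $\b$, and the order-comparison $v\le_{\b}w$ translates (by definition of $\le_{\b}$, and using Corollary \ref{c:conj wb} / Lemma \ref{l:order}(1) to identify $(\b,\le_{\b})$ with $(\WL,\le_{\WL})$) into $v'\le_{\WL}u$.

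This reduces us to the neutral block, i.e.\ to proving: for $u\in\WL$, $i_{v'}^{*}\uIC(u)_{\cL}$ and $i_{v'}^{!}\uIC(u)_{\cL}$ vanish unless $v'\in\WL$ and $v'\le_{\WL}u$. Here I would argue by induction on $\ell_{\cL}(u)$, peeling off a simple reflection $s\in\WL$ with $\ell_{\cL}(us)<\ell_{\cL}(u)$: by Lemma \ref{l:conv pure}(1) (over $k$) $\uIC(u)_{\cL}$ is a direct summand of $\uIC(us)_{\cL}\star\uIC(s)_{\cL}$, and by Lemma \ref{l:inert s}, $(-)\star\IC(s)_{\cL}\cong\pi_{s}^{*}\pi_{s*}(-)\j{1}$ is given by the parabolic pushforward-pullback along $G/\wt B\to G/\wt P_{s}$. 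One then checks that $\pi_{s}^{*}\pi_{s*}$ applied to something supported on $\bigcup_{v''\le_{\WL}us}\overline{Bv''B}/B$ (intersected with the block) has support contained in $\bigcup_{v'\le_{\WL}u}\overline{Bv'B}/B$ inside the $\WL$-block, using the standard fact that $\le_{\WL}$ interacts with multiplication by $s$ the same way the Bruhat order does, together with the descent Lemma \ref{l:IC descend}. The inductive base $u=e$ is trivial since $\uIC(e)_{\cL}=\uD(e)_{\cL}$ is the skyscraper.

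The main obstacle, I expect, is the bookkeeping in the reduction step: verifying carefully that the chain of clean convolutions $(-)\star\IC(\ds)_{\cL}$ realizing $(-)\star\uD(w^{\b})_{\cL}$ not only matches $\uIC(u)_{\cL}\mapsto\uIC(w)_{\cL}$ but does so compatibly with the stratifications, so that the two partial orders $\le_{\b}$ and $\le_{\WL}$ correspond under $v\mapsto(w^{\b})^{-1}v$ — this is essentially the content of Lemma \ref{l:order} and Corollary \ref{c:conj wb}, but it must be threaded through the geometry. The parabolic-induction support estimate in the neutral-block case is routine once set up, since the map $G/\wt B\to G/\wt P_{s}$ is a $\PP^{1}$-bundle and the behavior of Bruhat-type orders under $\{e,s\}$-cosets is classical; the subtlety, flagged in Remark \ref{r:order}, is precisely that $\le_{\WL}$ is strictly finer than the restriction of the Bruhat order of $W$, so one must work with $\le_{\WL}$ throughout and not accidentally replace it by $\le$.
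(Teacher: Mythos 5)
Your opening reduction (block decomposition forces $v\in\b$, and left convolution with a minimal IC sheaf identifies $(\b,\le_{\b})$ with $(\WL,\le_{\WL})$ while carrying $\uD$'s to $\uD$'s and $\uIC$'s to $\uIC$'s) is correct, but the induction you then run inside the neutral block has a genuine gap: you conflate simple reflections of the Coxeter group $(\WL,S_{\cL})$ with simple reflections of $W$ that happen to lie in $\WL$. Lemma \ref{l:inert s}, the canonical object $\IC(s)_{\cL}=i_{\le s*}\wt\cL\j{1}$, the parabolic $P_{s}$ and the $\PP^{1}$-fibration $\pi_{s}$ are only available when $s$ is a simple reflection of $W$. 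A simple reflection of $\WL$ need not be one of $W$: for $W=W(C_{2})$ with $\Phi_{\cL}=\{\pm(e_{1}-e_{2}),\pm(e_{1}+e_{2})\}$, the $\WL$-simple reflection $r_{e_{1}+e_{2}}=s_{1}s_{2}s_{1}$ is not $W$-simple, and for $u=r_{e_{1}+e_{2}}$ there is no $W$-simple $s\in\WL$ with $\ell_{\cL}(us)<\ell_{\cL}(u)$, so your induction on $\ell_{\cL}$ cannot even start. For such $s$ there is also no justification for "$\uIC(u)_{\cL}$ is a direct summand of $\uIC(us)_{\cL}\star\uIC(s)_{\cL}$" nor for the support estimate via $\pi_{s}^{*}\pi_{s*}$. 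Worse, the reduction to the neutral block actively obstructs the fix: if you instead peel off a $W$-simple reflection $s$ from $u\in\WL$ with $\ell(us)<\ell(u)$, then typically $s\notin\WL$ and $us$ leaves the neutral block, so you need the statement for all blocks anyway.

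The paper's proof keeps all blocks in play and inducts on the ordinary length $\ell(w)$ (for varying $\b$), writing $w=w's$ with $s$ a $W$-simple reflection and $\ell(w)=\ell(w')+1$. When $s\notin\WL$ it uses Lemma \ref{l:clean s equiv}(3) to translate the generating set $\j{\uD(v)_{s\cL}[n];v\le_{\b s}w'}$ by $s$; when $s\in\WL$ it uses that $\uIC(w)_{\cL}$ is a summand of $\uIC(w')_{\cL}\star\uIC(s)_{\cL}$, that $\uD(v)_{\cL}\star\uIC(s)_{\cL}$ has stalks only along $G_{v}$ and $G_{vs}$, and that $v\le_{\b}w'$ or $vs\le_{\b}w'$ forces $v\le_{\b}w$ since $s$ is then a simple reflection of $\WL$ as well. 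To repair your argument you would have to restructure it along these lines; the neutral-block reduction and the $\ell_{\cL}$-induction should be discarded.
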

\begin{proof} It is enough to prove the stalk statement and the costalk statement follows by Verdier duality.

Induction on $\ell(w)$. The statement is clear for $w=e$. Suppose it is proved for $\ell(w)<N$ (for varying $\b$), and we now prove it for $\ell(w)=N$. Write $w=w's$ for some simple reflection $s$ such that $\ell(w)=\ell(w')+1$. 

If $s\notin\WL$, then $\uIC(w)_{\cL}\cong \uIC(w')_{s\cL}\star\uD(s)_{\cL}$ by Lemma \ref{l:clean s equiv}(3). Let $\b'=\b s$. Applying inductive hypothesis to $\uIC(w')_{s\cL}$, we see that $\uIC(w')_{s\cL}\in \j{\uD(v)_{s\cL}[n]; v\le_{\b'}w',n\in\ZZ}$. Therefore, $\uIC(w')_{s\cL}\star  \uD(s)_{\cL}\in\j{\uD(v)_{s\cL}\star\uD(s)_{\cL}[n]; v\le_{\b'}w',n\in\ZZ}$. Note that $\uD(v)_{s\cL}\star\uD(s)_{\cL}=\uD(vs)_{\cL}$, and $v\le_{\b'}w'$ implies $vs\le_{\b}w's=w$, we see that  $\uIC(w)_{\cL}\in \j{\uD(v)_{\cL}[n]; v\le_{\b}w,n\in\ZZ}$, hence $i^{*}_{v}\uIC(w)_{\cL}$ is zero unless $v\le_{\b}w$.

If $s\in\WL$, then $\uIC(w)_{\cL}$ is a direct summand of $\uIC(w')_{\cL}\star\uIC(s)_{\cL}$, and we shall prove the stalk statement for the latter. Applying inductive hypothesis to $\uIC(w')_{\cL}$, we see that $\uIC(w')_{s\cL}\in \j{\uD(v)_{s\cL}[n]; v\le_{\b'}w',n\in\ZZ}$.  Therefore, $\uIC(w')_{\cL}\star  \uD(s)_{\cL}\in\j{\uD(v)_{\cL}\star\uIC(s)_{\cL}[n]; v\le_{\b'}w',n\in\ZZ}$. Since $\uD(v)_{\cL}\star\uIC(s)_{\cL}$ only has stalks along $G_{v}$ and $G_{vs}$, we see that $i^{*}_{v}(\uIC(w')_{\cL}\star\uIC(s)_{\cL})$ is nonzero only if either $v\le_{\b}w'$, or $vs\le_{\b}w'$. In the former case, $v\le_{\b}w'\le_{\b}w$; the latter, $vs\le_{\b}\max_{\b}\{w',w's\}=w$ (using that $s$ is a simple reflection in $\WL$). In either case,  $i^{*}_{v}(\uIC(w')_{\cL}\star\uIC(s)_{\cL})$ is zero unless $v\le_{\b}w$. This completes the induction step.
\end{proof}

%%%%%%%%%%%%%%%%%%%%%%%%%%%%%%%%%
%%%%%%%%%%%%%%%%%%%%%%%%%%%%%%%%
\section{Minimal IC sheaves}\label{s:min}
In this section we study the simple perverse sheaves with minimal support in each block, and use them to prove categorical equivalences among different blocks.

\subsection{Minimal IC sheaves}\label{ss:min}
For $\b\in{}_{\cL'}\un W_{\cL}$, any object  $\xi\in {}_{\cL'}\cD^{\b}_{\cL}$ is called a {\em minimal IC sheaf} if $\om\xi\cong \uIC(w^{\b})_{\cL}$. We denote by ${}_{\cL'}\fP^{\b}_{\cL}$ the groupoid of minimal IC sheaves in ${}_{\cL'}\cD^{\b}_{\cL}$. The automorphism group of objects in ${}_{\cL'}\fP^{\b}_{\cL}$ are $\Qlbar^{\times}$.

%%%%%%% ok %%%%%%
\begin{prop}\label{p:min th} Let $\b\in{}_{\cL'}\un W_{\cL}$ and $\dw^{\b}$ be a lifting of $w^{\b}$.
\begin{enumerate}
\item The natural maps $\D(\dw^{\b})_{\cL}\to \IC(\dw^{\b})\to \nb(\dw^{\b})_{\cL}$ are isomorphisms.
\item Let $\cL''\in \fo$ and $\g\in{}_{\cL''}\un W_{\cL'}$. Then  the functor
\begin{equation*}
(-)\star\IC(\dw^{\b}): {}_{\cL''}\cD_{\cL'}^{\g}\to {}_{\cL''}\cD_{\cL}^{\g\b}
\end{equation*}
is an equivalence with inverse $(-)\star\IC(\dw^{\b,-1})$.  Similar statement is true for left convolution with $\IC(\dw^{\b})$.
\item The equivalence $(-)\star\IC(\dw^{\b})$ sends $\D(\dw)_{\cL}, \nb(\dw)_{\cL}$ and $\IC(\dw)_{\cL}$ to $\D(\dw\dw^{\b})_{\cL}, \nb(\dw\dw^{\b})_{\cL}$ and $\IC(\dw\dw^{\b})_{\cL}$, for all $w\in W$.
\end{enumerate}
\end{prop}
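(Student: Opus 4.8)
The plan is to reduce the whole proposition to the case of a single ``clean'' simple reflection, treated in Lemma~\ref{l:clean s equiv}. The crucial point is that a reduced expression of $w^{\b}$ uses only such reflections. Concretely, fix a reduced word $w^{\b}=s_{i_{N}}\cdots s_{i_{1}}$, choose liftings $\dot{s}_{i_{j}}\in N_{G}(T)(\FF_{q})$, and set $\dw^{\b}=\dot{s}_{i_{N}}\cdots\dot{s}_{i_{1}}$; put $\cL_{0}=\cL$ and $\cL_{j}=s_{i_{j}}\cdots s_{i_{1}}(\cL)$, so $\cL_{N}=w^{\b}\cL=\cL'$. Since $w^{\b}$ is minimal in $\b$ we have $\ell_{\b}(w^{\b})=\ell_{\cL}(e)=0$, so the equality in Lemma~\ref{l:ell beta}(4) forces $s_{i_{j}}\notin W^{\c}_{\cL_{j-1}}$ for every $j$. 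Hence at each stage below Lemma~\ref{l:clean s equiv} (with monodromy $\cL_{j-1}$) applies.

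Granting this, I would first prove, by induction on the length, that for \emph{any} reduced word $v=s_{j_{m}}\cdots s_{j_{1}}$ all of whose simple reflections are clean relative to the running monodromy (i.e.\ $s_{j_{l}}\notin W^{\c}_{\cM_{l-1}}$, where $\cM_{l}=s_{j_{l}}\cdots s_{j_{1}}(\cM_{0})$) one has
\[
\IC(\dot{s}_{j_{m}})_{\cM_{m-1}}\star\cdots\star\IC(\dot{s}_{j_{1}})_{\cM_{0}}\;\cong\;\IC(\dot{s}_{j_{m}}\cdots\dot{s}_{j_{1}})_{\cM_{0}}.
\]
The inductive step peels off the rightmost factor, writes the product of the first $m-1$ factors as $\IC(\dv')_{\cM_{1}}$ by induction, and applies Lemma~\ref{l:clean s equiv}(3) to $\IC(\dv')_{\cM_{1}}\star\IC(\dot{s}_{j_{1}})_{\cM_{0}}$. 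Taking $v=w^{\b}$ gives $\IC(\dw^{\b})_{\cL}\cong\IC(\dot{s}_{i_{N}})_{\cL_{N-1}}\star\cdots\star\IC(\dot{s}_{i_{1}})_{\cL_{0}}$. Part (1) then follows: by Lemma~\ref{l:basic conv} the corresponding product of the $\D(\dot{s}_{i_{j}})_{\cL_{j-1}}$ is $\D(\dw^{\b})_{\cL}$, each factor $\D(\dot{s}_{i_{j}})_{\cL_{j-1}}\cong\IC(\dot{s}_{i_{j}})_{\cL_{j-1}}$ by Lemma~\ref{l:clean s equiv}(1), so $\D(\dw^{\b})_{\cL}\cong\IC(\dw^{\b})_{\cL}$ and likewise $\nb(\dw^{\b})_{\cL}\cong\IC(\dw^{\b})_{\cL}$ (or use Verdier duality), the isomorphisms being the canonical maps $\D\to\IC\to\nb$. (Alternatively, (1) is immediate from Lemma~\ref{l:stalk order}: $w^{\b}$ is the unique $\le_{\b}$-minimal element of $\b$, so $\uIC(w^{\b})_{\cL}$ has neither stalks nor costalks off $U\bs G_{w^{\b}}/U$, hence $\om$ kills the cones of $\D\to\IC\to\nb$, and $\om$ is conservative.)

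For (2), Lemma~\ref{l:clean s equiv}(2) makes each $F_{j}:=(-)\star\IC(\dot{s}_{i_{j}})_{\cL_{j-1}}\colon{}_{\cL''}\cD_{\cL_{j}}\to{}_{\cL''}\cD_{\cL_{j-1}}$ an equivalence with inverse $(-)\star\IC(\dot{s}_{i_{j}}^{-1})_{\cL_{j}}$. By associativity of convolution and the identity above, $F_{1}\circ\cdots\circ F_{N}\cong(-)\star\IC(\dw^{\b})_{\cL}$, so the latter is an equivalence; its inverse $F_{N}^{-1}\circ\cdots\circ F_{1}^{-1}$ equals $(-)\star\bigl(\IC(\dot{s}_{i_{1}}^{-1})_{\cL_{1}}\star\cdots\star\IC(\dot{s}_{i_{N}}^{-1})_{\cL_{N}}\bigr)$, and since $(w^{\b})^{-1}=s_{i_{1}}\cdots s_{i_{N}}$ sends $\Phi^{+}_{\cL'}$ into $\Phi^{+}$ (hence is minimal in its block of ${}_{\cL}\un W_{\cL'}$, so again has a reduced word of clean reflections), the same identity identifies this product with $\IC(\dw^{\b,-1})_{\cL'}$. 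That $(-)\star\IC(\dw^{\b})$ carries ${}_{\cL''}\cD^{\g}_{\cL'}$ onto ${}_{\cL''}\cD^{\g\b}_{\cL}$ follows from $\IC(\dw^{\b})_{\cL}\in{}_{\cL'}\cD^{\b}_{\cL}$ (the corollary immediately following Proposition~\ref{p:block}) together with Proposition~\ref{p:conv block}, applied to both the functor and its inverse; the left-convolution statement is symmetric. For (3), I would follow $\D(\dw)_{\cL'}$, $\nb(\dw)_{\cL'}$, $\IC(\dw)_{\cL'}$ through $F_{N},F_{N-1},\dots,F_{1}$ in turn: Lemma~\ref{l:clean s equiv}(3) replaces $\dw$ successively by $\dw\dot{s}_{i_{N}}$, then $\dw\dot{s}_{i_{N}}\dot{s}_{i_{N-1}}$, and finally by $\dw\dw^{\b}$, the relevant element lying in ${}_{\cL''}W_{\cL_{j}}$ at the $j$-th step so the lemma applies. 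Changing the lifting of $w^{\b}$ only twists every object by a one-dimensional $\Fr$-module through \eqref{tensor Lt}, so all three statements are lifting-independent.

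The step I expect to demand the most care is the bookkeeping in the iterated convolution: keeping track, stage by stage, of which rank-one character sheaf sits on the left and which on the right, and verifying that every intermediate Weyl-group element stays in the appropriate bitorsor ${}_{\cL''}W_{\cL_{j}}$ so that Lemma~\ref{l:clean s equiv}(3) is applicable. Conceptually there is no genuine obstacle: the one nontrivial input — that a reduced word for $w^{\b}$ avoids all the relevant subgroups $W^{\c}_{\cL_{j-1}}$ — has already been isolated in Lemma~\ref{l:ell beta}(4), and beyond that the argument uses only Lemmas~\ref{l:basic conv} and~\ref{l:clean s equiv}.
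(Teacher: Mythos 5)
Your proposal is correct and follows essentially the same route as the paper: induct along a reduced word for $w^{\b}$, observe that minimality of $w^{\b}$ (via $\ell_{\b}(w^{\b})=0$ and Lemma~\ref{l:ell beta}, or equivalently the paper's direct argument that the last letter cannot lie in $\WL$ and the truncation is again minimal in its block) forces every letter to be a clean simple reflection, and then apply Lemmas~\ref{l:basic conv} and~\ref{l:clean s equiv} factor by factor, with \eqref{tensor Lt} absorbing the choice of liftings. The only cosmetic differences are your one-shot use of Lemma~\ref{l:ell beta}(4) in place of the paper's step-by-step minimality argument, and the optional shortcut to part (1) via Lemma~\ref{l:stalk order}; both are fine.
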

\begin{proof}
We prove all the statements by induction on $\ell(w^{\b})$.  For $\ell(w^{\b})=0$ the statements are clear. Suppose the statements are true for $\ell(w^{\b})<n$. Let $\b$ be such that $\ell(w^{\b})=n$. Write $w^{\b}=w's$ for some simple reflection $s$ such that $\ell(w')=n-1$. We have $s\notin \WL$ for otherwise $w'\in \b$ and it is shorter than $w$. Let $\b'\in {}_{\cL'}\un W_{s\cL}$ be the block containing $w'$. We must have $w'=w^{\b'}$ for otherwise $\ell(w^{\b'}s)\le \ell(w^{\b'})+1\le \ell(w')$ and $w^{\b'}s\in \b$ would be shorter than $w^{\b}$. Hence $w^{\b}=w^{\b'}s$. 

For part (1), it suffices to show its non-mixed version. By Lemma \ref{l:clean s equiv}(3), $\uIC(w^{\b'})_{s\cL}\star\uIC(s)_{\cL}\cong\uIC(w^{\b'}s)_{\cL}\cong \uIC(w^{\b})_{\cL}$. By inductive hypothesis, $\uD(w^{\b'})_{s\cL}\isom \unb(w^{\b'})_{s\cL}$. By Lemma \ref{l:clean s equiv}(1), $\uD(s)_{\cL}\isom \uIC(s)_{\cL}\isom\unb(s)_{\cL}$. Hence the natural map $\uD(w^{\b})_{\cL}\to\uIC(w^{\b})_{\cL}$ can be factorized into isomorphisms $\uD(w^{\b})_{\cL}=\uD(w^{\b'}s)_{\cL}\cong \uD(w^{\b'})_{s\cL}\star\uD(s)_{\cL}\cong\uIC(w^{\b'})_{s\cL}\star\uIC(s)_{\cL}\cong\uIC(w^{\b'}s)_{\cL}=\uIC(w^{\b})_{\cL}$. By Verdier duality the natural map $\uIC(w^{\b})_{\cL}\to \unb(w^{\b})_{\cL}$ is also an isomorphism. This proves part (1) for $\uIC(w^{\b})_{\cL}$.

Part (2) follows from (1) together with Lemma \ref{l:invertible}.

Finally we show part (3).  By inductive hypothesis, $\D(\dw)_{\cL'}\star\IC(\dw^{\b'})_{s\cL}\cong \D(\dw\dw^{\b'})_{s\cL}$. Therefore $\D(\dw)_{\cL'}\star\IC(\dw^{\b'}\ds)_{\cL}\cong \D(\dw)_{\cL'}\star\IC(\dw^{\b'})_{s\cL}\star\IC(\ds)_{\cL}\cong \D(\dw\dw^{\b'})_{s\cL}\star\IC(\ds)_{\cL}\cong \D(\dw\dw^{\b'}\ds)_{\cL}$, where we use Lemma \ref{l:clean s equiv}(3). Write $\dw^{\b}=\dw^{\b'}\ds t$ for $t\in T(\FF_{q})$.  Then by \eqref{tensor Lt}, $\IC(\dw^{\b})_{\cL}=\IC(\dw^{\b'}\ds)_{\cL}\ot \cL_{t}$,  and $\D(\dw\dw^{\b})_{\cL}=\D(\dw\dw^{\b'}\ds)_{\cL}\ot \cL_{t}$. Therefore $\D(\dw)_{\cL'}\star\IC(\dw^{\b'}\ds)_{\cL}\cong \D(\dw\dw^{\b'}\ds)_{\cL}$ implies $\D(\dw)_{\cL'}\star\IC(\dw^{\b})_{\cL}\cong \D(\dw\dw^{\b})_{\cL}$. The argument for $\nb$ and $\IC$ are similar.
\end{proof}

%%%%%% ok %%%%%%%%
We may strengthen statement (3) in the above proposition to canonical isomorphisms. To do this, we first need a lemma. The rest of this section is only used in \S\ref{s:all blocks}.

\begin{lemma}\label{l:aff fiber} Let $\dw, \dw'\in N_{G}(T)$ be any liftings of $w, w'\in W$ respectively. Let  $m_{w,w'}: G_{w}\twtimes{B}G_{w'}\to G$ be the multiplication map. Let $B^{-}$ be the Borel subgroup of $G$ such that $B\cap B^{-}=T$, and let $U^{-}$ be the unipotent radical of $B^{-}$. We denote $\Ad(\dw)U$ by ${}^{w}U$. 
\begin{enumerate}
\item The following map is an isomorphism
\begin{eqnarray*}
U^{-}\cap {}^{w^{-1}}U\cap {}^{w'}U&\isom & m_{w,w'}^{-1}(\dw\dw')\\
u &\mapsto& (\dw u, u^{-1}\dw'). 
\end{eqnarray*}
\item We have $\dim (U^{-}\cap {}^{w^{-1}}U\cap {}^{w'}U)=\frac{1}{2}(\ell(w)+\ell(w')-\ell(ww'))$. In particular, $m_{w,w'}^{-1}(\dw\dw')$   is isomorphic to an affine space  of  dimension $\frac{1}{2}(\ell(w)+\ell(w')-\ell(ww'))$.
\end{enumerate}
\end{lemma}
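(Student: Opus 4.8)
I would prove the two parts largely independently: part (1) by reducing the description of the fiber $m_{w,w'}^{-1}(\dw\dw')$ to a question inside the Bruhat cell $G_w/B$, and part (2) by a direct count of inversions. For (1), set $V=U^-\cap{}^{w^{-1}}U\cap{}^{w'}U$ and let $\phi\colon V\to G_w\twtimes{B}G_{w'}$ send $u$ to the class of $(\dw u,u^{-1}\dw')$. That $\phi$ factors through $m_{w,w'}^{-1}(\dw\dw')$ is immediate: $u\in{}^{w^{-1}}U$ gives $\dw u\dw^{-1}\in U$, hence $\dw u\in U\dw\subset B\dw B=G_w$; $u^{-1}\in{}^{w'}U$ gives $u^{-1}\dw'\in\dw' U\subset B\dw' B=G_{w'}$; and $(\dw u)(u^{-1}\dw')=\dw\dw'$. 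Injectivity is equally direct: if $(\dw u_1,u_1^{-1}\dw')$ and $(\dw u_2,u_2^{-1}\dw')$ lie in one $B$-orbit then $\dw u_1=\dw u_2 b^{-1}$ for some $b\in B$, so $b=u_2^{-1}u_1\in U^-\cap B=\{\dot e\}$ and $u_1=u_2$.

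For surjectivity and for the conclusion that $\phi$ is an isomorphism, I would pass to $G_w/B$. The assignment $(g_1,g_2)\mapsto g_1B$ descends to an injective morphism from $m_{w,w'}^{-1}(\dw\dw')$ onto $\{xB\mid x\in G_w,\ x^{-1}\dw\dw'\in G_{w'}\}$ — the condition depends only on $xB$ since $G_{w'}$ is left $B$-stable — and this morphism has explicit inverse $xB\mapsto$ (class of $(x,x^{-1}\dw\dw')$), constructed from a local section of $G_w\to G_w/B$; this is what lets one upgrade a bijection to a scheme isomorphism in characteristic $p$. Under the standard parametrization $U^-\cap{}^{w^{-1}}U\isom G_w/B$, $u\mapsto\dw uB$, the image becomes $\{u\in U^-\cap{}^{w^{-1}}U\mid u^{-1}\dw'\in B\dw' B\}$ and $\phi$ becomes the inclusion of $V$, so it remains to prove that this set equals $V$. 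The inclusion $\supseteq$ is clear, since $u\in{}^{w'}U$ forces $u^{-1}\dw'\in\dw' U\subset B\dw' B$. For $\subseteq$, observe that $u^{-1}\dw'\in B\dw' B$ means $u^{-1}\in B\dw' B\dw'^{-1}$, and $B\dw' B\dw'^{-1}=B\cdot(\dw' B\dw'^{-1})=B\cdot T\cdot{}^{w'}U=B\cdot(U^-\cap{}^{w'}U)$, using that $\dw'$ normalizes $T$, that ${}^{w'}U=(U\cap{}^{w'}U)\cdot(U^-\cap{}^{w'}U)$, and that $T$ and $U\cap{}^{w'}U$ lie in $B$; thus $u^{-1}=bv$ with $b\in B$ and $v\in U^-\cap{}^{w'}U$, forcing $b=u^{-1}v^{-1}\in U^-\cap B=\{\dot e\}$, so $u\in U^-\cap{}^{w'}U$, and combined with $u\in U^-\cap{}^{w^{-1}}U$ this gives $u\in V$.

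For (2), I would first identify $V$ as the product, in any fixed order, of the root subgroups $U_\alpha$ with $\alpha\in\Phi^-\cap w^{-1}\Phi^+\cap w'\Phi^+$: this root set is an intersection of three closed subsets of $\Phi(G,T)$ and consists only of negative roots, so the product map is an isomorphism of varieties onto $V$; hence $\dim V=\#(\Phi^-\cap w^{-1}\Phi^+\cap w'\Phi^+)$ and $V\cong\AA^{\dim V}$, which, together with part (1), also yields the ``in particular'' clause. It then remains to establish the identity $\ell(xy)=\ell(x)+\ell(y)-2\,\#(\Phi^-\cap x^{-1}\Phi^+\cap y\Phi^+)$ for all $x,y\in W$ and to specialize to $x=w,\ y=w'$. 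I would prove the identity by writing $\ell(xy)=\#\{\gamma\in\Phi^+\mid xy\gamma\in\Phi^-\}$ and splitting the count according to the sign of $y\gamma$: the subset with $y\gamma\in\Phi^+$ has, after the substitution $\delta=y\gamma$, cardinality $\ell(x)-\#(\Phi^-\cap x^{-1}\Phi^+\cap y\Phi^+)$, and the subset with $y\gamma\in\Phi^-$ has, after the substitution $\beta=y\gamma$, cardinality $\ell(y)-\#(\Phi^-\cap x^{-1}\Phi^+\cap y\Phi^+)$; adding the two gives the claim.

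The step I expect to be the genuine obstacle is the inclusion $\subseteq$ in part (1): that the Bruhat-cell membership $u^{-1}\dw'\in B\dw' B$ on its own forces $u\in{}^{w'}U$. This is where one actually uses the structure theory of reductive groups — the identity $B\dw' B\dw'^{-1}=B\cdot(U^-\cap{}^{w'}U)$ and the factorization of ${}^{w'}U$ into root subgroups — whereas the rest (well-definedness, injectivity, and the length identity) is formal. A secondary point requiring attention is that $\phi$ must be shown to be a scheme isomorphism, not merely a bijection; the argument is therefore organized so as to produce an explicit inverse morphism through $G_w/B$.
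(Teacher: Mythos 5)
Your proof is correct. For part (2) it is essentially the paper's argument: both reduce to counting $\#(\Phi^-\cap w^{-1}\Phi^+\cap w'\Phi^+)$ and verify the same inversion identity (the paper phrases it as an identity among three cardinalities plus the substitution $\a\mapsto-\a$; you package it as the formula $\ell(xy)=\ell(x)+\ell(y)-2\#(\Phi^-\cap x^{-1}\Phi^+\cap y\Phi^+)$, which is the same computation). For part (1), however, you take a genuinely different route. The paper parametrizes the whole convolution space at once via the isomorphism $({}^{w^{-1}}U\cap U^{-})\times B\times({}^{w'}U\cap U^{-})\isom G_{w}\twtimes{B}G_{w'}$, $(u,b,u')\mapsto(\dw ub,u'\dw')$, and then reads off the fiber by solving $ubu'=1$: since $b\in B$ and $u^{-1}u'^{-1}\in U^{-}$, one gets $b=1$ and $u'=u^{-1}$ in a single stroke from $B\cap U^{-}=\{\dot e\}$. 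This completely sidesteps what you correctly identify as the delicate point of your approach, namely showing that the Bruhat-cell membership $u^{-1}\dw'\in B\dw'B$ forces $u\in{}^{w'}U$; your resolution of that point via $B\dw'B\dw'^{-1}=B\cdot(U^{-}\cap{}^{w'}U)$ and the factorization ${}^{w'}U=(U\cap{}^{w'}U)(U^{-}\cap{}^{w'}U)$ is correct, but it is extra structure theory the paper never needs. On the other hand, your detour through $G_w/B$ identifies the fiber with the intersection of Bruhat cells $\{xB\in G_w/B\mid x^{-1}\dw\dw'\in G_{w'}\}$, which is exactly the description used in the subsequent corollary on triples of Borels, so your organization makes that corollary immediate. (One microscopic slip: in your injectivity argument the element of $B$ is $b^{-1}=u_2^{-1}u_1$, not $b$; the conclusion is unaffected.)
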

\begin{proof}
(1) By Bruhat decomposition, any $g\in G_{w}$ can be written uniquely as $\dw ub$ where $u\in {}^{w^{-1}}U\cap U^{-}$ and $b\in B$; any $g'\in G_{w'}$ can be written uniquely as $b'u'\dw'$ where $b'\in B$ and $u'\in {}^{w'}U\cap U^{-}$. Using these facts we have an isomorphism
\begin{eqnarray}\label{ph 3 parts}
\ph: ({}^{w^{-1}}U\cap U^{-})\times B\times ({}^{w'}U\cap U^{-}) & \isom & G_{w}\twtimes{B}G_{w'}\\
\notag (u,b,u')& \mapsto & (\dw ub, u'\dw').
\end{eqnarray}
We write a  point $(g,g')\in m_{w,w'}^{-1}(\dw\dw')$ as $\ph(u,b,u')$ as above, then $gg'=\dw\dw'$ implies $ubu'=1$, or $b=u^{-1}u'^{-1}$. Since $b\in B$ and $u^{-1}u'^{-1}\in U^{-}$, we must have $b=1$ and $u'=u^{-1}$,  and the latter implies $u\in U^{-}\cap {}^{w^{-1}}U\cap {}^{w'}U$. Therefore, restricting $\ph$ to triples $(u,b,u')$ where $b=1$ and $u'=u^{-1}$ gives an isomorphism
\begin{eqnarray*}
U^{-}\cap {}^{w^{-1}}U\cap {}^{w'}U &\isom & m_{w,w'}^{-1}(\dw\dw')\\
u &\mapsto & \ph(u,1,u^{-1})=(\dw u,u^{-1}\dw').
\end{eqnarray*}

(2) Since $\dim(U^{-}\cap {}^{w^{-1}}U\cap {}^{w'}U)=\#(\Phi^-\cap w^{-1}\Phi^+\cap w'\Phi^+)$, and $\ell(w)=\#(\Phi^{-}\cap w\Phi^{+})$ for all $w\in W$, the dimension formula is equivalent to
\begin{equation}\label{b}
2\#(\Phi^-\cap w^{-1}\Phi^+\cap w'\Phi^+)=\#(\Phi^-\cap w^{-1}\Phi^+)+\#(\Phi^-\cap w'\Phi^+)-\#(\Phi^-\cap ww'\Phi^+).
\end{equation}
We have 
\begin{eqnarray*}
\#(\Phi^-\cap w^{-1}\Phi^+)&=&\#(\Phi^-\cap w^{-1}\Phi^+\cap w'\Phi^+)+\#(\Phi^-\cap w^{-1}\Phi^+\cap w'\Phi^-),\\
\#(\Phi^-\cap w'\Phi^+)&=&\#(\Phi^-\cap w^{-1}\Phi^+\cap w'\Phi^+)+\#(\Phi^-\cap w^{-1}\Phi^-\cap w'\Phi^+),\\
\#(\Phi^-\cap ww'\Phi^+)&=&\#(w^{-1}\Phi^-\cap w'\Phi^+).
\end{eqnarray*}
Thus to prove \eqref{b} it is enough to prove
\begin{equation}\label{c}
\#(\Phi^-\cap w^{-1}\Phi^+\cap w'\Phi^-)+\#(\Phi^-\cap w^{-1}\Phi^-\cap w'\Phi^+)=\#(w^{-1}\Phi^-\cap w'\Phi^+).
\end{equation}
By the change of variable $\a\mapsto-\a$, we see that
\begin{equation*}
\#(\Phi^-\cap w^{-1}\Phi^+\cap w'\Phi^-)=\#(\Phi^+\cap w^{-1}\Phi^-\cap w'\Phi^+),
\end{equation*}
so that \eqref{c} is equivalent to
\begin{equation*}
\#(\Phi^+\cap w^{-1}\Phi^-\cap w'\Phi^+)+\#(\Phi^-\cap w^{-1}\Phi^-\cap w'\Phi^+)=\#(w^{-1}\Phi^-\cap w'\Phi^+),
\end{equation*}
which is obvious. 
\end{proof}

The next result will not be used in the rest of the paper.

\begin{cor} Let $\fB$ be the flag variety of $G$, and $\fO_{w}\subset \fB\times\fB$ the $G$-orbit containing $(1,\dw)$, $w\in W$. Let $w_1,w_2,w_3$ be elements of $W$ such that $w_1w_2w_3=1$. Let 
\begin{equation*}
A_{w_{1},w_{2},w_{3}}=\{(B_1,B_2,B_3)\in\fB^{3}|(B_1,B_2)\in\fO_{w_1},(B_2,B_3)\in\fO_{w_2},(B_3,B_1)\in\fO_{w_3}\}.
\end{equation*}
Then $A_{w_{1},w_{2},w_{3}}$ is a single $G$-orbit under the diagonal $G$-action on $\fB^{3}$, and $\dim(A_{w_{1},w_{2},w_{3}})=\dim\fB+(\ell(w_1)+\ell(w_2)+\ell(w_3))/2$.
\end{cor}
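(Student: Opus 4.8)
The plan is to exhibit $A_{w_1,w_2,w_3}$ as a fibre bundle over one of the orbits $\fO_w$, with fibres the affine spaces supplied by Lemma~\ref{l:aff fiber}. First I would use the hypothesis $w_1w_2w_3=1$, which gives $w_3^{-1}=w_1w_2$, together with the elementary identity $\fO_{w_3}=\{(X,Y):(Y,X)\in\fO_{w_3^{-1}}\}$, to note that for $(B_1,B_2,B_3)\in A_{w_1,w_2,w_3}$ the condition $(B_3,B_1)\in\fO_{w_3}$ is the same as $(B_1,B_3)\in\fO_{w_1w_2}$. Hence projection to the first and third coordinates is a $G$-equivariant morphism $r\colon A_{w_1,w_2,w_3}\to\fO_{w_1w_2}$; it is surjective because $\fO_{w_1w_2}$ is a single $G$-orbit and $A_{w_1,w_2,w_3}$ contains $a_0=(B,\dw_1B,\dw_1\dw_2B)$, which maps to $(B,\dw_1\dw_2B)$ (the three incidence conditions for $a_0$ are immediate from $G$-invariance of the $\fO_{w_i}$ and from $\dw_1\dw_2$ being a lifting of $w_1w_2$). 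Writing $n=\dw_1\dw_2$ and $P=\Stab_G(B,nB)=B\cap nBn^{-1}$, the identification $\fO_{w_1w_2}\cong G/P$ turns $r$ into a fibre bundle, so
\begin{equation*}
A_{w_1,w_2,w_3}\cong G\twtimes{P}F,\qquad F:=r^{-1}(B,nB)=\{B_2\in\fB:(B,B_2)\in\fO_{w_1},\ (B_2,nB)\in\fO_{w_2}\}.
\end{equation*}

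Next I would identify $F$. A point $B_2=g_1B\in F$ amounts to $g_1\in G_{w_1}$ with $g_1^{-1}n\in G_{w_2}$, i.e.\ to a point $[g_1,g_1^{-1}n]\in m_{w_1,w_2}^{-1}(n)$ (and conversely, $[a,a']\in m_{w_1,w_2}^{-1}(n)$ gives $aB\in F$ since $a^{-1}n=a'\in G_{w_2}$). By Lemma~\ref{l:aff fiber}(1), $v\mapsto[\dw_1v,v^{-1}\dw_2]$ is an isomorphism $V\isom m_{w_1,w_2}^{-1}(n)$ with $V=U^-\cap{}^{w_1^{-1}}U\cap{}^{w_2}U$, so composing yields $\phi\colon V\to\fB$, $v\mapsto\dw_1vB$, whose image is exactly $F$. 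Since $\phi$ is the composite of $V\hookrightarrow U^-$, the big-cell open immersion $U^-\hookrightarrow\fB$, and left translation by $\dw_1$, it is a locally closed immersion, hence an isomorphism $V\isom F$. For the dimension count, Lemma~\ref{l:aff fiber}(2) gives $\dim F=\dim V=\tfrac12(\ell(w_1)+\ell(w_2)-\ell(w_1w_2))$, while $\dim\fO_{w_1w_2}=\dim\fB+\ell(w_1w_2)$ (the first projection $\fO_{w_1w_2}\to\fB$ has fibre the Schubert cell $G_{w_1w_2}/B$). Since $\ell(w_1w_2)=\ell(w_3^{-1})=\ell(w_3)$, we obtain
\begin{equation*}
\dim A_{w_1,w_2,w_3}=\dim\fO_{w_1w_2}+\dim F=\dim\fB+\ell(w_3)+\tfrac12\big(\ell(w_1)+\ell(w_2)-\ell(w_3)\big)=\dim\fB+\tfrac12\textstyle\sum_i\ell(w_i).
\end{equation*}

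Finally, by $A_{w_1,w_2,w_3}\cong G\twtimes{P}F$ it remains to show $P$ acts transitively on $F\cong V$. Transporting the $P$-action through $\phi$: for $p\in P$ put $h=\dw_1^{-1}p\dw_1$, which ranges over $H:=\dw_1^{-1}P\dw_1={}^{w_1^{-1}}B\cap{}^{w_2}B$ as $p$ ranges over $P$; then $p\cdot(\dw_1vB)=\dw_1(hv)B$, so on $V$ the element $v$ is sent to the unique $v'\in V$ with $v'B=hvB$. Now $V$ is a subgroup of $G$, being the intersection of the subgroups $U^-$, ${}^{w_1^{-1}}U$, ${}^{w_2}U$, and $V\subseteq{}^{w_1^{-1}}U\cap{}^{w_2}U\subseteq H$. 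Hence, given $v_1,v_2\in V$, the element $h=v_2v_1^{-1}$ lies in $V\subseteq H$, so $p=\dw_1h\dw_1^{-1}\in P$ sends $v_1$ to the unique $v'\in V$ with $v'B=v_2v_1^{-1}v_1B=v_2B$, namely $v'=v_2$. Thus $F$ is a single $P$-orbit, and $A_{w_1,w_2,w_3}=G\cdot a_0$ is a single $G$-orbit of the asserted dimension.

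The only genuinely delicate point, in my view, is the identification $\phi\colon V\isom F$: one must make sure it is an isomorphism of varieties and not merely a bijection (the ground field has characteristic $p$), which is why I would factor it through the big-cell open immersion $U^-\hookrightarrow\fB$ rather than argue directly. Once $A_{w_1,w_2,w_3}\cong G\twtimes{P}F$ is in place, both the dimension formula and the transitivity of $P$ on $F$ are short.
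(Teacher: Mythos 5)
Your proof is correct and takes essentially the same route as the paper's: both project $A_{w_1,w_2,w_3}$ to the orbit recording the pair $(B_1,B_3)$ (the paper uses $(B_3,B_1)\in\fO_{w_3}$, which is the same orbit up to swapping factors), identify the fibre with $m_{w_1,w_2}^{-1}(\dw_1\dw_2)$ and hence, via Lemma~\ref{l:aff fiber}, with the affine group $V=U^{-}\cap {}^{w_1^{-1}}U\cap {}^{w_2}U$, and obtain transitivity from a subgroup of the stabilizer acting through left translation of $V$ on itself. The only cosmetic difference is that the paper invokes the action of $U\cap {}^{w_1w_2}U$, which contains your $\dw_1V\dw_1^{-1}$, for the transitivity step.
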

\begin{proof} Since $G$ acts transitively (by simultaneous conjugation) on $\fO_{w_3}$, it is enough to show that
for fixed $(B_3,B_1)\in\fO_{w_3}$, the conjugation action of $B_1\cap B_3$ on
$A':=\{B_2\in\fB|(B_1,B_2)\in\fO_{w_1},(B_2,B_3)\in\fO_{w_2}\}$ 
is transitive and that 
$\dim(A')=\dim\fB+(\ell(w_1)+\ell(w_2)+\ell(w_3))/2-\dim\fO_{w_{3}}=(\ell(w_1)+\ell(w_2)-\ell(w_3))/2.$ We may assume that $B_1=B,B_3={}^{w_{3}^{-1}}B={}^{w_{1}w_{2}}B$. Then $A'=\{gB\in G/B|g\in G_{w_{1}}, g^{-1}\dw_{1}\dw_{2}\in G_{w_{2}}\}$, and  it can be identified with the fiber $m_{w_{1},w_{2}}^{-1}(\dw_{1}\dw_{2})$ considered in Lemma \ref{l:aff fiber}: $gB\in A'$ corresponds to $(g,g^{-1}\dw_{1}\dw_{2})\in m_{w_{1},w_{2}}^{-1}(\dw_{1}\dw_{2})$. By Lemma \ref{l:aff fiber}(1), the action of $U\cap {}^{w_{1}w_{2}}U$ on $m_{w_{1},w_{2}}^{-1}(\dw_{1}\dw_{2})$ by $u\cdot (g,g^{-1}\dw_{1}\dw_{2})=(ug, g^{-1}u^{-1}\dw_{1}\dw_{2})$ is already transitive, therefore the action of $B\cap {}^{w_{1}w_{2}}B$ on $A'$ by left translation on $gB$ is also transitive. The dimension formula follows from Lemma \ref{l:aff fiber}(2).
\end{proof}

%%%% OK %%%%%%%
\begin{cons}\label{con:can} Let $\b\in{}_{\cL'}\un W_{\cL}$ and $w\in W$. We will construct {\em canonical} isomorphisms
\begin{eqnarray}
\label{can D}\D(\dw)_{\cL'}\star\IC(\dw^{\b})_{\cL}\cong \D(\dw\dw^{\b})_{\cL}, \\
\label{can nb}\nb(\dw)_{\cL'}\star\IC(\dw^{\b})_{\cL}\cong \nb(\dw\dw^{\b})_{\cL}, \\
\label{can IC}\IC(\dw)_{\cL'}\star\IC(\dw^{\b})_{\cL}\cong \IC(\dw\dw^{\b})_{\cL}.
\end{eqnarray}
There are similar canonical isomorphisms for left convolution with $\IC(\dw^{\b})_{\cL}$.

By Proposition \ref{p:min th}(3) we know that the two sides of the above equations are indeed isomorphic, and such isomorphisms are unique up to a scalar (for the endomorphisms of $\D(\dw\dw^{\b})_{\cL},\nb(\dw\dw^{\b})_{\cL}$ and $\IC(\dw\dw^{\b})_{\cL}$ are scalars). 

We first construct the  canonical isomorphism \eqref{can D}. For this it suffices to construct a canonical isomorphism between the stalks of the two sides at $\dw\dw^{\b}$. By  the definition of convolution, we have
\begin{equation*}
i_{\dw\dw^{\b}}^{*}(\D(\dw)_{\cL'}\star\IC(\dw^{\b})_{\cL})\cong \cohoc{*}{m_{w,w^{\b}}^{-1}(\dw\dw^{\b})_{k},C(\dw)_{\cL'}\stackrel{B}{\boxtimes}C(\dw^{\b})_{\cL}|_{m_{w,w^{\b}}^{-1}(\dw\dw^{\b})}}.
\end{equation*}
Here $C(\dw)_{\cL'}\stackrel{B}{\boxtimes}C(\dw^{\b})_{\cL}$ is the descent of $C(\dw)_{\cL'}\boxtimes C(\dw^{\b})_{\cL}$ to $G_{w}\twtimes{B}G_{w^{\b}}$, and $m_{w,w^{\b}}: G_{w}\twtimes{B}G_{w^{\b}}\to G$ is the multiplication map. Using Lemma \ref{l:aff fiber}(1), we may identify $m_{w,w^{\b}}^{-1}(\dw\dw^{\b})$ with the unipotent group $U^{-}\cap {}^{w^{-1}}U\cap {}^{w^{\b}}U$, under which the restriction of $C(\dw)_{\cL'}\stackrel{B}{\boxtimes}C(\dw^{\b})_{\cL}$ is canonically isomorphic to the constant sheaf $\Qlbar\j{\ell(w)+\ell(w^{\b})}$ since the stalk of $C(\dw)_{\cL'}$ at $\dw$ and the stalk of $C(\dw^{\b})_{\cL}$ at $\dw^{\b}$ are canonically isomorphic to $\Qlbar\j{\ell(w)}$ and $\Qlbar\j{\ell(w^{\b})}$ respectively by construction. Therefore we have a canonical isomorphism of $\Fr$-modules
\begin{eqnarray*}
&& i_{\dw\dw^{\b}}^{*}(\D(\dw)_{\cL'}\star\IC(\dw^{\b})_{\cL})\cong \cohoc{*}{U^{-}_{k}\cap {}^{w^{-1}}U_{k}\cap {}^{w^{\b}}U_{k}, \Qlbar\j{\ell(w)+\ell(w^{\b})}}\\
&\cong& \Qlbar\j{\ell(w)+\ell(w^{\b})}\j{-\ell(w)-\ell(w^{\b})+\ell(ww^{\b})}\\
&=&\Qlbar\j{\ell(ww^{\b})}\cong i_{\dw\dw^{\b}}^{*}\D(\dw\dw^{\b})_{\cL},
\end{eqnarray*}
where we used the dimension formula for $U^{-}\cap {}^{w^{-1}}U\cap {}^{w^{\b}}U$ proved in Lemma \ref{l:aff fiber}(2). We define the canonical isomorphism \eqref{can D} to be the one which restricts to the above isomorphism after taking stalks at $\dw\dw^{\b}$.

To construct the canonical isomorphism \eqref{can IC}, we consider the following diagram
\begin{equation*}
\xymatrix{ \D(\dw)_{\cL'}\star\IC(\dw^{\b})_{\cL}\ar[r]^-{\eqref{can D}}\ar[d] & \D(\dw\dw^{\b})_{\cL}\ar[d]\\
\IC(\dw)_{\cL'}\star\IC(\dw^{\b})_{\cL}\ar@{-->}[r]^-{\l} & \IC(\dw\dw^{\b})_{\cL}
}
\end{equation*} 
where the vertical maps are induced from the canonical maps $\D(\dw)_{\cL}\to \IC(\dw)_{\cL}$, and the upper horizontal map is the one constructed just now. Since $\Hom(\D(\dw)_{\cL'}\star\IC(\dw^{\b})_{\cL}, \IC(\dw\dw^{\b})_{\cL})$ is one-dimensional, an arbitrary choice of the isomorphism $\l$ (dashed arrow) would make the diagram commutative up to a nonzero scalar. Hence there is a unique choice of the isomorphism $\l$ making the above diagram commutative. This constructs the desired map \eqref{can IC}. The construction of \eqref{can nb} is similar.  
\end{cons}

%%%%%% OK %%%%%%%%
\begin{warning}\label{ss:warning} For two blocks $\b\in{}_{\cL'}\un W_{\cL}$ and $\g\in{}_{\cL''}\un W_{\cL'}$, Construction \ref{con:can} gives a canonical isomorphism
\begin{equation}\label{can gb}
\can_{\dw^{\g},\dw^{\b}}: \IC(\dw^{\g})_{\cL'}\star\IC(\dw^{\b})_{\cL}\cong \IC(\dw^{\g}\dw^{\b})_{\cL}.
\end{equation}
%for any lifting $\dw^{\b}$ of $w^{\b}$ and $\dw^{\g}$ of $w^{\g}$, $\b\in{}_{\cL'}\un W_{\cL}$, $\g\in{}_{\cL''}\un W_{\cL'}$. 
Let $\d\in {}_{\cL'''}\un W_{\cL''}$ be yet another block. We have two isomorphism between $\IC(\dw^{\d})_{\cL''}\star \IC(\dw^{\g})_{\cL'}\star\IC(\dw^{\b})_{\cL}$ and $\IC(\dw^{\d}\dw^{\g}\dw^{\b})_{\cL}$ given by first doing convolution $\IC(\dw^{\g})_{\cL'}\star\IC(\dw^{\b})_{\cL}$ or doing $\IC(\dw^{\d})_{\cL''}\star\IC(\dw^{\g})_{\cL'}$:
\begin{equation}\label{two asso}
\xymatrix{\IC(\dw^{\d})_{\cL''}\star \IC(\dw^{\g})_{\cL'}\star\IC(\dw^{\b})_{\cL}\ar@<0.5ex>[rrrr]^-{\can_{\dw^{\d}\dw^{\g},\dw^{\b}}\c(\can_{\dw^{\d},\dw^{\g}}\star\id)}\ar@<-0.5ex>[rrrr]_-{\can_{\dw^{\d}, \dw^{\g}\dw^{\b}}\c(\id\star\can_{\dw^{\g},\dw^{\b}})} 
 &&&& \IC(\dw^{\d}\dw^{\g}\dw^{\b})_{\cL}}.
\end{equation}
However, these two maps are not equal in general, as we will see from the following example.
\end{warning}

%%%%%%% OK %%%%%%%%
\begin{exam}\label{ex:-1} Consider the case $G=\SL_{2}$,  and $\cL\in \Ch(T)$ nontrivial.  Let $\ds=\mat{}{1}{-1}{}$ be a lifting of the nontrivial element $s\in W$, and $s\cL=\cL^{-1}$. In this case, both $\IC(\ds)_{\cL}$ and $\IC(\dot e)_{\cL}=\d_{\cL}$ are minimal IC sheaves. We claim that the two isomorphisms between $\IC(\ds)_{\cL}\star\IC(\ds^{-1})_{s\cL}\star\IC(\ds)_{\cL}$ and $\IC(\ds)_{\cL}$ given as in \eqref{two asso} differ by a sign. 

Indeed, the stalk of $\cF=\IC(\ds)_{\cL}\star\IC(\ds^{-1})_{\cL}\star\IC(\ds)_{\cL}$ at $\ds$ can be calculated  from the definition of the convolution as follows. We identify $G/U$ with $\AA^{2}-\{0\}$, where $U$ is the stabilizer of $e_{1}=(1,0)$. The fiber of the three-fold convolution morphism $G\twtimes{U}G\twtimes{U}G\to G$ over $\ds$ can be identified with pairs of vectors $(v_{1}, v_{2})\in (\AA^{2}-\{0\})^{2}$ via the map $(g_{1},g_{2},g_{3})\mapsto (g_{1}e_{1}, g_{1}g_{2}e_{1})$. The open subset $Y=\{v_{1}=(x_{1},y_{1})\in \AA^{2}-\{0\}, v_{2}=(x_{2}, y_{2})\in \AA^{2}-\{0\}| y_{1}\ne0, x_{2}\ne0, x_{1}y_{2}-x_{2}y_{1}\ne0\}$ of $(\AA^{2}-\{0\})^{2}$ is relevant to our calculation. For any invertible function $f$ on $Y$ we use $\cL_{f}$ to denote the pullback $f^{*}\cL$. We consider the local system $\cK=\cL_{-y_{1}}\cL^{-1}_{x_{1}y_{2}-x_{2}y_{1}}\cL_{x_{2}}$. Let $\Gm\times\Gm$ act on $Y$ by scaling the vectors $(x_{1},y_{1})$ and $(x_{2}, y_{2})$ separately. Then $\cK$ is equivariant under the $\Gm^{2}$ action on $Y$, hence descends to a local system on $X=Y/\Gm^{2}$ which we still denote by $\cK$. We have a canonical isomorphism
\begin{equation*}
i_{\ds}^{*}\cF\cong\cohoc{*}{X,\cK}\j{3}.
\end{equation*}
Now $X\incl\AA^{2}$ by coordinates $u=x_{1}/y_{1}$ and $v=y_{2}/x_{2}$, and with image $\AA^{2}-\{uv=1\}$. The local system $\cK=\cL^{-1}_{1-uv}$ on $X$. Therefore we have canonically
\begin{equation*}
i_{\ds}^{*}\cF\cong\cohoc{*}{X,\cL^{-1}_{1-uv}}\j{3}.
\end{equation*}
The isomorphism $\can_{\dot e,\ds}\c(\can_{\ds,\ds^{-1}}\star\id)$ corresponds to the isomorphism by restriction to the line $v=0$
\begin{equation*}
i^{*}_{v=0}: \cohoc{*}{X,\cL^{-1}_{1-uv}}\j{3}\isom \cohoc{*}{\AA^{1}_{v=0}, \Qlbar}\j{3}\cong \Qlbar\j{1}.
\end{equation*}
Here we have used the canonical trivialization of the stalk of $\cL$ at $1$, and the fundamental class of $\AA^{1}$. Similarly, the other isomorphism $\can_{\ds, \dot e}\c(\id\star\can_{\ds^{-1},\ds})$ corresponds to the isomorphism by restriction to the line $u=0$. Let $\s:X\to X$ be the involution  $(u,v)\mapsto (v,u)$, then $\cL^{-1}_{1-uv}$ has a canonical $\s$-equivariant structure such that the $\s$-action on the stalk at $(0,0)$ is the identity. This induces an involution $\s^{*}$ on $\cohoc{*}{X,\cL^{-1}_{1-uv}}\j{3}$, and the following diagram is commutative
\begin{equation*}
\xymatrix{\cohoc{*}{X,\cL^{-1}_{1-uv}}\j{3} \ar[rr]^{\s^{*}}\ar[d]_{i^{*}_{v=0}}&& \cohoc{*}{X,\cL^{-1}_{1-uv}}\j{3}\ar[d]^{i^{*}_{u=0}}\\
\cohoc{*}{\AA^{1}_{v=0}, \Qlbar}\j{3}\ar@{=}[r] & \Qlbar\j{1}\ar@{=}[r] & \cohoc{*}{\AA^{1}_{u=0}, \Qlbar}}
\end{equation*}
We claim that $\s^{*}$ acts on the one-dimensional space $\cohoc{*}{X,\cL^{-1}_{1-uv}}$ by $-1$, which would imply our claim in the beginning of this example. 

We compare two traces $\Tr_{1}=\Tr(\Fr, \cohoc{*}{X,\cL^{-1}_{1-uv}})$ and $\Tr_{2}=\Tr(\s^{*}\c\Fr,\cohoc{*}{X,\cL^{-1}_{1-uv}})$. Let $\chi$ be the character of $\FF_{q}^{\times}$ corresponding to $\cL^{-1}$. By the Lefschetz trace formula, $\Tr_{1}=\sum_{u,v\in \FF_{q}, uv\ne1}\chi(1-uv)$. The fiber of $(u,v)\mapsto a=1-uv$ has $q-1$ elements over $a\ne1$, and has $2q-1$ elements over $a=1$. Therefore, $\Tr_{1}=(q-1)\sum_{a\ne0,1}\chi(a)+(2q-1)=q$ since $\chi\ne1$. On the other hand, $\s^{*}\c\Fr$ is the Frobenius for the variety $X'\subset \Res_{\FF_{q^{2}}/\FF_{q}}\AA^{1}-\{\Nm=1\}$ which becomes isomorphic to $X$ over $\FF_{q^{2}}$. Using this interpretation, we have $\Tr_{2}=\sum_{u\in \FF_{q^{2}}, \Nm(u)\ne1}\chi(1-\Nm(u))$. The fiber of the map $\FF_{q^{2}}\ni u\mapsto 1-\Nm(u)=a\in \FF_{q}$ has $q+1$ elements over $a\ne1$ and $1$ element over $a=1$. Therefore $\Tr_{2}=(q+1)\sum_{a\ne0,1}\chi(a)+1=-q$. This shows $\Tr_{1}=-\Tr_{2}$, hence $\s^{*}$ acts by $-1$ on the one-dimensional space $\cohoc{*}{X,\cL^{-1}_{1-uv}}$. 
\end{exam}

%%%%%%% OK %%%%%%%%
\subsection{The $3$-cocycle}\label{ss:3c} For three composable blocks $\b,\g,\d$ as in \S\ref{ss:warning}, let $\s(\dw^{\d},\dw^{\g},\dw^{\b})$ be the ratio of the two isomorphisms in \eqref{two asso} (top over bottom). It is easy to see that $\s(\dw^{\d},\dw^{\g},\dw^{\b})$ depends only on $\b,\g,\d$, so we denote it by $\s(w^{\d},w^{\g},w^{\b})$. Recall the groupoid $\Xi$ defined in \S\ref{ss:Xi}. By the pentagon axiom for the associativity of the convolution, the assignment $(w^{\d},w^{\g},w^{\b})\mapsto \s(w^{\d},w^{\g},w^{\b})$ defines a $3$-cocycle $\s\in Z^{3}(\Xi, \Qlbar^{\times})$. In other words, for four composable morphisms $w^{\e},w^{\d},w^{\g}$ and $w^{\b}$ in $\Xi$, 
\begin{equation*}
\s(w^{\d},w^{\g},w^{\b})\s(w^{\e\d}, w^{\g},w^{\b})^{-1}\s(w^{\e}, w^{\d\g},w^{\b})\s(w^{\e}, w^{\d},w^{\g\b})^{-1}\s(w^{\e}, w^{\d}, w^{\g})=1.
\end{equation*}

In \cite[\S4]{3c} it is shown that $\s$ always takes values in $\{\pm1\}$. In fact there is a $3$-cocycle $\e^{W}_{3}\in Z^{3}(W,\{\pm1\})$ canonically attached to the Coxeter group $(W,S)$, and $\s$ is the pullback of $\e^{W}_{3}$ along the natural map $\Xi\to [\pt/W]$. The cohomology class of $\s$ is also calculated in \cite[\S5]{3c},  and it often turns out to be nontrivial.

%%%%% added 8/15/2021 %%%%%
The rest of the section is devoted to the construction of a rigidification of the minimal objects.
\subsection{The Whittaker category}\label{ss:Wh}
Let $B^{-}$ be the Borel subgroup of $G$ containing $T$ and opposite to $B$; let $U^{-}$ be the unipotent radical of $B^{-}$. Fix a nontrivial additive character $\psi_{0}: \FF_{q}\to \Qlbar^{\times}$, which defines an Artin-Schreier character sheaf $\AS$ on the additive group $\Ga$. Using the pinning $\bx_{-\a_{s}}: U_{-\a_{s}}\cong \Ga$ that we fixed, consider the homomorphism
\begin{equation*}
\psi: U^{-}\to \prod U_{-\a_{s}}\xr{\prod \bx_{-\a_{s}}} \prod \Ga\xr{\textup{sum}}\Ga.
\end{equation*}
Here the products are over simple roots $\a_{s}$. Then $\psi^{*}\AS$ is a rank one character sheaf on $U^{-}$. For $\cL\in \Ch(T)$, consider the category
\begin{equation*}
{}_{\psi}\cM_{\cL}:=D^{b}_{(U^{-}\times T,\psi^{*}\AS\boxtimes \cL^{-1}),m}(G/U)
\end{equation*}
where $U^{-}$ acts on $G/U$ by left translation and $T$ by the inverse of right translation. It is well-known that all objects in ${}_{\psi}\cM_{\cL}$ vanish outside the open subset $U^{-}B/U$ of $G/U$, and taking the stalk at $e\in G$ gives an equivalence
\begin{equation*}
i^{*}_{e}: {}_{\psi}\cM_{\cL}\isom D^{b}_{m}(\pt).
\end{equation*}
See \cite[Lemma 4.2.1]{BY} (specialized to $W_{\Theta}=W$). Let ${}_{\psi}\cF_{\cL}\in {}_{\psi}\cM_{\cL}$ be the object corresponding to $\Qlbar\in D^{b}_{m}(\pt)$ under the above equivalence. Let ${}_{\psi}\un \cF_{\cL}$ be the image of ${}_{\psi}\cF_{\cL}$ in the unmixed version ${}_{\psi}\un\cM_{\cL}$.

For $\cL, \cL'\in \Ch(T)$, convolution gives a functor
\begin{equation*}
(-)\star(-): {}_{\psi}\cM_{\cL'}\times {}_{\cL'}\cD_{\cL}\to {}_{\psi}\cM_{\cL}.
\end{equation*}
For $\cL'=\cL$, this gives an action of ${}_{\cL}\cD_{\cL}$ on ${}_{\psi}\cM_{\cL}$. 

\begin{lemma}\label{l:action Wh} If $w\in {}_{\cL'}W_{\cL}$ then
\begin{equation*}
{}_{\psi}\un\cF_{\cL'}\star\un\IC(w)_{\cL}\cong\begin{cases}{}_{\psi}\un\cF_{\cL} & w=w^{\b} \mbox{ for some block }\b\in {}_{\cL'}\un W_{\cL};\\
0 & \mbox{otherwise.}\end{cases}
\end{equation*}
\end{lemma}
\begin{proof}
By induction on  $\ell(w)$, we reduce to two basic cases for $w=s$ a simple reflection (so $\cL'=s\cL$)
\begin{equation}\label{psiF s}
{}_{\psi}\un\cF_{\cL'}\star\un\IC(s)_{\cL}\cong\begin{cases}{}_{\psi}\un\cF_{\cL}, & s\notin W^{\c}_{\cL} ;\\
0, & s\in W^{\c}_{\cL}.\end{cases}
\end{equation}

If $s\notin W^{\c}_{\cL} $, then by $\un\IC(s)_{\cL}\star \un\IC(s)_{\cL'}\cong \un\d_{\cL'}$ we see that  $\star\un\IC(s)_{\cL'}: {}_{\psi}\un\cM_{\cL}\to {}_{\psi}\un\cM_{\cL'}$ gives an inverse to $\star\un\IC(s)_{\cL}: {}_{\psi}\un\cM_{\cL'}\to {}_{\psi}\un\cM_{\cL}$. In particular, $\un\IC(s)_{\cL}$ is an equivalence ${}_{\psi}\un\cM_{\cL'}\isom {}_{\psi}\un\cM_{\cL}$, hence ${}_{\psi}\un\cF_{\cL'}\star\un\IC(s)_{\cL}\cong {}_{\psi}\un\cF_{\cL}[n]$ for some $n\in\ZZ$. 

We normalize the perverse t-structure on ${}_{\psi}\un\cM_{\cL}$ so that ${}_{\psi}\un\cF_{\cL'}$ is in the heart. Using estimate of perverse degrees under affine maps, we see that ${}_{\psi}\un\cF_{\cL'}\star\un\D(s)_{\cL}$ lies in perverse degrees $\ge0$ while ${}_{\psi}\un\cF_{\cL'}\star\un\nb(s)_{\cL}$ lies in perverse degrees $\le0$. Since $\un\D(s)_{\cL}\cong \IC(s)_{\cL}\cong \un\nb(s)_{\cL}$, we see that ${}_{\psi}\un\cF_{\cL'}\star\un\IC(s)_{\cL}$ is also perverse, hence must be isomorphic to ${}_{\psi}\un\cF_{\cL}$. 

If $s\in W^{\c}_{\cL}$, consider the rank one character sheaf $\wt\cL$ on $L_{s}$ (see \S\ref{ss:inert s}) and the category ${}_{\psi}\un\cM_{\wt\cL}=D^{b}_{(U^{-}\times L_{s},\psi^{*}\AS\boxtimes \wt\cL^{-1})}((G/U^{s})_{k})
$. This category is zero for any object in ${}_{\psi}\un\cM_{\wt\cL}$  would pull back to an object in ${}_{\psi}\cM_{\cL}$ whose support is not contained in $U^{-}B$. By an analogue of Lemma \ref{l:inert s} for ${}_{\psi}\un\cM_{\cL}$ we have
\begin{equation}\label{psiF ICs}
{}_{\psi}\un\cF_{\cL}\star\un\IC(s)_{\cL}\cong \pi_{s}^{*}\pi_{s*}({}_{\psi}\un\cF_{\cL})[1]\in {}_{\psi}\un\cM_{\cL}
\end{equation}
where $\pi_{s*}: {}_{\psi}\un\cM_{\cL}\to {}_{\psi}\un\cM_{\wt\cL}$ and its left adjoint $\pi_{s}^{*}$ are defined using the construction in \S\ref{ss:fun}.  Since ${}_{\psi}\un\cM_{\wt\cL}=0$ we conclude by \eqref{psiF ICs} that ${}_{\psi}\un\cF_{\cL}\star\IC(s)_{\cL}=0$.  This proves \eqref{psiF s} and hence the lemma.
%It is easy to see that $\DD(\cF\star \cK)\cong\DD(\cF)\star\DD(\cK)$ for $\cF\in {}_{\psi}\un\cM_{\cL'}$ and $\cK\in {}_{\cL'}\un\cD_{\cL}$. Here we normalize $\DD$ on ${}_{\psi}\un\cM_{\cL}$ such that $\DD({}_{\psi}\cF_{\cL})\cong {}_{-\psi}\cF_{\cL^{-1}}$.
\end{proof}

\subsection{Rigidified minimal IC sheaves}\label{ss:rig min} 
Let $\b\in{}_{\cL'}\un W_{\cL}$.  A {\em rigidified minimal IC sheaf} in ${}_{\cL'}\fP_{\cL}^{\b}$ is a pair $(\xi, \e)$ where $\xi\in {}_{\cL'}\fP_{\cL}^{\b}$ (i.e., $\om\xi\cong \un\IC(w^{\b})_{\cL}$), and $\e$ is an isomorphism in ${}_{\psi}\cM_{\cL}$
\begin{equation*}
\e: {}_{\psi}\cF_{\cL'}\star \xi\cong {}_{\psi}\cF_{\cL}.
\end{equation*}
Rigidified minimal IC sheaves exist in all blocks. Indeed, starting with any $\xi_{0}\in {}_{\cL'}\fP_{\cL}^{\b}$, then by Lemma \ref{l:action Wh} we have an isomorphism $\e_{0}: {}_{\psi}\cF_{\cL'}\star \xi_{0}\cong {}_{\psi}\cF_{\cL}\ot V$ for some one-dimensional $\Fr$-module $V$. Then let $\xi=\xi_{0}\ot V^\vee$, and let $\e$ be the isomorphism ${}_{\psi}\cF_{\cL'}\star \xi={}_{\psi}\cF_{\cL'}\star (\xi_{0}\ot V^\vee)\cong {}_{\psi}\cF_{\cL}$ induced  from $\e_{0}$. Then $(\xi, \e)$ is  a rigidified minimal IC sheaf in ${}_{\cL'}\fP_{\cL}^{\b}$.
    
Let $(\xi_{1},\e_{1})$ and $(\xi_{2},\e_{2})$ be two rigidified minimal IC sheaves in ${}_{\cL'}\fP_{\cL}^{\b}$. Then by adjusting scalars, there is a unique isomorphism $\un\a: \om\xi_{1}\isom\om\xi_{2}$ such that $\e_{2}\c(\id_{{}_{\psi}\un\cF_{\cL'}}\star\un\a)=\e_{1}: {}_{\psi}\un\cF_{\cL'}\star \om\xi_{1}\to {}_{\psi}\un\cF_{\cL}$. The uniqueness of $\un\a$ implies that $\Fr(\un\a)=\un\a$; moreover, $\Hom({}_{\psi}\un\cF_{\cL'}\star \om\xi_{1}, {}_{\psi}\un\cF_{\cL}[-1])=0$, hence $\un\a$ uniquely lifts to an isomorphism $\a: \xi_{1}\isom \xi_{2}$  in ${}_{\cL'}\fP^{\b}_{\cL}$. Therefore any two rigidified minimal IC sheaves are isomorphic to each other. Moreover, the automorphism group of any rigidified minimal IC sheaf is trivial. Therefore we may identify all the rigidified minimal IC sheaves in ${}_{\cL}\fP^{\b}_{\cL}$ as a single object and denote it by $\IC(w^{\b})^{\da}_{\cL}$.

Note that $\IC(w^{\b})^{\da}_{\cL}$ depends on the definition of the Whittaker category ${}_{\psi}\cM_{\cL}$, hence ultimately depends on the choice of the isomorphisms $U_{-\a_{s}}\cong \Ga$ and the additive character $\psi_{0}$ on $\FF_{q}$. 

\begin{lemma}\label{l:lgb}
For  $\b\in {}_{\cL'}\un W_{\cL}$ and $\g\in{}_{\cL''}\un W_{\cL'}$ there is a canonical isomorphism
\begin{equation*}
\IC(w^{\g})^{\da}_{\cL'}\star\IC(w^{\b})^{\da}_{\cL}\isom\IC(w^{\g\b})^{\da}_{\cL}.
\end{equation*}
Moreover these isomorphisms are compatible with the associativity of the multiplication in the groupoid $\Xi$ and the associativity constraint of the convolution between $\{{}_{\cL'}\fP^{\b}_{\cL}\}$ for blocks $\b$. 
\end{lemma}
\begin{proof}
Let $\xi=\IC(w^{\g})^{\da}_{\cL'}\star\IC(w^{\b})^{\da}_{\cL}\in {}_{\cL''}\fP^{\g\b}_{\cL}$. Using the rigidifications of $\IC(w^{\g})^{\da}_{\cL'}$ and $\IC(w^{\b})^{\da}_{\cL}$, we have a canonical isomorphism
\begin{equation*}
\e: {}_{\psi}\cF_{\cL''}\star\xi={}_{\psi}\cF_{\cL''}\star \IC(w^{\g})^{\da}_{\cL'}\star\IC(w^{\b})^{\da}_{\cL}\cong {}_{\psi}\cF_{\cL'}\star \IC(w^{\b})^{\da}_{\cL}\cong {}_{\psi}\cF_{\cL}.
\end{equation*}
The pair $(\xi,\e)$ is a rigidified minimal IC sheaf in ${}_{\cL''}\fP^{\g\b}_{\cL}$. By the uniqueness of rigidified minimal IC sheaves (up to unique isomorphism), there is a canonical isomorphism $(\xi,\e)\cong \IC(w^{\g\b})^{\da}_{\cL}$. The compatibility with the associativity constraint follows similarly from the uniqueness of the isomorphism between rigidified minimal IC sheaves in a given block. 
\end{proof}

%%%%%%% ok %%%%%%%%
\section{Maximal IC sheaves}\label{s:max IC}

%%%%% ok %%%%%%
\subsection{Maximal IC sheaves} Let $\cL,\cL'\in\fo$ and $\b\in {}_{\cL'}\un W_{\cL}$. Recall that   $w_{\b}$ is the longest element in the block $\b$. An object $\cF\in {}_{\cL'}\cD^{\b}_{\cL}$ is called a {\em maximal IC sheaf} if $\om\cF\cong\uIC(w_{\b})_{\cL}$.

When $\cL,\cL'$ are trivial, there is only one block $\b$ in $D^{b}_{m}(B\bs G/B)$, $w_{\b}=w_{0}$ is the longest element in $W$ and $\uIC(w_{\b})_{\cL}\cong \Qlbar[\dim G/B]$ is a shifted constant sheaf on $B\bs G/B$. The constant sheaf $\Qlbar$ on $B\bs G/B$ has two remarkable properties: (a) convolution with it always yields a direct sum of constant sheaves; (b) its stalks and costalks are one-dimensional. Below we will prove analogs of these properties for maximal IC sheaves in each block.

%%%%% ok %%%%%%%
\begin{prop}\label{p:conv Th} Let $\b\in {}_{\cL'}\un W_{\cL}$ and $\g\in {}_{\cL''}\un W_{\cL'}$. Let $w\in \b$.
\begin{enumerate}
\item For any $w\in \b$, the convolution $\uIC(w_{\g})_{\cL'}\star\uIC(w)_{\cL}$ is isomorphic to a direct sum of shifts of $\uIC(w_{\g\b})_{\cL}$.

\item The perverse cohomology $\pH^{i}(\uIC(w_{\g})_{\cL'}\star\uIC(w)_{\cL})$ vanishes unless $-\ell_{\b}(w)\le i\le \ell_{\b}(w)$. 

\item There are isomorphisms
\begin{eqnarray*}
\pH^{\ell_{\b}(w)}(\uIC(w_{\g})_{\cL'}\star\uIC(w)_{\cL})\cong \uIC(w_{\g\b})_{\cL},\\
\pH^{-\ell_{\b}(w)}(\uIC(w_{\g})_{\cL'}\star\uIC(w)_{\cL})\cong \uIC(w_{\g\b})_{\cL}.
\end{eqnarray*}
\end{enumerate}
\end{prop}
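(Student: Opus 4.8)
The plan is to reduce everything to a statement about the longest element of the Coxeter group $\WL$ and then run an induction on $\ell_{\b}(w)$, using the ``clean'' simple reflections (those not in $\WL$) to move between blocks and the parabolic-induction description of $\IC(s)_{\cL}$ for $s\in\WL$ to handle the genuinely interesting step. First I would observe that by Proposition \ref{p:min th}(3) we may left-convolve with $\IC(\dw^{\g})_{\cL'}$, which is an equivalence ${}_{\cL''}\un\cD_{\cL'}^{\b'}\to{}_{\cL''}\un\cD_{\cL'}^{\g\b'}$ carrying $\uIC(w_{\b})_{\cL'}$ to $\uIC(w_{\g\b})_{\cL}$ up to the identity $w^{\g}w_{\b}=w_{\g\b}$ from Corollary \ref{c:trans min}; likewise, composing on the right with a clean reflection changes $w$ to $ws$ inside the same block without changing $\ell_{\b}$ (by Lemma \ref{l:clean s equiv}(3) and Lemma \ref{l:ell beta}\eqref{ell beta tran}). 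Hence by Lemma \ref{l:max} and Corollary \ref{c:conj wb} it suffices to treat the neutral block $\b=\WL$ and, writing $w=vw^{\b}$ uniquely with $v\in\WL$ so $\ell_{\b}(w)=\ell_{\cL}(v)$, we may further pre- and post-compose with minimal elements of blocks to reduce to proving: for $v\in\WL$, $\uIC(w_{\cL,0})_{\cL}\star\uIC(v)_{\cL}$ is a direct sum of shifts of $\uIC(w_{\cL,0})_{\cL}$, with perverse cohomology concentrated in degrees $[-\ell_{\cL}(v),\ell_{\cL}(v)]$ and with top and bottom perverse cohomology exactly $\uIC(w_{\cL,0})_{\cL}$, where $w_{\cL,0}$ is the longest element of $\WL$.

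Next I would prove this reduced statement by induction on $\ell_{\cL}(v)$, the base case $v=e$ being trivial. For the inductive step, write $v=v's$ with $s\in\WL$ a simple reflection of $\WL$ and $\ell_{\cL}(v)=\ell_{\cL}(v')+1$. By Lemma \ref{l:clean s equiv} applied inside $\WL$ — more precisely by the analogue $\uIC(v)_{\cL}$ being a summand of $\uIC(v')_{\cL}\star\uIC(s)_{\cL}$ from Lemma \ref{l:conv pure} and the SL$_2$-computation — it suffices to understand $\uIC(w_{\cL,0})_{\cL}\star\uIC(v')_{\cL}\star\uIC(s)_{\cL}$. By the inductive hypothesis the first convolution is a sum of shifts of $\uIC(w_{\cL,0})_{\cL}$, so I am reduced to computing $\uIC(w_{\cL,0})_{\cL}\star\uIC(s)_{\cL}$ for a simple reflection $s\in\WL$. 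Here I would invoke Lemma \ref{l:inert s}: this convolution is $\pi_s^*\pi_{s*}\uIC(w_{\cL,0})_{\cL}\j{1}$, and since $\ell(w_{\cL,0})>\ell(w_{\cL,0}s)$ (as $s\in\WL$ and $w_{\cL,0}$ is $\WL$-longest, combined with Lemma \ref{l:IC descend}) one gets $\uIC(w_{\cL,0})_{\cL}\cong\pi_s^*\uIC(w_{\cL,0})_{\wt\cL}$, whence $\pi_{s*}\pi_s^*\uIC(w_{\cL,0})_{\wt\cL}\cong\uIC(w_{\cL,0})_{\wt\cL}\ot\pH^*(\PP^1)$ by the projection formula for the $\PP^1$-fibration $\wt\pi_s$. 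Pulling back again gives $\uIC(w_{\cL,0})_{\cL}\star\uIC(s)_{\cL}\cong\uIC(w_{\cL,0})_{\cL}\op\uIC(w_{\cL,0})_{\cL}\j{1}\j{-1}[\text{shifts}]$, i.e. a sum of $\uIC(w_{\cL,0})_{\cL}[1]$, $\uIC(w_{\cL,0})_{\cL}$, $\uIC(w_{\cL,0})_{\cL}[-1]$; this increments the perverse amplitude by exactly one on each side and keeps the extreme perverse cohomologies equal to $\uIC(w_{\cL,0})_{\cL}$, which closes the induction for (1), (2) and (3) simultaneously.

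The main obstacle I anticipate is bookkeeping the perverse amplitude precisely rather than just up to boundedness: the naive estimate from composing $\ell_{\cL}(v)$ one-step convolutions gives amplitude $\le\ell_{\cL}(v)$ but one must rule out that the extreme perverse cohomology degenerates (i.e. that $\pH^{\pm\ell_{\b}(w)}$ could vanish or be larger than a single $\uIC(w_{\g\b})_{\cL}$). The cleanest way I see to pin this down is to pass to Grothendieck groups: under the homomorphism $\g$ of \S\ref{ss:Hk}, $\uIC(w_{\cL,0})_{\cL}$ maps to (a version of) the canonical basis element $c_{w_{\cL,0}}$, which satisfies $c_{w_{\cL,0}}\cdot\wt T_s = (v+v^{-1})c_{w_{\cL,0}}$ for $s\in\WL$ (the longest-element relation in the Hecke algebra of $\WL$ embedded in $\bH_{\fo}$), so the multiplicities and the shifts are forced. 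Combining this numerical rigidity with the semisimplicity from Lemma \ref{l:conv pure}(1) and the weight-purity of Proposition \ref{p:parity purity} — which guarantees the decomposition has no ``hidden'' summands supported on smaller blocks, via Proposition \ref{p:conv block} — yields exactly the asserted decomposition, amplitude, and the identification of $\pH^{\pm\ell_\b(w)}$.
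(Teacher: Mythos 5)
Your reduction to the neutral block (left-convolving by $\IC(\dw^{\g})$ and right-convolving by $\IC(\dw^{\b})$, which are equivalences by Proposition \ref{p:min th}) is fine, but the inductive step that follows has a genuine gap. You induct on $\ell_{\cL}(v)$ by peeling off a simple reflection $s$ \emph{of the Coxeter group $\WL$} and then invoke Lemma \ref{l:inert s} to get $\uIC(w_{\cL,0})_{\cL}\star\uIC(s)_{\cL}\cong \pi_{s}^{*}\pi_{s*}\uIC(w_{\cL,0})_{\cL}\j{1}$. That lemma, and the whole $\pi_{s}^{*}\pi_{s*}$ / $\PP^{1}$-fibration apparatus of \S\ref{ss:inert s}, is only available when $s$ is a simple reflection of $W$ (it uses the standard minimal parabolic $P_{s}$ of $G$). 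In general $\WL$ has simple reflections $r_{\a}$ (for $\a\in\Phi^{+}_{\cL}$ indecomposable) that are \emph{not} simple in $W$ — e.g.\ the $\Sp_{2n}/\SO_{2n}$ situation from the introduction — and for such $r_{\a}$ the object $\uIC(r_{\a})_{\cL}$ has $\ell(r_{\a})>1$ and no parabolic-induction description at this stage; indeed its structure is part of what the main theorem establishes. You cannot fix this by instead peeling off simple reflections of $W$, because then the intermediate elements leave $\WL$ and the blocks vary, which destroys your reduction to the neutral block. Your fallback via the Grothendieck group has the same circularity: the identity $c_{w_{\cL,0},\cL}\cdot c_{s,\cL}=(v+v^{-1})c_{w_{\cL,0},\cL}$ for $s$ a non-simple reflection of $W$ is not among the relations of $\bH_{\fo}$ and is essentially equivalent to the statement being proven.

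The paper's proof is organized precisely to avoid this: it inducts on $\ell(w)$ in $W$ with the block varying, writes $w=w's$ for $s$ simple \emph{in $W$}, and splits into the clean case $s\notin\WL$ (Lemma \ref{l:clean s equiv}, which changes the block but not $\ell_{\b}$) and the case $s\in\WL$ (where $s$ is simple in both $W$ and $\WL$, so Lemma \ref{l:inert s} applies). The remaining delicate point — your "main obstacle" of showing $\pH^{\pm\ell_{\b}(w)}$ is exactly one copy of $\uIC(w_{\g\b})_{\cL}$ — is handled not numerically but by Lemma \ref{l:ss} (perversity of $\uIC(w')_{\cL}\star\uIC(s)_{\cL}$) together with the support estimate of Lemma \ref{l:stalk order}: the complement $\cP$ in $\uIC(w')_{\cL}\star\uIC(s)_{\cL}\cong\uIC(w)_{\cL}\oplus\cP$ consists of $\uIC(v)_{\cL}$ with $v<_{\b}w$, hence $\ell_{\b}(v)<\ell_{\b}(w)$, so by the inductive hypothesis $\cP$ contributes nothing in perverse degrees $\pm\ell_{\b}(w)$. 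You would need some substitute for this step as well.
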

\begin{proof}
We prove the statements simultaneously by induction on $\ell(w)$. For $w=e$ the statement is clear. 

If $\ell(w)=1$,  $w$ is a simple reflection $s$. 

If $s\notin \WL$, Lemma \ref{l:clean s equiv}(3) implies that $\uIC(w_{\g})_{\cL'}\star\uIC(s)_{\cL}\cong \uIC(w_{\g}s)_{\cL}$. By Corollary  \ref{c:trans min},  $w_{\g}s=w_{\g\b}$ is the maximal element in $\g\b$, hence  $\uIC(w_{\g})_{\cL'}\star\uIC(s)_{\cL}\cong \uIC(w_{\g \b})_{\cL}$. Note that $\ell_{\b}(s)=0$ in this case, and (2)(3) hold trivially.

If $s\in \WL$ (hence $\cL'=s\cL=\cL$), then by Lemma \ref{l:IC descend}, $\uIC(w_{\g})_{\cL}\cong \pi_{s}^{*}\uIC(\ov w_{\g})_{\wt\cL}[1]$, here $\uIC(\ov w_{\g})_{\wt\cL}=\om \IC(\dw_{\g})_{\wt\cL}\in {}_{\cL}\un\cD_{\wt\cL}$. By Lemma \ref{l:inert s}, $\uIC(w_{\g})_{\cL}\star\uIC(s)_{\cL}\cong \pi_{s}^{*}\pi_{s*}\pi^{*}_{s}\uIC(\ov w_{\g})_{\wt\cL}[1]$. By the projection formula, $\pi_{s*}\pi^{*}_{s}\uIC(\ov w_{\g})_{\wt\cL}\cong \uIC(\ov w_{\g})_{\wt\cL}\ot\cohog{*}{\PP^{1}_{k}}$ because $\pi_{s}: G/B\to G/P_{s}$ is a $\PP^{1}$-fibration. Therefore
\begin{equation*}
\uIC(w_{\g})_{\cL}\star\uIC(s)_{\cL}\cong \pi^{*}_{s}\uIC(\ov w_{\g})_{\wt\cL}[1]\oplus \pi^{*}_{s}\uIC(\ov w_{\g})_{\wt\cL}[-1]=\uIC(w_{\g})_{\cL}[1]\oplus\uIC(w_{\g})_{\cL}[-1]
\end{equation*}
Note that $\ell_{\b}(s)=1$ in this case, and (2)(3) follows from the above isomorphism. This settles the case $\ell(w)=1$.

For $\ell(w)>1$, write $w=w's$ for some  simple reflection $s$ such that $\ell(w')=\ell(w)-1$. Let  $\b'=\b s\in {}_{\cL'}\un W_{s\cL}$ so $w'\in \b'$. We shall first prove the analogs of the statements (1)(2) for $\uIC(w')_{s\cL}\star\uIC(s)_{\cL}$ instead of $\uIC(w)_{\cL}$ (in statement (2), the range for $i$ is still $[-\ell_{\b}(w), \ell_{\b}(w)]$). 

By inductive hypothesis, $\uIC(w_{\g})_{\cL'}\star\uIC(w')_{s\cL}\cong \uIC(w_{\g\b'})_{s\cL}\ot V'$ for a graded $\Qlbar$-vector space $V'=\oplus_{n\in\ZZ}V'_{n}[-n]$ such that $V'_{n}=0$ unless $-\ell_{\b'}(w')\le n\le \ell_{\b'}(w')$ and $\dim V'_{\pm\ell_{\b'}(w')}=1$. 
Therefore
\begin{equation}\label{V'}
\uIC(w_{\g})_{\cL'}\star(\uIC(w')_{s\cL}\star\uIC(s)_{\cL})\cong \oplus_{n}V'_{n}[-n]\ot(\uIC(w_{\g\b'})_{s\cL}\star\uIC(s)_{\cL}).
\end{equation}

If $s\notin \WL$, we have $\ell_{\b}(w)=\ell_{\b'}(w')$ by the formula for $\ell_{\b}$ given in Lemma \ref{l:ell beta}\eqref{ell beta eq}.  We also have $\uIC(w_{\g\b'})_{s\cL}\star\uIC(s)_{\cL}\cong\uIC(w_{\g\b})$ by Lemma \ref{l:clean s equiv}(3). The statements (1)(2)(3) for $\uIC(w')_{s\cL}\star\uIC(s)_{\cL}$ follow easily from \eqref{V'}.

If $s\in \WL$, we have $\ell_{\b}(w)=\ell_{\b'}(w')+1$ by the formula for $\ell_{\b}$ given in Lemma \ref{l:ell beta}\eqref{ell beta eq}.  By the $w=s$ case already treated in the beginning of the proof, we have $\uIC(w_{\g\b'})_{s\cL}\star\uIC(s)_{\cL}\cong\uIC(w_{\g\b})_{\cL}[1]\oplus \uIC(w_{\g\b})_{\cL}[-1]$. Therefore $\uIC(w_{\g})_{\cL'}\star\uIC(w')_{s\cL}\star\uIC(s)_{\cL}\cong\oplus_{n}(V'_{n}[-n-1]\oplus V'_{n}[-n+1])\ot\uIC(w_{\g\b})_{\cL}\cong \oplus_{n\in\ZZ}(V'_{n-1}\oplus V'_{n+1})\ot \uIC(w_{\g\b})_{\cL}[-n]$. The statements (1)(2)(3) follow from the known properties of $V'$. 

Finally we deduce the statements (1)(2)(3) for $\uIC(w)_{\cL}$ from the proven statements for $\uIC(w')_{s\cL}\star\uIC(s)_{\cL}$. When $s\notin \WL$, we have $\uIC(w')_{s\cL}\star\uIC(s)_{\cL}\cong \uIC(w)_{\cL}$ by Lemma \ref{l:clean s equiv}(3), therefore the statements are already proven. Below we deal with the case $s\in \WL$.

By the decomposition theorem, $\uIC(w)_{\cL}$ is a direct summand of $\uIC(w')_{s\cL}\star\uIC(s)_{\cL}$. By Lemma \ref{l:ss} below, $\uIC(w')_{\cL}\star\uIC(s)_{\cL}$ is itself perverse, hence we can write 
\begin{equation*}
\uIC(w')_{\cL}\star\uIC(s)_{\cL}\cong \uIC(w)_{\cL}\oplus \cP
\end{equation*}
for some semisimple perverse sheaf $\cP\in {}_{\cL'}\un\cD_{\cL}$ with support in $U\bs G_{<w}/U$. We see that
\begin{equation*}
\uIC(w_{\g})_{\cL'}\star\uIC(w')_{\cL}\star\uIC(s)_{\cL}\cong \uIC(w_{\g})_{\cL'}\star\uIC(w)_{\cL}\oplus \uIC(w_{\g})_{\cL'}\star\cP.
\end{equation*}
By the proven statements (1)(2)(3) for the left side above, we have
\begin{equation}\label{ICw summand}
\uIC(w_{\g})_{\cL'}\star\uIC(w)_{\cL}\oplus \uIC(w_{\g})_{\cL'}\star\cP\cong \uIC(w_{\g\b})_{\cL}\ot(\oplus_{n}V_{n}[-n])
\end{equation}
where $V_{n}$ is a finite-dimensional $\Qlbar$-vector space, $V_{n}=0$ unless $-\ell_{\b}(w)\le n\le \ell_{\b}(w)$, and $\dim V_{\pm\ell_{\b}(w)}=1$.

Part (2) for $\uIC(w_{\g})_{\cL'}\star\uIC(w)_{\cL}$ is now clear from the degree range on the right side of \eqref{ICw summand}.

Part (1). In view of \eqref{ICw summand}, each perverse cohomology sheaf of $\uIC(w_{\g})_{\cL'}\star\uIC(w)_{\cL}$ is a direct summand of a direct sum of $\uIC(w_{\g\b})_{\cL}$, hence itself a direct sum of $\uIC(w_{\g\b})_{\cL}$. By the decomposition theorem, $\uIC(w_{\g})_{\cL'}\star\uIC(w)_{\cL}$ is then a direct sum of shifts of $\uIC(w_{\g\b})_{\cL}$.

Part (3).  We  claim that $\cP$ is a direct sum of $\uIC(v)_{\cL}$ for $v\in \b$ and $v<_{\b}w$. In fact, the argument in the third paragraph of Lemma \ref{l:stalk order} shows that  $\uIC(w')_{\cL}\star\uIC(s)_{\cL}$ only has stalks along $G_{v}$ for $v\le_{\b}w$. Now $\cP$ is supported on $U\bs G_{<w}/U$,  its direct summands can only be $\uIC(v)_{\cL}$ for $v\in \b$ and $v<_{\b}w$.

By the above claim, we have $\ell_{\b}(v)<\ell_{\b}(w)$ for any $\uIC(v)_{\cL}$ that show up in $\cP$. By inductive hypothesis applied to these $\uIC(v)_{\cL}$, we have $\pH^{\pm\ell_{\b}(w)}(\uIC(w_{\g})_{\cL'}\star\cP)=0$. Therefore $\pH^{\pm\ell_{\b}(w)}(\uIC(w_{\g})_{\cL'}\star\uIC(w)_{\cL})\cong \uIC(w_{\g\b})_{\cL}\ot V_{\pm\ell_{\b}(w)}\cong \uIC(w_{\g\b})_{\cL}$. 
\end{proof}

%%%% ok %%%%%%
\begin{lemma}\label{l:ss}
Let $s\in W$ be a simple reflection and $w'\in W$ be such that $\ell(w's)=\ell(w')+1$. Then $\uIC(w')_{s\cL}\star\uIC(s)_{\cL}$ is a perverse sheaf.
\end{lemma}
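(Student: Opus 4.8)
The plan is to treat the two cases $s\notin\WL$ and $s\in\WL$ separately, reducing the second (substantive) case to a semismallness statement for a parabolic projection.

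If $s\notin\WL$, then Lemma~\ref{l:clean s equiv}(3) applies directly and gives $\uIC(w')_{s\cL}\star\uIC(s)_\cL\cong\uIC(w's)_\cL$, which is a perverse sheaf, so there is nothing more to do. Assume from now on that $s\in\WL$, so that $s\cL=\cL$ and we must show that $\cF:=\uIC(w')_\cL\star\uIC(s)_\cL$ is perverse. First I would invoke Lemma~\ref{l:inert s} (in its non-mixed form, cf.\ the remark that the results of \S\ref{s:mon Hk} hold verbatim in the constructible setting) to rewrite $\cF\cong\pi_s^*\pi_{s*}\uIC(w')_\cL\j1$, where by \S\ref{ss:descend IC} the functor $\pi_s^*$ is pullback along the smooth $\PP^1$-fibration $\wt\pi_s\colon G/\wt B\to G/\wt P_s$ and $\pi_{s*}$ is the corresponding (proper) pushforward. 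Since $\wt\pi_s$ is smooth of relative dimension one, $\pi_s^*(-)[1]$, hence also $\pi_s^*(-)\j1$ (the half Tate twist being invisible on the constructible level), is perverse $t$-exact; so it suffices to prove that $\pi_{s*}\uIC(w')_\cL$ is a perverse sheaf.

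For this I would show that $\wt\pi_s$, restricted to the support $G_{\le w'}/\wt B$ of $\uIC(w')_\cL$, is semismall onto its image. The hypothesis $\ell(w's)=\ell(w')+1$ means precisely that $w'$ is the minimal-length representative of its coset $w'\j{s}=\{w',w's\}$, so $\wt\pi_s$ identifies the open cell $G_{w'}/\wt B$ with a dense open subset of $\wt\pi_s(G_{\le w'}/\wt B)$, which therefore has dimension $\ell(w')$ and is stratified by the images $\wt\pi_s(G_v/\wt B)$ of the cells indexed by $v\le w'$ with $v$ minimal in $v\j{s}$, of dimension $\ell(v)$. Every fibre of $\wt\pi_s$ is a projective line; the fibre of $\wt\pi_s|_{G_{\le w'}/\wt B}$ over a point of $\wt\pi_s(G_v/\wt B)$ is the single point lying in $G_v/\wt B$ when $vs\not\le w'$, and is the whole projective line when $vs\le w'$. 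In the latter case $v<vs\le w'$ forces $vs<w'$ (if $vs=w'$ then $v=w's$, and $v<vs$ would give $w's<w'$, contradicting $w'<w's$), hence $\ell(v)\le\ell(w')-2$, so the stratum $\wt\pi_s(G_v/\wt B)$ has codimension $\ge 2$ and the semismall inequality $\dim(\text{fibre})\le\tfrac12\,\codim$ holds. Thus $\wt\pi_s|_{G_{\le w'}/\wt B}$ is semismall, $\pi_{s*}$ carries the perverse sheaf $\uIC(w')_\cL$ to a perverse sheaf, and $\cF\cong\pi_s^*\pi_{s*}\uIC(w')_\cL\j1$ is perverse.

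I expect the main obstacle to be the bookkeeping in the last step: pinning down which $\PP^1$-fibres of $\wt\pi_s$ get absorbed into the Schubert variety $G_{\le w'}/\wt B$ and organizing the fibre-dimension count over the Bruhat strata of the image, using the standard lifting properties of the Bruhat order. The monodromy $\cL$ is irrelevant here, since semismallness depends only on the geometry of $\wt\pi_s|_{G_{\le w'}/\wt B}$; alternatively one could derive perversity of $\pi_{s*}\uIC(w')_\cL$ from the parity and purity estimates of Proposition~\ref{p:parity purity} together with $\dim(G_{\le w'}/\wt B)=\ell(w')$, but the semismallness route is the most transparent.
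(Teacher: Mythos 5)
Your proposal follows essentially the same route as the paper's proof: the case $s\notin\WL$ is dispatched by Lemma \ref{l:clean s equiv}(3), and for $s\in\WL$ one reduces via Lemma \ref{l:inert s} to the perversity of $\wt\pi_{s*}\uIC(w')_{s\cL}$ on $G/\wt P_{s}$, which is then established over the strata $B\ov{v}P_{s}/\wt P_{s}$ by decomposing each fibre of $\wt\pi_{s}$ into a point and an $\AA^{1}$ lying in adjacent Bruhat cells, with Verdier duality handling the costalk half; your Bruhat-order bookkeeping ($v<vs\le w'$ forces $vs<w'$, hence $\ell(v)\le\ell(w')-2$) is the same count the paper performs. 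The one step you should unpack is the inference ``$\wt\pi_{s}|_{G_{\le w'}/\wt B}$ is semismall, hence $\pi_{s*}$ carries $\uIC(w')_{s\cL}$ to a perverse sheaf.'' The off-the-shelf semismallness criterion (fibre dimension at most half the codimension of the target stratum) yields perversity of the pushforward of the IC sheaf when the source is smooth; here the source $G_{\le w'}/\wt B$ is a generally singular Schubert variety, so the honest verification must combine the fibre dimensions with the strict support condition $i_{v}^{*}\uIC(w')_{s\cL}\in D^{\le-\ell(v)-1}$ on each non-open stratum. That is exactly the paper's explicit stalk estimate, and it closes with no room to spare: for an $\AA^{1}$-fibre inside a boundary cell $G_{v}/\wt B$ with $vs<v$, the bound is $-\ell(v)-1+2=-\ell(vs)$, which equals $-\dim$ of the target stratum. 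Because the fibres of $\wt\pi_{s}$ are at most one-dimensional this refined count always succeeds, so your argument is correct once that step is made explicit (your parenthetical alternative via Proposition \ref{p:parity purity} is in effect the same fix).
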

\begin{proof}
If $s\notin \WL$ then $\uIC(w')_{s\cL}\star\uIC(s)_{\cL}\cong \uIC(w)_{\cL}$ by Lemma \ref{l:clean s equiv}(3). 

If $s\in \WL$, by Lemma \ref{l:inert s}, we have $\uIC(w')_{s\cL}\star\uIC(s)_{\cL}\cong \pi^{*}_{s}\pi_{s*}\uIC(w')_{s\cL}[1]$. It suffices to show that $\pi_{s*}\uIC(w')_{s\cL}$ is perverse in the following sense.  Let $\nu: \wt L_{s}\to L_{s}$ be a finite \'etale isogeny such that $\wt\cL$ is defined via a character of $\ker(\nu)$. Let $\wt P_{s}=P_{s}\times_{L_{s}}\wt L_{s}$ and $\wt B=B\times_{L_{s}}\wt L_{s}$.  We have the projection map $\wt\pi_{s}: G/\wt B\to G/\wt P_{s}$. Viewing $\uIC(w')_{s\cL}$ as a complex on the stack $G/\wt B$, then $\pi_{s*}\uIC(w')_{s\cL}$ as a complex on $G/\wt P_{s}$ is simply $\wt\pi_{s*}\uIC(w')_{s\cL}$. We shall show that $\wt\pi_{s*}\uIC(w')_{s\cL}$ is a perverse sheaf on $G/\wt P_{s}$.  Since $\wt\pi_{s}$ is smooth of relative dimension $1$, $\wt\pi^{*}_{s}[1]$ preserves perverse sheaves, which would imply that  $\uIC(w')_{s\cL}\star\uIC(s)_{\cL}\cong \wt\pi^{*}_{s}\wt\pi_{s*}\uIC(w')_{s\cL}[1]$ is perverse.

For $v\in W$, let $\ov v$ be its image in $W/\j{s}$. Then $G/\wt P_{s}=\bigsqcup_{\ov v\in W/\j{s}}B\ov v P_{s}/\wt P_{s}$ is a stratification of $G/\wt P_{s}$, and $\dim B\ov v P_{s}/\wt P_{s}=\ell(\ov v):=\min\{\ell(v),\ell(vs)\}$.   By Verdier duality, it suffices to show that for any $v\le w'$ and $x\in (B\ov v P_{s})/\wt P_{s}=(G_{v}\cup G_{vs})/\wt P_{s}$, the stalk of $\wt\pi_{s*}\uIC(w')_{s\cL}$ at $x$, which is $\cohog{*}{\wt\pi^{-1}_{s}(x), \uIC(w')_{s\cL}|_{\wt\pi_{s}^{-1}(x)}}$, lies in degrees $\le -\ell(\ov v)$.  Note that $\wt\pi^{-1}_{s}(x)\cong\PP^{1}$ for any $x\in G/\wt P_{s}$.

First consider the case $v<w'$, and we may assume $vs<v$. Then $\uIC(w')_{s\cL}|_{G_{v}}$ lies in degrees $\le -\ell(v)-1$ and $\uIC(w')_{s\cL}|_{G_{vs}}$ lies in degrees $\le -\ell(vs)-1=-\ell(v)$. We have $\wt\pi^{-1}_{s}(x)\cap G_{v}/\wt B\cong \AA^{1}$ and $\wt\pi^{-1}_{s}(x)\cap G_{vs}/\wt B\cong \pt$. Therefore $\cohog{*}{\wt\pi^{-1}_{s}(x), \uIC(w')_{s\cL}|_{\ov\pi_{s}^{-1}(x)}}$ lies in degrees $\le -\ell(v)-1+2=-\ell(\ov v)$. 

If $v=w'$, then the stalk of $\wt\pi_{s*}\uIC(w')_{s\cL}$ at $x\in (B\ov w' P_{s})/\wt P_{s}$ lies in degree $-\ell(w')$ because $G_{w'}/\wt B\to (B\ov w' P_{s})/\wt P_{s}$ is an isomorphism. This finishes the stalk degree estimates needed to show that $\wt\pi_{s*}\uIC(w')_{s\cL}$ is perverse.
\end{proof}

%%%%% ok %%%%%%
\begin{prop}\label{p:stalk Th} Let $N_{\cL}$ be the length of the longest element in the Coxeter group $\WL$ (with respect to its own simple reflections). For $\b\in{}_{\cL'}\un W_{\cL}$ and $w\in \b$, we have
\begin{equation*}
i_{w}^{*}\uIC(w_{\b})_{\cL}\cong \uC(w)_{\cL}[N_{\cL}-\ell_{\b}(w)]; \quad i_{w}^{!}\uIC(w_{\b})_{\cL}\cong \uC(w)_{\cL}[-N_{\cL}+\ell_{\b}(w)].
\end{equation*}
Here $\ell_{\b}$ is the function defined in \eqref{def ell beta}.
\end{prop}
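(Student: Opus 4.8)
The statement for costalks will follow from the statement for stalks by applying the renormalized Verdier duality $\DD$: it interchanges $i^{*}_{v}$ with $i^{!}_{v}$, replaces $\cL$ by $\cL^{-1}$ (for which $\Phi_{\cL}$, hence $\WL$, $N_{\cL}$ and the block combinatorics, so also $\ell_{\b}$, are unchanged), and sends $\uC(w)_{\cL}[n]$ to $\uC(w)_{\cL^{-1}}[-n]$ and $\uIC(w_{\b})_{\cL}$ to $\uIC(w_{\b})_{\cL^{-1}}$. So it suffices to treat stalks, which I will do by induction on $N_{\cL}-\ell_{\b}(w)\ge 0$. When $N_{\cL}-\ell_{\b}(w)=0$ we have $w=w_{\b}$ by Lemma~\ref{l:max}, and then $i^{*}_{w_{\b}}\uIC(w_{\b})_{\cL}=\uC(w_{\b})_{\cL}$ because $U\bs G_{w_{\b}}/U$ is the open stratum of $U\bs G_{\le w_{\b}}/U$. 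For the inductive step, write $w=w^{\b}v$ with $v\in\WL$; since $v\ne w_{\cL,0}$, choose a simple reflection $s$ of the Coxeter group $\WL$ with $\ell_{\cL}(vs)=\ell_{\cL}(v)+1$, so that $ws\in\b$ and $\ell_{\b}(ws)=\ell_{\b}(w)+1$.

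Two length facts about $s$ will be needed. (a) In $W$, $\ell(ws)=\ell(w)+1$: writing $s=r_{\gamma}$ with $\gamma\in\Phi^{+}_{\cL}$ the simple root attached to $s$, the inequality $\ell_{\cL}(vs)>\ell_{\cL}(v)$ forces $v\gamma\in\Phi^{+}_{\cL}\subset\Phi^{+}$, and then $w\gamma=w^{\b}(v\gamma)\in w^{\b}(\Phi^{+}_{\cL})\subset\Phi^{+}$ by the characterization of $w^{\b}$ in Lemma~\ref{l:max}, so $\ell(wr_{\gamma})>\ell(w)$. (b) In $W$, $\ell(w_{\b}s)=\ell(w_{\b})-1$: since $w_{\cL,0}s<_{\WL}w_{\cL,0}$ we have $w_{\b}s<_{\b}w_{\b}$, hence $w_{\b}s<w_{\b}$ by Lemma~\ref{l:order}(3).

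The geometric heart of the step is the \emph{absorption} isomorphism $\uIC(w_{\b})_{\cL}\star\uIC(s)_{\cL}\cong\uIC(w_{\b})_{\cL}[1]\oplus\uIC(w_{\b})_{\cL}[-1]$. By (b) and Lemma~\ref{l:IC descend}, $\uIC(w_{\b})_{\cL}\cong\pi^{*}_{s}\uIC(w_{\b})_{\wt\cL}$; combined with Lemma~\ref{l:inert s}, the projection formula, and the fact that $\pi_{s}$ (i.e.\ $\wt\pi_{s}\colon G/\wt B\to G/\wt P_{s}$) is a $\PP^{1}$-fibration, this gives the absorption isomorphism. Now I compute $i^{*}_{w}$ of $\uIC(w_{\b})_{\cL}\star\uIC(s)_{\cL}$ in two ways. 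Directly, it is $i^{*}_{w}\uIC(w_{\b})_{\cL}[1]\oplus i^{*}_{w}\uIC(w_{\b})_{\cL}[-1]$. On the other hand, using $\uIC(w_{\b})_{\cL}\star\uIC(s)_{\cL}\cong\pi^{*}_{s}\pi_{s*}\uIC(w_{\b})_{\cL}\j{1}$ and that $\pi^{*}_{s}$ preserves stalks along the fibration, it equals $(i^{*}_{\ov w}\pi_{s*}\uIC(w_{\b})_{\cL})[1]$, and by proper base change $i^{*}_{\ov w}\pi_{s*}\uIC(w_{\b})_{\cL}$ is the cohomology of the fiber $\wt\pi^{-1}_{s}(\ov w)\cong\PP^{1}$, which meets $G_{w}/\wt B$ in an affine line (on which $\uIC(w_{\b})_{\cL}$ is constant, being pulled back along $\wt\pi_{s}$, with value $i^{*}_{w}\uIC(w_{\b})_{\cL}$) and meets $G_{ws}/\wt B$ in a point — by (a) it is $G_{w}$, not $G_{ws}$, that contributes the affine line. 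The open--closed triangle on $\PP^{1}$ then yields a distinguished triangle
\[
i^{*}_{w}\uIC(w_{\b})_{\cL}[-2]\;\to\; i^{*}_{\ov w}\pi_{s*}\uIC(w_{\b})_{\cL}\;\to\; i^{*}_{ws}\uIC(w_{\b})_{\cL}\;\xrightarrow{+1},
\]
and the inductive hypothesis identifies $i^{*}_{ws}\uIC(w_{\b})_{\cL}$ with $\uC(ws)_{\cL}[N_{\cL}-\ell_{\b}(w)-1]$, while comparing the two computations above gives $i^{*}_{\ov w}\pi_{s*}\uIC(w_{\b})_{\cL}\cong i^{*}_{w}\uIC(w_{\b})_{\cL}\oplus i^{*}_{w}\uIC(w_{\b})_{\cL}[-2]$.

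To conclude, I will forget equivariance and work with underlying complexes in $D^{b}(\pt_{k})$: by Proposition~\ref{p:parity purity}(2) those of $i^{*}_{w}\uIC(w_{\b})_{\cL}$ and $i^{*}_{ws}\uIC(w_{\b})_{\cL}$ are direct sums of copies of $\Qlbar$ concentrated in degrees of a single parity, and the one for $ws$ is one-dimensional. Running the long exact cohomology sequence of the displayed triangle with $i^{*}_{\ov w}\pi_{s*}\uIC(w_{\b})_{\cL}$ replaced by $i^{*}_{w}\uIC(w_{\b})_{\cL}\oplus i^{*}_{w}\uIC(w_{\b})_{\cL}[-2]$ then forces the underlying complex of $i^{*}_{w}\uIC(w_{\b})_{\cL}$ to be one-dimensional, placed in the cohomological degree dictated — using (a) — by the claimed formula; since $i^{*}_{w}\uIC(w_{\b})_{\cL}$ is a sum of shifts of $\uC(w)_{\cL}$, this pins it down to $\uC(w)_{\cL}[N_{\cL}-\ell_{\b}(w)]$, completing the induction. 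The main obstacle is this last bookkeeping — tracking all shifts and, in the mixed setting, Tate twists, handling the identifications of the equivariant derived categories $D^{b}_{\G(w)_{k}}(\pt_{k})$, $D^{b}_{\G(ws)_{k}}(\pt_{k})$ and the category over $\{\ov w\}$ through $\pi_{s*}$, and verifying that $\wt\pi_{s}$ restricts to an affine-line bundle over $\wt\pi_{s}(G_{w}/\wt B)$; none of this is deep, but it is where the care is required.
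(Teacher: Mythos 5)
Your induction is organized around a simple reflection $s$ of the \emph{Coxeter group} $\WL$, i.e.\ $s=r_{\gamma}$ for $\gamma$ an indecomposable root of $\Phi^{+}_{\cL}$. But every geometric input you then invoke --- the object $\IC(s)_{\cL}$ of \S\ref{ss:inert s}, the parabolic $P_{s}$ and the $\PP^{1}$-fibration $\pi_{s}$, Lemma \ref{l:IC descend}, Lemma \ref{l:inert s}, and hence the absorption isomorphism --- is only defined and proved for $s$ a simple reflection of $W$ that happens to lie in $\WL$. An indecomposable root of $\Phi^{+}_{\cL}$ need not be a simple root of $\Phi^{+}$: for $G=\SL_{3}$ one can choose $\cL$ with $\Phi_{\cL}=\{\pm(\alpha_{1}+\alpha_{2})\}$, so the unique simple reflection of $\WL$ is $s_{1}s_{2}s_{1}$. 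For such $s$ there is no parabolic $P_{s}$ with $\PP^{1}$-fibration $G/B\to G/P_{s}$ and no object $\IC(s)_{\cL}$; moreover your claim (a) is false as stated (already for $w=e$ in this example, $\ell(ws)=3\neq\ell(w)+1$; for a non-simple reflection one only gets $\ell(wr_{\gamma})>\ell(w)$, not $+1$), and likewise (b) only gives $\ell(w_{\b}s)<\ell(w_{\b})$. Since in general \emph{no} simple reflection of $W$ need lie in $\WL$ at all, you cannot repair this by restricting to those: the induction would then have no step available. This is the genuine gap.

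The paper's proof is structured precisely to avoid this: it inducts \emph{backward on $\ell(w)$ in $W$} (base case $w=w_{0}$, the longest element of $W$, where the statement is a definition), and at each step picks a simple reflection $s$ \emph{of $W$} with $\ell(ws)=\ell(w)+1$, splitting into the case $s\notin\WL$ --- handled by the convolution equivalence of Lemma \ref{l:clean s equiv}, which changes the block and the character sheaf and is how one travels between strata whose separation inside $\b$ is realized only by non-simple reflections of $W$ --- and the case $s\in\WL$, handled by $\pi_{s}^{*}$-descent (Lemma \ref{l:IC descend}) together with Lemma \ref{l:ell beta}\eqref{ell beta eq}. Your duality reduction and the local $\PP^{1}$-fiber computation are fine in spirit and essentially reproduce the paper's $s\in\WL$ case, but the combinatorial backbone of the induction has to run over $W$, not over $\WL$.
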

\begin{proof}
The second isomorphism follows from the first one by Verdier duality. We prove the first one by backward induction on $\ell(w)$ (we allow $\cL$ to vary in $\fo$, and $w_{\b}$ is determined by $w$ and $\cL$). If $w=w_{0}$ is the longest element in $W$, then $w_{0}=w_{\b}$ for the block $\b$ containing $w_{0}$, and $i_{w_{0}}^{*}\uIC(w_{\b})_{\cL}\cong \uC(w_{0})_{\cL}$ by definition (and in this case $\ell_{\b}(w_{0})=N_{\cL}$). 

Now suppose the isomorphism holds for any $w\in W$ such that $\ell(w)>n$. Let $w\in W$ be such that $\ell(w)=n$, and let $\b\in {}_{w\cL}\un W_{\cL}$ be the block containing $w$. Let $s$ be a simple reflection in $W$ such that $\ell(ws)=\ell(w)+1$.  We denote $ws$ by $w'$. Let $\b'=\b s\in {}_{w\cL}\un W_{s\cL}$, the block containing $w'$.

If $s\notin \WL$, then by Lemma \ref{l:clean s equiv}(1), right convolution with $\uIC(s)_{s\cL}$ gives an equivalence ${}_{w\cL}\un\cD_{\cL}\to {}_{w\cL}\un\cD_{s\cL}$ sending $\uIC(w_{\b})_{\cL}$ to $\uIC(w_{\b}s)_{s\cL}$  and $\unb(w)_{\cL}$ to $\unb(w')_{s\cL}$. By Corollary \ref{c:trans min}, $w_{\b}s=w_{\b'}$. Therefore we have an isomorphism of graded $\upH^{*}_{T_{k}}(\pt_{k})$-modules (coming from the left $T$-action)
\begin{eqnarray}\label{Hom Th s}
\Hom(\uIC(w_{\b})_{\cL}, \unb(w)_{\cL})\cong\Hom(\uIC(w_{\b'})_{s\cL}, \unb(w')_{s\cL}).
\end{eqnarray}
Applying the inductive hypothesis to $\uIC(w_{\b'})_{s\cL}$ and $w'$ (which is longer than $w$), we get
\begin{eqnarray*}
&&\Hom(\uIC(w_{\b'})_{s\cL}, \unb(w')_{s\cL})\cong \Hom(i^{*}_{w'}\uIC(w_{\b'})_{s\cL}, \uC(w')_{s\cL})\\
&\cong& \End(\uC(w')_{s\cL})[-N_{s\cL}+\ell_{\b'}(w')]\cong\upH^{*}_{\G(w')_{k}}(\pt_{k})[-N_{s\cL}+\ell_{\b'}(w')].
\end{eqnarray*}
The last isomorphism uses Lemma \ref{l:one cell}.  Similarly, 
\begin{equation*}
\Hom(\uIC(w_{\b})_{\cL}, \unb(w)_{\cL})\cong\Hom(i^{*}_{w}\uIC(w_{\b})_{\cL}, \uC(w)_{\cL})\cong \Hom_{[\{\dw\}/\G(w)]}(i^{*}_{\dw}(\uIC(w_{\b})_{\cL}), \Qlbar)[\ell(w)].
\end{equation*}
In view of \eqref{Hom Th s}, we have an isomorphism of graded $\upH^{*}_{T_{k}}(\pt_{k})$-modules
\begin{equation*}
\upH^{*}_{\G(w')_{k}}(\{\dw\})[-N_{s\cL}+\ell_{\b'}(w')]\cong \Hom(i^{*}_{\dw}\uIC(w_{\b})_{\cL}, \Qlbar)[\ell(w)].
\end{equation*}
This forces $i^{*}_{\dw}(\uIC(w_{\b})_{\cL})\cong\Qlbar[\ell(w)+N_{s\cL}-\ell_{\b'}(w')]\in \cD^{b}_{\G(w)_{k}}(\{\dw\})$, which implies that $i_{w}^{*}\uIC(w_{\b})_{\cL}\cong \uC(w)_{\cL}[N_{s\cL}-\ell_{\b'}(w')]$ by Lemma \ref{l:one cell}. Clearly $N_{s\cL}=N_{\cL}$. By Lemma \ref{l:ell beta}\eqref{ell beta eq}, $\ell_{\b}(w)=\ell_{\b'}(w')$. Therefore $i_{w}^{*}\uIC(w_{\b})_{\cL}\cong \uC(w)_{\cL}[N_{\cL}-\ell_{\b}(w)]$.

If $s\in \WL$, then $\uIC(w_{\b})_{\cL}$ is in the image of $\pi^{*}_{s}$ by Lemma \ref{l:IC descend}, which implies that the stalks of $\uIC(w_{\b})_{\cL}$ at $\dw$ and at $\dw'$ are isomorphic to each other.  By inductive hypothesis, the stalk  $i^{*}_{\dw'}\uIC(w_{\b})_{\cL}\cong \Qlbar[\ell(w')+N_{\cL}-\ell_{\b}(w')]$ (now $w'\in \b$). By Lemma \ref{l:ell beta}\eqref{ell beta eq}, we have $\ell_{\b}(w')=\ell_{\b}(ws)=\ell_{\b}(w)+1$. Therefore $i^{*}_{\dw}\uIC(w_{\b})_{\cL}\cong \Qlbar[\ell(w')+N_{\cL}-\ell_{\b}(w')]\cong \Qlbar[\ell(w)+N_{\cL}-\ell_{\b}(w)]$, and hence $i^{*}_{w}\uIC(w_{\b})_{\cL}\cong \uC(w)_{\cL}[N_{\cL}-\ell_{\b}(w)]$.
\end{proof}

%%%%%%% ok %%%%%%%%%
\subsection{Rigidified maximal IC sheaf in the neutral block}\label{ss:rig neutral}
Let $\cL\in\fo$. Recall that $\d_{\cL}=\IC(\dot e)_{\cL}\in{}_{\cL}\cD_{\cL}^{\c}$  is the monoidal unit of ${}_{\cL}\cD_{\cL}^{\c}$ under convolution. Recall that $N_{\cL}$ is the length of the longest element $w_{\cL,0}$ in the Coxeter group $\WL$ (in terms of simple reflections in $\WL$). 

A {\em rigidified maximal IC sheaf} in ${}_{\cL}\cD_{\cL}^{\c}$ is a pair $(\Th, \e)$ where $\Th\in {}_{\cL}\cD_{\cL}^{\c}$ is such that $\Th[N_{\cL}]$ is a maximal IC sheaf (i.e., $\om\Th[N_{\cL}]\cong\uIC(w_{\cL,0})_{\cL}$) and $\e: \Th\to \d_{\cL}$ is a nonzero map in ${}_{\cL}\cD_{\cL}^{\c}$.

Rigidified maximal IC sheaves exist. Indeed, by Proposition \ref{p:stalk Th}, $i^{*}_{e}\IC(\dw_{\cL,0})[-N_{\cL}]\cong C(\dot e)\ot V$ for a one-dimensional $\Fr$-module $V$. Therefore, letting $\Th=\IC(\dw_{\cL,0})[-N_{\cL}]\ot V^{*}$, we get a nonzero map $\e:\Th\to \d_{\cL}$ by adjunction. 
    
Let $(\Th,\e)$ and $(\Th',\e')$ be two rigidified maximal IC sheaves in ${}_{\cL}\cD_{\cL}^{\c}$. Then there is a unique isomorphism $\un\a: \om\Th\isom\om\Th'$ such that $\e'\c\un\a=\e$ as elements in $\Hom(\om\Th, \om\d_{\cL})$. The uniqueness of $\un\a$ implies that $\Fr(\un\a)=\un\a$; moreover, $\Hom(\om\Th, \om\Th'[-1])=0$, hence $\un\a$ uniquely lifts to an isomorphism $\a: \Th\isom \Th'$  inside ${}_{\cL}\cD^{\c}_{\cL}$. Therefore any two rigidified maximal IC sheaves are isomorphic to each other. Moreover, the automorphism group of any rigidified maximal IC sheaf is trivial. Therefore we may identify all the rigidified maximal IC sheaves in ${}_{\cL}\cD^{\c}_{\cL}$ as a single object and denote it by
\begin{equation*}
(\Th^{\c}_{\cL}, \e_{\cL}: \Th^{\c}_{\cL}\to \d_{\cL}).
\end{equation*}
We denote by
\begin{equation*}
(\uTh^{\c}_{\cL},\un\e_{\cL})=\om(\Th^{\c}_{\cL}, \e_{\cL}).
\end{equation*}
the rigidified maximal IC sheaf in ${}_{\cL}\un\cD^{\c}_{\cL}$.

%%%%%%% ok %%%%%%%%
\begin{prop}\label{p:coalg} There is a unique coalgebra structure on $\Th^{\c}_{\cL}$ (inside the monoidal category ${}_{\cL}\cD^{\c}_{\cL}$) with $\e_{\cL}$ as the counit map.
\end{prop}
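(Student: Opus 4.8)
The plan is to pin down the comultiplication by a one‑dimensionality argument and then obtain the coalgebra axioms formally, the only genuine input being a nonvanishing at the identity stratum. Throughout write $\Th=\Th^{\c}_{\cL}$, $\d=\d_{\cL}$, $\e=\e_{\cL}$, $P=\IC(\dw_{\cL,0})_{\cL}$ and $N=N_{\cL}$. Recall from \S\ref{ss:rig neutral} that $\Th\cong P[-N](-N/2)$ is pure of weight $0$ (and of Tate type), that $P$ is a simple perverse sheaf with $\hom(P,P)=\Qlbar$, and that $\e$ is a nonzero map $\Th\to\d$ for which $i^{*}_{e}\e\colon i^{*}_{e}\Th\cong\Qlbar\to\Qlbar\cong i^{*}_{e}\d$ is an isomorphism. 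A coalgebra structure with counit $\e$ is a map $\mu\colon\Th\to\Th\star\Th$ with $(\id_{\Th}\star\e)\c\mu=\id_{\Th}=(\e\star\id_{\Th})\c\mu$ and $(\mu\star\id_{\Th})\c\mu=(\id_{\Th}\star\mu)\c\mu$.

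The first step is to show $\hom(\Th,\Th\star\Th)=\Qlbar$. By Lemma \ref{l:conv pure} the complex $\Th\star\Th$ is semisimple and pure of weight $0$, hence splits as $\bigoplus_{j}\pH^{j}(\Th\star\Th)[-j]$. Applying Proposition \ref{p:conv Th} with $\g=\b$ the neutral block and $w=w_{\cL,0}$ (so $\ell_{\b}(w)=N$), each $\pH^{j}(\Th\star\Th)$ is a direct sum of Tate twists of $P$, vanishes unless $N\le j\le 3N$, and $\pH^{N}(\Th\star\Th)[-N]\cong\Th$ — the twist being forced to be trivial for weight reasons, using that all the cohomology occurring is of Tate type (Proposition \ref{p:parity purity}). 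Thus $\Th\star\Th\cong\Th\oplus\cC$ with $\cC$ concentrated in strictly higher perverse degrees than $\Th$; since $P$ is simple, $\hom(\Th,\cC)=0$ and $\hom(\Th,\Th\star\Th)=\hom(\Th,\Th)=\Qlbar$, spanned by the split inclusion $\iota\colon\Th\hookrightarrow\Th\star\Th$ of the distinguished summand. This already yields uniqueness: any comultiplication with counit $\e$ is a scalar multiple of $\iota$.

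For existence I would normalize $\iota$. Put $a=(\id_{\Th}\star\e)\c\iota$ and $b=(\e\star\id_{\Th})\c\iota$ in $\hom(\Th,\Th)=\Qlbar\cdot\id_{\Th}$, say $a=\a\,\id_{\Th}$, $b=\b\,\id_{\Th}$. Factoring $\e\star\e\colon\Th\star\Th\to\d\star\d=\d$ in the two possible ways, $\e\star\e=(\id_{\d}\star\e)\c(\e\star\id_{\Th})=(\e\star\id_{\d})\c(\id_{\Th}\star\e)$, gives $(\e\star\e)\c\iota=\a\,\e=\b\,\e$, so $\a=\b$ since $\e\ne0$. The point that requires real work is that this common scalar is nonzero. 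As $i^{*}_{e}\e\ne0$, it suffices to show $i^{*}_{e}\big((\e\star\e)\c\iota\big)\ne0$, and $i^{*}_{e}$ of a convolution is computed by the stalk formula used in Construction \ref{con:can}: $i^{*}_{e}(\Th\star\Th)$ is the (shifted) cohomology of the fibre of the convolution morphism over the unit with coefficients in $\Th\,\widetilde{\boxtimes}\,\Th$, its lowest cohomological degree is exactly the line $i^{*}_{e}\Th=\Qlbar$ picked out by $i^{*}_{e}\iota$, and $i^{*}_{e}(\e\star\e)$ is restriction of this complex to the unit point, on which $\e\,\widetilde{\boxtimes}\,\e$ is the identity by the normalization of $\e$. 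A dimension and support count for this fibre — parallel to, and no harder than, the one carried out in Construction \ref{con:can} (and consistent with the sign bookkeeping of Example \ref{ex:-1}) — shows the leading term restricts isomorphically, whence $\a\ne0$. I then set $\mu=\a^{-1}\iota$; by construction $(\id_{\Th}\star\e)\c\mu=\id_{\Th}$, and $(\e\star\id_{\Th})\c\mu=\id_{\Th}$ follows from $\a=\b$.

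Finally, coassociativity: iterating Proposition \ref{p:conv Th} and purity as in the first step, $\Th\star\Th\star\Th$ again has $\Th$ as its unique summand in lowest perverse degree, so $\hom(\Th,\Th\star\Th\star\Th)=\Qlbar$. Applying the map $\id_{\Th}\star\e\star\id_{\Th}\colon\Th\star\Th\star\Th\to\Th\star\Th$ to $(\mu\star\id_{\Th})\c\mu$ and to $(\id_{\Th}\star\mu)\c\mu$ and using the counit identities already established yields $\mu$ in both cases; since $\id_{\Th}\star\e\star\id_{\Th}$ thus carries a generator of the one‑dimensional space $\hom(\Th,\Th\star\Th\star\Th)$ to the nonzero element $\mu$, it is injective there and the two composites coincide. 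The main obstacle I anticipate is exactly the nonvanishing $\a\ne0$ of the third paragraph — the identity‑stalk computation; once that is in place, every other step follows formally from Proposition \ref{p:conv Th}, purity, and the one‑dimensionality of the relevant $\hom$‑spaces.
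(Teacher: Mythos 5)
Your proposal is correct and follows essentially the same route as the paper: both arguments reduce existence and uniqueness to the one-dimensionality of the relevant $\hom$-spaces via Proposition \ref{p:conv Th}, establish the counit and coassociativity axioms formally from that, and isolate the nonvanishing of the composite $\Th^{\c}_{\cL}\to\Th^{\c}_{\cL}\star\Th^{\c}_{\cL}\to\d_{\cL}$ as the one substantive point, proved by restricting to the identity stratum. The paper's version of your ``dimension and support count'' is the stratification computation of $\cohog{*}{(G/B)_{k},\inv^{*}\uTh^{\c}_{\cL}\ot\uTh^{\c}_{\cL}}$ using Proposition \ref{p:stalk Th}, which shows only the point stratum contributes in degree zero — exactly the input you anticipate needing.
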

\begin{proof} For each $n\ge2$, let $(\Th^{\c}_{\cL})^{\star n}$ be the $n$-fold convolution of $\Th^{\c}_{\cL}$. We will construct a comultiplication map
\begin{equation*}
\mu^{n}_{\cL}: \Th^{\c}_{\cL}\to(\Th^{\c}_{\cL})^{\star n}
\end{equation*}
characterized as the unique map such that the following diagram is commutative
\begin{equation}\label{comult n}
\xymatrix{     \Th^{\c}_{\cL}\ar[r]^-{\mu^{n}_{\cL}}\ar[d]^{\e_{\cL}} &  (\Th^{\c}_{\cL})^{\star n}
\ar[d]^{\e^{\star n}_{\cL}}     \\
\d_{\cL}\ar[r]^-{\sim} & (\d_{\cL})^{\star n}      }
\end{equation}
where the bottom arrow is the canonical isomorphism from the monoidal unit structure on $\d_{\cL}$. 

Let $\un\d_{\cL}=\om\d_{\cL}$.  By an iterated application of Proposition \ref{p:conv Th}(3), we see that $\pH^{i}((\uTh^{\c}_{\cL})^{\star n})=0$ for $i<N_{\cL}$, and $\uTh^{\c}_{\cL}[N_{\cL}]\cong\pH^{N_{\cL}}((\uTh^{\c}_{\cL})^{\star n})$. Therefore there is a nonzero map $\un\mu^{n}: \uTh^{\c}_{\cL}\to (\uTh^{\c}_{\cL})^{\star n}$,  unique up to a scalar. Since nonzero maps $\uTh^{\c}_{\cL}\to \un\d_{\cL}$ are unique up to a scalar, the Claim below shows that there is a unique nonzero multiple of $\un\mu^{n}$, call it $\un\mu^{n}_{\cL}$, that makes the non-mixed version of the diagram \eqref{comult n} (i.e., the diagram after applying $\om$ to all terms) commutative. The uniqueness of $\un\mu^{n}_{\cL}$ implies that it is invariant under Frobenius; moreover $\Hom(\uTh^{\c}_{\cL},(\uTh^{\c}_{\cL})^{\star n}[-1])=0$ for perverse degree reasons, therefore $\un\mu_{\cL}^{n}$ determines uniquely a morphism $\mu_{\cL}^{n}: \Th^{\c}_{\cL}\to(\Th^{\c}_{\cL})^{\star n}$ in ${}_{\cL}\cD^{\c}_{\cL}$. 

\begin{claim} The composition (for any nonzero choice of $\mu^{n}$)
\begin{equation*}
\uTh^{\c}_{\cL}\xr{\un\mu^{n}}(\uTh^{\c}_{\cL})^{\star n}\xr{\un\e^{\star n}_{\cL}}(\un\d_{\cL})^{\star n}\cong\un\d_{\cL}
\end{equation*}
is nonzero. 
\end{claim}
\begin{proof}[Proof of Claim]
We prove the claim by induction on $n$. For $n=2$, we take the degree zero stalks of the above maps at the identity element $\dot e\in G$, the map becomes (where $i:\{\dot e\}\incl G$ is the inclusion)
\begin{eqnarray}\label{stalk1}
&&i^{*}\uTh^{\c}_{\cL}\xr{\upH^{0}i^{*}\un\mu^{2}}\upH^{0}i^{*}(\uTh^{\c}_{\cL}\star\uTh^{\c}_{\cL})
\cong \cohog{0}{(G/B)_{k}, \inv^{*}\uTh^{\c}_{\cL}\ot\uTh^{\c}_{\cL}}\\
\notag &\xr{\res}& \upH^{0}i^{*}(\inv^{*}\uTh^{\c}_{\cL}\ot\uTh^{\c}_{\cL})\\
\notag &\cong& i^{*}\uTh^{\c}_{\cL}\ot i^{*}\uTh^{\c}_{\cL}\cong i^{*}\un\d_{\cL}\ot i^{*}\un\d_{\cL}=\Qlbar=i^{*}\un\d_{\cL}.
\end{eqnarray}
Here $\upH^{0}i^{*}\un\mu^{2}$ is an isomorphism since $\uTh^{\c}_{\cL}\star\uTh^{\c}_{\cL}$ is a direct sum of $\uTh^{\c}_{\cL}$ and $\uTh^{\c}_{\cL}[-j]$ for $j>0$ by Proposition \ref{p:conv Th}, and $i^{*}\uTh^{\c}_{\cL}$ is concentrated in degree 0 by Proposition \ref{p:stalk Th}. The second isomorphism follows from the definition of the convolution, where $\inv:G\to G$ is the inversion map. The map ``\res'' is the restriction map to $\{\dot e\}$. To prove the claim, we show that the composition \eqref{stalk1} is an isomorphism. It suffices to show that $\res$ is an isomorphism. Let $\cF=\inv^{*}\uTh^{\c}_{\cL}\ot\uTh^{\c}_{\cL}\in D^{b}(B_{k}\bs G_{k}/B_{k})$. By Proposition \ref{p:stalk Th}, the stalk of $\cF$ along the cell $X_{w,k}=G_{w,k}/B_{k}$ vanishes if $w\notin \WL$, and if $w\in \WL$, it lies in degree $2(-\ell(w)+\ell_{\cL}(w))$. We compute $\cohog{*}{(G/B)_{k},\cF}$ using the stratification $G/B=\sqcup_{w\in W}X_{w}$,  the contribution of $X_{w}$ is $0$ if $w\notin \WL$ and is  $\cohoc{*}{X_{w,k}, \Qlbar[2\ell(w)-2\ell_{\cL}(w)]}\cong \Qlbar[-2\ell_{\cL}(w)]$ for $w\in \WL$. This shows that the only contribution to $\cohog{0}{(G/B)_{k},\cF}$ is from the point stratum $X_{e}$, hence $\res$ is an isomorphism. This proves the case $n=2$.

Suppose the Claim is proved for $n-1$. Up to a nonzero scalar, $\un\mu^{n}$ is equal to the composition
\begin{equation*}
\uTh^{\c}_{\cL}\xr{\un\mu^{2}}\uTh^{\c}_{\cL}\star\uTh^{\c}_{\cL}\xr{\un\mu^{n-1}\star\id}(\uTh^{\c}_{\cL})^{\star (n-1)}\star\uTh^{\c}_{\cL}.
\end{equation*}
Composing with $\un\e^{\star n}_{\cL}$, we see that up to a nonzero scalar, $\un\e^{\star n}_{\cL}\c\un\mu^{n}$ can be rewritten as the composition
\begin{equation*}
\uTh^{\c}_{\cL}\xr{\un\mu^{2}}\uTh^{\c}_{\cL}\star\uTh^{\c}_{\cL}\xr{(\un\e^{\star (n-1)}_{\cL}\c\un\mu^{n-1})\star\id}\un\d_{\cL}\star\uTh^{\c}_{\cL}\xr{\id\star\un\e_{\cL}}\un\d_{\cL}\star\un\d_{\cL}\cong\un\d_{\cL}.
\end{equation*}
By inductive hypothesis, $\un\e^{\star (n-1)}_{\cL}\c\un\mu^{n-1}$ is a nonzero multiple of $\un\e_{\cL}$, therefore the above composition is, up to a nonzero scalar, $\un\e_{\cL}^{\star 2}\c\un\mu^{2}$, which is nonzero by the $n=2$ case proved above. This completes the induction step.
\end{proof}

We continue with the proof of Proposition \ref{p:coalg}.  Co-associativity of $\mu^{2}_{\cL}$ follows by the uniqueness of $\mu^{3}_{\cL}$. It remains to check the counit axioms, i.e., the compositions
\begin{eqnarray}\label{right unit}
\Th^{\c}_{\cL}\xr{\mu^{2}_{\cL}}\Th^{\c}_{\cL}\star\Th^{\c}_{\cL}\xr{\id\star\e_{\cL}}\Th^{\c}_{\cL}\star\d_{\cL}\cong\Th^{\c}_{\cL},\\
\label{left unit}\Th^{\c}_{\cL}\xr{\mu^{2}_{\cL}}\Th^{\c}_{\cL}\star\Th^{\c}_{\cL}\xr{\e_{\cL}\star\id}\d_{\cL}\star\Th^{\c}_{\cL}\cong\Th^{\c}_{\cL}
\end{eqnarray}
are the identity maps.  Composing \eqref{right unit} with $\e_{\cL}$ we recover the map $\Th^{\c}_{\cL}\xr{\mu^{2}_{\cL}}\Th^{\c}_{\cL}\star\Th^{\c}_{\cL}\xr{\e^{\star2}_{\cL}}\d_{\cL}$ which is equal to $\e_{\cL}$ by construction. This forces \eqref{right unit} to be the identity because  the endomorphisms of $\Th^{\c}_{\cL}$ are  scalars. The same argument works to show that \eqref{left unit} is the identity map.
\end{proof}

%%%%%%%% OK %%%%%%%%%%
\begin{defn}\label{d:can IC} For $w\in \WL$, define
\begin{eqnarray*}
&&C(w)_{\cL}^{\da}:=i^{*}_{w}\Th^{\c}_{\cL}\j{\ell_{\cL}(w)},\\
&&\D(w)_{\cL}^{\da}:=i_{w!}C(w)_{\cL}^{\da}, \quad \nb(w)_{\cL}^{\da}:=i_{w*}C(w)_{\cL}^{\da}, \quad \IC(w)_{\cL}^{\da}:=i_{w!*}C(w)_{\cL}^{\da}.
\end{eqnarray*}
\end{defn}
By Proposition \ref{p:stalk Th}, $\om C(w)_{\cL}^{\da}\cong \uC(w)_{\cL}$. By Proposition \ref{p:parity purity}(1), $C(w)_{\cL}^{\da}$ is pure of weight zero. Therefore $\om\IC(w)_{\cL}^{\da}\cong \uIC(w)_{\cL}$ and $\IC(w)_{\cL}^{\da}$ is pure of weight zero. We call $\IC(w)^{\da}_{\cL}$ a {\em rigidified IC sheaf}. Note there is a canonical isomorphism $\Th_{\cL}^{\c}\j{N_{\cL}}\cong\IC(w_{\cL,0})^{\da}_{\cL}$.

%%%%%% OK %%%%%%%
\begin{lemma}\label{l:map Th to IC}
There is a unique map $\th^{\da}_{w}: \Th^{\c}_{\cL}\to \IC(w)_{\cL}^{\da}\j{-\ell_{\cL}(w)}$ whose restriction under $i^{*}_{w}$ is the identity map of $C(w)^{\da}_{\cL}\j{-\ell_{\cL}(w)}$.
\end{lemma}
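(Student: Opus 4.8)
The plan is to deduce the statement from the computation of a single $\Hom$ space. Since $i^{*}_{w}\Th^{\c}_{\cL}\cong C(w)^{\da}_{\cL}\j{-\ell_{\cL}(w)}$ by definition and $i^{*}_{w}\IC(w)^{\da}_{\cL}=i^{*}_{w}i_{w!*}C(w)^{\da}_{\cL}\cong C(w)^{\da}_{\cL}$, restriction along $i_{w}$ yields a map
\begin{equation*}
i^{*}_{w}\colon \Hom\bigl(\Th^{\c}_{\cL},\,\IC(w)^{\da}_{\cL}\j{-\ell_{\cL}(w)}\bigr)\ \longrightarrow\ \Hom\bigl(C(w)^{\da}_{\cL}\j{-\ell_{\cL}(w)},\,C(w)^{\da}_{\cL}\j{-\ell_{\cL}(w)}\bigr),
\end{equation*}
whose target is $\Qlbar\cdot\id$ because $\om C(w)^{\da}_{\cL}\cong\uC(w)_{\cL}$ and $\End(\uC(w)_{\cL})\cong\upH^{0}_{\G(w)_{k}}(\pt_{k})=\Qlbar$ by Lemma~\ref{l:one cell}. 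I will show that $i^{*}_{w}$ is an isomorphism; then $\th^{\da}_{w}$ is forced to be the unique preimage of $\id$, which simultaneously yields existence, the restriction property, and uniqueness. By the convention \eqref{Homconvention} and $\om\IC(w)^{\da}_{\cL}\cong\uIC(w)_{\cL}$, this amounts to showing that $\Homb(\om\Th^{\c}_{\cL}\j{\ell_{\cL}(w)},\uIC(w)_{\cL})$ is one-dimensional in cohomological degree $0$, vanishes in negative degrees, and that the degree-$0$ part is detected by $i^{*}_{w}$.

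For this I would apply Corollary~\ref{c:filM}. Both $\om\Th^{\c}_{\cL}\j{\ell_{\cL}(w)}$ — a shift of the simple perverse sheaf $\uIC(w_{\cL,0})_{\cL}$ — and $\uIC(w)_{\cL}$ are semisimple complexes, so there is an increasing filtration $F$ on $\Homb(\om\Th^{\c}_{\cL}\j{\ell_{\cL}(w)},\uIC(w)_{\cL})$ indexed by $v\in W_{\cL}$ with $\Gr^{F}_{v}\cong\Homb\bigl(i^{*}_{v}(\om\Th^{\c}_{\cL}\j{\ell_{\cL}(w)}),\,i^{!}_{v}\uIC(w)_{\cL}\bigr)$. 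For $v\notin\WL$ one has $i^{*}_{v}\uIC(w_{\cL,0})_{\cL}=0$ by Lemma~\ref{l:stalk order}, so $\Gr^{F}_{v}=0$. For $v\in\WL$, Proposition~\ref{p:stalk Th} applied to the neutral block (where $w_{\b}=w_{\cL,0}$ and $\ell_{\b}=\ell_{\cL}$) gives $i^{*}_{v}(\om\Th^{\c}_{\cL}\j{\ell_{\cL}(w)})\cong\uC(v)_{\cL}[\ell_{\cL}(w)-\ell_{\cL}(v)]$; on the other side $i^{!}_{v}\uIC(w)_{\cL}=0$ unless $v\le_{\WL}w$ (Lemma~\ref{l:stalk order}), $i^{!}_{w}\uIC(w)_{\cL}\cong\uC(w)_{\cL}$, and for $v<_{\WL}w$ the support condition for middle extensions ($i^{!}_{v}\uIC(w)_{\cL}\in{}^{p}D^{\ge 1}$) together with Proposition~\ref{p:parity purity}(2) forces $i^{!}_{v}\uIC(w)_{\cL}$ to be a direct sum of shifts $\uC(v)_{\cL}[n]$ with $n\le -1$. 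Since $\Homb(\uC(v)_{\cL},\uC(v)_{\cL})\cong\upH^{*}_{\G(v)_{k}}(\pt_{k})$ is concentrated in degrees $\ge 0$, for $v<_{\WL}w$ the piece $\Gr^{F}_{v}$ lives in degrees $\ge\ell_{\cL}(w)-\ell_{\cL}(v)+1\ge 2$ (using the Bruhat inequality $\ell_{\cL}(v)<\ell_{\cL}(w)$ in the Coxeter group $\WL$), hence vanishes in degrees $\le 0$, whereas $\Gr^{F}_{w}\cong\upH^{*}_{\G(w)_{k}}(\pt_{k})$ contributes exactly $\Qlbar$ in degree $0$ and nothing in negative degrees. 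Thus the source $\Hom$ space is one-dimensional, concentrated in degree $0$, and equals $\Gr^{F}_{w}$ in that degree; inspecting the filtration in Corollary~\ref{c:filM}, whose top graded piece is cut out by the canonical map $\uIC(w)_{\cL}\to\unb(w)_{\cL}$ followed by $(i^{*}_{w},i_{w*})$-adjunction, the corresponding projection is precisely $i^{*}_{w}$, so $i^{*}_{w}$ is an isomorphism.

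It remains to pass back to the mixed category. Let $\wt\th^{\da}_{w}$ be the unique element of $\Hom(\Th^{\c}_{\cL},\IC(w)^{\da}_{\cL}\j{-\ell_{\cL}(w)})$ with $i^{*}_{w}\wt\th^{\da}_{w}=\id$. Being the preimage of the $\Fr$-fixed element $\id$ under the $\Fr$-equivariant isomorphism $i^{*}_{w}$, it is $\Fr$-fixed; and $\Hom(\Th^{\c}_{\cL},\IC(w)^{\da}_{\cL}\j{-\ell_{\cL}(w)}[-1])=0$ by the degree-$(-1)$ part of the computation above. Hence, exactly as in the proof of Proposition~\ref{p:coalg}, $\wt\th^{\da}_{w}$ lifts uniquely to a morphism $\th^{\da}_{w}\colon\Th^{\c}_{\cL}\to\IC(w)^{\da}_{\cL}\j{-\ell_{\cL}(w)}$ in ${}_{\cL}\cD^{\c}_{\cL}$ with $i^{*}_{w}\th^{\da}_{w}=\id$; any morphism with this restriction property has image $\wt\th^{\da}_{w}$ under $\om$ and then agrees with $\th^{\da}_{w}$ by uniqueness of the lift. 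The step I expect to require the most care is the cohomological-degree bookkeeping in the middle paragraph: one must combine Proposition~\ref{p:stalk Th}, Lemma~\ref{l:stalk order} and the support conditions for middle extensions so as to pin every $\Gr^{F}_{v}$ with $v\ne w$ into strictly positive degrees — the load-bearing input being $\ell_{\cL}(v)<\ell_{\cL}(w)$ for $v<_{\WL}w$ — and to verify that the projection onto $\Gr^{F}_{w}$ coincides with the restriction functor $i^{*}_{w}$.
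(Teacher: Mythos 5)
Your proposal is correct and follows essentially the same route as the paper's proof: both use the filtration of Corollary \ref{c:filM} on $\Homb(\Th^{\c}_{\cL},\IC(w)^{\da}_{\cL}\j{-\ell_{\cL}(w)})$, compute $\Gr^{F}_{v}$ via Proposition \ref{p:stalk Th} and Lemma \ref{l:stalk order}, and push every piece with $v<_{\WL}w$ into degrees $\ge 2$ using $\ell_{\cL}(v)<\ell_{\cL}(w)$ together with the costalk condition for middle extensions, so that the degree-zero part is exactly $\Gr^{F}_{w}$ and is detected by $i^{*}_{w}$. Your final paragraph lifting the resulting $\Fr$-fixed class to a genuine morphism in the mixed category is a small extra refinement the paper leaves implicit, but it is consistent with how the object is used later.
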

\begin{proof}
By Corollary \ref{c:filM}, there is a filtration on $M=\Homb(\Th^{\c}_{\cL}, \IC(w)^{\da}_{\cL}\j{-\ell_{\cL}(w)})$ indexed by $\{v\in\WL;v\le w\}$ such that $\Gr^{F}_{v}M\cong \Homb(i^{*}_{v}\Th^{\c}_{\cL}, i^{!}_{v}\IC(w)^{\da}_{\cL}\j{-\ell_{\cL}(w)})$ as graded $R\ot R$-modules.  By Proposition \ref{p:stalk Th}, $\om\Gr^{F}_{v}M\cong \Homb(\uC(v)_{\cL}, i_{v}^{!}\uIC(w)_{\cL})[\ell_{\cL}(v)-\ell_{\cL}(w)]$. If $v<w$, $i_{v}^{!}\uIC(w)_{\cL}$ lies in perverse degrees $>0$; moreover, this costalk is zero unless $v<_{\WL}w$ by Lemma \ref{l:stalk order} (in particular $\ell_{\cL}(v)<\ell_{\cL}(w)$). These imply that $\Gr^{F}_{v}M$ is concentrated in degrees $\ge2$ for $v<w$. Therefore the quotient map $M\to \Gr^{F}_{w}M$ is an isomorphism in degrees $\le1$, and in particular in degree $0$. Now $\Gr^{F}_{w}M=\Homb(i^{*}_{w}\Th^{\c}_{\cL}, i_{w}^{*}\IC(w)^{\da}_{\cL}\j{-\ell_{\cL}(w)})\cong \Homb(C(w)^{\da}_{\cL}\j{-\ell_{\cL}(w)}, C(w)^{\da}_{\cL}\j{-\ell_{\cL}(w)})$ and the quotient map $M\to \Gr^{F}_{w}M$ is induced by $i^{*}_{w}$. Therefore there is a unique $\th^{\da}_{w}\in M^{0}$ mapping to $\id\in \End(C(w)^{\da}_{\cL}\j{-\ell_{\cL}(w)})=(\Gr^{F}_{w}M)^{0}$.
\end{proof}

%%%%%% OK %%%%%%%%
\begin{lemma}\label{l:ICda es}
\begin{enumerate}
\item There is a unique isomorphism $\io_{e}: \IC(e)^{\da}_{\cL}\cong\d_{\cL}$ such that $\io_{e}\c\th^{\da}_{e}=\e_{\cL}$.
\item Let $s\in W$ be a simple reflection and $s\in \WL$. Recall the object $\IC(s)_{\cL}$ introduced in \S\ref{ss:inert s}. Then there is a unique isomorphism $\io_{s}: \IC(s)^{\da}_{\cL}\cong\IC(s)_{\cL}$ such that the composition $\io_{s}\c\th^{\da}_{s}: \Th^{\c}_{\cL}\to \IC(s)_{\cL}\j{-1}$ restricts to the identity map on the stalks at $e\in G$. (Recall the stalks of both $\Th^{\c}_{\cL}$ and $\IC(s)_{\cL}\j{-1}$ are equipped with an isomorphism with the trivial $\Fr$-module $\Qlbar$.)
\end{enumerate}
\end{lemma}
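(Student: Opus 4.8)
\textbf{Proof plan for Lemma \ref{l:ICda es}.}

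The plan is to handle the two parts by the same mechanism: both $\IC(e)^\da_\cL=\d_\cL$ and $\IC(s)^\da_\cL$ (resp.\ $\IC(s)_\cL$) are rigid, in the sense that their automorphism groups in ${}_\cL\cD^\c_\cL$ are reduced to scalars, so an isomorphism is pinned down uniquely once we normalize it on a stalk. First, for part (1): by Definition \ref{d:can IC} and the computation $i^*_e\Th^\c_\cL\cong C(\dot e)\ot V$ from \S\ref{ss:rig neutral} (which in fact gives $i^*_e\Th^\c_\cL\cong\uC(e)_\cL$ after the twist), we have $C(e)^\da_\cL=i^*_e\Th^\c_\cL$, hence $\D(e)^\da_\cL=\nb(e)^\da_\cL=\IC(e)^\da_\cL$ are all supported at the identity coset and equal to the perverse sheaf corresponding to the rank-one $\Fr$-module $i^*_e\Th^\c_\cL$. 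Since $i^*_e\Th^\c_\cL$ is pure of weight zero (Proposition \ref{p:parity purity}(1)) and one-dimensional, it is canonically $\Qlbar$ via the trivialization of the stalk coming from $\e_\cL:\Th^\c_\cL\to\d_\cL$ — more precisely, $\th^\da_e:\Th^\c_\cL\to\IC(e)^\da_\cL$ restricts on $i^*_e$ to the identity of $C(e)^\da_\cL$, and $\e_\cL$ also factors through $i_{e*}i^*_e\Th^\c_\cL=\IC(e)^\da_\cL$ up to a scalar; there is a unique rescaling $\io_e$ of this factorization with $\io_e\c\th^\da_e=\e_\cL$, and one checks it is an isomorphism because both sides are the IC extension of the same one-dimensional stalk.

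For part (2): by the previous lemma there is a canonical map $\th^\da_s:\Th^\c_\cL\to\IC(s)^\da_\cL\j{-1}$ restricting to the identity on $i^*_s$. On the other hand, $\om\IC(s)^\da_\cL\cong\uIC(s)_\cL\cong\om\IC(s)_\cL$ (the first by the remark after Definition \ref{d:can IC}, the second by \S\ref{ss:inert s}), so $\IC(s)^\da_\cL$ and $\IC(s)_\cL$ are two mixed lifts of the same non-mixed simple perverse sheaf, both pure of weight zero; hence they are isomorphic in ${}_\cL\cD^\c_\cL$, and any two isomorphisms differ by a scalar since $\End(\IC(s)_\cL)=\Qlbar$. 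To pin down the scalar I would use the stalks at $e\in G$: by Proposition \ref{p:stalk Th} (applied with $\b=\WL$, $w=e$) the stalk $i^*_e\IC(s)^\da_\cL\j{-1}$ is one-dimensional and pure of weight zero, and via $\th^\da_s$ it is identified with $i^*_e\Th^\c_\cL=\Qlbar$; similarly, by \S\ref{ss:inert s} the stalk $i^*_e\IC(s)_\cL\j{-1}$ is equipped with a canonical trivialization $\cong\Qlbar$ (coming from the trivialization of $\wt\cL$ at $e\in L_s$). So among the $\Qlbar^\times$-torsor of isomorphisms $\IC(s)^\da_\cL\isom\IC(s)_\cL$ there is exactly one, $\io_s$, for which the composite $\io_s\c\th^\da_s$ matches these two trivializations on $i^*_e$; uniqueness is immediate from $\End=\Qlbar$, and it remains only to note $\io_s$ really is an isomorphism (it is, since it is a nonzero map between simple objects with the same image under $\om$).

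The main obstacle I anticipate is not any of the uniqueness claims — those are formal consequences of the endomorphism rings being $\Qlbar$ — but rather verifying the \emph{existence} and \emph{compatibility of trivializations}: namely that the natural map from $\Th^\c_\cL$ to the $!*$-extension of its stalk at $e$ (through which $\th^\da_e,\th^\da_s$ are built) is nonzero, hence that the resulting $\io_e,\io_s$ are genuinely isomorphisms and not the zero map. For part (1) this is the statement that $\e_\cL\neq0$ factors (up to scalar) through $i_{e*}i^*_e\Th^\c_\cL$, which follows because $\e_\cL$ by definition is adjoint to the identity of $i^*_e\Th^\c_\cL$ after the twist in \S\ref{ss:rig neutral}. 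For part (2) the subtle point is to confirm that the two a priori unrelated trivializations of $i^*_e\IC(s)_\cL\j{-1}$ — one transported from $\Th^\c_\cL$ via $\th^\da_s$, the other intrinsic from $\wt\cL$ — are both nonzero elements of the same one-dimensional space, so that a rescaling identifying them exists; this is where one uses purity of weight zero of $i^*_e\IC(s)^\da_\cL$ (Proposition \ref{p:parity purity}(1)) together with the explicit description $\IC(s)_\cL=i_{\le s*}\wt\cL\j{1}$ from \S\ref{ss:inert s}, guaranteeing the intrinsic stalk is $\Qlbar\j{1}\j{-1}=\Qlbar$ canonically.
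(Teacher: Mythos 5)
Part (1) of your proposal is correct and is essentially the paper's argument: the adjoint of $\e_{\cL}$ is a nonzero, hence invertible, map between the one-dimensional stalks $i^{*}_{e}\Th^{\c}_{\cL}$ and $C(\dot e)_{\cL}$, and this induces $\io_{e}$ with the required normalization.

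Part (2) has a genuine gap. You assert that since $\IC(s)^{\da}_{\cL}$ and $\IC(s)_{\cL}$ are both pure weight-zero mixed lifts of the same non-mixed simple perverse sheaf $\uIC(s)_{\cL}$, ``they are isomorphic in ${}_{\cL}\cD^{\c}_{\cL}$.'' This inference is invalid: two pure weight-zero objects with isomorphic images under $\om$ can differ by tensoring with a one-dimensional weight-zero $\Fr$-module (e.g.\ with Frobenius acting by $-1$; compare \eqref{tensor Lt}), and such objects are not isomorphic in the mixed category, since $\hom(\cF,\cF')=\Hom(\om\cF,\om\cF')^{\Fr}=0$ when Frobenius acts nontrivially on the one-dimensional $\Hom$. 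So the \emph{existence} of any isomorphism $\IC(s)^{\da}_{\cL}\cong\IC(s)_{\cL}$, not merely its normalization, is what has to be proved. Relatedly, your normalization step takes for granted that $i^{*}_{e}\th^{\da}_{s}$ is nonzero, but the defining property of $\th^{\da}_{s}$ only controls its restriction to the open stratum $G_{s}$; a priori the induced map on the stalk at the closed point $e$ could vanish, and purity of the two one-dimensional stalks does not rule this out.

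Both issues are resolved by the step your plan is missing, which is the crux of the paper's proof: by Lemma \ref{l:IC descend}, $\Th^{\c}_{\cL}=\pi_{s}^{*}\ov\Th$, so its restriction to the closed subvariety $G_{\le s}=P_{s}$ is a rank-one local system, which the trivialization $\e_{\cL}$ of the stalk at $\dot e$ identifies with $\wt\cL$. The adjoint of $\th^{\da}_{s}$ is then a map $\wt\cL\cong i^{*}_{\le s}\Th^{\c}_{\cL}\to i^{*}_{\le s}\IC(s)^{\da}_{\cL}\j{-1}$ that is the identity over $G_{s}$; since both sides are (middle extensions of) rank-one local systems on the smooth irreducible $P_{s}$, a map nonzero on the open stratum is an isomorphism everywhere, in particular on the stalk at $e$. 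This single computation simultaneously produces the isomorphism $\IC(s)^{\da}_{\cL}\cong i_{\le s*}\wt\cL\j{1}=\IC(s)_{\cL}$ and pins down its normalization at $e$. Without invoking the descent along $\pi_{s}$ (or some equivalent control of $\Th^{\c}_{\cL}|_{P_{s}}$), purity alone will not close the gap.
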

\begin{proof}
(1) The rigidification $\e_{\cL}: \Th^{\c}_{\cL}\to \d_{\cL}$ gives by adjunction a nonzero map $C(e)^{\da}_{\cL}=i^{*}_{e}\Th^{\c}_{\cL}\to C(\dot e)_{\cL}$, which has to be an isomorphism. This induces the desired isomorphism $\io_{e}$. The uniqueness part is clear.

(2) By Lemma \ref{l:IC descend},  we can write $\Th^{\c}_{\cL}=\pi_{s}^{*}\ov\Th$ for some shifted perverse sheaf $\ov\Th\in {}_{\cL}\cD_{\wt\cL}$. Since the stalk of $\Th^{\c}_{\cL}$ at $\dot e$ is the trivial $\Fr$-module by the rigidification $\e_{\cL}$, we have $\ov\Th|_{P_{s}}\cong \wt\cL$, and hence $i^{*}_{\le s}\Th^{\c}_{\cL}\cong \wt\cL\in {}_{\cL}\cD(\le s)_{\cL}$.  By adjunction $\th^{\da}_{s}$ gives a nonzero map $\wt\cL\cong i^{*}_{\le s}\Th^{\c}_{\cL}\to i^{*}_{\le s}\IC(s)^{\da}_{\cL}\j{-1}$, which has to be an isomorphism. This induces an isomorphism $\IC(s)^{\da}_{\cL}\cong i_{\le s *}\wt\cL\j{1}=\IC(s)_{\cL}$. The uniqueness of $\io_{s}$ is clear.
\end{proof}

%%%%%% ok %%%%%%%
\subsection{Rigidified maximal IC sheaves in general} Let $\cL,\cL'\in \fo$ and $\b\in{}_{\cL'}\un W_{\cL}$.   Recall we have defined the rigidified minimal IC sheaves $\IC(w^{\b})^{\da}_{\cL}$ in \S\ref{ss:rig min}. We define the {\em rigidified maximal IC sheaf} in block $\b$ to be
\begin{equation*}
{}_{\cL'}\Th^{\b}_{\cL}:=\Th^{\c}_{\cL'}\star\IC(w^{\b})^{\da}_{\cL}.
\end{equation*}
Then $\om({}_{\cL'}\Th^{\b}_{\cL})\cong \un\IC(w^{\b})_{\cL}[N_{\cL}]$. It carries a canonical nonzero map 
\begin{equation*}
{}_{\cL'}\e^{\b}_{\cL}: {}_{\cL'}\Th^{\b}_{\cL}=\Th^{\c}_{\cL'}\star\IC(w^{\b})^{\da}_{\cL}\xr{\e_{\cL'}\star\id} \d_{\cL'}\star\IC(w^{\b})^{\da}_{\cL}=\IC(w^{\b})^{\da}_{\cL}.
\end{equation*}
The pair $({}_{\cL'}\Th^{\b}_{\cL},{}_{\cL'}\e^{\b}_{\cL})$ is unique up to a unique isomorphism.

We consider a slightly more general notion of rigidified maximal IC sheaves that does not require rigidifying the minimal IC sheaves first. Let $\xi\in {}_{\cL'}\fP^{\b}_{\cL}$ be any minimal IC sheaf in the block $\b$  (see \S\ref{ss:min}). Let
\begin{equation*}
\Th(\xi):=\Th^{\c}_{\cL}\star\xi.
\end{equation*}
Then $\Th(\xi)$ is equipped with a nonzero map
\begin{equation*}
\e(\xi):=\e_{\cL'}\star\id_{\xi}: \Th(\xi)=\Th^{\c}_{\cL'}\star\xi\to \d_{\cL'}\star\xi\cong\xi.
\end{equation*}
The pair $(\Th(\xi),\e(\xi))$ has trivial automorphism group. We denote
\begin{equation*}
(\uTh(\xi), \un\e(\xi)):=\om(\Th(\xi), \e(\xi))\in {}_{\cL'}\un\cD^{\b}_{\cL}.
\end{equation*}
By Proposition \ref{p:min th}(3),   $\uTh(\xi)\cong \uIC(w_{\b})_{\cL}[-N_{\cL}]$. 

%%%%%% OK %%%%%%
\begin{lemma}\label{l:switch Th} For $\xi\in {}_{\cL'}\fP^{\b}_{\cL}$,  there is a unique isomorphism
\begin{equation*}
\t(\xi): \xi\star\Th^{\c}_{\cL}\isom\Th^{\c}_{\cL'}\star\xi
\end{equation*}
making the following diagram commutative
\begin{equation}\label{switch Th}
\xymatrix{     \xi\star\Th^{\c}_{\cL}\ar[d]^{\id\star\e_{\cL}}\ar[rr]^-{\t(\xi)} & & \Th^{\c}_{\cL'}\star\xi\ar[d]^{\e_{\cL'}\star\id}   \\
\xi\star\d_{\cL}\ar[r]^-{\sim} & \xi\ar[r]^-{\sim} & \d_{\cL'}\star\xi}
\end{equation}
\end{lemma}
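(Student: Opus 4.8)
The plan is to establish existence and uniqueness of $\t(\xi)$ by the same rigidification bootstrap used throughout \S\ref{ss:rig neutral}: first produce the isomorphism over $k$ using a support/degree argument, then show it is Frobenius-equivariant by uniqueness, and finally lift it back to the mixed category because the relevant negative-degree $\Hom$ vanishes. Concretely, I would first observe that both $\xi\star\Th^{\c}_{\cL}$ and $\Th^{\c}_{\cL'}\star\xi$ lie in the block ${}_{\cL'}\cD^{\b}_{\cL}$: this is Proposition \ref{p:conv block} applied to the blocks $\b$ and $\WL$ (on either side). By Proposition \ref{p:min th}(3), $\om\xi\cong\uIC(w^{\b})_{\cL}$ with $w^{\b}$ minimal, and convolving with $\Th^{\c}_{\cL}$ (resp.\ $\Th^{\c}_{\cL'}$) on either side is, after a shift by $[N_{\cL}]$, the operation producing the maximal IC sheaf in the block; combined with Proposition \ref{p:min th}(2) (the functors $(-)\star\IC(\dw^{\b})$ and its inverse are equivalences) and Proposition \ref{p:conv Th}, one gets $\om(\xi\star\Th^{\c}_{\cL})\cong\uIC(w_{\b})_{\cL}[-N_{\cL}]\cong\om(\Th^{\c}_{\cL'}\star\xi)$. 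In particular $\Hom_{{}_{\cL'}\un\cD^{\b}_{\cL}}(\uIC(w_{\b})_{\cL}[-N_{\cL}],\uIC(w_{\b})_{\cL}[-N_{\cL}])=\Qlbar$, so over $k$ there is a one-dimensional space of candidate maps and any nonzero one is an isomorphism.

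Next I would pin down the scalar. Apply $\om$ to the square \eqref{switch Th}: the left vertical map is $\id\star\un\e_{\cL}$ and the right vertical map is $\un\e_{\cL'}\star\id$, both nonzero. I claim the composite $\xi\star\Th^{\c}_{\cL}\xrightarrow{\id\star\e_{\cL}}\xi$ (using $\xi\star\d_{\cL}\cong\xi$) is itself nonzero, hence a generator of $\Hom(\uTh(\xi)$-shifted-into-$\xi$$)$ — indeed this is exactly $\e(\xi')$ for the minimal IC sheaf $\xi'$ on the other side, or more simply, it is obtained from $\e_{\cL}\neq 0$ by convolving with the invertible $\xi$ on the left, so it stays nonzero since $(-)\star\xi$-type operations and their one-sided inverses are equivalences by Proposition \ref{p:min th}(2) and Lemma \ref{l:invertible}. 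The same holds for $\un\e_{\cL'}\star\id_\xi$. Since $\Hom_{{}_{\cL'}\un\cD^{\b}_{\cL}}(\om(\xi\star\Th^{\c}_{\cL}),\xi)$ is one-dimensional (by the degree computation above, $\om(\xi\star\Th^{\c}_{\cL})$ is a shifted single IC and $\xi$ is the unique IC at the top of the block), both bottom-row-composites are nonzero multiples of a single generator, so there is a \emph{unique} scalar multiple $\un\t(\xi)$ of the chosen isomorphism making the diagram commute over $k$.

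Then I would promote $\un\t(\xi)$ to the mixed category. Uniqueness of $\un\t(\xi)$ forces $\Fr(\un\t(\xi))=\un\t(\xi)$, i.e.\ it is a morphism in the mixed category up to the ambiguity of lifting; the obstruction to lifting an $\Fr$-fixed isomorphism from ${}_{\cL'}\un\cD^{\b}_{\cL}$ to ${}_{\cL'}\cD^{\b}_{\cL}$ lies in $\Hom(\xi\star\Th^{\c}_{\cL},(\Th^{\c}_{\cL'}\star\xi)[-1])$, which vanishes for perverse-degree reasons exactly as in the proof of Proposition \ref{p:coalg} and in \S\ref{ss:rig neutral} (both objects are shifted perverse sheaves concentrated in the same perverse degree, so there are no negative self-$\mathrm{Ext}$'s). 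Hence $\un\t(\xi)$ lifts uniquely to $\t(\xi)$ in ${}_{\cL'}\cD^{\b}_{\cL}$, and the square \eqref{switch Th} commutes in the mixed category because it commutes after applying $\om$ and the relevant $\Hom$ group injects under $\om$ in the degree where these maps live (again degree reasons, since the difference of the two composites would be a map landing in a perverse degree with no room).

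The main obstacle I anticipate is not any single step but making the "$\Hom$ is one-dimensional, bottom composite is nonzero" bookkeeping airtight: one must be careful that the isomorphisms $\xi\star\d_{\cL}\cong\xi$ and $\d_{\cL'}\star\xi\cong\xi$ are the canonical monoidal-unit ones, that $\e(\xi)$-type maps on both sides are genuinely nonzero (which uses the invertibility of $\xi$ under convolution from Proposition \ref{p:min th}(2), not just that $\xi$ is a minimal IC), and that the perverse-amplitude vanishing $\Hom(-,-[-1])=0$ really applies to $\xi\star\Th^{\c}_{\cL}$ — here one needs $\xi\star\Th^{\c}_{\cL}[N_\cL]$ to be a genuine perverse sheaf, which follows from Proposition \ref{p:min th}(3) identifying $\om$ of it with $\uIC(w_\b)_\cL$ and the purity from Proposition \ref{p:parity purity}. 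Once these are in place the argument is a routine instance of the rigidification technique already deployed for $\Th^{\c}_{\cL}$ itself.
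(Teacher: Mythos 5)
Your proposal is correct and follows essentially the same route as the paper: identify both sides with $\uIC(w_{\b})_{\cL}[-N_{\cL}]$ via Proposition \ref{p:min th}, use that $(-)\star\xi$ is an equivalence to see $\Hom(\xi\star\Th^{\c}_{\cL},\xi)\cong\Hom(\Th^{\c}_{\cL},\d_{\cL})\cong\Qlbar$ so the commutativity requirement pins down the scalar uniquely, and then lift by Frobenius-invariance plus vanishing of the degree $-1$ Hom. The only cosmetic slip is calling $\xi$ the IC ``at the top of the block'' (it is the minimal one), but your actual justification via the invertibility of $\xi$ is the one the paper uses.
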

\begin{proof}
By Proposition \ref{p:min th},  both $ \om(\xi\star\Th^{\c}_{\cL})$ and $\om(\Th^{\c}_{\cL'}\star\xi) $ are isomorphic to $\uIC(w_{\b})_{\cL}[-N_{\cL}]$, therefore isomorphisms $\un\t:\om(\xi\star\Th^{\c}_{\cL})\isom \om(\Th^{\c}_{\cL'}\star\xi) $ are unique up to a nonzero scalar. Moreover,  since $\xi\star(-)$ is an equivalence, $\Hom(\xi\star\Th^{\c}_{\cL}, \xi)\cong\Hom(\Th^{\c}_{\cL}, \d_{\cL})\cong\Qlbar$.   Therefore there is a unique $\un\t$  making the non-mixed version of the diagram \eqref{switch Th} commutative. Uniqueness of $\un\t$ implies that it is $\Fr$-invariant and lifts to a unique isomorphism $\t(\xi)$ in ${}_{\cL'}\cD^{\b}_{\cL}$.
\end{proof}

\begin{cor}\label{c:Th by left wb} For any block $\b\in {}_{\cL'}\un W_{\cL}$, there is a unique isomorphism
\begin{equation*}
\IC(w^{\b})^{\da}_{\cL}\star\Th^{\c}_{\cL}\isom {}_{\cL'}\Th^{\b}_{\cL}=\Th^{\c}_{\cL'}\star \IC(w^{\b})^{\da}_{\cL} 
\end{equation*}
that intertwines the canonical maps of both sides to $\IC(w^{\b})^{\da}_{\cL}$. In other words, we can alternatively define ${}_{\cL'}\Th^{\b}_{\cL}$ to be $\IC(w^{\b})^{\da}_{\cL}\star\Th^{\c}_{\cL}$.
\end{cor}

%%%%% ok %%%%%%%%
Let $\cL''\in\fo$,  $\g\in{}_{\cL''}\un W_{\cL'}$ and $\y\in {}_{\cL''}\fP^{\g}_{\cL'}$. To save notation, we will abbreviate $\y\star\xi$ by $\y\xi$. Consider the composition
\begin{eqnarray*}
\ph(\y,\xi): &  &\Th(\y\xi)=\Th^{\c}_{\cL''}\star(\y\xi)\\
&\xr{\mu^{2}_{\cL''}\star\id}&(\Th^{\c}_{\cL''}\star\Th^{\c}_{\cL'}) \star (\y\xi)=\Th^{\c}_{\cL''}\star(\Th^{\c}_{\cL'} \star \y)\star\xi\\
&\xr{\id\star\t(\y)^{-1}\star\id}& \Th^{\c}_{\cL''}\star\y\star\Th^{\c}_{\cL'}\star\xi=\Th(\y)\star\Th(\xi).
\end{eqnarray*}
Here $\mu^{2}_{\cL''}: \Th^{\c}_{\cL''}\to \Th^{\c}_{\cL''}\star\Th^{\c}_{\cL''}$ is the  comultiplication constructed in Proposition \ref{p:coalg}.

%%%%%%% ok %%%%%%%%%
\begin{prop}\label{p:coalg ext} Notation as above.
\begin{enumerate}
\item The composition
\begin{equation*}
\Th(\y\xi)\xr{\ph(\y,\xi)}\Th(\y)\star\Th(\xi)\xr{\e(\y)\star\e(\xi)}\y\xi
\end{equation*}
is the same as $\e(\y\xi)$.

\item The following compositions  are the identity maps
\begin{eqnarray*}
\Th(\xi)\xr{\ph(\d_{\cL'},\xi)}\Th^{\c}_{\cL'}\star\Th(\xi)\xr{\e_{\cL'}\star\id}\d_{\cL'}\star\Th(\xi)\cong\Th(\xi),\\
\Th(\xi)\xr{\ph(\xi,\d_{\cL})}\Th(\xi)\star\Th^{\c}_{\cL}\xr{\id\star\e_{\cL}}\Th(\xi)\star\d_{\cL}\cong\Th(\xi).
\end{eqnarray*}

\item For $\cL'''\in\fo$ and $\z\in{}_{\cL'''}\Xi_{\cL''}$,  the following diagram is commutative
\begin{equation}\label{asso}
\xymatrix{\Th(\z\y\xi)\ar[d]_{\ph(\z\y,\xi)}\ar[rr]^-{\ph(\z,\y\xi)} && \Th(\z)\star\Th(\y\xi)\ar[d]^{\id\star\ph(\y,\xi)}\\
\Th(\z\y)\star\Th(\xi)\ar[rr]^-{\ph(\z,\y)\star\id} &&
\Th(\z)\star\Th(\y)\star\Th(\xi)
}
\end{equation} 
\end{enumerate}
\end{prop}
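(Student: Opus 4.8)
### Proof strategy

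The plan is to prove all three statements by reducing each to a uniqueness assertion of the type already used throughout \S\ref{s:max IC}: a morphism between two maximal IC sheaves (suitably shifted) in a fixed block is determined up to a scalar, so any two parallel maps that become equal after composing with a fixed nonzero map $\e$ must already be equal. Since $\om\Th(\xi)\cong\uIC(w_\b)_\cL[-N_\cL]$ by Proposition \ref{p:min th}(3), and analogous statements hold for $\Th(\y)\star\Th(\xi)$ and the triple convolution (via Proposition \ref{p:conv block} and Proposition \ref{p:min th}(2), the relevant $\Hom$ spaces in the non-mixed category are one-dimensional in degree zero and vanish in negative perverse degree), all the maps in sight are rigid enough that checking commutativity after composing with the counit to $\y\xi$ (resp. $\xi$, resp. $\z\y\xi$) suffices.

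For part (1): by definition $\ph(\y,\xi)$ is the composite $(\id\star\t(\y)^{-1}\star\id)\circ(\mu^2_{\cL''}\star\id)$. Composing with $\e(\y)\star\e(\xi)=(\e_{\cL''}\star\id_\y)\star(\e_{\cL'}\star\id_\xi)$, first use the defining diagram \eqref{switch Th} of $\t(\y)$ to replace $(\e_{\cL'}\star\id_\y)\circ\t(\y)^{-1}$ by the canonical unit isomorphism $\y\star\Th^\c_{\cL'}\cong\y$; what remains is $(\e_{\cL''}\star\e_{\cL''})\circ\mu^2_{\cL''}$ convolved with $\id_{\y\xi}$, which by the counit characterization of $\mu^2_{\cL''}$ (diagram \eqref{comult n} with $n=2$) equals $\e_{\cL''}\star\id_{\y\xi}=\e(\y\xi)$. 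This is a diagram chase with no moving parts once the characterizing properties of $\t$ and $\mu^2$ are invoked.

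For part (2): both composites are endomorphisms of $\Th(\xi)$, which is indecomposable with scalar endomorphisms (its automorphism group as a rigidified object is trivial), so it suffices to check the composite with $\e(\xi)$ is $\e(\xi)$; but that is exactly part (1) specialized to $\y=\d_{\cL'}$ (resp. $\xi=\d_\cL$), using the counit axioms for $\mu^2$ and the normalization in \eqref{switch Th} that $\t(\d_\cL)$ is the canonical unit isomorphism. For part (3): both composites in \eqref{asso} are maps $\Th(\z\y\xi)\to\Th(\z)\star\Th(\y)\star\Th(\xi)$; by Proposition \ref{p:min th}(3) applied iteratively, the target has $\om$ isomorphic to $\uIC(w_{\z\y\b})_\cL$ up to shift, so the space of such maps (in the non-mixed category, hence also after lifting) is one-dimensional and any two parallel maps differ by a scalar. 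Composing both with $\e(\z)\star\e(\y)\star\e(\xi)$ and applying part (1) twice (along the two ways of bracketing $\z,\y,\xi$) gives $\e(\z\y\xi)$ in both cases, forcing the scalar to be $1$. The uniqueness/Frobenius-invariance argument then upgrades this from $\un{\phantom{x}}$-level to an equality in ${}_{\cL'''}\cD^{\z\y\b}_\cL$.

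The main obstacle is part (3): although the reduction to ``equal after composing with the counit'' is routine, one must be careful that the identification $\Th(\z)\star\Th(\y)\star\Th(\xi)=\Th^\c_{\cL'''}\star\z\star\Th^\c_{\cL''}\star\y\star\Th^\c_{\cL'}\star\xi$ used on the two sides of \eqref{asso} is the \emph{same} rebracketing, and that the coassociativity of $\mu^2$ (from the uniqueness of $\mu^3$ in Proposition \ref{p:coalg}) together with the naturality of $\t$ (which must be checked: $\t(\y)$ is natural in the sense that conjugating $\mu^2$ through it is compatible with further convolution) line up the two composites correctly before one invokes the one-dimensionality of the $\Hom$ space. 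Concretely, the subtle point is an interchange between $\mu^2_{\cL'''}$ acting on the outermost $\Th^\c$ and the braiding isomorphisms $\t(\z)$, $\t(\y)$ moving the $\Th^\c$ factors past $\z$ and $\y$; this is where one genuinely uses that $\t(\xi)$ was \emph{defined} by the universal property \eqref{switch Th} rather than chosen, since that is what makes it compatible with $\mu^2$ and with composition of the $\xi$'s.
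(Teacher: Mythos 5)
Your proposal matches the paper's proof: part (1) is the same diagram chase using the defining property of $\t(\y)$ from \eqref{switch Th} together with the counit identity $\e_{\cL}^{\star2}\c\mu^{2}_{\cL}=\e_{\cL}$, part (2) reduces to scalar endomorphisms exactly as in the counit verification inside Proposition \ref{p:coalg}, and part (3) is the paper's argument of composing both circuits with $\e(\z)\star\e(\y)\star\e(\xi)$, applying (1) iteratively, and invoking one-dimensionality of $\Hom(\Th(\z\y\xi),\Th(\z)\star\Th(\y)\star\Th(\xi))$. The only slip is in your justification of that one-dimensionality: it comes from Proposition \ref{p:conv Th}(2)(3) (the triple convolution is a direct sum of shifts of $\uTh(\z\y\xi)$ with the lowest perverse degree occurring with multiplicity one), not from $\om$ of the target being a single shift of $\uIC(w_{\z\y\b})_{\cL}$ as you state.
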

\begin{proof} Part (1) follows from the definition of $\ph(\y,\xi)$ and the characterizing property of the comultiplication $\mu^{2}_{\cL}$ on $\Th^{\c}_{\cL}$ that $\e_{\cL}^{\star 2}\c\mu^{2}_{\cL}=\e_{\cL}$.

The proof of (2) is similar to the verification of the counit axioms in the proof of Proposition \ref{p:coalg}. We omit it here.

To prove (3),  we observe that by Proposition \ref{p:conv Th},  $\Th(\z\y\xi)$ is identified with the lowest nonzero perverse cohomology of $\Th(\z)\star\Th(\y)\star\Th(\xi)$, therefore  nonzero maps $\Th(\z\y\xi)\to \Th(\z)\star\Th(\y)\star\Th(\xi)$  are unique up to a scalar. Therefore it suffices to show that, after composing with $\e(\z)\star\e(\y)\star\e(\xi): \Th(\z)\star\Th(\y)\star\Th(\xi)\to \z\y\xi$, both compositions in the diagram \eqref{asso} are equal to $\e(\z\y\xi)$. But this follows from iterated applications of part (1).
\end{proof}

Now we can extend the definition of rigidified IC sheaves to all blocks.
Recall we have defined the rigidified minimal IC sheaves $\IC(w^{\b})^{\da}_{\cL}$ in \S\ref{ss:rig min}. 
\begin{defn}\label{def:rig IC} Let $w\in {}_{\cL'}W_{\cL}$ and let  $\b\in {}_{\cL'}\un W_{\cL}$ be its block. Write $w=xw^{\b}$ for $x\in W_{\cL'}^{\c}$. We define
\begin{eqnarray*}
&&\D(w)_{\cL}^{\da}:=\D(x)^{\da}_{\cL'}\star \IC(w^{\b})^{\da}_{\cL}, \quad \nb(w)_{\cL}^{\da}:=\nb(x)^{\da}_{\cL'}\star \IC(w^{\b})^{\da}_{\cL},\\
&& \IC(w)_{\cL}^{\da}:=\IC(x)^{\da}_{\cL'}\star\IC(w^{\b})_{\cL}^{\da}.\\
&&C(w)_{\cL}^{\da}:=i^{*}_{w}\D(w)^{\da}_{\cL}=i^{*}_{w}\nb(w)^{\da}_{\cL}=i^{*}_{w}\IC(w)^{\da}_{\cL}.
\end{eqnarray*}
\end{defn}
By Lemma \ref{l:map Th to IC}, there is a canonical map $\th^{\da}_{w}: {}_{\cL'}\Th^{\b}_{\cL}\to \IC(w)^{\da}_{\cL}\j{-\ell_{\b}(w)}$ by applying $\star\IC(w^{\b})^{\da}_{\cL}$ to $\th^{\da}_{x}$, that induces an isomorphism after applying $i^{*}_{w}$. Therefore $C(w)_{\cL}^{\da}$ can be alternatively defined as $i^{*}_{w}({}_{\cL'}\Th^{\b}_{\cL})\j{\ell_{\b}(w)}$, and we may in turn define $\D(w)^{\da}_{\cL},\nb(w)^{\da}_{\cL}$ and $\IC(w)^{\da}_{\cL}$ as the $!$, $*$ and middle extensions of $C(w)_{\cL}^{\da}$. 

Using this alternative definition of $C(w)^{\da}_{\cL}, \D(w)^{\da}_{\cL}, \nb(w)^{\da}_{\cL}$ and $\IC(w)^{\da}_{\cL}$ in terms of stalks of ${}_{\cL'}\Th^{\b}_{\cL}$, and using Corollary \ref{c:Th by left wb}, we see that if we write $w=w^{\b}y$ for $y\in W_{\cL}^{\c}$, there are {\em canonical} isomorphisms
\begin{equation*}
\D(w)_{\cL}^{\da}:=\IC(w^{\b})^{\da}_{\cL}\star\D(y)^{\da}_{\cL}, \quad 
\nb(w)_{\cL}^{\da}:=\IC(w^{\b})^{\da}_{\cL}\star\nb(y)^{\da}_{\cL},\quad
\IC(w)_{\cL}^{\da}:=\IC(w^{\b})^{\da}_{\cL}\star\IC(y)^{\da}_{\cL}.\end{equation*}

%%%%%%%% ok %%%%%%%%%%%%%
\section{Monodromic Soergel functor}\label{s:Soergel}

In this section we introduce the Soergel functor between the monodromic Hecke category and the category of graded $R$-bimodules, construct its monoidal structure and prove an analogue of Soergel's Extension Theorem for this functor.

%%%%%% ok %%%%%%
\subsection{$R$-bimodules with Frobenius actions}\label{ss:Rbimod} Let $R=\upH^{*}_{T_{k}}(\pt_{k},\Qlbar)\cong\Sym(\xch(T)_{\Qlbar})$, with the grading $\deg\xch(T)_{\Qlbar}=2$, and the Frobenius action on $\xch(T)_{\Qlbar}$ by $q$. Let $\RRM$ be the category of $\ZZ$-graded $R\ot R$-modules. Let $(R\ot R, \Fr)\gmod$ be the category of $\ZZ$-graded $R\ot R$-modules $M=\oplus_{n}M^{n}$ with a degree-preserving automorphism $\Fr: M\to M$ compatible with the Frobenius action on $R\ot R$; i.e., for homogeneous $a\in  R$ and $m\in M$, we have $\Fr((a\ot 1)m)=q^{\deg(a)/2}(a\ot 1)\Fr(m)$ and $ \Fr((1\ot a)m)=q^{\deg(a)/2}(1\ot a)\Fr(m)$. Let $\om: (R\ot R, \Fr)\gmod\to \RRM$ be the functor forgetting the Frobenius action.

We use $[1]$ for the degree shift for graded $(R\ot R,\Fr)$-modules, i.e., if $M=\oplus_{n\in\ZZ}M^{n}\in \RRM$,  $M[1]$ is the graded $R\ot R$-module with $(M[1])^{n}=M^{n+1}$ as $\Fr$-modules. For $M\in (R\ot R, \Fr)\gmod$, $M(n/2)$ is the same graded $R\ot R$-module as $M$ with the Frobenius action multiplied by $q^{-n/2}$. Let $\j{n}$ be the composition $[n](n/2)$.

For $M_{1},M_{2}\in(R\ot R, \Fr)\gmod$, we understand $M_{1}\ot_{R}M_{2}$ as the tensor product of $M_{1}$ and $M_{2}$ with respect to the second $R$-action on $M_{1}$ and the first $R$-action on $M_{2}$.

For $M_{1}, M_{2}\in \RRM$, their inner Hom is the graded $R\ot R$-module
\begin{equation*}
\Homb(M_{1}, M_{2})=\bigoplus_{n\in \ZZ}\Hom_{\RRM}(M_{1}, M_{2}[n]).
\end{equation*}
If $M_{1}, M_{2}\in \RRF$, then $\Homb(M_{1}, M_{2})$ is also naturally an object in $(R\ot R, \Fr)\gmod$.

For two objects $\cF,\cG\in {}_{\cL'}\cD^{\b}_{\cL}$, let
\begin{equation*}
\Homb(\cF,\cG):=\bigoplus_{n\in\ZZ}\Hom(\cF, \cG[n]).
\end{equation*} 
Since $\Homb(\cF,\cG)=\upH^{*}_{T_{k}\times T_{k}}((U\bs G/U)_{k}, \RuHom(\cF,\cG))$, it is a graded $(R\ot R,\Fr)$-module, where the $R\ot R=\upH^{*}_{T_{k}\times T_{k}}(\pt_{k})$-action comes from the $T\times T$-action on $U\bs G/U$ given in  \S\ref{ss:mono Hecke}. Same notation applies to ${}_{\cL'}\cD(w)_{\cL}$ and ${}_{\cL'}\cD(\le w)_{\cL}$.
 
For each $w\in W$, let $R(w)$ be the graded $R$-bimodule which is the quotient of $R\ot R$ by the ideal generated by $w(a)\ot 1-1\ot a$ for all $a\in R$. We have a canonical isomorphism in $(R\ot R, \Fr)\gmod$
\begin{equation*}
R(w)\cong \upH^{*}_{\G(w)_{k}}(\pt_{k})\cong \Homb(C(\dw)_{\cL},C(\dw)_{\cL}).
\end{equation*}
% In other words, the underlying graded vector space of $R(w)$ is $R$, with the $R$-bimodule structure given by $a\cdot x\cdot b=aw(b)x$ for $a,b\in R$ and $x\in R(w)$.

%%%%% ok %%%%%%%%
\begin{defn}
\begin{enumerate}
\item Let $\b\in {}_{\cL'}\un W_{\cL}$ and $\xi\in {}_{\cL'}\fP^{\b}_{\cL}$ be a minimal IC sheaf in the block $\b$. The {\em mixed Soergel functor} associated to $\xi$ is the functor
\begin{equation*}
\MM_{\xi}:=\Homb(\Th(\xi), -):{}_{\cL'}\cD^{\b}_{\cL} \to \RRF.
\end{equation*}
\item The {\em non-mixed  Soergel functor} associated to $\xi$ is 
\begin{equation*}
\uMM_{\xi}:=\Homb(\uTh(\xi), -):{}_{\cL'}\un\cD^{\b}_{\cL} \to \RRM.
\end{equation*}
\item When $\xi=\d_{\cL}\in {}_{\cL}\fP^{\c}_{\cL}$, we denote the corresponding Soergel functors  by $\MM^{\c}=\Homb(\Th^{\c}_{\cL}, -)$ and $\uMM^{\c}=\Homb(\uTh^{\c}_{\cL},-)$.
\end{enumerate}
\end{defn}

%%%%%% ok %%%%%%%
\begin{lemma}\label{l:MM th} Let $\xi\in{}_{\cL'}\fP^{\b}_{\cL}$. There is a canonical isomorphism in $\RRF$
\begin{equation*}
\MM_{\xi}(\xi)\cong R(w^{\b})
\end{equation*}
under which the canonical map $\e(\xi):\Th(\xi)\to \xi$ corresponds to $1\in R(w^{\b})$.
\end{lemma}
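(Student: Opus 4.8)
The plan is to compute $\MM_{\xi}(\xi)=\Homb(\Th(\xi),\xi)$ via the Schubert stratification filtration of Corollary \ref{c:filM}, to observe that this filtration collapses onto the single stratum $G_{w^{\b}}$, and then to use the rigidification $\e(\xi)$ to trivialize the remaining one-dimensional twist.

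First I would record how $\xi$ and $\Th(\xi)$ look along the Bruhat strata. Since $\xi$ is a minimal IC sheaf in $\b$, Proposition \ref{p:min th}(1) gives $\om\xi\cong\uIC(w^{\b})_{\cL}\cong\uD(w^{\b})_{\cL}$; applying conservativity of $\om$ to the counit $i_{w^{\b},!}i^{*}_{w^{\b}}\xi\to\xi$ then shows $\xi\cong i_{w^{\b},!}i^{*}_{w^{\b}}\xi$, so $\xi$ is the $!$-extension of the object $i^{*}_{w^{\b}}\xi\in{}_{w^{\b}\cL}\cD(w^{\b})_{\cL}$, which (having underlying complex $\uC(w^{\b})_{\cL}$) is of the form $C(\dw^{\b})_{\cL}\otimes M$ for a one-dimensional $\Fr$-module $M$; moreover $i^{!}_{w^{\b}}\xi=i^{*}_{w^{\b}}\xi$. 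By Lemma \ref{l:stalk order} (together with Verdier duality) the costalk $i^{!}_{v}\xi$ vanishes for every $v\neq w^{\b}$, because $w^{\b}$ is the $\le_{\b}$-minimal element of $\b$. On the other side, $\om\Th(\xi)\cong\uIC(w_{\b})_{\cL}[-N_{\cL}]$ by Proposition \ref{p:min th}(3), so Proposition \ref{p:stalk Th} together with $\ell_{\b}(w^{\b})=0$ gives $i^{*}_{w^{\b}}\om\Th(\xi)\cong\uC(w^{\b})_{\cL}$; hence $i^{*}_{w^{\b}}\Th(\xi)\cong C(\dw^{\b})_{\cL}\otimes N$ for a one-dimensional $\Fr$-module $N$.

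Next I would apply Corollary \ref{c:filM} to the semisimple complexes $\Th(\xi)$ and $\xi$ (semisimplicity of $\Th(\xi)$ being Lemma \ref{l:conv pure}(1)). Since $\Gr^{F}_{v}\Homb(\Th(\xi),\xi)\cong\Homb(i^{*}_{v}\Th(\xi),i^{!}_{v}\xi)$ vanishes for all $v\neq w^{\b}$ (as $i^{!}_{v}\xi=0$ there), the filtration has a single nonzero piece and restriction to the stratum $G_{w^{\b}}$ yields an isomorphism of graded $(R\otimes R,\Fr)$-modules
\[
\Homb(\Th(\xi),\xi)\ \xrightarrow{\ \sim\ }\ \Homb\!\big(i^{*}_{w^{\b}}\Th(\xi),\,i^{!}_{w^{\b}}\xi\big),
\]
under which the class of $\e(\xi)$ is carried to its restriction $i^{*}_{w^{\b}}\e(\xi)$. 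The \emph{key point} is that $i^{*}_{w^{\b}}\e(\xi)$ is an isomorphism. Indeed $\e(\xi)=\e_{\cL'}\star\id_{\xi}$; the map $i^{*}_{e}\e_{\cL'}$ is an isomorphism by the very construction of $\e_{\cL'}$ in \S\ref{ss:rig neutral}, and $(-)\star\om\xi$ is an equivalence, since $\om\xi\cong\uD(w^{\b})_{\cL}$ is $\star$-invertible by Lemma \ref{l:invertible}; hence $\e(\xi)$ is a nonzero element of $\Homb^{0}(\Th(\xi),\xi)$. As the displayed map is an isomorphism, $i^{*}_{w^{\b}}\e(\xi)$ is then a nonzero degree-zero morphism between two objects whose underlying complex is $\uC(w^{\b})_{\cL}$, and since $\Hom^{0}(\uC(w^{\b})_{\cL},\uC(w^{\b})_{\cL})=\Qlbar$ and $\om$ is conservative, it is an isomorphism.

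Finally I would conclude. Writing $i^{*}_{w^{\b}}\e(\xi)=\id_{C(\dw^{\b})_{\cL}}\otimes\varepsilon$ for an isomorphism $\varepsilon\colon N\xrightarrow{\ \sim\ }M$ of $\Fr$-modules, and using $\varepsilon$ to identify $i^{*}_{w^{\b}}\Th(\xi)$ with $i^{!}_{w^{\b}}\xi=C(\dw^{\b})_{\cL}\otimes M$, the displayed isomorphism becomes
\[
\Homb(\Th(\xi),\xi)\ \cong\ \Homb\!\big(C(\dw^{\b})_{\cL},\,C(\dw^{\b})_{\cL}\big)\ \cong\ \upH^{*}_{\G(w^{\b})_{k}}(\pt_{k})\ \cong\ R(w^{\b}),
\]
the last two isomorphisms being the canonical ones recorded in \S\ref{ss:Rbimod}; and along this chain $\e(\xi)\mapsto i^{*}_{w^{\b}}\e(\xi)\mapsto\id\mapsto 1$, which is the assertion. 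I expect the main obstacle to be organizational rather than conceptual: verifying that the isomorphism of Corollary \ref{c:filM} respects the graded $(R\otimes R,\Fr)$-module structures and genuinely carries the class of $\e(\xi)$ to $i^{*}_{w^{\b}}\e(\xi)$, together with the minor point that $\e(\xi)$—a priori only a morphism in the mixed category—has nonzero image in the non-mixed graded $\Homb$, which is precisely where the $\star$-invertibility of $\om\xi$ and the construction of $\e_{\cL'}$ enter.
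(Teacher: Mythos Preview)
Your proof is correct and follows essentially the same approach as the paper's: both reduce to the single stratum $G_{w^{\b}}$ via the cleanness of $\xi$ (Proposition~\ref{p:min th}(1)), invoke Proposition~\ref{p:stalk Th} to identify $i^{*}_{w^{\b}}\Th(\xi)$ with $\uC(w^{\b})_{\cL}$, and then normalize by $\e(\xi)$. The only cosmetic difference is that the paper uses the $(i^{*}_{w^{\b}},i_{w^{\b},*})$-adjunction directly (writing $\om\xi\cong\unb(w^{\b})_{\cL}$) in place of your appeal to the filtration of Corollary~\ref{c:filM}, which of course collapses to the same one-step computation here.
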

\begin{proof} By definition, we have
\begin{equation*}
\RHom(\uTh(\xi), \om\xi)\cong \RHom(\uIC(w_{\b})_{\cL}[-N_{\cL}], \unb(w^{\b})_{\cL})=\RHom(i^{*}_{w^{\b}}\uIC(w_{\b})_{\cL}, \uC(w^{\b})_{\cL})[N_{\cL}]. 
\end{equation*}
By Proposition \ref{p:stalk Th}, $i^{*}_{w^{\b}}\uIC(w_{\b})_{\cL}\cong \uC(w^{\b})_{\cL}[N_{\cL}]$. Therefore,
\begin{equation*}
\RHom(\uTh(\xi), \om\xi)\cong\RHom(\uC(w^{\b})_{\cL}, \uC(w^{\b})_{\cL}).
\end{equation*}
Taking cohomology we get an isomorphism of graded $R\ot R$-modules $\a: \uMM_{\xi}(\xi)\cong \om R(w^{\b})$, well-defined up to a scalar. We  normalize  this isomorphism by requiring that $\e(\xi)$ go to $1\in R(w^{\b})$. Since both $\e(\xi)$ and $1\in R(w^{\b})$ are invariant under $\Fr$, $\a$ is also $\Fr$-equivariant.
\end{proof}

%%%%%% ok %%%%%%%%%
\begin{lemma}\label{l:MM ICs} Let $s\in W$ be a simple reflection and $s\in \WL$. Recall we have a canonical isomorphism $\IC(s)_{\cL}\cong \IC(s)^{\da}_{\cL}$ given by Lemma \ref{l:ICda es}(2). 
\begin{enumerate} 
\item Let $\xi\in {}_{\cL'}\fP^{\b}_{\cL}$ for some block $\b\in{}_{\cL'}\un W_{\cL}$ and $\cF\in {}_{\cL'}\cD^{\b}_{\cL}$. There is a canonical isomorphism in $\RRF$
\begin{equation}\label{MFICs}
\MM_{\xi}(\cF)\ot_{R^{s}}R\j{1}\isom\MM_{\xi}(\cF\star\IC(s)^{\da}_{\cL})
\end{equation}
such that the composition
\begin{equation}\label{MFMF}
\xymatrix{\MM_{\xi}(\cF)\ar[r]^-{\eqref{MFICs}} & \MM_{\xi}(\cF\star\IC(s)^{\da}_{\cL}\j{-1})\ar[rr]^-{\textup{Lemma \ref{l:inert s}}}_-{\sim} && \MM_{\xi}(\pi^{*}_{s}\pi_{s*}\cF)\ar[r]^-{\textup{adj}} & \MM_{\xi}(\cF)
}
\end{equation}
is the identity.
\item There is a canonical isomorphism in $\RRF$
\begin{equation}\label{MICsRR}
\MM^{\c}(\IC(s)^{\da}_{\cL}\j{-1})\cong R\ot_{R^{s}}R
\end{equation}
under which $\th^{\da}_{s}$ corresponds to $1\ot1$.

%such that $1\ot 1\in R\ot_{R^{s}}R$ corresponds to the unique map $\psi_{s}: \Th^{\c}_{\cL}\to \IC(s)_{\cL}\j{-1}$ whose stalk at $e\in G$ is the identity map of $\Qlbar$ under the trivializations of the stalks at $e$ of $\Th^{\c}_{\cL}$ and $\IC(s)_{\cL}\j{-1}$.
\end{enumerate}
\end{lemma}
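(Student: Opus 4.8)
```
The plan is to prove parts (1) and (2) of Lemma \ref{l:MM ICs} together, deriving (2) as the special case $\cF=\d_{\cL}$, $\xi=\d_{\cL}$ of (1) and then unwinding. The key input is Lemma \ref{l:inert s}, which identifies $(-)\star\IC(s)_{\cL}\cong \pi_s^*\pi_{s*}(-)\j{1}$, together with Lemma \ref{l:ICda es}(2), which identifies $\IC(s)^{\da}_{\cL}$ with $\IC(s)_{\cL}$. So $(-)\star\IC(s)^{\da}_{\cL}\j{-1}\cong \pi_s^*\pi_{s*}(-)$, and the adjunction unit $\pi_s^*\pi_{s*}\to\id$ together with $\pi_s^!\cong\pi_s^*\j{2}$ give adjunction (co)units that I will use to produce the map in \eqref{MFMF}.

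First I would establish \eqref{MFICs}. Applying $\MM_{\xi}=\Homb(\Th(\xi),-)$ to $\cF\star\IC(s)^{\da}_{\cL}$ and using Lemma \ref{l:inert s} to rewrite this as $\MM_{\xi}(\pi_s^*\pi_{s*}\cF\j{1})$, I would like a natural isomorphism $\Homb(\Th(\xi),\pi_s^*\pi_{s*}\cF)\cong \Homb(\Th(\xi),\cF)\ot_{R^s}R$. The right framework is to compute $\Homb$ as equivariant cohomology: $\Homb(\Th(\xi),\cG)=\upH^*_{T_k\times T_k}((U\bs G/U)_k,\RuHom(\om\Th(\xi),\om\cG))$ as in \S\ref{ss:Rbimod}, and $\pi_s^*\pi_{s*}$ corresponds to pullback–pushforward along the $\PP^1$-fibration $\wt\pi_s: G/\wt B\to G/\wt P_s$. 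The projection formula for $\wt\pi_s$ together with the fact that $\Th(\xi)=\pi_s^*(\ov\Th)$ lies in the image of $\pi_s^*$ on the relevant factor (as in the proof of Lemma \ref{l:IC descend} and \S\ref{ss:rig neutral}) lets one move the $\pi_{s*}$ past the $\RuHom$. Concretely, on the level of the right $R$-action, $\pi_{s*}$ effects the base change $R\rightsquigarrow R\ot_{R^s}R$ (this is the standard computation $\upH^*_{T}(\PP^1$-bundle$)$, using $\upH^*_{T_k}(G/\wt P_s)\cong R^s$-module structures), producing the factor $\ot_{R^s}R$, and the shift $\j{1}$ accounts for $\pi_s^!\cong\pi_s^*\j{2}$ versus $\pi_s^*$. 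I would make this precise by reducing, via Corollary \ref{c:filM} and the decomposition of $\cF$ into shifts of IC sheaves, to the case $\cF=\IC(\dw)_{\cL}$ with $\ell(w)>\ell(ws)$ so that $\pi_s^*\pi_{s*}\cF\cong\cF\j{-1}$ (Lemma \ref{l:IC descend} and the computation $\pi_{s*}\IC(\dw)_{\wt\cL}\cong\cdots$), where the isomorphism is transparent, and the general $\cF$ by additivity and functoriality of all constructions.

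Next, for the normalization \eqref{MFMF}: the composite in that display is an endomorphism of $\MM_{\xi}(\cF)$ built from the unit map \eqref{MFICs}, the Lemma \ref{l:inert s} identification, and the adjunction counit $\pi_s^!\pi_{s*}\to\id$ (equivalently $\pi_{s*}\dashv\pi_s^!$). Tracking through the $\upH^*$ descriptions, the three maps compose to $R$-bilinear operations $R\to R\ot_{R^s}R\to R$ whose composite is multiplication $a\ot b\mapsto ab$ precomposed with $a\mapsto a\ot 1$ — i.e. the identity. To verify this cleanly I would check it after applying $\om$ (forgetting Frobenius), reduce as above to $\cF=\IC(\dw)_{\cL}$ and even to $\cF=\d_{\cL}$ by the equivalence $(-)\star\xi$ and Proposition \ref{p:min th}, where it becomes the assertion that the Euler-class / Gysin composite for the $\PP^1$-bundle $\wt\pi_s$ is the identity — a standard fact. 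Frobenius-equivariance is automatic since every map in sight is canonical.

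Finally (2): take $\cF=\d_{\cL}$ and $\xi=\d_{\cL}$ (so $\b=\WL$, $w^\b=e$) in \eqref{MFICs}. Then $\MM^\c(\d_{\cL})\cong R(e)=R$ (the diagonal bimodule, by Lemma \ref{l:MM th} with $w^\b=e$), so \eqref{MFICs} reads $R\ot_{R^s}R\j{1}\cong \MM^\c(\d_{\cL}\star\IC(s)^{\da}_{\cL})=\MM^\c(\IC(s)^{\da}_{\cL})$, i.e. $\MM^\c(\IC(s)^{\da}_{\cL}\j{-1})\cong R\ot_{R^s}R$. It remains to see that under this isomorphism $\th^{\da}_{s}$ goes to $1\ot 1$. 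By construction $\th^{\da}_{s}$ is the unique element of $\MM^\c(\IC(s)^{\da}_{\cL}\j{-\ell_{\cL}(s)})^0=\MM^\c(\IC(s)^{\da}_{\cL}\j{-1})^0$ restricting under $i_s^*$ to the identity on $C(s)^{\da}_{\cL}\j{-1}$; on the other hand $1\ot 1\in R\ot_{R^s}R$ corresponds, under the $\Homb$–$\upH^*$ dictionary and the normalization in Lemma \ref{l:MM th} (where $\e(\xi)\leftrightarrow 1$), to the map $\Th^\c_{\cL}\to\IC(s)^{\da}_{\cL}\j{-1}$ factoring the counit $\e_{\cL}$ through the stalk at $e$ — which is exactly the characterizing property of $\io_s\c\th^{\da}_s$ in Lemma \ref{l:ICda es}(2), hence of $\th^{\da}_s$ itself. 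So the two degree-zero generators match.

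The main obstacle I expect is making the ``$\pi_{s*}$ implements $\ot_{R^s}R$'' step genuinely canonical and Frobenius-equivariant rather than merely true up to scalar: one has to be careful that the projection formula isomorphism, the $\PP^1$-bundle cohomology computation, and the $R\ot R$-bimodule structures are all compatible with the specific normalizations fixed in Lemma \ref{l:MM th} and Lemma \ref{l:ICda es}(2), and that the shift $\j{1}$ is the right one. Everything else is bookkeeping with the equivariant-cohomology description of $\Homb$ and the results already proved in \S\ref{s:min}--\S\ref{s:max IC}.
```
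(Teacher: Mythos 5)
Your proposal follows essentially the same route as the paper: write $\Th(\xi)=\pi_s^*\ov\Th$, use the adjunction $(\pi_s^*,\pi_{s*})$ together with the $\PP^1$-fibration computation $\pi_{s*}\Qlbar\cong\Qlbar\oplus\Qlbar\j{-2}$ (i.e.\ $R=R^s\oplus\a_s R^s$) to produce the factor $\ot_{R^s}R$, verify \eqref{MFMF} via the triangle identity for the adjunction $\pi_{s*}\cF\to\pi_{s*}\pi_s^*\pi_{s*}\cF\to\pi_{s*}\cF$, and deduce (2) by specializing to $\cF=\d_{\cL}$. One caution: your fallback of ``reducing to $\cF=\IC(\dw)_{\cL}$'' is unnecessary and would not by itself produce a natural isomorphism for arbitrary (non-semisimple) $\cF$, whereas your primary projection-formula argument --- which is the paper's --- already works for all $\cF$ directly.
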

\begin{proof}
(1) Let $\wt\cL\in\Ch(L_{s})$ be the extension of $\cL$. By Lemma \ref{l:IC descend},  we can write $\Th(\xi)=\pi_{s}^{*}\ov\Th$ for some shifted perverse sheaf $\ov\Th\in {}_{\cL}\cD_{\wt\cL}$. By Lemma \ref{l:inert s}, we have
\begin{equation}\label{MF des Ps}
\Homb(\Th(\xi), \cF)\cong\Homb(\pi_{s}^{*}\ov\Th, \cF)\cong \Homb(\ov\Th, \pi_{s*}\cF)=\cohog{*}{(B\bs G/P_{s})_{k}, \RuHom(\ov\Th, \pi_{s*}\cF)}.
\end{equation}
The right side above is naturally a graded $(R\ot R^{s},\Fr)$-module, for $R^{s}=\upH^{*}_{(P_{s})_{k}}(\pt_{k})$. 

Let $\pi_{s}$ also denote the projection $B\bs G/B\to B\bs G/P_{s}$.  For any complex $\cK\in D^{b}_{m}(B\bs G/P_{s})$ the pullback $\cohog{*}{(B\bs G/P_{s})_{k}, \cK}\to \cohog{*}{(B\bs G/B)_{k}, \pi_{s}^{*}\cK}$ is right $R^{s}$-linear. It then induces a natural map in $\RRF$
\begin{equation}\label{pullPs}
\cohog{*}{(B\bs G/P_{s})_{k}, \cK}\ot_{R^{s}}R\to \cohog{*}{(B\bs G/B)_{k}, \pi_{s}^{*}\cK}.
\end{equation}
This is in fact a bijection, because 
\begin{equation*}
\cohog{*}{(B\bs G/B)_{k}, \pi_{s}^{*}\cK}\cong \cohog{*}{(B\bs G/P_{s})_{k}, \pi_{s*}\pi_{s}^{*}\cK}\cong \cohog{*}{(B\bs G/P_{s})_{k}, \cK\ot \pi_{s*}\Qlbar}
\end{equation*}
and $\pi_{s*}\Qlbar\cong \Qlbar\oplus\Qlbar\j{-2}$ (in $D_{m}^{b}(B\bs G/P_{s})$) corresponding to the decomposition $R=R^{s}\op \a_{s}R^{s}$.
Applying the isomorphism \eqref{pullPs} to $\cK=\RuHom(\ov\Th, \pi_{s*}\cF)$ we get
\begin{eqnarray*}
&&\cohog{*}{(B\bs G/P_{s})_{k}, \RuHom(\ov\Th, \pi_{s*}\cF)}\ot_{R^{s}}R \cong\cohog{*}{(B\bs G/B)_{k}, \pi_{s}^{*}\RuHom(\ov\Th, \pi_{s*}\cF)}\\
&\cong&\cohog{*}{(B\bs G/B)_{k},\RuHom(\pi_{s}^{*}\ov\Th, \pi_{s}^{*}\pi_{s*}\cF)}\cong \Homb(\Th(\xi), \cF\star\IC(s)^{\da}_{\cL}\j{-1}).
\end{eqnarray*}
Here we have used Lemma \ref{l:inert s}.
Combining this with \eqref{MF des Ps}, we get an isomorphism
\begin{equation*}
\MM_{\xi}(\cF)\ot_{R^{s}}R\j{1}=\Homb(\Th(\xi), \cF)\ot_{R^{s}}R\j{1}\cong \Homb(\Th(\xi), \cF\star\IC(s)^{\da}_{\cL})=\MM_{\xi}(\cF\star\IC(s)^{\da}_{\cL}).
\end{equation*}
The construction above shows that the composition \eqref{MFMF} is induced by applying $\Homb(\ov\Th,-)$ to the composition of adjunction maps $\pi_{s*}\cF\to \pi_{s*}\pi_{s}^{*}\pi_{s*}\cF\to \pi_{s*}\cF$, which is the identity.

(2) Taking $\cF=\d_{\cL}$ in (1), we get the canonical isomorphism \eqref{MICsRR}. The fact that $\th^{\da}_{s}$ corresponds to $1\ot 1$ follows from the fact that \eqref{MFMF} is the identity for $\cF=\d_{\cL}$.  This proves part (2).
\end{proof}

%%%%%% ok %%%%%%%%%%
\subsection{Monoidal structure}\label{ss:monoidal}
Let $\b\in{}_{\cL'}\un W_{\cL}, \g\in{}_{\cL''}\un W_{\cL'}, \cF\in {}_{\cL'}\cD^{\b}_{\cL},\cG\in {}_{\cL''}\cD^{\g}_{\cL'},\xi\in{}_{\cL'}\fP^{\b}_{\cL}$ and $\y\in{}_{\cL''}\fP^{\g}_{\cL'}$. Consider the maps
\begin{eqnarray*}
\Hom(\Th(\y), \cG[i])\times\Hom(\Th(\xi), \cF[j])\xr{\star} \Hom(\Th(\y)\star\Th(\xi), \cG\star\cF[i+j])\\
\xr{(-)\c\ph(\y,\xi)}\Hom(\Th(\y\xi), \cG\star\cF[i+j]).
\end{eqnarray*}
Taking direct sum over $i,j\in\ZZ$ we get a pairing
\begin{equation*}
(\cdot,\cdot): \MM_{\y}(\cG)\times \MM_{\xi}(\cF)\to \MM_{\y\xi}(\cG\star\cF).
\end{equation*}
satisfying the following relations for $a\in R, f\in \MM_{\xi}(\cF)$ and $g\in \MM_{\y}(\cG)$
\begin{equation*}
((1\ot a)\cdot g,f)=(g,(a\ot 1)\cdot f),\quad ((a\ot 1)\cdot g,f)=(a\ot 1)\cdot (g,f), \quad (g,(1\ot a)\cdot f)=(1\ot a)\cdot (g,f).
\end{equation*}
Therefore it induces a map in $\RRF$
\begin{equation}\label{MM mono}
c_{\y,\xi}(\cG,\cF): \MM_{\y}(\cG)\ot_{R}\MM_{\xi}(\cF)\to \MM_{\y\xi}(\cG\star\cF).
\end{equation}
As $\cF$ and $\cG$ vary, the above maps form a natural transformation between two bifunctors ${}_{\cL''}\cD^{\g}_{\cL'}\times {}_{\cL'}\cD^{\b}_{\cL}\to \RRF$ defined by the left and right sides. The co-associativity of $\{\ph(\y,\xi)\}$  as shown in Proposition  \ref{p:coalg ext}(3) implies that the maps \eqref{MM mono} are associative for three composable $\xi,\y,\z$.

%%%%% ok %%%%%%%
\begin{lemma}\label{l:M conv th} With the above notation,  $c_{\y,\xi}(\cG, \xi)$ is an isomorphism in $\RRF$. In particular (by Lemma \ref{l:MM th}) there is a canonical isomorphism
\begin{equation*}
\MM_{\y}(\cG)\ot_{R}R(w^{\b})\isom \MM_{\y\xi}(\cG\star\xi).
\end{equation*}
Similar statement holds when $\cG$ appears in the second factor.
\end{lemma}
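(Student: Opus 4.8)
The plan is to reduce the statement about $\MM_{\y\xi}(\cG \star \xi)$ to the special case already handled in Lemma~\ref{l:MM th} (where one computes $\MM_{\xi}(\xi) \cong R(w^{\b})$) by a d\'evissage on $\xi$, using the factorization of $\xi$ through simple reflections provided by the structure theory of blocks. Concretely, recall that $\xi \in {}_{\cL'}\fP^{\b}_{\cL}$ has $\om\xi \cong \uIC(w^{\b})_{\cL}$, and that $w^{\b}$ can be written as a reduced word $w^{\b} = s_{i_1}\cdots s_{i_r}$ in simple reflections of $W$, none of which lies in the relevant $W^{\c}$ at the corresponding stage (since $w^{\b}$ is minimal in its block; compare the proof of Proposition~\ref{p:min th}). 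By Lemma~\ref{l:clean s equiv}(1)(3) and Proposition~\ref{p:min th}(3), $\xi$ is obtained from $\d_{\cL}$ by successive right convolutions with the clean objects $\IC(\dot s)_{\bullet}$, and each such convolution is an equivalence of categories. The key point is that $c_{\y,\xi}(\cG, -)$ is compatible with this factorization: the associativity of the pairings $c$ (Proposition~\ref{p:coalg ext}(3)) together with the compatibility of $\t(\xi)$ and $\ph(\y,\xi)$ with composition of blocks (Lemma~\ref{l:switch Th}, Proposition~\ref{p:coalg ext}) means it suffices to verify the isomorphism claim when $\xi = \d_{\cL}$ (trivial: $\MM_{\y}(\cG) \ot_R R \isom \MM_{\y}(\cG)$, using $\Th(\y \cdot \d_\cL) = \Th(\y)$) and when $\xi = \IC(\dot s)_{\cL}$ for a simple reflection $s \notin \WL$.

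For the base cases, the $s \notin \WL$ situation is the heart of the matter. Here $\IC(\dot s)_{\cL} \cong \D(\dot s)_{\cL} \cong \nb(\dot s)_{\cL}$ by Lemma~\ref{l:clean s equiv}(1), and right convolution with it is an equivalence ${}_{\cL''}\cD^{\g}_{\cL'} \to {}_{\cL''}\cD^{\g s}_{s\cL'}$ sending $\cG$ to $\cG \star \IC(\dot s)_{\cL}$. On the Soergel side, $\MM_{\IC(\dot s)_{\cL}}(\IC(\dot s)_{\cL}) \cong R(s)$ by Lemma~\ref{l:MM th} (this is $R(w^{\b})$ for the block $\b$ with $w^{\b}=s$), and $R(s) \cong R \ot_{R} R(s)$ canonically. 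The claim then becomes that the natural pairing map
\begin{equation*}
c_{\y, \IC(\dot s)_{\cL}}(\cG, \IC(\dot s)_{\cL}): \MM_{\y}(\cG) \ot_R R(s) \to \MM_{\y \cdot \IC(\dot s)_{\cL}}(\cG \star \IC(\dot s)_{\cL})
\end{equation*}
is an isomorphism. I would prove this by checking it is an isomorphism after forgetting the Frobenius action (the graded $R\ot R$-module structures determine everything, and both sides are $\Fr$-equivariant by the usual uniqueness-of-normalization argument), and there one can argue by a dimension/rank count: right convolution with $\IC(\dot s)_{\cL}$ is an equivalence, so it induces an isomorphism on all graded Hom spaces $\Homb(\Th(\y \cdot \IC(\dot s)_{\cL}), \cG \star \IC(\dot s)_{\cL}) \cong \Homb(\Th(\y) \star \IC(\dot s)_{\cL}, \cG \star \IC(\dot s)_{\cL})$ — after using $\t(\IC(\dot s)_{\cL})$ to rewrite $\Th(\y \cdot \IC(\dot s)_{\cL}) = \Th^{\c}_{\cL''} \star \y \star \IC(\dot s)_{\cL}$ — and one checks directly that under these identifications the pairing $c$ becomes the obvious base-change map $M \ot_R R(s) \to M \ot_R R(s)$, which is the identity.

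The inductive step then combines these: writing $\xi' = \xi \star \IC(\dot s)_{\cL}$ with $w^{\b'} = w^{\b} s$ and $\ell(w^{\b}s) = \ell(w^{\b})+1$, one has $\Th(\xi') \cong \Th(\xi) \star \IC(\dot s)_{\cL}$ and, chasing the associativity diagram \eqref{asso} of Proposition~\ref{p:coalg ext}(3) with the third factor $\IC(\dot s)_{\cL}$, the map $c_{\y,\xi'}(\cG,\xi')$ factors as a composite of $c_{\y,\xi}(\cG \star \IC(\dot s)_{\cL}, \xi)$ (an isomorphism by induction, applied with $\cG$ replaced by $\cG \star \IC(\dot s)_{\cL}$ and noting the inductive hypothesis is for shorter $w^{\b}$) with the $s \notin \WL$ base case. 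Since both $R(w^{\b'}) = R(w^{\b}s)$ and the $\ot_R$ tensor steps match up by Lemma~\ref{l:MM th}, this gives the claim for $\xi'$. The symmetric statement with $\cG$ in the second factor is proved the same way, factoring $\cG$'s block through simple reflections instead. The main obstacle I anticipate is bookkeeping: making the compatibility of $c_{\y,\xi}$ with the factorization $\xi = \xi' \star \IC(\dot s)_{\cL}$ genuinely precise requires carefully tracking how $\ph(\y,\xi')$ decomposes via $\ph$, $\t$, and $\mu^2$, and this is exactly the content of Proposition~\ref{p:coalg ext}; once that diagram is in hand the rest is formal, but assembling the commuting diagram of pairings without sign or normalization errors is the delicate part.
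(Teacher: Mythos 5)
Your argument is correct in substance, but it reconstructs through a dévissage what the paper obtains in one step, and the induction is in fact redundant. The paper's proof uses only that $(-)\star\xi$ is an equivalence for an \emph{arbitrary} minimal IC sheaf $\xi$ — this is already Proposition~\ref{p:min th}(2), via Lemma~\ref{l:invertible} — so that $\psi:\MM_{\y\xi}(\cG\star\xi)=\Homb(\Th(\y)\star\xi,\cG\star\xi)\isom\Homb(\Th(\y),\cG)=\MM_{\y}(\cG)$, and then one checks that $\psi\circ c_{\y,\xi}(\cG,\xi)$ sends $f\ot 1$ to $f$ (using Proposition~\ref{p:coalg ext}(1)--(2)), hence is an isomorphism; therefore so is $c_{\y,\xi}(\cG,\xi)$. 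Your base case for $\xi=\IC(\dot s)_{\cL}$, $s\notin\WL$, is exactly this argument restricted to length one, so nothing in it actually uses cleanness of $\IC(\dot s)_{\cL}$ beyond invertibility under convolution — which holds for every minimal IC sheaf. Consequently the reduced-word factorization, the appeal to Lemma~\ref{l:clean s equiv}, and the associativity chase via Proposition~\ref{p:coalg ext}(3) can all be dropped. They are not wrong (the associativity of the pairings $c$ for composable minimal IC sheaves is indeed available, and your induction would close), but note two small defects in your inductive step as written: the factorization should read $c_{\y,\xi\s}(\cG,\xi\star\s)\circ(\id\ot c_{\xi,\s}(\xi,\s))=c_{\y\xi,\s}(\cG\star\xi,\s)\circ(c_{\y,\xi}(\cG,\xi)\ot\id)$ — your term ``$c_{\y,\xi}(\cG\star\IC(\dot s)_{\cL},\xi)$'' does not typecheck, since $\cG\star\IC(\dot s)_{\cL}$ is not composable with $\xi$ on that side — and the given $\xi'$ agrees with $\xi\star\IC(\dot s)_{\cL}$ only up to tensoring by a one-dimensional $\Fr$-module, which must be carried along. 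Both are repairable, but the direct argument avoids them entirely.
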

\begin{proof}
By definition we have
\begin{eqnarray*}
\psi: && \MM_{\y\xi}(\cG\star\xi)=\Homb(\Th(\y\xi), \cG\star\xi)=\Homb(\Th(\y)\star\xi, \cG\star\xi)\\
&\cong& \Homb(\Th(\y), \cG)=\MM_{\y}(\cG)
\end{eqnarray*}
where we used the fact that $\star\xi$ is an equivalence (Proposition \ref{p:min th}).  The composition
\begin{equation*}
\MM_{\y}(\cG)\ot_{R}R(w^{\b})\cong\MM_{\y}(\cG)\ot_{R}\MM_{\xi}(\xi)\xr{c_{\y,\xi}(\cG,\xi)}\MM_{\y\xi}(\cG\star\xi)\xr{\psi} \MM_{\y}(\cG)
\end{equation*}
sends  $f\ot 1$ to $f$, hence it is an isomorphism. This implies that $c_{\y,\xi}(\cG,\xi)$ is an isomorphism.

For the statement where $\cG$ appears as the second factor, we use the canonical isomorphism $\Th(\xi)\cong \xi\star\Th^{\c}_{\cL}$ given in Lemma \ref{l:switch Th}. The rest of the argument is the same as above. 
\end{proof}

%%%%%% ok %%%%%%%
\begin{lemma}\label{l:M conv s} Let $s\in W$ be a simple reflection and $s\in \WL$. Let $\xi\in {}_{\cL'}\fP^{\b}_{\cL}$ for some block $\b\in{}_{\cL'}\un W_{\cL}$ and $\cF\in {}_{\cL'}\cD^{\b}_{\cL}$. Then the map $c_{\xi,\d_{\cL}}(\cF,\IC(s)_{\cL})$ is an isomorphism. 
\end{lemma}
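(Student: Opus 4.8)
The plan is to identify both the source and the target of $c:=c_{\xi,\d_{\cL}}(\cF,\IC(s)_{\cL})$ explicitly and to show that, up to a nonzero scalar, $c$ coincides with the canonical isomorphism furnished by Lemma \ref{l:MM ICs}(1). Recall that $\d_{\cL}\in{}_{\cL}\fP^{\c}_{\cL}$, so $\MM_{\d_{\cL}}=\MM^{\c}$ and $\xi\cdot\d_{\cL}\cong\xi$; thus $c$ is a morphism $\MM_{\xi}(\cF)\ot_{R}\MM^{\c}(\IC(s)_{\cL})\to\MM_{\xi}(\cF\star\IC(s)_{\cL})$ in $\RRF$.

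First I would identify the source. By the canonical isomorphism $\IC(s)_{\cL}\cong\IC(s)^{\da}_{\cL}$ of Lemma \ref{l:ICda es}(2) together with Lemma \ref{l:MM ICs}(2), one has $\MM^{\c}(\IC(s)_{\cL})\cong(R\ot_{R^{s}}R)\j{1}$ in $\RRF$, under which the appropriate twist of $\th^{\da}_{s}$ corresponds to $1\ot1$. Collapsing $\ot_{R}(R\ot_{R^{s}}R)=\ot_{R^{s}}R$, this gives $\MM_{\xi}(\cF)\ot_{R}\MM^{\c}(\IC(s)_{\cL})\cong\MM_{\xi}(\cF)\ot_{R^{s}}R\j{1}$, a module generated over $R\ot R$, using only the right $R$-action, by its submodule $\MM_{\xi}(\cF)\ot1$; under the above identification the latter is $\MM_{\xi}(\cF)\ot\th^{\da}_{s}$. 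On the other hand, since $s\in\WL$, Lemma \ref{l:MM ICs}(1) provides a canonical isomorphism $\a_{\cF}\colon\MM_{\xi}(\cF)\ot_{R^{s}}R\j{1}\isom\MM_{\xi}(\cF\star\IC(s)^{\da}_{\cL})=\MM_{\xi}(\cF\star\IC(s)_{\cL})$. Since $c$ and $\a_{\cF}$ are both $R\ot R$-linear, it suffices to compare their restrictions to $\MM_{\xi}(\cF)\ot1$.

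Unwinding the construction of $c$ in \S\ref{ss:monoidal} together with the definition of $\ph(\xi,\d_{\cL})$ gives, for $f\in\MM_{\xi}(\cF)$, the formula $c(f\ot\th^{\da}_{s})=(f\star\id_{\IC(s)_{\cL}})\c A$, where $A:=(\id_{\Th(\xi)}\star\th^{\da}_{s})\c\ph(\xi,\d_{\cL})\colon\Th(\xi)\to\Th(\xi)\star\IC(s)_{\cL}$ depends on neither $\cF$ nor $f$. Unwinding the proof of Lemma \ref{l:MM ICs}(1) — the chain of isomorphisms \eqref{MF des Ps}, \eqref{pullPs} and the isomorphism of Lemma \ref{l:inert s}, using $\Th(\xi)=\pi^{*}_{s}\ov\Th$ from Lemma \ref{l:IC descend} — gives similarly $\a_{\cF}(f\ot1)=(f\star\id_{\IC(s)^{\da}_{\cL}})\c B$, where $B\colon\Th(\xi)\to\Th(\xi)\star\IC(s)^{\da}_{\cL}\j{-1}=\pi^{*}_{s}\pi_{s*}\Th(\xi)$ is the unit of the adjunction $(\pi^{*}_{s},\pi_{s*})$ at $\Th(\xi)$, transported through Lemma \ref{l:inert s}. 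Since $\pi_{s*}\pi^{*}_{s}\cong\id\oplus\j{-2}$ (projection formula, as in the proof of Lemma \ref{l:MM ICs}(1)) and $\ov\Th$ is a shifted simple perverse sheaf, $\Hom(\Th(\xi),\Th(\xi)\star\IC(s)^{\da}_{\cL}\j{-1})$ is one-dimensional in degree zero, so $A=\l B$ for a single scalar $\l$, whence $c=\l\a_{\cF}$ as morphisms of $R\ot R$-modules. To see $\l\ne0$, I would compose $A$ and $B$ with $\id_{\Th(\xi)}\star r$, where $r\colon\IC(s)_{\cL}\to\d_{\cL}\j{1}$ is the natural restriction map to the identity stratum. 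Under Lemma \ref{l:inert s} the map $\id\star r$ becomes the counit of $(\pi^{*}_{s},\pi_{s*})$, so the triangle identity (equivalently \eqref{MFMF} of Lemma \ref{l:MM ICs}(1) for $\cF=\d_{\cL}$) gives $(\id\star r)\c B=\id_{\Th(\xi)}$ up to twist. On the other side $(\id\star r)\c A=(\id_{\Th(\xi)}\star(r\c\th^{\da}_{s}))\c\ph(\xi,\d_{\cL})$, and $r\c\th^{\da}_{s}$ and $\e_{\cL}$ both restrict to the identity on the canonically trivialized stalk at the identity point — for $r\c\th^{\da}_{s}$ this uses the normalization of $\th^{\da}_{s}$ via $\io_{s}$ in Lemma \ref{l:ICda es}(2) — so, $\Hom(\Th^{\c}_{\cL},\d_{\cL})$ being one-dimensional, $r\c\th^{\da}_{s}$ is a nonzero multiple of $\e_{\cL}$; then Proposition \ref{p:coalg ext}(2) gives $(\id\star\e_{\cL})\c\ph(\xi,\d_{\cL})=\id_{\Th(\xi)}$, so $(\id\star r)\c A$ is a nonzero multiple of $\id_{\Th(\xi)}$. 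Hence $\l\ne0$, and since $\a_{\cF}$ is an isomorphism so is $c$.

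The main obstacle is the middle step: extracting the precise formula $\a_{\cF}(f\ot1)=(f\star\id)\c B$ from the proof of Lemma \ref{l:MM ICs}(1), and carrying through consistently the several canonical identifications involved — $\IC(s)_{\cL}\cong\IC(s)^{\da}_{\cL}$, the equivalence $(-)\star\IC(s)_{\cL}\cong\pi^{*}_{s}\pi_{s*}(-)\j{1}$ of Lemma \ref{l:inert s}, the descent $\Th(\xi)=\pi^{*}_{s}\ov\Th$, and the various $\j{\pm1}$-twists. The geometry needed is already in place from \S\ref{ss:inert s} and Lemma \ref{l:IC descend}; the remaining work is a diagram chase, each step of which is controlled by the one-dimensionality of the relevant Hom spaces together with the explicit normalizations of $\e_{\cL}$, $\th^{\da}_{s}$ and $\io_{s}$.
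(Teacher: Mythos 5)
Your proposal is correct and follows essentially the same route as the paper: both arguments reduce to comparing the map $(\id\star\th^{\da}_{s})\c\ph(\xi,\d_{\cL})$ (your $A$, the paper's composition involving $\psi_{s}=\io_{s}\c\th^{\da}_{s}$) with the adjunction unit $\Th(\xi)\to\pi_{s}^{*}\pi_{s*}\Th(\xi)$, use the one-dimensionality of $\Hom(\Th(\xi),\Th(\xi)\star\IC(s)_{\cL}\j{-1})$, and pin down the scalar by composing with the counit and invoking Proposition \ref{p:coalg ext}(2). The only cosmetic difference is that the paper concludes the two maps are equal while you only verify the scalar is nonzero, which suffices for the statement.
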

\begin{proof}
Lemma \ref{l:MM ICs} already gives us an isomorphism
\begin{equation*}
\mu_{\cF,s}: \MM_{\xi}(\cF)\ot_{R}\MM^{\c}(\IC(s)_{\cL})\cong \MM_{\xi}(\cF)\ot_{R^{s}}R\j{1}\cong \MM_{\xi}(\cF\star\IC(s)_{\cL}).
\end{equation*}
It remains to show that $\mu_{\cF,s}$ is the same as $c_{\xi,\d_{\cL}}(\cF,\IC(s)_{\cL})$. To prove this, after a diagram chasing, it is enough to show that the following composition (we are using notation from the proof of Lemma \ref{l:MM ICs})
\begin{equation}\label{THS}
\pi^{*}_{s}\ov\Th=\Th(\xi)\xr{\ph(\xi,\d_{\cL})}\Th(\xi)\star\Th^{\c}_{\cL}\xr{\id\star \psi_{s}}\Th(\xi)\star\IC(s)_{\cL}\j{-1}\cong \pi_{s}^{*}\pi_{s*}\Th(\xi)=\pi_{s}^{*}\pi_{s*}\pi^{*}_{s}\ov\Th
\end{equation}
is the natural map given by the adjunction $\ov\Th\to \pi_{s*}\pi_{s}^{*}\ov\Th$.
By Proposition \ref{p:conv Th}(2)(3),  $\pi_{s}^{*}\pi_{s*}\Th(\xi)\cong \Th(\xi)\star\IC(s)_{\cL}\j{-1}$ lies in perverse degree $\ge0$, with $\om\pH^{0}\pi_{s}^{*}\pi_{s*}\Th(\xi)\cong \uTh(\xi)$, we see that $\Hom(\Th(\xi), \pi_{s}^{*}\pi_{s*}\Th(\xi))$ is one-dimensional. Therefore it suffices to show that the composition of \eqref{THS} with the adjunction $\pi_{s}^{*}\pi_{s*}\Th(\xi)\to\Th(\xi)$ is the identity map of $\Th(\xi)$. This boils down to the commutativity of the following diagram
\begin{equation*}
\xymatrix{ \Th(\xi)\ar[r]^-{\ph(\xi,\d_{\cL})}\ar@{=}[d]^{\id} & \Th(\xi)\star\Th^{\c}_{\cL}\ar[r]^-{\id\star\psi_{s}}\ar[d]^{\id\star\e_{\cL}} & \Th(\xi)\star\IC(s)_{\cL}\j{-1}\ar[dl]^{\id\star\e_{s}}          \\
\Th(\xi)\ar[r]^-{\sim} & \Th(\xi)\star\d_{\cL}}
\end{equation*}
Here $\e_{s}:\IC(s)_{\cL}\j{-1}\to \d_{\cL}$ is the map that induces the identity at $\dot e\in G$.  The left square is commutative by Proposition \ref{p:coalg ext}(2); the right triangle is commutative by the characterization of $\psi_{s}$. This finishes the proof.
\end{proof}

%%%%% ok %%%%%%%%
\begin{cor}\label{c:MM mono} The map $c_{\y,\xi}(\cG,\cF)$ in \eqref{MM mono} is an isomorphism if either $\cF\in{}_{\cL'}\cD^{\b}_{\cL}$ is a semisimple complex or $\cG\in {}_{\cL''}\cD^{\g}_{\cL'}$ is a semisimple complex.
\end{cor}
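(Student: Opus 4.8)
The plan is to reduce iso-ness of $c_{\y,\xi}(\cG,\cF)$ to a Bott--Samelson-type induction, with Lemma~\ref{l:M conv th} and Lemma~\ref{l:M conv s} as the two building blocks. Since a morphism in $\RRF$ is an isomorphism as soon as it is one in $\RRM$, I would first pass to the non-mixed category, so that ``$\cF$ semisimple'' means $\om\cF$ is a finite direct sum of shifts of the $\uIC(w)_{\cL}$, $w\in\b$; as $c_{\y,\xi}(\cG,-)$ is a natural transformation commuting with shifts and direct sums, and iso-ness is insensitive to twisting $\cF$, this reduces the problem to $\cF=\IC(\dw)_{\cL}$ for a single $w\in\b$. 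The case where instead $\cG$ is semisimple is handled by the mirror-image argument, peeling simple factors off $\cG$ on the left and invoking the left-convolution analogues of Lemmas~\ref{l:M conv th} and~\ref{l:M conv s}.

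Next I would realize $\IC(\dw)_{\cL}$ as a direct summand of an explicit convolution built from ``good'' factors. Fixing a reduced expression $w=s_{i_{N}}\cdots s_{i_{1}}$ in simple reflections of $W$ and putting $\cL_{0}=\cL$, $\cL_{j}=s_{i_{j}}\cdots s_{i_{1}}\cL$, the complex $\cH:=\IC(\ds_{i_{N}})_{\cL_{N-1}}\star\cdots\star\IC(\ds_{i_{1}})_{\cL_{0}}$ is semisimple by Lemma~\ref{l:conv pure}(1), is supported on $U\bs G_{\le w}/U$, and restricts on the open stratum to $C(\dw)_{\cL}$ because the word is reduced, so $\IC(\dw)_{\cL}$ occurs as a summand of $\cH$; by naturality of $c$ it then suffices to treat $\cH$ in place of $\IC(\dw)_{\cL}$. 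The point of this step is that each factor $\IC(\ds_{i_{j}})_{\cL_{j-1}}$ is ``good'': if $s_{i_{j}}\notin W^{\c}_{\cL_{j-1}}$ it is clean (equal to $\D(\ds_{i_{j}})_{\cL_{j-1}}$ by Lemma~\ref{l:clean s equiv}(1)) and is in fact a minimal IC sheaf, since a simple reflection $s$ with $\a_{s}\notin\Phi_{\cL}$ satisfies $s(\Phi^{+}_{\cL})\subset\Phi^{+}$ and hence is the minimal element of its block by Lemma~\ref{l:max}; whereas if $s_{i_{j}}\in W^{\c}_{\cL_{j-1}}$ it is isomorphic to the object $\IC(s_{i_{j}})_{\cL_{j-1}}$ of \S\ref{ss:inert s}.

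The core of the argument is then an induction on $m$ proving: for every object $\cG'$ (with $\eta'$ a minimal IC sheaf in its block) and every convolution $\cH'=\cG_{m}\star\cdots\star\cG_{1}$ whose factors are each either a minimal IC sheaf or an object $\IC(s)_{\cM}$ with $s$ a simple reflection of $W$ lying in $W^{\c}_{\cM}$, the map $c_{\eta',\eta''}(\cG',\cH')$ is an isomorphism. The base $m=1$ is exactly Lemma~\ref{l:M conv th} (if $\cG_{1}$ is a minimal IC sheaf) or Lemma~\ref{l:M conv s} (if $\cG_{1}=\IC(s)_{\cM}$). For the inductive step, writing $\cH'=\cG_{m}\star\cK$ with $\cK=\cG_{m-1}\star\cdots\star\cG_{1}$, the associativity of the pairings \eqref{MM mono} established just after \eqref{MM mono} (from Proposition~\ref{p:coalg ext}(3)) yields
\begin{equation*}
c_{-,-}(\cG',\cG_{m}\star\cK)\c\bigl(\id\ot c_{-,-}(\cG_{m},\cK)\bigr)=c_{-,-}(\cG'\star\cG_{m},\cK)\c\bigl(c_{-,-}(\cG',\cG_{m})\ot\id\bigr),
\end{equation*}
where each $c$ carries the appropriate minimal IC sheaves as subscripts; here $c_{-,-}(\cG',\cG_{m})$ is an isomorphism by the $m=1$ case, while $c_{-,-}(\cG_{m},\cK)$ and $c_{-,-}(\cG'\star\cG_{m},\cK)$ are isomorphisms by the inductive hypothesis applied to the $(m-1)$-fold convolution $\cK$, so $c_{-,-}(\cG',\cH')$ is an isomorphism too. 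Taking $\cG'=\cG$ and $\cH'=\cH$ finishes the proof.

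I expect the main obstacle to be the bookkeeping in the inductive step: choosing the reduced word so that every peeled-off factor is one to which Lemma~\ref{l:M conv th} or Lemma~\ref{l:M conv s} applies, checking via Lemma~\ref{l:max} that clean simple reflections genuinely produce minimal IC sheaves (rather than arbitrary simple perverse sheaves), and tracking which minimal IC sheaf accompanies each partial convolution so that the associativity identity is applied with consistent subscripts. By contrast, the Bott--Samelson ``direct summand'' claim and the passage to the non-mixed category are routine, and one should note that exactness of $\ot_{R}$ on free modules is exactly what makes the semisimplicity hypothesis essential (so that no dévissage on general $\cF$ is available).
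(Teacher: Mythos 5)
Your proof is correct and follows essentially the same route as the paper: reduce by symmetry and by passing to the non-mixed category to the case of a Bott--Samelson-type convolution of the $\IC(\ds_{i_j})_{\cL_{j-1}}$, then peel off factors one at a time using the associativity of the maps $c_{\y,\xi}$ together with Lemma \ref{l:M conv th} (for the clean factors, which are minimal IC sheaves) and Lemma \ref{l:M conv s} (for the factors with $s\in W^{\c}_{\cL}$). The paper's own proof is just a terser version of this; your extra verification via Lemma \ref{l:max} that a clean $\IC(\ds)_{\cL}$ is indeed a minimal IC sheaf is a detail the paper leaves implicit.
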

\begin{proof}
By symmetry we only need to treat the case $\cF$ semisimple, and it suffices to work with non-mixed complexes. Since any simple perverse sheaf $\uIC(w)_{\cL}$ is a direct summand of a successive convolution $\uIC(s_{i_{1}},\cdots, s_{i_{n}})_{\cL}:=\uIC(s_{i_{1}})_{\cL_{1}}\star\cdots\star\uIC(s_{i_{n}})_{\cL}$,  it suffices to prove the statement for $\cF=\uIC(s_{i_{1}},\cdots, s_{i_{n}})_{\cL}$ for any sequence of simple reflections $(s_{i_{1}},\cdots, s_{i_{n}})$ in $W$. But the latter case follows by successive application of either Lemma \ref{l:M conv th} or Lemma \ref{l:M conv s}.
\end{proof}

%%%% ok %%%%%%%
The next result is the main result of this section. It is a monodromic version of Soergel's \cite[Erweiterungssatz 17]{S}. For the non-monodromic Hecke categories, the Erweiterungssatz (Extension Theorem) of Soergel is a special case of a more general result of Ginzburg for varieties with $\Gm$-actions \cite{Ginz}. Our argument below is specific to the Hecke categories. 

%%%%%%%% OK %%%%%%%%
\begin{theorem}\label{th:Hom} Let $\b\in{}_{\cL'}\un W_{\cL}, \xi\in{}_{\cL'}\fP^{\b}_{\cL}$ and let $\cF,\cG\in {}_{\cL'}\cD^{\b}_{\cL}$ be semisimple complexes. Then the natural map
\begin{equation*}
m(\cF,\cG): \Homb(\cF,\cG)\to \Homb_{\RRM}(\MM_{\xi}(\cF), \MM_{\xi}(\cG))
\end{equation*}
is an isomorphism in $\RRF$.
\end{theorem}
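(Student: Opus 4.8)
The plan is to adapt Soergel's proof of his Erweiterungssatz, using the monoidal structure of $\MM_{\xi}$ (Corollary \ref{c:MM mono}) to reduce to the case where $\cF$ is a successive convolution $\uIC(s_{i_{1}},\dots,s_{i_{n}})_{\cL}=\uIC(s_{i_{1}})_{\cL_{1}}\star\cdots\star\uIC(s_{i_{n}})_{\cL}$ of objects $\uIC(s)_{\cL}$, and then to induct on $n$, peeling off one factor at a time. Since $m(\cF,\cG)$ is a morphism in $\RRF$ by naturality, it suffices to prove it is an isomorphism of graded $R\ot R$-modules, so I may work with the non-mixed versions of everything. Both sides are additive in $\cF$ and $\cG$ and $m$ respects direct sums; and every shift of a simple perverse sheaf in a block $\b$ is a direct summand of such a convolution (as in the proof of Corollary \ref{c:MM mono}). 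Hence it suffices to prove the statement with $\cF=\uIC(s_{i_{1}},\dots,s_{i_{n}})_{\cL}$ and $\cG$ an arbitrary semisimple complex in the same block; note that a length-$0$ such object is the monoidal unit $\d_{\cL}$, which forces $\b$ to be the neutral block and $\MM_{\xi}=\MM^{\c}$.

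For the base case $n=0$: $\cF=\d_{\cL}$, and $\MM^{\c}(\d_{\cL})\cong R(e)=R$ with $\e_{\cL}\leftrightarrow 1$ by Lemma \ref{l:MM th}. By Corollary \ref{c:filM}, $\MM^{\c}(\cG)=\Homb(\Th^{\c}_{\cL},\cG)$ carries an increasing filtration indexed by $v\in W_{\cL}$ with $\Gr_{v}\MM^{\c}(\cG)\cong\Homb(i_{v}^{*}\Th^{\c}_{\cL},i_{v}^{!}\cG)$; by Propositions \ref{p:stalk Th} and \ref{p:parity purity}(2) this is a free graded module over $\upH^{*}_{\G(v)_{k}}(\pt_{k})\cong R(v)$, hence $R$-torsion-free, and the two $R\ot R$-actions on it factor through $R(v)=(R\ot R)/(v(a)\ot1-1\ot a)$. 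The bottom piece is $F_{\le e}\MM^{\c}(\cG)=\Gr_{e}\MM^{\c}(\cG)\cong\Homb(\uC(e)_{\cL},i_{e}^{!}\cG)\cong\Homb(\d_{\cL},\cG)$, on which the two $R$-actions coincide. Conversely, since $W$ acts faithfully on $\xch(T)$ and each $\Gr_{v}$ is $R$-torsion-free, any $x\in\MM^{\c}(\cG)$ annihilated by all $a\ot1-1\ot a$ must lie in $F_{\le e}$. Thus $\Homb_{\RRM}(R(e),\MM^{\c}(\cG))=F_{\le e}\MM^{\c}(\cG)=\Homb(\d_{\cL},\cG)$, and $m(\d_{\cL},\cG)$, which sends $\phi$ to $\phi\c\e_{\cL}$, is precisely this identification: $\phi\c\e_{\cL}$ factors through $i_{e*}i_{e}^{!}\cG$ because $\d_{\cL}=i_{e*}\uC(e)_{\cL}$, and the resulting map is the $(i_{e*},i_{e}^{!})$-adjunction isomorphism.

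For $n\ge 1$, write $\cF=\cF'\star\uIC(s)_{\cL}$ with $s=s_{i_{n}}$ and $\cF'$ a length-$(n-1)$ convolution in ${}_{\cL'}\cD_{s\cL}$. By Lemma \ref{l:conv pure}, $\cG\star\uIC(s^{-1})_{s\cL}$ and $\cG\star\uIC(s)_{\cL}$ are again semisimple. If $s\notin\WL$, then $(-)\star\uIC(s)_{\cL}$ is an equivalence (Lemma \ref{l:clean s equiv}(2)) which, by Proposition \ref{p:min th}, carries the rigidified object of the block of $\cF'$ to that of the block of $\cF$; hence $m(\cF,\cG)$ is identified with $m(\cF',\cG\star\uIC(s^{-1})_{s\cL})$, which is an isomorphism by the inductive hypothesis. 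If $s\in\WL$ (so $\uIC(s)_{\cL}$ lies in the neutral block, $\cF'$ stays in the block of $\cF$), then by Corollary \ref{c:adj ICs} the functor $(-)\star\IC(s)_{\cL}$ is biadjoint to itself up to a shift, giving $\Homb(\cF'\star\IC(s)_{\cL},\cG)\cong\Homb(\cF',\cG\star\IC(s)_{\cL})$; on the algebraic side, Lemma \ref{l:MM ICs} (via $\IC(s)^{\da}_{\cL}\cong\IC(s)_{\cL}$ and Lemma \ref{l:M conv s}) identifies $\MM_{\xi}$ applied to right convolution by $\IC(s)_{\cL}$ with $(-)\ot_{R^{s}}R\j1$, and since $R$ is free of rank two over $R^{s}$ the functor $(-)\ot_{R^{s}}R$ is biadjoint to restriction up to the shift $\j{\pm1}$, yielding $\Homb_{\RRM}(\MM_{\xi}(\cF')\ot_{R^{s}}R\j1,\MM_{\xi}(\cG))\cong\Homb_{\RRM}(\MM_{\xi}(\cF'),\MM_{\xi}(\cG\star\IC(s)_{\cL})\j{-2})$. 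Granting that $m$ intertwines the geometric and algebraic adjunction isomorphisms, the inductive hypothesis applied to $(\cF',\cG\star\IC(s)_{\cL})$ finishes the step.

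The main obstacle is precisely this last compatibility in the case $s\in\WL$: one must check that $\MM_{\xi}$ carries the unit and counit of the geometric self-adjunction of $(-)\star\IC(s)_{\cL}$ — built from $\pi_{s}^{*}\dashv\pi_{s*}\dashv\pi_{s}^{!}$ and the identification $\cF\star\IC(s)_{\cL}\cong\pi_{s}^{*}\pi_{s*}\cF\j1$ of Lemma \ref{l:inert s} — to the unit and counit of the algebraic self-adjunction of $(-)\ot_{R^{s}}R$ attached to the Frobenius extension $R^{s}\subset R$. This amounts to unwinding the isomorphism \eqref{MFICs} of Lemma \ref{l:MM ICs}, whose defining property is exactly that the composite \eqref{MFMF} is the identity; the verification is bookkeeping-heavy but uses no ideas beyond the results of \S\ref{s:Soergel}. (One could alternatively route the whole argument through the monoidality isomorphisms $c_{\y,\xi}$ of Corollary \ref{c:MM mono} together with a compatibility of $\MM_{\xi}$ with Verdier duality, reducing $\Homb(\cF,\cG)$ to the $n=0$ case applied to $\DD(\cF)\star\cG$; the compatibility burden is the same.)
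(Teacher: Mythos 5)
Your argument is essentially the paper's proof: the same reduction to Bott--Samelson convolutions, the same base case $\cF=\un\d_{\cL}$ handled via the filtration of Corollary \ref{c:filM} (this is Lemma \ref{l:Hom from d}), and the same induction peeling off one $\uIC(s)_{\cL}$, split into the invertible case $s\notin\WL$ and the self-adjoint case $s\in\WL$ matched against the Frobenius-extension adjunction \eqref{adj for bimod}. The compatibility you single out as the main obstacle is exactly what the paper asserts when aligning \eqref{adj for bimod} with Corollary \ref{c:adj ICs} via Lemma \ref{l:MM ICs}, and the paper eases this burden by observing that the relevant square need only commute up to a nonzero scalar for the isomorphism statement to follow.
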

\begin{proof} Since $m(\cF,\cG)$ is $\Fr$-equivariant, if suffices to prove that $m(\cF,\cG)$ is an isomorphism in $\RRM$. Therefore we may assume $\cF,\cG\in {}_{\cL'}\un\cD^{\b}_{\cL}$.  In the rest of the argument we only consider the non-mixed Soergel functors, and we do not specify the non-mixed minimal IC sheaves defining them (the non-mixed minimal IC sheaves are unique up to isomorphism in each block); we simply write $\un\MM$ for the non-mixed Soergel functor.

Since every semisimple complex is a direct sum of shifts of $\uIC(w)_{\cL}$ (for $w\in\b$), it suffices to prove the above isomorphism for $\cF=\uIC(w)_{\cL}$. For a sequence $\un w=(s_{i_{1}},\cdots, s_{i_{n}})$ of simple reflections, write $\uIC(\un w)_{\cL}=\uIC(s_{i_{1}})_{s_{i_{2}}\cdots s_{i_{n}}\cL}\star\cdots\star\uIC(s_{i_{n}})_{\cL}$. By the decomposition theorem \cite{BBD},  every $\uIC(w)_{\cL}$ is a direct summand of $\uIC(\un w)_{\cL}$ for some  sequence  $\un w$, it suffices to treat the case $\cF=\uIC(\un w)_{\cL}$ for a sequence $\un w=(s_{i_{1}},\cdots, s_{i_{n}})$ of simple reflections, i.e., showing the following is an isomorphism
\begin{equation}\label{ww'}
\Homb(\uIC(\un w)_{\cL}, \cG)\to \Homb_{\RRM}( \un\MM(\uIC(\un w)_{\cL}), \un\MM(\cG)).
\end{equation}

We prove this by induction on the length of $\un w$ (and varying block $\b$ accordingly). If $\un w=\varnothing$, this means $\uIC(\un w)_{\cL}\cong \un\d_{\cL}$. This case will be treated in Lemma \ref{l:Hom from d}.

Now suppose \eqref{ww'} is an isomorphism for all $\un w$ of length $<n$. Consider a sequence $\un w=(s_{i_{1}}, \cdots, s_{i_{n}})$ of length $n$ and arbitrary semisimple complex $\cG$ in the same block as $\uIC(\un w)_{\cL}$. Let $\un w'=(s_{i_{1}}, \cdots, s_{i_{n-1}}), s=s_{i_{n}}$ then $\uIC(\un w)_{\cL}\cong\uIC(\un w')_{s\cL}\star\uIC(s)_{\cL}$. Consider the following diagram where each solid arrow is well-defined up to a nonzero scalar
\begin{equation}\label{HomIC}
\xymatrix{ \Homb(\uIC(\un w')_{s\cL}\star\uIC(s)_{\cL}, \cG)\ar[r]^-{a}\ar[d]^{m} &      \Homb(\uIC(\un w')_{s\cL}, \cG\star\uIC(s)_{s\cL})\ar[d]^{m'}      \\
\Homb(\un\MM(\uIC(\un w')_{s\cL}\star\uIC(s)_{\cL}), \un\MM(\cG))\ar[d]^{u} &      \Homb(\un\MM(\uIC(\un w')_{s\cL}), \un\MM(\cG\star\uIC(s)_{s\cL}))\ar[d]^{u'}    \\
\Homb(\un\MM(\uIC(\un w')_{s\cL})\ot_{R}\un\MM(\uIC(s)_{\cL}), \un\MM(\cG))\ar@{-->}[r]^-{b} &      \Homb(\un\MM(\uIC(\un w')_{s\cL}), \un\MM(\cG)\ot_{R}\un\MM(\uIC(s)_{s\cL}))   
}
\end{equation}
Here the map $a$ is the adjunction isomorphism either from Lemma \ref{l:clean s equiv} if $s\notin \WL$ or as in Corollary \ref{c:adj ICs} if $s\in \WL$. The maps $m$ and $m'$ are given by the functor $\un\MM$, and $m'$ is an isomorphism by  inductive hypothesis for $\un w'$.  The isomorphisms $u$ and $u'$ are induced by the monoidal structure of $\un\MM$ proved in Corollary \ref{c:MM mono}. Therefore, to show that $m$ is an isomorphism, it suffices to construct the dotted arrow $b$ which is an isomorphism and makes the diagram commutative up to a nonzero scalar.

If $s\notin \WL$, using Lemma \ref{l:M conv th} we have 
\begin{eqnarray*}
&&\Homb(\un\MM(\uIC(\un w')_{s\cL})\ot_{R}\un\MM(\uIC(s)_{\cL}), \un\MM(\cG))\cong \Homb(\un\MM(\uIC(\un w')_{s\cL})\ot_{R}R(s),\un\MM(\cG))  \\
&\cong&   \Homb(\un\MM(\uIC(\un w')_{s\cL}),\un\MM(\cG)\ot_{R}R(s)) \cong \Homb(\un\MM(\uIC(\un w')_{s\cL}), \un\MM(\cG)\ot_{R}\un\MM(\uIC(s)_{s\cL})). 
\end{eqnarray*}
Let $b$ be the composition of the above isomorphisms. It is easy to check that $b$ makes  \eqref{HomIC} commutative, hence $m$ is an isomorphism.

If $s\in \WL$, by Lemma \ref{l:M conv s} it suffices to construct an isomorphism
\begin{eqnarray*}
b': \Homb(\un\MM(\uIC(\un w')_{\cL}\ot_{R^{s}}R\j{1}, \un\MM(\cG))\isom \Homb(\un\MM(\uIC(\un w')_{\cL}), \un\MM(\cG)\ot_{R^{s}}R\j{1}).
\end{eqnarray*}
By \cite[Proposition 5.10(2)]{Sb}, for $M_{1}, M_{2}\in \RRM$, there is a bifunctorial isomorphism of $R$-bimodules
\begin{equation}\label{adj for bimod}
\Homb_{\RRM}(M_{1}, M_{2}\ot_{R^{s}}R\j{1})
\cong \Homb_{\RRM}(M_{1}\ot_{R^{s}}R\j{1}, M_{2}).
\end{equation}
Indeed, since $R=R^{s}\op \a_{s} R^{s}$ (note $\a_{s}^{2}\in R^{s}$), we may identify $R$ with $R^{s}\j{1}\op R^{s}\j{-1}$ as graded $R^{s}$-modules. For an $R$-bimodule map $f: M_{1}\to M_{2}\ot_{R^{s}}R\j{1}=M_{2}\j{1}\op  M_{2}\j{-1}$, we write $f(x)=(f_{-1}(x),f_{1}(x))$, where $f_{\pm1}: M_{1}\to M_{2}\j{\pm1}$ is $R\ot R^{s}$-linear. Then $f\mapsto f_{1}$ gives an isomorphism $\Homb_{\RRM}(M_{1}, M_{2}\ot_{R^{s}}R\j{1})\cong\Homb_{R\textup{-Mod-}R^{s}}(M_{1}, M_{2}\j{-1})$, with inverse $f_{1}\mapsto (f:x\mapsto (f_{1}(x\a_{s}), f_{1}(x)))$. On the other hand, we also have $\Homb_{\RRM}(M_{1}\ot_{R^{s}}R\j{1}, M_{2})\cong \Homb_{R\textup{-Mod-}R^{s}}(M_{1}\j{1}, M_{2})\cong \Homb_{R\textup{-Mod-}R^{s}}(M_{1}, M_{2}\j{-1})$ by the adjunction between tensor and forgetful functors. Combining these isomorphisms we get the desired isomorphism \eqref{adj for bimod}. Moreover, \eqref{adj for bimod} is compatible with the adjunction in Corollary \ref{c:adj ICs} under the isomorphisms in Lemma \ref{l:MM ICs}. The isomorphism \eqref{adj for bimod} gives the desired isomorphism $b'$, hence $b$ that makes the diagram \eqref{HomIC} commutative up to a nonzero scalar. Therefore $m$ is again an isomorphism in this case. This finishes the proof.
\end{proof}

%%%%%%%% OK %%%%%%%%%
\begin{lemma}\label{l:Hom from d} For any semisimple complex $\cG\in {}_{\cL}\un\cD^{\c}_{\cL}$, the natural map
\begin{equation}\label{Fd}
m(\un\d_{\cL},\cG):\Homb(\un\d_{\cL},\cG)\to \Homb_{\RRM}(R(e), \un\MM^{\c}(\cG))
\end{equation}
is an isomorphism of graded $R$-bimodules. 
\end{lemma}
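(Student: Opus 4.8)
```latex
\begin{proof}[Proof plan for Lemma \ref{l:Hom from d}]
The plan is to unwind both sides and reduce the statement to a concrete computation in equivariant cohomology of the flag variety. First I would identify the right-hand side: since $R(e) = R\ot_{R}R$ is the monoidal unit of $\RRM$ (for the tensor product $\ot_{R}$ with respect to the middle $R$), we have a canonical isomorphism $\Homb_{\RRM}(R(e), \un\MM^{\c}(\cG)) \cong \un\MM^{\c}(\cG) = \Homb(\uTh^{\c}_{\cL}, \cG)$ as graded $R$-bimodules. Thus \eqref{Fd} becomes the assertion that the natural map $\Homb(\un\d_{\cL}, \cG) \to \Homb(\uTh^{\c}_{\cL}, \cG)$ induced by $\un\e_{\cL}: \uTh^{\c}_{\cL}\to \un\d_{\cL}$ is an isomorphism. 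So the whole content is that $\un\e_{\cL}$ induces an isomorphism on $\Homb(-,\cG)$ for all semisimple $\cG$ in the neutral block.

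By the decomposition theorem it suffices to take $\cG = \uIC(w)_{\cL}$ for $w\in\WL$, and moreover, since every such $\uIC(w)_{\cL}$ is a direct summand of an iterated convolution $\uIC(\un w)_{\cL} = \uIC(s_{i_1})_{\cL_1}\star\cdots\star\uIC(s_{i_n})_{\cL}$ of objects attached to simple reflections $s_{i_j}\in\WL$, it is enough to treat $\cG = \uIC(\un w)_{\cL}$. The key reduction is adjunction: by Corollary \ref{c:adj ICs}, right convolution with $\uIC(s)_{\cL}$ (for $s\in\WL$) is biadjoint to itself, so for $\cG = \cG'\star\uIC(s)_{\cL}$ with $s\in\WL$ we have $\Homb(\cH, \cG'\star\uIC(s)_{\cL})\cong\Homb(\cH\star\uIC(s)_{\cL}, \cG')$ naturally in $\cH$. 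Since $\un\e_{\cL}\star\id: \uTh^{\c}_{\cL}\star\uIC(s)_{\cL}\to \un\d_{\cL}\star\uIC(s)_{\cL}\cong\uIC(s)_{\cL}$, and since by Proposition \ref{p:conv Th} $\uTh^{\c}_{\cL}\star\uIC(s)_{\cL}\cong \uIC(s)_{\cL}[1]\oplus\uIC(s)_{\cL}[-1]$, applying induction on the length of $\un w$ reduces us to the base case $\un w = \varnothing$.

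The base case is $\cG = \un\d_{\cL}$, and there the claim is that $\un\e_{\cL}$ induces an isomorphism $\Homb(\un\d_{\cL}, \un\d_{\cL})\to \Homb(\uTh^{\c}_{\cL}, \un\d_{\cL})$. Both sides are computed as equivariant cohomology groups: $\Homb(\un\d_{\cL},\un\d_{\cL}) \cong \upH^{*}_{\G(e)_{k}}(\pt_{k}) = R(e)$ by Lemma \ref{l:one cell}, while $\Homb(\uTh^{\c}_{\cL}, \un\d_{\cL}) \cong \Homb(\uIC(w_{\cL,0})_{\cL}[-N_{\cL}], \unb(e)_{\cL}) \cong \Homb(i^{*}_{e}\uIC(w_{\cL,0})_{\cL}, \uC(e)_{\cL})[N_{\cL}]$, which by Proposition \ref{p:stalk Th} (applied with $\b = \WL$ and $w = e$, so $\ell_{\b}(e) = 0$) gives $i^{*}_{e}\uIC(w_{\cL,0})_{\cL}\cong\uC(e)_{\cL}[N_{\cL}]$, hence the right-hand side is again $\Homb(\uC(e)_{\cL},\uC(e)_{\cL}) = R(e)$. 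The remaining point is that the map induced by $\un\e_{\cL}$ between these two copies of $R(e)$ is the identity (up to an invertible scalar, hence an isomorphism of graded $R$-bimodules): this follows because $\un\e_{\cL}$ is, by definition of the rigidified maximal IC sheaf, the unique (up to scalar) nonzero map realizing the adjunction counit $i_{e!}i^{*}_{e}\uTh^{\c}_{\cL}\to\uTh^{\c}_{\cL}$ composed with the isomorphism $i^{*}_{e}\uTh^{\c}_{\cL}\cong\uC(\dot e)_{\cL}$, and applying $\Homb(-,\un\d_{\cL})$ to this sends a generator to a generator.

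The main obstacle I anticipate is the bookkeeping in the inductive step: one must check that the isomorphism $b$ (in the spirit of the commutative diagram \eqref{HomIC} in the proof of Theorem \ref{th:Hom}) indeed intertwines the geometric adjunction of Corollary \ref{c:adj ICs} with the algebraic adjunction \eqref{adj for bimod} for $R^{s}$-bimodules, and that the scalars match up so that the vertical map $m$ really is an isomorphism and not merely injective or surjective; but this is precisely parallel to the argument already carried out in the proof of Theorem \ref{th:Hom}, and I would simply invoke that machinery with $\cF$ replaced by $\un\d_{\cL}$.
\end{proof}
```
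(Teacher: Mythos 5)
Your proof breaks at the very first step, and everything after it is built on that error. You claim that because $R(e)$ is the monoidal unit of $(\RRM,\ot_{R})$, one has $\Homb_{\RRM}(R(e),\un\MM^{\c}(\cG))\cong\un\MM^{\c}(\cG)$. The unit property gives $R(e)\ot_{R}M\cong M$, but $\Hom$ \emph{out of} the unit is not the identity functor: an $R\ot R$-linear map $R(e)=(R\ot R)/(w(a)\ot 1-1\ot a)\to M$ is determined by the image of $1$, which must be killed by the ideal, so
\begin{equation*}
\Homb_{\RRM}(R(e),M)\;\cong\;\{m\in M\mid (a\ot 1)m=(1\ot a)m\ \forall a\in R\},
\end{equation*}
a proper graded submodule of $M$ in general. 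Concretely, for $\cG=\uIC(w_{\cL,0})_{\cL}\cong\uTh^{\c}_{\cL}[N_{\cL}]$ the left side of \eqref{Fd} is $\Homb(\uC(e)_{\cL},i_{e}^{!}\uIC(w_{\cL,0})_{\cL})\cong R(e)[-N_{\cL}]$ (free of rank one over $R$ by Proposition \ref{p:stalk Th}), whereas $\un\MM^{\c}(\cG)=\End^{\bullet}(\uIC(w_{\cL,0})_{\cL})[N_{\cL}]$ is free of rank $|\WL|$ over $R$; so the statement you reduce to --- that $\un\e_{\cL}$ induces an isomorphism $\Homb(\un\d_{\cL},\cG)\isom\Homb(\uTh^{\c}_{\cL},\cG)$ --- is simply false whenever $\WL\neq\{e\}$. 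Your base-case computation of $\Homb(\uTh^{\c}_{\cL},\un\d_{\cL})\cong R(e)$ is correct but is a computation of the wrong object.

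The missing idea is precisely the one the paper's proof supplies: one must identify $\Homb_{\RRM}(R(e),\un\MM^{\c}(\cG))$ with the bottom piece $F_{\le e}\un\MM^{\c}(\cG)=\Homb(\uTh^{\c}_{\cL},i_{e*}i_{e}^{!}\cG)$ of the support filtration from Corollary \ref{c:filM}, and then match this against $\Homb(\un\d_{\cL},\cG)\cong\Homb(\uC(e)_{\cL},i_{e}^{!}\cG)$ via the adjunction $(i_{e*},i_{e}^{!})$. The key algebraic input, absent from your argument, is that $\Homb_{\RRM}(R(e),R(w))=0$ for $w\neq e$ (the graphs $\G(e)$ and $\G(w)$ meet properly), together with the freeness of $\Gr^{F}_{w}\un\MM^{\c}(\cG)$ over $R(w)$ from Proposition \ref{p:parity purity}(2); this forces any map out of $R(e)$ to land in $F_{\le e}$. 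Your proposed induction on a reduced word, patterned on Theorem \ref{th:Hom}, also cannot replace this: that induction needs the present lemma as its base case, so invoking "that machinery with $\cF$ replaced by $\un\d_{\cL}$" is circular.
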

\begin{proof}
Recall the adjunction $i_{e*}: {}_{\cL}\un\cD(e)_{\cL}\leftrightarrow {}_{\cL}\un\cD^{\c}_{\cL}: i_{e}^{!}$. The adjunction map $i_{e*}i^{!}_{e}\cG\to \cG$ gives a commutative diagram
\begin{equation*}
\xymatrix{\Homb(\un\d_{\cL}, i_{e*}i^{!}_{e}\cG)\ar[d]^{a}\ar[rr]^-{m(\un\d_{\cL}, i_{e*}i^{!}_{e}\cG)} & & \Homb_{\RRM}(R, \un\MM^{\c}(i_{e*}i^{!}_{e}\cG))\ar[d]^{b}\\
\Homb(\un\d_{\cL},\cG)\ar[rr]^-{m(\un\d_{\cL},\cG)} & & \Homb_{\RRM}(R,\un\MM^{\c}(\cG))
}
\end{equation*}
We will show that $m(\un\d_{\cL},\cG)$ is an isomorphism by showing that the other three arrows in the above diagram are isomorphisms. Here the arrows $a$ and $b$ are induced by the adjunction map $i_{e*}i^{!}_{e}\cG\to \cG$ and $a$ is an isomorphism by adjunction. 

We show that $m(\un\d_{\cL}, i_{e*}i^{!}_{e}\cG)$ is also an isomorphism. Indeed, by Proposition \ref{p:parity purity}(2), $i^{!}_{e}\cG$ is a direct sum of shifts of $\uC(e)_{\cL}$, it suffices to treat the case where $i^{!}_{e}\cG$ is replaced by $\uC(e)_{\cL}$, or equivalently replacing $i_{e*}i^{!}_{e}\cG$ with $\un\d_{\cL}$, in which case both sides are identified with the regular $R$-bimodule $R(e)=R$.

Finally we show that the arrow $b$ is an isomorphism. Using the filtration $F_{\le w}\un\MM^{\c}(\cG)$ introduced in Corollary \ref{c:filM}, we have $F_{\le e}\un\MM^{\c}(\cG)=\Homb(\un\Th^{\c}_{\cL}, i_{e*}i^{!}_{e}\cG)=\un\MM^{\c}(i_{e*}i^{!}_{e}\cG)$, which implies that $b$ is injective.

To see $b$ is surjective, we argue that any $R\ot R$-linear map $\ph: R(e)\to \un\MM^{\c}(\cG)$ must land in $F_{\le e}\un\MM^{\c}(\cG)$. Extend the partial order on $\WL$ to a total order, and suppose $w\in \WL$ is the smallest element under this total order such that $\ph(R(e))\subset F_{\le w}\un\MM^{\c}(\cG)$. Then the projection $R\xr{\ph} F_{\le w}\un\MM^{\c}(\cG)\to \Gr^{F}_{w}\un\MM^{\c}(\cG)$ must be nonzero for otherwise $\ph(R(e))$ would land in the previous step of the filtration. However, by \eqref{GrFw} and Proposition \ref{p:parity purity}(2), $\Gr^{F}_{w}\un\MM^{\c}(\cG)$ is a free $R(w)$-module. For there to exist a nonzero $R\ot R$-linear map $R(e)\to R(w)$, we must have $w=e$, which implies that $\ph(R(e))\subset F_{\le e}\un\MM^{\c}(\cG)$, as desired. This finishes the proof of the Lemma.
\end{proof}

%%%%%%%%%% 3/29 noon %%%%%%%%%%

%%%%%%%%%%%%%%%%%%%%%%
%%%%%%%%%%%%%%%%%%%%%%
\section{Soergel bimodules}%%%%%%%%%
After reviewing basics about Soergel bimodules, the main result of this section is Proposition \ref{p:Sw} that connects simple perverse sheaves in the monodromic Hecke category with indecomposable Soergel bimodules via the Soergel functor introduced in the previous section. 

%%%%%%  ok %%%%%%%%%
\subsection{Soergel bimodules}\label{ss:SBim}
Consider a finite Weyl group $(W_{0},S_{0})$ with reflection representation $V$ over $\Qlbar$.  Let $R=\Sym(V^{*})$ (graded with $V^{*}$ in degree $1$). We recall the notion of Soergel $R$-bimodules. 

For any sequence $(s_{i_{n}},\cdots, s_{i_{1}})$ of simple reflections, we have the {\em Bott--Samelson bimodule} $\SS(s_{i_{n}},\cdots, s_{i_{1}}):=R\ot_{R^{s_{i_{n}}}}R\ot_{R^{s_{i_{n-1}}}}\ot\cdots\ot_{R^{s_{i_{1}}}}R$. %We observe that $\SS(s_{i_{n}},\cdots, s_{i_{1}})$ carries an element $\one=1\ot\cdots\ot 1$ in degree zero.

The {\em indecomposable  Soergel bimodules} are, up to degree shifts, indecomposable direct summands of $\SS(s_{i_{n}},\cdots, s_{i_{1}})$ for some sequence $(s_{i_{n}},\cdots, s_{i_{1}})$ of simple reflections in $W_{0}$.  A Soergel bimodule is a direct sum of indecomposable Soergel bimodules. Let $\SB(W_{0})\subset \RRM$ be the full subcategory consisting of the Soergel bimodules. Then $\SB(W_{0})$ carries a monoidal structure given by  the tensor product $(-)\ot_{R}(-)$.

Soergel \cite{Sb} shows that, for each $w\in W_{0}$, there is an indecomposable Soergel bimodule $\SS(w)$ characterized (up to isomorphism), among graded $R\ot R$-modules, by the following two properties.
\begin{enumerate}
\item $\Supp(\SS(w))\subset V\times V$ contains $\G(w)=\{(wx,x)|x\in V\}$, the graph of the $w$ action on $V$.
\item For some (equivalently any) reduced expression $w=s_{i_{n}}\cdots s_{i_{1}}$ in $W_{0}$, $\SS(w)$ is a direct summand of the Bott--Samelson bimodule $\SS(s_{i_{n}},\cdots, s_{i_{1}})$.
\end{enumerate}
To emphasize the dependence on the Coxeter group $W_{0}$, we denote $\SS(w)$ also by $\SS(w)_{W_{0}}$.

%%%%%%% OK %%%%%%%%
\subsection{Rigidified Soergel bimodules}
It is easy to see that the degree zero part of $\SS(w)$ is one-dimensional. Moreover,  the endomorphism ring of $\SS(w)$ inside $\RRM$ consists of scalars. Indeed in our case $W_{0}$ is a Weyl group, so we may interpret $\SS(w)$ as the equivariant intersection cohomology of an IC sheaf of a Schubert variety in a flag variety, and deduce the statement about the endomorphism ring from it (see \cite[Lemma 19 and Eweiterungssatz 17]{S}).  For fixed $w\in W_{0}$, consider a pair $(M,\one_{M})$ where $M\in \SB(W_{0})$ is isomorphic to $\SS(w)$, and $\one_{M}\in M^{0}$ is any nonzero element. Then the automorphism group of such a pair is trivial, and any two such pairs are isomorphic by a unique isomorphism. Therefore we may identify all such pairs with one pair, and denote it by $(\SS(w),\one)$.

%%%%%%% OK %%%%%%%
\subsection{Extended Soergel bimodules} For $\cL\in\fo$ and $w\in W$ we define a graded $R\ot R$-module $\SS(w)_{\cL}$ as follows. Let $\b\in {}_{w\cL}\un W_{\cL}$ be the block containing $w$. Write $w=xw^{\b}$ for $x\in W_{w\cL}^{\c}$. Then we define
\begin{equation*}
\SS(w)_{\cL}:=\SS(x)_{W_{w\cL}^{\c}}\ot_{R}R(w^{\b}).
\end{equation*}
Again we can rigidify $\SS(w)_{\cL}$ by equipping it with the degree zero element $\one\ot 1$.

We also define a generalization of Bott-Samelson modules. For a sequence  $(s_{i_{n}},\cdots, s_{i_{1}})$ of simple reflections in $W$, and $\cL\in\fo$, let $\cL_{j}=s_{i_{j}}\cdots s_{i_{1}}\cL$. Define 
\begin{equation*}
\SS(s_{i_{n}},\cdots, s_{i_{1}}):=\SS(s_{i_{n}})_{\cL_{n-1}}\ot_{R}\SS(s_{i_{n-1}})_{\cL_{n-2}}\ot_{R}\ot\cdots\ot_{R}\SS(s_{i_{1}})_{\cL}.
\end{equation*}
Note that $\SS(s_{i_{j}})_{\cL_{j-1}}\cong R(s_{i_{j}})$ if $s_{i_{j}}\notin W_{\cL_{j}}^{\c}$ and is otherwise isomorphic to $R\ot_{R^{s_{i_{j}}}}R$.

%%%%%%%% OK %%%%%%%%%%
\begin{lemma}\label{l:BSL}
Let $(s_{i_{n}},\cdots, s_{i_{1}})$ be a reduced word of simple reflections in $W$ and $\cL\in\fo$, $\cL_{j}=s_{i_{j}}\cdots s_{i_{1}}\cL$ for $1\le j\le n$. Let $\b\in {}_{\cL_{n}}\un W_{\cL}$ be the block containing $w=s_{i_{n}}\cdots s_{i_{1}}$. Then there is a reduced word $(t_{m},\cdots, t_{1})$ of simple reflections in the Coxeter group $W_{\cL_{n}}^{\c}$ such that $w=t_{m}t_{m-1}\cdots t_{1}w^{\b}$ and
\begin{equation*}
\SS(s_{i_{n}},\cdots, s_{i_{1}})_{\cL}\cong\SS(t_{m},\cdots, t_{1})_{W_{\cL_{n}}^{\c}}\ot_{R}R(w^{\b})
\end{equation*}
as graded $R\ot R$-modules.
\end{lemma}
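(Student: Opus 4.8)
The statement is purely about graded $R\otimes R$-modules, so the strategy is to track how the ``block structure'' of a reduced word interacts with the tensor product defining $\SS(s_{i_n},\cdots,s_{i_1})_{\cL}$. First I would set up notation: for each $j$, let $\b_j\in{}_{\cL_j}\un W_{\cL}$ be the block containing $w_j:=s_{i_j}\cdots s_{i_1}$, so $\b_n=\b$ and $w_j=v_j w^{\b_j}$ with $v_j\in W^{\c}_{\cL_j}$. By Lemma~\ref{l:ell beta}\eqref{ell beta eq}, since the word is reduced, $\ell_{\b_j}(w_j)=\#\{1\le l\le j\mid s_{i_l}\in W^{\c}_{\cL_{l-1}}\}$. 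The plan is to prove the isomorphism by induction on $n$, at each step analyzing whether $s_{i_n}\in W^{\c}_{\cL_{n-1}}$ or not.

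The inductive step splits into two cases. If $s_{i_n}\notin W^{\c}_{\cL_{n-1}}$: then $\SS(s_{i_n})_{\cL_{n-1}}\cong R(s_{i_n})$, and $\b=\b_{n-1}s_{i_n}$ (a block on the other side), with $w^{\b}=w^{\b_{n-1}}s_{i_n}$ by Corollary~\ref{c:trans min}, so $v_n=v_{n-1}$ under the Coxeter isomorphism of Corollary~\ref{c:conj wb}; here the list $(t_m,\cdots,t_1)$ for $w$ is essentially the one for $w_{n-1}$ (transported via conjugation by $s_{i_n}$ if the block changed sides, or unchanged), and one uses $R(w^{\b_{n-1}})\ot_R R(s_{i_n})\cong R(w^{\b_{n-1}}s_{i_n})=R(w^{\b})$ together with associativity of $\ot_R$. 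If $s_{i_n}\in W^{\c}_{\cL_{n-1}}$: then $\cL_n=\cL_{n-1}$, $s:=s_{i_n}$ is a simple reflection of the Coxeter group $W^{\c}_{\cL_n}$ with respect to $\Phi^+_{\cL_n}$, $\b=\b_{n-1}$, $w^{\b}=w^{\b_{n-1}}$, and $w=s\,v_{n-1}w^{\b}$ where $\ell_{\cL_n}(sv_{n-1})=\ell_{\cL_n}(v_{n-1})+1$ because the word is reduced (this length-additivity in $W^{\c}_{\cL_n}$ is the content of the ``reduced word'' hypothesis combined with Lemma~\ref{l:ell beta}). Then $\SS(s_{i_n})_{\cL_{n-1}}\cong R\ot_{R^s}R$, and we get $\SS(s,\cdots,s_{i_1})_{\cL}\cong (R\ot_{R^s}R)\ot_R \SS(t_{m-1},\cdots,t_1)_{W^{\c}_{\cL_{n-1}}}\ot_R R(w^{\b})$, so taking $t_m=s$ and appending gives $\SS(s,t_{m-1},\cdots,t_1)_{W^{\c}_{\cL_n}}\ot_R R(w^{\b})$ as desired. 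In both cases I need to check that the resulting list $(t_m,\cdots,t_1)$ is genuinely reduced in $W^{\c}_{\cL_n}$ and that $t_m\cdots t_1 = v_n$; this follows by counting lengths via Lemma~\ref{l:ell beta}\eqref{ell beta eq}.

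The one genuinely delicate point — the main obstacle — is bookkeeping the Coxeter-group isomorphisms $W^{\c}_{\cL_j}\isom W^{\c}_{\cL}$ of Corollary~\ref{c:conj wb} coherently through the induction, so that the simple reflections $t_l$ produced at different stages all live in a single fixed Coxeter group $W^{\c}_{\cL_n}$ (the statement demands a single reduced word in $W^{\c}_{\cL_n}$, not a word that drifts between conjugate subgroups). Concretely, whenever the block side changes ($s_{i_j}\notin W^{\c}_{\cL_{j-1}}$), the intrinsic Bott--Samelson piece $\SS(x)_{W^{\c}_{\bullet}}$ must be re-expressed via the conjugation isomorphism, and one must verify that conjugation sends reduced words to reduced words and is compatible with $\ot_R$ of the corresponding Bott--Samelson bimodules (which it is, since it's induced by an isomorphism of reflection representations up to the $R(w^{\b})$-twist). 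Once this coherence is pinned down, the argument is a routine induction; I would also remark that the non-reduced version fails precisely because Lemma~\ref{l:ell beta}\eqref{ell beta eq} degenerates to the inequality \eqref{ell beta ineq}, and note that the statement as used later only needs the reduced case.
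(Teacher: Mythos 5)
Your proposal is correct and is essentially the paper's argument: an induction on the length of the reduced word, splitting according to whether the peeled simple reflection is inert for the adjacent character sheaf, and relying on Lemma \ref{l:ell beta}\eqref{ell beta eq}, Corollaries \ref{c:trans min} and \ref{c:conj wb}, and the bimodule identities $R(u)\ot_{R}R(u')\cong R(uu')$ and $R(u)\ot_{R^{s}}R\cong R\ot_{R^{usu^{-1}}}R(u)$. The only real difference is which end of the word you peel. The paper strips the rightmost letter $s_{i_{1}}$, so the ambient Coxeter group $W^{\c}_{\cL_{n}}$ is fixed throughout the induction and the only conjugation ever needed is of the single newly peeled inert letter, by $w^{\b}$ (producing the appended generator $t=w^{\b}sw^{\b,-1}$). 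You strip the leftmost letter, which forces you, at every non-inert step, to transport the entire accumulated reduced word through the Coxeter isomorphism of Corollary \ref{c:conj wb}; this does work (conjugation carries $R\ot_{R^{t}}R$ to $R\ot_{R^{sts}}R$ compatibly with $\ot_{R}$), and you correctly flag it as the delicate point, but it is precisely the bookkeeping that the paper's choice of induction direction avoids. One small slip to fix: since your new letter is prepended on the left, the block relation in the non-inert case should read $w^{\b}=s_{i_{n}}w^{\b_{n-1}}$ and the relevant identity is $R(s_{i_{n}})\ot_{R}R(w^{\b_{n-1}})\cong R(s_{i_{n}}w^{\b_{n-1}})$, not the right-handed versions you wrote; this does not affect the substance of the argument.
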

\begin{proof}
We prove the lemma by induction on $n$. For $n=1$ and $s=s_{i_{1}}\notin \WL$, then $s=w^{\b}$ and $\SS(s)_{\cL}\cong R(w^{\b})$ (corresponding to $m=0$). For $n=1$ and $s=s_{i_{1}}\in \WL$, we have $w^{\b}=1$ and $s$ is a simple reflection in $\WL$, and $\SS(s)_{\cL}\cong \SS(s)_{\WL}$. 

Now suppose the statement is proved for reduced words of length less than $n$ ($n\ge2$). Let $\b'\in {}_{\cL_{n}}\un W_{\cL_{1}}$ be the block containing $w'=s_{i_{n}}\cdots s_{i_{2}}$. By inductive hypothesis, there is a reduced word $(t_{m},\cdots, t_{1})$ in $W^{\c}_{\cL_{n}}$ such that $t_{m}\cdots t_{1}w^{\b'}=w'$
\begin{equation*}
\SS(s_{i_{n}},\cdots,s_{i_{2}})_{\cL_{1}}\cong \SS(t_{m},\cdots, t_{1})_{W^{\c}_{\cL_{n}}}\ot_{R}R(w^{\b'}).
\end{equation*}
Write $s=s_{i_{1}}$. If $s\notin\WL$, then $w^{\b}=w^{\b'}s$, and we have
\begin{eqnarray*}
&&\SS(s_{i_{n}},\cdots,s_{i_{1}})_{\cL}\cong\SS(s_{i_{n}},\cdots,s_{i_{2}})_{\cL_{1}}\ot_{R}\SS(s)_{\cL}\cong\SS(t_{m},\cdots, t_{1})_{W^{\c}_{\cL_{n}}}\ot_{R}R(w^{\b'})\ot_{R}R(s)\\
&\cong&\SS(t_{m},\cdots, t_{1})_{W^{\c}_{\cL_{n}}}\ot_{R}R(w^{\b'}s)=\SS(t_{m},\cdots, t_{1})_{W^{\c}_{\cL_{n}}}\ot_{R}R(w^{\b}).
\end{eqnarray*}
We have $t_{m}\cdots t_{1}w^{\b}=t_{m}\cdots t_{1}w^{\b'}s=w's=w$. 

If $s\in\WL$, then $\cL_{1}=\cL, \b'=\b$. Moreover, $t=w^{\b}sw^{\b,-1}$ is a simple reflection in $W_{\cL_{n}}^{\c}$ by Corollary \ref{c:conj wb}. Hence
\begin{eqnarray*}
&&\SS(s_{i_{n}},\cdots,s_{i_{1}})_{\cL}\cong\SS(s_{i_{n}},\cdots,s_{i_{2}})_{\cL_{1}}\ot_{R^{s}}R\cong \SS(t_{m},\cdots, t_{1})_{W^{\c}_{\cL_{n}}}\ot_{R}R(w^{\b})\ot_{R^{s}}R\\
&\cong& \SS(t_{m},\cdots, t_{1})_{W^{\c}_{\cL_{n}}}\ot_{R^{t}}R(w^{\b})=\SS(t_{m},\cdots, t_{1},t)_{W^{\c}_{\cL_{n}}}.
\end{eqnarray*}
Here we have used  $\SS(s)_{\cL}=R\ot_{R^{s}}R$ and $R(w^{\b})\ot_{R^{s}}R\cong R\ot_{R^{t}}R(w^{\b})$. We have $t_{m}\cdots t_{1}tw^{\b}=t_{m}\cdots t_{1}w^{\b}s=w's=w$. Since $\ell_{\b}(w)=\ell_{\b'}(w')+1$ by Lemma \ref{l:ell beta}(4), $(t_{m},\cdots, t_{1},t)$ is a reduced word for $ww^{\b,-1}$. This completes the inductive step.
\end{proof}

%%%%%%%% OK %%%%%%%%%
We have the following characterization for $\SS(w)_{\cL}$.
\begin{lemma}\label{l:crit Sw}
Let $\cL\in\fo$ and $w\in W$. 
Let $M$ be an indecomposable graded $R\ot R$-module such that
\begin{enumerate}
\item $\Supp(M)\supset \G(w)$ as a subset of $\Spec(R\ot R)=V\times V$.
\item For some reduced expression $w=s_{i_{n}}s_{i_{n-1}}\cdots s_{i_{1}}$ in $W$, $M$ is a direct summand of $\SS(s_{i_{n}},\cdots, s_{i_{1}})_{\cL}$.
\end{enumerate}
Then $M\cong \SS(w)_{\cL}$.
\end{lemma}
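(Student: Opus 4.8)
\textbf{Proof plan for Lemma \ref{l:crit Sw}.} The plan is to reduce the statement to the known characterization of indecomposable Soergel bimodules for the Coxeter group $W^{\c}_{\cL_{n}}$ (where $\cL_{n}=w\cL$), recalled in \S\ref{ss:SBim}, by transporting everything through the decomposition of Lemma \ref{l:BSL}. Fix a reduced expression $w=s_{i_{n}}\cdots s_{i_{1}}$ as in hypothesis (2), set $\cL_{j}=s_{i_{j}}\cdots s_{i_{1}}\cL$ and let $\b\in {}_{\cL_{n}}\un W_{\cL}$ be the block containing $w$. By Lemma \ref{l:BSL} there is a reduced word $(t_{m},\cdots,t_{1})$ in $W^{\c}_{\cL_{n}}$ with $w=t_{m}\cdots t_{1}w^{\b}$ and an isomorphism of graded $R\ot R$-modules
\begin{equation*}
\SS(s_{i_{n}},\cdots,s_{i_{1}})_{\cL}\cong \SS(t_{m},\cdots,t_{1})_{W^{\c}_{\cL_{n}}}\ot_{R}R(w^{\b}).
\end{equation*}
Thus $M$ is an indecomposable direct summand of the right-hand side. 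The key observation will be that tensoring on the right with $R(w^{\b})$ — which is simply restriction of scalars along the algebra automorphism of $R$ induced by $w^{\b}$, composed with a coordinate shift $V\times V\to V\times V$, $(u,x)\mapsto(u,w^{\b}x)$ — is an equivalence from $R\ot R$-modules supported (set-theoretically) in $V\times w^{\b,-1}(\text{--})$ to $R\ot R$-modules, and in particular sends indecomposables to indecomposables and preserves the support condition. Concretely, $(-)\ot_{R}R(w^{\b})$ carries $\G(x)$ to $\G(xw^{\b})$ for any $x\in W$. So $M\ot_{R}R(w^{\b,-1})$ is an indecomposable direct summand of $\SS(t_{m},\cdots,t_{1})_{W^{\c}_{\cL_{n}}}$ whose support contains $\G(w)\cdot(w^{\b})^{-1}$ along the second factor, i.e. contains $\G(wm^{\b,-1})=\G(t_{m}\cdots t_{1})$.

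The next step is to apply Soergel's characterization (properties (1)--(2) of \S\ref{ss:SBim}) to $N:=M\ot_{R}R(w^{\b,-1})$ inside $\RRM$, with the Coxeter group taken to be $W^{\c}_{\cL_{n}}$ and $x_{0}:=t_{m}\cdots t_{1}$. Property (2) is exactly what we just established: $N$ is an indecomposable summand of the Bott--Samelson bimodule $\SS(t_{m},\cdots,t_{1})_{W^{\c}_{\cL_{n}}}$ for the reduced expression $(t_{m},\cdots,t_{1})$ of $x_{0}$. Property (1), that $\Supp(N)\supset \G(x_{0})$, also holds as just noted. Hence Soergel's theorem gives $N\cong \SS(x_{0})_{W^{\c}_{\cL_{n}}}$. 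Tensoring back with $R(w^{\b})$ and using the definition $\SS(w)_{\cL}=\SS(x_{0})_{W^{\c}_{\cL_{n}}}\ot_{R}R(w^{\b})$ (here $x_{0}=w w^{\b,-1}\in W^{\c}_{w\cL}$ is the element denoted $x$ in the definition, using $\cL_{n}=w\cL$), we conclude $M\cong \SS(w)_{\cL}$.

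\textbf{Where the difficulty lies.} The routine points are the bookkeeping that $(-)\ot_{R}R(w^{\b})$ is an equivalence of module categories onto an appropriate subcategory and that it behaves as claimed on supports and on indecomposability — this is a direct computation since $R(w^{\b})$ is free of rank one over $R$ on each side. The one genuine subtlety I expect is verifying the \emph{support} hypothesis (1) carefully: hypothesis (1) of the Lemma is stated for the support as a subset of $V\times V$, and one must check that "$\Supp(M)\supset\G(w)$" transforms precisely to "$\Supp(N)\supset\G(x_{0})$" under the twist, and moreover that this suffices to pin down which summand of the Bott--Samelson bimodule $N$ is — this is exactly the content that Soergel's characterization handles, but one should make sure the reduced expression $(t_{m},\cdots,t_{1})$ produced by Lemma \ref{l:BSL} is genuinely reduced in $W^{\c}_{\cL_{n}}$ (it is, by the last sentence of the proof of Lemma \ref{l:BSL}, using $\ell_{\b}(w)=m$) so that property (2) of \S\ref{ss:SBim} applies verbatim. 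There is also a minor point of matching the two a priori different descriptions of the element $x_{0}$: the one from Lemma \ref{l:BSL} ($w=t_{m}\cdots t_{1}w^{\b}$, so $x_{0}=t_{m}\cdots t_{1}$) and the one from the definition of $\SS(w)_{\cL}$ ($w=xw^{\b}$, $x\in W^{\c}_{w\cL}$); these agree because $w^{\b}$ is uniquely determined by the block of $w$ and the decomposition $w=x w^{\b}$ with $x\in W^{\c}_{w\cL}$ is unique by Corollary \ref{c:conj wb} and Lemma \ref{l:max}.
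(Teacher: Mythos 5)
Your proposal is correct and follows essentially the same route as the paper's proof: pass to $M'=M\ot_{R}R(w^{\b,-1})$, use Lemma \ref{l:BSL} to see that $M'$ is an indecomposable summand of the Bott--Samelson bimodule $\SS(t_{m},\cdots,t_{1})_{W^{\c}_{\cL_{n}}}$ with $\Supp(M')\supset\G(x)$ for $x=ww^{\b,-1}$, invoke Soergel's criterion, and twist back by $R(w^{\b})$. (Minor typo aside, "$\G(wm^{\b,-1})$" should read "$\G(ww^{\b,-1})$".)
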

\begin{proof}
Let $\b\in{}_{w\cL}\un W_{\cL}$ be the block containing $w$. Write $w=xw^{\b}$ for $x\in W_{w\cL}^{\c}$. Let $M'=M\ot_{R}R(w^{\b,-1})$. Then $M'$ is an indecomposable $R\ot R$-module whose support contains $\G(ww^{\b,-1})=\G(x)$. By Lemma \ref{l:BSL}, $\SS(s_{i_{n}},\cdots, s_{i_{1}})_{\cL}\cong \SS(t_{m},\cdots, t_{1})_{W_{\cL_{n}}^{\c}}\ot_{R} R(w^{\b})$ for a reduced expression $t_{m}\cdots t_{1}$ of $x=ww^{\b,-1}$ in $W_{\cL_{n}}^{\c}$. Therefore, $M'$ is a direct summand of $\SS(t_{m},\cdots, t_{1})_{W_{\cL_{n}}^{\c}}$.  By Soergel's criterion in \S\ref{ss:SBim}, $M'\cong \SS(x)_{W_{\cL_{n}}^{\c}}$. Hence $M=M'\ot_{R}R(w^{\b})\cong \SS(x)_{W_{\cL_{n}}^{\c}}\ot_{R}R(w^{\b})=\SS(w)_{\cL}$.
\end{proof}

%%%%%% OK %%%%%%%%
\subsection{Soergel bimodules with Frobenius action}
Let $\RRF_{\pure}$ be the full subcategory of $\RRF$ consisting of those $M=\oplus_{n}M^{n}$ such that $M^{n}$ is pure of weight $n$ as a $\Fr$-module (\S\ref{sss:Fr}). Forgetting the Frobenius action gives a functor
\begin{equation*}
\om: \RRF_{\pure}\to\RRM.
\end{equation*}
This functor admits a one-side inverse 
\begin{equation*}
(-)^{\na}: \RRM\to \RRF_{\pure}
\end{equation*}
that sends a graded $R\ot R$-module  $M=\oplus_{n}M^{n}$  to the same graded $R\ot R$-module $M$ with $\Fr$ acting on $M^{n}$ by $q^{n/2}$. 

Let $\SB_{m}(\WL)\subset\RRF_{\pure}$ be the preimage of $\SB(\WL)$ under $\om$, i.e., it is the full subcategory consisting of $M\in \RRF_{\pure}$ such that $\om M\in \SB(\WL)$. Then $\SB_{m}(\WL)$ also carries a monoidal structure given by $(-)\ot_{R}(-)$.

\begin{prop}\label{p:Sw} Let $\cL\in\fo$ and $w\in \WL$. Then there is a unique isomorphism in $\RRF$
\begin{equation}\label{MICw}
\MM^{\c}(\IC(w)^{\da}_{\cL}\j{-\ell_{\cL}(w)})\cong \SS(w)^{\na}_{\WL}
\end{equation}
under which $\th_{w}^{\da}$ corresponds to $\one\in\SS(w)^{\na}_{\WL}$.
\end{prop}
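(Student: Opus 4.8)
\textbf{Proof plan for Proposition \ref{p:Sw}.} The plan is to run an induction on $\ell_{\cL}(w)$ and transport everything through the monoidal Soergel functor $\MM^{\c}$, using the main Extension Theorem \ref{th:Hom} to control endomorphism rings. For $w=e$ the statement is Lemma \ref{l:ICda es}(1) together with Lemma \ref{l:MM th} (which gives $\MM^{\c}(\d_{\cL})\cong R(e)=R=\SS(e)^{\na}_{\WL}$, with $\th^{\da}_{e}$ going to $\e_{\cL}\mapsto 1=\one$). For the inductive step, pick a simple reflection $s\in\WL$ with $\ell_{\cL}(ws)=\ell_{\cL}(w)-1$ (equivalently $\ell_\cL(w's)=\ell_\cL(w')+1$ for $w'=ws$), so that $\uIC(w)_{\cL}$ is a direct summand of $\uIC(w')_{\cL}\star\uIC(s)_{\cL}$ (decomposition theorem), and by Lemma \ref{l:ss} this convolution is a perverse sheaf, semisimple of weight zero. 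First I would apply $\MM^{\c}$ to the identity $\IC(w')^{\da}_{\cL}\star\IC(s)^{\da}_{\cL}$ (after appropriate Tate twists): by the monoidal structure Corollary \ref{c:MM mono} and Lemma \ref{l:M conv s}, combined with Lemma \ref{l:MM ICs}(2), this gives
\begin{equation*}
\MM^{\c}(\IC(w')^{\da}_{\cL}\star\IC(s)^{\da}_{\cL}\j{-\ell_{\cL}(w')-1})\cong \MM^{\c}(\IC(w')^{\da}_{\cL}\j{-\ell_{\cL}(w')})\ot_{R^{s}}R\j{1}\cong \SS(w')^{\na}_{\WL}\ot_{R^{s}}R\j{1},
\end{equation*}
where the last step uses the inductive hypothesis; and by Lemma \ref{l:BSL} (applied to a reduced expression of $w$ in $\WL$ ending in $s$) the right-hand side is a Bott--Samelson bimodule $\SS(t_m,\dots,t_1)^{\na}_{\WL}$ of which $\SS(w)^{\na}_{\WL}$ is the relevant indecomposable summand.

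Next I would identify the summand of $\MM^{\c}(\IC(w')^{\da}_{\cL}\star\IC(s)^{\da}_{\cL})$ corresponding to $\IC(w)^{\da}_{\cL}$ with the summand $\SS(w)^{\na}_{\WL}$ of the Bott--Samelson bimodule. This is where Theorem \ref{th:Hom} does the real work: it shows $\MM^{\c}$ is fully faithful on semisimple complexes (preserving graded Hom), hence it matches the idempotent decomposition of $\uIC(w')_{\cL}\star\uIC(s)_{\cL}$ in ${}_{\cL}\un\cD^{\c}_{\cL}$ with the idempotent decomposition of its image in $\RRM$. So the direct summand $\MM^{\c}(\IC(w)^{\da}_{\cL}\j{-\ell_{\cL}(w)})$ is an indecomposable summand of the Bott--Samelson bimodule $\SS(t_m,\dots,t_1)_{\WL}$; to pin it down as $\SS(w)_{\WL}$ I verify Soergel's two criteria (as recalled in \S\ref{ss:SBim}, in the extended form Lemma \ref{l:crit Sw} for $w\in\WL$, i.e.\ with trivial block $w^{\b}=e$): criterion (2) is immediate, and criterion (1), that the support contains $\Gamma(w)$, I would check by restricting to the open cell, i.e.\ by examining $i_{w}^{*}$ of $\IC(w)^{\da}_{\cL}$ via Corollary \ref{c:filM}/Proposition \ref{p:parity purity} — the associated graded piece $\Gr^{F}_{w}\MM^{\c}(\IC(w)^{\da}_{\cL})\cong\Homb(C(w)^{\da}_{\cL},C(w)^{\da}_{\cL})\cong R(w)$ is nonzero, which forces $\Gamma(w)\subset\Supp$. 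The purity statement (that the result lands in $\RRF_{\pure}$, so that $(-)^{\na}$ applies and $\om$ of it is $\SS(w)_{\WL}$) follows because $\IC(w)^{\da}_{\cL}$ is pure of weight zero (noted after Definition \ref{d:can IC}) and $\MM^{\c}$ is computed from $\bR\Hom$ between pure complexes of weight zero.

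It then remains to upgrade the abstract isomorphism to the \emph{unique} one sending $\th^{\da}_{w}$ to $\one$. For uniqueness: the endomorphism ring of $\SS(w)^{\na}_{\WL}$ in $\RRF$ is just scalars (this is recalled in \S\ref{ss:SBim} for $\SS(w)_{\WL}$ over $\RRM$, and the Frobenius-equivariant version follows since the scalars are $\Fr$-invariant), so there is at most one isomorphism with the prescribed behavior on $\th^{\da}_{w}$; for existence I need $\th^{\da}_{w}\in\MM^{\c}(\IC(w)^{\da}_{\cL}\j{-\ell_{\cL}(w)})$ to be a nonzero element of degree zero mapping to a generator, i.e.\ to land in the degree-zero line and to not lie in the ``lower'' part of the filtration $F_{<w}$ — this is exactly the content of the lemma preceding Definition \ref{d:can IC} (which builds $\th^{\da}_{w}$ precisely as the unique element restricting to $\id$ under $i^{*}_{w}$), combined with the fact that $\SS(w)_{\WL}$ has one-dimensional degree-zero part. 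Matching $\Fr$-actions is automatic because both $\th^{\da}_{w}$ and $\one$ are $\Fr$-invariant and $\MM^{\c}$ is $\Fr$-equivariant. \textbf{The main obstacle} I anticipate is the bookkeeping in the inductive step that reconciles the two descriptions of the Bott--Samelson module $\SS(s_{i_n},\dots,s_{i_1})_{\cL}$ — via iterated $\ot_{R^s}R$ on the $\MM^{\c}$ side (Lemma \ref{l:MM ICs}) versus via Lemma \ref{l:BSL} on the Soergel side — and making sure the distinguished element $\th^{\da}_{w}$ is carried through these identifications to $\one$ rather than to some nonzero scalar multiple; the degree-zero one-dimensionality of $\SS(w)_{\WL}$ and the characterizing property of $\th^{\da}_{w}$ are what ultimately force the normalization, but threading this through the chain of canonical isomorphisms in Lemmas \ref{l:MM th}, \ref{l:MM ICs}, \ref{l:M conv th}, \ref{l:M conv s} requires care.
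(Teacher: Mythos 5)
Your overall strategy (verify Soergel's two criteria via Lemma \ref{l:crit Sw}, get indecomposability from Theorem \ref{th:Hom}, get $\G(w)\subset\Supp$ from $\Gr^{F}_{w}\cong R(w)$, and fix the normalization using the one-dimensional degree-zero part together with $\Fr$-invariance of $\th^{\da}_{w}$) is the same as the paper's, and that part of your write-up is correct. But the inductive set-up has a genuine gap. You factor $w=w's$ with ``$s\in\WL$ a simple reflection'' and then invoke Lemma \ref{l:ss}, Lemma \ref{l:M conv s} and Lemma \ref{l:MM ICs}. All of those lemmas require $s$ to be a simple reflection \emph{of $W$} that happens to lie in $\WL$ (they rely on the parabolic $P_{s}$ and the functors $\pi_{s}^{*},\pi_{s*}$). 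For a general $w\in\WL$ no such factorization exists: take $G=\SL_{3}$ and $\cL$ with $(\a_{1}^{\vee})^{*}\cL$, $(\a_{2}^{\vee})^{*}\cL$ nontrivial but $((\a_{1}+\a_{2})^{\vee})^{*}\cL$ trivial; then $\WL=\{e,\,s_{1}s_{2}s_{1}\}$ contains no simple reflection of $W$ at all, so your induction cannot even start on $w=s_{1}s_{2}s_{1}$. If instead you mean $s$ to be a simple reflection of the Coxeter group $\WL$, then there is no object ``$\IC(s)_{\cL}$'' with the properties you use, and Lemma \ref{l:BSL} does not apply to reduced words in $\WL$ — it takes reduced words in $W$ as input and \emph{produces} a reduced word in $W^{\c}_{\cL_{n}}$ as output.

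This is exactly why the paper first proves the more general isomorphism $\un\MM(\uIC(w)_{\cL}[-\ell_{\b}(w)])\cong\SS(w)_{\cL}$ for \emph{arbitrary} $w\in W$: one must take a reduced expression $w=s_{i_{n}}\cdots s_{i_{1}}$ in $W$, whose intermediate convolutions $\uIC(s_{i_{j}})_{\cL_{j-1}}$ leave the neutral block and pass through other character sheaves $\cL_{j}$ and other blocks; the decomposition theorem realizes $\uIC(w)_{\cL}$ as a summand of that full mixed Bott--Samelson convolution, and repeated use of Lemma \ref{l:MM th} (for $s_{i_{j}}\notin W^{\c}_{\cL_{j-1}}$, contributing $R(s_{i_{j}})$) and Lemma \ref{l:MM ICs} (for $s_{i_{j}}\in W^{\c}_{\cL_{j-1}}$, contributing $\ot_{R^{s_{i_{j}}}}R$) identifies its image with $\SS(s_{i_{n}},\dots,s_{i_{1}})_{\cL}$, which Lemma \ref{l:BSL} then rewrites as a genuine Bott--Samelson bimodule for $\WL$. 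Your induction should be replaced by this iteration over a $W$-reduced word (equivalently, an induction on $\ell(w)$, not on $\ell_{\cL}(w)$), after which the remainder of your argument goes through as written.
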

\begin{proof}
We first prove more generally for any $w\in W$, we have an isomorphism in $\RRM$
\begin{equation}\label{MwSw}
\un\MM(\uIC(w)_{\cL}[-\ell_{\b}(w)])\cong \SS(w)_{\cL}.
\end{equation}
Here $\b\in {}_{w\cL}\un W_{\cL}$ is the block containing $w$, and we are suppressing the choice of a minimal IC sheaf $\xi\in {}_{w\cL}\fP^{\b}_{\cL}$ from $\un\MM_{\xi}$ because the isomorphism class of the functor $\un\MM_{\xi}$ is independent of $\xi$. To show \eqref{MwSw} we apply the criterion in Lemma \ref{l:crit Sw} to $M=\un\MM(\uIC(w)_{\cL}[-\ell_{\b}(w)])$. By Theorem \ref{th:Hom}, $\End(M)=\End(\uIC(w)_{\cL})=\Qlbar$, hence $M$ is indecomposable. By Corollary \ref{c:filM}, $M$ admits a filtration indexed by $\{v\in W; v\le w\}$ with the last associated graded $\Gr^{F}_{w}M\cong\Homb(i^{*}_{w}\uIC(w_{\b})[-N_{\cL}], i^{!}_{w}\uIC(w)_{\cL}[-\ell_{\b}(w)])$, which by Proposition \ref{p:stalk Th} is $\Homb(\uC(w)[-\ell_{\b}(w)], \uC(w)[-\ell_{\b}(w)])\cong R(w)$. Therefore $\Supp(M)\supset\Supp(R(w))=\G(w)$. Finally, for any reduced expression $w=s_{i_{n}}\cdots s_{i_{1}}$, $\uIC(w)_{\cL}$ is a direct summand of $\uIC(s_{i_{n}},\cdots, s_{i_{1}})_{\cL}=\uIC(s_{i_{n}})_{\cL_{n-1}}\star\cdots\star\uIC(s_{i_{1}})_{\cL}$ by the decomposition theorem. Therefore $M$ is a direct summand of $\un\MM(\uIC(s_{i_{n}},\cdots, s_{i_{1}})_{\cL}[-\ell_{\b}(w)])$. By repeated applications of  Lemma \ref{l:MM th} and Lemma \ref{l:MM ICs}, one sees that 
\begin{equation*}
\un\MM(\uIC(s_{i_{n}},\cdots, s_{i_{1}})_{\cL}[-\ell_{\b}(w)])\cong \SS(s_{i_{n}},\cdots, s_{i_{1}})_{\cL}.
\end{equation*}
The shift by $\ell_{\b}(w)$ matches the number of $1\le j\le n$  such that $s_{i_{j}}\in W^{\c}_{\cL_{j-1}}$ by Lemma \ref{l:ell beta}(4), which enters into the above calculation because of the shift $[1]$ that appears in Lemma \ref{l:MM ICs}(1). The above checks the conditions in Lemma \ref{l:crit Sw} and hence \eqref{MwSw} is proved.

Now consider the case $w\in \WL$ and let $M^{\da}=\MM^{\c}(\IC(w)^{\da}_{\cL}\j{-\ell_{\cL}(w)})$. We have already proved that $\om M^{\da}\cong\SS(w)_{\WL}$ as a graded $R\ot R$-module. Now the Frobenius action on $\SS(w)_{\WL}$ compatible with the grading and the $R\ot R$-action is unique up to a scalar  (proof: if $F$ and $F'$ are two such Frobenius actions on $\SS(w)_{\WL}$, then $F'\c F^{-1}\in\Aut_{\RRM}(\SS(w)_{\WL})\cong\Aut(\uIC(w)_{\cL})=\Qlbar^{\times}$). Therefore, $M^{\da}\cong \SS(w)^{\na}_{\WL}\ot V$ for some one-dimensional $\Fr$-module $V$.   In particular, we have an identification of $\Fr$-modules $(M^{\da})^{0}\cong V$. Now $0\ne\th^{\da}_{w}\in (M^{\da})^{0}$ is $\Fr$-invariant, hence $V$ is a trivial $\Fr$-module. Therefore $M^{\da}\cong \SS(w)^{\na}_{\WL}$; such an isomorphism is unique up to a scalar, and it becomes unique if we require $\th^{\da}_{w}$ to go to $\one$. 
\end{proof}
\subsection{More on Soergel bimodules} The rest of the section is only used in the proof of Prop. \ref{p:equiv CL}. Let $\cL\in\fo$ and consider Soergel bimodules for $\WL$. In the rest of the section we shall denote $\SS(w)_{\WL}$ simply by $\SS(w)$. To each indecomposable Soergel bimodule $S\cong\SS(w)[n]$ we assign the integer $d(S):=-n+\ell_{\cL}(w)$.  This is the analogue of the perverse degree for Soergel bimodules.

%%%%%%% OK %%%%%%%%%
\begin{lemma}\label{l:Sdeg} Let $S,S'$ be indecomposable Soergel bimodules for $\WL$.
\begin{enumerate}
\item If $d(S)<d(S')$ then $\Hom_{\RRM}(S,S')=0$.
\item If $d(S)=d(S')$ and $S$ and $S'$ are not isomorphic, then $\Hom_{\RRM}(S,S')=0$.
\end{enumerate}
\end{lemma}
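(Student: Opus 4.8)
The plan is to transport the statement, via the monodromic Soergel functor, from the category of $R$-bimodules back to the monodromic Hecke category ${}_{\cL}\un\cD^{\c}_{\cL}$, where it becomes the familiar $t$-exactness of morphisms between shifts of simple perverse sheaves. Write $S\cong\SS(w)[n]$ and $S'\cong\SS(w')[n']$ with $w,w'\in\WL$ (every indecomposable Soergel bimodule for $\WL$ is of this form by \S\ref{ss:SBim}), so that $d(S)=\ell_{\cL}(w)-n$ and $d(S')=\ell_{\cL}(w')-n'$; put $k=d(S)-d(S')$. Applying the isomorphism \eqref{MwSw} from the proof of Proposition~\ref{p:Sw} to the neutral block $\b=\WL$ (where $\ell_{\b}=\ell_{\cL}$ and $\SS(w)_{\cL}=\SS(w)$) gives, in $\RRM$,
\begin{equation*}
S\cong \uMM^{\c}\big(\uIC(w)_{\cL}[-d(S)]\big),\qquad S'\cong \uMM^{\c}\big(\uIC(w')_{\cL}[-d(S')]\big).
\end{equation*}

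Next I would feed the semisimple complexes $\cF=\uIC(w)_{\cL}[-d(S)]$ and $\cG=\uIC(w')_{\cL}[-d(S')]$ of ${}_{\cL}\un\cD^{\c}_{\cL}$ into Theorem~\ref{th:Hom} (in its non-mixed form, which is what the proof of that theorem actually establishes). This yields a graded isomorphism $\Homb(\cF,\cG)\cong\Homb_{\RRM}(\uMM^{\c}(\cF),\uMM^{\c}(\cG))$; taking degree-zero parts and unwinding the cohomological shifts produces
\begin{equation*}
\Hom_{\RRM}(S,S')\;\cong\;\Hom_{{}_{\cL}\un\cD^{\c}_{\cL}}\big(\uIC(w)_{\cL},\,\uIC(w')_{\cL}[k]\big).
\end{equation*}

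It then remains to read off the vanishing from the perverse $t$-structure on ${}_{\cL}\un\cD_{\cL}$. Both $\uIC(w)_{\cL}$ and $\uIC(w')_{\cL}$ lie in the heart, so if $k<0$ (i.e.\ $d(S)<d(S')$) the right-hand group is a negative $\Ext$ between objects of an abelian category and hence vanishes, proving (1); if $k=0$ (i.e.\ $d(S)=d(S')$) it is a $\Hom$ between two simple perverse sheaves, which vanishes unless $\uIC(w)_{\cL}\cong\uIC(w')_{\cL}$, i.e.\ unless $w=w'$, but then $d(S)=d(S')$ forces $n=n'$, hence $S\cong S'$, contradicting the hypothesis of (2). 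I do not anticipate a serious obstacle: everything is reduced to Proposition~\ref{p:Sw}, Theorem~\ref{th:Hom}, and the standard $t$-structure formalism. The only genuinely delicate points are the bookkeeping between the grading shift $[1]$ on $R$-bimodules and the cohomological shift on complexes (so that $d(S)$ matches the shift of $\uIC(w)_{\cL}$ correctly), and the elementary fact that for indecomposable Soergel bimodules an isomorphism $S\cong S'$ forces equality of both the underlying Weyl group element and the degree shift.
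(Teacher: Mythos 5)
Your proof is correct and is essentially identical to the paper's: both invoke Proposition \ref{p:Sw} (via \eqref{MwSw}) to identify $S$ and $S'$ with $\uMM^{\c}(\uIC(w)_{\cL}[-d(S)])$ and $\uMM^{\c}(\uIC(w')_{\cL}[-d(S')])$, then apply Theorem \ref{th:Hom} to convert $\Hom_{\RRM}(S,S')$ into $\Hom(\uIC(w)_{\cL},\uIC(w')_{\cL}[d(S)-d(S')])$ and conclude by the perverse $t$-structure. The bookkeeping of shifts and the final reduction to simplicity of the $\uIC$'s match the paper's argument exactly.
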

\begin{proof}
If $S=\SS(w)[n]$ and $S'=\SS(w')[n']$, then by Proposition \ref{p:Sw}, $\un\MM^{\c}(\uIC(w)_{\cL}[n-\ell_{\cL}(w)])\cong S$,  $\un\MM^{\c}(\uIC(w')_{\cL}[n'-\ell_{\cL}(w')])\cong S'$. By Theorem \ref{th:Hom}, $\Hom_{\RRM}(S,S')=\Hom(\uIC(w)[n-\ell_{\cL}(w)], \uIC(w')[n'-\ell_{\cL}(w')])=\Hom(\uIC(w)[-d(S)],\uIC(w')[-d(S')])$. 

If $d(S)<d(S')$, then $\Hom(\uIC(w)[-d(S)],\uIC(w')[-d(S')])=0$ by perverse degree reasons, therefore $\Hom_{\RRM}(S,S')=0$. 

If $d(S)=d(S')$, then we have $\Hom(\uIC(w)[-d(S)],\uIC(w')[-d(S')])=\Hom(\uIC(w),\uIC(w'))$ which vanishes if $w\ne w'$, in which case $\Hom_{\RRM}(S,S')=0$.
\end{proof}

\begin{prop}\label{p:filM}
Let $M\in \SB_{m}(\WL)$.  There exists a finite filtration $0=F_{0}M\subset F_{1}M
\subset\cdots\subset F_{n}M=M$ by subobjects in $\SB_{m}(\WL)$ with the following properties 
\begin{enumerate}
\item For $1\le i\le n$, $\Gr^{F}_{i}M\cong \SS(w_{i})^{\na}\j{n_{i}}\ot V_{i}$ for some $w_{i}\in \WL, n_{i}\in\ZZ$  and finite-dimensional $\Fr$-module $V_{i}$ pure of weight zero.
\item The filtration $\om F_{\bu}M$ of $\om M$ splits in $\RRM$.
\end{enumerate}
\end{prop}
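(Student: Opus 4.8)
The plan is to reduce the statement to two manageable situations and then induct. The first is the \emph{isotypic case}: suppose $\om M\cong S^{\oplus m}$ for a single indecomposable Soergel bimodule $S=\SS(w)[n]$ with $w\in\WL$. I would show that then $M\cong \SS(w)^{\na}\j{n}\ot_{\Qlbar}V$ in $\RRF$ for an $\Fr$-module $V$ of dimension $m$, necessarily pure of weight zero because $M$ is pure. The mechanism is that $\Hom_{\RRM}(\SS(w),\SS(w))=\Qlbar$ (scalars), so $V:=\Hom_{\RRM}(\om(\SS(w)^{\na}\j{n}),\om M)^{0}$ is $m$-dimensional; the operator $f\mapsto \Fr_{M}\c f\c\Fr_{\SS(w)^{\na}\j{n}}^{-1}$ is $\Qlbar$-linear and degree preserving, making $V$ an $\Fr$-module, and the evaluation map $\om(\SS(w)^{\na}\j{n})\ot_{\Qlbar}V\to\om M$ is then an isomorphism in $\RRM$ which is $\Fr$-equivariant by construction. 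Weight-zero purity of $V$ is immediate from the fact that $(-)^{\na}$ makes $\Fr$ act by $q^{j/2}$ in degree $j$, combined with $M\in\RRF_{\pure}$.

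The second ingredient is \emph{stability of the extremal layer}. Writing $\om M\cong\bigoplus_{j}S_{j}$ as a sum of indecomposable Soergel bimodules (unique by Krull--Schmidt), set $d_{0}=\min_{j}d(S_{j})$, $P=\bigoplus_{d(S_{j})=d_{0}}S_{j}$ and $Q=\bigoplus_{d(S_{j})>d_{0}}S_{j}$, so $\om M=P\op Q$ in $\RRM$. By Lemma \ref{l:Sdeg}(1) we have $\Hom_{\RRM}(P,Q)=0$; since the supports of Soergel bimodules are unions of graphs $\G(w')$, hence conical, the same vanishing persists for $\Fr$-semilinear maps $P\to Q$, and therefore $\Fr_{M}(P)\subset P$. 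Thus $P$ with the restricted Frobenius is a subobject $F_{1}M\subset M$ in $\SB_{m}(\WL)$, it is a direct summand of $\om M$ in $\RRM$ with complement $Q\cong\om(M/F_{1}M)$, and $M/F_{1}M\in\SB_{m}(\WL)$ has strictly fewer distinct values of $d$ on the summands of its underlying module.

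With these in hand I would prove the proposition by induction on the number of distinct values of $d$ among the indecomposable summands of $\om M$; the empty case is trivial. In the inductive step, form $F_{1}M=P$ as above and apply the inductive hypothesis to $M/F_{1}M$, obtaining a filtration which I pull back to a filtration of $M$ refining $F_{1}M$; its associated graded pieces lying above $F_{1}M$ coincide with those of $M/F_{1}M$, hence have the required form. For $F_{1}M$ itself, the isotypic decomposition $P=\bigoplus_{i}T_{i}^{\op m_{i}}$ (distinct indecomposables $T_{i}$, all of $d$-value $d_{0}$) is canonical by Lemma \ref{l:Sdeg}(2), hence $\Fr$-stable by the same semilinearity remark as above, so $F_{1}M=\bigoplus_{i}F_{1,i}$ in $\RRF$ with each $\om F_{1,i}$ isotypic; Step 1 gives $F_{1,i}\cong\SS(w_{i})^{\na}\j{n_{i}}\ot V_{i}$ with $V_{i}$ pure of weight zero, and the filtration of $F_{1}M$ by partial sums has graded pieces of the required form and splits after $\om$. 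Splicing the filtration of $F_{1}M$ with the pulled-back filtration of $M/F_{1}M$, and using that $P$ is a direct summand of $\om M$ (with complement $Q$, which carries the induced split filtration), yields property (2) for the entire flag.

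The genuinely delicate point is the passage, in Step 2 and again in Step 3, from the vanishing of the relevant Hom spaces in $\RRM$ (supplied by Lemma \ref{l:Sdeg}) to the vanishing of $\Fr$-\emph{semilinear} maps, i.e. the assertion that the $d$-filtration of $\om M$ is respected by the $q$-semilinear automorphism $\Fr_{M}$. This is exactly the place where the semilinearity of $\Fr$ must be reckoned with, and it is resolved by observing that the Soergel bimodules in play are unchanged (up to isomorphism in $\RRM$) by the grading-rescaling automorphism of $R\ot R$, so that an $\Fr$-semilinear map between two of them is the same datum as an honest $\RRM$-morphism between abstractly isomorphic modules. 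Everything else is bookkeeping with Krull--Schmidt and with splicing filtrations.
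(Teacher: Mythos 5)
Your proof is correct and follows essentially the same route as the paper's: both rest on Lemma \ref{l:Sdeg} to show that the $d$-filtration of $\om M$ is canonical, hence Frobenius-stable (via the observation, left implicit in the paper, that the grading-rescaling twist of $R\ot R$ preserves the isomorphism classes of indecomposable Soergel bimodules), and both identify the graded pieces through multiplicity spaces $\Hom_{\RRM}(\SS(w)^{\na}\j{n},-)$ equipped with the conjugation Frobenius. The only organizational differences are that you induct by peeling off the minimal-$d$ layer and passing to the quotient where the paper builds the whole filtration $F_{i}M=\oplus_{d(S_{\a})\le i}S_{\a}$ inside $M$ at once, and that you omit the paper's preliminary decomposition of $M$ according to cosets in $\Qlbar^{\times}/q^{\ZZ}$ of generalized Frobenius eigenvalues, which your argument does not in fact need.
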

\begin{proof}
Let $M=\oplus_{n}M^{n}\in \SB_{m}(\WL)$. Since $M$ is finitely generated as a graded $R\ot R$-module, each $M^{n}$ is finite-dimensional, and may be decomposed into generalized eigenspaces of $\Fr$. We group the generalized Frobenius eigenvalues according to the cosets $\Qlbar^{\times}/q^{\ZZ}$
\begin{equation*}
M=\oplus_{\l\in \Qlbar^{\times}/q^{\ZZ}}M_{\l}.
\end{equation*} 
Since $\Fr$ acts on $R\ot R$ by integer powers of $q$,   each $M_{\l}$ is itself an object in $\RRF$; since $\om M_{\l}$ is a direct summand of a Soergel bimodule, it is also a Soergel bimodule. Hence  $M_{\l}\in \SB_{m}(\WL)$. We only need to produce a filtration for each $M_{\l}$. Without loss of generality, we consider the case $\l=1$ and assume $M=M_{1}$, i.e., $\Fr$-eigenvalues on $M$ are in $q^{\ZZ}$. In particular, $M$ is evenly graded. 

Consider any decomposition $\om M=\oplus_{\a\in I} S_{\a}$ where each $S_{\a}$ is an indecomposable Soergel bimodule.  Let $F_{i}M=\oplus_{\a\in I, d(S_{\a})\le i}S_{\a}\subset M$. By Lemma \ref{l:Sdeg}(1), $F_{i}M$ is independent of the decomposition of $\om M$ into indecomposables. Therefore each $F_{i}M$ is stable under $\Fr$, and hence an object in $\SB_{m}(\WL)$. Moreover, by Lemma \ref{l:Sdeg}(2), we can canonically write $\om\Gr^{F}_{i}M=\oplus_{w\in \WL}\SS(w)[\ell_{\cL}(w)-i]\ot {}_{i}V_{w}$ where ${}_{i}V_{w}=\Hom_{\RRM}(\SS(w)[\ell_{\cL}(w)-i], \om\Gr^{F}_{i}M)$.   Equip ${}_{i}V_{w}$ with the Frobenius action by viewing it as $\Hom_{\RRM}(\SS(w)^{\na}\j{\ell_{\cL}(w)-i},\Gr^{F}_{i}M)$, then $\Gr^{F}_{i}M\cong \oplus_{w\in \WL}\SS(w)^{\na}\j{\ell_{\cL}(w)-i}\ot {}_{i}V_{w}$ as objects in $\RRF$. After refining the filtration $F_{\bu}M$ and renumbering, it becomes a filtration satisfying the required conditions.
\end{proof}

%%%%%%%%%%%%%%%%%%%%%%%%%%%%
\section{Equivalence for the neutral block}\label{s:endo}
In this section we prove Theorem \ref{th:intro neutral}.

%%%%%%%% ok %%%%%%%
\subsection{The endoscopic group}\label{ss:endo} Let $\cL\in\fo$ and consider the neutral block $_{\cL}\cD^{\c}_{\cL}$. Let $H$ be the reductive group over $\FF_{q}$ with a maximal torus identified with $T$ and the root system $\Phi(H,T)=\Phi_{\cL}\subset \xch(T)$. In particular, the Weyl group of $H$ with respect to $T$ is identified with $\WL$.  We call $H$ the {\em endoscopic group of $G$ corresponding to $\cL$}. 
Let $B_{H}\subset H$ be the Borel subgroup containing $T$ corresponding to the positive roots $\Phi^{+}_{\cL}$. As defined $H$ is unique up to non-unique isomorphisms. We will give a rigidification of $H$ later in \S\ref{ss:rel pin}.

To justify the terminology, we recall the usual definition of endoscopic groups of $G$.  Let $\dG$ be the Langlands dual group of $G$ defined over $\Qlbar$, with a maximal torus $\dT$ and roots $\Phi(\dG,\dT)\subset\xch(\dT)=\xcoch(T)$ identified with the coroots $\Phi^{\vee}(G,T)$ of $(G,T)$.  Let $\k$ be a semisimple element in $\dG$, and $\dH$ be the neutral component of the centralizer $\dG_{\k}$. An endoscopic group of $G$ associated to $\k$ is a reductive group over $\FF_{q}$ whose Langlands dual is isomorphic to $\dH$.

Now let $\dH\subset \dG$ be the connected reductive subgroup containing $\dT$ with roots $\Phi(\dH,\dT)=\Phi^{\vee}_{\cL}\subset \Phi^{\vee}(G,T)=\Phi(\dG,\dT)$. Then $\dH$ is dual to $H$. We claim that $\dH$ is the neutral component of the centralizer in $\dG$ of a semisimple element $\k\in\dT$, which would imply that $H$ is an endoscopic group of $G$ in the usual sense. In fact, the bijection \eqref{cs1T} allows us to identify $\Ch(T)$ with $\xch(T)\ot_{\ZZ} \Hom(\FF_{q}^{\times},\Qlbar^{\times})$. Choosing a generator $\z\in \FF_{q}^{\times}$, we get an isomorphism $\Hom(\FF_{q}^{\times},\Qlbar^{\times})\cong \mu_{q-1}(\Qlbar)$ by evaluating at $\z$, hence $\Ch(T)\isom \xch(T)\ot_{\ZZ}\mu_{q-1}(\Qlbar)=\dT[q-1](\Qlbar)$. This allows us to turn $\cL\in\Ch(T)$ into an element $\k\in \dT(\Qlbar)$ such that $\k^{q-1}=1$. Then we have $\dH=\dG_{\kappa}^{\c}$ as subgroups of $\dG$.

With the correspondence $\cL\bij\k\in\dT[q-1](\Qlbar)$ above,   $W_{\cL}$  is identified with the Weyl group of the possibly disconnected group $\dG_{\kappa}$ with respect to $\dT$, i.e., $W_{\cL}\cong N_{\dG_{\kappa}}(\dT)/\dT$. If $G$ has connected center, so that $\dG$ has simply-connected derived group,  $\dG_{\kappa}$  is connected by Steinberg \cite[Theorem 8.1]{St}, hence $W_{\cL}=\WL$ in this case (see also \cite[Theorem 5.13]{DL}).

Consider the usual Hecke category for $H$
\begin{equation*}
\cD_{H}:=D^{b}_{m}(B_{H}\bs H/B_{H}).
\end{equation*}
We denote by $\IC(w)_{H}, \D(w)_{H}$ and $\nb(w)_{H}$ the objects in $\cD_{H}$ that are the intersection complex, standard perverse sheaf and costandard perverse sheaf supported on the closure of the Schubert cell $B_{H}wB_{H}/B_{H}\subset H/B_{H}$ defined similarly as in \eqref{std obj} for $H$ in place of $G$ and the trivial character sheaf on $T$ in place of $\cL$.

%%%%%%% ok %%%%%%%
\begin{theorem}[Monodromic-Endoscopic equivalence for the neutral block]\label{th:main} Let $\cL\in \Ch(T)$ and $H$ be the endoscopic group of $G$ attached to $\cL$ as in \S\ref{ss:endo}. Then there is a canonical monoidal equivalence of triangulated categories
\begin{equation*}
\Psi^{\c}_{\cL}: \cD_{H}\isom{}_{\cL}\cD^{\c}_{\cL}
\end{equation*}
satisfying
\begin{enumerate}
\item For all $w\in \WL$
\begin{eqnarray}
\label{Phi IC} \Psi^{\c}_{\cL}(\IC(w)_{H})\cong \IC(w)^{\da}_{\cL}, \\
\label{Phi D}\Psi^{\c}_{\cL}(\D(w)_{H})\cong \D(w)^{\da}_{\cL}, \\
\label{Phi nb}\Psi^{\c}_{\cL}(\nb(w)_{H})\cong \nb(w)^{\da}_{\cL}.
\end{eqnarray}
In particular, $\Psi^{\c}_{\cL}$ is $t$-exact for the perverse $t$-structures.
\item There is a functorial isomorphism of graded $(R\ot R, \Fr)$-modules for all $\cF,\cF'\in \cD_{H}$ \footnote{This does not automatically follow from the equivalence $\Psi^{\c}_{\cL}$, because as in \eqref{Homconvention} $\Hom(-,-)$ denotes the Hom space after base change to $k=\ov\FF_{q}$.}
\begin{equation}\label{HomPsi}
\Homb(\cF,\cF')\isom \Homb(\Psi^{\c}_{\cL}(\cF), \Psi^{\c}_{\cL}(\cF')).
\end{equation}
\end{enumerate}
\end{theorem}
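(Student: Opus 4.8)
The plan is to build the equivalence $\Psi^{\c}_{\cL}$ by factoring it through the derived category of Soergel bimodules for $\WL$. By now the excerpt has developed both halves of this bridge: on the monodromic side, Proposition~\ref{p:Sw} gives a Soergel functor $\MM^{\c}$ identifying $\MM^{\c}(\IC(w)^{\da}_{\cL}\j{-\ell_{\cL}(w)})\cong\SS(w)^{\na}_{\WL}$, and Theorem~\ref{th:Hom} shows this functor is fully faithful on graded Hom spaces between semisimple complexes. On the endoscopic side the classical Soergel theory (the Erweiterungssatz of \cite{S}) gives the analogous statement for $\cD_{H}$ relating $\IC(w)_{H}$ to $\SS(w)_{\WL}$. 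So first I would state a clean intermediate result (presumably Theorem~\ref{th:SB} referenced earlier): the homotopy category of the additive subcategory of semisimple complexes in ${}_{\cL}\cD^{\c}_{\cL}$, localized appropriately, is equivalent as a monoidal category to a suitable (bounded homotopy/derived) category of Frobenius-equivariant Soergel bimodules $\mathrm{Ho}^{b}(\SB_{m}(\WL))$, with $\IC(w)^{\da}_{\cL}$ corresponding to $\SS(w)^{\na}_{\WL}$ and the convolution corresponding to $\otimes_{R}$. The monoidal structure on $\MM^{\c}$ is Corollary~\ref{c:MM mono}, and its compatibility with associativity was recorded there. I would prove the same statement for $\cD_{H}$ (this is essentially in the literature, e.g.\ Soergel, Rouquier, or one can cite \cite{Sb}). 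Composing one equivalence with the inverse of the other produces $\Psi^{\c}_{\cL}\colon\cD_{H}\isom{}_{\cL}\cD^{\c}_{\cL}$ as a monoidal equivalence of triangulated categories.

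Next I would verify property (1). The correspondence $\IC(w)_{H}\leftrightarrow\SS(w)_{\WL}\leftrightarrow\IC(w)^{\da}_{\cL}$ gives \eqref{Phi IC} directly, after tracking the degree/Tate-twist normalization $\j{-\ell_{\cL}(w)}$ in Proposition~\ref{p:Sw} against the normalization of $\IC(w)_{H}$ (pure of weight zero). For \eqref{Phi D} and \eqref{Phi nb} I would argue that $\Psi^{\c}_{\cL}$, being monoidal and sending $\IC(s)_{H}$ to $\IC(s)^{\da}_{\cL}\cong\IC(s)_{\cL}$ for simple $s\in\WL$ (using Lemma~\ref{l:ICda es}(2)), sends Bott--Samelson objects to the corresponding convolutions $\IC(\ds_{i_{n}})_{\cL}\star\cdots\star\IC(\ds_{i_{1}})_{\cL}$; since the standard object $\D(w)_{H}$ is characterized inside the triangulated category generated by these (via the recollement/standard-costandard formalism, or simply by its image in the Grothendieck group together with $t$-exactness), and convolution on the monodromic side has the matching behavior by Lemma~\ref{l:basic conv} and Lemma~\ref{l:clean s equiv}(3), one deduces $\Psi^{\c}_{\cL}(\D(w)_{H})\cong\D(w)^{\da}_{\cL}$, and dually for $\nb$. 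Then $t$-exactness follows because $\Psi^{\c}_{\cL}$ sends standard-and-costandard objects to standard-and-costandard objects, so it preserves both the $\le 0$ and $\ge 0$ parts of the perverse $t$-structures.

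Property (2) is the most delicate point, as the footnote flags: the equivalence $\Psi^{\c}_{\cL}$ a priori only controls morphisms computed inside the mixed categories, whereas $\Homb(-,-)$ in \eqref{HomPsi} is computed after base change to $k$. The plan here is to reduce \eqref{HomPsi} to the case of semisimple complexes (both sides are way-out functors compatible with the triangulated structure, so it suffices to check on a generating set closed under the relevant operations, namely shifts of $\IC(w)^{\da}_{\cL}$ and their convolutions), and then to invoke Theorem~\ref{th:Hom} on the monodromic side together with its classical counterpart for $\cD_{H}$: both $\Homb(\cF,\cF')$ and $\Homb(\Psi^{\c}_{\cL}\cF,\Psi^{\c}_{\cL}\cF')$ are identified, via the respective Soergel functors, with $\Homb_{\RRM}(\MM_{H}(\cF),\MM_{H}(\cF'))$ and $\Homb_{\RRM}(\MM^{\c}(\Psi^{\c}_{\cL}\cF),\MM^{\c}(\Psi^{\c}_{\cL}\cF'))$, and the construction of $\Psi^{\c}_{\cL}$ as a composite through Soergel bimodules makes these two graded $R$-bimodules canonically isomorphic — indeed $\Fr$-equivariantly, since $\MM^{\c}$ lands in $\RRF$ and the identification $\SS(w)_{\WL}\leftrightarrow\SS(w)^{\na}_{\WL}$ is by the canonical pure Frobenius structure. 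The main obstacle I anticipate is bookkeeping: making sure the monoidal structure constructed via $\{\ph(\y,\xi)\}$ and the $3$-cocycle subtleties of \S\ref{ss:warning} genuinely do not intervene in the \emph{neutral} block (they should not, since on ${}_{\cL}\cD^{\c}_{\cL}$ the relevant blocks are $\b=\g=\WL$ with $w^{\b}=e$, trivializing $\s$), and that every normalization constant introduced in Construction~\ref{con:can}, Lemma~\ref{l:MM th}, Lemma~\ref{l:MM ICs}, and Proposition~\ref{p:Sw} is carried through so that the final isomorphisms are canonical and $\Fr$-equivariant rather than merely unique-up-to-scalar.
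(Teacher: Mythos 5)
Your overall route coincides with the paper's: both equivalences are produced by Theorem \ref{th:SB} applied to ${}_{\cL}\cD^{\c}_{\cL}$ and to $\cD_{H}$ (with trivial monodromy), $\Psi^{\c}_{\cL}=\ph_{H}$ followed by $\ph_{\cL}^{-1}$, \eqref{Phi IC} falls out of Proposition \ref{p:Sw} and its analogue for $H$, and you are right that the $3$-cocycle subtleties do not intervene in the neutral block. There are, however, two genuine gaps.

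First, the deduction of \eqref{Phi D} and \eqref{Phi nb}. Your concrete suggestion --- that $\D(w)_{H}$ is characterized by its image in the Grothendieck group together with $t$-exactness --- is circular (you only establish $t$-exactness as a consequence of \eqref{Phi D} and \eqref{Phi nb}) and, independently, false: the class $[\D(w)_{H}]=[\IC(w)_{H}]+\sum_{v<w}m_{v}[\IC(v)_{H}]$ is shared by the semisimple perverse sheaf $\IC(w)_{H}\oplus\bigoplus_{v<w}\IC(v)_{H}^{\oplus m_{v}}$, so the $K$-class plus perversity does not pin down the standard object. Monoidality alone does not locate it either: for a simple reflection $s\in\WL$ one has $\D(s)_{H}\ne\IC(s)_{H}$, so knowing $\Psi^{\c}_{\cL}(\IC(s)_{H})\cong\IC(s)^{\da}_{\cL}$ says nothing directly about $\Psi^{\c}_{\cL}(\D(s)_{H})$. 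The paper proceeds in the opposite order: it first establishes the Hom-preservation \eqref{HomPsi}, then characterizes $\cF=(\Psi^{\c}_{\cL})^{-1}(\D(w)^{\da}_{\cL})$ by three transported properties (membership in the triangulated subcategory generated by $\IC(w')_{H}\ot V$ for $w'\le w$; vanishing of $\Homb(\cF,\IC(w')_{H})$ for $w'<w$; and $\Homb(\cF,\IC(w)_{H})\cong R(w)$), and finally runs a support argument on the $H$-side, writing $\cF=j_{!}\cG$ on the open cell and showing $\cG$ is the rank one constant sheaf in degree zero. Your vaguer ``recollement/standard--costandard formalism'' is in this direction, but it cannot be executed before \eqref{HomPsi} is in hand.

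Second, your proof of \eqref{HomPsi} by reduction ``to the case of semisimple complexes'' does not work as stated. Since $\Homb$ here denotes morphisms after base change to $k$ --- strictly larger than the mixed $\hom$ --- the equivalence of mixed categories provides no a priori functorial map between the two sides for non-pure objects, so there is no natural transformation of cohomological functors to test on generators, and Theorem \ref{th:Hom} only speaks about semisimple complexes. The actual content, supplied in the proof of Theorem \ref{th:SB}, is the spectral sequence for a bounded complex of pure objects with $E_{1}^{a,b}=\oplus_{j-i=a}\Ext^{b}(\cF^{i},\cF'^{j})$: purity forces degeneration at $E_{2}$, and the $E_{2}$-page on each side is identified with $\HOM$ in $K^{b}(R\ot R\lmod)$ of the associated complexes of Soergel bimodules, yielding \eqref{HomDSB} and \eqref{HomDSBH}, whose combination is \eqref{HomPsi}. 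You should route part (2) through that argument rather than a d\'evissage.
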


%%%%%% ok %%%%%%%
The proof will occupy \S\ref{ss:Weil sh} to \S\ref{ss:finish}. %The strategy is to use the graded Hom spaces between IC sheaves in ${}_{\cL}\cD^{\c}_{\cL}$ to identify it with the category of perfect modules over a certain ring $E_{\cL}$ with Frobenius action. Similarly one identifies $\cD_{H}$ with the category of perfect modules over another ring $E_{H}$ with Frobenius action.  Then using Theorem \ref{th:Hom} we identify both $E_{\cL}$ and $E_{H}$ with the endomorphism ring of Soergel bimodules.  In the process we will apply the ``purity to formality'' argument in \cite[Appendix B]{BY} to connect the sheaf categories to categories of modules. 

%%%%%% OK %%%%%%%
\subsection{DG model for ${}_{\cL}\cD_{\cL}^{\c}$}\label{ss:Weil sh} 
We apply the construction of \cite[\S B.1-B.2]{BY} to the category ${}_{\cL}\cD^{ \c}_{\cL}$. Let ${}_{\cL}\cC^{\c}_{\cL}\subset {}_{\cL}\cD^{\c}_{\cL}$ be the full subcategory consisting of objects that are pure of weight zero. By Proposition \ref{p:parity purity}, any object $\cF\in{}_{\cL}\cC^{\c}_{\cL}$ is also {\em very pure} in the sense that  $i^{*}_{w}\cF$ and $i^{!}_{w}\cF$ are pure of weight zero for all $w\in \WL$. Then ${}_{\cL}\cC^{\c}_{\cL}$ is an additive Karoubian category stable under the operation $(-)\ot V$, where $V$ is any bounded complex of finite-dimensional $\Fr$-modules such that $\upH^{i}V$ has weight $i$. In particular, ${}_{\cL}\cC^{\c}_{\cL}$ is stable under $\j{n}$, for all $n\in\ZZ$.  By Lemma \ref{l:conv pure}(2), ${}_{\cL}\cC^{\c}_{\cL}$ is  a monoidal category under convolution.

Let ${}_{\cL}\un\cC^{\c}_{\cL}$ be the essential image of ${}_{\cL}\cC^{\c}_{\cL}$ under $\om:{}_{\cL}\cD^{\c}_{\cL}\to{}_{\cL}\un\cD^{\c}_{\cL}$. Then ${}_{\cL}\un\cC^{\c}_{\cL}$ is the category of semisimple complexes in ${}_{\cL}\un\cD^{\c}_{\cL}$. Let $K^{b}({}_{\cL}\cC^{\c}_{\cL})$ be the homotopy category of bounded complexes in ${}_{\cL}\cC^{\c}_{\cL}$. Let $K^{b}({}_{\cL}\cC^{\c}_{\cL})_{0}\subset K^{b}({}_{\cL}\cC^{\c}_{\cL})$ be the thick subcategory consisting of complexes that are null-homotopic when mapped to $K^{b}({}_{\cL}\un\cC^{\c}_{\cL})$.

As in \cite[\S B.1]{BY}, with the help of a filtered version of ${}_{\cL}\cD^{\c}_{\cL}$, there is a triangulated functor (the realization functor) $\wt\r: K^{b}({}_{\cL}\cC^{\c}_{\cL})\to {}_{\cL}\cD^{\c}_{\cL}$. 

%%%%%% OK %%%%%%%
\begin{lemma} The functor $\wt\r$ descends to an equivalence
\begin{equation*}
\r: K^{b}({}_{\cL}\cC^{\c}_{\cL})/K^{b}({}_{\cL}\cC^{\c}_{\cL})_{0}\to {}_{\cL}\cD^{\c}_{\cL}.
\end{equation*}
\end{lemma}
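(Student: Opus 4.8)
The plan is to show that $\wt\r$ is essentially surjective and that it identifies $K^{b}({}_{\cL}\cC^{\c}_{\cL})_{0}$ precisely with the kernel of $\wt\r$ in the sense of Verdier quotients, i.e. that $\wt\r$ induces an equivalence on the Verdier quotient. This follows the standard template of \cite[\S B.2]{BY}, where the analogous statement is proved for the ordinary (non-monodromic) mixed Hecke category; the point is that all the inputs needed for that argument are available in the monodromic setting thanks to the results already established in \S\ref{s:mon Hk}--\S\ref{s:max IC}.

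First I would recall the formal structure of the argument. The realization functor $\wt\r\colon K^{b}({}_{\cL}\cC^{\c}_{\cL})\to {}_{\cL}\cD^{\c}_{\cL}$ is triangulated, and by construction it sends a one-term complex $\cF$ (in degree $0$) to $\cF$. Essential surjectivity of the induced functor $\r$ then amounts to showing that ${}_{\cL}\cD^{\c}_{\cL}$ is generated as a triangulated category by the pure objects ${}_{\cL}\cC^{\c}_{\cL}$; this is immediate because the standard objects $\D(\dw)_{\cL}$ generate ${}_{\cL}\cD^{\c}_{\cL}$ (Definition \ref{d:block} and Proposition \ref{p:block}), and each $\D(\dw)_{\cL}$ differs from $\IC(\dw)_{\cL}$, which is pure of weight zero by Proposition \ref{p:parity purity}(1), by objects supported on smaller strata, so an induction on $\ell(w)$ expresses every $\D(\dw)_{\cL}$ inside the triangulated hull of $\{\IC(\dv)_{\cL}\j{n}\}$. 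For the quotient statement, the key is the ``purity implies formality/degeneration'' mechanism: because objects of ${}_{\cL}\cC^{\c}_{\cL}$ are very pure (established in \S\ref{ss:Weil sh} via Proposition \ref{p:parity purity}), the weight filtration on $\Hom$-spaces between them is controlled, and one gets that $\wt\r$ is fully faithful on the subcategory of one-term complexes and, more generally, that a morphism of complexes becomes zero in ${}_{\cL}\cD^{\c}_{\cL}$ iff it becomes null-homotopic after applying $\om$. The precise homological input is that for $\cF,\cG\in{}_{\cL}\cC^{\c}_{\cL}$ one has $\Hom_{{}_{\cL}\cD^{\c}_{\cL}}(\cF,\cG\j{n}[m])=0$ when $m<0$, and $\om$ induces an isomorphism $\Hom_{{}_{\cL}\cD^{\c}_{\cL}}(\cF,\cG)\otimes_{?}\Qlbar \cong \Hom_{{}_{\cL}\un\cC^{\c}_{\cL}}(\om\cF,\om\cG)$ compatibly with Frobenius weights — both of which follow from Corollary \ref{c:filM} together with Proposition \ref{p:parity purity}(2), exactly as in \cite[Lemma B.2.2]{BY}.

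Concretely, the steps I would carry out are: (1) verify that $\wt\r$ kills $K^{b}({}_{\cL}\cC^{\c}_{\cL})_{0}$, so that it factors through a functor $\r$ on the quotient — this is formal from the definition of $K^{b}({}_{\cL}\cC^{\c}_{\cL})_{0}$ together with the fact that $\wt\r$ composed with the tautological functor to $K^{b}({}_{\cL}\un\cC^{\c}_{\cL})$ agrees, up to the realization functor on the non-mixed side, with $\om\circ\wt\r$; (2) prove $\r$ is essentially surjective, as sketched above; (3) prove $\r$ is full, by using the calculus of fractions in the Verdier quotient: any morphism in the target lifts to a roof, and one clears denominators using purity; (4) prove $\r$ is faithful, by showing a complex mapping to $0$ in ${}_{\cL}\cD^{\c}_{\cL}$ already lies in $K^{b}({}_{\cL}\cC^{\c}_{\cL})_{0}$, which is where the vanishing $\Hom(\cF,\cG\j{n}[m])=0$ for $m<0$ and the injectivity of $\om$ on the relevant $\Hom$-spaces are used to run the standard ``minimal complex'' argument. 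The main obstacle is step (4) (equivalently, the heart of \cite[\S B.2]{BY}): one must check that the mixed weight structure interacts with the $t$-structure tightly enough that a bounded complex of pure objects which is exact in ${}_{\cL}\cD^{\c}_{\cL}$ can be contracted within $K^{b}({}_{\cL}\cC^{\c}_{\cL})$ after passing to the non-mixed quotient. I expect this to go through verbatim as in \emph{loc.\ cit.}, since the only facts used there about the ordinary Hecke category — parity and purity of stalks/costalks of IC sheaves, the existence of a stratification with affine-space strata and equivariant cohomology of the stabilizers concentrated in even degrees, and the resulting $E_1$-degeneration of the Schubert spectral sequence — are exactly Proposition \ref{p:parity purity}, Lemma \ref{l:one cell}, and Corollary \ref{c:filM} in the present setting. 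I would therefore present the proof as: ``The argument of \cite[\S B.2]{BY} applies verbatim, using Proposition \ref{p:parity purity} and Corollary \ref{c:filM} in place of the corresponding statements for the non-monodromic Hecke category,'' and spell out only the two or three places where a monodromic bookkeeping (the character sheaf $\cL$, the twists $\j{n}$, the Coxeter length $\ell_{\cL}$) needs to be inserted.
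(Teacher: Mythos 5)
Your proposal is correct in substance and rests on the same machinery as the paper, but the division of labor is different enough to be worth comparing. The paper does not re-run the argument of \cite[\S B.2]{BY}: it simply cites \cite[Proposition B.1.7]{BY} for the statement that $\wt\r$ induces an equivalence $K^{b}({}_{\cL}\cC^{\c}_{\cL})/\ker(\wt\r)\isom {}_{\cL}\cD^{\c}_{\cL}$ (this subsumes your steps (2)--(4) in one stroke; note that faithfulness of $\r$ is part of what that proposition delivers and is \emph{not} a formal consequence of identifying which objects die, so your separation of ``full'' and ``faithful via the kernel'' would need the same d\'evissage that \emph{loc.\ cit.}\ performs anyway), and cites \cite[Lemma B.1.6]{BY} for the inclusion $\ker(\wt\r)\subset K^{b}({}_{\cL}\cC^{\c}_{\cL})_{0}$. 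The only argument the paper supplies from scratch is the reverse inclusion $K^{b}({}_{\cL}\cC^{\c}_{\cL})_{0}\subset\ker(\wt\r)$ --- exactly the step you dismiss as ``formal.'' Your route there (commuting $\wt\r$ past $\om$ through a non-mixed realization functor and using that null-homotopic complexes realize to zero) is valid, but it requires constructing the realization functor on ${}_{\cL}\un\cD^{\c}_{\cL}$ and checking its compatibility with $\om$, which the paper never does. Instead the paper argues directly: for $\cK^{\bu}\in K^{b}({}_{\cL}\cC^{\c}_{\cL})_{0}$ and any $\cF\in{}_{\cL}\un\cC^{\c}_{\cL}$, the spectral sequence with $E_{1}^{i,j}=\Ext^{j}(\cF,\cK^{i})$ computing $\Homb(\cF,\wt\r(\cK^{\bu}))$ has null-homotopic $E_{1}$-rows (the contracting homotopy of $\id_{\om\cK^{\bu}}$ transports to the $E_{1}$-page), so $E_{2}=0$; since semisimple complexes generate, $\om\wt\r(\cK^{\bu})=0$, and conservativity of $\om$ gives $\wt\r(\cK^{\bu})=0$. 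Your version is conceptually cleaner at the price of an extra compatibility to set up; the paper's is more self-contained. Neither has a genuine gap, but if you keep your formulation you should either prove the compatibility $\om\circ\wt\r\cong\un{\wt\r}\circ K^{b}(\om)$ or replace that step by the spectral-sequence argument.
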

\begin{proof}
By \cite[Proposition B.1.7]{BY}, $\wt\r$ induces an equivalence of triangulated categories
\begin{equation}\label{rho}
\r: K^{b}({}_{\cL}\cC^{\c}_{\cL})/\ker(\wt\r)\isom {}_{\cL}\cD^{\c}_{\cL}.
\end{equation}
We claim that $\ker(\wt\r)=K^{b}({}_{\cL}\cC^{\c}_{\cL})_{0}$. The inclusion $\ker(\wt\r)\subset K^{b}({}_{\cL}\cC^{\c}_{\cL})_{0}$ is proved in \cite[Lemma B.1.6]{BY}. We now show the inclusion in the other direction. Suppose $\cK^{\bu}\in K^{b}({}_{\cL}\cC^{\c}_{\cL})_{0}$, and $h:\om\cK^{\bu}\to \om\cK^{\bu}[-1]$ is a homotopy between $\id_{\cK^{\bu}}$ and $0$. Then for any $\cF\in {}_{\cL}\un\cC^{\c}_{\cL}$, $\Homb(\cF,\cK^{\bu})$ is calculated by a spectral sequence whose $E_{1}$-page consists of $E^{i,j}_{1}=\Ext^{j}(\cF,\cK^{i})$ with differentials $E^{i,j}_{1}\to E^{i+1,j}_{1}$ induced by the differentials of $\cK^{\bu}$. The chain homotopy $h$ implies that $E^{\bu,j}_{1}$ is null-homotopic, hence $E_{2}=0$ and $\Homb(\cF,\cK^{\bu})=0$ for all $\cF\in {}_{\cL}\un\cC^{\c}_{\cL}$. This implies that $\om\wt\r(\cK^{\bu})=0$ in ${}_{\cL}\un\cD^{\c}_{\cL}$. Now $\om: {}_{\cL}\cD^{\c}_{\cL}\to {}_{\cL}\un\cD^{\c}_{\cL}$ is conservative, hence $\wt\r(\cK^{\bu})=0$, and $\cK^{\bu}\in\ker(\wt\r)$.
\end{proof}

%%%%%%%%% OK %%%%%%%%%
\begin{prop}\label{p:equiv CL}
The restriction of $\MM^{\c}$ gives a monoidal equivalence
\begin{equation*}
\ph_{0}: {}_{\cL}\cC^{\c}_{\cL}\isom \SB_{m}(\WL)
\end{equation*}
such that for $\cF,\cF'\in {}_{\cL}\cC^{\c}_{\cL}$, there is a canonical isomorphism in $\RRF$
\begin{equation}\label{HomSB}
\Homb(\cF,\cF')\cong \Homb_{\RRM}(\ph_{0}(\cF),\ph_{0}(\cF')).
\end{equation}
\end{prop}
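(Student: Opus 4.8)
The plan is to deduce Proposition~\ref{p:equiv CL} from the results already established in \S\ref{s:Soergel} and \S\ref{s:max IC}, treating $\ph_{0}$ as essentially the functor $\MM^{\c}$ with codomain cut down to $\SB_{m}(\WL)$. First I would check that $\MM^{\c}$ does land in $\SB_{m}(\WL)$: for $\cF\in{}_{\cL}\cC^{\c}_{\cL}$ the complex $\cF$ is a direct summand of a convolution $\IC(s_{i_{1}})_{\cL}\star\cdots\star\IC(s_{i_{n}})_{\cL}$ (up to shifts and twists), and applying Lemma~\ref{l:MM th}, Lemma~\ref{l:MM ICs}, and the repeated-convolution computation in the proof of Proposition~\ref{p:Sw} shows $\MM^{\c}(\cF)$ is a direct summand of a Bott--Samelson bimodule $\SS(t_{m},\dots,t_{1})^{\na}_{\WL}$; since ${}_{\cL}\cC^{\c}_{\cL}$ is stable under $\j{n}$ and $\MM^{\c}$ commutes with $\j{n}$, and since $\cF$ is pure, the image lies in $\SB_{m}(\WL)\subset\RRF_{\pure}$. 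This also gives essential surjectivity, because every indecomposable Soergel bimodule $\SS(w)_{\WL}$ arises (up to shift and twist) as $\MM^{\c}(\IC(w)^{\da}_{\cL}\j{-\ell_{\cL}(w)})$ by Proposition~\ref{p:Sw}, and every object of $\SB_{m}(\WL)$ is a direct sum of such after splitting off Frobenius eigenvalue-coset blocks as in Proposition~\ref{p:filM}; the rigidified objects $\IC(w)^{\da}_{\cL}$ are pure of weight zero by Definition~\ref{d:can IC}, hence lie in ${}_{\cL}\cC^{\c}_{\cL}$.

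Next I would establish fully faithfulness. For $\cF,\cF'\in{}_{\cL}\cC^{\c}_{\cL}$ these are semisimple complexes, so Theorem~\ref{th:Hom} applied with $\xi=\d_{\cL}$ gives the isomorphism $\Homb(\cF,\cF')\isom\Homb_{\RRM}(\MM^{\c}(\cF),\MM^{\c}(\cF'))$ in $\RRF$, which is precisely \eqref{HomSB}; restricting to degree zero and to the $\Fr$-invariant parts recovers the ordinary Hom spaces $\hom(\cF,\cF')$ (here one uses purity of $\cF,\cF'$ so that the weight-zero part of $\Homb$ computes actual morphisms in ${}_{\cL}\cC^{\c}_{\cL}$, and one uses that both sides are weight-pure objects of $\RRF$). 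Combined with essential surjectivity, this shows $\ph_{0}$ is an equivalence of additive categories, and \eqref{HomSB} is exactly the asserted functorial Hom-compatibility.

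It remains to promote $\ph_{0}$ to a \emph{monoidal} equivalence. The monoidal structure on $\MM^{\c}$ is the natural transformation $c_{\d_{\cL},\d_{\cL}}(\cG,\cF)\colon\MM^{\c}(\cG)\ot_{R}\MM^{\c}(\cF)\to\MM^{\c}(\cG\star\cF)$ from \S\ref{ss:monoidal}, together with the unit isomorphism from Lemma~\ref{l:MM th} identifying $\MM^{\c}(\d_{\cL})\cong R(e)=R$. By Corollary~\ref{c:MM mono}, $c_{\d_{\cL},\d_{\cL}}(\cG,\cF)$ is an isomorphism whenever $\cF$ or $\cG$ is a semisimple complex; since every object of ${}_{\cL}\cC^{\c}_{\cL}$ is semisimple, this gives that $c$ is an isomorphism of bifunctors on ${}_{\cL}\cC^{\c}_{\cL}\times{}_{\cL}\cC^{\c}_{\cL}$, and ${}_{\cL}\cC^{\c}_{\cL}$ is closed under $\star$ by Lemma~\ref{l:conv pure}(2). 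The associativity and unit coherence diagrams for this monoidal structure follow from the associativity of the maps \eqref{MM mono}, which in turn follows from the co-associativity of $\{\ph(\y,\xi)\}$ proved in Proposition~\ref{p:coalg ext}(3), and from the counit axioms in Proposition~\ref{p:coalg ext}(2). Thus $(\ph_{0},c)$ is a monoidal functor which is an equivalence.

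I expect the main obstacle to be bookkeeping rather than a genuine difficulty: one must carefully track the shifts $\j{n}$ and Tate twists so that $\MM^{\c}$ actually maps \emph{weight-pure} objects to objects of $\RRF_{\pure}$ (as opposed to a shifted or twisted version), and one must verify that the monoidal and unit coherence conditions hold on the nose in $\RRF$ and not merely after forgetting the Frobenius action. The purity is guaranteed by Proposition~\ref{p:parity purity} together with the ``very pure'' property noted in \S\ref{ss:Weil sh}, and the coherence is forced by the uniqueness statements (endomorphisms of the relevant objects are scalars, so any two compatible coherence isomorphisms agree). A secondary point requiring care is that $\SB_{m}(\WL)$ is defined as a full subcategory of $\RRF_{\pure}$, so one should confirm that the Hom-compatibility \eqref{HomSB} is an isomorphism of $(R\ot R,\Fr)$-modules and not just of graded $R\ot R$-modules; this is already part of the statement of Theorem~\ref{th:Hom}.
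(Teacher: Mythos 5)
Your treatment of full faithfulness and of the monoidal structure matches the paper's (Theorem \ref{th:Hom} plus Corollary \ref{c:MM mono}, then passing to Frobenius invariants using that $\Ext^{-1}(\cF,\cF')$ is pure of weight $-1$), and your check that the image lands in $\SB_{m}(\WL)$ is also essentially the paper's argument. The gap is in essential surjectivity. You assert that every object of $\SB_{m}(\WL)$ is a \emph{direct sum} of objects of the form $\SS(w)^{\na}_{\WL}\j{n}\ot V$, citing Proposition~\ref{p:filM}. That proposition does not say this, and the claim is false: it only produces a \emph{filtration} $F_{\bu}M$ whose associated graded pieces have that form and which splits after applying $\om$, i.e.\ in $\RRM$. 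In $\RRF$ the successive extensions need not split, because $\Ext^{1}_{(R\ot R,\Fr)}(\Gr^{F}_{i}M,F_{i-1}M)$ contains the coinvariants $\Hom_{R\ot R}(\Gr^{F}_{i}M,F_{i-1}M)_{\Fr}$, which are typically nonzero. (Already for $G=T$, modifying the Frobenius on $R\oplus R\j{-2}$ by an off-diagonal degree-$2$ element of $R$ yields an object of $\RRF_{\pure}$ that is not isomorphic to any direct sum of shifted and twisted copies of $R$.) So realizing each $\Gr^{F}_{i}M$ as $\MM^{\c}(\IC(w_{i})^{\da}_{\cL}\j{-\ell_{\cL}(w_{i})}\ot V_{i})$ does not yet realize $M$ itself.

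The paper closes exactly this gap by induction along the filtration: having found $\cF_{i-1}$ with $\ph_{0}(\cF_{i-1})\cong F_{i-1}M$, it notes that the extension class of $0\to F_{i-1}M\to F_{i}M\to\Gr^{F}_{i}M\to 0$ lifts to a class in $\Hom_{R\ot R}(\Gr^{F}_{i}M,F_{i-1}M)_{\Fr}$ (precisely because the extension splits in $\RRM$), transports it via Theorem \ref{th:Hom} to $\Hom(\cK_{i},\cF_{i-1})_{\Fr}$, pushes it forward to $\hom(\cK_{i},\cF_{i-1}[1])$, and sets $\cF_{i}=\Cone(\e')[-1]$. Some such step is unavoidable; without it your argument only shows that the split objects of $\SB_{m}(\WL)$ lie in the essential image of $\ph_{0}$.
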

\begin{proof}
The monoidal structure of $\MM^{\c}$ restricted to semisimple complexes is proved in Corollary \ref{c:MM mono}.
Let $\wt\ph_{0}:=\MM^{\c}|_{{}_{\cL}\cC_{\cL}^{\c}}: {}_{\cL}\cC_{\cL}^{\c}\to (R\ot R,\Fr)\gmod$. The isomorphism \eqref{HomSB} with $\ph_{0}$ replaced by $\wt\ph_{0}$ follows from Theorem \ref{th:Hom}. 

Now for $\cF,\cF'\in {}_{\cL}\cC_{\cL}^{\c}$, $\Ext^{i}(\cF,\cF')$ is pure of weight $i$ by the $*$-purity of $\cF$ and $!$-purity of $\cF'$ (cf. \cite[Lemma 3.1.5]{BY}). This implies $\hom_{{}_{\cL}\cC_{\cL}^{\c}}(\cF,\cF')=\Hom(\cF,\cF')^{\Fr}$ since $\Ext^{-1}(\cF,\cF')$ is pure of weight $-1$, and also implies $(\Homb(\cF,\cF'))^{\Fr}=\Hom(\cF,\cF')^{\Fr}=\hom_{{}_{\cL}\cC_{\cL}^{\c}}(\cF,\cF')$.  On the other hand, $\hom_{(R\ot R,\Fr)\gmod}(M,M')=\Hom_{\RRM}(M,M')^{\Fr}$ for $M,M'\in \SB_{m}(\WL)$. Taking Frobenius invariants of both sides of \eqref{HomSB} we conclude that $\ph_{0}$ is fully faithful.  

We show that the image of $\MM^{\c}|_{{}_{\cL}\cC_{\cL}^{\c}}$ lies in  $\SB_{m}(\WL)$. Indeed for $\cF\in {}_{\cL}\cC_{\cL}^{\c}$, $\om\cF$ is a semisimple complex, hence $\om\MM^{\c}(\cF)=\un\MM^{\c}(\om\cF)$ is a direct sum of shifts of $\un\MM^{\c}(\uIC(w)_{\cL})$, which is isomorphic to a shift of $\SS(w)_{\WL}$ by Proposition \ref{p:Sw}. On the other hand, the very purity of $\Th^{\c}_{\cL}$ and $\cF$ implies that $\Ext^{i}(\Th^{\c}_{\cL}, \cF)$ is pure of weight $i$, hence $\MM^{\c}(\cF)\in \RRF_{\pure}$. We conclude that $\MM^{\c}(\cF)\in \SB_{m}(\WL)$.  %Using truncations for the perverse t-structure there exists a sequence of maps $0=\cF_{0}\to\cF_{1}\to\cdots\to\cF_{n}=\cF$ in ${}_{\cL}\cC^{\c}_{\cL}$ such that $\cK_{i}:=Cone(\cF_{i-1}\to \cF_{i})$ is a shifted simple perverse sheaf pure of weight zero. Moreover, one can arrange so that $\om\cF\cong \oplus_{i=1}^{n}\om\cK_{i}$ in a way that splits the triangles $\om\cF_{i-1}\to \om\cF_{i}\to \om \cK_{i}\to $ (i.e., there exist sections $\om\cK_{i}\to \om\cF_{i}$. For $M=\wt\ph_{0}(\cF)$, $F_{i}M=\wt\ph_{0}(\cF_{i})$ gives a filtration satisfying the requirements.

Finally we show that any $M\in \SB_{m}(\WL)$ is in the essential image of $\ph_{0}$. Let $0=F_{0}M\subset F_{1}M\subset \cdots\subset F_{n}M=M$ be a filtration satisfying the  conditions in Proposition \ref{p:filM}. In particular, $\Gr^{F}_{i}M\cong \SS(w_{i})^{\na}_{\WL}\ot V_{i}$ for some $w_{i}\in \WL$ and $V_{i}\in D^{b}_{m}(\pt)$ pure of weight zero. We prove by induction on $i$ that $F_{i}M$ is in the essential image of $\ph_{0}$. For $i=0$ there is nothing to prove. 
%For $i=1$, we can take $\cF_{1}=\IC(w_{1})^{\da}_{\cL}\j{-\ell_{\cL}(w_{1})}\ot V_{1}\in {}_{\cL}\cC_{\cL}^{\c}$ by Proposition \ref{p:Sw}.  
Suppose $i\ge1$ and we have found $\cF_{i-1}\in {}_{\cL}\cC_{\cL}^{\c}$ such that $\ph_{0}(\cF_{i-1})\cong F_{i-1}M$. Let $\cK_{i}=\IC(w_{i})^{\da}_{\cL}\j{-\ell_{\cL}(w_{i})}\ot V_{i}\in {}_{\cL}\cC_{\cL}^{\c}$. Let $\e\in \Ext^{1}_{(R\ot R,\Fr)}(\Gr^{F}_{i}M,F_{i-1}M)$ be the extension class of
\begin{equation}\label{FiM}
0\to F_{i-1}M\to F_{i}M\to \Gr^{F}_{i}M\to 0
\end{equation}
in $(R\ot R,\Fr)\lmod$ (non-graded modules).  We have a short exact sequence
\begin{equation*}
0\to \Hom_{R\ot R}(\Gr^{F}_{i}M,F_{i-1}M)_{\Fr}\to \Ext^{1}_{(R\ot R,\Fr)}(\Gr^{F}_{i}M,F_{i-1}M)\to \Ext^{1}_{R\ot R}(\Gr^{F}_{i}M,F_{i-1}M)^{\Fr}\to 0.
\end{equation*}
Since \eqref{FiM} splits in $R\ot R\lmod$, the image of $\e$ in $\Ext^{1}_{R\ot R}(\Gr^{F}_{i}M,F_{i-1}M)^{\Fr}$ is zero, therefore $\e$ comes from a class $\wt\e\in \Hom_{R\ot R}(\Gr^{F}_{i}M,F_{i-1}M)_{\Fr}$. By Theorem \ref{th:Hom}, $\MM^{\c}$ induces an isomorphism of $\Fr$-modules
\begin{equation*}
\Homb(\cK_{i}, \cF_{i-1})\isom \Homb_{\RRM}(\Gr^{F}_{i}M,F_{i-1}M)=\Hom_{R\ot R}(\Gr^{F}_{i}M,F_{i-1}M).
\end{equation*}
Therefore $\wt\e$ can be viewed as a class $\wt\e'\in \Homb(\cK_{i}, \cF_{i-1})_{\Fr}=\Hom(\cK_{i}, \cF_{i-1})_{\Fr}$ (because $\Ext^{j}(\cK_{i}, \cF_{i-1})$ has weight $j$). Let $\e'$ be the image of $\wt\e'$ under $\Hom(\cK_{i}, \cF_{i-1})_{\Fr}\to \hom(\cK_{i}, \cF_{i-1}[1])$ (the latter is calculated in ${}_{\cL}\cD_{\cL}^{\c}$). Let $\cF_{i}=\Cone(\e')[-1]\in {}_{\cL}\cD^{\c}_{\cL}$. Then $\cF_{i}$ fits into a distinguished triangle $\cF_{i-1}\to \cF_{i}\to \cK_{i}\to \cF_{i-1}[1]$. Therefore $\cF_{i}\in {}_{\cL}\cC_{\cL}^{\c}$ and $\ph_{0}(\cF_{i})\cong F_{i}M$ by construction.
\end{proof}

%%%%%%% OK %%%%%%%
To state the next theorem, we need some notation. For $\cF,\cF'\in {}_{\cL}\cD^{\c}_{\cL}$, we let $\Ext^{n}(\cF,\cF')_{m}$ be the weight $m$ summand of the $\Fr$-module $\Ext^{n}(\cF,\cF')$. For $M,M'\in K^{b}(R\ot R\lmod)$,  their morphism space in $K^{b}(R\ot R\lmod)$ is denoted $\HOM_{K^{b}(R\ot R\lmod)}(M,M')$, and it is the homotopy classes of $R\ot R$-linear chain maps $M\to M'$. We denote the degree shift of complexes in $K^{b}(R\ot R\lmod)$ by $\{1\}$. We denote 
\begin{equation*}
\HOM^{\bu}_{K^{b}(R\ot R\lmod)}(M,M')=\bigoplus_{n\in\ZZ}\HOM_{K^{b}(R\ot R\lmod)}(M,M'\{n\}).
\end{equation*}
When $M,M'\in K^{b}(\RRM)$, $\HOM_{K^{b}(R\ot R\lmod)}(M,M')$ also carries an internal grading from the gradings of each component $M^{i}$ and $M'^{i}$ (which are in $R\ot R\gmod$), and we denote the graded pieces by $\HOM_{K^{b}(R\ot R\lmod)}(M,M')_{m}$. %The degree zero part $\HOM_{K^{b}(R\ot R\lmod)}(M,M')_{0}$ is $\HOM_{K^{b}(\RRM)}(M,M')$. 
If $M,M'\in K^{b}(\RRF)$, $\HOM_{K^{b}(R\ot R\lmod)}(M,M')$ also inherits a $\Fr$-module structure.

%%%%%%%%%% OK %%%%%%%%%
\begin{theorem}\label{th:SB} The equivalence $K^{b}(\ph_{0})$ on the homotopy categories of ${}_{\cL}\cC^{\c}_{\cL}$ and $\SB_{m}(\WL)$  induces a  monoidal equivalence of triangulated categories
\begin{equation*}
\ph_{\cL}: {}_{\cL}\cD^{\c}_{\cL}\xr{\r^{-1}}K^{b}({}_{\cL}\cC^{\c}_{\cL})/K^{b}({}_{\cL}\cC^{\c}_{\cL})_{0}\xr{K^{b}(\ph_{0})}K^{b}(\SB_{m}(\WL))/K^{b}(\SB_{m}(\WL))_{0}.
\end{equation*}
Here $K^{b}(\SB_{m}(\WL))_{0}$ consists of complexes of objects in $\SB_{m}(\WL)$ that become null-homotopic in $\SB(\WL)$. 

Moreover, for $\cF,\cF'\in {}_{\cL}\cD^{\c}_{\cL}$, we have a functorial isomorphism of $(R\ot R,\Fr)$-modules 
\begin{equation}\label{HomDSB}
\Homb(\cF,\cF')\isom \HOM^{\bu}_{K^{b}(R\ot R\lmod)}(\om\ph_{\cL}(\cF), \om\ph_{\cL}(\cF'))
\end{equation}
under which
\begin{equation}\label{HomDSBgr}
\Ext^{n}(\cF,\cF')_{m}\isom \HOM_{K^{b}(R\ot R\lmod)}(\om\ph_{\cL}(\cF), \om\ph_{\cL}(\cF')\{n-m\})_{m}, \quad \forall n,m\in\ZZ.
\end{equation}
\end{theorem}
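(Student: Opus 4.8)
The plan is to build $\ph_\cL$ in a purely formal way by combining Proposition \ref{p:equiv CL} with the realization Lemma preceding it, and then to extract the $\Hom$-formulas by reducing to the semisimple case already settled in \eqref{HomSB}. Since $\ph_0=\MM^\c|_{{}_{\cL}\cC^\c_\cL}$ is an equivalence of additive monoidal categories onto $\SB_m(\WL)$, the induced functor $K^b(\ph_0)$ is a triangulated monoidal equivalence $K^b({}_{\cL}\cC^\c_\cL)\isom K^b(\SB_m(\WL))$. First I would record the non-mixed analogue $\un\ph_0:{}_{\cL}\un\cC^\c_\cL\isom\SB(\WL)$, obtained exactly as $\ph_0$ but from $\uMM^\c$ — full faithfulness by Theorem \ref{th:Hom}, essential surjectivity by Proposition \ref{p:Sw} together with the decomposition theorem — and note the compatibility $\om\c\ph_0\cong\un\ph_0\c\om$. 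This compatibility shows that a complex $\cK^\bu$ in $K^b({}_{\cL}\cC^\c_\cL)$ becomes null-homotopic in $K^b({}_{\cL}\un\cC^\c_\cL)$ if and only if $K^b(\ph_0)(\cK^\bu)$ becomes null-homotopic in $K^b(\SB(\WL))$; hence $K^b(\ph_0)$ carries $K^b({}_{\cL}\cC^\c_\cL)_0$ onto $K^b(\SB_m(\WL))_0$ and descends to an equivalence of the Verdier quotients. Both thick subcategories are tensor ideals (if $\om\cK^\bu$ is null-homotopic, so is $\om(\cK^\bu\star\cF^\bu)=\om\cK^\bu\star\om\cF^\bu$, and likewise for $\ot_R$ on the Soergel side), so the quotients inherit monoidal structures and the descended functor is monoidal. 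Precomposing with $\r^{-1}$ from the preceding Lemma — which upgrades to a monoidal equivalence, the convolution on ${}_{\cL}\cD^\c_\cL$ being induced from a convolution on the filtered equivariant derived category used to define the realization functor, as in \cite{BY} — gives the monoidal triangulated equivalence $\ph_\cL$. On semisimple objects $\ph_\cL$ restricts to $\ph_0$, so in particular $\ph_\cL(\IC(w)^\da_\cL\j{-\ell_\cL(w)})\cong\SS(w)^\na_{\WL}$ by Proposition \ref{p:Sw}.

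For the $\Hom$-isomorphism, fix finite complexes $\cF^\bu,\cF'^\bu$ in $K^b({}_{\cL}\cC^\c_\cL)$ with $\r(\cF^\bu)\cong\cF$ and $\r(\cF'^\bu)\cong\cF'$, so that $\om\ph_\cL(\cF)=\un\ph_0(\om\cF^\bu)$ and $\om\ph_\cL(\cF')=\un\ph_0(\om\cF'^\bu)$ are genuine bounded complexes of Soergel bimodules. On the sheaf side, the purity statement used in the proof of Proposition \ref{p:equiv CL} — $\Ext^p(\cX,\cY)$ pure of weight $p$ for very pure $\cX,\cY$ — forces the morphisms between the realizations $\om\cF,\om\cF'$ and their shifts to be computed by the total complex of the bicomplex with entries $\Homb(\om\cF^i,\om\cF'^j)$. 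On the Soergel side, $K^b(\SB(\WL))\hookrightarrow K^b(R\ot R\gmod)$ is fully faithful, so $\HOM^\bu_{K^b(R\ot R\lmod)}(\om\ph_\cL\cF,\om\ph_\cL\cF')$ is the total cohomology of the bicomplex with entries $\Homb_{\RRM}(\un\ph_0(\om\cF^i),\un\ph_0(\om\cF'^j))$ summed over internal-degree shifts. These two bicomplexes are identified termwise, and $\Fr$-equivariantly, by \eqref{HomSB}, whence \eqref{HomDSB} on taking total cohomology and invoking the convention \eqref{Homconvention}. The refinement \eqref{HomDSBgr} then follows by tracking the shift $\j{1}=[1](1/2)$ against $\{1\}$ through $\MM^\c$ together with the same weight-purity input, exactly as the weight bookkeeping was carried out for \eqref{HomSB}.

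I expect the delicate step to be the precise identification of morphism spaces in the two Verdier quotients with $\HOM$-groups in $K^b(R\ot R\lmod)$ — that is, checking that quotienting by $K^b(-)_0$ on both sides exactly implements ``retain only honest chain maps up to homotopy, together with the internal grading'', so that the purity-driven degeneration on the sheaf side matches the absence of higher $\Ext$ in the homotopy category of $R\ot R$-modules — and the attendant shift-and-weight bookkeeping needed for \eqref{HomDSBgr}. A secondary point requiring care is the monoidality of the realization functor $\r$: it is implicit in the filtered-category formalism of \cite{BY} but must be spelled out compatibly with convolution on ${}_{\cL}\cD^\c_\cL$.
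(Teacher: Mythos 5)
Your proposal is correct and follows essentially the same route as the paper: the descent of $K^{b}(\ph_{0})$ to the Verdier quotients is obtained exactly as in the paper by transporting null-homotopies through $\ph_{0}$ and its inverse (your formulation via $\om\c\ph_{0}\cong\un\ph_{0}\c\om$ is just a cleaner packaging of the same step), monoidality comes from the two kernels being monoidal ideals, and your bicomplex-plus-purity argument for \eqref{HomDSB} and \eqref{HomDSBgr} is precisely the paper's spectral sequence with $E_{1}^{a,b}=\oplus_{j-i=a}\Ext^{b}(\cF^{i},\cF'^{j})$ degenerating at $E_{2}$ by weight purity, matched termwise against the Hom-bicomplex of Soergel bimodules via Theorem \ref{th:Hom}. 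The two points you flag as delicate (the weight bookkeeping for \eqref{HomDSBgr} and the monoidality of $\r$) are handled, respectively no more explicitly, in the paper as well, so nothing is missing.
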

\begin{proof}
In view of the equivalence $\ph_{0}$, to prove $\ph_{\cL}$ is an equivalence, it suffices to show that the image of $K^{b}({}_{\cL}\cC^{\c}_{\cL})_{0}$ under $K^{b}(\ph_{0})$ is  $K^{b}(\SB_{m}(\WL))_{0}$. If $\cK^{\bu}\in K^{b}({}_{\cL}\cC^{\c}_{\cL})_{0}$, then there exists a chain homotopy $h_{i}:\om\cK^{i}\to \om\cK^{i-1}$ between $\id_{\om\cK^{\bu}}$ and $0$.  Applying $\ph_{0}$ to $h_{i}$ we get $\ph_{0}(h_{i}): \om\ph_{0}(\cK^{i})\to \om\ph_{0}(\cK^{i-1})$ giving a chain homotopy between $\id_{\om\ph_{0}(\cK^{\bu})}$ and $0$. The same argument shows that $\ph_{0}^{-1}$ sends $K^{b}(\SB_{m}(\WL))_{0}$ to $K^{b}({}_{\cL}\cC^{\c}_{\cL})_{0}$. This shows that $\ph_{\cL}$ is an equivalence. The monoidal structure of $\ph_{\cL}$ comes from that of $\ph_{0}$, since $K^{b}({}_{\cL}\cC^{\c}_{\cL})_{0}$ and $K^{b}(\SB_{m}(\WL))_{0}$ are monoidal ideals.

We prove the isomorphism \eqref{HomDSB}. For $\cF^{\bu}, \cF'^{\bu}$ two bounded complexes in ${}_{\cL}\cC^{\c}_{\cL}$, let $\cF=\r(\cF^{\bu}), \cF'=\r(\cF'^{\bu})\in {}_{\cL}\cD^{\c}_{\cL}$. Then there is a spectral sequence with $E_{1}^{a,b}=\oplus_{j-i=a}\Ext^{b}(\cF^{i}, \cF'^{j})$ that converges to $\Ext^{a+b}(\cF,\cF')$. The differential $d_{1}:E^{a,b}_{1}\to E^{a+1,b}_{1}$ is given by an alternating sum of maps induced by the differentials in $\cF^{\bu}$ and  $\cF'^{\bu}$. Since $E_{1}^{a,b}$ is pure of weight $b$, the spectral sequence degenerates at $E_{2}$. This implies that
\begin{equation}\label{E2}
E_{2}^{a,b}=\Ext^{a+b}(\cF,\cF')_{b}.
\end{equation}

Let $M=\om\ph_{\cL}(\cF)$, i.e., $M$ is a complex with terms $M^{i}=\om\MM^{\c}(\cF^{i})\in\SB(\WL)$; similarly let $M'=\om\ph_{\cL}(\cF')$. By Theorem \ref{th:Hom}, $\MM^{\c}$ induces  an isomorphism of $\Fr$-modules $E_{1}^{a,b}\cong \oplus_{j-i=a}\Hom_{\RRM}(M^{i},M'^{j}[b])$,  and the differential $d_{1}$  is given by an alternating sum of differentials in $M^{\bu}$ and $M'^{\bu}$. Therefore $E_{2}^{a,b}$ is isomorphic to $\HOM_{K^{b}(\RRM)}(M, M'[b]\{a\})$, which is the same as the degree $b$ part of $\HOM_{K^{b}(R\ot R\lmod)}(M,M'\{a\})$ for the internal grading
\begin{equation}\label{E2SB}
E_{2}^{a,b}\cong\HOM_{K^{b}(R\ot R\lmod)}(M,M'\{a\})_{b}.
\end{equation}
Comparing \eqref{E2} and \eqref{E2SB} we get \eqref{HomDSBgr}. Taking direct sum over all $n$ and $m$ in \eqref{HomDSBgr}, we get \eqref{HomDSB}.
\end{proof}

%%%%%% OK %%%%%%%
\begin{remark}
The functor $K^{b}(\om): K^{b}(\SB_{m}(\WL))\to K^{b}(\SB(\WL))$ clearly factors through $K^{b}(\SB_{m}(\WL))_{0}$. It induces a functor
\begin{equation*}
\ov\ph_{\cL}: {}_{\cL}\cD^{\c}_{\cL}\to K^{b}(\SB(\WL)).
\end{equation*}
The homotopy category $K^{b}(\SB(\WL))$ can be viewed as a $\Fr$-semisimplified version of the monodromic Hecke category ${}_{\cL}\cD^{\c}_{\cL}$.
\end{remark}

\subsection{Finish of the proof of Theorem \ref{th:main}}\label{ss:finish} 
Apply Theorem \ref{th:SB} to the endoscopic group $H$ and the trivial character sheaf $\cL=\Qlbar\in \Ch(T)$,  we get a monoidal equivalence
\begin{equation*}
\ph_{H}: \cD_{H}\isom K^{b}(\SB_{m}(W_{H}))/K^{b}(\SB_{m}(W_{H}))_{0}
\end{equation*}
such that for $\cK,\cK'\in \cD_{H}$, there is a natural $R\ot R$-linear isomorphism
\begin{equation}\label{HomDSBH}
\Homb(\cK,\cK')\isom \HOM^{\bu}_{K^{b}(R\ot R\lmod)}(\om\ph_{H}(\cK), \om\ph_{H}(\cK'))
\end{equation}
with an analogue of \eqref{HomDSBgr}. Since $W_{H}=\WL$, we may identify the target categories of $\ph_{\cL}$ and $\ph_{H}$. 

Let $\Psi^{\c}_{\cL}=\ph^{-1}_{\cL}\c\ph_{H}$. Then $\Psi^{\c}_{\cL}$ is a monoidal equivalence of triangulated categories. Combining \eqref{HomDSB} and \eqref{HomDSBH} we get \eqref{HomPsi}.

It remains to show \eqref{Phi IC}-\eqref{Phi nb}. By Proposition \ref{p:Sw} and its analogue for $\cD_{H}$, we know $\ph_{\cL}(\IC(w)^{\da}_{\cL})\cong \SS(w)^{\na}_{\WL}\j{\ell_{\cL}(w)}\cong \ph_{H}(\IC(w)_{H})$ for all $w\in \WL$, therefore $\Psi^{\c}_{\cL}(\IC(w)_{H})\cong \IC(w)^{\da}_{\cL}$ for all $w\in \WL$.  This proves \eqref{Phi IC}.

Now we show \eqref{Phi D}. Let $\cF=(\Psi^{\c}_{\cL})^{-1}(\D(w)^{\da}_{\cL})$. From the properties of $\D(w)^{\da}_{\cL}$ and the fact that $(\Psi^{\c})^{-1}$ preserves IC sheaves and Hom spaces, we have
\begin{enumerate}
\item $\cF$ lies in the full triangulated subcategory generated by $\IC(w')_{H}\ot V$ for $w'\le w$ (Bruhat order of $W$) and Fr-modules $V$;
\item $\Hom^{\bullet}(\cF, \IC(w')_{H})=0 $ for all $w'<w$.
\item There is a canonical isomorphism $\Hom^{\bullet}(\cF, \IC(w)_{H})\cong R(w)$ as graded $(R\ot R, \Fr)$-modules.
\end{enumerate}
We show that these properties imply that a canonical isomorphism $\cF\cong \D(w)_{H}$. Let $Y\subset H$ (resp. $Z\subset H$) be the union of $B_{H}w'B_{H}$ for $w'\le w$ (resp, $w'<w$) in the Bruhat order of $W$. By Lemma \ref{l:order}(3), $Y$ and $Z$ are closed, and $Y-Z=B_{H}wB_{H}=H(w)$ is open in $Y$. Notice that $Y$ is not necessarily the closure of $H(w)$, see Remark \ref{r:order}. Property (1) above implies that $\cF$ is supported on $B_{H}\bs Y/B_{H}$; property (2) implies that $\cF|_{Z}=0$. Therefore $\cF=j_{!}\cG$ for some $\cG\in D^{b}_{m}(B_{H}\bs H(w)/B_{H})=:\cD_{H}(w)$ (where $j:H(w)\incl Y$ is the open embedding). Now $\Hom^{\bullet}(\cF,\IC(w)_{H})\cong \Hom^{\bullet}_{\cD_{H}(w)}(\cG, \Qlbar)$, and property (3)  above gives a map $\cF\to \IC(w)_{H}$  in $\cD_{H}$ (corresponding to $1\in \Qlbar$ under the isomorphism $\Hom(\cF, \IC(w)_{H})\cong\Qlbar$, which is $\Fr$-invariant hence lifts uniquely to $\hom(\cF,\IC(w)_{H})$ for $\Hom^{-1}(\cF, \IC(w)_{H})=0$), hence a nonzero map $c: \cG\to \Qlbar$ in $\cD_{H}(w)\cong D^{b}_{\G(w),m}(\pt)$. Moreover, property (3) implies that $\Hom^{\bullet}(\cG, \Qlbar)$ is free left $R$-module of rank one and is generated in degree zero, which implies that $\cG$ has rank one and is concentrated in degree zero. Therefore the canonical map $c$ is an isomorphism, and it induces an isomorphism $\cF=j_{!}\cG\isom j_{!}\Qlbar=\D(w)_{H}$. Therefore we get a canonical isomorphism $\Psi^{\c}_{\cL}(\D(w)_{H})\cong \Psi_{\cL}^{\c}(\cF)=\D(w)^{\da}_{\cL}$.

A similar argument proves \eqref{Phi nb}. \qed

%Theorem \ref{th:main} implies the following numerical result, which was first proved in \cite[Lemma 1.11]{L-book}.
%
%%%%% ok %%%%%
%\begin{cor}\label{c:KL poly} Let $\b\in {}_{\cL'}\un W_{\cL}$ be a block, and $v,w\in \b$. Write $v=w^{\b}x$ and $w^{\b}y$ for $x,y\in \WL$. Then the Poincar\'e polynomial of the stalk of $\IC(w)_{\cL}[-\ell(w)]$ along any point in $G_{v}=BvB$ is given by 
%\begin{equation*}
%t^{\ell(w)-\ell(v)-\ell_{\cL}(y)+\ell_{\cL}(x)}P^{\WL}_{x,y}(t^{2}),
%\end{equation*}
%where $P^{\WL}_{x,y}(q)$ is the Kazhdan--Lusztig polynomial \cite{KL} for the Coxeter group $\WL$.
%\end{cor}
%\begin{proof}
%In the case $\b$ is the neutral block,  Theorem \ref{th:main} implies $\Hom^{\bu}(\uIC(y)_{H}, \unb(x)_{H})\cong\Hom^{\bu}(\uIC(y)_{\cL}, \unb(x)_{\cL})$ as graded $R\ot R$-modules. This implies that the Poincar\'e polynomials of the stalks $i^{*}_{x}\uIC(y)_{H}$ and $i^{*}_{x}\uIC(y)_{\cL}$ are the same. Other blocks can be reduced to the neutral block by convolving with a minimal IC sheaf and applying Proposition \ref{p:min th}.
%\end{proof}

%%%%%% OK %%%%%%%%
\begin{remark}[Parabolic version] It is possible to extend Theorem \ref{th:main} to a parabolic version. Namely, consider two standard parabolic subgroups $P$ and $Q$ of $G$ with unipotent radicals $U_{P}$ and $U_{Q}$ and Levi subgroups $L$ and $M$ containing $T$. Suppose  that $\cL\in\Ch(T)$ extends to rank one local systems $\cK\in\Ch(L)$ and $\cK'\in \Ch(M)$. Then we may consider the category ${}_{\cK'}\cD_{\cK}=D^{b}_{(M\times L,\cK'\boxtimes\cK^{-1}),m}(U_{Q}\bs G/U_{P})$. We still have a block decomposition of ${}_{\cK'}\cD_{\cK}$ indexed by $\Om_{\cL}=W_{\cL}/\WL$.

By Lemma \ref{l:ext L} we have $\Phi(L,T), \Phi(M,T)\subset \Phi_{\cL}$, hence $L$ and $M$ determine standard parabolic subgroups $P_{H}$ and $Q_{H}$ of $H$ whose Levi factors have roots $\Phi(L,T)$ and $\Phi(M,T)$. Then there is an equivalence between the neutral block ${}_{\cK'}\cD^{\c}_{\cK}$ and $D^{b}_{m}(Q_{H}\bs H/P_{H})$, which can be proved using similar techniques used in this paper.
\end{remark}

%%%%%%   ok %%%%%%%
We have the following strengthening of the purity result in Proposition \ref{p:parity purity} to include Frobenius semisimplicity for the stalks of IC sheaves. To state it, recall that $P^{\WL}_{x,y}(t)$ is the Kazhdan-Lusztig polynomial \cite{KL} for the Coxeter group  $\WL$, of degree less than $\frac{1}{2}(\ell_{\cL}(y)-\ell_{\cL}(x))$ if $x<y$. The numerical part of the following result was first proved in \cite[Lemma 1.11]{L-book}.
 
\begin{prop}\label{p:stalk ss} Let $\cL,\cL'\in\fo$.
\begin{enumerate}
\item For $v\le w\in \WL$, write
\begin{equation*}
P^{\WL}_{v,w}(t)=\sum_{n\ge0} a_{v,w}(n)t^{n}.
\end{equation*}
Then we have
\begin{eqnarray}
i^{*}_{v}\IC(w)^{\da}_{\cL}\cong C(v)^{\da}_{\cL}\j{\ell_{\cL}(w)-\ell_{\cL}(v)}\ot\left(\bigoplus_{n\ge0}\Qlbar\j{-2n}^{\oplus a_{v,w}(n)}\right),\\
\label{costalk sum} i^{!}_{v}\IC(w)^{\da}_{\cL}\cong C(v)^{\da}_{\cL}\j{-\ell_{\cL}(w)+\ell_{\cL}(v)}\ot\left(\bigoplus_{n\ge0}\Qlbar\j{2n}^{\oplus a_{v,w}(n)}\right).
\end{eqnarray}

\item Let $w,v$ be in the same block $\b\in {}_{\cL'}\un W_{\cL}$. Write $v=w^{\b}x$ and $w=w^{\b}y$ for $x,y\in \WL$.  Then there is a one-dimensional $\Fr$-module $V_{1}$ (depending on $\dw$ and $\dv$) of weight zero such that
\begin{eqnarray*}
i^{*}_{v}\IC(\dw)_{\cL}\cong C(\dv)_{\cL}\j{\ell_{\cL}(y)-\ell_{\cL}(x)}\ot\left(\bigoplus_{n\ge0}\Qlbar\j{-2n}^{\oplus a_{x,y}(n)}\right)\ot V_{1},\\
i^{!}_{v}\IC(\dw)_{\cL}\cong C(\dv)_{\cL}\j{-\ell_{\cL}(y)+\ell_{\cL}(x)}\ot\left(\bigoplus_{n\ge0}\Qlbar\j{2n}^{\oplus a_{x,y}(n)}\right)\ot V_{1}.
\end{eqnarray*}
\end{enumerate}
\end{prop}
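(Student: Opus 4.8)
\emph{Proof plan.} The plan is to derive part (1) from the monoidal equivalence $\Psi^{\c}_{\cL}$ of Theorem~\ref{th:main} together with the classical description of stalks of intersection complexes on the flag variety of the split reductive group $H$, and then to deduce part (2) from part (1) by convolving with a minimal IC sheaf.

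For part~(1), I would first note that since $\nabla(v)^{\da}_{\cL}=i_{v*}C(v)^{\da}_{\cL}$ and $\Delta(v)^{\da}_{\cL}=i_{v!}C(v)^{\da}_{\cL}$, adjunction gives isomorphisms of graded $(R\ot R,\Fr)$-modules
\begin{equation*}
\Homb(\IC(w)^{\da}_{\cL},\nabla(v)^{\da}_{\cL})\cong\Homb(i_{v}^{*}\IC(w)^{\da}_{\cL},C(v)^{\da}_{\cL}),\qquad \Homb(\Delta(v)^{\da}_{\cL},\IC(w)^{\da}_{\cL})\cong\Homb(C(v)^{\da}_{\cL},i_{v}^{!}\IC(w)^{\da}_{\cL}).
\end{equation*}
By Proposition~\ref{p:parity purity}, both $i_{v}^{*}\IC(w)^{\da}_{\cL}$ and $i_{v}^{!}\IC(w)^{\da}_{\cL}$ are pure of weight zero and their images in ${}_{\cL}\un\cD(v)_{\cL}\cong D^{b}_{\G(v)_{k}}(\pt_{k})$ are finite direct sums of shifts of $\uC(v)_{\cL}$; hence each is isomorphic to a direct sum $\bigoplus_{n}C(v)^{\da}_{\cL}\j{n}\ot W_{n}$ with $W_{n}$ pure of weight zero, and the free $R(v)$-module $\Homb(i_{v}^{*}\IC(w)^{\da}_{\cL},C(v)^{\da}_{\cL})$ (respectively its costalk analogue) determines the collection $\{W_{n}\}$, hence the object itself. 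So it suffices to compute these two Hom-modules.

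Now I would apply $\Psi^{\c}_{\cL}\colon\cD_{H}\isom{}_{\cL}\cD^{\c}_{\cL}$. By \eqref{Phi IC}, \eqref{Phi D}, \eqref{Phi nb} and the Hom-compatibility \eqref{HomPsi}, these modules are identified with $\Homb(\IC(w)_{H},\nabla(v)_{H})\cong\Homb(i_{v}^{*}\IC(w)_{H},C(v)_{H})$ and $\Homb(\Delta(v)_{H},\IC(w)_{H})\cong\Homb(C(v)_{H},i_{v}^{!}\IC(w)_{H})$ for the ordinary Hecke category of $H$. For $H$ one has $W_{H}=\WL$ with length function $\ell_{\cL}$, and the stalks and costalks of $\IC(w)_{H}$ along the Schubert cells of $H/B_{H}$ are the classical ones: since Schubert varieties admit affine pavings these stalks are mixed Tate with Frobenius acting semisimply, and by \cite{KL} their graded ranks are the coefficients of $P^{W_{H}}_{v,w}=P^{\WL}_{v,w}$, so that $i_{v}^{*}\IC(w)_{H}\cong C(v)_{H}\j{\ell_{\cL}(w)-\ell_{\cL}(v)}\ot\bigl(\bigoplus_{n\ge0}\Qlbar\j{-2n}^{\oplus a_{v,w}(n)}\bigr)$ and dually for $i_{v}^{!}$ (both zero unless $v\le_{\WL}w$, matching $a_{v,w}\equiv0$ and Lemma~\ref{l:stalk order}). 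Feeding this back through the identifications recovers the formulas in part~(1).

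For part~(2), fix the block $\b\in{}_{\cL'}\un W_{\cL}$, write $v=w^{\b}x$ and $w=w^{\b}y$ with $x,y\in\WL$, and choose liftings so that $\dw=\dw^{\b}\dy$. By Proposition~\ref{p:min th}(2) left convolution $\Lambda:=\IC(\dw^{\b})_{\cL}\star(-)\colon{}_{\cL}\cD^{\c}_{\cL}\isom{}_{\cL'}\cD^{\b}_{\cL}$ is an equivalence, and by Construction~\ref{con:can} (its left-handed version) it sends $\IC(\dy)_{\cL},\Delta(\dy)_{\cL},\nabla(\dy)_{\cL}$ to $\IC(\dw)_{\cL},\Delta(\dw^{\b}\dy)_{\cL},\nabla(\dw^{\b}\dy)_{\cL}$; since $\Lambda$ commutes with $\om$ and with $\Fr$ it preserves the graded $\Fr$-modules $\Homb(-,-)$. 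Combining this with the adjunction reduction of part~(1) gives $\Homb(i^{*}_{w^{\b}x}\IC(\dw)_{\cL},C(\dw^{\b}\dx)_{\cL})\cong\Homb(i^{*}_{x}\IC(\dy)_{\cL},C(\dx)_{\cL})$, and as $\IC(\dy)_{\cL}$ differs from $\IC(y)^{\da}_{\cL}$, and $C(\dx)_{\cL}$ from $C(x)^{\da}_{\cL}$, only by a one-dimensional weight-zero $\Fr$-twist, part~(1) computes the right-hand side; the same reconstruction via Proposition~\ref{p:parity purity} then recovers $i^{*}_{v}\IC(\dw)_{\cL}$, with the accumulated twists and the ambiguity in the lifting $\dv$ absorbed into $V_{1}$. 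The costalk statement is the same with $\Delta$ in place of $\nabla$.

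The main obstacle is the first step in each part: the equivalences $\Psi^{\c}_{\cL}$ and $\Lambda$ a priori control only global $\Hom$'s, so one must show they pin down the local stalks and costalks. This is exactly what the adjunction reduction plus the rigidity of pure, parity-type objects in $D^{b}_{\G(v)_{k}}(\pt_{k})$ (Proposition~\ref{p:parity purity}) provides; a secondary, purely bookkeeping difficulty in part~(2) is tracking the one-dimensional Frobenius twist $V_{1}$ caused by the non-canonical choices of $\dw^{\b}$ and $\dv$.
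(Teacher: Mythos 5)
Your overall strategy (reduce stalks/costalks to the graded $\Fr$-modules $\Homb(\IC(w)^{\da}_{\cL},\nb(v)^{\da}_{\cL})$ and $\Homb(\D(v)^{\da}_{\cL},\IC(w)^{\da}_{\cL})$ by adjunction, transfer these through $\Psi^{\c}_{\cL}$ to the endoscopic group, and handle part (2) by convolving with a minimal IC sheaf) is the same as the paper's. But there is a genuine gap at the pivotal step. You write that Proposition \ref{p:parity purity} gives purity of weight zero and the non-mixed parity decomposition, ``hence each is isomorphic to a direct sum $\bigoplus_{n}C(v)^{\da}_{\cL}\j{n}\ot W_{n}$.'' That ``hence'' is false: purity of a $\Fr$-module constrains only the eigenvalues of Frobenius, not its Jordan structure, so a pure weight-zero stalk could a priori be a non-split successive extension of objects $C(v)^{\da}_{\cL}\j{n}$ (e.g.\ a unipotent Jordan block on $\upH^{0}$). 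In the mixed category $D^{b}_{\G(v),m}(\pt)$ one has $\Ext^{1}(C(v)^{\da}_{\cL},C(v)^{\da}_{\cL})\supset\Hom(\uC(v)_{\cL},\uC(v)_{\cL})_{\Fr}\ne 0$, so such extensions genuinely exist. The Frobenius semisimplicity of the stalk is precisely the new content of this proposition beyond Proposition \ref{p:parity purity} (the paper says so explicitly right after that proposition), so it cannot be assumed at the outset; once you assume it, the rest of your argument is circular in the sense that the Hom computation only determines the multiplicities of an object you have already declared to be semisimple.

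The missing ingredient is the full embedding $h=\Homb(C(v)^{\da}_{\cL},-)\colon {}_{\cL}\cC(v)_{\cL}\incl (R(v),\Fr)\textup{-gmod}$ obtained from Proposition \ref{p:equiv CL} applied to the torus $T$ in place of $G$. With it, the correct order of argument is: (i) observe $h(\cK_{v})=\Gr^{F}_{v}\MM^{\c}(\IC(w)^{\da}_{\cL})\j{-\ell_{\cL}(v)}$ is a free $R(v)$-module and is $\Fr$-semisimple --- either because it is a subquotient of $\SS(w)^{\na}_{\WL}$ (the paper's route) or, along your lines, because $\Psi^{\c}_{\cL}$ identifies it with $\Homb(C(v)_{H},i^{!}_{v}\IC(w)_{H})$ and the classical Frobenius semisimplicity of IC stalks on Schubert varieties applies; (ii) lift a Frobenius eigenbasis of $h(\cK_{v})\ot_{R(v)}\Qlbar$ to an $R(v)$-basis of eigenvectors, so $h(\cK_{v})\cong\bigoplus R(v)\j{n}$; (iii) conclude $\cK_{v}\cong\bigoplus C(v)^{\da}_{\cL}\j{n}$ by full faithfulness of $h$. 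The same repair is needed in part (2), where you again invoke ``the same reconstruction via Proposition \ref{p:parity purity}.'' With this insertion your proof closes and coincides with the paper's, up to the cosmetic difference that you read off both the semisimplicity and the multiplicities from the $H$-side at once, whereas the paper derives the semisimplicity internally from $\SS(w)^{\na}$ and uses $H$ only for the multiplicities.
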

\begin{proof} (1) We first treat the costalk $i^{!}_{v}\IC(w)^{\da}_{\cL}$. In Proposition \ref{p:parity purity}, we already proved that $\cK_{v}=i^{!}_{v}\IC(w)^{\da}_{\cL}\in{}_{\cL}\cD(v)_{\cL}$ is a successive extension of $C(v)^{\da}\j{n}\ot V_{n}$ for finite-dimensional $\Fr$-modules $V_{n}$ pure of weight zero, and $n\equiv \ell(w)-\ell(v)\mod2$. We shall first show
\begin{equation}\label{Frss}
\mbox{$\cK_{v}$ is a direct sum of $C(v)^{\da}_{\cL}\j{n}$ for $n\equiv\ell(w)-\ell(v)\mod2$.}
\end{equation}
Let ${}_{\cL}\cC(v)_{\cL}\subset {}_{\cL}\cD(v)_{\cL}$ be the subcategory of complexes that are pure of weight zero. Applying Proposition \ref{p:equiv CL} to the case $G=T$ (now $C(v)^{\da}_{\cL}$ plays the role of $\Th^{\c}_{\cL}$), we see that $\Hom^{\bu}(C(v)^{\da}_{\cL}, -)$ induces a full embedding
\begin{equation}\label{emb h}
h: {}_{\cL}\cC(v)_{\cL}\incl (R(v), \Fr)\lmod
\end{equation}
Here $R(v)=\upH^{*}_{\G(v)_{k}}(\pt_{k})$ is introduced in \S\ref{ss:Rbimod}. Under this embedding, to show \eqref{Frss}, it suffices to show that $h(\cK_{v})$ is a direct sum of $R(v)\j{n}$ for $n\equiv\ell(w)-\ell(v)\mod2$. By Proposition \ref{p:parity purity},  $h(\cK_{v})$ is a successive extension of $R(v)\j{n}\ot V_{n}$ for  $n\equiv\ell(w)-\ell(v)\mod2$ and for $\Fr$-modules $V_{n}$ pure of weight zero. In particular, $h(\cK_{v})$ is free as an $R(v)$-module. Therefore it suffices to show that $h(\cK_{v})$, as a $\Fr$-module, is a direct sum of $\Qlbar\j{n}$ for $n\in\ZZ$ (necessarily of the same parity as $\ell(w)-\ell(v)$): for then $h(\cK_{v})\ot_{R(v)}\Qlbar$ is a direct sum of $\Qlbar\j{n}$, and we can lift a basis of $h(\cK_{v})\ot_{R(v)}\Qlbar$ consisting of Frobenius eigenvectors to Frobenius eigenvectors in $h(\cK_{v})$, giving an $R(v)$-basis of $h(\cK_{v})$.

To summarize, to show \eqref{Frss}, we only need to show that $h(\cK_{v})$ is a direct sum of $\Qlbar\j{n}$ as a $\Fr$-module. By Corollary \ref{c:filM}, $\Homb(i^{*}_{v}\Th^{\c}_{\cL}, \cK_{v})=\Gr^{F}_{v}\MM^{\c}(\IC(w)^{\da}_{\cL})$, the latter being a subquotient of $\SS(w)^{\na}_{\WL}\j{\ell_{\cL}(w)}$ (by Proposition \ref{p:Sw}), is hence a direct sum of $\Qlbar\j{n}$. Since $i^{*}_{v}\Th^{\c}_{\cL}=C(v)_{\cL}^{\da}\j{-\ell_{\cL}(v)}$,  $h(\cK_{v})=\Gr^{F}_{v}\MM^{\c}(\IC(w)^{\da}_{\cL})\j{-\ell_{\cL}(v)}$ is a direct sum of $\Qlbar\j{n}$. This proves \eqref{Frss}.

By Theorem \ref{th:main}, we have $h(\cK_{v})=\Homb(\D(v)^{\da}_{\cL}, \IC(w)^{\da}_{\cL})\cong \Homb(\D(v)_{H}, \IC(w)_{H})=\Homb(C(v)_{H}, i^{!}_{v}\IC(w)_{H})$. Therefore the multiplicity of $C(v)^{\da}_{\cL}\j{n}$ in $\cK_{v}$ is the same as the  multiplicity of $C(v)_{H}\j{n}$ in $i^{!}_{v}\IC(w)_{H}$, which is well-known to be expressed in terms of the coefficients of $P^{\WL}_{v,w}$, as in \eqref{costalk sum}.

The statement for $i^{*}_{v}\IC(w)^{\da}_{\cL}$ can be proved in the same way by analyzing $\Homb(\IC(w)^{\da}_{\cL}, \nb(v)^{\da}_{\cL})$ and comparing it to $\Homb(\IC(w)_{H}, \nb(v)_{H})$. We omit details.

%In the case $\cL=\cL'$ and $\b$ is the neutral block, by the equivalence $\Psi^{\Weil}$, it suffices to show that $M_{v}^{n}:=\Hom_{\cD_{H}}(\D(v)_{H},\IC(w)_{H}[n])$ is a semisimple $\Fr$-module. An argument using the Schubert stratification and parity of the stalks shows that the graded vector space $\cohog{*}{B_{H}\bs H/B_{H}, \IC(w)_{H}}$  admits a filtration with associated graded, up to a shift and Tate twist,  the graded $R$-module $M^{\bu}_{v}=\oplus_{n\in\ZZ}M_{v}^{n}$. By \cite[Lemma 3.2.4]{BY},  $\cohog{*}{B_{H}\bs H/B_{H}, \IC(w)_{H}}$ is $\Fr$-semisimple. Therefore its associated graded $M^{\bu}_{v}$ are $\Fr$-semisimple as well.

(2) By Proposition \ref{p:min th}, there is a minimal IC sheaf $\xi\in {}_{\cL'}\fP^{\b}_{\cL}$ such that $\IC(\dw)_{\cL}\cong \xi\star\IC(y)^{\da}_{\cL}$. Then $\xi\star\D(x)^{\da}_{\cL}\cong \D(\dv)_{\cL}\ot V_{1}$ for some one-dimensional $\Fr$-module $V_{1}$. We have $\Homb(C(\dv)_{\cL}\ot V_{1}, i_{v}^{!}\IC(\dw)_{\cL})=\Homb(\D(\dv)_{\cL}\ot V_{1}, \IC(\dw)_{\cL})\cong\Homb(\xi\star\D(x)^{\da}_{\cL},  \xi\star\IC(y)_{\cL}^{\da})=\Homb(\D(x)^{\da}_{\cL}, \IC(y)_{\cL}^{\da})$ which is a direct sum of $\Qlbar\j{n}$ as a  $\Fr$-module by (1). By the same argument as in (1) using the embedding \eqref{emb h} , this implies that $i_{v}^{!}\IC(\dw)_{\cL}$ is a direct sum of  $C(\dv)_{\cL}\j{n}\ot V_{1}$, with multiplicities given by the coefficients of $P^{\WL}_{x,y}$. The argument for $i_{v}^{*}\IC(\dw)_{\cL}$ is similar, using $\Homb(\IC(\dw)_{\cL}, \nb(\dv)_{\cL}\ot V_{1})\cong \Homb(\IC(y)^{\da}_{\cL}, \nb(x)^{\da}_{\cL})$.
\end{proof}

Similarly we have the Frobenius semisimplicity of convolution.

%%%%%%  OK %%%%%%%
\begin{prop}\label{p:conv ss}
\begin{enumerate}
\item For $w,w'\in \WL$, the convolution $\IC(w')^{\da}_{\cL}\star\IC(w)^{\da}_{\cL}$  is a direct sum of $\IC(v)^{\da}_{\cL}\j{n}$ for $v\in\WL$ and $n\equiv\ell_{\cL}(w)+\ell_{\cL}(w')-\ell_{\cL}(v)\mod2$.

\item Let $\cL,\cL',\cL''\in \fo$, $w\in {}_{\cL'}W_{\cL}$ and  $w'\in {}_{\cL''}W_{\cL'}$. Let $\b\in {}_{\cL'}\un W_{\cL}$ and $\b'\in{}_{\cL''}\un W_{\cL'}$ be the blocks containing $w$ and $w'$. Then the convolution $\IC(\dw')_{\cL'}\star\IC(\dw)_{\cL}$ is a direct sum of $\IC(\dv)_{\cL}\j{n}\ot V_{\dw',\dw}^{\dv}$ for $v\in \b'\b\subset {}_{\cL''}W_{\cL}, n\equiv\ell_{\b}(w)+\ell_{\b'}(w')-\ell_{\b'\b}(v)\mod2$ and a one-dimensional $\Fr$-module $V_{\dw',\dw}^{\dv}$ depending only on $\dw,\dw'$ and $\dv$.
\end{enumerate}
\end{prop}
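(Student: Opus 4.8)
The plan is to deduce part (1) from the Soergel-bimodule description of the neutral block established in \S\ref{s:endo}, and then bootstrap part (2) from part (1) using the minimal-IC-sheaf equivalences of Proposition \ref{p:min th}.

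For part (1), I first observe that $\IC(w')^{\da}_{\cL}\star\IC(w)^{\da}_{\cL}$ lies in ${}_{\cL}\cC^{\c}_{\cL}$: it lies in the neutral block by Proposition \ref{p:conv block}, it is a semisimple complex by Lemma \ref{l:conv pure}(1), and it is pure of weight zero by Lemma \ref{l:conv pure}(2) (recall $\IC(w)^{\da}_{\cL}$ and all its Tate twists are pure of weight zero). I then apply the monoidal equivalence $\ph_{0}=\MM^{\c}|_{{}_{\cL}\cC^{\c}_{\cL}}\colon {}_{\cL}\cC^{\c}_{\cL}\isom \SB_{m}(\WL)$ of Proposition \ref{p:equiv CL}, whose monoidality (Corollary \ref{c:MM mono}) together with Proposition \ref{p:Sw} gives $\ph_{0}(\IC(w)^{\da}_{\cL})\cong \SS(w)^{\na}_{\WL}\j{\ell_{\cL}(w)}$ and hence
\[
\ph_{0}\big(\IC(w')^{\da}_{\cL}\star\IC(w)^{\da}_{\cL}\big)\cong \big(\SS(w')^{\na}_{\WL}\otimes_{R}\SS(w)^{\na}_{\WL}\big)\j{\ell_{\cL}(w')+\ell_{\cL}(w)}.
\]
The key observation is that $(-)^{\na}$ is monoidal for $\otimes_{R}$ — tensoring over the evenly graded ring $R$ two objects on which $\Fr$ acts by $q^{d/2}$ in degree $d$ yields again such an object — so the right-hand side equals $\big(\SS(w')_{\WL}\otimes_{R}\SS(w)_{\WL}\big)^{\na}\j{\ell_{\cL}(w')+\ell_{\cL}(w)}$. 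In the Karoubian monoidal category $\SB(\WL)$ one has $\SS(w')_{\WL}\otimes_{R}\SS(w)_{\WL}\cong\bigoplus_{i}\SS(v_{i})_{\WL}[k_{i}]$ with $v_{i}\in\WL$, and since Bott--Samelson bimodules are concentrated in even degrees (with the convention $\deg\xch(T)=2$ of \S\ref{ss:Rbimod}), so are all their direct summands and all their tensor products over $R$; this forces each $k_{i}$ to be even. Translating back through $\ph_{0}^{-1}$, using $(\SS(v_{i})_{\WL}[k_{i}])^{\na}=\SS(v_{i})^{\na}_{\WL}\j{k_{i}}$ and $\ph_{0}(\IC(v_{i})^{\da}_{\cL})\cong\SS(v_{i})^{\na}_{\WL}\j{\ell_{\cL}(v_{i})}$, yields
\[
\IC(w')^{\da}_{\cL}\star\IC(w)^{\da}_{\cL}\cong\bigoplus_{i}\IC(v_{i})^{\da}_{\cL}\j{m_{i}},\qquad m_{i}=k_{i}+\ell_{\cL}(w')+\ell_{\cL}(w)-\ell_{\cL}(v_{i}),
\]
and $m_{i}\equiv\ell_{\cL}(w')+\ell_{\cL}(w)-\ell_{\cL}(v_{i})\bmod 2$ because $k_{i}$ is even. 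This gives (1).

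For part (2), I write $w=w^{\b}v$ with $v\in\WL$ and $w'=w^{\b'}v'$ with $v'\in W^{\c}_{\cL'}$, following \S\ref{ss:Xi}. By Proposition \ref{p:min th}(3) (in its left-convolution form) one has $\IC(\dw)_{\cL}\cong\IC(\dw^{\b})_{\cL}\star\IC(\dv)_{\cL}$ and $\IC(\dw')_{\cL'}\cong\IC(\dw^{\b'})_{\cL'}\star\IC(\dv')_{\cL'}$, in each case up to a one-dimensional $\Fr$-twist. Moving the minimal IC sheaf $\IC(\dw^{\b})_{\cL}$ past $\IC(\dv')_{\cL'}$ — both $\IC(\dv')_{\cL'}\star\IC(\dw^{\b})_{\cL}$ and $\IC(\dw^{\b})_{\cL}\star\IC(\dot x)_{\cL}$, with $x=w^{\b,-1}v'w^{\b}\in W^{\c}_{\cL}$ (Corollary \ref{c:conj wb}), are isomorphic to $\IC(\dv'\dw^{\b})_{\cL}$ up to $\Fr$-twist by Proposition \ref{p:min th}(3) — and composing the two minimal IC sheaves via the canonical isomorphism $\IC(\dw^{\b'})_{\cL'}\star\IC(\dw^{\b})_{\cL}\cong\IC(\dw^{\b'\b})_{\cL}$ (Construction \ref{con:can}, Corollary \ref{c:trans min}), I obtain
\[
\IC(\dw')_{\cL'}\star\IC(\dw)_{\cL}\cong \IC(\dw^{\b'\b})_{\cL}\star\big(\IC(\dot x)^{\da}_{\cL}\star\IC(\dv)^{\da}_{\cL}\big)\otimes V
\]
for a one-dimensional $\Fr$-module $V$, where I have also replaced $\IC(\dot x)_{\cL},\IC(\dv)_{\cL}$ by $\IC(x)^{\da}_{\cL},\IC(v)^{\da}_{\cL}$ at the cost of further $\Fr$-twists. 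Applying part (1) to the inner convolution gives a decomposition into $\IC(u)^{\da}_{\cL}\j{n_{u}}$ over $u\in\WL$; then Proposition \ref{p:min th}(3) once more identifies $\IC(\dw^{\b'\b})_{\cL}\star\IC(u)^{\da}_{\cL}$ with $\IC(\dw^{\b'\b}\dot u)_{\cL}$ up to a one-dimensional $\Fr$-twist, and $w^{\b'\b}u\in\b'\b\subset{}_{\cL''}W_{\cL}$. This exhibits $\IC(\dw')_{\cL'}\star\IC(\dw)_{\cL}$ as a direct sum of $\IC(\dot v'')_{\cL}\j{n}\otimes V^{\dot v''}_{\dw',\dw}$ with $v''=w^{\b'\b}u\in\b'\b$ and $V^{\dot v''}_{\dw',\dw}$ one-dimensional depending only on the liftings $\dw,\dw',\dot v''$. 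The parity assertion matches because $\ell_{\b'\b}(w^{\b'\b}u)=\ell_{\cL}(u)$ and $\ell_{\b}(w)=\ell_{\cL}(v)$ by the definition of $\ell_{\b}$ in \S\ref{ss:Xi}, while $\ell_{\b'}(w')=\ell_{\cL'}(v')=\ell_{\cL}(x)$, the last equality because conjugation by $w^{\b}$ is a Coxeter-group isomorphism $W^{\c}_{\cL}\isom W^{\c}_{\cL'}$ (Corollary \ref{c:conj wb}); so the congruence $n\equiv\ell_{\cL}(x)+\ell_{\cL}(v)-\ell_{\cL}(u)\bmod 2$ coming from part (1) is precisely $n\equiv\ell_{\b'}(w')+\ell_{\b}(w)-\ell_{\b'\b}(v'')\bmod 2$.

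I do not expect a deep obstacle, since the essential content is already packaged in Propositions \ref{p:equiv CL}, \ref{p:Sw} and \ref{p:min th}. The points that will need care are the even-degree property of Bott--Samelson bimodules used to pin down the parity in (1), and, in (2), the consistent tracking of the one-dimensional Frobenius twists together with the length matching $\ell_{\cL'}(v')=\ell_{\cL}(x)$ via Corollary \ref{c:conj wb} and the definition of $\ell_{\b}$.
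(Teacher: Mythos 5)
Your proposal is correct; both parts go through. The difference from the paper is mostly one of packaging. For part (1) the paper simply observes that the corresponding statement for $\cD_{H}$ is already known (\cite[Proposition 3.2.5]{BY}) and transports it through the monoidal equivalence $\Psi^{\c}_{\cL}$ of Theorem \ref{th:main}, which matches $\IC(w)_{H}$ with $\IC(w)^{\da}_{\cL}$; you instead re-derive the statement inside the Soergel-bimodule model, using Proposition \ref{p:equiv CL}, Proposition \ref{p:Sw}, the even grading of Bott--Samelson bimodules, and the monoidality of $(-)^{\na}$. Your route is self-contained (no external citation) but in substance reproves the $\cD_{H}$ case, since the equivalence $\Psi^{\c}_{\cL}$ is itself built from exactly these ingredients. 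For part (2) both arguments reduce to part (1) via minimal IC sheaves; the paper chooses the factorizations $w=xw^{\b}$ with $x\in W^{\c}_{\cL'}$ and $w'=w^{\b'}y$ with $y\in W^{\c}_{\cL'}$, so that $\IC(\dw')_{\cL'}\star\IC(\dw)_{\cL}\cong\y\star\IC(y)^{\da}_{\cL'}\star\IC(x)^{\da}_{\cL'}\star\xi$ has the two neutral-block factors already adjacent at $\cL'$ and no commutation is needed, whereas your factorization $w=w^{\b}v$, $w'=w^{\b'}v'$ forces one extra step of conjugating $v'$ through $w^{\b}$ via Corollary \ref{c:conj wb}. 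The length bookkeeping ($\ell_{\b}(w)=\ell_{\cL}(v)$, $\ell_{\b'}(w')=\ell_{\cL'}(v')=\ell_{\cL}(x)$, $\ell_{\b'\b}(w^{\b'\b}u)=\ell_{\cL}(u)$) comes out identically, and your treatment of the one-dimensional Frobenius twists matches the paper's definition of $V^{\dv}_{\dw',\dw}$ up to isomorphism.
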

\begin{proof} 
(1) The same statement for $\cD_{H}$ holds by \cite[Proposition 3.2.5]{BY}, hence (1) follows from the equivalence $\Psi^{\c}_{\cL}$.

(2) Write $w=xw^{\b}$ for $x\in W^{\c}_{\cL'}$; $w'=w^{\b'}y$ for $y\in W^{\c}_{\cL'}$. Let $\xi\in {}_{\cL'}\fP^{\b}_{\cL}$ and $\y\in {}_{\cL''}\fP^{\b'}_{\cL'}$ be such that $\IC(\dw)_{\cL}\cong \IC(x)_{\cL'}^{\da}\star\xi$ and $\IC(\dw')_{\cL'}\cong \y\star\IC(y)^{\da}_{\cL'}$. For $v\in \b'\b$, we have $v=w^{\b'}zw^{\b}$ for $z\in W^{\c}_{\cL'}$, and let $V_{\dw',\dw}^{\dv}$ be the one-dimensional $\Fr$-module such that
\begin{equation*}
\IC(\dv)_{\cL}\ot V_{\dw',\dw}^{\dv}\cong \y\star\IC(z)^{\da}_{\cL'}\star\xi.
\end{equation*}
By (1), $\IC(y)^{\da}_{\cL'}\star\IC(x)^{\da}_{\cL'}$ is a direct sum of $\IC(z)^{\da}_{\cL'}\j{n}$ for $z\in W^{\c}_{\cL'}$ and $n\equiv\ell_{\cL'}(x)+\ell_{\cL'}(y)-\ell_{\cL'}(z)\mod2$. Therefore $\IC(\dw')_{\cL'}\star\IC(\dw)_{\cL}\cong \y\star\IC(y)^{\da}_{\cL'}\star\IC(x)^{\da}_{\cL'}\star\xi$ is a direct sum of $\y\star\IC(z)^{\da}_{\cL'}\j{n}\star\xi$ for   $z\in W^{\c}_{\cL'}$ and $n\equiv\ell_{\cL'}(x)+\ell_{\cL'}(y)-\ell_{\cL'}(z)\mod2$, or equivalently a direct sum of $\IC(\dv)_{\cL}\j{n}\ot V_{\dw',\dw}^{\dv}$ for $v\in \b'\b$ and $n\equiv\ell_{\cL'}(x)+\ell_{\cL'}(y)-\ell_{\cL'}(z)\mod2$ where $v=w^{\b'}zw^{\b}$. It remains to note that $\ell_{\cL'}(x)=\ell_{\b}(w), \ell_{\cL'}(y)=\ell_{\b'}(w')$ and $\ell_{\cL'}(z)=\ell_{\b'\b}(v)$.
\end{proof}

%we use minimal IC sheaves $\IC(w^{\b,-1})_{\cL'}$ and $\IC(w^{\b',-1})_{\cL''}$ to right translate $\IC(w)_{\cL}$ and left translate $\IC(w')_{\cL'}$ to the neutral block ${}_{\cL'}D^{\c}_{\cL'}$. By  the neutral block case proved above, $\IC(w^{\b',-1}w')_{\cL''}\star\IC(ww^{\b,-1})$ is a direct sum of $\IC(v)_{\cL}\j{n}$ for $v\in W^{\c}_{\cL'}$ and $n\equiv\ell_{\cL'}(ww^{\b,-1})+\ell_{\cL'}(w^{\b',-1}w')-\ell_{\cL'}(v)\mod2$. Translating back this result to ${}_{\cL''}\cD^{\b'\b}_{\cL}$ using left convolution by $\IC(w^{\b'})_{\cL'}$ and right convolution by $\IC(w^{\b})_{\cL}$, using Proposition \ref{p:min th}, $\IC(w)_{\cL'}\star\IC(w')_{\cL}$ is a direct sum of $\IC(w^{\b'}vw^{\b})_{\cL}\j{n}$ for $n\equiv\ell_{\cL'}(ww^{\b,-1})+\ell_{\cL'}(w^{\b',-1}w')-\ell_{\cL'}(v)\mod2$, the latter being the same as $\ell_{\b}(w)+\ell_{\b'}(w')-\ell_{\b'\b}(w^{\b'}vw^{\b})$.

%%%%%%%%.   %%%%%%%%%%
\section{Equivalence for all blocks}\label{s:all blocks}

In this section we extend the monoidal equivalence for the neutral blocks in Theorem \ref{th:main} to an equivalence for all blocks (Theorem \ref{th:all blocks}). To do this, we will need to extend the endoscopic group to a groupoid, and it will be convenient to organize the various blocks into a $2$-category.

%%%%%% OK  %%%%%%%%%
\subsection{The groupoid $\wt\Xi$}\label{ss:tXi}  We define a  groupoid $\wt\Xi$ in $\FF_{q}$-schemes as follows. Its object set is  $\fo$, and the  morphism ${}_{\cL'}\wt\Xi_{\cL}$ between $\cL$ and $\cL'\in\fo$ is the union of connected components of $N_{G}(T)$ whose image in $W$ is in ${}_{\cL'}\Xi_{\cL}$. In other words, ${}_{\cL'}\wt\Xi_{\cL}$ parametrizes liftings  of $w^{\b}$ for blocks $\b\in{}_{\cL'}\un W_{\cL}$.  The composition map  is defined by the multiplication in $N_{G}(T)$. We have an obvious map of groupoids $\wt\Xi\to \Xi$ which is a $T$-torsor. 
%For $\cL\in \fo$,  ${}_{\cL}\Xi_{\cL}$ contains the identity element $1\in N_{G}(T)$ which we denote by $\one_{\cL}$.

For $\b\in {}_{\cL'}\un W_{\cL}$, let ${}_{\cL'}\wt\Xi^{\b}_{\cL}\subset {}_{\cL'}\wt\Xi_{\cL}$ be the component corresponding to $w^{\b}$. Then  ${}_{\cL'}\wt\Xi_{\cL}=\coprod_{\b\in{}_{\cL'}\un W_{\cL}}{}_{\cL'}\wt\Xi^{\b}_{\cL}$.

%%%%%% OK %%%%%%%
\subsection{Relative pinning}\label{ss:rel pin} We give a rigidification of the endoscopic group $H=H_{\cL}$ attached to $\cL\in\fo$ as follows. Recall that $H$ contains $T$ as a maximal torus, and has $\Phi^{+}_{\cL}$ as its positive roots with respect to the Borel $B_{H}$. Let $\D_{\cL}\subset \Phi_{\cL}^{+}$ be the set of simple roots. A {\em relative pinning} for the endoscopic group $H$ is a collection of isomorphisms $\io_{\a}: H_{\a}\cong G_{\a}$ for each $\a\in \D_{\cL}$. Here $H_{\a}$ (resp. $G_{\a}$) is the root subgroup for $\a$ (isomorphic to the additive group) of $H$ (resp. $G$). The automorphism group of the data $(H, T,B_{H}, \{\io_{\a}\}_{\a\in\D_{\cL}})$ is trivial. Therefore a relatively pinned endoscopic group attached to $\cL$ is unique up to a unique isomorphism. 

For each $\cL\in\fo$, we use the notation $H^{\c}_{\cL}$ to denote the relatively pinned endoscopic group attached to $\cL$. Its canonical Borel subgroup is denoted $B^{H}_{\cL}$.

Let $\cL,\cL'\in\fo$ and $\ddw\in {}_{\cL'}\wt\Xi_{\cL}$ with image $w\in {}_{\cL'}W_{\cL}$.  There is a unique isomorphism
\begin{equation*}
\s(\ddw): H^{\c}_{\cL}\to H^{\c}_{\cL'}
\end{equation*}
characterized as follows. It is $w$ when restricted to $T$. Since $w$ is minimal in its block, it induces an isomorphism between the based root systems $(\Phi_{\cL}, \D_{\cL})$ and $(\Phi_{\cL'}, \D_{\cL'})$. For each simple root $\a\in \D_{\cL}$, $\s(\ddw)$ is required to restrict to an isomorphism of root subgroups $H^{\c}_{\cL,\a}\isom H^{\c}_{\cL', w\a}$, and we require that the following diagram be commutative
\begin{equation*}
\xymatrix{ H^{\c}_{\cL,\a}\ar[r]^-{\io_{\a}}\ar[d]^{\s(\ddw)} & G_{\a}\ar[d]^{\Ad(\ddw)}\\
H^{\c}_{\cL',w\a}\ar[r]^-{\io_{w\a}} & G_{w\a}
}
\end{equation*}
When $\cL'=\cL$, the above construction gives an action of ${}_{\cL}\wt\Xi_{\cL}$ on $H^{\c}_{\cL}$. When restricted to $T\subset {}_{\cL}\wt\Xi_{\cL}$, it is the conjugation action of $T$ on $H^{\c}_{\cL}$.

%%%%% OK %%%%%%
\subsection{The groupoid $\fH$}\label{ss:fH}
We construct a groupoid $\fH$ in $\FF_{q}$-schemes together with a map of groupoids $\om_{\fH}: \fH\to \Xi$ as follows. Set $\Ob(\fH)=\fo$ and $\om_{\fH}$ is the identity on objects. For $\cL,\cL'\in\fo$, define the morphism $\FF_{q}$-scheme in $\fH$ as
\begin{equation*}
{}_{\cL'}\fH_{\cL}={}_{\cL'}\wt\Xi_{\cL}\twtimes{T}H^{\c}_{\cL},
\end{equation*}
where the action of $T$ on $H^{\c}_{\cL}$ is by left translation, and its action on ${}_{\cL'}\wt\Xi_{\cL}\subset N_{G}(T)$ is by right translation. 

For $\b\in{}_{\cL'}\un W_{\cL}$, we get a component 
\begin{equation*}
{}_{\cL'}\fH^{\b}_{\cL}:={}_{\cL'}\wt\Xi^{\b}_{\cL}\twtimes{T}H^{\c}_{\cL}.
\end{equation*}
The map $\om_{\fH}: \fH\to \Xi$ then sends ${}_{\cL'}\fH^{\b}_{\cL}$ to $w^{\b}\in {}_{\cL'}\Xi_{\cL}$. There is a canonical isomorphism
\begin{equation*}
{}_{\cL'}\fH^{\b}_{\cL}={}_{\cL'}\wt\Xi^{\b}_{\cL}\twtimes{T}H^{\c}_{\cL}\cong H^{\c}_{\cL'}\twtimes{T}{}_{\cL'}\wt\Xi^{\b}_{\cL}
\end{equation*}
sending $(\ddw, h)\mapsto (\s(\ddw)(h), \ddw)$. Under this isomorphism, ${}_{\cL'}\fH^{\b}_{\cL}$ is a   $(H^{\c}_{\cL'}, H^{\c}_{\cL})$-bitorsor. 
%We may also use the right side as the definition of ${}_{\cL'}\fH^{\b}_{\cL}$. 
%The decomposition ${}_{\cL'}\wt\Xi_{\cL}=\coprod_{\b\in{}_{\cL'}\un W_{\cL}}{}_{\cL'}\wt\Xi^{\b}_{\cL}$ gives a decomposition
%\begin{equation*}
%{}_{\cL'}\fH_{\cL}=\coprod_{\b\in{}_{\cL'}\un W_{\cL}}{}_{\cL'}\fH^{\b}_{\cL}.
%\end{equation*}

For $\b\in{}_{\cL'}\un W_{\cL}$ and $\g\in{}_{\cL''}\un W_{\cL'}$, the composition map
\begin{equation*}
{}_{\cL''}\fH^{\g}_{\cL'}\times {}_{\cL'}\fH^{\b}_{\cL}\to {}_{\cL''}\fH^{\g\b}_{\cL}
\end{equation*}
is defined as
\begin{eqnarray*}
({}_{\cL''}\wt\Xi^{\g}_{\cL'}\twtimes{T}H^{\c}_{\cL'})\times ({}_{\cL'}\wt\Xi^{\b}_{\cL}\twtimes{T}H^{\c}_{\cL})\to {}_{\cL''}\wt\Xi^{\g\b}_{\cL}\twtimes{T}H^{\c}_{\cL}\\
(\ddw',h',\ddw,h)\mapsto (\ddw'\ddw, \s(\ddw^{-1})h'h).
\end{eqnarray*}
It is easy to check that the composition map is associative. Under the composition map, ${}_{\cL}\fH_{\cL}$ becomes a group scheme over $\FF_{q}$ with neutral component $H_{\cL}^{\c}$ and component group $W_{\cL}/\WL$. Each ${}_{\cL'}\fH_{\cL}$ is a $({}_{\cL'}\fH_{\cL'}, {}_{\cL}\fH_{\cL})$-bitorsor.

The double cosets $B^{H}_{\cL'}\bs {}_{\cL'}\fH_{\cL}/B^{H}_{\cL}$ are in natural bijection with ${}_{\cL'}W_{\cL}$: for $w\in {}_{\cL'}W_{\cL}$ we can write it uniquely as $w^{\b}v$ for the block $\b\in {}_{\cL'}\un W_{\cL}$  containing $w$ and $v\in \WL=W(H_{\cL}^{\c}, T)$. Then $w$ corresponds to the $(B^{H}_{\cL'},B^{H}_{\cL})$-double coset containing $(\dw^{\b}, \dot v)\in {}_{\cL'}\wt\Xi_{\cL}\twtimes{T}H_{\cL}^{\c}={}_{\cL'}\fH_{\cL}$, which we denote by $\fH(w)_{\cL}$.

%%%%%% OK %%%%%%%%
\subsection{$2$-categories over a groupoid} What we call a $2$-category $\fC$ is called a ``bicategory'' in \cite[Ch.XII.6]{Mac}. It has an object set $\Ob(\fC)$, and for $x,y\in\Ob(\fC)$ the morphisms from $x$ to $y$ form an ordinary category which we denote by ${}_{y}\fC_{x}$. The category ${}_{x}\fC_{x}$ carries an identity $\one_{x}$. For  $x,y,z\in \Ob(\fC)$ there is a bifunctor called composition: ${}_{z}\fC_{y}\times{}_{y}\fC_{x}\to {}_{z}\fC_{x}$. For a quadruple of objects there is a natural isomorphism of functors giving the associativity of composition. These data are required to satisfy the pentagon axiom for associativity and another axiom involving the identities $\{\one_{x}\}$.

From a $2$-category $\fC$ we get an ordinary category $\pi_{\le 1}\fC$ with the same object set and morphism sets $\Hom_{\pi_{\le 1}\fC}(x,y):=|{}_{y}\fC_{x}|$, the set of isomorphism classes of objects of ${}_{y}\fC_{x}$.

Let $\G$ be a small groupoid, viewed as a category where all morphisms are isomorphisms. A $2$-category $\fC$ over $\G$ is a $2$-category with a functor $\om: \pi_{\le 1}\fC\to \G$. In other words, for each object $x\in \Ob(\fC)$ we assign an object $\om(x)\in \Ob(\G)$, and for a pair of objects $x,y\in \Ob(\fC)$, a map ${}_{y}h_{x}: |{}_{y}\fC_{x}|\to {}_{\om(y)}\G_{\om(x)}$ compatible with compositions and sending identities to identities.

If $(\fC,\om:\pi_{\le 1}\fC\to \G)$  is a $2$-category over $\G$, and $x,y\in \Ob(\fC)$, $\xi\in {}_{\om(y)}\G_{\om(x)}$, we denote by $_{y}\fC^{\xi}_{x}\subset {}_{y}\fC_{x}$ the full subcategory of objects whose isomorphism class maps to $\xi$ via ${}_{y}h_{x}$. Then ${}_{y}\fC_{x}=\coprod_{\xi\in{}_{\om(y)}\G_{\om(x)}}{}_{y}\fC^{\xi}_{x}$. The composition functor restricts to a bifunctor
\begin{equation*}
\c: {}_{z}\fC^{\y}_{y}\times {}_{y}\fC^{\xi}_{x}\to {}_{z}\fC^{\y\xi}_{x}, \quad \forall \xi\in{}_{\om(y)}\G_{\om(x)}, \y\in{}_{\om(z)}\G_{\om(y)}.
\end{equation*}

%%%%%% OK %%%%%%%%
\begin{exam}
The categories $\{{}_{\cL'}\cD^{\b}_{\cL}\}_{\cL,\cL'\in\fo}$ can be organized into a $2$-category $\fD$ over $\Xi$ in an obvious way. The object set is $\fo$ and $\om$ is the identity map on the object sets. For $\cL,\cL'\in\fo$, the morphism category is ${}_{\cL'}\fD_{\cL}=\coprod_{\b\in {}_{\cL'}\un W_{\cL}}{}_{\cL'}\cD^{\b}_{\cL}$ with composition given by convolution (using Proposition \ref{p:conv block}).
\end{exam}

%%%%%% OK %%%%%%%%
\begin{exam}
For  $\cL,\cL'\in\fo$ and $\b\in{}_{\cL'}\un W_{\cL}$, define
\begin{equation*}
{}_{\cL'}\cE^{\b}_{\cL}:=D^{b}_{m}(B^{H}_{\cL'}\bs {}_{\cL'}\fH^{\b}_{\cL}/B^{H}_{\cL}).
\end{equation*}
Then the groupoid structure on $\fH$ gives a convolution functor for $\b\in{}_{\cL'}\un W_{\cL}$ and $\g\in{}_{\cL''}\un W_{\cL'}$
\begin{equation*}
\star: {}_{\cL''}\cE^{\g}_{\cL'} \times {}_{\cL'}\cE^{\b}_{\cL} \to {}_{\cL''}\cE^{\g\b}_{\cL}
\end{equation*}
carrying an associativity natural transformation satisfying the pentagon axiom. This defines a $2$-category $\fE$ over $\Xi$ with object set $\fo$ and morphism categories ${}_{\cL'}\fE_{\cL}=\coprod_{\b\in{}_{\cL'}\un W_{\cL}}{}_{\cL'}\cE^{\b}_{\cL}$.

If $\b=W^{\c}_{\cL}\subset {}_{\cL}\un W_{\cL}$ is the neutral block, we denote ${}_{\cL}\cE^{\b}_{\cL}$ by ${}_{\cL}\cE^{\c}_{\cL}$. This is the usual Hecke category $\cD_{H_{\cL}^{\c}}$ for the reductive group $H_{\cL}^{\c}$.
\end{exam}

%%%%%%% OK %%%%%%%
\quash{\subsection{Twisting data}
Let $E$ be a field. An {\em $E$-linear twisting data} for a groupoid $\G$ is a normalized $2$-cocycle of $\G$ with values in $\Pic(E)$, the Picard groupoid of one-dimensional $E$-vector spaces. More precisely, it is the following data $(\l,\mu)$:
\begin{enumerate}
\item For arrows $x\xr{\xi}y\xr{\y}z$ in $\G$,  an  $E$-line $\l(\y,\xi)$. 
\item For any arrow $x\xr{\xi}y$ in $\G$, trivializations of the lines $\l(\xi, \id_{x})$ and $\l(\id_{y}, \xi)$.
\item For arrows $x\xr{\xi}y\xr{\y}z\xr{\z}t$ in $\G$, an isomorphism of $E$-lines
\begin{equation*}
\mu_{\z,\y,\xi}: \l(\z,\y\xi )\ot_{E}\l(\y,\xi)\isom \l(\z\y, \xi)\ot_{E}\l(\z,\y).
\end{equation*}
\end{enumerate}
The data $(\l,\mu)$ should satisfy the following conditions
\begin{itemize}
\item For arrows $x\xr{\xi}y\xr{\y}z$ in $\G$, $\mu_{\y,\id_{y}, \xi}$ is the identity map of $\l(\y,\xi)$ using the trivializations of $\l(\id_{y}, \xi)$ and $\l(\y,\id_{y})$.
\item For four composable morphisms $\xi,\y,\z,\t$ in $\G$, the following diagram is commutative
\begin{equation*}
\xymatrix{ & \l(\t,\z\y\xi)\ot\l(\z,\y\xi)\ot\l(\y,\xi)\ar[dl]_{\id\ot\mu_{\z,\y,\xi}}\ar[dr]^{\mu_{\t,\z,\y\xi}\ot\id}\\
\l(\t,\z\y\xi)\ot\l(\z\y, \xi)\ot\l(\z,\y)\ar[d]^-{\mu_{\t,\z\y,\xi}\ot\id} & & \l(\t\z, \y\xi)\ot\l(\t,\z)\ot\l(\y,\xi)\ar[d]^{\mu_{\t\z,\y,\xi}\ot\id_{\l(\t,\z)}}\\
\l(\t\z\y,\xi)\ot\l(\t, \z\y)\ot\l(\z,\y)\ar[rr]^-{\id\ot\mu_{\t,\z,\y}} & & \l(\t\z\y,\xi)\ot\l(\t\z,\y)\ot(\t,\z)
}
\end{equation*}
\end{itemize}
Let $Z^{2}_{\norm}(\G, \Pic(E))$ be the category of $E$-twisting data.

Suppose we have chosen a basis for each $\l(\y,\xi)$ compatible with the trivializations of $\l(\id_{y},\xi)$ and $\l(\xi,\id_{x})$. Using these bases, $\mu_{\z,\y,\xi}$ then gives an element in $E^{\times}$. The collection $\{\mu_{\z,\y,\xi}\}$ defines a normalized 3-cocycle of $\G$ with values in $E^{\times}$ (normalized means $\mu_{\z,\y,\xi}=1$ whenever one of $\z,\y,\xi$ is the identity arrow). A different choice of bases of $\l(\y,\xi)$ gives another $3$-cocycle which differs from the previous one by a coboundary of a normalized $2$-cochain. This gives an equivalence of groupoids
\begin{equation}\label{Z2}
Z^{3}_{\norm}(\G, E^{\times})/C^{2}_{\norm}(\G,E^{\times})\isom Z^{2}_{\norm}(\G,\Pic(E)).
\end{equation}
In particular, the isomorphism classes of $Z^{2}_{\norm}(\G,\Pic(E))$ are parametrized by $\cohog{3}{\G,E^{\times}}$, and the automorphism groups are $Z^{2}_{\norm}(\G,E^{\times})$. 

There is an action of $Z^{3}_{\norm}(\G,E^{\times})$ on $Z^{2}_{\norm}(\G, \Pic(E))$ as follows. For $z\in Z^{3}_{\norm}(\G, E^{\times})$ and $(\l,\mu)\in Z^{2}_{\norm}(\G,\Pic(E))$, $z\cdot(\l,\mu)=(\l,z\mu)$ where $(z\mu)_{\z,\y,\xi}=z(\z,\y,\xi)\mu_{\z,\y,\xi}$.

%%%%%% OK %%%%%%
\subsection{Twisting a $2$-category by twisting data}\label{ss:tw cat}
Let $(\fC,\om: \pi_{\le 1}\fC\to \G)$ be a $2$-category over a groupoid $\G$, such that ${}_{y}{\fC}^{\xi}_{x}$ is a module category for  $\Pic(E)$, for every $x,y\in\Ob(\fC)$ and $\xi\in{}_{\om(y)}\G_{\om(x)}$.  Let $(\l,\mu)\in Z^{2}_{\norm}(\G,\Pic(E))$ be a twisting data for $\G$. We define a new $2$-category $\fC^{(\l,\mu)}$ over $\G$ as follows:
\begin{enumerate}
\item $\fC^{(\l,\mu)}$ has the same objects and the same morphism categories as $\fC$.
\item For $x,y,z\in\Ob(\fC)$ and  $\om(x)\xr{\xi}\om(y)\xr{\y}\om(z)$ in $\G$, the composition functor $\c_{\l}$ for $\fC^{(\l,\mu)}$ is defined as
\begin{eqnarray*}
\c_{\l}: {}_{z}\fC^{\y}_{y}\times {}_{y}\fC^{\xi}_{x}&\to& {}_{z}\fC^{\y\xi}_{x}\\
(\cG,\cF)&\mapsto&(\cG\c\cF)\ot_{E}\l(\y,\xi).
\end{eqnarray*}
Here $\cG\c\cF$ is the composition functor in $\fC$.
\item The identity morphism in ${}_{x}\fC_{x}$ remains the same, and the natural isomorphisms $f\c_{\l}1_{x}\cong  f \cong 1_{y}\c_{\l}f$ for $f\in {}_{y}\fC^{\xi}_{x}$  are defined using similar isomorphisms for $\c$ and the trivializations of $\l(\xi, \id_{x})$ and $\l(\id_{y}, \xi)$. 
\item The associativity isomorphisms for $\fC^{(\l,\mu)}$ between two three-term composition functors $(h\c_{\l}g)\c_{\l}f\cong h\c_{\l}(g\c_{\l}f)$, where $\om(f)=\xi, \om(g)=\y$ and $\om(h)=\z$ are three composable arrows in $\G$,  are obtained using the associativity isomorphisms for $\c_{\l}$ and the isomorphism $\mu_{\z,\y,\xi}$. 
\end{enumerate}
The pentagon identities for $\fC$ and for $\{\mu_{\z,\y,\xi}\}$ imply the pentagon identities for $\fC^{(\l,\mu)}$.

%%%%%% OK %%%%%%
\begin{cons}\label{con:tw} We define a $\Qlbar$-linear twisting data $(\l,\mu)$ for $\Xi$, which depends on the choice of a lifting $\dw^{\b}$ for the minimal elements $w^{\b}$ in each block $\b\in{}_{\cL'}W_{\cL}$. 
From the liftings $\{\dw^{\b}\}$ (normalized such that $\dot e$ is the identity of $G$) we get a normalized $T(\FF_{q})$-valued $2$-cocycle $c$ for the groupoid  $\Xi$:  for $\b\in {}_{\cL'}\Xi_{\cL}$ and $\g\in{}_{\cL''}\Xi_{\cL'}$, let
\begin{equation*}
c(\g,\b)=(\dw^{\g\b})^{-1}\dw^{\g}\dw^{\b}.
\end{equation*}
Now define
\begin{equation*}
\l(\g,\b):=\cL_{c(\g,\b)} \quad \mbox{(the stalk of $\cL$ at $c(\g,\b)$).}
\end{equation*}
Since $c(\g,\b)=1$ if one of $\g,\b$ is the neutral block, $\l(\g,\b)=\cL_{\dot e}$ carries a trivialization in this case. The construction of $\l$ gives canonical isomorphisms $\mu^{\na}_{\d,\g,\b}:\l(\d,\g\b)\ot\l(\g,\b)\isom\l(\d\g,\b)\ot\l(\d,\g)$ coming from the fact that $c$ is a cocycle and $\cL$ is a character sheaf.  The pair $(\l,\mu^{\na})$ is a $\Qlbar$-linear twisting data but it is not what we will use. 

Instead, by combining the canonical isomorphism $\can_{\dw^{\g}, \dw^{\b}}$ in \eqref{can gb} and the isomorphism \eqref{tensor Lt}, we have a canonical isomorphism
\begin{equation}\label{lgb}
c(\g,\b): \IC(\dw^{\g})_{\cL'}\star\IC(\dw^{\b})_{\cL}\cong \IC(\dw^{\g\b})_{\cL}\ot \l(\g,\b).
\end{equation}
Our $\mu_{\d,\g,\b}$ will come from the above isomorphism and the associativity for the convolution. More precisely,   let $\s(w^{\d}, w^{\g}, w^{\b})\in \Qlbar^{\times}$ be the normalized $3$-cocycle on $\Xi$ introduced in \S\ref{ss:3c} as the ratio of the two maps in \eqref{two asso}.  Let $\mu=\s\mu^{\na}$, i.e., $\mu_{\d,\g,\b}=\s(w^{\d},w^{\g},w^{\b})\mu^{\na}_{\d,\g,\b}$. Then $\{\l(\g,\b)\}$ together with $\{\mu_{\d,\g,\b}\}$ define a twisting data $(\l,\mu)\in Z^{2}_{\norm}(\Xi,\Pic(\Qlbar))$.

From the construction of $\mu$, we have a commutative diagram
\begin{equation}\label{pentagon}
\xymatrix{ &\IC(\dw^{\d})_{\cL''}\star     \IC(\dw^{\g})_{\cL'}\star\IC(\dw^{\b})_{\cL}\ar[dr]^{\id\star c(\g,\b)}\ar[dl]_{c(\d,\g)\star\id}&     \\
 (\IC(\dw^{\d\g})_{\cL'}\ot\l(\d,\g))\star\IC(\dw^{\b})_{\cL}\ar[d]_{c(\d\g, \b)} &&  \IC(\dw^{\d})_{\cL''}\star(\IC(\dw^{\g\b})_{\cL}\ot \l(\g,\b))\ar[d]^{c(\d,\g\b)}\\
 \IC(\dw^{\d\g\b})_{\cL}\ot \l(\d,\g)\ot \l(\d\g,\b)  && \IC(\dw^{\d\g\b})_{\cL}\ot \l(\d,\g\b)\ot \l(\g,\b)\ar[ll]_-{\id\ot \mu_{\d,\g,\b}}
 }
\end{equation}
\end{cons}

%%%%%% OK %%%%%%%
\begin{lemma}\label{l:same class}
The cohomology class of $(\l,\mu^{\na})$ in $\cohog{3}{\Xi, \Qlbar^{\times}}$ is trivial. In particular, the cohomology class of $(\l,\mu)$ in $\cohog{3}{\Xi, \Qlbar^{\times}}$ is equal to the class of the $3$-cocycle $\s$ introduced in \S\ref{ss:3c}.
\end{lemma}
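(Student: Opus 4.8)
The plan is to prove the main assertion $[(\l,\mu^{\na})]=0$ in $\cohog{3}{\Xi,\Qlbar^{\times}}$; the ``in particular'' clause is then immediate, since by Construction \ref{con:tw} one has $\mu=\s\mu^{\na}$, i.e.\ $(\l,\mu)=\s\cdot(\l,\mu^{\na})$, and under the equivalence \eqref{Z2} the action of $Z^{3}_{\norm}(\Xi,\Qlbar^{\times})$ on $Z^{2}_{\norm}(\Xi,\Pic(\Qlbar))$ translates into translation of the cohomology class by $[\s]$, so that $[(\l,\mu)]=[\s]$ once $(\l,\mu^{\na})$ is known to be cohomologically trivial.

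First I would isolate what $(\l,\mu^{\na})$ really is. By Construction \ref{con:tw} its lines are $\l(\g,\b)=\cL_{c(\g,\b)}$, where $c(\g,\b)=(\dw^{\g\b})^{-1}\dw^{\g}\dw^{\b}$ is the normalized $T(\FF_{q})$-valued $2$-cochain on the groupoid $\Xi$ attached to the chosen liftings $\{\dw^{\b}\}$ (a cocycle for the $\Ad$-action of $N_{G}(T)$ on $T(\FF_{q})$, by associativity in $N_{G}(T)$), and $\mu^{\na}$ is built purely from this cocycle identity together with the multiplicativity isomorphisms $\cL_{t_{1}}\ot\cL_{t_{2}}\cong\cL_{t_{1}t_{2}}$ of the character sheaf $\cL$ and its rigidification at $e$. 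Thus $(\l,\mu^{\na})$ is obtained by applying to $c$ the monoidal functor ``stalk of $\cL$'' from the group $T(\FF_{q})$ to $\Pic(\Qlbar)$, and in order to present $(\l,\mu^{\na})$ as a coboundary $d\eta$ of a $\Pic(\Qlbar)$-valued $1$-cochain on $\Xi$ it suffices to show that this monoidal functor is monoidally trivializable, i.e.\ that one can choose basis vectors $b_{t}\in\cL_{t}$ for $t\in T(\FF_{q})$ compatible with the multiplicativity isomorphisms and with the trivialization at $e$. Equivalently: the restriction $\cL|_{T(\FF_{q})}$ is a trivial multiplicative local system on the finite group $T(\FF_{q})$.

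To prove this I would reduce to the case that $T$ is split, exactly as in the proof of Lemma \ref{l:ext L}: the multiplicative local system $\cL|_{T(\FF_{q})}$ is compatible with base change to a finite extension $\FF_{q^{n}}/\FF_{q}$, so it is enough to trivialize $\cL_{k'}|_{T(\FF_{q^{n}})}$ after replacing $\FF_{q}$ by a splitting field. For split $T$, write $\cL=\cL_{\chi}$ via the bijection \eqref{cs1T} with $\chi:T(\FF_{q})=\xcoch(T)\ot\FF_{q}^{\times}\to\Qlbar^{\times}$; then $\cL$ is the external tensor product of the rank one character sheaves $\cL_{\chi_{i}}$ pulled back along coordinate projections $T\to\Gm$, and a multiplicative trivialization of the external product is produced from multiplicative trivializations of the factors, so it suffices to treat $T=\Gm$. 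For $T=\Gm$ the multiplicative local system $\cL_{\chi}|_{\FF_{q}^{\times}}$ is classified by a class in $\cohog{2}{\FF_{q}^{\times},\Qlbar^{\times}}$, which vanishes since $\FF_{q}^{\times}$ is cyclic and $\Qlbar^{\times}$ is divisible.

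The main obstacle is the bookkeeping in the passage from ``$\cL|_{T(\FF_{q})}$ is multiplicatively trivial'' to ``$(\l,\mu^{\na})=d\eta$''. One must unwind the coboundary operator in the cohomology of the groupoid $\Xi$ valued in $\Pic(\Qlbar)$ and check that, once the $b_{t}$ are chosen, the isomorphisms $\mu^{\na}_{\d,\g,\b}$ — defined via the $\Ad(\dw^{\b})$-twisted cocycle identity for $c$ and the associativity of the multiplicativity constraints of $\cL$ — become the trivial $3$-cocycle; this requires being careful that the trivializing $1$-cochain is chosen compatibly with the Weyl-group conjugation twist that enters the cocycle identity for $c$ and with the rigidification conditions at the identity arrows. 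With the multiplicative trivialization of $\cL|_{T(\FF_{q})}$ in hand this is a formal, if somewhat tedious, verification rather than a serious difficulty.
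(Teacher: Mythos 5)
Your proposal follows the same overall reduction as the paper: both the ``in particular'' step (translating $\mu=\s\mu^{\na}$ into $[(\l,\mu)]=[\s]\cdot[(\l,\mu^{\na})]$ via \eqref{Z2}) and the identification of $(\l,\mu^{\na})$ as the pushforward of the $T(\FF_{q})$-valued cocycle $c$ along the monoidal functor $t\mapsto\cL_{t}$, so that everything hinges on trivializing $\cL|_{T(\FF_{q})}$ as a multiplicative local system. Where you diverge from the paper is in how that trivialization is produced. The paper argues globally: it shows the pullback $\pi^{*}:\Ch(T'_{k})\to\Ch(T_{k})$ along $\pi:T_{k}\to T_{k}/A$ is surjective for any finite subgroup $A\subset T_{k}$ (using that $\Qlbar^{\times}$ is divisible and that $\pi_{1}^{t}(T_{k})$ has finite index in $\pi_{1}^{t}(T'_{k})$), writes $\cL\cong\pi^{*}\cL'$, and observes that the restriction of $\pi^{*}\cL'$ to $A$ is visibly the constant multiplicative local system. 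You instead identify the obstruction to a multiplicative trivialization as a class in $H^{2}(T(\FF_{q}),\Qlbar^{\times})$, decompose $\cL$ as an external product over the $\Gm$-factors of the (already split) torus, and kill each factor's obstruction using $H^{2}(\ZZ/n,\Qlbar^{\times})=\Qlbar^{\times}/(\Qlbar^{\times})^{n}=0$. Both arguments are correct; the factor-by-factor argument is more elementary (no tame fundamental groups) but needs the product decomposition and would require the base-change step for a nonsplit torus, whereas the paper's argument is uniform in $A$ and avoids coordinates. One caution about your decomposition: the obstruction class of $\cL|_{T(\FF_{q})}$ is a priori an arbitrary element of $H^{2}((\FF_{q}^{\times})^{r},\Qlbar^{\times})$, which is \emph{not} zero for $r\ge 2$; your argument works only because that class is a sum of pullbacks along the coordinate projections, which is exactly what the external-product structure of the multiplicativity constraint gives you — it is worth saying this explicitly. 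Finally, the Weyl-conjugation bookkeeping you flag in your last paragraph (compatibility of the trivializations across the orbit $\fo$ with the twist ${}^{w^{\b,-1}}(-)$ entering the cocycle identity for $c$) is likewise left implicit in the paper's proof, so you are not missing an ingredient that the paper supplies; but since this is the only place where the reduction to ``$\cL|_{T(\FF_{q})}$ is trivializable'' could conceivably lose information, it deserves more than the one sentence you give it.
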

\begin{proof}By construction, $(\l,\mu^{\na})$ is the image of a cocycle $c\in Z^{2}_{\norm}(W, T(\FF_{q}))$ under the homomorphism $T(\FF_{q})\to \Pic(\Qlbar)$ given by the character sheaf $\cL$. It suffices to show that $\cL$ can be trivialized (as a character sheaf) when restricted  to $T(\FF_{q})$, or more generally to any finite subgroup $A\subset T_{k}$. Let $T'_{k}=T_{k}/A$, another torus over $k$, and let $\pi: T_{k}\to T'_{k}$ be the projection. It suffices to show that the pullback  $\pi^{*}: \Ch(T'_{k})\to \Ch(T_{k})$ is surjective, for then any $\cL\in \Ch(T_{k})$ is isomorphic to $\pi^{*}\cL'$ for some $\cL'\in \Ch(T'_{k})$, and $\cL|_{A}\cong\pi^{*}\cL'|_{A}$ is visibly trivial. Now $\Ch(T_{k})=\Hom_{\cont}(\pi_{1}^{t}(T_{k}), \Qlbar^{\times})$ (where $\pi^{t}_{1}$ stands for the tame fundamental group). Since $\Qlbar^{\times}$ is divisible, any homomorphism $\r: \pi_{1}^{t}(T_{k})\to \Qlbar^{\times}$ can be extended to $\pi_{1}^{t}(T'_{k})$, and if $\r$ is continuous, any such extension is also continuous because  $\pi_{1}^{t}(T_{k})\subset \pi_{1}^{t}(T'_{k})$ has finite index.  Therefore $\pi^{*}: \Ch(T'_{k})\to \Ch(T_{k})$ is surjective.
\end{proof}

%\begin{exam} Let $G=\SL_{2}$, and $\cL\in\Ch(T)$ is the unique element of order two. Then $\fo=\{\cL\}$, and $\Xi$ is the groupoid with one object $\cL$ and automorphism group $W=\ZZ/2\ZZ=\{1,s\}$. One can show that cocycle $(\l,\mu^{\na})$ has trivial image in $\cohog{3}{\ZZ/2\ZZ,\Qlbar^{\times}}$. From the calculations in Example \ref{ex:-1}, we see that $\mu$ is obtained from $\mu^{\na}$ by changing $\mu(s,s,s)$ by a sign and keeping all the other values. This implies that the class of $(\l,\mu)$ in $\cohog{3}{\ZZ/2\ZZ,\Qlbar^{\times}}\cong\{\pm1\}$ is nontrivial.
%\end{exam}

\begin{remark}
If $G$ has connected center, then $W_{\cL}=\WL$ and $\Xi$ is a groupoid that is equivalent to a point. Since $\cohog{3}{\Xi, \Qlbar^{\times}}=1$ in this case, $(\l,\mu)$ can be trivialized by the equivalence \eqref{Z2}; however the trivializations of $(\l,\mu)$ are not unique but  form a torsor under $Z^{2}_{\norm}(\Xi, \Qlbar^{\times})$. 

On the other hand, when $\Om_{\cL}$ is nontrivial, the cohomology class of $\s$ is calculated in \cite{3c}, which by Lemma \ref{l:same class} also gives the cohomology class of the twisting data $(\l,\mu)$.

For example, let $G=\SL_{2}$, and $\cL\in\Ch(T)$ be the unique element of order two. Then $\fo=\{\cL\}$, and $\Xi$ is the groupoid with one object $\cL$ and automorphism group $W=\ZZ/2\ZZ=\{1,s\}$. The calculation in Example \ref{ex:-1} shows that $\s(s,s,s)=-1$. Therefore the class of $(\l,\mu)$ in $\cohog{3}{\ZZ/2\ZZ,\Qlbar^{\times}}\cong\{\pm1\}$ is nontrivial.
\end{remark}

}

%%%%%% OK %%%%%%%
We are ready to state the extension of Theorem \ref{th:main} to all blocks. Recall from \S\ref{ss:fH} that for $\cL\in\fo$ and $w\in W$, we have a $(B^{H}_{w\cL}, B^{H}_{\cL})$-double coset $\fH(w)_{\cL}\subset {}_{w\cL}\fH_{\cL}$. Let $C(w)^{H}_{\cL}=\Qlbar\j{\ell_{\b}(w)}$ (where $\b\in{}_{w\cL}\un W_{\cL}$ is the block containing $w$) be the shifted and twisted constant sheaf on $\fH(w)_{\cL}$. Let $\D(w)^{H}_{\cL}, \nb(w)^{H}_{\cL}$ and $\IC(w)^{H}_{\cL}$ be the $!$-, $*$-, and middle extensions of $C(w)^{H}_{\cL}$ to the closure of $\fH(w)_{\cL}$, viewed as objects in ${}_{w\cL}\cE^{\b}_{\cL}\subset {}_{w\cL}\fE_{\cL}$.

%%%%% OK%%%%%%
\begin{theorem}[Monodromic-Endoscopic equivalence in general]\label{th:all blocks} %Fix a lifting $\dw^{\b}$ for the minimal element $w^{\b}$ in each block $\b$, and use them to define the twisting data $(\l,\mu)\in Z^{2}_{\norm}(\Xi,\Pic(\Qlbar))$ as in Construction \ref{con:tw}. 

There is a canonical equivalence of $2$-categories over $\Xi$
\begin{equation*}
\Psi:\fE\cong \fD
\end{equation*}
such that
\begin{enumerate}
\item For $\cL\in\fo$ and $\b$ the unit coset in ${}_{\cL}\un W_{\cL}$, the equivalence $\Psi$ restricts to the equivalence $\Psi^{\c}_{\cL}$ in Theorem \ref{th:main} as monoidal functors.
\item For $\cL,\cL'\in\fo$ and $w\in {}_{\cL'}W_{\cL}$, the equivalence $\Psi$ sends  $\D(w)^{H}_{\cL}, \nb(w)^{H}_{\cL}$ and $\IC(w)^{H}_{\cL}$ in ${}_{\cL'}\fE_{\cL}$ to $\D(w)^{\da}_{\cL}, \nb(w)^{\da}_{\cL}$ and $\IC(w)^{\da}_{\cL}$ in ${}_{\cL'}\cD_{\cL}$ (see Definition \ref{def:rig IC}).
\end{enumerate}
\end{theorem}

%%%%% OK %%%%%%%
\begin{remark} In the statement of the above theorem, $\Psi$ being an equivalence of $2$-categories implies that for $\cL,\cL'\in\fo$ and $\b\in{}_{\cL'}\un W_{\cL}$, it restricts to an equivalence of triangulated categories ${}_{\cL'}\Psi^{\b}_{\cL}: {}_{\cL'}\cE^{\b}_{\cL}\isom {}_{\cL'}\cD^{\b}_{\cL}$; moreover,  the equivalences $\{{}_{\cL'}\Psi^{\b}_{\cL}\}$ are compatible with convolution structures

%after modifying the convolution structure of the $\{{}_{\cL'}\cE^{\b}_{\cL}\}$ by the twisting data $(\l,\mu)$.

%By the last paragraph of \S\ref{con:tw}, when $G$ has connected center, one can choose a (non-canonical) trivialization of the twisting data $(\l,\mu)$ and conclude that $\fE\cong \fD$ in this case.
\end{remark}

\begin{remark}\label{r:dep on pin} Since the definition of the rigidified minimal IC sheaves $\IC(w^{\b})^{\da}_{\cL}$ depends on the choice of the pinning $\bx_{-\a_{s}}: U_{-\a_{s}}\cong \Ga$ and the additive character $\psi_{0}$ on $\FF_{q}$, so does the equivalence $\Psi$
 in the theorem. 
 \end{remark}

The rest of the section is devoted to the proof of Theorem \ref{th:all blocks}.

%%%%% OK %%%%%% 
\subsection{Action of minimal IC sheaves on neutral blocks}\label{ss:action min} For $\cL,\cL'\in\fo, \b\in{}_{\cL'}\un W_{\cL}$ we define the functor
\begin{eqnarray*}
{}^{\b}(-): {}_{\cL}\cD^{\c}_{\cL}&\to& {}_{\cL'}\cD^{\c}_{\cL'}\\
\cF&\mapsto& {}^{\b}\cF:=\xi\star \cF\star\xi^{-1}. 
\end{eqnarray*}
where  $\xi\in {}_{\cL'}\fP^{\b}_{\cL}$, and $\xi^{-1}\in {}_{\cL}\fP^{\b^{-1}}_{\cL'}$ is the inverse of $\xi$ under convolution (i.e., $\xi^{-1}$ is equipped with canonical isomorphisms $\xi^{-1}\star\xi\cong\d_{\cL}$ and $\xi\star\xi^{-1}\cong \d_{\cL'}$ satisfying the usual axioms). We claim that the functor ${}^{\b}(-)$ is independent of the choice of $\xi$ up to a canonical isomorphism. Indeed, if $\xi'\in {}_{\cL'}\fP^{\b}_{\cL}$ is another minimal IC sheaf, then we may canonically write $\xi'=\xi\ot V$ for a one-dimensional $\Fr$-module $V=\Hom(\xi,\xi')$. Then $\xi'^{-1}=\xi^{-1}\ot V^{\vee}$, and $\xi'\star \cF\star\xi'^{-1}\cong \xi\star \cF\star\xi^{-1}\ot (V\ot V^{\vee})\cong \xi\star \cF\star\xi^{-1}$ canonically. 

If $\cL,\cL',\cL''\in\fo$, $\g\in{}_{\cL''}\un W_{\cL'}$ and $\b\in{}_{\cL'}\un W_{\cL}$, then there is a canonical isomorphism making the following diagram commutative
\begin{equation*}
\xymatrix{{}_{\cL}\cD^{\c}_{\cL}\ar@/_{2pc}/[rr]^{{}^{\g\b}(-)} \ar[r]^{{}^{\b}(-)} & {}_{\cL'}\cD^{\c}_{\cL'} \ar[r]^{{}^{\g}(-)} & {}_{\cL''}\cD^{\c}_{\cL''}}
\end{equation*}
Moreover, these isomorphisms are compatible with three step compositions. All these statements can be checked easily using the independence of $\xi$ in defining the functor ${}^{\b}(-)$.

By Corollary \ref{c:conj wb}, we have an isomorphism of Coxeter groups $\WL\to W^{\c}_{\cL'}$ given by $\Ad(w^{\b})$. It induces an equivalence 
\begin{equation*}
{}^{\b}(-): \SB_{m}(\WL)\isom \SB_{m}(W^{\c}_{\cL'}).
\end{equation*}

%%%%%% OK %%%%%%%
\begin{lemma}\label{l:bSB} There is a canonical isomorphism making the following diagram commutative 
\begin{equation*}
\xymatrix{ {}_{\cL}\cD^{\c}_{\cL}\ar[r]^-{\ph_{\cL}}\ar[d]^{{}^{\b}(-)} & K^{b}(\SB_{m}(\WL))/K^{b}(\SB_{m}(\WL))_{0}\ar[d] ^{K^{b}({}^{\b}(-))}     \\
{}_{\cL'}\cD^{\c}_{\cL'}\ar[r]^-{\ph_{\cL'}} & K^{b}(\SB_{m}(W^{\c}_{\cL'}))/K^{b}(\SB_{m}(W^{\c}_{\cL'}))_{0}}
\end{equation*}
Moreover, these isomorphisms are compatible for composable blocks $\b,\g$.
\end{lemma}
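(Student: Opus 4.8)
The plan is to reduce the assertion to a statement about the Soergel functor $\MM^{\c}$. Since $\ph_{\cL}$ is by construction $K^{b}(\ph_{0})\c\r^{-1}$ (Theorem~\ref{th:SB}), with $\ph_{0}=\MM^{\c}|_{{}_{\cL}\cC^{\c}_{\cL}}$ the monoidal equivalence of Proposition~\ref{p:equiv CL}, and since $K^{b}(-)$ and the passage to the Verdier quotients are functorial, it suffices to establish a canonical monoidal isomorphism of functors ${}_{\cL}\cC^{\c}_{\cL}\to\SB_{m}(W^{\c}_{\cL'})$
\[
\ph_{0}'\c{}^{\b}(-)\ \cong\ {}^{\b}(-)\c\ph_{0}
\]
(where $\ph_{0}'$ is the functor for $\cL'$) and to check its compatibility with composition of blocks. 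Here one first notes that ${}^{\b}(-)=\xi\star(-)\star\xi^{-1}$ sends semisimple weight-zero complexes to semisimple weight-zero complexes (Lemma~\ref{l:conv pure}), is monoidal, and commutes with $\om$, so it restricts to an equivalence ${}_{\cL}\cC^{\c}_{\cL}\isom{}_{\cL'}\cC^{\c}_{\cL'}$ and preserves the null-homotopic ideals; part of the statement at the level of the triangulated categories ${}_{\cL'}\cD^{\c}_{\cL'}$ was already recorded in \S\ref{ss:action min}.

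To construct the displayed isomorphism I would fix $\xi\in{}_{\cL'}\fP^{\b}_{\cL}$ and use the chain
\[
\Homb(\Th^{\c}_{\cL'},\ \xi\star\cF\star\xi^{-1})\ \isom\ \Homb(\Th^{\c}_{\cL'}\star\xi,\ \xi\star\cF)\ \isom\ \Homb(\xi\star\Th^{\c}_{\cL},\ \xi\star\cF)\ \isom\ \Homb(\Th^{\c}_{\cL},\ \cF),
\]
where the first and third isomorphisms come from the equivalences $(-)\star\xi$ and $\xi^{-1}\star(-)$ of Proposition~\ref{p:min th} together with the canonical identification $\xi^{-1}\star\xi\cong\d_{\cL}$, and the middle one is induced by the canonical switching isomorphism $\t(\xi):\xi\star\Th^{\c}_{\cL}\isom\Th^{\c}_{\cL'}\star\xi$ of Lemma~\ref{l:switch Th}. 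The essential point is to compute the resulting $(R\ot R,\Fr)$-module structure: right convolution with $\xi$ commutes with the left $T$-action and carries the right $R$-action to the right $R$-action precomposed with the automorphism of $R$ induced by $w^{\b}$ on $\xch(T)$ (as one sees from the identification $\Homb(C(\dw^{\b})_{\cL},C(\dw^{\b})_{\cL})=R(w^{\b})$ of \S\ref{ss:Rbimod}), and symmetrically $\xi^{-1}\star(-)$ twists the left $R$-factor, while $\t(\xi)$, being a genuine morphism in ${}_{\cL'}\cD^{\b}_{\cL}$, is $R\ot R$-linear. Hence the composite is semilinear for the automorphism of $R\ot R$ induced by $w^{\b}$ on both factors, which is precisely the twist defining ${}^{\b}(-)$ on $\SB_{m}(\WL)$ via the Coxeter isomorphism $\Ad(w^{\b}):\WL\isom W^{\c}_{\cL'}$ of Corollary~\ref{c:conj wb}. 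Proposition~\ref{p:Sw} together with the normalization $\th^{\da}_{v}\mapsto\one$ pins the isomorphism down canonically on the indecomposables $\IC(v)^{\da}_{\cL}$, and Corollary~\ref{c:MM mono} upgrades it to a monoidal isomorphism; independence of the choice of $\xi$ is immediate, since replacing $\xi$ by $\xi\ot V$ replaces $\xi^{-1}$ by $\xi^{-1}\ot V^{\vee}$ and the two twists cancel.

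For the compatibility with composition of blocks $\b\in{}_{\cL'}\un W_{\cL}$ and $\g\in{}_{\cL''}\un W_{\cL'}$, I would pick $\xi\in{}_{\cL'}\fP^{\b}_{\cL}$ and $\y\in{}_{\cL''}\fP^{\g}_{\cL'}$, so that $\y\star\xi\in{}_{\cL''}\fP^{\g\b}_{\cL}$ computes ${}^{\g\b}(-)$, and then verify that the coherence diagram relating the isomorphisms attached to $\b$, $\g$ and $\g\b$ commutes. This diagram unwinds into the associativity of convolution, the identity $w^{\g}w^{\b}=w^{\g\b}$ (Corollary~\ref{c:trans min}), the compatibility of $\t(\xi)$ and $\t(\y)$ with $\ph(\y,\xi)$ recorded in Proposition~\ref{p:coalg ext}, and the associativity of the $R\ot R$-twists; as in the proofs of Propositions~\ref{p:coalg} and~\ref{p:coalg ext}, commutativity is forced because every object involved has one-dimensional endomorphism algebra, so the diagram may be checked after composing with the counit maps. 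On the Soergel side the matching statement is the tautology $\Ad(w^{\g})\c\Ad(w^{\b})=\Ad(w^{\g\b})$.

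The step I expect to be the main obstacle is the bookkeeping in the second paragraph: determining the precise left/right variance of the $\Ad(w^{\b})$-twists produced by the three convolution equivalences, checking that together they reproduce exactly and functorially in $\cF$ the twist built into ${}^{\b}(-)$ on $\SB_{m}(\WL)$, and carrying the Frobenius structures along so that the final isomorphism is one of objects of $\RRF$ rather than merely of $\RRM$. Once this is in place the remainder is formal manipulation of the canonical objects $\Th^{\c}_{\cL}$, $\xi$ and $\t(\xi)$ already constructed in the preceding sections.
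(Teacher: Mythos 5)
Your proposal is correct and takes essentially the same route as the paper: the paper simply observes that $({}^{\b}\Th^{\c}_{\cL},{}^{\b}\e_{\cL})$ is a rigidified maximal IC sheaf in ${}_{\cL'}\cD^{\c}_{\cL'}$, hence uniquely isomorphic to $(\Th^{\c}_{\cL'},\e_{\cL'})$ by \S\ref{ss:rig neutral}, which is exactly the isomorphism your chain through $\t(\xi)$ produces, since $\t(\xi)$ is itself characterized by compatibility with the counits. Your extra bookkeeping of the $\Ad(w^{\b})$-twists on the $R\ot R$-actions and your uniqueness argument for composable blocks are the implicit content of the paper's ``unwinding the definitions,'' so nothing is missing.
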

\begin{proof}
Unwinding the definitions of the functors involved, it suffices to give a canonical isomorphism ${}^{\b}{\Th^{\c}_{\cL}}\cong \Th^{\c}_{\cL'}$. Now $\Th^{\c}_{\cL}\j{N_{\cL}}$ is a maximal IC sheaf equipped with a nonzero map $\e_{\cL}: \Th^{\c}_{\cL}\to \d_{\cL}$. Therefore ${}^{\b}{\Th^{\c}_{\cL}}\j{N_{\cL}}={}^{\b}{\Th^{\c}_{\cL}}\j{N_{\cL'}}$ is a maximal IC sheaf equipped with a nonzero map ${}^{\b}\e_{\cL}: {}^{\b}\Th^{\c}_{\cL}\to {}^{\b}\d_{\cL}=\d_{\cL'}$, i.e., $({}^{\b}{\Th^{\c}_{\cL}},{}^{\b}\e_{\cL})$ is a rigidified maximal IC sheaf in ${}_{\cL'}\cD^{\c}_{\cL'}$. Therefore, by the discussion in \S\ref{ss:rig neutral}, there is a unique isomorphism $({}^{\b}{\Th^{\c}_{\cL}},{}^{\b}\e_{\cL})\cong (\Th^{\c}_{\cL'},\e_{\cL'})$. 
\end{proof}

%%%%%% OK %%%%%%%%%%
\subsection{}\label{ss:Xi action on E} For $\cL,\cL'\in \fo, \b\in{}_{\cL'}\un W_{\cL}$, the isomorphism $\s(\dw^{\b}): H_{\cL}^{\c}\isom H^{\c}_{\cL'}$ (see \S\ref{ss:rel pin}) induces an equivalence of neutral blocks
\begin{equation*}
{}^{\b}(-): {}_{\cL}\cE_{\cL}^{\c}\isom {}_{\cL'}\cE_{\cL'}^{\c}.
\end{equation*}
From the definition of $\IC(w^{\b})^{H}_{\cL}$ we get canonically
\begin{equation*}
{}^{\b}\cF\cong\IC(w^{\b})^{H}_{\cL}\star \cF\star \IC(w^{\b,-1})^{H}_{\cL'}, \quad \forall \cF\in {}_{\cL}\cE_{\cL}^{\c}.
\end{equation*}
From this we see that the functor ${}^{\b}(-)$ is independent of the choice of the lifting $\dw^{\b}$ up to a canonical isomorphism.

%%%%%%%% OK %%%%%%%
Lemma \ref{l:bSB} immediately implies the following.
\begin{cor}\label{c:bPsi}
Let $\cL,\cL'\in\fo, \b\in{}_{\cL'}\un W_{\cL}$. There is a canonical isomorphism making the following diagram commutative
\begin{equation*}
\xymatrix{ {}_{\cL}\cE^{\c}_{\cL}\ar[r]^-{\Psi^{\c}_{\cL}}\ar[d]^{{}^{\b}(-)} & {}_{\cL}\cD^{\c}_{\cL}\ar[d] ^{{}^{\b}(-)}     \\
{}_{\cL'}\cE^{\c}_{\cL'}\ar[r]^-{\Psi^{\c}_{\cL'}} & {}_{\cL'}\cD^{\c}_{\cL'}}
\end{equation*}
Moreover, these isomorphisms are compatible with compositions of  $1$-morphisms in $\fE$ and $\fD$ for composable blocks $\b,\g$.
\end{cor}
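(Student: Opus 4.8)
The plan is to decompose the two vertical functors through the Soergel-bimodule model of Theorem \ref{th:SB} and reduce the statement to Lemma \ref{l:bSB} together with an easy endoscopic analogue of it. Recall from \S\ref{ss:finish} that $\Psi^{\c}_{\cL}=\ph^{-1}_{\cL}\c\ph_{H^{\c}_{\cL}}$, where $\ph_{\cL}$ is the equivalence of Theorem \ref{th:SB} for ${}_{\cL}\cD^{\c}_{\cL}$ and $\ph_{H^{\c}_{\cL}}$ is the same equivalence applied to the relatively pinned reductive group $H^{\c}_{\cL}$ with trivial monodromy, and that by Corollary \ref{c:conj wb} the isomorphism of Coxeter groups $\Ad(w^{\b}):\WL\isom W^{\c}_{\cL'}$ induces the conjugation equivalence $K^{b}({}^{\b}(-))$ between the common targets of $\ph_{\cL},\ph_{H^{\c}_{\cL}}$ and of $\ph_{\cL'},\ph_{H^{\c}_{\cL'}}$. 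So the first step is to substitute $\Psi^{\c}_{\cL}=\ph_{\cL}^{-1}\c\ph_{H^{\c}_{\cL}}$ into the square to be filled and observe, using the monodromic intertwining supplied by Lemma \ref{l:bSB}, that what remains to produce is a canonical isomorphism $K^{b}({}^{\b}(-))\c\ph_{H^{\c}_{\cL}}\cong \ph_{H^{\c}_{\cL'}}\c{}^{\b}(-)$ on ${}_{\cL}\cE^{\c}_{\cL}$.

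The second step is to establish this endoscopic analogue, which I expect to be immediate. The rigidified maximal IC sheaf $(\Th^{\c},\e)$ of the ordinary Hecke category ${}_{\cL}\cE^{\c}_{\cL}=\cD_{H^{\c}_{\cL}}$ is the shifted constant sheaf on $H^{\c}_{\cL}/B^{H}_{\cL}$ with its canonical augmentation to the unit, and the group isomorphism $\s(\dw^{\b}):H^{\c}_{\cL}\isom H^{\c}_{\cL'}$ of \S\ref{ss:rel pin} carries $H^{\c}_{\cL}/B^{H}_{\cL}$ isomorphically onto $H^{\c}_{\cL'}/B^{H}_{\cL'}$, hence (by the rigidity statement of \S\ref{ss:rig neutral}) identifies the two rigidified maximal IC sheaves by a unique isomorphism. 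Since $\MM^{\c}=\Homb(\Th^{\c},-)$, this identification intertwines the Soergel functors, hence the realization functors $\ph_{H^{\c}_{\cL}}$ and $\ph_{H^{\c}_{\cL'}}$; and it matches the Soergel-side conjugation $K^{b}({}^{\b}(-))$ because the latter is also induced by conjugation by a fixed lift of $w^{\b}$. Alternatively, one may observe that ${}_{\cL}\cE^{\c}_{\cL}$ is a monodromic Hecke category with trivial monodromy and run the proof of Lemma \ref{l:bSB} verbatim for the family $\{H^{\c}_{\cL}\}_{\cL\in\fo}$ with the transition isomorphisms $\s(\dw^{\b})$. Composing this with the intertwining of Lemma \ref{l:bSB} yields the canonical isomorphism filling the square.

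For the final clause — compatibility with composition of $1$-morphisms for composable blocks $\b\in{}_{\cL'}\un W_{\cL}$, $\g\in{}_{\cL''}\un W_{\cL'}$ — I would combine the composition-compatibilities already recorded in Lemma \ref{l:bSB} (applied both to the $\ph_{\cL},\ph_{\cL'}$ and to the $\ph_{H^{\c}_{\cL}},\ph_{H^{\c}_{\cL'}}$) with the functoriality of $\ddw\mapsto\s(\ddw)$ in the groupoid $\wt\Xi$ of \S\ref{ss:rel pin} and the canonical isomorphisms ${}^{\g}{}^{\b}(-)\cong{}^{\g\b}(-)$ on $\cD^{\c}$ and on $\cE^{\c}$; all the resulting coherences then reduce to pentagon-type identities in the Soergel-bimodule categories. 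The main (and essentially only) obstacle is the bookkeeping of the second step: making the endoscopic mirror of Lemma \ref{l:bSB} precise and verifying that the Soergel-side conjugation functor appearing in Lemma \ref{l:bSB} — induced by $\Ad(w^{\b})$ via Corollary \ref{c:conj wb} — literally coincides with the one induced by $\s(\dw^{\b})|_{T}$ and the root-group identifications of \S\ref{ss:rel pin}. Since both are conjugation by the same lift of $w^{\b}$, this needs no new input, so once it is checked the corollary follows formally.
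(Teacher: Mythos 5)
Your proposal is correct and follows essentially the same route as the paper: the paper deduces the corollary directly from Lemma \ref{l:bSB} (applied through the decomposition $\Psi^{\c}_{\cL}=\ph_{\cL}^{-1}\c\ph_{H^{\c}_{\cL}}$), together with the observation, recorded just before the corollary, that the endoscopic-side functor ${}^{\b}(-)$ induced by $\s(\dw^{\b})$ agrees with conjugation by the minimal IC sheaf $\IC(w^{\b})^{H}_{\cL}$ and hence intertwines the Soergel functors via the same $\Ad(w^{\b})$-conjugation on bimodules. Your more detailed verification of the endoscopic mirror of Lemma \ref{l:bSB} is exactly the content the paper leaves implicit.
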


\subsection{Semi-direct product}
Let $\G$ be a small groupoid. Let $\fC^{\c}=\{\fC^{\c}_{x}\}_{x\in \Ob(\G)}$ be a collection of monoidal categories indexed by $x\in \Ob(\G)$. An {\em action} of $\G$ on $\fC^{\c}$ is the following data and conditions:
\begin{enumerate}
\item For every morphism $\xi\in {}_{y}\G_{x}$ in $\G$ we are given a monoidal equivalence
\begin{equation*}
{}^{\xi}(-): \fC^{\c}_{x}\isom \fC^{\c}_{y}
\end{equation*}
such that $^{\id_{x}}(-)=\id_{\fC^{\c}_{x}}$.
\item For composable morphisms $\xi\in {}_{y}\G_{x}$ and $\y\in {}_{z}\G_{y}$, a natural isomorphisms between functors
\begin{equation*}
c_{\y,\xi}: {}^{\y}({}^{\xi}(-))\isom {}^{\y\xi}(-)\in \Fun(\fC^{c}_{x}, \fC^{\c}_{z}).
\end{equation*}
\item The data $\{c_{\y,\xi}\}$ satisfy the unital and associativity conditions which we do not spell out.
\end{enumerate}

Given an action of $\G$ on $\fC^{\c}$ as above, we can form the semi-direct product $\fC=\fC^{\c}\rtimes\G$ as a $2$-category over $\G$ as follows.  
\begin{enumerate}
\item $\Ob(\fC)=\Ob(\G)$.
\item For $x,y\in\Ob(\G)$, define  ${}_{y}\fC_{x}$ to be the disjoint union of categories ${}_{y}\fC^{\xi}_{x}$ over $\xi\in {}_{y}\G_{x}$, and each ${}_{y}\fC^{\xi}_{x}$ is a copy of $\fC^{\c}_{y}$. For $\cF\in \fC^{\c}_{y}$, we denote the corresponding object in ${}_{y}\fC^{\xi}_{x}$ to be $(\cF,\xi)$.
\item For $x,y,z\in \Ob(\G)$, the composition of $1$-morphisms in $\fC$ 
\begin{equation*}
\c: {}_{z}\fC_{y}\times{}_{y}\fC_{x}\to {}_{z}\fC_{x}
\end{equation*}
is defined as follows. For $(\cF,\xi)\in \Ob({}_{y}\fC_{x})$ and $(\cG,\y)\in \Ob({}_{z}\fC_{y})$, define their composition to be the object
\begin{equation*}
(\cG,\y)\c(\cF,\xi):=(\cG\c {}^{\y}\cF, \y\c\xi)\in \Ob({}_{z}\fC_{x}).
\end{equation*}
\item The associativity isomorphism for three composable $1$-morphism categories  comes from the data $\{c_{\y,\xi}\}$ in the definition of the $\G$-action in an obvious way.
\end{enumerate}

The following lemma is easily checked from the definitions.

\begin{lemma}\label{l:2functor sd}
Let $\fC$ be a $2$-category over a groupoid $\G$. Let $i: \G\to \fC$ be a $2$-functor over $\G$ (where we view $\G$ as a $2$-category where all the $2$-morphisms are identity maps). Let $\fC^{\c}$ be the collection of categories $\cC^{\c}_{x}:={}_{x}\fC^{\id_{x}}_{x}$ for $x\in \Ob(\G)$. 
\begin{enumerate}
\item Every morphism $\xi\in {}_{y}\G_{x}$ defines a monoidal equivalence $\cC^{\c}_{x}\isom\cC^{\c}_{y}$  by $\cF\mapsto {}^{\xi}\cF:=i(\xi)\c \cF\c i(\xi^{-1})$. The functors ${}^{\xi}(-)$ define an action of $\G$ on $\fC^{\c}$.
\item There is a canonical $2$-functor over $\G$
\begin{equation*}
\fC^{\c}\rtimes \G\to \fC
\end{equation*}
that is the identity $\cC^{\c}_{x}\isom {}_{x}\fC^{\id_{x}}_{x}$ for $x\in \Ob(\G)$ and is $i$ on $\G$. 
\end{enumerate}
\end{lemma}

%%%%%%%% OK %%%%%%%%%
\subsection{Proof of Theorem \ref{th:all blocks}}
The assignment ${}_{\cL'}\un W_{\cL}\ni w^{\b}\mapsto \IC(w^{\b})^{\da}_{\cL}\in {}_{\cL'}\cD_{\cL}^{\b}$ extends to a $2$-functor $\Xi\to \fD$ over $\Xi$. This is the content of Lemma \ref{l:lgb}. By Lemma \ref{l:2functor sd}(1), this gives an action of $\Xi$ on $\fD^{\c}=\{{}_{\cL}\cD_{\cL}^{\c}\}$. The action is the same as the one constructed in \S\ref{ss:action min}. Therefore Lemma  \ref{l:2functor sd}(2) gives a $2$-functor
\begin{equation}\label{2eq D}
\fD^{\c}\rtimes\Xi\to \fD.
\end{equation}
Concretely, for a morphism $\b\in {}_{\cL'}\un W_{\cL}$ in $\Xi$, the above functor sends $(\cF,\b)\in {}_{\cL'}(\fD^{\c}\rtimes\Xi)^{\b}_{\cL}$ to $\cF\star\IC(w^{\b})^{\da}_{\cL}\in {}_{\cL'}\cD^{\b}_{\cL}$, where $\cF\in {}_{\cL'}\cD^{\c}_{\cL'}$. From this we see that \eqref{2eq D} is an equivalence of $2$-categories.

Similarly,  the assignment ${}_{\cL'}\un W_{\cL}\ni w^{\b}\mapsto \IC(w^{\b})^{H}_{\cL}\in {}_{\cL'}\cE_{\cL}^{\b}$ extends to a $2$-functor $\Xi\to \fE$ over $\Xi$. It gives an action of $\Xi$ on $\fE^{\c}=\{{}_{\cL}\cE_{\cL}^{\c}\}$ that is the same as the one constructed in \S\ref{ss:Xi action on E}. By Lemma \ref{l:2functor sd}(2), this gives a $2$-functor
\begin{equation}\label{2eq E}
\fE^{\c}\rtimes\Xi\to \fE,
\end{equation}
which is also clearly an equivalence.

Finally, Theorem \ref{th:main} gives a monoidal equivalence of the  $\fo=\Ob(\Xi)$-collection of categories $\fE^{\c}\cong \fD^{\c}$. Corollary \ref{c:bPsi} says that this equivalence is compatible with the $\Xi$-actions. Therefore we get a $2$-equivalence
\begin{equation*}
\fE^{\c}\rtimes \Xi\cong \fD^{\c}\rtimes\Xi.
\end{equation*}
Combined with the equivalences \eqref{2eq D} and \eqref{2eq E}, we obtained the equivalence $\Psi$. The desired property (1) of $\Psi$ follows from the construction. For property (2), let us check it for IC sheaves and the other statements are checked similarly. For $w\in {}_{\cL'}W_{\cL}$ written as $w=xw^{\b}$ where $w^{\b}$ is the minimal element in the block of $w$ and $x\in W^{\c}_{\cL'}$, then  by construction, 
\begin{equation*}
\Psi(\IC(w)^{H}_{\cL})=\Psi(\IC(x)^{H}_{\cL'}\star \IC(w^{\b})^{H}_{\cL})=\Psi^{\c}_{\cL'}(\IC(x)^{H}_{\cL'})\star \IC(w^{\b})^{\da}_{\cL}\cong \IC(x)^{\da}_{\cL'}\star \IC(w^{\b})^{\da}_{\cL}=\IC(w)^{\da}_{\cL},
\end{equation*}
where the isomorphism in the middle is \eqref{Phi IC}. This completes the proof.
\qed

\section{Application to character sheaves}
In this section we apply Theorem \ref{th:main} to get an equivalence between the asymptotic versions of character sheaves on $G$ with semisimple parameter $\fo$ and unipotent character sheaves on its endoscopic group. To state the theorem, we review three versions of the statement ``character sheaves are categorical center of Hecke categories'' (after passing to asymptotic versions).

In this section, {\em all schemes are defined over $k=\ov\FF_{q}$}.
  
%%%%%% OK %%%%%%%
\subsection{Truncated convolution for the usual Hecke category} Let $H$ be a connected reductive group over $k$ with maximal torus $T$ and Borel subgroup $B_{H}$ containing $T$ (later $H$ will be an endoscopic group of $G$).  Let $\bc$ be a two-sided cell in the Weyl group $W_{H}$. Let $\un\cS^{\bc}_{H}$ be the full subcategory of $\un\cD_{H}$ consisting of perverse sheaves that are direct sums of $\IC(w)_{H}$  for $w\in \bc$. Then $\un\cS^{\bc}_{H}$ is a semisimple abelian category equipped with a truncated convolution $(-)\c(-)$ defined in \cite[3.2]{L-center-unip}. Note that the truncated convolution in \cite{L-center-unip} is first defined for the mixed version of $\un\cS^{\bc}_{H}$ via a perverse degree truncation and a weight truncation; the weight truncation is in fact unnecessary because convolution preserves complexes pure of weight zero. Therefore one can directly define truncated convolution on $\un\cS^{\bc}_{H}$.

%%%%%% OK %%%%%%%
\subsection{Unipotent character sheaves}\label{ss:unip chsh} We recall the relationship between the usual Hecke category $\cD_{H}$ for a connected reductive group $H$ and unipotent character sheaves on $H$, following \cite{L-center-unip}.

Character sheaves on $H$ are certain simple perverse sheaves on $H$ equivariant under the conjugation action by $H$. Each character sheaf has a semisimple parameter that is a $W_{H}$-orbit of $\Ch(T)$. When the semisimple parameter is the trivial local system on $T$, we call the character sheaf unipotent. Each unipotent character sheaf on $H$ can be assigned a $2$-sided cell in $W_{H}$, see \cite[1.5]{L-center-unip}. Let $\un\CS^{\bc}_{u}(H)$ be the full subcategory of $D^{b}_{H}(H)$ (for the conjugation action) consisting of finite direct sums of unipotent character sheaves belonging to $\bc$. Then $\un\CS^{\bc}_{u}(H)$ is a semisimple $\Qlbar$-linear abelian category. By \cite[4.6, 9.1]{L-center-unip}, truncated convolution is defined on $\un\CS^{\bc}_{u}(H)$ and makes it into a braided monoidal category.

%%%%%% OK %%%%%%%
\begin{theorem}[{\cite[Theorem 9.5]{L-center-unip}}]\label{th:center unip} There is a canonical equivalence of braided monoidal categories
\begin{equation*}
\un\CS^{\bc}_{u}(H)\isom \cZ(\un\cS^{\bc}_{H})
\end{equation*} 
where $\cZ(-)$ denotes the categorical center introduced by Joyal and Street \cite{JS}, Majid \cite{Majid} and Drinfeld.
\end{theorem}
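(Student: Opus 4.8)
\emph{The plan is} to realise the equivalence through the Harish--Chandra (horocycle) transform between $G$-equivariant sheaves on $H$ and the Hecke category $\cD_H$, after passing to the fixed two-sided cell $\bc$ and to Lusztig's truncated convolution. Concretely I would first recall the transform $\mathsf{hc}\colon D^b_H(H)\to\cD_H$ obtained by pulling a conjugation-equivariant complex back along the Grothendieck--Springer-type correspondence $\{(g,xB_H):x^{-1}gx\in B_H\}$ and pushing to $B_H\bs H/B_H$ (equivalently: parabolic induction from the Borel followed by the projection to the Hecke stack), together with its right adjoint. The key structural input is that the conjugation-equivariant structure on $A\in D^b_H(H)$ produces, functorially in $M\in\cD_H$, a canonical isomorphism $\mathsf{hc}(A)\star M\cong M\star\mathsf{hc}(A)$ satisfying the hexagon axioms; this is the standard mechanism by which horocycle transforms factor through a Drinfeld center, so $\mathsf{hc}$ refines to a functor $D^b_H(H)\to\cZ(\cD_H)$ (cf.\ \cite{BN}, \cite{BFO}). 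One also records a projection formula $\mathsf{hc}(A\ast A')\cong\mathsf{hc}(A)\star\mathsf{hc}(A')$ on a suitable subcategory, where $\ast$ is the truncated convolution of character sheaves.

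Next I would cut down to the cell $\bc$. Using the $a$-function/perverse truncation that defines $\ast$ on $\un\cS^\bc_H$ and on $\un\CS^\bc_u(H)$, together with Lusztig's results relating Harish--Chandra induction to two-sided cells (the cell of a unipotent character sheaf being pinned down by its unipotent support, and the structure of the ring $J_\bc$), I would show that $\mathsf{hc}$ carries a unipotent character sheaf in $\bc$ into $\un\cS^\bc_H$, that the half-braiding descends through the truncation, and that $\mathsf{hc}$ is monoidal for the truncated convolutions. This yields a braided monoidal functor $F\colon\un\CS^\bc_u(H)\to\cZ(\un\cS^\bc_H)$. Full faithfulness then follows from the projection formula applied to the two legs of the correspondence, since $\Hom$-spaces between character sheaves in $\bc$ and their transforms are computed by the cell.

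For essential surjectivity I would construct the quasi-inverse by ``integration'': a central object $(M,\varphi)$ of $\cZ(\un\cS^\bc_H)$ carries, via the half-braidings $\varphi_{\IC(s)_H}$ against the generators, exactly the descent datum needed to glue $M$ from $B_H\bs H/B_H$ down to $D^b_H(H)$; purity (as in the parity/purity results recalled above) together with the cell constraint force the descended complex to be a direct sum of unipotent character sheaves lying in $\bc$. One then checks that $F$ and this construction are mutually quasi-inverse and that the braiding intrinsic to the Drinfeld center matches Lusztig's braiding on $\un\CS^\bc_u(H)$ (coming from the commutation constraint on conjugation-equivariant sheaves). An alternative, more combinatorial route bypasses the integration step: identify $\un\cS^\bc_H$ with the explicit fusion category attached to the finite group $\mathcal{G}_\bc$ of the cell, use Lusztig's classification of unipotent character sheaves in $\bc$ to identify $\un\CS^\bc_u(H)$ with the center of that category, and quote the general description of the Drinfeld center of a group-theoretical category.

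The hardest point is reconciling the $a$-function truncation with the coherence data --- proving that the \emph{truncated} Harish--Chandra functor is still braided monoidal with all hexagon (and center pentagon) identities, which rests on the fine combinatorics of $J_\bc$ and on the geometry of character sheaves supported over a fixed special unipotent class. A second pressure point is essential surjectivity: showing that the center is exhausted by character sheaves, which seems to require either a count against Lusztig's classification in $\bc$ or an a priori finiteness/semisimplicity input for $\cZ(\un\cS^\bc_H)$.
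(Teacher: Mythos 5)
First, the point of comparison: the paper does not prove this statement at all. It is imported verbatim as Theorem 9.5 of \cite{L-center-unip} and used as a black box in the proof of Theorem \ref{th:ch}, so there is no proof in this paper to measure your argument against; what you are sketching is a proof of Lusztig's theorem itself. Judged on its own terms, your outline follows the characteristic-zero template of \cite{BN} and \cite{BFO} (horocycle transform, half-braidings from conjugation-equivariance, then truncation), whereas Lusztig's actual argument in \cite{L-center-unip} is built the other way around: he works from the start with truncated versions of the Harish--Chandra induction/restriction functors on the semisimple cell subquotients, and controls everything through the asymptotic ring $J_{\bc}$ and the classification of unipotent character sheaves in $\bc$ by $\cM(\cG_{\bc})$. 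The two routes are genuinely different, and the difference matters because the derived-category half-braiding does not come for free on the truncated categories.

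That is also where the genuine gaps sit, and you have named them without closing them. (i) The truncated convolution on $\un\cS^{\bc}_{H}$ is the degree-$a(\bc)$ perverse cohomology of the honest convolution restricted to a subquotient; a central structure on the derived category does not automatically descend to one for $\c$, and proving that the truncation is monoidal and compatible with the hexagon/pentagon data is precisely the content of Lusztig's construction (it rests on the structure constants of $J_{\bc}$), not a formality one can defer. (ii) For essential surjectivity, the proposed ``integration''/descent of a central object from $B_{H}\bs H/B_{H}$ to $D^{b}_{H}(H)$ does not make sense as stated: an object of $\cZ(\un\cS^{\bc}_{H})$ lives in a semisimple subquotient, not in the derived category, so there is nothing to glue; Lusztig instead uses the adjoint of the truncated transform together with a count against the parametrization of $\un\CS^{\bc}_{u}(H)$ by $\cM(\cG_{\bc})$. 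Your ``alternative combinatorial route'' essentially assumes this identification, i.e.\ assumes the conclusion. As written, the proposal is a credible plan whose two hardest steps remain open, so it is not yet a proof.
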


%%%%% OK %%%%%%
\subsection{Truncated convolution for monodromic Hecke categories}\label{ss:cell}
Now consider the situation for $G$. Let $\fo\subset \Ch(T)$ be a $W$-orbit. In \cite[1.11, Case (v)]{L-center-nonunip}, the notion of two-sided cells inside $W\times\fo$ are defined (see also \cite[third paragraph in p.620]{L-center-nonunip}). Such a two-sided cell $\frc\subset W\times\fo$ can be characterized as follows. For $\cL,\cL'\in \fo$ and any block $\b\in{}_{\cL'}\un W_{\cL}$, let $\frc(\b)=\frc\cap(\b\times\{\cL\})\subset {}_{\cL'}W_{\cL}\times\{\cL\}\subset W\times\fo$. Then $\{\frc(\b)\}$ satisfies
\begin{enumerate}
\item For any triple $\cL,\cL',\cL''\in\fo$ and $\b\in {}_{\cL'}\un W_{\cL}$ and $\g\in {}_{\cL''}\un W_{\cL'}$, we have $w^{\g}\frc(\b)=\frc(\g\b)=\frc(\g)w^{\b}$.
\item For $\cL\in\fo$ and $\b$ the neutral block $\b=W_{\cL}^{\c}$, $\frc(\b)$ is the union of a $\Om_{\cL}=W_{\cL}/W^{\c}_{\cL}$-orbit of the usual two-sided cells for $W_{\cL}^{\c}$.
\end{enumerate}
In other words, starting from a fixed $\cL\in\fo$ and a two-sided cell  $\bc\subset W_{\cL}^{\c}$, there is a unique two-sided cell $\frc\subset W\times\fo$, which we denote by $\frc=[\bc]$, such that $\frc\cap (W_{\cL}^{\c}\times\{\cL\})=\cup_{\om\in\Om_{\cL}}\om(\bc)$.

Fix a two-sided cell $\frc$ for $W\times\fo$. For $\b\in {}_{\cL'}\un W_{\cL}$, let ${}_{\cL'}\un\cS^{\frc(\b)}_{\cL}$ be the full subcategory of ${}_{\cL'}\un\cD^{\b}_{\cL}$ consisting of finite direct sums of simple perverse sheaves of the form $\uIC(w)_{\cL}$ for $w\in \frc(\b)$. Let ${}_{\cL'}\un\cS^{\frc}_{\cL}=\oplus_{\b\in {}_{\cL'}\un W_{\cL}}({}_{\cL'}\un\cS^{\frc(\b)}_{\cL})$ and $\un\cS_{\fo}^{\frc}=\oplus_{\cL,\cL'\in\fo}({}_{\cL'}\un\cS^{\frc}_{\cL})$. Truncated convolution \cite[2.24, 4.6]{L-center-nonunip} defines a monoidal structure $(-)\c(-)$ on $\un\cS_{\fo}^{\frc}$. \footnote{In \cite{L-center-nonunip}, the convolution on the monodromic Hecke category is defined in a different way from \S\ref{ss:conv}. Namely, in {\em loc.cit.} the push-forward along  $G\twtimes{U}G\to G$ was used instead of $G\twtimes{B}G\to G$. As a result, convolution as defined in {\em loc.cit.}  does not preserve purity while the  convolution in this paper does.  Therefore, instead of using the definition of the truncated convolution in \cite[2.24, 4.6]{L-center-nonunip}, we may work with the convolution defined in this paper and ignore weight truncation (doing only the cell truncation). In particular, truncated convolution can be defined directly for the non-mixed category $\un\cS_{\fo}^{\frc}$.}

%%%%% OK %%%%%%
\subsection{Character sheaves with general monodromy}\label{ss:nonunip chsh}
Let $\un\CS_{\fo}(G)$ be the semisimple abelian category of finite direct sums of character sheaves whose semisimple parameter is $\fo$ (see \cite[middle of p.698]{L-center-nonunip}). To each character sheaf $\cA$  with semisimple parameter $\fo$, one can attach a two-sided cell $\frc_{\cA}$ for $W\times\fo$ following \cite[first paragraph of p.699]{L-center-nonunip}. Let $\un\CS^{\frc}_{\fo}(G)$ be the full subcategory of $\un\CS_{\fo}(G)$ consisting of finite direct sums of character sheaves $\cA$ such that $\frc_{\cA}=\frc$. By \cite[5.20, 6.11]{L-center-nonunip}, truncated convolution equips $\un\CS^{\frc}_{\fo}(G)$ with the structure of a braided monoidal category.

%%%%% OK %%%%%%
\begin{theorem}[{\cite[Theorem 6.13]{L-center-nonunip}}]\label{th:center nonunip}  There is a canonical equivalence of braided monoidal categories
\begin{equation*}
\un\CS^{\frc}_{\fo}(G)\isom \cZ(\un\cS^{\frc}_{\fo}).
\end{equation*} 
\end{theorem}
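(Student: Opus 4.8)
This statement is due to Lusztig, \cite[Theorem 6.13]{L-center-nonunip}, and the natural plan is to follow the template of its unipotent prototype, Theorem \ref{th:center unip} above ($=$ \cite[Theorem 9.5]{L-center-unip}), while keeping track of the extra structure produced by the monodromy $\fo$. Concretely I would: (i) take as given the truncated-convolution monoidal structure on $\un\cS^{\frc}_{\fo}$ recalled in \S\ref{ss:cell}, together with its semisimplicity and the braiding coming from the double-convolution symmetry; (ii) construct a functor $\un\CS^{\frc}_{\fo}(G)\to\cZ(\un\cS^{\frc}_{\fo})$ by sending a character sheaf $\cA$ with parameter $\fo$ to the cell-truncation of its Harish-Chandra (horocycle) restriction to $U\bs G/U$, which by the construction of $\frc_{\cA}$ lands in the summand indexed by $\frc=\frc_{\cA}$ and carries a canonical half-braiding against every object of $\un\cS^{\frc}_{\fo}$ extracted from the $G$-conjugation-equivariant structure of $\cA$; (iii) check that this functor is monoidal for truncated convolution and matches braidings; and (iv) prove it is an equivalence, block by block over the blocks of \S\ref{ss:cell}, using the classification of character sheaves \cite[23.1]{L-CS5} to count and match simple objects on the two sides. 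The subtlety flagged in the footnote of \S\ref{ss:cell} — that the convolution used in \cite{L-center-nonunip} is built from $G\twtimes{U}G\to G$ rather than from $G\twtimes{B}G\to G$ — must be respected, but after cell truncation the two coincide, so one may work throughout with the convolution of the present paper.

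Given the machinery developed in this paper, I would actually prefer a route that reduces the statement to its already-known unipotent case for the endoscopic group. Fix $\cL\in\fo$, let $H=H^{\c}_{\cL}$ be the relatively pinned endoscopic group of \S\ref{ss:rel pin}, and let $\bc\subset\WL=W_{H}$ be a two-sided cell with $\frc=[\bc]$ in the notation of \S\ref{ss:cell}. By Theorem \ref{th:all blocks} the monodromic Hecke $2$-category $\fD$ is equivalent, up to the $3$-cocycle twist $(\l,\mu)$ of \S\ref{con:tw}, to the Hecke $2$-category $\fE$ of the endoscopic groupoid $\fH$. Passing to cell truncation, $\un\cS^{\frc}_{\fo}$ becomes — modulo the twist — the cell-truncated Hecke category attached to $\fH$, which is Morita-equivalent to a $(\l,\mu)$-twisted $\Om_{\cL}$-equivariant version of the truncated usual Hecke category $\un\cS^{\bc}_{H}$. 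Since the categorical center is invariant under Morita equivalence and is computable, for a cocycle-twisted $\Om_{\cL}$-graded category, from the twisted-equivariant objects in the center of its neutral part, Theorem \ref{th:center unip} applied to $H$ identifies $\cZ(\un\cS^{\frc}_{\fo})$ with the corresponding twisted-equivariant refinement of $\un\CS^{\bc}_{u}(H)$. Finally, the classification of character sheaves on $G$ with semisimple parameter $\fo$ (\cite[23.1]{L-CS5}, cf.\ \cite[Theorem 4.23]{L-book}) presents $\un\CS^{\frc}_{\fo}(G)$ in exactly that shape, which yields the desired equivalence.

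The main obstacle, in either approach, is that the two sides must be matched as \emph{braided monoidal} categories, not merely as abelian categories: the braiding on a categorical center is defined abstractly (Drinfeld; Joyal--Street \cite{JS}), whereas on character sheaves it comes from the geometry of the conjugation action, and identifying the two forces one through the ``character sheaves $=$ center of the Hecke category'' correspondence with nontrivial monodromy present. In the reduction route this delicate geometric input is concentrated entirely in Theorem \ref{th:center unip}, which is available, so the residual work is to show that the $3$-cocycle twist $(\l,\mu)$ — which is genuinely nontrivial, see Example \ref{ex:-1} — enters symmetrically on both sides and cancels, and that the classification of character sheaves can be upgraded to an equivalence of braided monoidal categories; in the direct route one must instead reprove the whole correspondence in the monodromic setting, carefully handling two-sided cells for $W\times\fo$ as in \S\ref{ss:cell}. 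For this reason I expect the reduction via Theorem \ref{th:all blocks} to be the cleaner path, isolating the genuinely hard part in the already-established unipotent case.
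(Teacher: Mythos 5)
The first thing to say is that the paper does not prove this statement at all: Theorem \ref{th:center nonunip} is quoted verbatim from \cite[Theorem 6.13]{L-center-nonunip} and is used as a black box. So there is no internal proof to compare against; any argument you give is either a reconstruction of Lusztig's proof or an independent one. Your first route is essentially a prose outline of what Lusztig does (horocycle restriction, half-braidings from the conjugation-equivariant structure, cell truncation), and as such it is a reasonable description of the strategy but not a proof --- all of the substance (the construction of the central structure, the monoidal compatibility of truncated convolution across blocks of $W\times\fo$, the degeneration arguments) lives in the cited paper. That is fine if your intent is to cite, but it should be presented as a citation, not as an argument.

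Your second route has a genuine problem and you should not pursue it in this form. It is circular with respect to the architecture of the present paper: Theorem \ref{th:ch} is precisely the statement that $\un\CS^{[\bc]}_{\fo}(G)\cong\bigoplus_{\b}\un\CS^{\bc}_{u}(H;\b)^{(\Om_{\bc},\L_{\b})}$, and the paper \emph{deduces} it from Theorem \ref{th:center nonunip} (together with Theorems \ref{th:center unip}, \ref{th:tw center}, Lemma \ref{l:rest L} and the equivalence \eqref{ZZ}); you are proposing to run this deduction backwards, which requires you to first establish the endoscopic description of $\un\CS^{\frc}_{\fo}(G)$ by some independent means. The only means you offer is the classification of character sheaves \cite[23.1]{L-CS5}, but a classification gives a bijection between sets of isomorphism classes of simple objects, not a canonical functor, and certainly not a braided monoidal equivalence --- whereas the whole content of the theorem is the existence of a \emph{canonical braided monoidal} equivalence. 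Matching simple objects cannot produce the half-braidings, and one of the explicitly stated goals of this paper is to obtain such endoscopic relationships \emph{without} appealing to the classification. The honest conclusion is that this theorem must be taken from \cite{L-center-nonunip}; if you want to sketch its proof, follow your first route but acknowledge that it is a summary of that paper rather than a self-contained argument.
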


%Assuming $W_{\cL}^{\c}=W_{\cL}$ for some (any) $\cL\in \fo$, then the relationship between $\fo$-character sheaves on $G_{k}$ and unipotent character sheaves on $H_{k}$ can be stated in a simple way.

%\begin{theorem}\label{th:chsh1} Assume $W_{\cL}^{\c}=W_{\cL}$ for some (equivalently any) $\cL\in \fo$ (for example if $G$ has connected center). Let $\bc$ be a two-sided cell for $W_{\cL}$. Let $H$ be the (connected) endoscopic group attached to $\cL$. Then $\bc$ can also be viewed as a two-sided cell for $W_{H}$. There is a canonical equivalence of braided monoidal categories
%\begin{equation*}
%\un\CS^{\bc}_{u}(H)\isom\un\CS^{[\bc]}_{\fo}(G).
%\end{equation*}
%\end{theorem}
%\begin{proof}
%By \cite{}, projecting from $\cS^{[\bc]}_{\fo}$ to ${}_{\cL}\un\cS^{[\bc]}_{\cL}$ induces an equivalence  between their categorical centers. Since $W_{\cL}^{\c}=W_{\cL}$, we have ${}_{\cL}\un\cS^{[\bc]}_{\cL}={}_{\cL}\un\cS^{\bc}_{\cL}\subset {}_{\cL}\cS^{\c}_{\cL}$. Theorem \ref{th:main} implies an equivalence of semisimple abelian categories $\un\cS_{H}\isom {}_{\cL}\un\cS^{\c}_{\cL}$, whose restriction to the cell $\bc$ is compatible with truncated convolution and induces a monoidal equivalence $\cS^{\bc}_{H}\isom {}_{\cL}\un\cS^{\bc}_{\cL}$. Note that the $a$-function for a cell used in the construction of the truncated convolution is the same for $[\bc]$ as a cell for $W\times\fo$ and for $\bc$ as a cell for $W_{H}$. Taking categorical centers and using Theorems \ref{th:center unip} and \ref{th:center nonunip}, we get the desired equivalence.
%\end{proof}

%%%%%%% OK %%%%%%
\subsection{Unipotent character sheaves on a disconnected group as a twisted center}\label{ss:unip chsh disconn} Let $H$ be a reductive group with a finite-order automorphism $\s$. Then there is the notion of $\s$-twisted unipotent character sheaves: these are certain simple perverse sheaves on $H$ equivariant under the $\s$-twisted conjugation action $h\cdot x=hx\s(h)^{-1}$, $h,x\in H$. Let $\bc$ be a two-sided cell of $W_{H}$ invariant under $\s$. Then one can define the category $\un\CS^{\bc}_{u}(H;\s)$ consisting of finite direct sums of $\s$-twisted unipotent character sheaves on $H$ whose two-sided cell is $\bc$.  If $\s$ changes to the automorphism $\s\Ad(h)$ for some $h\in H(k)$, then right translation by $h$ induces an equivalence between $\un\CS^{\bc}_{u}(H;\s)$ and $\un\CS^{\bc}_{u}(H;\s\Ad(h))$. 

On the other hand, if $\s$ stabilizes $B_{H}$, then it induces an auto-equivalence $\s_{*}$ of the monoidal category $\un\cD_{H}$. For a two-sided cell $\bc$ for $W_{H}$ fixed by $\s$, $\un\cS_{H}^{\bc}$ is stable under the $\s$-action, and one can talk about the {\em $\s$-twisted center} of the monoidal category $\un\cS^{\bc}_{H}$, denoted by $\cZ(\un\cS^{\bc}_{H};\s)$. Objects $\cF$ in $\cZ(\un\cS^{\bc}_{H};\s)$ are $\cF\in \un\cS^{\bc}_{H}$ equipped with functorial isomorphisms $\cF\c\s_{*}\cG\cong\cG\c\cF$ for $\cG\in \un\cS^{\bc}_{H}$. If $\s$ changes to $\s\Ad(b)$ for some $b\in B_{H}(k)$, then the actions of $\s$ and $\s\Ad(b)$ on $\un\cD_{H}$ are canonically equivalent (using the $\Ad(B_{H})$-equivariant structures of objects in $\cD_{H}$), hence a canonical equivalence $\cZ(\un\cS^{\bc}_{H};\s)\cong \cZ(\un\cS^{\bc}_{H};\s\Ad(b))$.

%%%%%%% OK %%%%%%%
\begin{theorem}[{\cite[Theorem 7.3]{L-nonunip-rep}}]\label{th:tw center} Under the above assumptions (in particular $\bc$ is fixed by $\s$), there is a canonical equivalence of categories
\begin{equation*}
\un\CS^{\bc}_{u}(H;\s)\isom \cZ(\un\cS^{\bc}_{H};\s).
\end{equation*} 
\end{theorem}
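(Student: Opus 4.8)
The plan is to adapt the proof of Theorem~\ref{th:center unip} (the case $\s=\id$) to the $\s$-twisted setting, working on the non-identity coset $H\s$ inside the possibly disconnected group $\wh H=H\rtimes\langle\s\rangle$ and invoking the theory of character sheaves on disconnected groups. First I would set up the $\s$-twisted horocycle correspondence: let $\wh Z=\{(h,gB_{H})\in H\s\times H/B_{H}\mid g^{-1}h\,\s(g)\in B_{H}\}$, with projections $a\colon\wh Z\to H\s$ (the target of the $\s$-twisted conjugation quotient) and $b\colon\wh Z\to B_{H}\bs H/B_{H}$, which is well defined because $\s$ stabilizes $B_{H}$. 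Pull–push along this correspondence gives an induction functor $\ind^{\s}:=a_{!}b^{*}[\,\cdot\,]\colon\un\cD_{H}\to D^{b}_{H}(H\s)$ (equivariance for $\s$-twisted conjugation) together with a right adjoint restriction functor $\res^{\s}$. As in the untwisted case one checks that $\ind^{\s}$ is compatible with convolution on $\un\cD_{H}$ and with the left action of $\un\cD_{H}$ on $D^{b}_{H}(H\s)$ by convolution; after passing to the $\s$-stable cell $\bc$ and applying the perverse-degree-zero and cell truncations used in \cite{L-center-unip} (the weight truncation being unnecessary here, exactly as in \S\ref{ss:cell}), this descends to functors $\un\cS^{\bc}_{H}\to\un\CS^{\bc}_{u}(H;\s)$ and, in the other direction, $\res^{\s}\colon\un\CS^{\bc}_{u}(H;\s)\to\un\cS^{\bc}_{H}$.

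Next I would construct the twisted half-braiding. For $\cA\in\un\CS^{\bc}_{u}(H;\s)$ the object $\res^{\s}\cA\in\un\cS^{\bc}_{H}$ inherits, from the $\s$-twisted conjugation equivariance of $\cA$, canonical functorial isomorphisms $(\res^{\s}\cA)\c\s_{*}\cG\cong\cG\c(\res^{\s}\cA)$ for all $\cG\in\un\cS^{\bc}_{H}$: geometrically these come from the symmetry of the double horocycle space $\{(h,g_{1}B_{H},g_{2}B_{H})\}$ with one of the two flag-variety slots twisted by $\s$, together with the commutation of $a_{!}$ with the relevant translations — precisely the mechanism producing the half-braiding in Theorem~\ref{th:center unip}. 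One must verify that this data satisfies the compatibility axiom defining $\cZ(\un\cS^{\bc}_{H};\s)$ (the hexagon-type identity with one leg twisted by $\s_{*}$); this follows from associativity of truncated convolution together with the fact that $\s_{*}$ is a monoidal auto-equivalence of $\un\cS^{\bc}_{H}$. This produces the candidate functor $\un\CS^{\bc}_{u}(H;\s)\to\cZ(\un\cS^{\bc}_{H};\s)$, $\cA\mapsto(\res^{\s}\cA,\text{half-braiding})$.

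It then remains to show this functor is an equivalence of ($\Qlbar$-linear, semisimple abelian) categories. Full faithfulness would be reduced, as in \cite{L-center-unip}, to a $\Hom$-space computation: morphisms in $\cZ(\un\cS^{\bc}_{H};\s)$ are an equalizer over all $\cG$ of the two ways of convolving with $\s_{*}\cG$, and one identifies this with $\hom(\cA,\cA')$ using the $(\ind^{\s},\res^{\s})$ adjunction and the fact that the cell-$\bc$ truncation of $\ind^{\s}\res^{\s}$ is governed by convolution with an explicit object whose truncation is the unit of $(\un\cS^{\bc}_{H},\c)$. Essential surjectivity would use the classification of $\s$-twisted unipotent character sheaves and their assignment to two-sided cells (the $\s$-twisted generalized Springer correspondence from Lusztig's work on disconnected groups), which ensures that every object of $\un\CS^{\bc}_{u}(H;\s)$ occurs as a summand of $\ind^{\s}(\IC(w)_{H})$ with $w\in\bc$, and conversely. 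Finally, replacing $\s$ by $\s\,\Ad(b)$ with $b\in B_{H}(k)$ gives canonically equivalent data on both sides through the $\Ad(B_{H})$-equivariant structures on objects of $\cD_{H}$, which yields the asserted canonicity.

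The principal obstacle is the construction and coherence of the twisted half-braiding together with the simultaneous proof that it sweeps out exactly $\cZ(\un\cS^{\bc}_{H};\s)$: this rests on having the full apparatus of character sheaves on disconnected groups available (in particular the classification of cuspidal data on the coset $H\s$ and the resulting cell decomposition), and on a careful bookkeeping of the $\s$-twist through the horocycle transform and through the two layers of truncation. Everything else is a routine, if lengthy, transcription of the $\s=\id$ arguments of \cite{L-center-unip}.
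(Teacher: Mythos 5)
There is nothing in the paper to compare your argument against: Theorem~\ref{th:tw center} is not proved here at all, but quoted verbatim from \cite[Theorem 7.3]{L-nonunip-rep} (just as Theorems~\ref{th:center unip} and \ref{th:center nonunip} are quoted from \cite{L-center-unip} and \cite{L-center-nonunip}). So the only fair comparison is with the proof in that reference, and your outline does follow its general architecture: induction/restriction through the $\s$-twisted horocycle correspondence on the coset $H\s$, compatibility of the induction with (truncated) convolution, a half-braiding $\cF\c\s_{*}\cG\cong\cG\c\cF$ extracted from the double horocycle space, and the reduction of canonicity under $\s\mapsto\s\,\Ad(b)$ to the $\Ad(B_{H})$-equivariant structures. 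Your remark that the weight truncation can be dropped and only the perverse and cell truncations matter is also consistent with what the present paper says in \S\ref{ss:cell}.

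That said, what you have written is a strategy outline, not a proof, and you say so yourself: the coherence of the twisted half-braiding, the identification of $\hom$-spaces needed for full faithfulness (which in Lusztig's argument goes through a rather delicate analysis of the truncation of $\ind^{\s}\res^{\s}$ and of the $a$-function, not merely ``an explicit object whose truncation is the unit''), and essential surjectivity (which rests on the classification of character sheaves on the coset $H\s$ and the parametrization of the $\s$-stable cell $\bc$ from the series \cite{L-CSDGVII}, \cite{L-CSDGX}) are exactly the places where all the work lies, and they are deferred rather than carried out. Since the theorem is imported as a black box in this paper, deferring to \cite{L-nonunip-rep} is legitimate; but as a standalone proof your text would not be accepted, because every step you label ``routine transcription of the $\s=\id$ case'' is precisely where the twisted setting requires the disconnected-group machinery you have not supplied.
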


%%%%%%%% OK %%%%%%%%%
\subsection{}\label{ss:prep cs} Now we setup notation for our application to character sheaves.  Fix $\cL\in\fo$, and let $\bc$ be a two-sided cell of $W_{\cL}^{\c}$. Let $[\bc]$ be the two-sided cell for $W\times\fo$ constructed from $\bc$ by the procedure described in \S\ref{ss:nonunip chsh}.  Let $\Om_{\bc}\subset \Om_{\cL}$ be the stabilizer of $\bc$  under $\Om_{\cL}$.

Let $H$ be the endoscopic group of $G$ attached to $\cL$. In \S\ref{ss:fH}, we have introduced an algebraic group  ${}_{\cL}\fH_{\cL}$ containing $H=H_{\cL}^{\c}$ as its neutral component. The component group of ${}_{\cL}\fH_{\cL}$ is $\Om_{\cL}$. For $\b\in \Om_{\cL}$ any lifting $\dw^{\b}\in {}_{\cL}\Xi^{\b}_{\cL}=w^{\b}T$  induces an automorphism of $H$ preserving $B_{H}$. The $\b$-twisted character sheaves category $\un\CS^{\bc}_{u}(H;\dw^{\b})$ is independent of the choice of $\dw^{\b}$ up to canonical equivalences as we discussed in \S\ref{ss:unip chsh disconn}. Therefore we may unambiguously identify all these categories and write it as $\un\CS^{\bc}_{u}(H;\b)$. Note that $\un\CS^{\bc}_{u}(H;\b)$ carries an action of $\Om_{\bc}$: for each $\b'\in \Om_{\bc}$ with lifting $\dw^{\b'}\in {}_{\cL}\wt\Xi^{\b'}_{\cL}$,  the $\dw^{\b'}$-action on $H$ induces an auto-equivalence of  $\un\CS^{\bc}_{u}(H;\b)$, which depends only on $\b'$ up to canonical equivalences.  We let $\un\CS^{\bc}_{u}(H;\b)^{\Om_{\bc}}$ denote the category of objects in $\un\CS^{\bc}_{u}(H;\b)$ together with $\Om_{\bc}$-equivariant structures.

\quash{For $\b\in\Om_{\cL}$, we have defined an auto-equivalence ${}^{\b}(-): {}_{\cL}\un\cD_{\cL}\to {}_{\cL}\un\cD_{\cL}$ in \S\ref{ss:action min}. For any minimal IC sheaf $\xi_{\g}\in{}_{\cL}\un\fP^{\g}_{\cL}$  for $\g\in\Om_{\cL}$ (so $\xi_{\g}\cong\uIC(w^{\g})_{\cL}$),  define a $\Qlbar$-line
\begin{equation}\label{Lb}
\L_{\b}(\g):=\Hom(\xi_{\g}, {}^{\b}\xi_{\g}).
\end{equation}
Note that $\L_{\b}(\g)$ is  independent of the choice of $\xi_{\g}$ up to canonical isomorphisms. We have canonical isomorphisms  $\L_{\b}(\g_{1})\ot\L_{\b}(\g_{2})\isom \L_{\b}(\g_{1}\g_{2})$ satisfying associativity, and $\L_{\b}(1)$ is canonically trivialized.  Therefore the assignment $\g\mapsto \L_{\b}(\g)$ defines a normalized $1$-cocycle on $\Om_{\cL}$ valued in $\Pic(\Qlbar)$, the Picard groupoid of one-dimensional $\Qlbar$-vector spaces. By restriction, we may view $\L_{\b}$ as a normalized $1$-cocycle on $\Om_{\bc}$ valued in $\Pic(\Qlbar)$.

Suppose $\cC$ is an $E$-linear category ($E$ is a field) on which a group $A$ acts (so each $\g\in A$ gives an auto-equivalence of $\cC$ which we denote by ${}^{\g}(-)$, $^{1_{A}}(-)=\id_{\cC}$ together with natural isomorphisms ${}^{\g_{1}\g_{2}}(-)\cong{}^{\g_{1}}({}^{\g_{2}}(-))$ satisfying associativity and unital conditions). Let $\L\in\cZ^{1}_{\norm}(A, \Pic(E))$ be a normalized $1$-cocycle of $A$ valued in $\Pic(E)$. Then a $(A,\L)$-equivariant structure on an object $X\in\cC$ is a collection of isomorphisms  $\a_{\g}: {}^{\g}X\cong X\ot \L(\g)$ for $\g\in A$, which is the identity for $\g=1_{A}$ ($\L(1_{A})$ is trivialized), such that for $\g_{1},\g_{2}\in A$, the following diagram is commutative
\begin{equation*}
\xymatrix{   {}^{\g_{1}\g_{2}}X \ar[d]^{\a_{\g_{1}\g_{2}}}  \ar[r]^{{}^{\g_{1}}\a_{\g_{2}}} &   {}^{\g_{1}}X\ot \L(\g_{2})  \ar[d]^{\a_{\g_{1}}\ot\id} \\
X\ot \L(\g_{1}\g_{2})\ar[r]^-{\sim} & X\ot \L(\g_{1})\ot \L(\g_{2})}
\end{equation*}
where the bottom map is the one from the cocycle structure of $\L$.  Let $\cC^{(A,\L)}$ be the category of objects in $\cC$ equipped with $(A,\L)$-equivariant structures, with the obvious notion of morphisms compatible with the equivariant structures. 
}

\begin{theorem}\label{th:ch} Let $\fo\subset \Ch(T)$ be the $W$-orbit of $\cL$,  and $\bc$ a two-sided cell in $W_{\cL}^{\c}$. There is a canonical equivalence of semisimple abelian categories
\begin{equation}\label{CSGCSH}
\un\CS^{[\bc]}_{\fo}(G)\isom \bigoplus_{\b\in \Om_{\bc}}\un\CS^{\bc}_{u}(H;\b)^{\Om_{\bc}}.
\end{equation} 
\end{theorem}

\begin{remark}
As in Remark \ref{r:dep on pin}, the equivalence \eqref{CSGCSH} depends on the pinning on $G$ and the additive character $\psi_{0}$ on $\FF_{q}$.

The equivalence \eqref{CSGCSH} induces a canonical bijection between simple objects on both sides. Simple objects in  $\un\CS^{\bc}_{u}(H;\b)$ are classified in \cite[\S46]{L-CSDGX}, from which one can get a classification of simple objects in $\un\CS^{[\bc]}_{\fo}(G)$ using \eqref{CSGCSH}.  In the case $\Om_{\bc}$ is trivial, simple objects in both $\un\CS^{\bc}_{u}(H)$ and $\un\CS^{[\bc]}_{\fo}(G)$ are parametrized by the set $\cM(\cG_{\bc})$ by \cite[Theorem 23.1]{L-CS5} (see \cite[\S4.4-4.13]{L-book} for $\cG_{\bc}$ and $\cM(\cG_{\bc})$). This is consistent with \eqref{CSGCSH}.
\end{remark}

The rest of the section is devoted to the proof of Theorem \ref{th:ch}.

%%%%%%%% OK %%%%%%%%
\begin{lemma}\label{l:rest L}
The projection from $\un\cS^{[\bc]}_{\fo}$ to ${}_{\cL}\un\cS^{[\bc]}_{\cL}$ induces an equivalence on  their categorical centers
\begin{equation}\label{rest L}
r_{\cL}: \cZ(\un\cS^{[\bc]}_{\fo})\isom \cZ({}_{\cL}\un\cS^{[\bc]}_{\cL}).
\end{equation}
\end{lemma}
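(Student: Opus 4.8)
The idea is to realize both sides as limits of a diagram of categories indexed by the groupoid $\Xi$ (or rather its restriction to the component group $\Om_{\cL}$), using the structural facts about two-sided cells in $W\times\fo$ recalled in \S\ref{ss:cell}. Since $\un\cS^{[\bc]}_{\fo}=\bigoplus_{\cL,\cL'\in\fo}{}_{\cL'}\un\cS^{[\bc]}_{\cL}$, an object of $\cZ(\un\cS^{[\bc]}_{\fo})$ consists of a family $\{\cF_{\cL}\}_{\cL\in\fo}$ with $\cF_{\cL}\in{}_{\cL}\un\cS^{[\bc]}_{\cL}$ together with half-braiding isomorphisms $\cF_{?}\c\cG\cong\cG\c\cF_{?}$ for all $\cG$ in all the off-diagonal pieces. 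First I would observe that by Proposition \ref{p:min th}(2), for each block $\b\in{}_{\cL'}\un W_{\cL}$ the minimal IC sheaf $\xi_\b\in{}_{\cL'}\fP^\b_{\cL}$ is invertible under convolution, so conjugation by $\xi_\b$ gives an equivalence ${}_{\cL}\un\cD^{\c}_{\cL}\isom {}_{\cL'}\un\cD^{\c}_{\cL'}$ (the functor ${}^{\b}(-)$ of \S\ref{ss:action min}), and by property (1) of two-sided cells in \S\ref{ss:cell} it carries ${}_{\cL}\un\cS^{\bc}_{\cL}$ to ${}_{\cL'}\un\cS^{\frc(\g\b)}_{\cL'}$ compatibly with truncated convolution. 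This is the monodromic analogue of the fact that Morita-equivalent algebras have equivalent centers.

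\textbf{Key steps, in order.} (i) Show the restriction functor $r_{\cL}$ of \eqref{rest L} is well-defined: given a central object $(\cF_\bu,\{\gamma\})$ in $\un\cS^{[\bc]}_{\fo}$, its diagonal component $\cF_{\cL}$ inherits the half-braidings with all $\cG\in{}_{\cL}\un\cS^{[\bc]}_{\cL}$, so $\cF_{\cL}\in\cZ({}_{\cL}\un\cS^{[\bc]}_{\cL})$; naturality and compatibility with $\c$ are routine. (ii) Construct a quasi-inverse: starting from $\cF_{\cL}\in\cZ({}_{\cL}\un\cS^{[\bc]}_{\cL})$, define $\cF_{\cL'}:=\xi_\b\c\cF_{\cL}\c\xi_\b^{-1}$ for any block $\b\in{}_{\cL'}\un W_{\cL}$. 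By the independence of ${}^{\b}(-)$ of the choice of $\xi_\b$ (noted in \S\ref{ss:action min}) and Corollary \ref{c:trans min}, this $\cF_{\cL'}$ is well-defined up to canonical isomorphism and does not depend on $\b$; here I would use Proposition \ref{p:conv block} to see that truncated convolution by off-diagonal cell-objects is compatible with this transport. (iii) Transport the half-braiding: any $\cG\in{}_{\cL''}\un\cS^{\frc(\g)}_{\cL'}$ can be written as $\xi_\g\c\cG_0$ for $\cG_0\in{}_{\cL'}\un\cS^{[\bc]}_{\cL'}$, hence $\cG\c\cF_{\cL'}$ and $\cF_{\cL''}\c\cG$ are both obtained by transporting $\cG_0\c\cF_{\cL'}$ resp.\ $\cF_{\cL''}\c\cG_0$, and one further reduces via conjugation to the diagonal half-braiding already available on $\cF_{\cL}$; the hexagon/naturality axioms for the new family follow from those for $\cF_{\cL}$ by functoriality. (iv) Check that the two constructions are mutually inverse, which is formal once (i)–(iii) are in place: transporting back along $\b^{-1}$ recovers $\cF_{\cL}$, and the uniqueness of the transported half-braidings forces the composites to be the identity.

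\textbf{Main obstacle.} The delicate point is step (iii): making the transported half-braidings \emph{coherent} across all of $\fo$ simultaneously. One must verify that the isomorphisms $\cG\c\cF_{\cL'}\cong\cF_{\cL''}\c\cG$ built by conjugating from the diagonal satisfy the pentagon-type compatibility required of an object of the center, and this is where the associativity constraints of truncated convolution interact with the conjugation equivalences ${}^{\b}(-)$. Since truncated convolution on $\un\cS^{[\bc]}_{\fo}$ is genuinely associative (unlike the $3$-cocycle twist that appears for the full monodromic category in Warning \ref{ss:warning}, the cell truncation kills that ambiguity because the relevant IC sheaves lie in a fixed cell and the sign cocycle $\s$ does not obstruct associativity of the \emph{truncated} product), the coherence data exist; but tracking them requires care with the canonical isomorphisms $w^\g w^\b=w^{\g\b}$ of Corollary \ref{c:trans min} and the independence statements of \S\ref{ss:action min}. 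I expect that once these bookkeeping compatibilities are organized, the equivalence \eqref{rest L} drops out, and it is in fact an equivalence of \emph{braided monoidal} categories, the braiding on $\cZ({}_{\cL}\un\cS^{[\bc]}_{\cL})$ being the restriction of the Drinfeld braiding; but for the present lemma only the underlying equivalence of categories is needed.
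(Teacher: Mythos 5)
Your strategy is the same as the paper's: restrict to the diagonal component at $\cL$, and construct the quasi-inverse by spreading a central object ${}_{\cL}\cF_{\cL}$ over the diagonal via conjugation $\xi\star{}_{\cL}\cF_{\cL}\star\xi^{-1}$ by minimal IC sheaves, then transport the half-braiding by writing an arbitrary $\cG$ as a translate of an object of the neutral-block cell category. There is, however, one step whose justification as written is wrong, and it happens to be the only non-formal input of the whole argument. In your step (ii) you claim that $\cF_{\cL'}:=\xi_{\b}\star\cF_{\cL}\star\xi_{\b}^{-1}$ is independent of the block $\b\in{}_{\cL'}\un W_{\cL}$ by appealing to \S\ref{ss:action min} and Corollary \ref{c:trans min}. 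Those only give independence of the choice of $\xi_{\b}$ \emph{within} a fixed block. For two blocks $\b$ and $\b'=\g\b$ with $\g\in\Om_{\cL'}$ nontrivial, the functors ${}^{\b}(-)$ and ${}^{\b'}(-)={}^{\g}\bigl({}^{\b}(-)\bigr)$ differ by the auto-equivalence ${}^{\g}(-)$ of ${}_{\cL'}\un\cD^{\c}_{\cL'}$, which is genuinely nontrivial in general (it permutes the $\uIC(w)_{\cL'}$ by conjugation by $w^{\g}$). Identifying $\xi_{\b}\star\cF_{\cL}\star\xi_{\b}^{-1}$ with $\xi_{\b'}\star\cF_{\cL}\star\xi_{\b'}^{-1}$ amounts to a commutation isomorphism between $\cF_{\cL}$ and the minimal IC sheaf $\xi_{\b'}^{-1}\star\xi_{\b}\in{}_{\cL}\fP_{\cL}$, and this is exactly where the central structure of ${}_{\cL}\cF_{\cL}$ (its half-braiding against the objects $\cH\star\xi_{\b'^{-1}\b}$ of ${}_{\cL}\un\cS^{[\bc]}_{\cL}$) must be invoked; this is the one-line appeal the paper makes. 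Proposition \ref{p:conv block} is only bookkeeping and does not supply it. Without this input your $\cF_{\cL'}$ depends on $\b$, and the half-braiding built in (iii) is then not independent of the choices of $\xi,\y$, so it does not define an object of the center.

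Your ``main obstacle'' paragraph worries about the wrong thing: the $3$-cocycle of Warning \ref{ss:warning} is not what could go wrong here, and it is not the cell truncation that saves you. That cocycle only appears when one rigidifies specific isomorphisms $\IC(\dw^{\g})\star\IC(\dw^{\b})\cong\IC(\dw^{\g\b})$ in the mixed category; in the non-mixed categories $\un\cS$ one works with the coherently associative convolution and treats $\xi$ together with its inverse as an abstract invertible object, so no such choices are made and no cocycle arises. The coherence of the transported half-braidings then follows formally from the naturality of the half-braiding on ${}_{\cL}\cF_{\cL}$, exactly as in the paper.
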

\begin{proof}
We construct an inverse to $r_{\cL}$ as follows. Let ${}_{\cL}\cF_{\cL}\in \cZ({}_{\cL}\un\cS^{[\bc]}_{\cL})$. Define $\cF=\op{}_{\cL'}\cF_{\cL}\in \un\cS^{[\bc]}_{\fo}=\oplus_{\cL,\cL'}({}_{\cL'}\un\cS^{[\bc]}_{\cL})$ by ${}_{\cL'}\cF_{\cL}=0$ if $\cL'\ne\cL$, and ${}_{\cL'}\cF_{\cL'}=\xi\star {}_{\cL}\cF_{\cL}\star\xi^{-1}$ for some $\xi\in {}_{\cL'}\fP_{\cL}$. Using the central structure of ${}_{\cL}\cF_{\cL}$ we see that ${}_{\cL'}\cF_{\cL'}$ is independent of the choice of $\xi$ up to canonical isomorphisms. Moreover, we show that $\cF$ carries a central structure. For $\cG\in {}_{\cL''}\un\cS^{[\bc]}_{\cL'}$, upon choosing  $\xi\in {}_{\cL'}\fP_{\cL}$ and $\y\in {}_{\cL''}\fP_{\cL}$, we may write $\cG=\y\star\cH\star\xi^{-1}$ for $\cH\in{}_{\cL}\un\cS^{[\bc]}_{\cL}$. Then we have an isomorphism $\cF\c\cG={}_{\cL''}\cF_{\cL''}\c\cG=(\y\star{}_{\cL}\cF_{\cL}\star\y^{-1})\c(\y\star\cH\star\xi^{-1})\cong \y\star({}_{\cL}\cF_{\cL}\c\cH)\star\xi^{-1}\cong \y\star(\cH\c{}_{\cL}\cF_{\cL})\star\xi^{-1}=\cG\c{}_{\cL'}\cF_{\cL'}=\cG\c\cF$ coming from the central structure of ${}_{\cL}\cF_{\cL}$. Again  this isomorphism is independent of the choices of $\xi$ and $\y$. The construction ${}_{\cL}\cF_{\cL}\mapsto \cF$  gives an inverse to $r_{\cL}$ and shows that $r_{\cL}$ is an equivalence.
\end{proof}

%%%%%%%% OK %%%%%%%%
\begin{lemma} There is a canoncial equivalence
\begin{equation}\label{ZZ}
\cZ({}_{\cL}\un\cS^{[\bc]}_{\cL})\isom \bigoplus_{\b\in\Om_{\bc}}\cZ({}_{\cL}\un\cS^{\c,\bc}_{\cL}; \b)^{\Om_{\bc}}
\end{equation}
where ${}_{\cL}\un\cS^{\c,\bc}_{\cL}$ consists of direct sums of $\uIC(w)_{\cL}$ for $w\in \bc\subset \WL$.
\end{lemma}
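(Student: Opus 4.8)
The plan is to decompose the monoidal category ${}_{\cL}\un\cS^{[\bc]}_{\cL}$ according to blocks and then compute its categorical center. Recall from \S\ref{ss:cell} that the two-sided cell $[\bc]$ restricted to the neutral block gives a $\Om_{\cL}$-orbit of usual two-sided cells for $\WL$, and by hypothesis $\Om_{\bc}$ is the stabilizer of $\bc$; so the blocks $\b\in {}_{\cL}\un W_{\cL}=\Om_{\cL}$ on which ${}_{\cL}\un\cS^{[\bc(\b)]}_{\cL}$ is nonzero are indexed by $\Om_{\cL}$, and the simple objects in the $\b$-component are the $\uIC(w)_{\cL}$ for $w\in [\bc](\b)=w^{\b}\bc'$ where $\bc'$ runs over the $\Om_{\cL}$-conjugates of $\bc$. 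Using Proposition \ref{p:min th} (right convolution with a minimal IC sheaf $\xi_{\b}\in{}_{\cL}\un\fP^{\b}_{\cL}$ is an equivalence sending $\uIC(w)_{\cL}$ to $\uIC(ww^{\b})_{\cL}$) together with the conjugation functor ${}^{\b}(-)$ of \S\ref{ss:action min}, I would identify the $\b$-component of ${}_{\cL}\un\cS^{[\bc]}_{\cL}$ with the usual cell category ${}_{\cL}\un\cS^{\c,\b^{-1}\bc\b}_{\cL}$ attached to the conjugated cell. This realizes ${}_{\cL}\un\cS^{[\bc]}_{\cL}$ as an $\Om_{\cL}$-graded monoidal category whose neutral part is $\bigoplus_{\bc'}{}_{\cL}\un\cS^{\c,\bc'}_{\cL}$.

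Next I would apply the standard ``center of a graded monoidal category'' formalism. For a monoidal category $\cM=\bigoplus_{\g\in A}\cM_{\g}$ graded by a finite group $A$ (here $A=\Om_{\cL}$, with the subtlety that only cosets in $\Om_{\bc}\bs\Om_{\cL}$ survive after restricting to a single cell $\bc$), an object of $\cZ(\cM)$ is an object $X$ together with half-braidings against all of $\cM$; decomposing $X=\bigoplus X_{\g}$ and testing centrality against $\cM_{1}$ and against the invertible-up-to-cell pieces $\cM_{\b}$, one sees that $X$ must be supported on the subgroup $\Om_{\bc}$, that each $X_{\b}$ lies in the $\b$-twisted center $\cZ(\cM_{1};\b)$, and that the collection $\{X_{\b}\}_{\b\in\Om_{\bc}}$ is glued by isomorphisms ${}^{\b'}X_{\b}\cong X_{\b'\b\b'^{-1}}\otimes(\text{line})$; the lines are exactly the $\L_{\b}(\b')$ of \eqref{Lb} because conjugating a minimal IC sheaf $\xi_{\b}$ by $\xi_{\b'}$ introduces the one-dimensional space $\Hom(\xi_{\b},{}^{\b'}\xi_{\b})$. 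Concretely, fixing the component $X_{\b}\in\cZ({}_{\cL}\un\cS^{\c,\bc}_{\cL};\b)$ for one representative $\b$ in each $\Om_{\bc}$-orbit and propagating by conjugation, the remaining datum is precisely a $(\Om_{\bc},\L_{\b})$-equivariant structure on $X_{\b}$. This yields the asserted equivalence \eqref{ZZ}, with the liftings $\{\dw^{\b}\}$ entering only through the identifications of the twisted actions and the trivializations needed to make $\L_{\b}$ a genuine normalized cocycle.

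The main obstacle will be bookkeeping the coherences rather than any deep new idea: I must check that the half-braiding data on the off-diagonal pieces $X_{\b}$ is determined by — and imposes no condition beyond — the diagonal data plus the equivariance isomorphisms, and that all the associativity/pentagon constraints for the half-braiding translate exactly into the cocycle condition defining a $(\Om_{\bc},\L_{\b})$-equivariant object. Here I would lean on Proposition \ref{p:coalg ext} and the canonical-isomorphism machinery of \S\ref{ss:action min} (independence of the functor ${}^{\b}(-)$ from the choice of $\xi$, and compatibility with composition) to guarantee that all the requisite natural transformations exist and are unique, so that the gluing is strict. A secondary point to handle carefully is that $\cZ(\cM_{1})$ itself decomposes over the $\Om_{\cL}$-orbit of cells $\{\bc'\}$, and one must verify that a central object of ${}_{\cL}\un\cS^{[\bc]}_{\cL}$ cannot mix different $\bc'$ within the neutral block — this follows because truncated convolution respects the cell decomposition and distinct two-sided cells are orthogonal under $\c$, so the half-braiding against objects supported on one cell forces $X_{1}$ to be supported on the same cell, and then conjugation-equivariance pins down the others. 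Once these compatibilities are in place, \eqref{ZZ} follows, and combined with Lemma \ref{l:rest L} and Theorems \ref{th:center nonunip}, \ref{th:tw center} it will give the statement of Theorem \ref{th:ch}.
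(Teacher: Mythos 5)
Your proposal is correct and follows essentially the same route as the paper: decompose a central object over the blocks $\b\in\Om_{\cL}$ using the invertible minimal IC sheaves $\xi_{\b}$, specialize the half-braiding to $\cG\in{}_{\cL}\un\cS^{\c,[\bc]}_{\cL}$ (giving the $\b$-twisted central structure) and to $\cG=\xi_{\g}$ (giving the $(\Om,\L_{\b})$-equivariance, with $\L_{\b}(\g)$ arising from $\xi_{\b}\star\xi_{\g}\star\xi_{\b}^{-1}\star\xi_{\g}^{-1}\cong\d_{\cL}\ot\L_{\b}(\g)$), and use orthogonality of distinct two-sided cells under truncated convolution to force the support onto $\Om_{\bc}$ and onto a single cell component. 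The only caveat is a minor wording slip where you describe the $\b$-component as a single conjugated cell category and speak of "$\Om_{\bc}$-orbits" of $\b$ (the conjugation action on the abelian $\Om_{\cL}$ is trivial; what gets propagated by equivariance are the cell components $\bc'$ of each $\cF_{\b}$), but you correct for this in your final paragraph, so the argument matches the paper's.
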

\begin{proof} For each $\b\in \Om_{\cL}$, let $\xi_{\b}=\om\IC(w^{\b})^{\da}_{\cL}\in {}_{\cL}\un\fP^{\b}_{\cL}$. For $\cF\in\cZ({}_{\cL}\un\cS^{[\bc]}_{\cL})$,  write $\cF=\oplus_{\b\in\Om_{\cL}}\cF_{\b}\star\xi_{\b}$ where $\cF_{\b}\in {}_{\cL}\un\cS^{\c,[\bc]}_{\cL}:={}_{\cL}\un\cS^{\c}_{\cL}\cap {}_{\cL}\un\cS^{[\bc]}_{\cL}$. By the description of $[\bc]$ in \S\ref{ss:nonunip chsh}, $\cF_{\b}$ can be written uniquely as a direct sum $\oplus_{\bc'\in \Om_{\cL}\cdot\bc}\cF^{\bc'}_{\b}$, where $\cF^{\bc'}_{\b}\in {}_{\cL}\un\cS^{\c,\bc'}_{\cL}$. Let $(-)\c(-)$ denote the truncated convolution in ${}_{\cL}\un\cS^{[\bc]}_{\cL}$.  The central structure of $\cF$ gives the following isomorphisms
\begin{equation}\label{FGbg}
(\cF_{\b}\star\xi_{\b})\c(\cG\star \xi_{\g})\cong (\cG\star \xi_{\g})\c(\cF_{\b}\star\xi_{\b}), \quad \forall \b,\g\in \Om_{\cL}, \cG\in {}_{\cL}\un\cS^{\c,[\bc]}_{\cL}.
\end{equation}
Using the action of $\Om_{\cL}$ on ${}_{\cL}\un\cD^{\c}_{\cL}$ introduced in \S\ref{ss:action min}, we may rewrite the above isomorphism as
\begin{equation*}
(\cF_{\b}\c{}^{\b}\cG)\star\xi_{\b}\star\xi_{\g}\cong (\cG\c{}^{\g}\cF_{\b})\star\xi_{\g}\star\xi_{\b}.
\end{equation*}
By Lemma \ref{l:lgb}, there is a canonical isomorphism $\xi_{\b}\star\xi_{\g}\cong \xi_{\g}\star\xi_{\b}$. We may rewrite the above isomorphism as
\begin{equation}\label{FGbg2}
\cF_{\b}\c{}^{\b}\cG\cong \cG\c{}^{\g}\cF_{\b},\quad\forall \b,\g\in\Om_{\cL}, \cG\in {}_{\cL}\un\cS^{\c,[\bc]}_{\cL}.
\end{equation}
Taking $\g=1$ we get isomorphisms
\begin{equation}\label{FG}
\y_{\cG}: \cF_{\b}\c{}^{\b}\cG\cong \cG\c\cF_{\b}, \quad \forall \b\in\Om_{\cL}, \cG\in {}_{\cL}\un\cS^{\c,[\bc]}_{\cL}
\end{equation}
which equip $\cF_{\b}$ with a $\b$-twisted central structure, i.e., $\cF_{\b}$ has a natural lift to an object $\cF_{\b}^{\#}\in \cZ({}_{\cL}\un\cS^{\c,[\bc]}_{\cL};\b)$.
 
Taking $\cG=\d_{\cL}$ in \eqref{FGbg2} we get isomorphisms
\begin{equation}\label{FLbg}
\z_{\g}: \cF_{\b}\cong {}^{\g}\cF_{\b}, \quad\forall \g\in \Om_{\cL}
\end{equation}
which equip $\cF_{\b}$ with a $\Om_{\cL}$-equivariant structure. The central structure implies that the isomorphisms \eqref{FGbg2} satisfy compatibilities with convolution of the $\cG\star\xi_{\g}$'s, which are equivalent to the commutative diagram
\begin{equation*}
\xymatrix{   \cF_{\b}\c{}^{\b\g}\cG\ar@{=}[d]\ar[r]^-{\y_{{}^{\g}\cG} \ot\id} & {}^{\g}\cG\c\cF_{\b}\ar[r]^-{\id\c\z_{\g}} & {}^{\g}\cG\c{}^{\g}\cF_{\b}  \ar@{=}[d]    \\
\cF_{\b}\c{}^{\g\b}\cG \ar[r]^-{\z_{\g}\c\id} & {}^{\g}\cF_{\b}\c{}^{\g\b}\cG \ar[r]^-{{}^{\g}\y_{\cG}} & {}^{\g}(\cG\c\cF_{\b})}
\end{equation*}
for all $\g\in\Om_{\cL}$ and $\cG\in {}_{\cL}\un\cS^{\c,[\bc]}_{\cL}$.  The commutativity of these diagrams means exactly that $\cF^{\#}_{\b}$ carries a $\Om_{\cL}$-equivariant structure as an object in $\cZ({}_{\cL}\un\cS^{\c,[\bc]}_{\cL};\b)$, i.e., $\cF^{\#}_{\b}$ further lifts to an object $\cF_{\b}^{\hs}\in \cZ({}_{\cL}\un\cS^{\c,[\bc]}_{\cL};\b)^{\Om_{\cL}}$.

Take a cell $\bc'\subset W_{\cL}^{\c}$ in  the $\Om_{\cL}$-orbit of $\bc$ and take $\cG\in {}_{\cL}\un\cS^{\c,\bc'}_{\cL}$. Now ${}^{\b}\cG\in {}_{\cL}\un\cS_{\cL}^{\c, \b(\bc')}$. For $w,w'\in W_{\cL}^{\c}$ in different cells, the truncated convolution of $\uIC(w)_{\cL}$ and $\uIC(w')_{\cL}$ vanishes. Therefore the left side of \eqref{FG} lies in ${}_{\cL}\un\cS^{\c, \b(\bc')}_{\cL}$ while the right side lies in ${}_{\cL}\un\cS^{\c, \bc'}_{\cL}$. If $\b(\bc')\ne\bc'$, then both sides of \eqref{FG} must vanish, hence $\uIC(w)_{\cL}\c\cF_{\b}^{\bc'}=0$ for all $w\in \bc'$.  This implies $\cF_{\b}^{\bc'}=0$ if $\b(\bc')\ne\bc'$ since ${}_{\cL}\un\cS^{\c,\bc'}_{\cL}$ has a monoidal unit. Since $\Om_{\cL}$ is abelian, $\b\in\Om_{\cL}$ either fixes all $\bc'$ in the orbit of $\bc$ or none, therefore $\cF_{\b}=0$ if $\b\notin \Om_{\bc}$.

Now we consider $\b\in\Om_{\bc}$. The isomorphisms \eqref{FLbg} allows us to recover $\cF_{\b}^{\bc'}$ for any cell $\bc'$ in the $\Om_{\cL}$-orbit of $\bc$ from $\cF_{\b}^{\bc}$. The object $\cF_{\b}^{\bc}$ lifts to $\cF^{\bc,\hs}_{\b}\in \cZ({}_{\cL}\un\cS^{\c,\bc}_{\cL}; \b)^{\Om_{\bc}}$. The functor $\cF_{\b}^{\hs}\mapsto \cF^{\bc,\hs}_{\b}$ is an equivalence
\begin{equation*}
\cZ({}_{\cL}\un\cS^{\c,[\bc]}_{\cL}; \b)^{\Om_{\cL}}\isom \cZ({}_{\cL}\un\cS^{\c,\bc}_{\cL}; \b)^{\Om_{\bc}}.
\end{equation*}
Combining the above discussions, we arrive at the equivalence \eqref{ZZ} given by $\cF\mapsto \oplus_{\b\in\Om_{\bc}}\cF^{\bc,\hs}_{\b}$.
\end{proof}

%%%%%%%% OK %%%%%%%%%%
\subsection{Proof of Theorem \ref{th:ch}}
Theorem \ref{th:main} implies a monoidal equivalence between semisimple abelian categories
\begin{equation}\label{SHL}
\un\cS_{H}^{\bc}\isom {}_{\cL}\un\cS^{\c,\bc}_{\cL}.
\end{equation}
In \cite{L-center-nonunip}, the value of the $a$-function for $[\bc]$ used in the construction of the truncated convolution is the same as the value of the $a$-function on $\bc$ as a cell for $W_{H}$. By Corollary \ref{c:bPsi},  \eqref{SHL} is equivariant under the actions of $\Om_{\bc}$. Therefore, we get a canonical braided monoidal equivalence for $\b\in\Om_{\bc}$
\begin{equation}\label{ZZLH}
\cZ(\un\cS_{H}^{\bc};\b)^{\Om_{\bc}}\isom\cZ({}_{\cL}\un\cS^{\c,\bc}_{\cL}; \b)^{\Om_{\bc}}.
\end{equation}

Composing the known equivalences we get
\begin{equation*}
\xymatrix{
\un\CS^{[\bc]}_{\fo}(G)\ar[r]_-{\sim}^-{\textup{Th.}\ref{th:center nonunip}} & \cZ(\un\cS^{[\bc]}_{\fo})\ar[r]^-{ \eqref{rest L}}_-{\sim} & \cZ({}_{\cL}\un\cS^{[\bc]}_{\cL})\ar[r]^-{\eqref{ZZ}}_-{\sim} & \oplus_{\b\in\Om_{\bc}}\cZ({}_{\cL}\un\cS^{\c,\bc}_{\cL}; \b)^{\Om_{\bc}}\\
&&& \oplus_{\b\in\Om_{\bc}}\cZ(\un\cS_{H}^{\bc};\b)^{\Om_{\bc}}\ar[u]^{\eqref{ZZLH}}_{\wr} & \oplus_{\b\in\Om_{\bc}}\un\CS^{\bc}_{u}(H;\b)^{\Om_{\bc}}\ar[l]_-{\textup{Th.}\ref{th:tw center}}^-{\sim}.}
\end{equation*}

\quash{
%%%%%%% OK %%%%%%%%
\subsection{Proof of Theorem \ref{th:ch}(2)}
If $\Om_{\cL}$ is cyclic, then $\upH^{2}(\Om_{\cL}, \Qlbar^{\times})=\{1\}$. 

When $G$ is almost simple, the only case where $\Om_{\cL}$ is not cyclic is when $G=\Spin_{4n}$  and $\Om_{\cL}\cong\ZZ/2\ZZ\times\ZZ/2\ZZ$ for certain $\cL$. In this case, if $\b=1$, then ${}^{\b}(-)$ is naturally isomorphic to the identity functor, hence $\L_{\b}$ carries a trivialization. If $\b\ne1$, then $\L_{\b}(\b)$ carries a canonical trivialization such that $\L_{\b}:\Om_{\cL}\to \Pic(\Qlbar)$ factors through $\ov\L_{\b}: \Om_{\cL}/\j{\b}\cong\ZZ/2\ZZ\to \Pic(\Qlbar)$. Therefore the class of $\Om_{\cL}$ is the pullback from the class of $\ov\L_{\b}$ in $\upH^{2}(\Om_{\cL}/\j{\b}, \Qlbar^{\times})=\{1\}$, which has to be trivial.

In general, let $\wt G\to G$ be the simply-connected cover of the derived subgroup of $G$. Then $\wt G=\prod_{i} G_{i}$ where each $G_{i}$ is almost simple and simply-connected. Let $\wt T\subset \wt G$ be the maximal torus whose image in $G$ is contained in $T$, and let $\wt\cL\in \Ch(\wt T)$ be the pullback of $\cL$. Then under the identification of the Weyl groups of $\wt G$ and $G$,  there is an inclusion $W_{\cL}\subset W_{\wt\cL}$ and an equality $W_{\cL}^{\c}=W_{\wt\cL}^{\c}$. Therefore $\Om_{\cL}\subset \Om_{\wt\cL}$. Moreover, from the definitions we see that for $\b\in\Om_{\cL}$, $\L_{\b}$ is the restriction to $\Om_{\cL}$ of the similarly defined cocycle $\wt\L_{\b}$ for $\Om_{\wt\cL}$ . By the almost simple case settled above, the class of $\wt\L_{\b}$ is always trivial in $\upH^{2}(\Om_{\wt\cL}, \Qlbar^{\times})$, hence the same is true for $\L_{\b}$ by restriction. The proof of Theorem \ref{th:ch} is now complete. \qed
}

\section{Application to representations}
In \cite{L-unip-rep} and \cite{L-nonunip-rep}, the first-named author has related the category of representations of $G(\FF_{q})$ to the twisted categorical center of asymptotic versions of the monodromic Hecke category of $G$, in a similar way that character sheaves on $G$ are related to the categorical center of the monodromic Hecke category.   In this section, we apply the monodromic-endoscopic equivalence to prove a relationship between representations of $G(\FF_{q})$ and its endoscopic groups. 

In this section, {\em all schemes are over $k=\ov\FF_{q}$}. We will work in the more general context of disconnected groups as in  \cite{L-nonunip-rep}, and establish a relationship between character sheaves on disconnected groups and unipotent character sheaves on their endoscopic groups.

\subsection{Disconnected groups and forms}\label{ss:cs disc} Let $G$ be a connected reductive group over $k$ with a pinning $(T, B, \{\bx_{-\a_{s}}\})$ (all defined over $k$). Let $\e:G\to G$ be a morphism over $k$ preserving  $(T,B)$ and satisfying one of the following two conditions:
\begin{enumerate}
\item[(A)] $\e$ is the Frobenius map for some rational structure of $G$ over $\FF_{q}$ and for each simple root $\a_{s}$ we have $\bx_{-\a_{\e(s)}}(\e(u))=\bx_{-\a_{s}}(u)^{q}$, $u\in U_{-\a_{s}}$.
\item[(B)]  $\e$ is a finite order automorphism of $G$ over $k$ preservering $\{\bx_{-\a_{s}}\}$.
\end{enumerate}
We will refer to the two conditions above as ``Situation (A)'' and ``Situation (B)''.

We form the semi-direct semigroup product $G\rtimes \e^{\ZZ_{\ge0}}$ (where $\e^{\ZZ_{\ge0}}$ is a copy of $\ZZ_{\ge}$ acting on $G$ via $\e$). There is the notion of character sheaves on the coset $G\cdot \e$, see \cite[\S6.1]{L-nonunip-rep}. These are certain $G$-equivariant simple perverse sheaves on $G\cdot \e$.   In situation (B), this notion is the same as the $\e$-twisted character sheaves on $G$ considered in \S\ref{ss:unip chsh disconn}.

When  $\e$ is in situation (A),  character sheaves on $G\cdot \e$ are exactly the irreducible $\Qlbar$-representations of the finite group $G^{\e}$,  the group of $\FF_{q}$-points of the form of $G$ with Frobenius map $\e$.

The map $\e$ induces an action on $\Ch(T)$: $\e\cL:=\e_{*}\cL$. In the following, we fix a  $W$-orbit $\fo\subset \Ch(T)$ that is stable under the action of $\e$. Fix $\cL\in \Ch(T)$. We have the relatively pinned endoscopic group $H=H_{\cL}^{\c}$ as defined in \S\ref{ss:rel pin} (now over $k$). 

The map $\e$ induces an automorphism of the based root system of $G$ which we still denote by $\e$. It restricts to a bijection of based roots systems $\e: \Phi_{\cL}\isom \Phi_{\e\cL}$ (it sends $\Phi^{+}_{\cL}=\Phi_{\cL}\cap \Phi^{+}$ to $\Phi^{+}_{\e\cL}=\Phi_{\e\cL}\cap \Phi^{+}$ because $\e$ is a bijection of based root systems).  For each $\b\in {}_{\cL}\un W_{\e\cL}$, $w^{\b}$ gives a bijection of base root systems $w^{\b}: \Phi_{\e\cL}\to \Phi_{\cL}$. The composition $w^{\b}\c\e$ is an automorphism of the base root system $(\Phi_{\cL},\D_{\cL})$. 

Fix a lifting $\dw^{\b}$ for each $\b\in {}_{\cL}\un W_{\e\cL}$. In situation (A), there is a unique $\FF_{q}$-Frobenius structure $\s_{\b\e}: H\to H$ preserving $(T,B_{H})$, inducing  $w^{\b}\c\e$ on the root system of $H$ (which is identified with $\Phi_{\cL}$), and such that $(\s_{\b\e},\Ad(\dw^{\b})\c\e)$ is compatible with the relative pinning in the sense that the following diagram is commutative for each simple root of $\a$ of $H$
\begin{equation*}
\xymatrix{     H_{\a}  \ar[d]^{\s_{\b\e}}  \ar[r]^-{\io_{\a}}  & G_{\a}\ar[d]^{\Ad(\dw^{\b})\c\e}\\
H_{w^{\b}\e(\a)} \ar[r]^-{\io_{w^{\b}\e(\a)}} & G_{w^{\b}\e(\a)}}
\end{equation*}
Note that the construction of $\s_{\b\e}$ depends on the choice of the lifting $\dw^{\b}$, but the finite group $H^{\s_{\b\e}}$ is independent of the choice up to inner automorphisms. In particular, $\Rep(H^{\s_{\b\e}})$ is independent of the choice of the lifting $\dw^{\b}$. 

Similarly, in situation (B), there is a unique finite order automorphism $\s_{\b\e}: H\to H$ preserving $(T,B_{H})$, inducing  $w^{\b}\c\e$ on the root system of $H$, and such that $(\s_{\b\e},\Ad(\dw^{\b})\c\e)$ is compatible with the relative pinning in the above sense.

\begin{exam} Suppose $\e$ is the Frobenius map for the split $\FF_{q}$-structure of $G$. In this case, $\e\cL=\cL^{1/q}$ (note that the order of $\cL$ is always prime to $p$). Assume that $W_{\cL}=\{1\}$ (we always assume $\fo$ be stable under $\e$). In this case, $H=T$, and there is a unique $w\in W$ such that $w\cL^{1/q}=\cL$ (the blocks are singletons, so $w$ can be viewed as a block).  Then $\s_{w\e}:T\to T$ is the Frobenius map for the $\FF_{q}$-form of $T$ given by the $W$-conjugacy class of $w$ (note that the conjugacy classes of maximal tori of the split $G$ defined over $\FF_{q}$ are classified by conjugacy classes in $W$).
\end{exam}

\subsection{Character sheaves on disconnected groups as a twisted center}  Recall that we fix a $W$-orbit $\fo\subset \Ch(T)$ stable under $\e$, and also fix $\cL\in \fo$. Let $\frc\subset W\times\fo$ be a two-sided cell that is stable under $\e$. As in \S\ref{ss:cell}, we may write $\frc=[\bc]$ for some two-sided cell $\bc\subset \WL$ which is well-defined up to the action of $\Om_{\cL}$.

Let $\un\CS^{\frc}_{\fo}(G;\e)$ be the semisimple abelian category whose objects are finite direct sums of character sheaves on $G\cdot\e$ with semisimple parameter $\fo$ and belonging to the cell $\frc$ (see \cite[\S6.1]{L-nonunip-rep}). In situation (A), the $G$-conjugation action on $G\cdot \e$ is transitive by Lang's theorem, with the stabilizer of $1\cdot \e$ equal to the finite group $G^{\e}$. Therefore we have an equivalence
\begin{equation}\label{CS rep}
\un\CS^{\frc}_{\fo}(G;\e)\cong \Rep^{\frc}_{\fo}(G^{\e}),
\end{equation}
the latter being the semisimple abelian category of $\Qlbar$-representations of the finite group $G^{\e}$ whose semisimple parameter is $\fo$ and are finite direct sums of irreducible representations belonging to the cell $\frc$. 

The following theorem proved in \cite{L-nonunip-rep} is a common generalization of Theorems \ref{th:center unip}, \ref{th:center nonunip} and \ref{th:tw center}.
\begin{theorem}[{\cite[Theorem 7.3]{L-nonunip-rep}}]\label{th:tw nonunip}
Under the above assumptions (in particular $\fo$ and $\frc$ are stable under  $\e$), there is a canonical equivalence of categories
\begin{equation*}
\un\CS^{\frc}_{\fo}(G;\e)\isom \cZ(\un\cS^{\frc}_{\fo};\e).
\end{equation*} 
\end{theorem}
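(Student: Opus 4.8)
\textbf{Proof proposal for Theorem \ref{th:tw nonunip}.}

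The plan is to reduce the statement to the already-established cases by combining the Monodromic--Endoscopic equivalence of Theorem \ref{th:main} (and its $\Om_{\cL}$-equivariant refinement via Corollary \ref{c:bPsi}) with the center-computation machinery developed in \S\ref{ss:prep cs} for character sheaves. The key observation is that $\e$ plays exactly the role that the blocks $\b$ played in \S\ref{ss:prep cs}: fixing $\cL\in\fo$, the map $\e$ carries $\fo$ to itself and, after choosing a block $\b\in{}_{\cL}\un W_{\e\cL}$ and a lifting $\dw^{\b}$, the twisted conjugation by $1\cdot\e$ on $G\cdot\e$ corresponds on the endoscopic side to the automorphism $\s_{\b\e}$ of $H=H^{\c}_{\cL}$ defined in \S\ref{ss:cs disc}. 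So the argument runs parallel to the proof of Theorem \ref{th:ch}: first cut down the center to the neutral block, then transport through $\Psi^{\c}_{\cL}$.

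The steps, in order. First I would observe that the projection $\un\cS^{\frc}_{\fo}\to{}_{\cL}\un\cS^{[\bc]}_{\cL}$ induces an equivalence on $\e$-twisted centers $\cZ(\un\cS^{\frc}_{\fo};\e)\isom\cZ({}_{\cL}\un\cS^{[\bc]}_{\cL};\e')$, where $\e'$ is the autoequivalence of ${}_{\cL}\un\cS^{[\bc]}_{\cL}$ induced by $\e$ composed with the minimal-IC-sheaf convolution that moves $\e\cL$ back to $\cL$; this is the twisted analogue of Lemma \ref{l:rest L}, with essentially the same proof (construct the inverse using minimal IC sheaves and their canonical independence of choices from \S\ref{ss:action min}). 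Second, I would decompose this twisted center over blocks exactly as in the proof of the equivalence \eqref{ZZ}: writing an object of the twisted center as $\oplus_{\b}\cF_{\b}\star\xi_{\b}$ and analyzing the twisted centrality constraints \eqref{FGbg}, one finds that $\cF_{\b}$ vanishes unless $\b$ lies in the subgroup $\Om_{\bc,\e}\subset\Om_{\cL}$ stabilizing both $\bc$ and compatible with $\e$, and that each surviving $\cF_{\b}$ acquires a $\s_{\b\e}$-twisted central structure in $\cZ({}_{\cL}\un\cS^{\c,\bc}_{\cL};\s_{\b\e})$ together with a $(\Om_{\bc,\e},\L_{\b})$-equivariant structure, the line $\L_{\b}$ being the one from \eqref{Lb}. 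Third, I would apply the monoidal equivalence \eqref{SHL} coming from Theorem \ref{th:main}, which by Corollary \ref{c:bPsi} is equivariant for the relevant twisting automorphisms, to get $\cZ({}_{\cL}\un\cS^{\c,\bc}_{\cL};\s_{\b\e})^{(\Om_{\bc,\e},\L_{\b})}\isom\cZ(\un\cS^{\bc}_{H};\s_{\b\e})^{(\Om_{\bc,\e},\L_{\b})}$. Fourth, Theorem \ref{th:tw center} (and Theorem \ref{th:center unip} when $\s_{\b\e}$ is trivial, Theorem \ref{th:tw nonunip}'s known connected case in situation (A)) identifies the right-hand side with $\un\CS^{\bc}_{u}(H;\s_{\b\e})^{(\Om_{\bc,\e},\L_{\b})}$. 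Composing, and running the center-computation \cite[Theorem 6.13]{L-center-nonunip} in its twisted form in the first step, yields the desired canonical equivalence $\un\CS^{\frc}_{\fo}(G;\e)\isom\cZ(\un\cS^{\frc}_{\fo};\e)$; note that in situation (A) the left side is $\Rep^{\frc}_{\fo}(G^{\e})$ by \eqref{CS rep}, so this simultaneously gives the representation-theoretic statement.

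The main obstacle, I expect, will be the first step: establishing the twisted analogue of ``character sheaves on $G\cdot\e$ form the $\e$-twisted center of $\un\cS^{\frc}_{\fo}$'' with enough functoriality that the block decomposition and the equivalence $\Psi^{\c}_{\cL}$ can be threaded through it compatibly. In the untwisted monodromic setting this is \cite[Theorem 6.13]{L-center-nonunip}, and in the disconnected-but-unipotent setting it is Theorem \ref{th:tw center}; the present statement is precisely the common generalization, and in fact \emph{is} the content of \cite[Theorem 7.3]{L-nonunip-rep}. So strictly speaking the theorem as stated is quoted from that reference; what remains genuinely new here, and what the proof should emphasize, is the \emph{combination} with the endoscopic equivalence --- i.e.\ showing that the twisting automorphism $\e'$ on the neutral block corresponds under $\Psi^{\c}_{\cL}$ to the explicit $\s_{\b\e}$ on $H$, so that the abstract center on the $G$-side is identified with unipotent ($\s_{\b\e}$-twisted) character sheaves on the endoscopic group $H$. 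The delicate point in that identification is tracking the cocycle $\L_{\b}$ and the choices of liftings $\dw^{\b}$ through the chain of equivalences, exactly as in \S\ref{ss:prep cs}; one must check that the $3$-cocycle subtlety of \S\ref{ss:3c}--\S\ref{con:tw} does not obstruct the twisted statement, which it does not because here we only compose \emph{two} block-moving operations ($\e$ and a single $\b$), so only a $2$-cocycle $\L_{\b}$ enters, and by (the twisted analogue of) Theorem \ref{th:ch}(2) its class is trivial.
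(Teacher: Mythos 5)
There is a genuine gap: your argument is circular. Theorem \ref{th:tw nonunip} is a statement purely about $G$ — it identifies the category $\un\CS^{\frc}_{\fo}(G;\e)$ of character sheaves on the coset $G\cdot\e$ with the $\e$-twisted Drinfeld center of the truncated monodromic Hecke category $\un\cS^{\frc}_{\fo}$ of $G$ itself. The endoscopic group $H$, the equivalence $\Psi^{\c}_{\cL}$ of Theorem \ref{th:main}, the automorphisms $\s_{\b\e}$, and the cocycles $\L_{\b}$ do not appear in either side of the asserted equivalence and cannot enter its proof. Your first step invokes ``the center-computation \cite[Theorem 6.13]{L-center-nonunip} in its twisted form'' — but the twisted form of that theorem, i.e.\ the common generalization of Theorems \ref{th:center nonunip} and \ref{th:tw center}, \emph{is} precisely the statement you are trying to prove. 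Everything after that first step (restriction to the neutral block, block decomposition of the twisted center, transport through $\Psi^{\c}_{\cL}$, application of Theorem \ref{th:tw center} on the $H$-side) is a sketch of the proof of Theorem \ref{th:rep}, the downstream application, not of Theorem \ref{th:tw nonunip}.

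The paper does not prove this theorem at all: it is quoted verbatim from \cite[Theorem 7.3]{L-nonunip-rep}, exactly as Theorems \ref{th:center unip}, \ref{th:center nonunip} and \ref{th:tw center} are quoted from their sources, and is then used as an input to the proof of Theorem \ref{th:rep}. The actual proof in that reference proceeds by directly constructing mutually quasi-inverse functors between $\un\CS^{\frc}_{\fo}(G;\e)$ and $\cZ(\un\cS^{\frc}_{\fo};\e)$ via induction/restriction along the horocycle correspondence and truncated convolution on the disconnected group $G\rtimes\e^{\ZZ_{\ge0}}$; endoscopy plays no role. If your goal is the representation-theoretic consequence (Corollary \ref{c:rep}) or Theorem \ref{th:rep}, your steps two through four are essentially the right outline — and indeed match the paper's sketch of proof of Theorem \ref{th:rep} — but they must take Theorem \ref{th:tw nonunip} as given rather than purport to establish it.
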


%Using \eqref{CS rep} we get:
%\begin{cor} In situation (A), we have a canonical equivalence of semisimple abelian categories
%\begin{eqnarray*}
%\Rep^{\frc}_{\fo}(G^{\e})\cong \cZ(\un\cS^{\frc}_{\fo}; \e).
%\end{eqnarray*}
%\end{cor}

\subsection{} 
We need some more notation to state the next theorem.   Fix a two-sided cell $\bc$ of $\WL$ contained in $\frc\cap\WL$. Then $\frc\cap\WL$ is the union of two-sided cells that are in the same $\Om_{\cL}$-orbit  of $\bc$. 

For $\b\in {}_{\cL}\un W_{\e\cL}$, $w^{\b}\c \e\cL=\cL$, hence $w^{\b}\c \e$ acts on $W_{\cL}, \WL$ and on $\Om_{\cL}$, and permutes the cells in $\WL$ that belong to $\frc$. Let
\begin{eqnarray*}
\fB_{\bc}=\{\b\in {}_{\cL}\un W_{\e\cL}| \mbox{$w^{\b}\c \e$ preserves the cell $\bc$ of $\WL$}\}.
\end{eqnarray*}
The left translation action of $\Om_{\cL}$ on ${}_{\cL}\un W_{\e\cL}$ (using the multiplication of blocks defined in \S\ref{ss:block W}) restricts to an action of $\Om_{\bc}=\Stab_{\Om_{\cL}}(\bc)$ on $\fB_{\bc}$, making it a $\Om_{\bc}$-torsor. Similarly, the right translation action of $\g\in\Om_{\cL}$ on ${}_{\cL}\un W_{\e\cL}$ by $\b\mapsto\b\e(\g)$ makes $\fB_{\bc}$ into a right $\Om_{\bc}$-torsor. Combining the two actions we get a twisted conjugation action of $\Om_{\cL}$ on ${}_{\cL}\un W_{\e\cL}$
\begin{equation*}
\Ad_{\e}(\g)(\b)=\g\b\e(\g)^{-1}, \quad \g\in \Om_{\cL}, \b\in{}_{\cL}\un W_{\e\cL}. 
\end{equation*}
It restricts to an action of $\Om_{\bc}$ on $\fB_{\bc}$ which we still denote by $\Ad_{\e}$. For $\b\in {}_{\cL}\un W_{\e\cL}$, let $\Om_{\b}\subset \Om_{\cL}$ be its stabilizer under the $\Ad_{\e}$-action of $\Om_{\cL}$; let  $\Om_{\bc,\b}=\Om_{\bc}\cap\Om_{\b}$. Since $\Om_{\cL}$ is abelian, the groups $\Om_{\b}, \Om_{\bc}$ and  $\Om_{\bc,\b}$ are independent of the choices of $\bc$ and $\b$.

When $\b\in \fB_{\bc}$, we can define the semisimple abelian category  $\un\CS^{\bc}_{u}(H;\s_{\b\e})$ consisting of finite direct sums of unipotent character sheaves on $H\cdot \s_{\b\e}$ belonging to the cell $\bc$. In situation (A), we have an equivalence 
\begin{equation}\label{CS rep H}
\un\CS^{\bc}_{u}(H;\s_{\b\e})\cong \Rep^{\bc}_{u}(H^{\s_{\b\e}}),
\end{equation}
the latter being the semisimple abelian category of unipotent $\Qlbar$-representations of the finite group $H^{\s_{\b\e}}$ belonging to the cell $\bc$.

%There is an action of $\Om_{\cL}={}_{\cL}\un W_{\cL}$ on ${}_{\cL}\un W_{\e\cL}$ as follows:  for $\g\in \Om_{\cL}$ and $\b\in {}_{\cL}\un W_{\e\cL}$, we define $\g\cdot_{\e}\b=\g\b\e(\g)^{-1}$ (using the multiplication of blocks defined in \S\ref{ss:block W}; note that $\e(\g)^{-1}\in \Om_{\e\cL})$.  This restricts to an action of $\Om_{\bc}$ on $\fB_{\bc}$. 

\quash{ For $\b\in {}_{\cL}\un W_{\e\cL}$, we introduce a twisted analogue of the cocycle $\L_{\b}$ defined in \eqref{Lb}. For $\g\in \Om_{\b}\subset \Om_{\cL}$, define
\begin{equation*}
\L_{\b\e}(\g)=\Hom(\xi_{\g}, \xi_{\b}\star \e_{*}\xi_{\g}\star \xi_{\b}^{-1}).
\end{equation*}
This is canonically independent of the choices of $\xi_{\g}\in {}_{\cL}\fP^{\g}_{\cL}$ and $\xi_{\b}\in {}_{\cL}\fP^{\g}_{F_{\e}\cL}$, and it defines a normalized $1$-cocycle $\L_{\b\e}$ of lines on $\Om_{\b}$.
}
%%%%%%%%%%%%%%%%%%%%%%%%%%
Finally, using the group ${}_{\cL}\fH_{\cL}$  with neutral component $H$ and group of components equal to $\Om_{\cL}$, there is a canonical action of $\Om_{\b}$ on $\un\CS_{u}(H;\s_{\b\e})$, defined in the same way as discussed in \S\ref{ss:prep cs}. This restricts to an action of $\Om_{\bc,\b}$ on $\un\CS^{\bc}_{u}(H;\s_{\b\e})$. It therefore makes sense to form the category $\un\CS^{\bc}_{u}(H;\s_{\b\e})^{\Om_{\bc,\b}}$ of objects in $\un\CS^{\bc}_{u}(H;\s_{\b\e})$  equipped with $\Om_{\bc,\b}$-equivariant structures.

The following result gives a relationship between character sheaves for a disconnected group with a fixed semisimple parameter and unipotent character sheaves on its endoscopic groups, generalizing Theorem \ref{th:ch}.

\begin{theorem}\label{th:rep} Choose a representative for each $\Ad_{\e}(\Om_{\bc})$-orbit of $\fB_{\bc}$, and denote this set of representatives by $\dot\fB_{\bc}$. There is a canonical equivalence of semisimple abelian categories
\begin{equation*}
\un\CS^{\frc}_{\fo}(G;\e)\cong \bigoplus_{\b\in \dot\fB_{\bc}}\un\CS^{\bc}_{u}(H;\s_{\b\e})^{\Om_{\bc,\b}}.
\end{equation*}
\end{theorem}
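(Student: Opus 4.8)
The plan is to run exactly the same chain of equivalences as in the proof of Theorem \ref{th:ch}(1), but everywhere replacing the categorical center by the $\e$-twisted center and keeping track of how $\e$ interacts with the groupoid data. The starting point is Theorem \ref{th:tw nonunip}, which gives a canonical equivalence $\un\CS^{\frc}_{\fo}(G;\e)\isom \cZ(\un\cS^{\frc}_{\fo};\e)$. So the whole burden is to compute $\cZ(\un\cS^{\frc}_{\fo};\e)$ in terms of the endoscopic group $H$, its disconnected extension ${}_{\cL}\fH_{\cL}$, and the Frobenius/automorphism data $\s_{\b\e}$.

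\textbf{Key steps.} First I would prove a twisted analogue of Lemma \ref{l:rest L}: restriction along $\un\cS^{\frc}_{\fo}\to {}_{\cL}\un\cS^{\frc}_{\cL}$ induces an equivalence $\cZ(\un\cS^{\frc}_{\fo};\e)\isom \cZ_{\e}$, where $\cZ_{\e}$ is an appropriate ``$\e$-twisted center relative to $\cL$'' of ${}_{\cL}\un\cS^{\frc}_{\cL}$; the inverse is built exactly as in Lemma \ref{l:rest L}, conjugating by minimal IC sheaves $\xi\in {}_{\cL'}\fP_{\cL}$, using the central structure to see independence of $\xi$, and using $\e$-equivariance of the construction. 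Here one must be careful that $\e$ permutes the blocks $\fo$, so the ``restriction to $\cL$'' only sees the part of the center supported over $\cL$; since $\fo$ is a single $W$-orbit the argument goes through. Second, I would prove the twisted analogue of the equivalence \eqref{ZZ}: decompose a central object $\cF$ as $\oplus_{\b}\cF_{\b}\star\xi_{\b}$ over $\b\in {}_{\cL}\un W_{\e\cL}$ (not $\Om_{\cL}$ now, because $\e$ shifts the block), extract from the $\e$-twisted central isomorphisms $(\cF_{\b}\star\xi_{\b})\c(\cG\star\xi_{\g})\cong (\cG\star\xi_{\g})\c(\e_{*}(\cF_{\b}\star\xi_{\b}))$ the following structures on $\cF_{\b}$: (i) a $\s_{\b\e}$-twisted central structure on $\cF_{\b}^{\bc}$ inside ${}_{\cL}\un\cS^{\c,\bc}_{\cL}$ (taking $\g=1$), and (ii) a $(\Om_{\bc,\b},\L_{\b\e})$-equivariant structure (taking $\cG=\d_{\cL}$ and $\g\in\Om_{\b}$, so that $\e_{*}\xi_\g$ lands back in the same block). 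The vanishing $\cF_{\b}=0$ unless $\b\in\fB_{\bc}$ comes, as in the untwisted case, from comparing the cells on both sides of the twisted central isomorphism: $w^{\b}\c\e$ must preserve $\bc$. The resulting functor is an equivalence $\cZ_{\e}\isom\oplus_{\b\in\dot\fB_{\bc}}\cZ({}_{\cL}\un\cS^{\c,\bc}_{\cL};\s_{\b\e})^{(\Om_{\bc,\b},\L_{\b\e})}$, where summing over $\Ad_{\e}(\Om_{\bc})$-orbit representatives accounts for the $\Om_{\bc}$-equivariant descent. Third, I would transport across Theorem \ref{th:main}: the monoidal equivalence $\un\cS^{\bc}_{H}\isom {}_{\cL}\un\cS^{\c,\bc}_{\cL}$ of \eqref{SHL} intertwines the $\s_{\b\e}$-action on the $H$-side (coming from the relative pinning, \S\ref{ss:rel pin}, and the compatibility of $\s_{\b\e}$ with $\Ad(\dw^{\b})\c\e$) with the corresponding twist on the monodromic side --- this is the twisted extension of Corollary \ref{c:bPsi}, and it is where the precise definition of $\s_{\b\e}$ enters. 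Hence $\cZ({}_{\cL}\un\cS^{\c,\bc}_{\cL};\s_{\b\e})^{(\Om_{\bc,\b},\L_{\b\e})}\isom \cZ(\un\cS^{\bc}_{H};\s_{\b\e})^{(\Om_{\bc,\b},\L_{\b\e})}$. Finally, apply Theorem \ref{th:tw center} (the disconnected unipotent case) to identify $\cZ(\un\cS^{\bc}_{H};\s_{\b\e})$ with $\un\CS^{\bc}_{u}(H;\s_{\b\e})$, carrying along the $(\Om_{\bc,\b},\L_{\b\e})$-equivariant structure. Composing all arrows gives the claimed equivalence.

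\textbf{Main obstacle.} I expect the hard part to be the bookkeeping in the twisted analogue of \eqref{ZZ}: precisely formulating what an object of $\cZ(\un\cS^{\frc}_{\fo};\e)$ restricts to, and checking that the cocycle conditions forced on $\cF_{\b}$ by the $\e$-twisted hexagon/compatibility axioms really assemble into a $(\Om_{\bc,\b},\L_{\b\e})$-equivariant object of a $\s_{\b\e}$-twisted center (as opposed to some other twist of the equivariance datum). The subtlety is that $\e$ acts on blocks by $\b\mapsto \e(\b)$ while $\Om_{\cL}$ acts by left translation, so the relevant symmetry group of $\fB_{\bc}$ is $\Om_{\bc}$ acting by the \emph{twisted} conjugation $\Ad_\e$, and one must verify that the stabilizer $\Om_{\bc,\b}$ is exactly the group for which $\e_{*}\xi_\g$ stays in block $\g$ so that $\L_{\b\e}(\g)$ is defined; this is a purely combinatorial check on blocks using Corollary \ref{c:trans min} and \eqref{RwL}, but it must be done carefully. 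A secondary technical point is confirming that the $a$-function value used to define truncated convolution for $\frc$ agrees with that for $\bc$ as a cell of $W_H=\WL$ (this is already noted after \eqref{SHL} in the untwisted case and carries over verbatim). Apart from these, every ingredient is either quoted (Theorems \ref{th:main}, \ref{th:tw center}, \ref{th:tw nonunip}, Corollary \ref{c:bPsi}) or a direct twisting of an argument already in the paper.
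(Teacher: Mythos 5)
Your proposal follows essentially the same route as the paper's own (sketched) proof: reduce via Theorem \ref{th:tw nonunip} to twisted categorical centers, restrict to the component at $\cL$ (the target is $\cZ({}_{\cL}\un\cS^{\frc}_{\e\cL};\e)$ rather than ${}_{\cL}\un\cS^{\frc}_{\cL}$, as your later indexing of blocks by $\b\in{}_{\cL}\un W_{\e\cL}$ correctly reflects), decompose central objects over blocks and cells to extract the twisted central and $(\Om_{\bc,\b},\L_{\b\e})$-equivariant structures with the vanishing forcing $\b\in\fB_{\bc}$, and transport across Theorem \ref{th:main} using the compatibility of $\s_{\b\e}$ with $\Ad(\dw^{\b})\c\e$. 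The only small adjustment is that in situation (A) the identification $\cZ(\un\cS^{\bc}_{H};\s_{\b\e})\cong\un\CS^{\bc}_{u}(H;\s_{\b\e})$ must invoke Theorem \ref{th:tw nonunip} (applied to $H$ with trivial semisimple parameter) rather than Theorem \ref{th:tw center}, since $\s_{\b\e}$ is then a Frobenius and not a finite-order automorphism; this is exactly what the paper does.
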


Using \eqref{CS rep} and \eqref{CS rep H}, we get:
\begin{cor}\label{c:rep} In situation (A), under the same notations as Theorem \ref{th:rep}, there is a canonical equivalence of semisimple abelian categories
\begin{equation*}
\Rep^{\frc}_{\fo}(G^{\e})\cong \bigoplus_{\b\in \dot\fB_{\bc}}\Rep^{\bc}_{u}(H^{\s_{\b\e}})^{\Om_{\bc,\b}}.
\end{equation*}
\end{cor}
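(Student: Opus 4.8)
The statement is Corollary \ref{c:rep}, which is obtained from Theorem \ref{th:rep} purely by translating the two sides into representation-theoretic language. So the real content is Theorem \ref{th:rep}, and the corollary is a formal consequence: in Situation (A) the group $G$ acts transitively on the coset $G\cdot\e$ by conjugation (Lang's theorem), with stabilizer of $1\cdot\e$ equal to the finite group $G^{\e}$, hence $\un\CS^{\frc}_{\fo}(G;\e)\cong \Rep^{\frc}_{\fo}(G^{\e})$ as in \eqref{CS rep}; likewise $\un\CS^{\bc}_{u}(H;\s_{\b\e})\cong \Rep^{\bc}_{u}(H^{\s_{\b\e}})$ as in \eqref{CS rep H}. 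Since these equivalences are canonical and respect the relevant $\Om$-actions and the cocycles $\L_{\b\e}$ (which are defined geometrically, on minimal IC sheaves, and therefore make sense in either description), substituting them into the equivalence of Theorem \ref{th:rep} yields the equivalence $\Rep^{\frc}_{\fo}(G^{\e})\cong \bigoplus_{\b\in\dot\fB_{\bc}}\Rep^{\bc}_{u}(H^{\s_{\b\e}})^{(\Om_{\bc,\b},\L_{\b\e})}$.

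\textbf{Key steps for Theorem \ref{th:rep} itself.} First I would apply Theorem \ref{th:tw nonunip}: $\un\CS^{\frc}_{\fo}(G;\e)\isom \cZ(\un\cS^{\frc}_{\fo};\e)$, the $\e$-twisted categorical center of the asymptotic monodromic Hecke category. Second, I would run the same reduction as in the proof of Theorem \ref{th:ch}: restrict the (twisted) center along the projection to the block ${}_{\cL}\un\cS^{[\bc]}_{\cL}$ (or rather, in the twisted case, to the sum of blocks that are permuted into one another by $\e$), obtaining an equivalence with an appropriate twisted center of ${}_{\cL}\un\cS^{[\bc]}_{\cL}$; this is the twisted analogue of Lemma \ref{l:rest L}. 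Third, unwind the twisted center over the component group $\Om_{\cL}$ exactly as in the proof of the equivalence \eqref{ZZ}: an object in the twisted center decomposes as $\oplus_{\b}\cF_{\b}\star\xi_{\b}$, the twisted centrality forces $\cF_{\b}$ to vanish unless $w^{\b}\c\e$ preserves the cell $\bc$ (i.e. $\b\in\fB_{\bc}$), and then the remaining data on $\cF_{\b}$ is a $\s_{\b\e}$-twisted central structure on ${}_{\cL}\un\cS^{\c,\bc}_{\cL}$ together with a $(\Om_{\bc,\b},\L_{\b\e})$-equivariant structure; summing over $\Ad_{\e}(\Om_{\bc})$-orbit representatives $\dot\fB_{\bc}$ collapses the redundancy. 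Fourth, apply the monodromic--endoscopic equivalence $\Psi^{\c}_{\cL}$ of Theorem \ref{th:main}: it gives a monoidal equivalence $\un\cS^{\bc}_{H}\isom {}_{\cL}\un\cS^{\c,\bc}_{\cL}$ intertwining the actions (Corollary \ref{c:bPsi}), hence an equivalence of $\s_{\b\e}$-twisted centers with equivariant structures; combined with Theorem \ref{th:tw center} (unipotent character sheaves on $H\cdot\s_{\b\e}$ as $\s_{\b\e}$-twisted center of $\un\cS^{\bc}_{H}$), this converts $\cZ({}_{\cL}\un\cS^{\c,\bc}_{\cL};\b)^{(\Om_{\bc,\b},\L_{\b\e})}$ into $\un\CS^{\bc}_{u}(H;\s_{\b\e})^{(\Om_{\bc,\b},\L_{\b\e})}$.

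\textbf{Main obstacle.} The delicate point is keeping the bookkeeping of the twisting data straight when $\e$ is present: the cocycles $\L_{\b}$ used in Theorem \ref{th:ch} must be replaced by the $\e$-twisted cocycles $\L_{\b\e}$, and one must check that the decomposition of the $\e$-twisted center respects this — in particular that the commutative diagrams expressing compatibility of the central structure with convolution of the $\xi_{\g}$'s translate correctly into $(\Om_{\bc,\b},\L_{\b\e})$-equivariance with the $\e$-twist inserted in the right place. One also needs to verify that the $a$-function value attached to the cell $\frc$ in the construction of the truncated convolution (in \cite{L-center-nonunip}) matches the $a$-value of $\bc$ as a cell for $\WL=W_H$, so that $\Psi^{\c}_{\cL}$ is compatible with truncated convolution on both sides; this is the analogue of the remark made in the proof of Theorem \ref{th:ch}(1). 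A secondary bookkeeping issue, as in Theorem \ref{th:ch}(2), is that all constructions depend a priori on the chosen liftings $\dw^{\b}$ and on representatives $\dot\fB_{\bc}$, and one should note that different choices give canonically equivalent categories; but unlike Theorem \ref{th:ch}(2) there is no need here to prove triviality of the cocycle class, since the statement is phrased with the twisted-equivariant categories directly.
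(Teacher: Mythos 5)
Your proposal is correct and matches the paper's approach exactly: the corollary is deduced from Theorem \ref{th:rep} purely by substituting the equivalences \eqref{CS rep} and \eqref{CS rep H}, which is all the paper itself does. Your sketch of Theorem \ref{th:rep} also follows the same route as the paper's own sketch (Theorem \ref{th:tw nonunip}, the twisted analogue of Lemma \ref{l:rest L}, the decomposition of the $\e$-twisted center over $\fB_{\bc}$, and the equivalence from Theorem \ref{th:main} combined with Theorem \ref{th:tw center}).
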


\begin{exam}\label{ex:SLn rep} Consider $G=\SL_{n}$, and $\e$ is the Frobenius map for the split $\FF_{q}$-structure on $G$.  Let $\cK\in \Ch(\Gm)$ be of order $n$. For rational number $a$ whose denominator is prime to $n$, it makes sense to take the tensor power $\cK^{a}$. Let
\begin{equation*}
\cL=\cK\boxtimes \cK^{2}\boxtimes\cdots \boxtimes \cK^{n}\in \Ch(\Gm^{n}).
\end{equation*} 
Restricting $\cL$ to the diagonal torus $T$ of $G$ (identified with the subtorus $T\subset \Gm^{n}$ with product equal to $1$) we denote it still by $\cL\in \Ch(T)$. The Weyl group $S_{n}$ acts on $\Ch(\Gm^{n})$ by $w(\cL_{1}\boxtimes\cdots\boxtimes \cL_{n})=\cL_{w^{-1}(1)}\boxtimes\cdots\boxtimes \cL_{w^{-1}(n)}$, and it restricts to an action on $\Ch(T)$.

Let $\fo$ be the $W$-orbit of $\cL$. In this case $\WL=\{1\}$ but $W_{\cL}=\Om_{\cL}\cong\ZZ/n\ZZ$ can be identified with the group generated by the cyclic permutation $c: i\mapsto i+1$ in $S_{n}$. We have $H=T$.  Since there is only one cell $\bc$ for $T$ with any semisimple parameter, we will omit $\bc$ from the notation.

We have $\e_{*}\cL=\cK^{1/q}\boxtimes\cK^{2/q}\cdots\boxtimes\cK^{n/q}$. Let $w(i)=i/q\mod n$, viewed as an element in $S_{n}$, then  $\fB={}_{\cL}W_{\e\cL}=\{c^{i}w|i\in \ZZ/n\ZZ\}$. For $\b\in\fB$, we have $\Ad_{\e}(c)(\b)=c\b\e(c)^{-1}=c\b c^{-1}$. Direct calculation shows that $\Ad_{\e}(c)$ sends $c^{i}w\in \fB$ to $c^{i+(q-1)/q}w\in \fB$. Let $d=\gcd(n,q-1)$, then the $\Ad_{\e}$-action of $\Om_{\cL}=\ZZ/n\ZZ$ on $\fB$ has $d$ orbits, and the stabilizers are isomorphic to  $\ZZ/d\ZZ$. By Corollary \ref{c:rep},  $\SL_{n}(\FF_{q})$ has $d^{2}$ irreducible representations with semisimple parameter $\fo$.
\end{exam}

\begin{exam} Consider the case $G=\SL_{n}$ but $\e$ is the Frobenius map corresponding to the special unitary group $\SU_{n}$ over $\FF_{q}$. Its action on the diagonal torus is given by $(x_{1},x_{2}, \cdots, x_{n})\mapsto (x_{n}^{-q},x_{n-1}^{-q},\cdots, x_{1}^{-q})$. We consider the same $\cL\in \Ch(T)$ as in Example \ref{ex:SLn rep}. This time $\e\cL=\cK^{-n/q}\boxtimes\cdots \boxtimes \cK^{-1/q}$. Let $w(i)=(i-n-1)/q\mod n$, viewed as an element in $S_{n}$, then  $\fB={}_{\cL}W_{\e\cL}=\{c^{i}w|i\in \ZZ/n\ZZ\}$. For $\b\in\fB$, we have $\Ad_{\e}(c)(\b)=c\b\e(c)^{-1}=c\b c$ because $\e(c)=c^{-1}$. Then $\Ad_{\e}(c)(c^{i}w)=c^{i+(q+1)/q}w$. Let $d'=\gcd(q+1,n)$. Then as in the discussion in Example \ref{ex:SLn rep},  $\SU_{n}(\FF_{q})$ has $d'^{2}$ irreducible representations with semisimple parameter $\fo$.
\end{exam}

\subsection{Sketch of proof of Theorem \ref{th:rep}} 
Applying Theorem \ref{th:tw nonunip} to $\un\CS^{\frc}_{\fo}(G;\e)$ and to $\un\CS^{\bc}_{u}(H;\s_{\b\e})$ separately,  we reduce to showing that
\begin{equation*}
\cZ(\un\cS_{\fo}^{\frc}; \e)\cong \bigoplus_{\b\in\dot\fB_{\bc}}\cZ(\un\cS_{H}^{\bc}; \s_{\b\e})^{\Om_{\bc,\b}}.
\end{equation*}
By the equivalence in Theorem \ref{th:main}, we may replace  $\un\cS_{H}^{\bc}$ by ${}_{\cL}\cS_{\cL}^{\c, \bc}$ on the right side and reduce to showing that
\begin{equation}\label{twisted center decomp}
\cZ(\un\cS_{\fo}^{\frc}; \e)\cong \bigoplus_{\b\in\dot\fB_{\cL, \bc}}\cZ({}_{\cL}\un\cS_{\cL}^{\c,\bc}; \b\c\e_{*})^{\Om_{\bc,\b}}.
\end{equation}
Here the twisting $\b\c\e_{*}$ that appears on the right side refers to the autoequivalence $\cF\mapsto \xi_{\b}\star\e_{*}\cF\star\xi_{\b}^{-1}$ of ${}_{\cL}\un\cS_{\cL}^{\c,\bc}$, where $\xi_{\b}=\om(\IC(w^{\b})^{\da}_{\cL})$.

%Note that $\Om_{\bc}$ acts on $\cZ({}_{\cL}\un\cS_{\cL}^{\c,\bc}; \b\c F_{\e})$ because $\b\in\fB_{\cL}$.

The argument for \eqref{twisted center decomp} is similar to that of Theorem \ref{th:ch}, so we only sketch the main steps. We have a twisted analogue of \eqref{rest L}: restriction gives an equivalence
\begin{equation}\label{tw rest L}
\cZ(\un\cS_{\fo}^{\frc}; \e)\isom \cZ({}_{\cL}\un\cS^{\frc}_{\e\cL}; \e)
\end{equation}
Here the right side are objects $\cF\in {}_{\cL}\un\cS^{\frc}_{\e\cL}$ together with functorial isomorphisms $\cF\c \e_{*}\cG\isom \cG\c\cF$ for $\cG\in {}_{\cL}\un\cS^{\frc}_{\cL}$ ($\c$ denotes the truncated convolution).  One can then write $\cF\in \cZ({}_{\cL}\un\cS^{\frc}_{\e\cL}; \e)$ as a sum
\begin{equation*}
\cF=\bigoplus_{\bc'\sim \bc, \b\in {}_{\cL}\un W_{\e\cL}} \cF^{\bc'}_{\b}\star \xi_{\b}
\end{equation*}
for $\cF^{\bc'}_{\b}\in {}_{\cL}\un\cS^{\c, \bc'}_{\cL}$ (where $\bc'$ runs over the $\Om_{\cL}$-orbit of $\bc$).  The $\e$-commutation with $\cG$ in various cell subcategories of ${}_{\cL}\un\cS^{\c}_{\cL}$ implies that if $\cF^{\bc}_{\b}\ne0$, then $w^{\b}\c \e$ preserves $\bc$, i.e., $\b\in \fB_{\bc}$. The $\e$-commutation with $\cG\in {}_{\cL}\un\cS^{\c,\bc}_{\cL}$ shows that $\cF^{\bc}_{\b}$ carries a $\b\c\e_{*}$-twisted central structure. The $\e$-commutation with $\cG=\xi_{\g}$ for $\g\in \Om_{\cL}$ then shows that $\cF^{\bc}_{\b}$ determines $\cF^{\g\cdot\bc}_{\g\cdot_{\e}\b}$, and that $\cF^{\bc}_{\b}$ (for $\b\in\fB_{\bc}$) is equipped with a $\Om_{\bc,\b}$-equivariant structure. Here we are using that $\e_{*}\xi_{\g}$ is canonically isomorphic to $\xi_\g$, since $\e$ is compatible with the pinning and hence acts on the Whittaker categories ${}_{\psi}\un\cM_{\cL}$ which were used to define the rigidified minimal IC sheaves. Sending $\cF\in \cZ({}_{\cL}\un\cS^{\frc}_{\e\cL}; \e)$ to $\{\cF^{\bc}_{\b}\}_{\b\in\dot\fB_{\bc}}$ then induces an equivalence 
\begin{equation*}
\cZ({}_{\cL}\un\cS^{\frc}_{\e\cL}; \e)\cong \bigoplus_{\b\in\dot\fB_{\bc}}\cZ({}_{\cL}\un\cS_{\cL}^{\c,\bc}; \b\c \e_{*})^{\Om_{\bc,\b}}.
\end{equation*} 
Combining this with \eqref{tw rest L} we get \eqref{twisted center decomp}, proving the theorem.
\qed

\end{document}